\renewcommand{\chaptermark}[1]%
        {\markboth{#1}{}}
\renewcommand{\sectionmark}[1]%
	{\markright{\thesection\ #1}}
\newcommand{\chapcleardoublepage}{\newpage{\pagestyle{empty}\cleardoublepage}}
\newtheorem{theo}{Theorem}[section]
\newtheorem{prop}{Proposition}[section]
\newtheorem{rem}{Remark}[section]
\newtheorem{lem}{Lemma}[section]
\newtheorem{guess8}{Example}[section]
\newcommand{\nuw}{\mbox{\boldmath$\nu$}}
\newcommand{\taux}{\mbox{\boldmath$\tau$}}
\newcommand{\etav}{\mbox{\boldmath$\eta$}}
\newcommand{\zetau}{\mbox{\boldmath$\zeta$}}
\newcommand{\Phiy}{\mbox{\boldmath$\Phi$}}
\newcommand{\wa}{\mbox{\boldmath$w$}}
\newcommand{\nb}{\mbox{\boldmath$n$}}
\newcommand{\mc}{\mbox{\boldmath$m$}}
\newcommand{\hd}{\mbox{\boldmath$h$}}
\newcommand{\fe}{\mbox{\boldmath$f$}}
\newtheorem*{prop2.2.1}{Proposition 2.2.1}
\newtheorem*{prop2.2.2}{Proposition 2.2.2}
\newtheorem*{prop2.2.3}{Proposition 2.2.3}
\newtheorem*{prop2.2.4}{Proposition 2.2.4}
\newtheorem*{thm2.3.1}{Theorem 2.3.1}
\newtheorem*{thm2.3.2}{Theorem 2.3.2}
\newtheorem*{thm2.3.3}{Theorem 2.3.3}
\newtheorem*{lem3.2.6}{Lemma 3.2.6}
\newtheorem*{lem3.2.8}{Lemma 3.2.8}
\newtheorem*{lem3.2.9}{Lemma 3.2.9}
\newtheorem*{lem3.3.1}{Lemma 3.3.1}
\newtheorem*{thm3.3.1}{Theorem 3.3.1}
\newtheorem*{lem3.4.1}{Lemma 3.4.1}
\newtheorem*{lem3.4.2}{Lemma 3.4.2}
\newtheorem*{lem3.4.3}{Lemma 3.4.3}
\newtheorem*{lem3.4.4}{Lemma 3.4.4}
\newtheorem*{lem3.5.1}{Lemma 3.5.1}
\newtheorem*{lem3.5.2}{Lemma 3.5.2}
\begin{document}

\pagenumbering{roman}
\setcounter{page}{1}

\thispagestyle{empty}
  \begin{flushright}
    {\small ~}
  \end{flushright}
  \begin{center}
    {\LARGE h-p spectral element methods for three dimensional elliptic problems
            on non-smooth domains using\\
            parallel computers}
  \end{center}
  \large
  \vspace{2.00in}
  \begin{center}
    {\Large\bf $\mbox{Akhlaq Husain}^{1}$}
  \end{center}

  \vspace{1.00in}

  \begin{figure}[!ht]
  \centering
  \includegraphics[scale = 0.60]{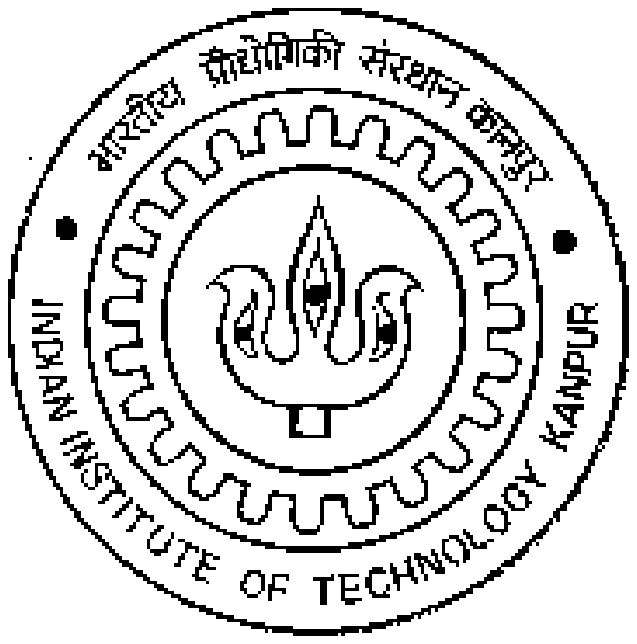}
  \end{figure}
  \vspace{0.1in}
  \begin{center}
   {\normalsize DEPARTMENT OF MATHEMATICS AND STATISTICS} \\
   {\large\bf INDIAN INSTITUTE OF TECHNOLOGY KANPUR} \\
   {\large June, 2010}
  \end{center}
\footnote[1]{This is a reprint of the PhD thesis of the author, which was submitted on $11^{\text{th}}$ June, 2010 and
defended on $14^{\text{th}}$ January, 2011 at IIT Kanpur, India}

\chapcleardoublepage

\thispagestyle{empty}
\begin{titlepage}
\begin{flushright}
    {\small ~}
  \end{flushright}
  \Large
  \begin{center}
    {\LARGE h-p spectral element methods for three dimensional elliptic problems
            on non-smooth domains using\\
            parallel computers}
  \end{center}
  \large
  \vspace{0.60in}
  \begin{center}
    {\large A Thesis Submitted \\
    in Partial Fulfillment of the Requirements \\
    for the Degree of} \\
    \vspace{0.15in}
    {\large\bf Doctor of Philosophy}\\
    \vspace{0.15in}
    {\large\it by} \\
    \vspace{0.15in}
    {\Large\bf $\mbox{Akhlaq Husain}^{2}$}
  \end{center}
  \vspace{0.40in}
  \begin{figure}[!ht]
  \centering
  \includegraphics[scale = 0.60]{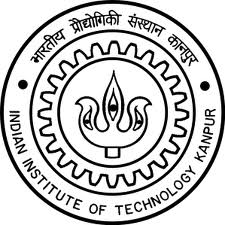}
  \end{figure}
  \vspace{0.15in}
  \begin{center}
    {\large\it to the} \\
    \vspace{0.10in}
    {\normalsize DEPARTMENT OF MATHEMATICS AND STATISTICS} \\
    {\large\bf INDIAN INSTITUTE OF TECHNOLOGY KANPUR}\\
    {\large June, 2010}
  \end{center}
\footnote{This is a reprint of the PhD thesis of the author, which was submitted on $11^{\text{th}}$ June, 2010 and defended
on $14^{\text{th}}$ January, 2011 at IIT Kanpur, India}
\footnote{Dr. Akhlaq Husain\\
LNM Institute of Information Technology\\
Rupa ki Nangal, Post-Sumel, Via-Jamdoli, Jaipur-302031, Rajasthan, INDIA}
\end{titlepage}

\chapcleardoublepage

\thispagestyle{empty}
\clearpage
\renewcommand{\baselinestretch}{1.5}
\begin{figure}[!ht]
\centering
\includegraphics[scale = 0.40]{logobw}
\end{figure}
\vspace{0.3in}
\begin{center}
\Large{{\bf CERTIFICATE}}
\end{center}
\vspace{0.5in}
\normalsize
\renewcommand{\baselinestretch}{1.5}
\renewcommand{\baselinestretch}{1.5}
It is certified that the work contained in the thesis entitled
``{\bf $h-p$ spectral element methods for three dimensional elliptic
problems on non-smooth domains using parallel computers}'' by
Akhlaq Husain, has been carried out under our supervision and that
this work has not been submitted elsewhere for a degree.
\vspace{1.5in}
\renewcommand{\baselinestretch}{1.5}
\begin{flushleft}
Pravir Dutt~~~~~~~~~~~~~~~~~~~~~~~~~~~~~~~~~~~~~~~A.S. Vasudeva Murthy\\
(Professor)~~~~~~~~~~~~~~~~~~~~~~~~~~~~~~~~~~~~~~~~(Professor)\\
Department of Mathematics~~~~~~~~~~~~~~~~~TIFR Centre for Applicable\\
and Statistics~~~~~~~~~~~~~~~~~~~~~~~~~~~~~~~~~~~~Mathematics\\
Indian Institute of Technology~~~~~~~~~~~~~~Tata Institute of Fundamental Research\\
Kanpur~~~~~~~~~~~~~~~~~~~~~~~~~~~~~~~~~~~~~~~~~~~~Bangalore
\end{flushleft}
\vspace{0.5in}
\begin{flushleft}
June, $2010$ \\
\end{flushleft}
\renewcommand{\baselinestretch}{1.5}
\chapcleardoublepage
\clearpage

\chapcleardoublepage

\thispagestyle{empty}
{\huge{ \bf{ABSTRACT}}}
\vspace{1.0cm}

Elliptic partial differential equations arise in many fields of science and engineering such as
steady state distribution of heat, fluid dynamics, structural/mechanical engineering, aerospace
engineering, medical science and seismology etc.

In three dimensions it is well known that the solutions of elliptic boundary value problems have
singular behaviour near the corners and edges of the domain. The singularities which arise are
known as vertex, edge, and vertex-edge singularities. Due to the presence of singularities the
conventional numerical methods are unable to provide accurate numerical solutions and the rate of
convergence of these methods degrades. In order to improve efficiency of computations and accuracy
of the solutions, it is desirable to find efficient methods along with standard numerical
techniques such as finite element method (FEM), spectral element method (SEM) and so on.

We propose a nonconforming $h-p$ spectral element method to solve three dimensional elliptic
boundary value problems on non-smooth domains to exponential accuracy.

To overcome the singularities which arise in the neighbourhoods of the vertices, vertex-edges
and edges we use local systems of coordinates. Away from
these neighbourhoods standard Cartesian coordinates are used. In each of these neighbourhoods
we use a \emph{geometrical mesh} which becomes finer near the corners and edges. The geometrical
mesh becomes a \emph{quasi-uniform mesh} in the new system of coordinates. Hence \emph{Sobolev's
embedding theorems} and the \emph{trace theorems} for Sobolev spaces are valid for spectral
element functions defined on mesh elements in the new system of variables with a uniform constant.
We then derive \emph{differentiability estimates} in these new sets of variables and a
\emph{stability estimate}, on which our method is based, for a non-conforming $h-p$ spectral
element method.

We choose as our approximate solution the spectral element function which minimizes the sum of
a weighted squared norm of the residuals in the partial differential equations and the squared
norm of the residuals in the boundary conditions in fractional Sobolev spaces and enforce
continuity by adding a term which measures the jump in the function and its derivatives at
inter-element boundaries in fractional Sobolev norms, to the functional being minimized. The
Sobolev spaces in vertex-edge and edge neighbourhoods are anisotropic and become singular at
the corners and edges.

The method is essentially a \emph{least$-$squares collocation} method and a solution can be obtained
using \emph{Preconditioned Conjugate Gradient Method (PCGM)}. To solve the minimization problem we
need to solve the \emph{normal equations} for the \emph{least$-$squares} problem. The residuals in
the normal equations can be obtained without computing and storing \emph{mass} and \emph{stiffness}
matrices.

We solve the normal equations using a block diagonal preconditioner where each block corresponds to
the square of $H^{2}$ norm of the SEF defined on a particular element. Moreover it is shown that there
exists a diagonal preconditioner using separation of variables technique. Let $N$ denote the number of
refinements in the geometrical mesh. We shall assume that $N$ is proportional to $W$.

For problems with Dirichlet boundary conditions the condition number of the preconditioned system
is $O((lnW)^2)$, provided $W=O(e^{N^{\alpha}})$ for $\alpha<1/2$. Moreover there exists a new
preconditioner which can be diagonalized in a new set of basis functions, using separation of
variables techniques, in which each diagonal block corresponds to a different element, and hence
it can easily be inverted on each element. For Dirichlet problems the method requires $O(NlnN)$
iterations of the PCGM to obtain solution to exponential accuracy and it requires $O(N^{5}ln(N))$
operations on a parallel computer with $O(N^{2})$ processors to compute the solution. For mixed
problems with Neumann and Dirichlet boundary conditions the condition number of the preconditioned
system is $O(N^4)$, provided $W=O(e^{N^{\alpha}})$ for $\alpha<1/2$. Hence, it requires $O(N^{3})$
iterations of the PCGM to obtain solution to exponential accuracy and requires $O(N^{7})$ operations
on a parallel computer with $O(N^{2})$ processors to compute the solution.

Computational results for a number of model problems confirm the theoretical estimates obtained
for the error and computational complexity.

\chapcleardoublepage

\thispagestyle{empty}
\chapter*{Synopsis}
\addcontentsline{toc}{chapter}{Synopsis }
\noindent
\begin{center} 
\begin{tabular}{lcl}
\hline
Name of the Student & : & {\bf Akhlaq Husain}
\\
Roll Number & : & {\bf Y4108061}
\\
Degree for which submitted  & : & {\bf Ph.D.}
\\
Department & : & {\bf Mathematics}
\\
Thesis Title & : & {\bf $h-p$ Spectral Element Methods for}
\\
             &  & {\bf Three Dimensional Elliptic Problems}
\\
             &  & {\bf on Non-smooth Domains using}
\\
             &  & {\bf Parallel Computers}
\\
Thesis Supervisors & : & {\bf Dr. Pravir K. Dutt and Dr. A. S.}
\\
             &  & {\bf Vasudeva Murthy}
\\
Month and year of submission & : & {\bf June, 2010}
\\
\hline
\end{tabular}
\end{center}
\vspace{1.0cm}

Elliptic partial differential equations arise in many fields of science and engineering such as
steady state distribution of heat, fluid dynamics, structural/mechanical engineering, aerospace
engineering, medical science and seismology etc. Elliptic boundary value problems in polygonal
and polyhedral domains have been studied in many works in the literature. Among these, problems
on polyhedral domains with corners and edges have become increasingly important in the last two
decades. In many practical situations, the physical domain often has corners and edges either
due to its geometry or created by unions and intersections of simpler objects such as cylinders,
cones and spheres. Hence singularities of the solutions occur at the corners and edges, and
severely affect the regularity of the solutions.

In three dimensions it is well known that the solutions of elliptic boundary value problems have
singular behaviour near the corners and edges of the domain. The singularities which arise are
known as vertex, edge, and vertex-edge singularities. Due to the presence of singularities the
conventional numerical methods are unable to provide accurate numerical solutions and the rate of
convergence of these methods degrades. In order to improve efficiency of computations and accuracy
of the solutions, it is desirable to find efficient methods along with standard numerical
techniques such as finite element method (FEM), spectral element method (SEM) and so on. Different
approaches and methods have been attempted over the years to find accurate solutions to the elliptic
boundary value problems on polyhedrons containing singularities.


We propose a nonconforming $h-p$ spectral element method to solve three dimensional elliptic
boundary value problems on non-smooth domains to exponential accuracy.

To overcome the singularities which arise in the neighbourhoods of the vertices, vertex-edges
and edges we use local systems of coordinates. These local coordinates are modified versions
of spherical and cylindrical coordinate systems in their respective neighbourhoods. Away from
these neighbourhoods standard Cartesian coordinates are used. In each of these neighbourhoods
we use a \emph{geometrical mesh} which becomes finer near the corners and edges. The geometrical
mesh becomes a \emph{quasi-uniform mesh} in the new system of coordinates. Hence \emph{Sobolev's
embedding theorems} and the \emph{trace theorems} for Sobolev spaces are valid for spectral
element functions defined on mesh elements in the new system of variables with a uniform constant.
We then derive \emph{differentiability estimates} in these new sets of variables and a
\emph{stability estimate}, on which our method is based, for a non-conforming $h-p$ spectral
element method.

We choose as our approximate solution the spectral element function which minimizes the sum of
a weighted squared norm of the residuals in the partial differential equations and the squared
norm of the residuals in the boundary conditions in fractional Sobolev spaces and enforce
continuity by adding a term which measures the jump in the function and its derivatives at
inter-element boundaries in fractional Sobolev norms, to the functional being minimized. The
Sobolev spaces in vertex-edge and edge neighbourhoods are anisotropic and become singular at
the corners and edges.

The spectral element functions are represented by a uniform constant at all the corner elements
in vertex neighborhoods and on the corner-most elements in vertex-edge neighbourhoods which are
in the angular direction to the edges. At corner elements which are in the direction of edges
in vertex-edge neighbourhoods and at all the corner elements in edge neighbourhoods the spectral
element functions are represented as one dimensional polynomials of degree $W$ in the modified
coordinates. In all other elements in edge neighbourhoods and vertex-edge neighbourhoods the
spectral element functions are a sum of tensor products of polynomials of degree $W$ in their
respective modified coordinates. The remaining elements in the vertex neighbourhoods and the
regular region are mapped to the master cube and the spectral element functions are represented
as a sum of tensor products of polynomials of degree $W$ in $\lambda_1, \lambda_2$, and $\lambda_3$,
the transformed variables on the master cube.

The method is essentially a \emph{least$-$squares collocation} method and a solution can be obtained
using \emph{Preconditioned Conjugate Gradient Method (PCGM)}. To solve the minimization problem we
need to solve the \emph{normal equations} for the \emph{least$-$squares} problem. The residuals in
the normal equations can be obtained without computing and storing \emph{mass} and \emph{stiffness}
matrices.

We choose spectral element functions (SEF) which are non-conforming. We solve the normal equations
using a block diagonal preconditioner where each block corresponds to the square of $H^{2}$ norm of
the SEF defined on a particular element.
Let $N$ denote the number of refinements in the geometrical mesh. We shall assume that $N$ is
proportional to $W$.

For problems with Dirichlet boundary conditions the condition number of the preconditioned system
is $O((lnW)^2)$, provided $W=O(e^{N^{\alpha}})$ for $\alpha<1/2$. Moreover there exists a new
preconditioner which can be diagonalized in a new set of basis functions, using separation of
variables techniques, in which each diagonal block corresponds to a different element, and hence
it can easily be inverted on each element. For Dirichlet problems the method requires $O(NlnN)$
iterations of the PCGM to obtain solution to exponential accuracy and it requires $O(N^{5}ln(N))$
operations on a parallel computer with $O(N^{2})$ processors to compute the solution. For mixed
problems with Neumann and Dirichlet boundary conditions the condition number of the preconditioned
system is $O(N^4)$, provided $W=O(e^{N^{\alpha}})$ for $\alpha<1/2$. Hence, it requires $O(N^{3})$
iterations of the PCGM to obtain solution to exponential accuracy and requires $O(N^{7})$ operations
on a parallel computer with $O(N^{2})$ processors to compute the solution.



We mention that once we have obtained our approximate solution consisting of non-conforming
spectral element functions we can make a correction to it so that the corrected solution is
conforming and is an exponentially accurate approximation to the actual solution in the $H^1$
norm over the whole domain.

Our method works for non self-adjoint problems too. 
Computational results for a number of model problems on non-smooth domains with constant and variable
coefficients having smooth and singular solutions are presented 
which confirm the theoretical estimates obtained for the error and computational complexity.

For mixed problems rapid growth of the factor $N^{4}$ creates difficulty in parallelizing the numerical
scheme. To overcome this difficulty another version of the method may be defined in which we choose
spectral element functions to be conforming only on the wirebasket of the elements and non-conforming
elsewhere. The values of the spectral element functions at the wirebasket of the elements constitute
the set of common boundary values and an accurate approximation to the Schur complement matrix can be
computed. We plan to consider this in future work.

\chapcleardoublepage

\markboth{\small Table of  Contents}{\small Table of  Contents}
\addcontentsline{toc}{chapter}{Contents }
\thispagestyle{empty}
\tableofcontents
\chapcleardoublepage

\markboth{\small List of Figures }{\small List of Figures }
\addcontentsline{toc}{chapter}{List of Figures }
\thispagestyle{empty}
\listoffigures
\chapcleardoublepage

\markboth{\small List of Tables }{ \small List of Tables}
\addcontentsline{toc}{chapter}{List of Tables}
\thispagestyle{empty}
\listoftables
\chapcleardoublepage

\baselineskip   22.5pt 

\pagenumbering{arabic}

\chapter{Introduction}
Many stationary phenomena in science and engineering are modelled by
elliptic boundary value problems for instance, the elasticity
problem on polyhedral domains. Usually we do not have a closed form
solution of the problem. So we frequently require the numerical
solution of these problems. In structural mechanics the physical
domain often has edges, vertices, cracks and interface between
different materials. It is well known that the solutions of these
problems develop singularities due to the presence of corners and
edges in a three-dimensional domain. In the presence of
singularities, the standard numerical methods such as finite element
method (FEM) and finite difference method (FDM) fail to provide
accurate solutions and efficiency of computations. The situation is
even worse if the singularity is more severe, for example, a
vertex-edge singularity. As a result the approximation becomes
difficult and inefficient and the conventional numerical methods
yield poor convergence results for solutions to these problems. In
order to have reliable and economical approximate solutions with
optimal rate of convergence, it is desirable to find efficient and
accurate numerical techniques.

We propose \textbf{$h-p$ Spectral Element Methods for Three Dimensional Elliptic Problems on
Non-smooth Domains using Parallel Computers}. In this chapter, we give a brief review of the
existing numerical methods for such problems and discuss their computational complexity.

\section{Spectral Methods}
Spectral Element Methods (SEM) are a class of spatial discretizations that can be utilized
for solving partial differential equations. Spectral methods are one of the most accurate
methods for solving partial differential equations, among others namely Finite Difference
Methods (FDM) and Finite Element Methods (FEM).

Spectral methods are considered as higher order finite element methods due to their so called
\textit{spectral/exponential} accuracy and use of high order polynomials for computing numerical
solution. The very high accuracy of spectral methods allows us to treat problems which would
require an enormous number of grid points by finite difference or finite element methods with
much fewer degrees of freedom.

Spectral methods were proposed by Blinova~\cite{BL} and first
implemented by Silberman~\cite{SL}, but abandoned in the mid-1960s.
Orszag~\cite{ORZ} and Eliasen \emph{et al.}~\cite{EMR}
resurrected them again. The formulation of the theory of modern
spectral methods was first presented in the monograph by Gottlieb
and Orszag~\cite{GO} for the numerical solution of partial
differential equations. Multi-dimensional discretizations were
formulated as tensor products of one-dimensional constructs in
separable domains. Since then spectral methods were extended to a
broader class of problems and thoroughly analyzed in the 1980s and
entered the mainstream of scientific computation in the 1990s. The
text book of Canuto \emph{et al.}~\cite{CHQZ1} focuses on fluid
dynamics algorithms and includes both practical as well as
theoretical aspects of global spectral methods. A companion book
\textit{Spectral Methods, Fundamentals in Single Domains} by Canuto
\emph{et al.}~\cite{CHQZ2} is focused on the essential aspects of
spectral methods on separable domains. The book \textit{Spectral/hp
Element Methods for Computational Fluid dynamics}, by Karniadakis
and Sherwin~\cite{KS}, deals with many important practical aspects
of computations using spectral methods and summarizes the recent
research in the subject. In the latest book by Bochev and
Gunzburger~\cite{BOGU} the least-squares finite element method
(LSFEM) for elliptic problems have been described.

The first practitioners of spectral methods were meteorologists studying global weather modelling
and fluid dynamicists investigating isotropic turbulence. The original idea was to use truncated
Fourier series to approximate the (smooth) solution when the problem was specified with (mostly)
periodic boundary conditions. In order to tackle problems with more general boundary conditions
(Dirichlet or Neumann type), the set of (algebraic) polynomials replaced the set of truncated
series, but the characterization of the unique discrete function that would provide the numerical
solution was still achieved following the original strategy.

The key components for spectral methods are the \emph{trial functions} (also called the expansion or
approximating functions) and the \emph{test functions} (also known as weight functions). The trial
functions, which are linear combinations of suitable trial basis functions, are used to provide
the approximate representation of the solution. The test functions are used to ensure that the
differential equation and some boundary conditions are satisfied as closely as possible by the
truncated series expansion. This is achieved by minimizing, with respect to a suitable norm, the
residual produced by using the truncated expansion instead of the exact solution.
For this reason they may be viewed as a special case of the method of weighted residuals
(Finlayson and Scriven, \cite{FIN}). An equivalent requirement is that the residual satisfy
a suitable orthogonality condition with respect to each of the test functions. From this
perspective, spectral methods may be viewed as a special case of Petrov-Galerkin methods
(Zienkiewicz and Cheung \cite{ZIE}, Babu\v{s}ka and Aziz \cite{AZIZ}).

The most frequently used approximation functions (trial functions) are trigonometric polynomials,
Chebyshev polynomials, and Legendre polynomials. Generally, trigonometric polynomials are used
for periodic problems whereas Chebyshev and Legendre polynomials for non-periodic problems.
Laguerre polynomials are used for problems on semi-infinite domains and Hermite polynomials
for problems on infinite domains.

Boyd~\cite{BOY} contains a wealth of detail and advise on spectral algorithms and is an especially
good reference for problems on unbounded domains and in cylindrical and spherical co-ordinate systems.
A thorough analysis of the theoretical aspect of spectral methods for elliptic equations was provided
by Bernardi and Maday~\cite{BER}.

\section{Types of Spectral Methods}
Spectral methods can be broadly classified into two categories: the pseudo-spectral or collocation
methods and the Galerkin methods~\cite{KS}. The choice of the trial functions distinguishes between
the three early versions of spectral methods, namely, the \textit{collocation, Galerkin} and
\textit{tau} versions~\cite{CHQZ2}.

\subsection{Collocation method}
In the collocation approach the test functions are translated Dirac delta-functions centered at
special, so-called \textit{collocation points}. This approach requires the differential equation
to be satisfied exactly at the collocation points. Of course the choice of the set of collocation
points is of fundamental importance for the accuracy of the method and the number of collocation
points must be equal to the dimension of the space of approximation. Otherwise, the problem could,
in general, be over- or under-specified. The collocation points for both the differential equations and the
boundary conditions are usually the same as the physical grid points. The most effective choice
for the grid points are those that correspond to quadrature formulae of maximum precision.

The collocation approach appears to have been first used by Slater~\cite{SLAT} and by
Kantorovic~\cite{KANT} in specific applications. Frazer \emph{et al.}~\cite{FRAZ} developed
it as a general method for solving ordinary differential equations. They used a variety of trial
functions and an arbitrary distribution of collocation points. The work of Lanczos~\cite{LANC}
established for the first time that a proper choice of trial functions and distribution of
collocation points is crucial to the accuracy of the solution. The earliest applications of
the spectral collocation method to partial differential equations were made for spatially
periodic problems by Kreiss and Oliger~\cite{KRIE} (who called it the Fourier method) and
Orszag~\cite{ORSZ2} (who termed it pseudo-spectral). Here we choose different spaces of test
functions and trial functions. We approximate $u$ by
\begin{equation}\label{eq1.2.1}
u_N(x)=\sum_{i=0}^{N} a_i\phi_i(x)
\end{equation}
where ${\{\phi_i\}}_i$ is the space of trial functions.
\newline
We let the space of test functions ${\{\psi_i\}}_i$ be different and impose the orthogonality
condition
\begin{equation}\label{eq1.2.2}
(\mathcal{L} u_{N},\psi_i)=0\qquad\text{for all}\; i.
\end{equation}
Now if we choose $\psi_i=\delta(x-x_i)$ for a suitably chosen set of points ${\{x_i\}}_i$ we
obtain the set of equations
\begin{equation}\label{eq1.2.3}
(\mathcal{L} u_{N})(x_i)=0\qquad\text{for all}\; i.
\end{equation}
The points $x_i$ are chosen as the quadrature points of a Gaussian integration formula.

\subsection{Galerkin method}
The Galerkin approach~\cite{GALER}, enjoys the aesthetically pleasing feature that the trial
and the test functions are the same, and the discretization is derived from a weak form of the
mathematical problem. The test functions are, therefore, infinitely smooth functions that
individually satisfy some or all of the boundary conditions. The differential equation is
enforced by requiring that the integral of the residual times each test function be zero, after
some integration-by-parts, accounting in the process for any remaining boundary conditions. Finite
element methods customarily use this approach. Moreover, the first serious application of spectral
methods to PDE's $-$ that of Silberman~\cite{SL} for meteorological modeling $-$ was a Galerkin
method. The basic idea is to assume that the unknown function $u(x)$ can be approximated by a sum
of $N+1$ basis functions $\phi_{n}(x)$:
\begin{equation}\label{eq1.2.4}
u_N(x)=\sum_{i=0}^{N} a_i\phi_i(x).
\end{equation}
When this series is substituted into the equation
\begin{equation}\label{eq1.2.5}
\mathcal{L}u(x)=f(x)
\end{equation}
where $\mathcal{L}$ is a differential (or integral) operator, the result is the so-called
\textit{residual function} defined by
\begin{equation}\label{eq1.2.6}
\mathcal{R}^{N}(x)=\mathcal{L}u_{N}(x)-f(x).
\end{equation}
Since the residual function $\mathcal{R}^{N}(x;a_{i})$ is identically equal to zero for the exact
solution, the challenge is to choose the series coefficients $\{a_{n}\}$ so that the residual
function is minimized.
\noindent
\subsection{Tau method}
The spectral tau methods, introduced by Lanczos~\cite{LANC}, are similar to Galerkin methods in
the way differential equation is enforced. However, none of the test functions need satisfy the
boundary conditions. Hence, a supplementary set of equations is used to apply boundary conditions.
Tau methods may be viewed as a special case of the so-called Petrov-Galerkin method and are
applicable to problems with nonperiodic boundary conditions.

\section{Spectral Methods on Non-smooth Domains}
We now present a brief summary of the Section \textit{Non-Smooth Domains} in the Chapter
\textit{Diffusion Equation} of~\cite{KS}.

Unlike finite element and finite difference methods, the order of convergence of spectral methods
is not fixed and it is related to the maximum regularity (smoothness) of the solution. Spectral
methods give exponential or \textit{spectral convergence} if the solution is very smooth, i.e.
possessing a high degree of regularity. In practice, exponential convergence implies that as the
number of collocation points is doubled, the error in the numerical solution decreases by at least
two orders of magnitude and not a fixed factor as in low-order methods. However, this fast
convergence is easily lost if the solution has finite regularity or if the domain is irregular.
For example, the solution of a Helmholtz/Poisson's equation may be singular~\cite{G}.

This singularity in the solution may arise due to non-smooth domains or due to discontinuity in the
boundary conditions, or in the specified data (e.g. forcing). Here as in~\cite{G}, we assume that
all the data, as well as the boundary conditions, are smooth/analytic and that singularities are only
due to non-smoothness of the domain. First derivatives are unbounded when the angle is reflexive or
convex, and the second derivatives are unbounded when the angle is acute or obtuse. In this case, not
only the fast convergence of spectral/$hp$ discretization be destroyed, but also the numerical solution
obtained (with any standard method) may be erroneous. In general, theoretical results in three dimensions
for vertex, edge and combined vertex-edge singularities are more difficult to obtain, but work by Guo,
Babu\v{s}ka and others~\cite{BG4,BG5,BG1,BG2,BG3,G1,GOH1} has addressed these issues.

To proceed further, we consider the domain shown in Figure \ref{fig1.0} with the corner located at the
origin and with one side of the corner aligned along $x_{1}$ axis, while the other is at an angle
$\alpha\pi$, $0<\alpha<2$, in the counterclockwise direction. The solution in polar coordinates may be
expressed as
\[u(r,\theta)\propto r^{\beta}\zeta(\theta)\chi(r).\]
Here $\zeta(\theta)$ is an analytic function and $\chi(r)$ is a smooth cut-off function.
\begin{figure}[!ht]
\centering
\includegraphics[scale = 0.55]{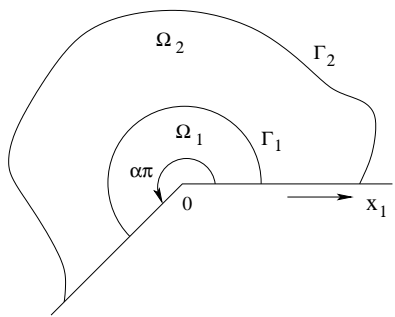}
\caption{Computational domain $\Omega$ containing a corner of angle $\theta=\alpha\pi$.}
\label{fig1.0}
\end{figure}
In this case, it is known that the spectral element solution $\tilde{u}(r,\theta)$ computed with
polynomial order $p$ in each element, satisfies
\[||u(r,\theta)-\tilde{u}(r,\theta)||_{1}\leq Cp^{-2\beta-\epsilon}\:,\]
where $\epsilon>0$ and the exponent $\beta$ depends on the angle. For many problems $\beta=1/\alpha$
(where $\theta=\alpha\pi$), and thus the convergence rate lies between
$\mathcal{O}(p^{-1})$ and $\mathcal{O}(p^{-2})$.

There are three main methods that allow us to recover, if not exponential, at least very high accuracy
for most elliptic problems. They make use of the following:
\begin{enumerate}
\item The method of gradual $h$-refinement.
\item The Method of Auxiliary Mapping (MAM) or Conformal mapping to smooth out the singularity.
\item Use of extra basis functions that contain the form of singularity referred to as
      \textit{space enrichment methods}. These require knowledge of eigenpair representation
      of the solution.
\end{enumerate}

\subsection{Gradual $h$ refinement}
This approach requires the application of good discretization strategy which is usually done using
radial or geometrically refined meshes (see~\cite{BS}) and \textit{quasi-uniform meshing}. A geometric
progression has been found to be effective with a ratio of $(\sqrt{2}-1)^{2}\approx0.17$, independent
of the strength $\beta$ of the singularity~\cite{GB,SZAB}. However, in practice a value of $0.15$ is
typically adopted.

\subsection{Method of auxiliary or conformal mapping}
Babu\v{s}ka and Oh~\cite{BABU} introduced a new approach called the Method of Auxiliary Mapping (MAM),
to deal with two dimensional elliptic boundary value problems containing corner singularities. In two
dimensions the method adopts an auxiliary mapping of the type $\phi(z)=z^{1/\alpha}$ that maps a
region where the solution is singular to a region where the transformed solution has a higher regularity.
Thus, on the mapped domain true solution has better approximation properties. The method gives highly
accurate numerical solutions without destroying the standard band structure of FEM and without increasing
the number of degrees of freedom.
The mapping is determined by the known nature of the singularity in such a way that the exact solution
of lower regularity can be transformed to a function of higher regularity. The MAM, implemented in the
$p$-version of the FEM, has proven to be successful in dealing
with all prominent singularities arising in the two dimensional case~\cite{BABU,OH1,OH2,LUCA}.
Benchmark comparisons with other well known numerical methods, reported in~\cite{LUCA}, show that
MAM is more efficient than other numerical methods.

The MAM was successfully extended and implemented for elliptic PDEs on non-smooth domains in
$\mathbb{R}^{3}$ by Guo and Oh~\cite{GOH1} and Lee \textit{et al.}~\cite{LEE}. However, unlike
the two dimensional case, in $R^{3}$ there are three different types of singularities, the vertex,
the edge and the vertex-edge. The solutions of three dimensional elliptic problems are anisotropic
in the neighbourhoods of edges and vertex-edges. Thus, the MAM techniques in three dimensions are
different from the two dimensional counterpart in theory as well as computation strategies. The
numerical results for the Poisson equations containing the vertex, the edge and the vertex-edge
singularities provided in~\cite{GOH1,LEE} show the effectiveness of the MAM in three dimensions.

We will now describe the method of auxiliary mapping for three different equations, namely
\textit{Laplace, Poisson} and \textit{Helmholtz} equation in two and three dimensional domains.
\newline
{\bf{\textit{Laplace equation in two-dimensional domains}}}
\newline
\noindent
Let us consider Laplace's equation $\Delta u=0$ with homogeneous boundary conditions, i.e. $u=0$
on $\partial\Omega$. In general, in the neighbourhood of the corner, the solution can be expressed
as~\cite{KOND}
\begin{equation}\label{eq1.7}
u(r,\theta)=\sum_{k=0}^{\infty}a_{k}\phi_{k}(r,\theta)
\end{equation}
where the coefficients $a_{k}$ are determined by the boundary conditions and
\begin{align}
\phi_{k}(r,\theta)&=\left\{\begin{array}{ccc}
r^{k/\alpha}\sin\left(\frac{k}{\alpha\theta}\right),\qquad \frac{k}{\alpha}
\:\mbox{is not an integer}\\
r^{k/\alpha}\left[\ln r\sin\left(\frac{k}{\alpha\theta}\right)
+\theta\cos\left(\frac{k}{\alpha}\right)
\right], \quad \frac{k}{\alpha}\:\mbox{is an integer}\:.
\end{array}\right.
\end{align}
For homogeneous Neumann boundary conditions the sin term is replaced by $\cos\left(k/\alpha\theta\right)$,
and for a problem with Dirichlet condition on one side and Neumann condition on the other side, it is
replaced by $\sin\left\{(k/\alpha)(\theta/2)\right\}$. Assuming that the logarithmic term does not
contribute to the solution, the mapping $z=\xi^{\alpha}$, where $z=re^{\dot{\iota}\theta}$, $\alpha=\pi/\omega$
(here $\omega$=sectoral angle), shown in Figure \ref{fig1.1},
makes the transformed solution analytic in terms of the new variables, thus
\begin{figure}[!ht]
\centering
\includegraphics[scale = 0.60]{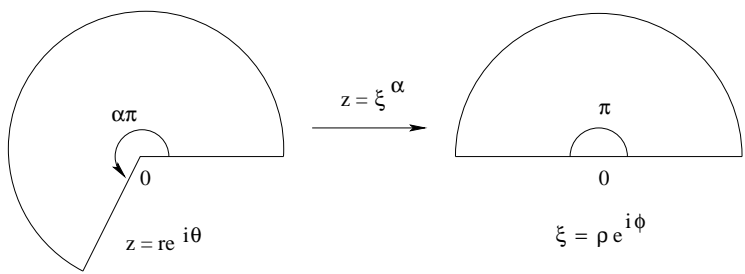}
\caption{Auxiliary mapping of a domain containing a corner to a domain with no corners.}
\label{fig1.1}
\end{figure}
\begin{equation}
u(r,\theta)=\sum_{k=0}^{\infty}a_{k}r^{k/\alpha}\sin\left(\frac{k}{\alpha\theta}\right)
\mapsto u(\rho,\phi)=\sum_{k=0}^{\infty}a_{k}\rho^{k}\sin(k\phi).\notag
\end{equation}
For the spectral/$hp$ element discretization this method was first implemented in~\cite{BABU}.
\newline
{\bf{\textit{Laplace equation in three-dimensional domains}}}
\newline
The solution of the Laplace equation in three dimensions, in the vicinity of the singularities,
can be decomposed into three different forms, depending whether it is in the neighbourhood of a
\textit{vertex}, an \textit{edge} or \textit{vertex-edge} (an intersection of the vertex and edge).
A three dimensional domain $\Omega$, is shown in Figure \ref{fig1.2}, which contains typical
three-dimensional singularities.
\begin{figure}[!ht]
\centering
\includegraphics[scale = 0.70]{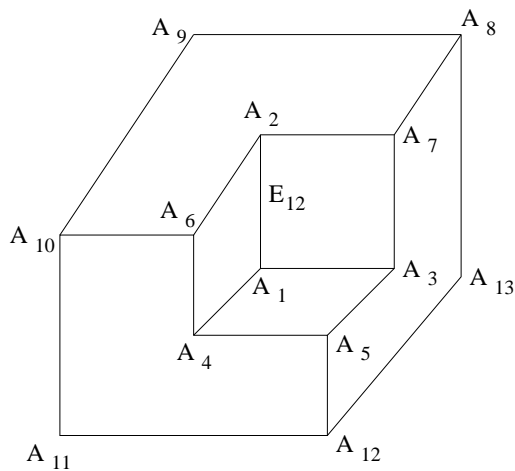}
\caption{Typical three-dimensional singularities.}
\label{fig1.2}
\end{figure}
Vertex singularities arise in the neighbourhoods of the vertices $A_{i}$, and edge singularities
arise in the neighbourhoods of the edges $E_{ij}$ ($E_{ij}$ is the edge joining vertices $A_{i}$
and $A_{j}$). Close to the vertex-edge intersection, vertex-edge singularities arise.
We assume that $\Omega$ contains only straight edges, for curved edges, we refer to~\cite{COST}.
\newline
\textbf{Neighbourhoods of singularities caused by non-smoothness of domain}
\newline
First, we give a brief introduction to the neighbourhoods of vertices, edges and vertex-edges.
We shall give a complete description of each of them in Chapter $2$. Throughout, we shall assume
that, $(x_1,x_2,x_3)$, $(\rho,\theta,\phi)$ and $(r,\theta,z)$ denote the usual Cartesian,
spherical and cylindrical coordinates respectively of points in $\Omega$.

Let $\Gamma_{i}$, $i\in\mathcal{I}=\{1,2,\cdots,I\}$ be the (open) faces,
$E_{ij}=\Gamma_{i}\cap\Gamma_{j}$ be the edges and $A_k$, $k\in\mathcal{K}=\{1,2,\cdots,K\}$ be
the vertices of $\Omega$. We also denote a vertex by $v$ and an edge by $e$. Without loss of
generality, we assume that the vertex $v$ is located at the origin and the edge coincides with
the positive $z$-axis.

By $\Omega^{e}$ we denote an \textit{edge-neighbourhood} of the edge $e=\{(r,\theta,z)|
r=0,0<z<l_{e}\}$ of length $l_{e}$, which is defined by
\begin{equation}
\Omega^{e}=\{(r,\theta,z)\in\Omega| 0<r<\epsilon,\delta_{v}<z<l_{e}-\delta_{v}\}\:.
\end{equation}
Here $\epsilon<1$ and $\delta_{v}<1$ are selected so that
$\Omega^{e}\cap\overline{\Gamma}_{i}=\emptyset$
for those faces $\Gamma_{i}$ which do not contain the edge $e$.

Consider a \textit{neighbourhood} $\Omega_{\rho_{v}}^{v}$ of the vertex $v$(=the origin) defined
by
\[\Omega_{\rho_{v}}^{v}=\{(\rho,\phi,\theta)\in\Omega|\rho<\rho_{v}\}.\]
Here $0<\rho_{v}<1$ is chosen so that $\Omega_{\rho_{v}}^{v}$ has a void intersection with
those edges which do not pass through the vertex $v$. Now we decompose $\Omega_{\rho_{v}}^{v}$
into a vertex neighbourhood and several vertex-edge neighbourhoods. We define, a
\textit{vertex-edge neighbourhood} of the vertex $v$ and edge $e$ as:
\[\Omega^{v-e}=\{(\rho,\phi,\theta)\in\Omega_{\rho_{v}}^{v}|0<\phi<\phi_{v}\}\]
where, $0<\phi_{v}$ and $\rho_{v}<1$ are selected so that
$\overline{\Omega}^{v-e_{1}}\cap\overline{\Omega}^{v-e_{2}}=v$ for distinct edges $e_{1}$ and
$e_{2}$ having $v$ as a common vertex.

A \textit{vertex-neighbourhood}, $\Omega^{v}$, of the vertex $v$ is defined by
\begin{equation}
\Omega^{v}=\Omega_{\rho_{v}}^{v}\setminus\underset{e\in\mathcal{E}^{v}}{\bigcup}\Omega^{v-e}\:,
\end{equation}
where, $\mathcal{E}^{v}$ denote the set of all edges passing through the vertex $v$.
\newline
\textbf{Intensities of vertex, edge and vertex-edge singularities}
\newline
We now give a short outline of the singular solution decomposition in the neighbourhood of a
vertex, an edge, or at a vertex-edge intersection. For corresponding computational results
see~\cite{GOH1,LEE,YOSI2}. For simplicity, we assume that in the vicinity of vertices or edges
of interest, homogeneous boundary conditions are imposed. The following asymptotic expansions of
the true solution are known:
\newline
{\bf{\textit{Vertex singularities}}}
\newline
In a vertex neighbourhood $\Omega^{v}$ of the vertex $v$, the solution has the vertex singularity
and admits an expansion of the form
\begin{equation}
u(\rho,\phi,\theta)=\sum_{l=1}^{L}a_{l}\rho^{\gamma_{l}}h_{l}(\phi,\theta)+v(\rho,\phi,\theta)\:,
\end{equation}
where $(\rho,\phi,\theta)$ denotes usual spherical coordinates with origin at the vertex $v$,
$v(\rho,\phi,\theta)\in H^{2}(\Omega^{v})$, $h_{l}(\phi,\theta)$ are analytic functions of $\phi$
and $\theta$ and are referred to as vertex eigenfunctions, and $\gamma_{l}$ is a positive real
number (see~\cite{BEAW,GRIS1,G}). In other words, in $\Omega^{v}$, the true solution has a
singularity of type $\rho^{\lambda_{1}}$, where $\lambda_{1}<1$.
\newline
{\bf{\textit{Edge singularities}}}
\newline
In an edge neighbourhood $\Omega^{e}$ of the edge $e$, the solution has the edge singularity and
can be decomposed as follows:
\begin{equation}
u(r,\theta,z)=\sum_{m=1}^{M}a_{m}(z)r^{\beta_{m}}h_{m}(\theta)+v(r,\theta,z)\:,
\end{equation}
where $(r,\theta,z)$ denotes usual cylindrical coordinates, and the functions $a_{m}(z)$ are
analytic in $z$. The functions $v(r,\theta,z)\in H^{2}(\Omega^{e})$, $h_{m}(\theta)$ are also
analytic and $h_{m}(\theta)$ are referred to as edge eigenfunctions (see~\cite{BEAW,GRIS1,WALK}).
In other words, in $\Omega^{e}$, the true solution has a singularity of type $r^{\lambda_{2}}$,
where $\lambda_{2}<1$.
\newline
{\bf{\textit{Vertex-edge singularities}}}
\newline
The most complicated decomposition of the solution arises in the case of a vertex-edge intersection.
For example, let us consider the neighbourhood where the edge $e$ approaches the vertex $v$. 
The resulting vertex-edge neighbourhood is denoted by $\Omega^{v-e}$, and the solution has a
singularity caused by the combination of edges and a vertex and has an expansion of the form
\begin{align}
u(\rho,\phi,\theta) & =\sum_{m=1}^{M}\left(\sum_{l=1}^{L}a_{ml}\rho^{\gamma_{l}}+f_{m}(\rho)\right)
(\sin\phi)^{\beta_{m}}g_{m}(\theta) \notag \\
& +\sum_{l=1}^{L}a_{l}\rho^{\gamma_{l}}h_{l}(\phi,\theta)+v(\rho,\phi,\theta)\:,
\end{align}
where the functions $f_{m}(\rho)$ are analytic in $\rho$, and $g_{m}(\theta)$, $h_{l}(\phi,\theta)$
and $v(\rho,\phi,\theta)\in H^{2}(\Omega^{v-e})$ are also analytic (see~\cite{GRIS1,G,GRIS2,WALK}).
That is, in $\Omega^{v-e}$, the true solution has a singularity of type
$\rho^{\lambda_{3}}(\sin\phi)^{\lambda_{4}}$, where $\lambda_{3}<1$ and $\lambda_{4}<1$.
\newline
{\bf{\textit{Poisson equation}}}
\newline
The situation becomes more complicated when a forcing function is introduced as in Poisson's
equation
\[-\Delta u=f(x)\:.\]
The convergence achieved through auxiliary mapping is no better than algebraic because the
solution may not be analytic after the mapping. Typically, we decompose the solution into a
homogeneous part $u^{\mathcal{H}}$, which has the singularity, and a particular part
$u^{\mathcal{P}}$, which depends on the forcing. Complications arise due to particular part
since, even if it is smooth in the original domain, it may be singular after the transformation.
For instance, consider the following transformed equation which has a singular forcing function
\begin{equation}
\Delta u=\alpha^{2}\rho^{2\alpha-2}f(\rho,\phi)\:.\notag
\end{equation}
Given the $\mathcal{O}(\rho^{-2\alpha})$ singularity, the spectral element convergence is of
the order $\mathcal{O}(\rho^{-4\alpha-\epsilon})$ for any $\epsilon>0$. In order to enhance
the convergence, we separate the two contributions so that we have an analytic contribution
in the $z$-plane and an analytic contribution in the $\xi$-plane.
\newline
{\bf{\textit{Helmholtz equation}}}
\newline
For the Helmholtz equation
\begin{equation}
\Delta u-\lambda u=f(x), \lambda>0\:,\notag
\end{equation}
the conformal mapping is an effective way of improving convergence,
although exponential convergence cannot be fully recovered. The
auxiliary mapping $z=\xi^{\alpha}$, $\alpha=\omega/\pi$,
$0<\alpha<2$ converts the Helmholtz equation to
\begin{equation}
\Delta u-\lambda\alpha^{2}\rho^{2\alpha-2}u=\alpha^{2}\rho^{2\alpha-2}f\:.\notag
\end{equation}
In terms of the original variables, the solution around the corner is
\begin{equation}
u(r,\theta)=\sum_{k=0}^{\infty}a_{k}I_{k/\alpha}(\sqrt\lambda r)\sin\left(\frac{k}{\alpha\theta}\right),
\notag
\end{equation}
for $k/\alpha$ not an integer, where $I(z)$ represent the modified Bessel function of the first kind.
After application of the mapping, the solution has the form
\begin{equation}
u(r,\theta)=\sum_{k=0}^{\infty}a_{k}\rho^{k}\sin(k\theta)\sum_{j=0}^{\infty}c_{j}\rho^{2j\alpha}\:,\notag
\end{equation}
with a leading singular term of order $\rho^{1+2\alpha}$. Therefore, the estimated convergence
rate is $\mathcal{O}(\rho^{-2-4\alpha-\epsilon})$, which in practise, is adequately fast, though
algebraic.

Similar to the Poisson equation, in three dimension there are three
different types of singularities namely, the vertex, the edge and
the combined vertex-edge. Furthermore, solutions of elliptic
problems are anisotropic in the neighbourhood of edges and
vertex-edges. However, it is possible to obtain explicitly the form
of such singularities~\cite{G1}, and thus the auxiliary mapping
technique can be effectively used in three dimensions. The
difference is that specific auxiliary mappings are required to
handle each type of singularity. Lee \textit{et al.}~\cite{LEE} have
obtained and implemented these mappings successfully for treating
all three types of singularities. The effectiveness of this method
was demonstrated on two polyhedral domains. The results with the
mapping method were superior to those obtained with the $p$-version
of FEM~\cite{SZAB} or other low-order methods.
\newline
{\bf{\textit{Singular Basis}}}
\newline
An alternative approach to using the auxiliary mapping is to use a set of supplementary basis functions
which have the leading behavior of the singularity in conjunction with the smooth basis $\Phi_{k}(x)$.
For the Helmholtz equation the leading-order singular terms are
\[r^{1/\alpha}, r^{2/\alpha} (\alpha>1/2), r^{3/\alpha} (\alpha>1),\cdots,\]
which can be included into the expansion basis. However, we can do even better by supplementing the
standard basis in the \textit{mapped} domain. The transformed solution is then
\begin{equation}
u(\rho,\phi)=\sum_{k=1}^{\infty}a_{k}I_{k/\alpha}(\sqrt\lambda\rho^{\alpha})\sin(k\phi)
=\sum_{k=1}^{\infty}\sum_{l=0}^{\infty}b_{kl}\rho^{k+2l\alpha}\sin(k\phi),
\end{equation}
and thus the leading singularities are weaker, i.e.,
\[\rho^{1+2\alpha}, \rho^{2+2\alpha} (\alpha>1/2), \rho^{1+4\alpha},\cdots\:.\]
A comparison of the effect of adding supplementary basis terms in the original and the transformed
domain is given in~\cite{KS,PATH}. It has been shown in~\cite{PATH} that with one or two terms included
in the transformed domain, a very fast convergence is obtained. To achieve the exponential accuracy we
need to include higher order terms but in the limiting case the system becomes ill-conditioned as the
augmented basis $\{\Phi_{1},\Phi_{2},\cdots,\Phi_{N},\varphi_{1},\varphi_{2},\cdots,\varphi_{M}\}$
is nearly linearly dependent if many singular functions are used and as a result the stiffness matrix
is ill-conditioned.
\newline
{\bf{\textit{Eigenpair representation: Steklov formulation}}}
\newline
A more recent method that treats singular solutions of both scalar and vector elliptic problems in
the neighbourhood of corners was presented for two-dimensional domains by Yosibash and
Szab\'{o}~\cite{SZAB2,YOSI3} and for three-dimensional domains by Yosibash in~\cite{YOSI1,YOSI2}.
As we have already seen, such singular solutions are characterized by the form $u=r^{\beta}h(\theta)$
close to the corner.

Let us illustrate the basic characteristics of the solution to the
Laplace problem over a two-dimensional domain as shown in Figure
\ref{fig1.1}. The Laplace equation over $\Omega_{2}$, with Dirichlet
boundary conditions on its boundary, is cast in cylindrical
coordinates as follows:
\begin{equation}\label{eq1.15}
\Delta u=\frac{\partial^{2}u}{\partial r^{2}}+\frac{1}{r}\frac{\partial u}{\partial r}
+\frac{1}{r^{2}}\frac{\partial^{2}u}{\partial\theta^{2}}\quad \mbox{in}\quad \Omega_{2}\:.
\end{equation}
The solution in the vicinity of the singular point is sought by separation of variables. We denote by
$h^{+}(\theta)$ and $h^{-}(\theta)$ the functions associated with the positive and negative values of
$\beta$ respectively. Although for the Laplace equation $h^{+}(\theta)\equiv h^{-}(\theta)$, for a
general elliptic equation this is not the case. Thus, the solution to (\ref{eq1.15}) admits the expansion
\begin{equation}\label{eq1.16}
u=\sum_{i=1}^{\infty}a_{i}r^{\beta_{i}}h_{i}^{+}(\theta),\: h_{i}^{+}(\theta)=\sin(\beta_{i}\theta),
\:\mbox{with}\:\: \beta_{i}=\frac{i}{\alpha}\:,
\end{equation}
where $\beta_{i}$ and $h_{i}(\theta)$ denote \textit{eigenpairs},
and these are determined uniformly by the geometry and boundary
conditions in the neighbourhood of the singular point. Notice that
if $\beta_{i}<1$ then the corresponding $i$th term in the expansion
(\ref{eq1.16}) for $\nabla u$ is unbounded as $r\rightarrow 0$. We
say that $u$ is singular at $0$ if $\nabla u$ tends to infinity as
$r\rightarrow 0$. The solution $u$ in (\ref{eq1.16}) is therefore
singular at $0$ if $\alpha>1$ for $i=1,\ldots,\emph{finitely many}$.
The coefficients $a_{i}$ depend on the boundary conditions away from
the singular points as well as the forcing term in the Poisson
equation.

For general singular points, analytical computation of eigenpairs is not practical, and numerical
approximations are usually sought. One of the most robust and efficient methods for the computation
of eigenpairs is the modified Steklov method, developed by Yosibash~\cite{YOSI2} and Yosibash and
Szab\'{o}~\cite{YOSI3}.

\subsection{Advantages and applications of spectral methods: A survey}
We now briefly describe important features of spectral methods that should be considered in
their formulation and application~\cite{GO}.
\begin{enumerate}
\item \textit{Rate of convergence} : If the solution to a problem is analytic then a properly
designed spectral method gives exponential accuracy in contrast to FDM and FEM which yield
finite-order rates of convergence. The important consequence is that spectral methods can
achieve high accuracy with little more resolution than is required to achieve moderate accuracy
in FDM and FEM.
\item \textit{Efficiency} : In order to be useful the spectral method should be as efficient
as difference methods with comparable number of degrees of freedom. The development of
collocation (pseudospectral) and Galerkin methods permits spectral methods to be implemented
with comparable efficiency to that of FDM with the same number of independent degrees of freedom.
As the required accuracy increases, the attractiveness of spectral methods increase.
\item \textit{Boundary conditions} : The mathematical features of spectral methods follow very
closely those of the partial differential equation being solved. Thus the boundary conditions
imposed on spectral approximations are normally the same as those imposed on the differential
equation. In contrast, FDM of higher order than the differential equation require additional
``boundary conditions". Many of the complications of finite-order FDM disappear with the
infinite-order-accurate spectral methods.

Another aspect of the treatment of boundary conditions by spectral methods is their high resolution
of boundary layers. For details we refer to~\cite{GO,ORSZ3}.
\item \textit{Bootstrap estimation of accuracy} : It is often possible to estimate the accuracy
of spectral computations by examination of the shape of the spectrum. Thus, in computations of
three-dimensional incompressible flows at high Reynolds numbers, the mean-square vorticity spectrum
must not increase abruptly at large wave numbers. If the vorticity spectrum decreases smoothly to
zero as wave number increases, it is safe to infer that the calculation is accurate. On the other
hand, similar criteria for finite-difference methods can be misleading.
\end{enumerate}
Let us now survey some applications of spectral methods. We shall classify the method according
to geometry and boundary conditions.
\begin{enumerate}
\item \textit{Periodic boundary conditions in Cartesian coordinates} : Here Fourier series should
be used. Spectral methods have been regularly used in three-dimensions to simulate homogeneous
turbulence using high resolution codes. Most operational codes now use pseudospectral methods
because aliasing errors are usually small.
\item \textit{Rigid boundary conditions in Cartesian coordinates} : Use of Chebyshev or Legendre
polynomials is appropriate is this case. Typical applications include numerical studies of turbulent
shear flows and boundary layer transition.
\item \textit{Rigid boundary conditions in Cylindrical geometry} : Chebyshev or Legendre polynomials
should be used in radial, Fourier series in angular, and either Fourier or Chebyshev series in the
axial direction (depending on boundary conditions).
\item \textit{Problems in spherical geometry} : Here surface harmonic expansions, generalized
Fourier series, and ``associated" Chebyshev expansions all have attractive features.
\item $Semi$-$infinite$ $or$ $infinite$ $geometry$ : In this case Chebyshev expansions are best
if the domain can be mapped or truncated to a finite domain without serious error. There are two
cases to be considered: additional boundary conditions may or may not be required at ``infinity".
If additional boundary conditions, such as radiation or outflow boundary conditions, must be
imposed on the truncated domain, then they should also be applied to the spectral method. On
the other hand, if mapping without additional boundary conditions does not introduce a singularity
in the exact equations, no boundary conditions at ``infinity" are required in the spectral
approximation.
\end{enumerate}

\subsection{Limitations of spectral methods}
The drawbacks of spectral methods are four fold.
\begin{enumerate}
\item The main drawback is their inability to handle complex geometries. Different strategies are,
however, possible to overcome this difficulty. The idea to couple domain decomposition techniques
to the spectral discretization has been successful in overcoming this drawback (S. A. Orszag \cite{ORSZ3}).
\item They are usually more difficult to program than finite difference methods.
\item They are more costly per degrees of freedom than other low-order methods.
\end{enumerate}

\section{Finite Element Method}
The Finite Element Method (FEM) is one of the most widely used numerical methods for obtaining
approximate solutions to a large variety of engineering problems. Although originally developed
for numerical solution of complex problems in structural mechanics, it has since been extended
and applied to the broad field of continuum mechanics. Though the term finite element was first
coined by Clough~\cite{CLOU} in $1960$ in a paper on plane elasticity problems, the ideas of
finite element analysis date back much further and can be traced back to the work by Alexander
Hrennikoff (1941) and Richard Courant (1942). In the early 1960s, engineers used the method for
approximate solutions of problems in a wide variety of engineering problems such as stress
analysis, fluid flow, heat transfer, and other areas. The first book on the FEM by Zienkiewicz
and Chung~\cite{ZIE} was published in 1967.

The underlying premise of the method states that a complicated domain can be sub-divided into a
series of smaller regions in which the differential equations are approximately solved. By
assembling the set of equations for each region, the behavior over the entire problem domain is
determined. Each region is referred to as an \textit{element} and the process of subdividing a
domain into a finite number of elements is referred to as \textit{discretization}. Elements are
connected at specific points, called \textit{nodes}, and the assembly process requires that the
solution be continuous along common boundaries of adjacent elements.

When more accuracy is needed, the finite element method has three different versions, namely $h$
version, $p$ version and $h-p$ version. In the $h$ version, mesh size $h$ is reduced and polynomials
of fixed degree $p$ are used to increase accuracy. In the $p$ version mesh size $h$ is kept fixed
while polynomial degree $p$ is raised. In the $h-p$ version the mesh size $h$ is reduced and the
polynomial degree $p$ is raised simultaneously within the elements either uniformly over the entire
computational domain or selectively depending on the resolution requirements.

Finite element method is a variational method of approximation,
making use of global or variational statements of physical problems
and employing the Rayleigh-Ritz-Galerkin philosophy of constructing
co-ordinate functions whose linear combinations represent the
unknown solutions. It took approximately a decade before the method
was recognized as a form of the Rayleigh-Ritz method. The relation
between these two techniques comes from considering the variational
form of a given problem. For instance, the quadratic functional
\begin{equation}\label{eq1.17}
\mathcal{F}({u})=\int_{0}^{1}[p(x)(u'(x))^{2}+q(x)(u(x))^{2}-2f(x)u(x)]dx
\end{equation}
has a minimum with respect to a variation in $u(x)$ given by the Euler equation
\begin{equation}\label{eq1.18}
-\frac{d}{dx}\left(p(x)\frac{du(x)}{dx}\right)+q(x)u(x)=f(x).
\end{equation}
Therefore, instead of solving for (\ref{eq1.18}) to determine $u(x)$, an alternative but equivalent
way is to find $u(x)$ which minimizes the functional (\ref{eq1.17}).

The Rayleigh-Ritz idea approximates the solution by a finite number of functions $u(x)=\sum_{i=1}^{N}
\alpha_{i}\Phi_{i}(x)$ to determine the unknown weights $\alpha_{i}$, which minimize the functional
(\ref{eq1.17}). In the FEM the solution is also approximated by a finite number of functions, which
are typically local in nature as opposed to the global functions used in the Rayleigh-Ritz approach.
However, the starting point for a finite element method is the differential equation (\ref{eq1.18}),
which is formulated into an integral form (also known as Galerkin formulation) so that the problem
reduces to an algebraic system of equations which can be solved numerically. This connection between
the two methods was made when it was realized that the integral form of the FEM was exactly same as
the functional form used in the Rayleigh-Ritz method.

This relation between the FEM and Rayleigh-Ritz method was very
significant and it made the finite element technique mathematically
respectable. A more general formulation is possible using the method
of weighted residuals which leads to the standard Galerkin
formulation.

\subsection{Method of weighted residuals}
The origin of the \textit{Method of Weighted Residuals} (MWR) dates back prior to development of the
finite element method. The method illustrates how the choice of different test (or weight) functions
may be used to produce many commonly used numerical methods for solving differential equations.

Suppose we have a linear differential operator $\mathcal{L}$ acting on a function $u$ as follows:
\[\mathcal{L}(u({\bf x})) = f({\bf x}) \:\mbox{for}\: {\bf x}\in\Omega.\]
We wish to approximate $u$ by a function $\tilde{u}$, which is a linear combination of basis
functions chosen from a linearly independent set. That is,
\begin{equation}\label{eq1.19}
u\approx\tilde{u}=\sum_{j=1}^{n}a_{j}\Phi_{j}.
\end{equation}
Now, when substituted into the differential operator, $\mathcal{L}$, the result of the operations
is not, in general, $f(x)$. Hence an error or residual will exist. Define
\begin{equation}
R({\bf x}) = \mathcal{L}(\tilde{u}({\bf x})) - f({\bf x}).
\end{equation}
The notion in the MWR is to force the residual to zero in some average sense over the domain
$\Omega$. That is,
\begin{equation}
\int_{\Omega} R({\bf x})w_{j}({\bf x})d{\bf x} = 0, j=1,2,\cdots,n
\end{equation}
where the number of test functions $w_{j}({\bf x})$ is exactly equal to the number of unknown
constants $a_{j}$ in $\tilde{u}$. The result is a set of $n$ algebraic equations for the unknown
constants $a_{j}$. Different choices of test (or weight) functions $w_{j}'$s give rise to
different methods. A list of most commonly used test functions and the computational method they
produce is given in Table \ref{tab1.1}. We shall describe two of them namely the Least Squares
method and the Galerkin method below. For further details of these methods we refer to~\cite{KS}.
\begin{table}[!ht]
\caption{Weight functions $w_{j}({\bf x})$ used in the method of residual and the method produced.}
\label{tab1.1}
   \begin{center}
   \begin{tabular}{lcl}
   \hline
    Test/weight function & Type of method \\
   \hline
     $w_{j}({\bf x})=\delta({\bf x}-{\bf x}_{j})$                & Collocation
     \\
     $w_{j}({\bf x})$=
                     $1$,  \mbox{inside}   $\Omega^{j}$  & Finite volume \\
     \hspace{1.4cm}  $0$,  \mbox{outside}  $\Omega^{j}$  & (sub-domain)   \\
     $w_{j}({\bf x})=\frac{\partial R}{\partial \tilde{u}_{j}}$  & Least-squares
     \\
     $w_{j}({\bf x})=\Phi_{j}$                                   & Galerkin
     \\
     $w_{j}({\bf x})=\Psi_{j}\neq\Phi_{j}$                       & Petrov-Galerkin
     \\
    \hline
   \end{tabular}
   \end{center}
\end{table}
\newline
{\bf{\textit{Least-squares method}}}
\newline
The least-squares method originates from the idea of least-squares estimation developed by Gauss.
If the integral of the square of residuals is minimized, the rationale behind the name can be seen.
In other words, a minimum of
\begin{equation}
\int_{\Omega}R^{2}({\bf x})d{\bf x}\: \notag
\end{equation}
is achieved. Therefore the weight functions for the least-squares method are just the derivatives
of the residual with respect to the unknown constants:
\begin{equation}
w_{j}({\bf x})=\frac{\partial R}{\partial a_{j}}\:. \notag
\end{equation}
This formulation using a spectral/$hp$ element discretization has recently increased in popularity
~\cite{JIAN,PROO1,PROO2}.
\newline
{\bf{\textit{Galerkin method}}}
\newline
This method (also known as Bubnov-Galerkin method) may be viewed as a modification of the least-squares
method. Rather than using the derivative of the residual with respect to the unknown $a_{j}$, the
derivative of the approximating function is used. It turns out that the weight/test functions are
same as the trial functions. That is,
\begin{equation}
w_{j}({\bf x})=\frac{\partial \tilde{u}}{\partial a_{j}}=\Phi_{j}({\bf x})\:. \notag
\end{equation}

\subsection{Advantages of FEM}
\begin{enumerate}
\item Finite element methods convert differential equations into matrix equations that are
      \textit{sparse} because only a handful of basis functions are non-zero in a given sub-interval.
      Sparsity of matrix equations allows the stiffness matrix to be stored and inverted easily
      thus saving a lot of computational cost.
\item In multi-dimensional problems, the sub-intervals become triangles (in 2D) or tetrahedra
      (in 3D) which can be fitted to irregularly-shaped geometries such as the shell of an
      automobile etc.
\item The major advantages of the FEM over FDM and other low-order methods are its built-in abilities
      to handle unstructured meshes, a rich family of element choices, and natural handling of boundary
      conditions.
\item FEM can handle a wide variety of engineering problems such as problems in solid mechanics,
      dynamics, fluids, heat conduction and electrostatics.
\end{enumerate}

\subsection{Disadvantages of FEM}
\begin{enumerate}
\item Finite element methods are not well suited for open region problems.
\item They suffer low accuracy (for a given number of degrees of freedom $N$) because each basis
      function is a polynomial of low degree.
\end{enumerate}

\subsection{Spectral vs. finite element methods}
Spectral methods are similar to finite element methods in philosophy; the major difference is that
in finite element methods we choose $\Phi_{n}(x)$ to be \textit{local} functions which are
polynomials of fixed degree and are non-zero only over a couple of sub-intervals. In contrast,
spectral methods use \textit{global} basis functions in which $\Phi_{n}(x)$ is a polynomial
(or trigonometric polynomial) of high degree which is non-zero, except at isolated points, over the
entire computational domain.

Finite element methods convert differential equations into matrix equations that are \textit{sparse}
because only a handful of basis functions are non-zero in a given sub-interval. Spectral methods
generate algebraic equations with full matrices, but in compensation, the higher order of the basis
functions give high accuracy for a given $N$. When fast iterative matrix solvers are used, spectral
methods can be much more efficient than FEM or FDM methods for many classes of problems. However,
they are most useful when the $geometry$ of the problem is fairly smooth and regular.

\textit{Spectral Element Methods} gain the best of both worlds by hybridizing spectral and finite
element methods. The domain is subdivided into elements, as in finite elements, to gain the
flexibility and matrix sparsity of finite elements. At the same time, the degree of the polynomial
$p$ in each sub-domain is sufficiently high to retain the high accuracy and low storage of spectral
methods.

\section{Non-conforming Methods}
The formulations presented so far deal with conforming elements where vertices of adjoining elements
coincide, and correspondingly a $C^{0}$ continuity condition is satisfied at the element interfaces.
However, design over complex domains often require to refine the mesh locally. For example, to resolve
the geometric singularity in a flow past a half-cylinder it is desirable to contain the mesh refinement
locally as needed and not propagate the mesh changes globally. Such local refinement is particularly
useful in increasing the computational efficiency of direct and large eddy simulations of turbulent
flows.

Non-conforming methods can be used to decompose and recompose such
complex domains into sub-domains without requiring the compatibility
between the meshes on the separate components. That is, we will no
longer require that the vertices of the adjoining elements coincide.
Instead, we will develop a framework that allows for arbitrary
connection between elements. An added advantage of this idea is that
the mesh refinement can be imposed selectively on the components
where it is required. We now present a brief summary of commonly
used non-conforming methods which allow for arbitrary connection
between elements and are highly suitable for parallel implementation
(for details see~\cite{KS}):
\newline\newline
\textit{\bf Iterative patching}

This formulation employs geometrically non-conforming elements but maintains $C^{0}$ continuity of
the global polynomial expansion.
\newline\newline
\textit{\bf Constrained approximation}

The method of constrained approximation was introduced by Oden and his associates~\cite{ODEN1,ODEN2,ODEN3}
to deal with geometrically non-conforming discretizations introduced by refinement. The main idea is
to maintain $C^{0}$ continuity across elemental interfaces by modifying the unconstrained basis
functions appropriately. In other words, the approximation space is a constrained space which is a
subset of $H^{1}(\Omega)$ for second order elliptic problems.
\newline\newline
\textit{\bf Mortar element method}

In this method $C^{0}$ continuity is no longer imposed and new weak forms of the problem are developed.
This method was first introduced by Patera and co-workers~\cite{ANAG,MAVR,BERN1}, who coined the term
`\textit{mortar element methods}' because the discretization introduces a set of functions that mortar
the brick-like elements together. The method generalizes the SEM to geometrically nonconforming
partitions, to sub-domains with different resolutions (polynomial degrees) on sub-domain interfaces and
allows for the coupling of variational discretizations of different types in non-overlapping domains,
that is, the non-conformity may be due to geometry, approximation spaces, or both.
\newline\newline
\textit{\bf Discontinuous Galerkin method}

Similar to mortar element method we do not require $C^{0}$ continuity in this method. Although original
application of most discontinuous Galerkin methods (DGM) was in solving hyperbolic problems, more recent
work has led to formulations for parabolic and elliptic problems~\cite{COCK}. In an effort to classify
all contributions made toward the use of discontinuous Galerkin methods for elliptic problems, Arnold
\textit{et al.}, first in~\cite{ARNO1} and then in more generality in~\cite{ARNO2}, published a unified
analysis of discontinuous Galerkin methods for elliptic problems.

\section{$h-p$/Spectral Element Methods on Parallel Computers}
Spectral element methods function very well on massively parallel machines. One can assign a single
large element with a high order polynomial approximation within it to a single processor. A three
dimensional element of degree $N$ roughly has $N^{3}$ internal degrees of freedom, but the number
of grid points on its boundary is $\mathcal{O}(6N^{2})$. It is these boundary values that must be
shared with other elements i.e., other processors, so that the numerical solution is continuous
everywhere. As $N$ increases, the ratio of internal grid points to the boundary grid points increases,
implying that more and more of the computations are internal to the element, and the shared boundary
values become smaller and smaller compared to the total number of unknowns. This in turn implies
inter-processor communication to be small. To do the same calculation with lower order methods, one
would need roughly eight times as many degrees of freedom in three dimensions. That would increase
the inter-processor communication load by at least a factor of four.

An exponentially accurate $h-p$/spectral element method for solving two dimensional general elliptic
problems with mixed Neumann and Dirichlet boundary conditions on non-smooth domains using parallel
computers was proposed in~\cite{DTK1,DT,DKU,DTK2,SKT1,SKT2}. To resolve the singularities which arise
at the corners an auxiliary map of the form $z=\log{\xi}$ is used along with a geometric mesh at the
corners. In a neighbourhood of the corners, modified polar coordinates $(\tau_k,\theta_k)$ are used,
where $\tau_k=\ln r_k$ and $(r_k,\theta_k)$ denote polar coordinates with origin at the vertex $A_k$.
Away from the sectoral neighbourhoods of the corners the usual coordinate system $(x_1,x_2)$ is used.

With this mesh a numerical method was proposed as follows:
\newline
Find a solution which minimizes ``\textit{the sum of the weighted squared norm of the residuals in
the partial differenital equation and the squared norm of the residuals in the boundary conditions
in fractional Sobolev norms and enforce continuity by adding a term which measures the jump in the
function and its derivatives at inter-element boundaries, in appropriate Sobolev norms.}

The method is a least-squares method and the solution can be obtained by solving the normal equations
using the preconditioned conjugate gradient method (PCGM) without computing the mass and stiffness
matrices~\cite{DT,DKU,SKT2}. Let $N$ denote the number of layers in the geometric mesh. Then the
method requires $O(N\ln N)$ iterations of the PCGM to obtain the solution to exponential accuracy.

\section{Review of Existing Work}
The $h-p$ version of the finite element method (FEM) for elliptic problems was proposed by Babu\v{s}ka
and his coworkers in the mid 80ies. They unified the hitherto largely separate developments of fixed
order ``$h$-version FEM" in the sense of Ciarlet, which achieve convergence through reduction of the
mesh size $h$, and the so-called ``\textit{spectral} (or $p$-version) FEM" achieving convergence through
increasing polynomial order $p$ on a fixed mesh. Apart from unifying these two approaches, a key new
feature of $hp$-FEM was the possibility to achieve \textit{exponential convergence} in terms of number
of degrees of freedom $N$. A method for obtaining a numerical solution to exponential accuracy for
elliptic problems in one dimension was first proposed by Babu\v{s}ka and Gui in~\cite{GB} within the
framework of $hp$-FEM. Exponential convergence results were shown for the model singular solution
$u(x)=x^{\alpha}-x\in H^{1}_{0}(\Omega)$ in $\Omega=(0,1)$. Specifically, the error was shown to be
bounded by $e^{-b\sqrt{N_{dof}}}$, $N_{dof}$ is the number of degrees of freedom. This result required
$\sigma$-geometric meshes with a \textit{fixed mesh ratio} $\sigma\in(0,1)$ while the constant $b$ in
the convergence estimate depends on the singularity exponent $\alpha$ as well as on $\sigma$. Among all
$\sigma\in(0,1)$, the optimal value was shown to be $\sigma_{opt}=(\sqrt{2}-1)^{2}\approx 0.17$.

In two dimensions, exponential convergence (an upper bound $Ce^{-b\sqrt[3]{N_{dof}}}$) on the error for
elliptic problems posed on polygonal domains was obtained by Babu\v{s}ka and Guo in the mid 80ies in a
series of landmark papers~\cite{BG6,BG7,BG8}. Key ingredients in the proof were \textit{geometric mesh}
refinements towards the corners and \textit{nonuniform elemental polynomial degrees} which increase
linearly with the elements' distance from corners.
We remark that the proof of \textit{elliptic regularity} results in terms of \textit{countably normed
spaces}, which constitutes an essential component of the exponential convergence proof, has been a major
technical achievement. This problem has also been examined by Karniadakis and Sherwin in~\cite{KS} and
Pathria and Karniadakis in~\cite{KS,PATH} in the frame work of spectral/$h-p$ element methods.
In~\cite{DTK1,DT,DKU,DTK2,SKT1,SKT2} $h-p$ spectral element methods for solving general elliptic problems
to exponential accuracy on polygonal domains using parallel computers were proposed. More recently the
case of elliptic systems was analyzed in~\cite{KDU} for non-conforming spectral element functions. Key
ingredients were use of geometric mesh at the corners and use of auxiliary mapping to remove the
singularity at the corners. Computational results for least-squares $h-p$/spectral element method for
elliptic problems on non-smooth domains with monotone singularities of the type $r^{\alpha}$ and
$r^{\alpha}\log^\delta r$ as well as the oscillating singularities of the type $r^{\alpha}\sin(\epsilon\log r)$
have been obtained in~\cite{KR}.

Starting in the 90ies, efforts to extend the analytic regularity and the $hp$-convergence analysis
of two dimensional problems to three dimensions in the frame work of finite element methods were
undertaken in~\cite{BG1,BG2,BG5,G2} and the references therein which include related works in the
subject. $hp$-version of discontinuous Galerkin finite element method ($hp$-DGFEM) for
second order elliptic problems with piecewise analytic data in three dimensional polyhedral domains
has been analyzed by Sch$\ddot{o}$tzau \textit{et al.}~\cite{SSW1,SSW2}. The method is shown to be
exponentially accurate.

\section{Review and Outline of the Thesis}
As discussed earlier, current formulations of spectral methods to solve elliptic problems on non-smooth
domains allow us to recover only algebraic convergence~\cite{CHQZ2,KS}. The method of auxiliary mapping,
which yields relatively fast convergence makes use of a conformal mapping of the form $\xi=z^{1/\alpha}$
that maps a neighbourhood of the singularity point (where the true solution is singular) onto a domain
where the true solution is smooth and has better approximation properties, i.e. we smooth out the
singularity that occurs at the corners~\cite{KS} using auxiliary map.

In this thesis, we propose an exponentially accurate $h-p$ spectral element method for elliptic
problems on non-smooth polyhedral domains in $\mathbb{R}^{3}$.

In contrast to the two dimensional case, in three dimensions the character of the singularities is
much more complex, not only because of higher dimension but also due to the different nature of the
singularities which are the vertex singularity, the edge singularity and the vertex-edge singularity.
Thus we have to distinguish between the behaviour of the solution in the neighbourhoods of the
vertices, edges and vertex-edges. Unlike the two dimensional case where weighted isotropic spaces
are used, in three dimensions we have to utilize weighted anisotropic spaces because the solution
is smooth along the edges but singular in the direction perpendicular to the edges~\cite{BG5}.
Behaviour of the solution is even more complex at the vertices where the edges are joined together
and the solution is not smooth along the edges too.

To overcome the singularities which arise in the neighbourhoods of the vertices, vertex-edges
and edges we use local systems of coordinates. These local coordinates are modified versions
of spherical and cylindrical coordinate systems in their respective neighbourhoods. Away from
these neighbourhoods standard Cartesian coordinates are used in the regular region of the
polyhedron. In each of these neighbourhoods we use a \emph{geometrical mesh} which becomes finer
near the corners and edges.

With this mesh we choose our approximate solution as the spectral element function which minimizes
the sum of a weighted squared norm of the residuals in the partial differential equations and the
squared norm of the residuals in the boundary conditions in fractional Sobolev spaces and enforce
continuity by adding a term which measures the jump in the function and its derivatives at
inter-element boundaries in fractional Sobolev norms, to the functional being minimized. The Sobolev
spaces in vertex-edge and edge neighbourhoods are anisotropic and become singular at the corners and
edges.

We then derive \textit{differentiability estimates} with respect to these new coordinates in the
neighbourhoods of vertices, edges and vertex-edges and in the regular region of the polyhedron
where we retain standard Cartesian coordinate system.

The spectral element functions are represented by a uniform constant
at all the corner elements in vertex neighborhoods and on the
corner-most elements in vertex-edge neighbourhoods which are in the
direction transverse to the edges of the polyhedron. At corner elements which are in the direction
of edges in vertex-edge neighbourhoods and at all the corner elements in edge neighbourhoods the
spectral element functions are represented as one dimensional polynomials of degree $W$ in the
modified coordinates. In all other elements in edge neighbourhoods and vertex-edge neighbourhoods
the spectral element functions are a sum of tensor products of polynomials of degree $W$ in their
respective modified coordinates. The remaining elements in the vertex neighbourhoods and the regular
region are mapped to the master cube and the spectral element functions are represented as a sum of
tensor products of polynomials of degree $W$ in $\lambda_1,\lambda_2$, and $\lambda_3$, the transformed
variables on the master cube.

A \textit{stability estimate} is then derived for the functional we minimize. We use the stability
estimate to obtain \textit{parallel preconditioners} and \textit{error estimates} for the solution
of the minimization problem. Let $N$ denote the number of refinements in the geometrical mesh. We
shall assume that $N$ in proportional to $W$. Then for problems with Dirichlet boundary conditions
the condition number of the preconditioned system is $O((lnW)^2)$ provided $W=O(e^{N^{\alpha}})$
for $\alpha<1/2$. Moreover it is shown
that there exists a new preconditioner which can be diagonalized in a new set of basis functions
using separation of variables techniques in which each diagonal block corresponds to a different
element, and hence it can easily be inverted on each element. Moreover, if the data is analytic
then the error is \textit{exponentially small} in $N$.

For mixed problems the condition number grows like $O(N^4)$. The
rapid growth of the factor $N^{4}$ creates difficulties in
parallelizing the numerical scheme. To overcome this difficulty
another version of the method may be defined in which we choose our
spectral element functions to be conforming on the wirebasket (union
of vertices and edges) of elements. It can be shown that a probing
parallel preconditioner can be defined for the minimization problem
using the stability estimates which allows the problem to decouple.
We intend to study this in the future.

The method is essentially a \emph{least-squares} collocation method and a solution can be obtained
using \emph{Preconditioned Conjugate Gradient Method (PCGM)}. To solve the minimization problem we
need to solve the \emph{normal equations} for the \emph{least$-$squares} problem. The residuals in
the normal equations can be obtained without computing and storing \emph{mass} and \emph{stiffness}
matrices.

For Dirichlet problems we use spectral element functions which are non-conforming and hence there
are no common boundary values. For problems with mixed boundary conditions the spectral element
functions are essentially non-conforming except that they are continuous only at the wirebasket
of the elements. Hence the cardinality of the set of common boundary values which is equal to the
values of the function at the wirebasket of the elements is much smaller than the cardinality of
the common boundary values for the standard finite element method and so we can compute an accurate
approximation to the Schur complement matrix. In this dissertation we examine the non-conforming
version of the method. The case when the spectral element functions are conforming on the wirebasket
will be examined in future work.

The method requires $O(NlnN)$ iterations of the PCGM to obtain the solution to exponential accuracy
and requires $O(N^{5}ln(N))$ operations on a parallel computer with $O(N^{2})$ processors for Dirichlet
problems. However, for mixed problems it would require $O(N^3)$ iterations of the PCGM to obtain
solution to exponential accuracy and $O(N^{7})$ operations on a parallel computer with $O(N^{2})$
processors.

Our method works for non self-adjoint problems too. Results of numerical simulations for a number of
model problems on non-smooth domains are presented with constant and variable coefficients including
the non self-adjoint case which confirm the theoretical estimates obtained for the error and
computational complexity.

We remark here that once we have obtained our approximate solution, consisting of non-conforming
spectral element functions, we can make a correction to the approximate solution so that the
corrected solution is conforming and the error between the corrected and exact solution is
exponentially small in $N$ in the $H^1$ norm over the whole domain.

We now briefly describe the contents of the thesis which is divided into six chapters.
\begin{description}
\item [Chapter 1] provides brief descriptions of different methods, namely spectral and finite
      element methods, with advantages and disadvantages. Basic properties of these methods
      are discussed. An overview of the existing work is also provided.

\item [Chapter 2] introduces the problem under consideration. We define various neighbourhoods
      of vertices, edges and vertex-edges and then derive the differentiability estimates and
      introduce the function spaces in these neighbourhoods which will be needed in what follows.
      The main stability estimates are stated here.

\item [Chapter 3] gives the proof of the stability theorem on which our method is based.

\item [Chapter 4] presents the numerical scheme which is based on the stability estimates of
      Chapter 2 and error estimates for the approximate solution are obtained.

\item [Chapter 5] provides preconditioning and parallelization techniques. It is shown that a
      preconditioner can be defined for the quadratic form corresponding to the minimization
      problem which allows the problem to decouple. It is also shown that there exists another
      preconditioner which can be diagonalized in a new basis using the separation of variables
      technique. Lastly, it briefly describes the steps to compute the residual.

\item [Chapter 6] gives numerical results to validate the error estimates and bounds on
      computational complexity. We also briefly outline our plan for future work here.
\end{description}

\chapcleardoublepage

\chapter{Differentiability and Stability Estimates}
\section{Introduction}
In this chapter we shall obtain differentiability estimates and stability estimates for a
general elliptic boundary value problem posed on a polyhedral domain with mixed Dirichlet and
Neumann boundary conditions.
To resolve the singularities which arise at the corners and edges namely, the vertex,
vertex-edge and edge singularities we use geometrical meshes which become finer near
corners and edges. 
In the neighbourhood of vertices, vertex-edges and edges we switch to local systems of
coordinates (auxiliary mappings). In doing this the geometrical mesh is reduced to a
quasi-uniform mesh and hence Sobolev's embedding theorems and trace theorems for Sobolev
spaces apply for functions defined on mesh elements in these new variables with a uniform
constant. Away from these neighbourhoods we retain standard Cartesian coordinates in the
regular region of the polydehron. The use of auxiliary mappings together with geometrical meshes
to overcome the singularities along corners and edges allows us to obtain the solution with
exponential accuracy. We remark that the local systems of coordinates which we use are
modified versions of spherical and cylindrical coordinates in the vertex and edge
neighbourhoods respectively and a hybridization of the two coordinate systems in the
vertex-edge neighbourhoods.

We now seek a solution as in~\cite{DT,DTK1,DTK2,SKT1,SKT2} which minimizes the sum of
the squares of a weighted squared norm of the residuals in the partial differential
equation and a fractional Sobolev norm of the residuals in the boundary conditions and
enforce continuity across inter-element boundaries by adding a term which measures the
sum of the squares of the jump in the function and its derivatives at inter-element
boundaries in appropriate Sobolev norms to the functional being minimized. Since the
residuals in the partial differential equation blow up in the neighbourhoods of vertices,
vertex-edges and edges, we have to multiply these residuals by appropriate weights in
various neighbourhoods. Anisotropic Sobolev norms are used in the neighbourhoods of edges
and vertex-edges. All these computations are done using modified system of coordinates
in the neighbourhoods of corners and edges and a global coordinate system elsewhere.

The differentiability estimates are now obtained with respect to these new coordinates
in the neighbourhoods of vertices, edges and vertex-edges and in the regular region where
standard Cartesian coordinate system is used.

We use spectral element functions which are \textit{non-conforming}. Let $N$ denote the
number of layers in the geometric mesh. The spectral element functions are represented
by a uniform constant at all the corner elements in vertex neighborhoods and on the corner-most
elements in vertex-edge neighbourhoods which are in the angular direction from the edges of
the domain. At remaining corner elements which are in the direction of the edges in edge
neighbourhoods and vertex-edge neighbourhoods the spectral element functions are represented
as polynomials which are functions of one variable. On elements away from corners and edges
in edge neighbourhoods and vertex-edge neighbourhoods these spectral element functions are
a sum of tensor products of polynomials in the modified coordinates. The remaining elements
in the vertex neighbourhoods and the regular region are mapped to the master cube $Q$ and the
approximate solution is represented as a sum of tensor products of polynomials of degree $W$
in $\lambda_1,\lambda_2$ and $\lambda_3$, the transformed variables. Here $W$ is chosen
proportional to $N$, the number of layers. A stability estimate is obtained on which our
method is based.

\section{Differentiability Estimates in Modified Coordinates}
We consider an elliptic boundary value problem posed on a polyhedron $\Omega$ in $R^3$
with mixed Neumann and Dirichlet boundary conditions:
\begin{align}\label{eq2.1}
Lw &= F    \mbox{ in} \;\Omega, \notag\\
w &= g^{[0]} \; \mbox{for}\; x \in \Gamma^{[0]}, \notag\\
\left(\frac{\partial w}{\partial\nuw}\right)_A &= g^{[1]}\;\mbox{for}\;x \in \Gamma^{[1]}.
\end{align}
It is assumed that the differential operator
\begin{equation}\label{eq2.2}
Lw(x) = \sum_{i,j=1}^3 - \frac{\partial}{\partial x_i}(a_{i,j}
w_{x_j}) + \sum_{i=1}^3 b_i w_{x_i} + c w
\end{equation}
is a strongly elliptic differential operator which satisfies the \textit{Lax-Milgram}
conditions. Moreover $a_{i,j}=a_{j,i}$ for all $i,j$ and the coefficients of the
differential operator are analytic. Let $\Gamma_i,$ $i \in {\mathcal I} = \{ 1,2,...,I\},$
be the faces of the polyhedron. Let ${\mathcal D}$ be a subset of ${\mathcal I}$
and ${\mathcal N} = {\mathcal I}\setminus{\mathcal D}.$ We impose Dirichlet boundary
conditions on the faces $\Gamma_i$, $i \in {\mathcal D}$ and Neumann boundary conditions
on the faces $\Gamma_j$, $j \in {\mathcal N}$. The data  $F$, $g^{[0]}$ and $g^{[1]}$
are analytic on each open face and $g^{[0]}$ is continuous on
$\underset{i \in {\mathcal D}}\cup\bar{\Gamma}_{i}$.

By $H^m(\Omega)$, we denote the usual Sobolev space of integer order $m\geq0$
furnished with the norm
$$||u||^2_{H^{m}(\Omega)}=\sum_{|\alpha|\leq m}||D^{\alpha}u||^2_{L^{2}(\Omega)}$$
where $\alpha=(\alpha_1,\alpha_2,\alpha_3), |\alpha|=\alpha_1+\alpha_2+\alpha_3$, and
$D^{\alpha}u=D^{\alpha_1}_{x_1}D^{\alpha_2}_{x_2}D^{\alpha_3}_{x_3}u=u_{x_{1}^{{\alpha}_{1}}
x_{2}^{{\alpha}_{2}}x_{3}^{{\alpha}_{3}}}$ is the distributional (weak) derivative of $u$.
As usual, $H^{0}(\Omega)=L^{2}(\Omega), H^{1}_{0}(\Omega)=\{u\in{L^{2}(\Omega)}: Du\in{L^{2}
(\Omega)}, u=0 \:\text{on}\: \partial\Omega\}.$ A seminorm on $H^m(\Omega)$ is given by
$$|u|^2_{H^{m}(\Omega)}=\sum_{|\alpha|= m}||D^{\alpha}u||^2_{L^{2}(\Omega)}.$$

\subsection{Differentiability estimates in modified coordinates in vertex neighbourhoods}
Let $\Omega$ denote a polyhedron in $R^3$, as shown in Figure \ref{fig2.1}.
\begin{figure}[!ht]
\centering
\includegraphics[scale = 0.60]{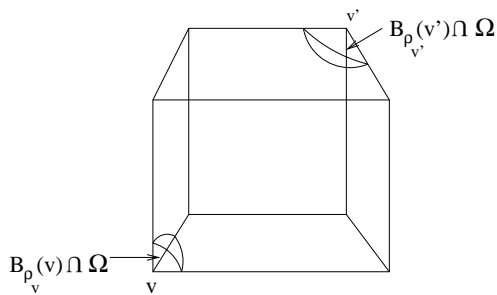}
\caption{Polyhedral domain $\Omega$ in $R^3$.}
\label{fig2.1}
\end{figure}
Let $\Gamma_{i}$, $i \in{\mathcal I} =\{1,2,...,I\},$ be the faces (open),
$S_j$,\,$j \in{\mathcal J} =\{1,2,...,J\},$ be the edges and $A_k$,
$k \in{\mathcal K}=\{1,2,...,K\},$ be the vertices of the polyhedron.

We shall also denote an edge by $e$, where $e \in {\mathcal E}=\{S_1,S_2,...,S_J\}$,
\,the set of edges, and a vertex by $v$ where $v\in{\mathcal V}=\{A_1,A_2,...,A_K\}$,
the set of vertices.
\begin{figure}[!ht]
\centering
\includegraphics[scale = 0.60]{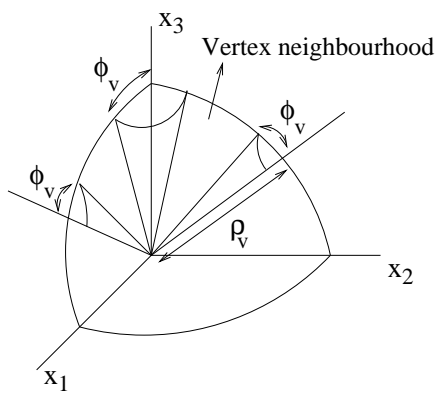}
\caption{Vertex neighbourhood $\Omega^v$.}
\label{fig2.2}
\end{figure}
Let $B_{\rho_v}(v)=\{x: dist\:(x,v)<{\rho_v}\}$. For every vertex $v$, $\rho_v$ is
chosen so small that $B_{\rho_v}(v)\cap B_{\rho_{v^{\prime}}}(v^{\prime})=\emptyset$
if the vertices $v$ and $v^{\prime}$ are distinct.

Now consider a vertex $v$ which has $n_v$ edges passing through it. We shall let the
$x_3$ axis denote one of these edges. Consider first the edge $e$ which coincides with
the $x_3$ axis. Let $\phi$ denote the angle which $x=(x_1,x_2,x_3)$ makes with the
$x_3$ axis. Let
$$\mathcal V_{\rho_v,\phi_v}(v,e)=\{x \in \Omega :\: 0< dist\,(x,v)< {\rho_v},\,
0< \phi < \phi_v\}$$ where $\phi_v$ is a constant. Let us choose $\phi_v$ sufficiently
small so that
$$\mathcal V_{\rho_v,\phi_v}(v,e^{\prime}) \bigcap \mathcal
V_{\rho_v,\phi_v}(v,e^{\prime\prime})=\emptyset$$ if $e^{\prime}$
and $e^{\prime\prime}$ are distinct edges which have $v$ as a common vertex. Now we
define $\Omega^v$, the vertex neighbourhood of the vertex $v$. Let $\mathcal E^v$ denote
the subset of $\mathcal E$, the set of edges, such that $\mathcal E^v=\{e\in\mathcal E:v$
is a vertex of $e\}$. Then
\[\Omega^v={\left(B_{\rho_v}(v)\setminus \underset{e \in\mathcal E^v}{\bigcup}
{\overline{\mathcal V_{\rho_v,\phi_v}(v,e)}}\right)}\bigcap\:\Omega\,.\]
Here $\rho_v$ and $\phi_v$ are chosen so that $\rho_v\sin(\phi_v) =Z$, a constant
for all $v \in \mathcal{V}$, the set of vertices. We now introduce a set of modified
coordinates in the vertex neighbourhood $\Omega^v$. Let
\begin{align*}
\rho &= \sqrt{{x_1}^{2}+{x_2}^{2}+{x_3}^{2}}\\
\phi &= \cos^{-1}(x_3/\rho) \\
\theta &= \tan^{-1}(x_2/x_1)
\end{align*}
denote the usual spherical coordinates in $\Omega^v$. Define
\begin{align}\label{eq2.3}
x_1^{v} &= \phi \notag\\
x_2^{v} &= \theta \notag\\
x_3^{v} &= \chi=\;\ln\rho\:.
\end{align}

Let $w_v=w(v)$, denote the value of $w$ at the vertex $v$. By $\tilde{\Omega}^v$ is
denoted the image of ${\Omega}^v$ in $x^v$ coordinates. We can now state the
differentiability estimates in these modified coordinates in vertex neighbourhoods.
The proof is based on the regularity results proved by Babu\v{s}ka and Guo in~\cite{BG1}.
These estimates are obtained when the differential operator is the Laplacian. However
they are valid for the more general situation examined in this work.

Unless otherwise stated, as in Babu\v{s}ka and Guo~\cite{BG1,BG2,BG3} we let
$w(x^{v})$, $w(x^{v-e})$, $w(x^{e})$ denote $w(x(x^{v}))$, $w(x(x^{v-e}))$,
$w(x(x^{e}))$ respectively. We shall use the same notation for the spectral
element functions $u(x^{v})$, $u(x^{v-e})$, $u(x^{e})$ etc. as well in the
ensuing sections and chapters.
\begin{prop}\label{prop2.2.1}
There exists a constant $\beta_v \in (0,1/2)$ such that for all $0<\nu\leq\rho_v$
the estimate
\begin{align}\label{eq2.4}
\underset{{\tilde{\Omega}^v\cap\{x^v:\ x_3^v\leq \,\ln(\nu)\}}}\int
\sum_{|\alpha|\leq m}e^{x_3^v}\left|\:D_{x^v}^\alpha\:(w(x^v)-w_v \right)|^2\:dx^v
\leq C\,(d^m\,m!)^2\,\nu^{(1-2\beta_v)}
\end{align}
holds for all integers $m\geq 1$. Here $C$ and $d$ denote constants and $dx^v$ denotes
a volume element in $x^v$ coordinates.
\end{prop}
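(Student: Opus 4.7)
The plan is to invoke the analytic regularity results of Babu\v{s}ka--Guo near the vertex, translate them into the modified coordinate system $(x_1^v,x_2^v,x_3^v)=(\phi,\theta,\ln\rho)$, and then extract the factor $\nu^{1-2\beta_v}$ by restricting to the subregion $x_3^v\leq \ln\nu$. The essential point is that in the new coordinates the vertex neighbourhood $\tilde\Omega^v$ is a semi-infinite ``slab'' in $\chi=x_3^v$, the operators $\partial_\phi,\partial_\theta,\partial_\chi=\rho\partial_\rho$ are scale-invariant (they commute with dilations), and the elliptic operator $L$ becomes uniformly elliptic on bounded $\chi$-slabs; the regularity results originally derived for the Laplacian extend to the general $L$ in (\ref{eq2.2}) as noted in the statement.

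First I would invoke the Babu\v{s}ka--Guo countably normed analytic regularity in $\Omega^v$: for some $\beta_v\in(0,1/2)$, chosen so that $\tfrac12-\beta_v<\gamma_1$, where $\gamma_1>0$ is the smallest positive vertex eigenvalue of the angular operator associated with $L$, there exist $C,d$ such that
\begin{equation*}
\int_{\Omega^v}\rho^{2(|\alpha|+\beta_v-3/2)}\,|D^\alpha_x(w-w_v)|^2\,dx \leq (Cd^{|\alpha|}|\alpha|!)^2
\end{equation*}
for every multi-index with $|\alpha|\leq m$. The low-order pieces ($|\alpha|\leq 1$) are controlled by a weighted Hardy-type inequality combined with $w-w_v$ vanishing at $v$.

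Next I would pass to modified coordinates. Since $\partial_\chi=\rho\partial_\rho$ is scale-invariant and $\partial_\phi,\partial_\theta$ do not touch the radial dependence, the chain rule gives an expansion
\begin{equation*}
D^\alpha_{x^v}=\sum_{|\gamma|\leq|\alpha|}p_{\alpha\gamma}(\phi,\theta)\,\rho^{|\gamma|}D^\gamma_x,
\end{equation*}
where the coefficients $p_{\alpha\gamma}$ are analytic on the angular domain of $\tilde\Omega^v$ (in which $\sin\phi$ is bounded away from zero, since vertex--edge neighbourhoods have been excised from $\Omega^v$). Combining this expansion with the Jacobian $dx=\rho^3\sin\phi\,dx^v$ converts the Cartesian estimate above into the modified-coordinate bound
\begin{equation*}
\int_{\tilde\Omega^v}\rho^{2\beta_v}\,|D^\alpha_{x^v}(w-w_v)|^2\,dx^v \leq (Cd^{|\alpha|}|\alpha|!)^2 .
\end{equation*}
Writing $e^{x_3^v}=e^{(1-2\beta_v)x_3^v}\cdot\rho^{2\beta_v}$ and using the pointwise bound $e^{(1-2\beta_v)x_3^v}\leq\nu^{1-2\beta_v}$ on $\{x_3^v\leq\ln\nu\}$ (valid because $1-2\beta_v>0$), I pull the factor $\nu^{1-2\beta_v}$ outside the integral and sum over $|\alpha|\leq m$ to conclude.

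The main technical obstacle is the chain-rule expansion at high order: each application of $\partial_\chi,\partial_\phi,\partial_\theta$ can generate many lower-order terms, and one must verify that neither the number of terms nor the growth of the analytic coefficients $p_{\alpha\gamma}$ destroys the $(d^m m!)^2$ estimate. This is handled by exploiting the algebra of scale-invariant operators, so that the Cartesian order $|\gamma|$ never exceeds $|\alpha|$, together with Cauchy-type bounds on the angular coefficients $p_{\alpha\gamma}$; the combinatorial overhead is absorbed into (possibly enlarged) constants $C$ and $d$. The exclusion of the vertex--edge neighbourhoods from $\Omega^v$ is precisely what keeps $\sin\phi$ bounded below throughout $\tilde\Omega^v$ and so keeps these coefficients uniformly bounded.
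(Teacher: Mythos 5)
Your proposal is correct and follows essentially the same route as the paper's proof in Appendix A.1: invoke the Babu\v{s}ka--Guo weighted analytic regularity (membership of $w$ in ${\bf B}^2_{\beta_v}(\Omega^v)$, with Lemma $5.5$ and the ${\bf C}^2_{\beta_v}$ embedding handling the orders $|\alpha|\leq 1$), pass to the modified coordinates using the scale invariance of $\partial_\chi=\rho\,\partial_\rho$ and the boundedness of $1/\sin\phi$ on $\Omega^v$, and extract $\nu^{1-2\beta_v}$ from the surplus weight on $\{x_3^v\leq\ln\nu\}$. The only difference is one of packaging: you perform the Cartesian-to-$x^v$ conversion in a single chain-rule expansion whose factorial combinatorics you assert, whereas the paper routes it through Theorem $4.13$ of~\cite{BG1} (equivalence of the Cartesian and spherical countably normed spaces) and then substitutes $\chi=\ln\rho$; both arguments defer the same combinatorial estimate to the Babu\v{s}ka--Guo machinery.
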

\begin{proof}
The proof is provided in Appendix A.1.
\end{proof}

\subsection{Differentiability estimates in modified coordinates in edge neighbourhoods}
Let $e$ denote an edge, which for convenience we assume to coincide with the $x_3$ axis,
whose end points are the vertices $v$ and $v^{\prime}$ as shown in Figure \ref{fig2.3}.

Assume that the vertex $v$ coincides with the origin. Let the length of the edge $e$ be
$l_e$, $\delta_v=\rho_v\cos(\phi_v)$ and $\delta_{v^{\prime}}=\rho_{v^{\prime}}
\cos(\phi_{v^{\prime}})$. Let $(r,\theta,x_3)$ denote the usual cylindrical coordinates
\begin{align}
r &= \sqrt{{x_1}^2+{x_2}^2} \notag\\
\theta &= \tan^{-1}(x_2/x_1) \notag
\end{align}
and $\Omega^e$ the edge neighbourhood
$$\Omega^e = \left\{x \in \Omega:\; \delta_v < x_3 < l_e-
\delta_{v^{\prime}},\ 0< r < Z\right\}$$ as shown in Figure \ref{fig2.3}. We introduce the
modified system of coordinates
\begin{align}\label{eq2.5}
x_1^e &= \tau = \ln r \notag\\
x_2^e &= \theta \notag\\
x_3^e &= x_3\:.
\end{align}
\begin{figure}[!ht]
\centering
\includegraphics[scale = 0.65]{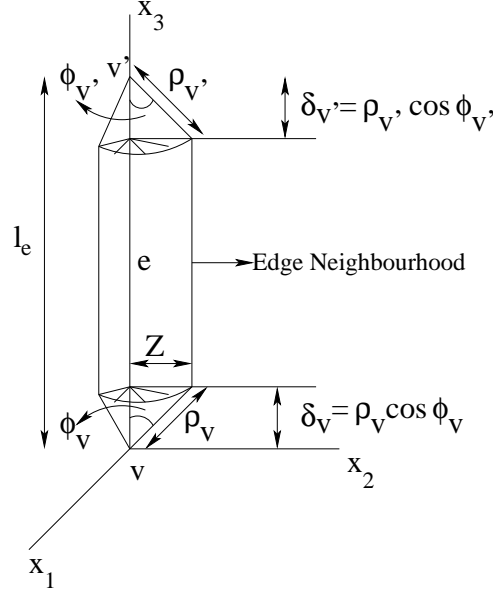}
\caption{ Edge neighbourhood $\Omega^e$. }
\label{fig2.3}
\end{figure}
Let $\tilde{\Omega}^e$ denote the image of $\Omega^e$ in $x^e$ coordinates. The
differentiability estimates for the solution $w$ in edge neighbourhoods in these
modified coordinates can now be stated.
\begin{prop}\label{prop2.2.2}
Let $s(x_3)=w\left(x_1,x_2,x_3\right)|_{\left(x_1=0,x_2=0\right)}$. Then
\begin{align}\label{eq2.6}
\int_{\delta_v}^{l_e-\delta_{v^{\prime}}}\sum_{k \leq m}\left|\:D_{x_3^e}^k
\:s(x_3^e)\:\right|^2\:dx_3^e \leq C\,(d^m\,m!)^2
\end{align}
for all integers $m\geq1$.

Moreover there exists a constant\,$\beta_e \in (0,1)$ such that for $\mu \leq Z$
\begin{align}\label{eq2.7}
\underset{\tilde{\Omega}^e\cap\{x^e:\:x_1^e< \ln\mu\}}\int\sum_{|\alpha| \leq m}
\left|\:D_{x^e}^\alpha(w(x^e)-s(x_3^e))\right|^2\,dx^e\leq\:C\,(d^m\,m!)^2
\:\mu^{2(1-\beta_e)}
\end{align}
for all integers $m\geq1$. Here $dx^e$ denotes a volume element in $x^e$ coordinates.
\end{prop}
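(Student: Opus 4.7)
The plan is to mirror the approach used for Proposition 2.2.1, replacing the isotropic vertex regularity of Babu\v{s}ka--Guo with the anisotropic weighted (countably normed) regularity for edge neighbourhoods developed in~\cite{BG1,BG5}, and then to push everything through the change of variable $\tau = \ln r$. I begin by noting that since $\delta_v < x_3 < l_e-\delta_{v'}$ stays strictly away from both vertices, the axis segment is an analytic submanifold on which the coefficients and the data are analytic, so the known regularity results away from the vertex clusters apply in full strength.

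First I would prove (2.6). The Babu\v{s}ka--Guo edge decomposition in $\Omega^e$ writes $w = \sum_m a_m(x_3)\,r^{\beta_m}\,h_m(\theta) + v(r,\theta,x_3)$ with $\beta_m>0$ and $v$ analytic. Evaluating at $r=0$ kills the singular sum, so $s(x_3) = v(0,0,x_3)$. Since $v$ is analytic in a neighbourhood of the (compact) segment $[\delta_v, l_e-\delta_{v'}]$ of the $x_3$-axis with radius of analyticity uniform in $x_3$, Cauchy estimates yield $|D_{x_3}^k s(x_3)| \leq C\,d^k k!$ uniformly on the segment, which integrated gives exactly (2.6).

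For (2.7) I would use the anisotropic weighted analyticity estimate of Babu\v{s}ka--Guo for the edge singular part, which in Cartesian--cylindrical form reads
\begin{equation*}
\int_{\Omega^e}\,r^{\,2(\alpha_1+\alpha_2-\beta_e)}\,\bigl|\,\partial_r^{\alpha_1}\partial_\theta^{\alpha_2}\partial_{x_3}^{\alpha_3}(w-s)\bigr|^2\,r\,dr\,d\theta\,dx_3 \;\leq\; C\,(d^{|\alpha|}|\alpha|!)^2
\end{equation*}
for some $\beta_e\in(0,1)$, with $\beta_e$ determined by the smallest edge eigenvalue. In the modified variables, $\partial_{x_1^e}= r\partial_r$, $\partial_{x_2^e}=\partial_\theta$, $\partial_{x_3^e}=\partial_{x_3}$, and $dx^e = d\tau\,d\theta\,dx_3 = r^{-1}dr\,d\theta\,dx_3$; the chain rule then converts the Cartesian $r^{\alpha_1+\alpha_2}$ weight precisely into the transformed derivatives $D_{x^e}^{\alpha}$ (with controlled binomial coefficients that get absorbed into $(d^m m!)^2$). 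The weighted estimate therefore becomes
\begin{equation*}
\int_{\tilde\Omega^e} r^{-2\beta_e}\,|D_{x^e}^{\alpha}(w-s)|^2\,r^{2}\,dx^e \;=\; \int_{\tilde\Omega^e} r^{\,2(1-\beta_e)}\,|D_{x^e}^{\alpha}(w-s)|^2\,dx^e \;\leq\; C\,(d^m m!)^2 .
\end{equation*}
Restricting to $\{x_1^e < \ln\mu\}$, i.e.\ $r<\mu$, we replace $r^{2(1-\beta_e)}$ by its upper bound $\mu^{2(1-\beta_e)}$ and pull the constant out; summing over $|\alpha|\leq m$ yields (2.7).

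The main technical obstacle is the bookkeeping in the chain rule that converts pure $x_1^e$-derivatives into the $r\partial_r$ combinations and that connects the two $r$-weights: one has to verify that the $(d^m m!)^2$ combinatorial bound survives the expansion $\partial_r^{\alpha_1} = r^{-\alpha_1}\sum_{k\leq \alpha_1} c_{\alpha_1,k}(r\partial_r)^k$ and its inverse, uniformly in $m$. This is the same kind of Fa\`a di Bruno--style computation carried out for the vertex case in Appendix A.1 and presents no essential new difficulty once the edge trace $s(x_3)$ has been identified and the weighted countably normed estimate of~\cite{BG1,BG5} has been invoked.
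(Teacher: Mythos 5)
Your argument for (2.6) reaches the right conclusion but rests on a claim that the stated decomposition does not supply: in the expansion $w=\sum_m a_m(x_3)r^{\beta_m}h_m(\theta)+v$ the remainder $v$ is only known to lie in $H^2(\Omega^e)$, not to be analytic, so you cannot extract Cauchy estimates for $s(x_3)=v(0,0,x_3)$ from it. The factorial bound $|d^k s/dx_3^k|\leq C\,d^k k!$ is instead the second defining condition of the countably normed space ${\bf C}^2_{\beta_e}(\Omega^e)$, and is available because $w\in{\bf B}^2_{\beta_e}(\Omega^e)\subseteq{\bf C}^2_{\beta_e}(\Omega^e)$ (Theorem 5.3 of~\cite{BG1}); the paper simply squares and integrates that pointwise bound. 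This part is repairable by citing the right result.

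The step that actually fails is the last one in your proof of (2.7). From a weighted bound of the form
\begin{equation*}
\int_{\tilde\Omega^e} r^{2(1-\beta_e)}\,\bigl|D_{x^e}^{\alpha}(w-s)\bigr|^2\,dx^e\leq C\,(d^m m!)^2,
\end{equation*}
with $1-\beta_e>0$, you cannot conclude anything about the \emph{unweighted} integral over $\{r<\mu\}$: the weight $r^{2(1-\beta_e)}$ vanishes at the edge, and replacing it by its supremum $\mu^{2(1-\beta_e)}$ on $\{r<\mu\}$ gives $\int_{r<\mu} r^{2(1-\beta_e)}|D^\alpha(w-s)|^2\,dx^e\leq \mu^{2(1-\beta_e)}\int_{r<\mu}|D^\alpha(w-s)|^2\,dx^e$ --- an inequality pointing in the wrong direction, which leaves the quantity you need to bound on the larger side. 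A degenerate weight of this sign cannot see a blow-up of $D^\alpha(w-s)$ concentrated at $r=0$. What the paper uses instead is a \emph{pointwise} (sup-norm) estimate: from the first defining condition of ${\bf C}^2_{\beta_e}$, namely $\|r^{\beta_e+\alpha_1+\alpha_2-1}D_x^{\alpha}(w-s)\|_{{\bf C}^0}\leq C\,d^{\alpha}\alpha!$, the chain-rule computation (as in Theorem 4.1 of~\cite{BG1}) converts the factor $r^{\alpha_1+\alpha_2}$ into the modified derivatives and yields $|D_{x^e}^{\alpha}(w-s)|\leq C\,d^{\alpha}\alpha!\,e^{(1-\beta_e)x_1^e}$ everywhere in $\tilde\Omega^e$. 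Squaring and integrating this over $\{x_1^e<\ln\mu\}$ produces $\int_{-\infty}^{\ln\mu}e^{2(1-\beta_e)\tau}\,d\tau=\mu^{2(1-\beta_e)}/(2(1-\beta_e))$, which is exactly where the factor $\mu^{2(1-\beta_e)}$ in (2.7) comes from. You need to replace your integrated weighted estimate by this pointwise decay estimate for the argument to close.
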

\begin{proof}
The proof is provided in Appendix A.2.
\end{proof}

\subsection{Differentiability estimates in modified coordinates in vertex-edge
neighbourhoods} Let $e$ denote an edge, which for convenience we assume coincides
with the $x_3$ axis, and $v$ a vertex which coincides with the origin.
\newline
Then the vertex-edge neighbourhood $\Omega^{v-e}$, shown in Figure \ref{fig2.4}, is
defined as
\[\Omega^{v-e}=\left\{x\in\Omega:\:0< \phi < \phi_v,\:
0< x_3 <\delta_v=\rho_v\cos\phi_v\right\}.\]
\begin{figure}[!ht]
\centering
\includegraphics[scale = 0.60]{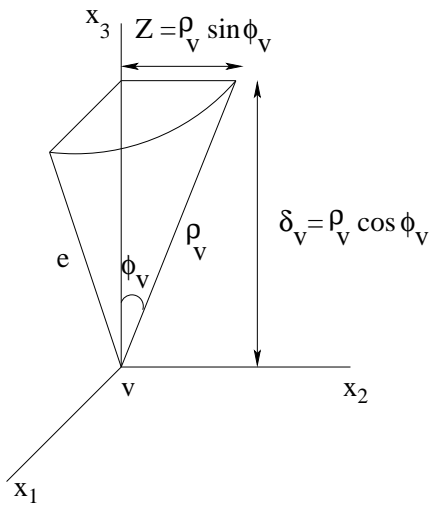}
\caption{ Vertex-edge neighbourhood $\Omega^{v-e}$. }
\label{fig2.4}
\end{figure}
We thus obtain a set of vertex-edge neighbourhoods $\Omega^{v-e}$ where
$v-e \in \mathcal V- \mathcal E$, the set of vertex-edges.

Let us introduce a set of modified coordinates in the vertex-edge neighbourhood
$\Omega^{v-e}$
\begin{align}\label{eq2.8}
x_1^{v-e} &= \psi =\: \ln(\tan\phi)  \notag\\
x_2^{v-e} &= \theta \notag\\
x_3^{v-e} &= \zeta =\: \ln{x_3}\:.
\end{align}
Let $\tilde{\Omega}^{v-e}$ denote the image of $\Omega^{v-e}$ in $x^{v-e}$
coordinates. We can now state the differentiability estimates in modified
coordinates in vertex-edge neighbourhoods.
\begin{prop}\label{prop2.2.3}
Let $w_v=w(v)$,\:the value of $w$ evaluated at the vertex $v$,\:and
$s(x_3)=w(x_1,x_2,x_3)|_{\left(x_1=0,x_2=0\right)}$. Then there exists a constant
$\beta_v \in (0,1/2)$ such that for any $0<\nu\,\leq\,\delta_v$
\begin{align}\label{eq2.9}
\underset{-{\infty}}\int^{\ln\nu} e^{x_3^{v-e}}\sum_{k\leq m}
\left|\:D_{x_3^{v-e}}^k(s(x_3^{v-e})-w_v)\right|^2\: dx_3^{v-e}
\leq\:C\,(d^m\,m!)^2\,\nu^{(1-2\beta_v)}.
\end{align}

Moreover there exists a constant $\beta_e \in (0,1)$ such that for any
$0<\alpha \leq \tan\phi_v$ and $0<\nu \leq\delta_v$
\begin{align}\label{eq2.10}
&\underset{{\tilde{\Omega}^{v-e}} \cap \{x^{v-e}:\;x_1^{v-e}<\ln\alpha,\ x_3^{v-e}<\ln\nu\}}
\int e^{x_3^{v-e}}\sum_{|\gamma|\leq m}\left|\:D_{x^{v-e}}^{\gamma}(w(x^{v-e})-s(x_3^{v-e}))
\right|^2 \:dx^{v-e} \notag\\
&\hspace{4.5cm}\leq C\,(d^m\,m!)^2\,\alpha^{2(1-\beta_e)}\,\nu^{(1-2\beta_v)}
\end{align}
for all integers $m \geq 1$. Here $dx^{v-e}$ denotes a volume element in $x^{v-e}$
coordinates.
\end{prop}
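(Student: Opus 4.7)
The plan is to establish both estimates by combining the vertex-type estimate (Proposition 2.2.1) with the edge-type estimate (Proposition 2.2.2), applied in the setting of the vertex-edge neighbourhood $\Omega^{v-e}$ where both singular behaviours coexist. The essential input is the anisotropic analytic regularity of $w$ in $\Omega^{v-e}$ established by Babu\v{s}ka--Guo, which provides derivative bounds in the original variables $(r,\theta,x_3)$ in a product-like form, separating the radial/edge contribution (exponent $\beta_e$) from the axial/vertex contribution (exponent $\beta_v$). The rest of the proof will be a chain-rule transfer to the modified coordinates $(\psi,\theta,\zeta)$ together with a Jacobian-weighted integration, mirroring the two earlier propositions.

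For the first estimate, I would observe that $s(x_3)=w(0,0,x_3)$ is the trace of $w$ on the edge $e$, so that estimating $s-w_v$ reduces to a one-dimensional problem along $e$. Near the vertex, $s-w_v$ inherits the vertex-singular analytic regularity from the restriction of the Babu\v{s}ka--Guo regularity to the edge, while away from the vertex it is analytic by the first part of Proposition 2.2.2. Transferring to $\zeta=\ln x_3$ through $D_\zeta=x_3 D_{x_3}$ (up to lower-order contributions that are absorbed into the constant $d$) is exactly the one-dimensional specialisation of the argument underlying Proposition 2.2.1, and it yields the claimed bound after integrating against the weight $e^\zeta=x_3$, which appears as the natural Jacobian factor.

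For the second estimate I would first subtract the edge trace, setting $\tilde w:=w-s$, so that $\tilde w$ vanishes on $e$ and carries exactly the combined edge-plus-vertex singularity. The Babu\v{s}ka--Guo anisotropic bound then reads, schematically, $|D_{(r,\theta,x_3)}^{(j,k,l)}\tilde w|\lesssim d^m m!\,r^{\beta_e-j}\,x_3^{-\beta_v-l}$ with $m=j+k+l$, the product structure reflecting the countably normed space used there. Changing variables via $r=x_3 e^\psi$ and $x_3=e^\zeta$, the chain rule gives $D_\psi=r\partial_r+(\text{l.o.t.})$ and $D_\zeta=x_3\partial_{x_3}+r\partial_r+(\text{l.o.t.})$, so each modified derivative absorbs a power of $r$ or $x_3$ that cancels the corresponding singular power in the original bound. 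Multiplying by the Jacobian weight $e^\zeta$ and integrating over the cutoff region $\{\psi<\ln\alpha,\ \zeta<\ln\nu\}$, the integral factorises into two elementary geometric integrals in $\psi$ and $\zeta$, producing exactly $\alpha^{2(1-\beta_e)}\,\nu^{1-2\beta_v}$.

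The main obstacle will be the careful bookkeeping of the chain-rule expansions of $D_{x^{v-e}}^\gamma$ in terms of $D_{(r,\theta,x_3)}$ and verifying that the lower-order terms generated by iterated differentiation can be absorbed into a new, still $m$-independent, constant $d$ without destroying the factorial growth $(d^m m!)^2$ needed for analyticity. A secondary difficulty is the angular variable $\theta$, whose derivatives do not scale with $r$ or $x_3$; however, this is exactly the reason why the angular direction is grouped as a separate tensor-product factor in the Babu\v{s}ka--Guo anisotropic setting and can be handled on its own. The clean product form of the final estimate is a direct reflection of the product structure of the underlying countably normed space on $\Omega^{v-e}$.
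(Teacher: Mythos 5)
Your plan follows essentially the same route as the paper's proof in Appendix A.3: invoke the Babu\v{s}ka--Guo countably normed space ${\bf C}^2_{\beta_{v-e}}(\Omega^{v-e})$ to get pointwise anisotropic analytic bounds on $s-w_v$ along the edge and on $p=w-s$ in the bulk, transfer them to the modified coordinates $(\psi,\theta,\zeta)$ by the chain rule, and integrate against the Jacobian weight $e^{\zeta}$ so the cutoff region factorises into two geometric integrals yielding $\alpha^{2(1-\beta_e)}\nu^{1-2\beta_v}$. The ``careful bookkeeping'' you flag as the main obstacle is exactly what the paper resolves, via an explicit recurrence for the chain-rule coefficients $a^m_{\alpha_1,\alpha_2,j_1,j_2}$ and the inductive bound $A^m_k\leq 4^m\,m!/k!$, which preserves the $(d^m m!)^2$ growth; your schematic exponents in the intermediate pointwise bound are slightly off, but the structure of the argument is the paper's.
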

\begin{proof}
The proof is provided in Appendix A.3.
\end{proof}

\subsection{Differentiability estimates in standard coordinates in the regular
region of the polyhedron} Let $\Omega^r$ denote the portion of the polyhedron
$\Omega$ obtained after the closure of the vertex neighbourhoods, edge neighbourhoods
and vertex-edge neighbourhoods have been removed from it.
\newline
Thus let
$$\Delta=\left\{\underset{v \in \mathcal V} \bigcup{\overline{\Omega}^v}
\right\}\bigcup\left\{\underset{e \in \mathcal E}\bigcup{\overline{\Omega}^e}\right\}
\bigcup\left\{\underset{v-e \in \mathcal V-\mathcal E}\bigcup{\overline{\Omega}^{v-e}}
\right\}\,.$$
Then
\[\Omega^r=\Omega \setminus \bigtriangleup \;.\]
We denote the regular region of the polyhedron, in which the solution $w$ is analytic,
by $\Omega^r$. In $\Omega^r$ the standard coordinate system $x=(x_1,x_2,x_3)$ is retained.
The differentiability estimates in these coordinates in the regular region of the
polyhedron, are now stated.
\begin{prop}\label{prop2.2.4}
The estimate
\begin{equation}\label{eq2.11}
\underset{{\Omega^r}}\int\sum_{|\alpha|\leq m}\left|\:D_x^\alpha
w(x)\right|^2 dx \leq C\,(d^m\,m!)^2
\end{equation}
holds for all integers $m\geq 1$. Here $dx$ denotes a volume element in $x$ coordinates.
\end{prop}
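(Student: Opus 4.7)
The plan is to invoke classical analytic regularity theory for linear elliptic operators with analytic data and reduce the global estimate on $\Omega^r$ to a finite covering argument. The key structural observation is that, by construction, $\overline{\Omega^r}$ is bounded away from every vertex and every edge of $\Omega$: the neighbourhoods $\Omega^v$, $\Omega^e$, $\Omega^{v-e}$ have been cut out precisely in order to absorb all singular features. Consequently, $\overline{\Omega^r} \cap \partial\Omega$ consists of a finite union of compact subsets of the open analytic faces $\Gamma_i$, each sitting at a positive distance from every edge.

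First, I would cover the compact set $\overline{\Omega^r}$ by a finite collection of balls of two types. Type~(i): interior balls $B_k \subset\subset \Omega$, each compactly contained in $\Omega$. Type~(ii): boundary balls $B_k'$ centred at points of $\overline{\Omega^r}\cap\partial\Omega$, chosen small enough that $\overline{B_k'}\cap\partial\Omega$ is contained in a single open face $\Gamma_{i(k)}$ and meets none of the edges. Such a finite cover exists by compactness of $\overline{\Omega^r}$ and the fact that it avoids the singular $1$-skeleton of $\partial\Omega$.

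Next, on each interior ball $B_k$, classical interior analytic regularity for strongly elliptic operators with analytic coefficients (Morrey--Nirenberg, see e.g.~Morrey's book) applied to the equation $Lw=F$ with $F$ analytic yields constants $C_k, d_k$ with
\begin{equation*}
\int_{B_k} \sum_{|\alpha|\leq m}\left|D_x^\alpha w\right|^2 dx \leq C_k\,(d_k^{\,m}\, m!)^2 \qquad \text{for all } m\geq 1.
\end{equation*}
On each boundary ball $B_k'$, the boundary portion $\overline{B_k'}\cap\partial\Omega$ is a piece of a flat (hence analytic) face, and the prescribed datum there — either $g^{[0]}$ on $\Gamma_i$ for $i\in\mathcal{D}$ or $g^{[1]}$ on $\Gamma_j$ for $j\in\mathcal{N}$ — is analytic on that open face by hypothesis. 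The analogous Morrey--Nirenberg analytic regularity up to an analytic boundary (with either Dirichlet or oblique/Neumann data satisfying the complementing/Lopatinskii condition, which holds here since $L$ is strongly elliptic and the conormal derivative is noncharacteristic) gives
\begin{equation*}
\int_{B_k'\cap\Omega} \sum_{|\alpha|\leq m}\left|D_x^\alpha w\right|^2 dx \leq C_k'\,((d_k')^{\,m}\, m!)^2 \qquad \text{for all } m\geq 1.
\end{equation*}
Setting $d = \max_k\{d_k,d_k'\}$ and $C = \sum_k (C_k+C_k')$ and summing over the finite cover yields the desired estimate~(\ref{eq2.11}).

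The main obstacle is not the argument itself but the verification that the up-to-the-boundary analytic regularity theorem is applicable uniformly across the boundary cover: one must check that the complementing conditions for the mixed problem hold on each face, that the local boundary straightening (trivial here since faces are already flat) preserves analyticity of the coefficients and data, and that on balls straddling two faces belonging to different boundary-condition types the covering is refined so that each ball sees only one type of boundary condition — which is possible because the Dirichlet--Neumann transitions lie along edges, and $\overline{\Omega^r}$ avoids all edges by construction. Once this bookkeeping is done, the analyticity estimate follows directly from the standard results.
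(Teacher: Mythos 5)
Your argument is correct and is essentially the paper's own proof written out in full: Appendix A.4 simply asserts that $w$ is analytic in an open neighbourhood of $\bar{\Omega}^r$ and concludes, leaving implicit exactly the Morrey--Nirenberg interior and up-to-the-boundary analytic regularity plus the finite covering of $\bar{\Omega}^r$ away from the edge--vertex skeleton that you spell out. Your version makes explicit the bookkeeping (single-face boundary balls, avoidance of Dirichlet--Neumann transitions along edges) that the paper takes for granted.
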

\begin{proof}
The proof is provided in Appendix A.4.
\end{proof}

\subsection{Function spaces}
We need to review a set of function spaces described in~\cite{BG1}.

Let $\Omega^v$ denote the vertex neighbourhood of the vertex $v$ and $\rho=\rho(x)=dist(x,v)$
for $x\in\Omega^{v}$ and $\beta_{v}\in(0,1/2)$. We introduce a weight function as in~\cite{BG1}
as follows:
Define
$$\Phiy_{\beta_v}^{\alpha,l}(x) =
\left\{\begin{array}{ccc}
\rho^{{\beta_v}+|\alpha|-l},\ &  \mbox{for} \;|\alpha|\geq l\\
\hspace{-1.5cm} 1, & \mbox{for} \;|\alpha|< l
\end{array}\right.
$$
as in $(2.3)$ of~\cite{BG1}. Let
\begin{equation*}
{\bf H}_{\beta_v}^{k,l}(\Omega^v) =\left\{u|\:\left\|u\right\| ^2_{{\bf H}_{\beta_v}
^{k,l}(\Omega^v)} = \underset{|\alpha|\leq k} \sum
\left\|\Phiy_{\beta_v}^{\alpha,l}\:D^{\alpha} u\:\right\|^2_{L^2({\Omega}^v)}
<\infty\right\}
\end{equation*}
and
\begin{equation*}
{\bf B}^l_{\beta_v}(\Omega^v) = \left\{u|\:u \in {\bf H}_{\beta_v}^{k,l}(\Omega^v)
\:\mbox{ for all }\:k\geq l \:\:\mbox{and}\: \left\|\Phiy_{\beta_v}^{\alpha,l}
{D}^{\alpha} u\;\right\|^2_{L^2{({\Omega}^v)}}\leq C\,d^{\alpha}\,\alpha!\right\}
\end{equation*}
denote the weighted Sobolev space and countably normed space defined on $\Omega^v$ as
in~\cite{BG1}.

Let us denote by ${\bf C}^2_{\beta_v}(\Omega^v)$ a countably normed space as in~\cite{BG1,G1}
which is the set of functions $u(x) \in {\bf C}^0(\bar{\Omega}^v)$ such that for all $\alpha$,
$|\alpha| \geq 0$
\[\left|\:D_x^{\alpha}(u(x)-u(v))\:\right| \leq C\,
d^{\alpha}\,\alpha!\,\rho^{-(\beta_v+|\alpha|-{1/2})}(x).\]
Here $\bar{\Omega}^{v}$ denotes the closure of $\Omega^{v}$. Then by Theorem $5.6$
of~\cite{BG1}, ${\bf B}^2_{\beta_v}(\Omega^v)\subseteq {\bf C}^2_{\beta_v}(\Omega^v).$

We now cite an important regularity result, viz. Theorem $5.1$ of~\cite{G1} for
the solution of (\ref{eq2.1}).
\newline
\textit{There exists a unique (weak) solution $u\in H^{1}(\Omega^v)$ of (\ref{eq2.1})
which belongs to ${\bf C}^2_{\beta_v}(\Omega^v)$ with $\beta_v\in(0,1/2)$ satisfying
\begin{equation}\label{eq2.12}
\beta_v\geq 1/2-\lambda_v, \lambda_v=\frac{1}{2}\sqrt{1+4\nu^{v}_{min}}-1
\end{equation}
where $\nu^{v}_{min}$ is the smallest positive eigenvalue of the Laplace-Beltrami
operator on the spherical boundary $S^v$.}

Next, let $\Omega^{e}$ denote an edge neighbourhood of $\Omega$ and $r=r(x)=dist(x,e)$
for $x\in\Omega^{e}$ and $\beta_{e}\in(0,1)$. Define the weight function by
$$\Phiy_{\beta_e}^{\alpha,l}(x) =
\left\{\begin{array}{ccc}
r^{{\beta_e}+|\alpha^{\prime}|-l},\ &  \mbox{for} \;|\alpha^{\prime}|
=\alpha_1+\alpha_2\geq l\\
\hspace{-1.5cm} 1, &  \hspace{-2.0cm} \mbox{for} \;|\alpha|< l
\end{array}\right.
$$
as in $(2.1)$ of~\cite{BG1}. Let
\begin{equation*}
{\bf H}_{\beta_e}^{k,l}(\Omega^e)=\left\{u|\:\left\|u\right\| ^2_{{\bf H}_{\beta_e}
^{k,l}(\Omega^e)} = \underset{|\alpha|\leq k} \sum\left\|\Phiy_{\beta_e}^{\alpha,l}\:
D^{\alpha} u\:\right\|^2_{L^2{({\Omega}^e)}}<\infty\right\}
\end{equation*}
and
\begin{equation*}
{\bf B}^l_{\beta_e}(\Omega^e) = \left\{u|\:u \in{\bf H}_{\beta_e}^{k,l}(\Omega^e)
\:\mbox{ for all }\:k\geq l \:\:\mbox{and}\: \left\|\Phiy_{\beta_e}^{\alpha,l}
{D}^{\alpha} u\;\right\|^2_{L^2{({\Omega}^e)}}\leq C\,d^{\alpha}\,\alpha!
\right\}
\end{equation*}
denote the weighted Sobolev space and countably normed space defined on $\Omega^e$
as in~\cite{BG1}.

We denote by ${\bf C}^2_{\beta_e}(\Omega^e), \beta_e \in (0,1)$ a countably normed
space as in~\cite{BG1,G1}, the set of functions $u \in {\bf C}^0(\bar{\Omega}^e)$
such that for $|\alpha| \geq 0$
\[\left\|\:r^{\beta_e+\alpha_1+\alpha_2-1}D_x^{\alpha}
\left(u(x)-u(0,0,x_3)\right)\right\|_{{\bf C}^0(\bar{\Omega}^e)}
\leq C\,d^{\alpha}\,\alpha!\] and for $k\geq 0$
\[\left\|\frac{d^k}{(dx_3)^k}u(0,0,x_3)\right
\|_{{\bf C}^0(\bar{\Omega}^e\cap\{x:\:x_1=x_2=0\})}
\leq C\,d^k\,k!\:,\]
where $\bar{\Omega}^{e}$ denotes the closure of $\Omega^{e}$. Then by Theorem $5.3$
of~\cite{BG1}, ${\bf B}^2_{\beta_e}(\Omega^e)\subseteq {\bf C}^2_{\beta_e}(\Omega^e)$.

We now cite another important regularity result, viz. Theorem $3.1$ of~\cite{G1}
for the solution of (\ref{eq2.1}).
\newline
\textit{There exists a unique (weak) solution $u\in H^{1}(\Omega^e)$ of (\ref{eq2.1})
which belongs to ${\bf C}^2_{\beta_e}(\Omega^e)$ with $\beta_e\in(0,1)$ satisfying
\begin{align}\label{eq2.13}
\beta_e\geq 1-\kappa_e, \kappa_e=\left\{\begin{array}{ccc}
\frac{\pi}{2\omega_{e}}\ &  \mbox{if} \:\: \Gamma_{s}\subset\Gamma^{[0]},
 \Gamma_{t}\subset\Gamma^{[1]}\\
\frac{\pi}{\omega_{e}}, &  \hspace{-2.0cm} \mbox{otherwise}
\end{array}\right.
\end{align}
where $\Gamma_{s}$ and $\Gamma_{t}$ are such that $\Gamma_{s}\cap\Gamma_{t}=e$}.

Finally, let $\rho=\rho(x)$ and $\phi=\phi(x)$ for $x\in\Omega^{v-e}$.
We define a weight function by
$$\Phiy_{\beta_{v-e}}^{\alpha,l}(x) =
\left\{\begin{array}{ccc}
\rho^{{\beta_v}+|\alpha|-l}(\sin(\phi))^{{\beta_e}+|\alpha^{\prime}|-l},
\ &  \mbox{for} \;|\alpha^{\prime}|=\alpha_1+\alpha_2\geq l\\
\hspace{-3.0cm} \rho^{{\beta_v}+|\alpha|-l}, & \hspace{-0.9cm} \mbox{for}
\;|\alpha^{\prime}|<l\leq|\alpha|\\
\hspace{-4.2cm} 1, &  \hspace{-2.0cm} \mbox{for} \;|\alpha|< l
\end{array}\right.$$
as in $(2.2)$ of~\cite{BG1}. Let
\begin{equation*}
{\bf H}_{\beta_{v-e}}^{k,l}(\Omega^{v-e})=\left\{u|\:\left\|u\right\|^2_{{\bf H}_{\beta_{v-e}}
^{k,l}(\Omega^{v-e})} = \underset{|\alpha|\leq k} \sum\left\|\Phiy_{\beta_{v-e}}^{\alpha,l}\:
D^{\alpha} u\:\right\|^2_{L^2{({\Omega}^{v-e})}}<\infty\right\}
\end{equation*}
and
\begin{align*}
{\bf B}^l_{\beta_{v-e}}(\Omega^{v-e}) = \left\{u|\:u \in {\bf H}_{\beta_{v-e}}^{k,l}(\Omega^{v-e})
\:\:\forall\:k\geq l \: \mbox{and}\: \left\|\Phiy_{\beta_{v-e}}^{\alpha,l}
{D}^{\alpha} u\;\right\|^2_{L^2{({\Omega}^{v-e})}}\leq C
\,d^{\alpha}\,\alpha!\right\}
\end{align*}
denote the weighted Sobolev space and countably normed space defined on $\Omega^{v-e}$
as in~\cite{BG1}.

Let us denote by ${\bf C}^2_{\beta_{v-e}}(\Omega^{v-e})$,
where $\beta_{v-e} = (\beta_v,\beta_e), \ \beta_v \in(0,1/2)$ and
$\beta_e \in (0,1)$, the set of functions $u(x)\in {\bf C}^0(\bar{\Omega}^
{v-e})$ such that
\[\left\|\:\rho^{\beta_v+|\alpha|-{1/2}}\:(\sin\phi)^{\beta_e+\alpha_1+\alpha_2-1}
\:D^{\alpha}_x\left(u(x)-u(0,0,x_3)\right)\:\right\|_{{\bf C}^0({\bar{\Omega}^{v-e}})}
\leq C\,d^{\alpha}\,\alpha!\]
and
\[\left|\:|x_3|^{\beta_v+k-{1/2}} \frac{d^k}{dx_3^k}\left(u(0,0,x_3)-u(v)\right)
\right|_{{\bf C}^0\left(\bar{\Omega}^{v-e}\cap\{x:\:x_1=x_2=0 \}\right)}\leq C\,d^k\,k!\]
as described in \cite{BG1,G1}. Here $\bar{\Omega}^{v-e}$ denotes the closure of
$\Omega^{v-e}$. Now by Theorem $5.9$ of ~\cite{BG1}, ${\bf B}^2_{\beta_{v-e}}
(\Omega^{v-e})\subseteq {\bf C}^2_{\beta_{v-e}}(\Omega^{v-e})$.

We cite one last Theorem $4.1$ of~\cite{G1} for the solution of (\ref{eq2.1}).
\newline
\textit{There exists a unique (weak) solution $u\in H^{1}(\Omega^{v-e})$
of (\ref{eq2.1}) which belongs to ${\bf C}^2_{\beta_{v-e}}(\Omega^{v-e})$, where
$\beta_{v-e} = (\beta_v,\beta_e),\ \beta_v \in(0,1/2)$ and
$\beta_e \in (0,1)$ satisfying (\ref{eq2.12}) and (\ref{eq2.13}).}

\section{The Stability Theorem}
$\Omega$ is divided into a regular region $\Omega^r$, a set of vertex neighbourhoods
$\Omega^v$, where $v \in \mathcal V$, a set of edge neighbourhoods $\Omega^e$, where
$e \in \mathcal E$ and a set of vertex-edge neighbourhoods $\Omega^{v-e}$, where $v-e
\in \mathcal V - \mathcal E$. In the regular region $\Omega^r$ standard coordinates
$x=(x_1,x_2,x_3)$ are used and in the remaining regions modified coordinates are used
as has been described in Section $2$. $\Omega^r$ is divided into a set of curvilinear
hexahedrons, tetrahedrons and prisms. We impose a geometrically graded mesh in the
remaining regions which is described in this section. We remark that a tetrahedron can
always be divided into four hexahedrons~\cite{SSW1}, in the same way that a triangle
can be divided into three quadrilaterals by joining the centre of the triangle to the
midpoints of the sides (Figure~\ref{fig2.5}). Moreover a prism can be divided into three
hexahedral elements. Hence we can choose all our elements to be hexahedrons.
\begin{figure}[!ht]
\centering
\includegraphics[scale = 0.60]{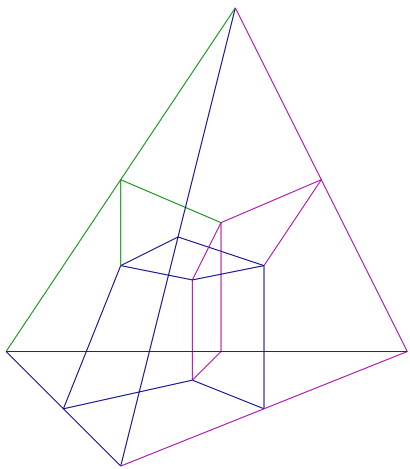}
\caption{Division of a tetrahedron into hexahedrons.}
\label{fig2.5}
\end{figure}
A set of spectral element functions are defined on the elements. In edge neighbourhoods
and vertex-edge neighbourhoods these spectral element functions are a sum of tensor products
of polynomials in the modified coordinates. Let $\{{\mathcal F_u}\}$ denote the spectral
element representation of the function $u$. We shall examine two cases. The first case is
when the spectral element functions are nonconforming. The second case is when the spectral
element functions are conforming on the wirebasket $W\!B$ of the elements, i.e. the union
of the edges and vertices of the elements. 
In both these cases the spectral element functions are nonconforming on the faces (open) of
the elements.

To state the stability theorem we need to define some quadratic forms. Let $N$ denote the
number of refinements in the geometrical mesh and $W$ denote an upper bound on the degree
of the polynomial representation of the spectral element functions. We shall define two
quadratic forms $\mathcal V^{N,W}\left(\{\mathcal F_u\}\right)$ and $\mathcal U^{N,W}
\left(\{\mathcal F_u\}\right)$.

Now
\begin{align}\label{eq2.14}
\mathcal V^{N,W}\left(\{\mathcal F_u\}\right) &=\mathcal
V_{regular}^{N,W}\left(\{\mathcal F_u\}\right)+\mathcal
V_{vertices}^{N,W}\left(\{\mathcal F_u\}\right) +\mathcal
V_{vertex-edges}^{N,W}\left(\{\mathcal F_u\}\right) \notag \\
&+\mathcal V_{edges}^{N,W}\left(\{\mathcal F_u\}\right).
\end{align}

In the same way
\begin{align}\label{eq2.15}
\mathcal U^{N,W}\left(\{\mathcal F_u\}\right)&=\mathcal
U_{regular}^{N,W}\left(\{\mathcal F_u\}\right)+\mathcal
U_{vertices}^{N,W}\left(\{\mathcal F_u\}\right)+\mathcal
U_{vertex-edges}^{N,W}\left(\{\mathcal F_u\}\right) \notag \\
&+\mathcal U_{edges}^{N,W}\left(\{\mathcal F_u\}\right).
\end{align}

Let us first consider the regular region $\Omega^r$ of $\Omega$ and define the two
quadratic forms $\mathcal V_{regular}^{N,W}\left(\{\mathcal F_u\}\right)$ and
$\mathcal U_{regular}^{N,W}\left(\{\mathcal F_u\}\right)$. The regular region
$\Omega^r$ is divided into $N_r$ curvilinear hexahedrons, tetrahedrons and prisms.
In $\Omega^r$ the standard coordinates $x=(x_1,x_2,x_3)$ are used. Let $\Omega_l^r$
be one of the elements into which $\Omega^r$ is divided, which we shall assume is
a curvilinear hexahedron to keep the exposition simple. Let $Q$ denote the standard
cube $Q=(-1,1)^3$. Then there is an analytic map $M_l^r$ from $Q$ to $\Omega_l^r$
which has an analytic inverse. Let $\Omega_l^r$ be as shown in Figure \ref{fig2.6}
and let $\{\Gamma_{l,i}^r\}_{1\leq i\leq n_{l}^{r}}$ denote its faces.
\begin{figure}[!ht]
\centering
\includegraphics[scale = 0.60]{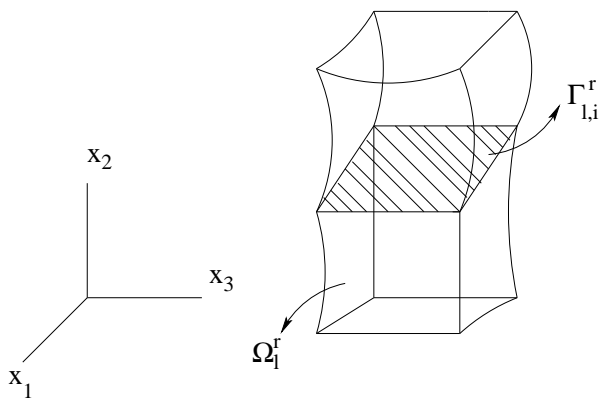}
\caption{ Elements in $\Omega^r$. }
\label{fig2.6}
\end{figure}
Now the map $M_l^r$ is of the form
$$x=X_l^r(\lambda_1,\lambda_2,\lambda_3)$$
where $(\lambda_1,\lambda_2,\lambda_3) \in Q$, the master cube. Define the spectral
element function $u_l^r$ on $\Omega_l^r$ by
\[u_l^r(\lambda) = \sum_{i=0}^W \sum_{j=0}^W \sum_{k=0}^W
\:\alpha_{i,j,k}\:\lambda_1^i\lambda_2^j\lambda_3^k.\]
Now the spectral element functions are nonconforming in the general case. Let
$\left.{[u]}\right|_{\Gamma_{l,i}^r}$ denote the jump in $u$ across the face
$\Gamma_{l,i}^r$. Let the face $\Gamma_{l,i}^r=\Gamma_{m,j}^r$ where $\Gamma_{m,j}^r$
is a face of the element $\Omega_m^r$. We may assume the face $\Gamma_{l,i}^r$
corresponds to $\lambda_3=1$ and $\Gamma_{m,j}^r$ corresponds to $\lambda_3=-1$. Then
$\left.{[u]}\right|_{\Gamma_{l,i}^r}$ is a function of only $\lambda_1$ and $\lambda_2$.

We now define
\begin{align}\label{eq2.16}
\mathcal V_{regular}^{N,W}\left(\{\mathcal F_u\}\right) &=
\sum_{l=1}^{N_r}\int_{\Omega_l^r}\left|\:Lu_l^r(x)\:\right|^2\:dx \notag\\
&+\sum_{\Gamma_{l,i}^r\subseteq\bar{\Omega}^r\setminus\partial\Omega}
\left(\|[u]\|_{0,\Gamma_{l,i}^r}^2 +\sum_{k=1}^3\left \|[u_{x_k}]
\right\|_{1/2,{\Gamma_{l,i}^r}}^2\right) \notag\\
&+\sum_{{\Gamma_{l,i}^r}\subseteq{\Gamma^{[0]}}}\left
\|u_l^r\right\|_{3/2,{\Gamma_{l,i}^r}}^2 +
\sum_{{\Gamma_{l,i}^r}\subseteq{\Gamma^{[1]}}}
\left\|\left(\frac{\partial u_l^r}{\partial
\nu}\right)_A\right\|_{1/2,{\Gamma_{l,i}^r}}^2.
\end{align}
The fractional Sobolev norms used above are as defined in~\cite{G}.

Since $\Gamma_{l,i}^r$, corresponding to $\lambda_{3} = 1$, is the image of
$S=(-1,1)^2$, or $T$ the master triangle, in $\lambda_1,\lambda_2$ coordinates
\begin{subequations}\label{eq2.17}
\begin{align}\label{eq2.17a}
\|w\|_{\sigma, \Gamma_{l,i}^r}^2 = \|w\|_{0,E}^2 + \int_E\int_E
\frac{\left(w(\lambda_1,\lambda_2)-w(\lambda_1^{\prime},
\lambda_2^{\prime})\right)^2}{\left((\lambda_1-\lambda_1^
{\prime})^2+(\lambda_2-\lambda_2^{\prime})^2\right)^{1+\sigma}}
d\lambda_1\:d\lambda_2\:d\lambda_1^{\prime}\:d\lambda_2^{\prime}
\end{align}
for $0< \sigma < 1$. Here $E$ denote either $S$ or $T$.
\newline
However, if $E$ is $S$ then we prefer to use the equivalent norm
\begin{align}\label{eq2.17b}
\|w\|_{\sigma, \Gamma_{l,i}^r}^2 = \|w\|_{0,E}^2 + \int_{-1}^{1}
\int_{-1}^{1}\int_{-1}^{1}\frac{(w(\lambda_1,\lambda_2)-w
(\lambda_1^{\prime},\lambda_2))^2}{(\lambda_1-\lambda_1^{\prime}
)^{1+2\sigma}}\,d\lambda_1 d\lambda_1^{\prime}d\lambda_2 \notag\\
+\int_{-1}^{1}\int_{-1}^{1}\int_{-1}^{1}\frac{(w(\lambda_1,
\lambda_2)-w(\lambda_1,\lambda_2^{\prime}))^2}{(\lambda_2-
\lambda_2^{\prime})^{1+2\sigma}}\,d\lambda_2 d\lambda_2^{\prime}
d\lambda_1\,.
\end{align}
\end{subequations}
Moreover
\begin{equation}\label{eq2.18}
\|w\|_{1+\sigma, \Gamma_{l,i}^r}^2 = \|w\|_{0,E}^2 +\sum_{i=1}^2 \left\|
\:\frac{\partial w}{\partial\lambda_i}\:\right\|_{\sigma, E}^2\;.
\end{equation}
Next, we define
\begin{align}\label{eq2.19}
\mathcal U_{regular}^{N,W}(\{\mathcal F_u\})=
\sum_{l=1}^{N_r}\int_{Q=(M_l^r)^{-1}(\Omega_l^r)}\underset{|\alpha|\leq 2}{\sum}
\left|\:D_\lambda^\alpha u_l^r\:\right|^2\,d\lambda\;.
\end{align}
Let $v$ be one of the vertices of $\Omega$. In Figure~\ref{fig2.7} the vertex neighbourhood,
described in Section $2.1$, is shown.
\begin{figure}[!ht]
\centering
\includegraphics[scale = 0.60]{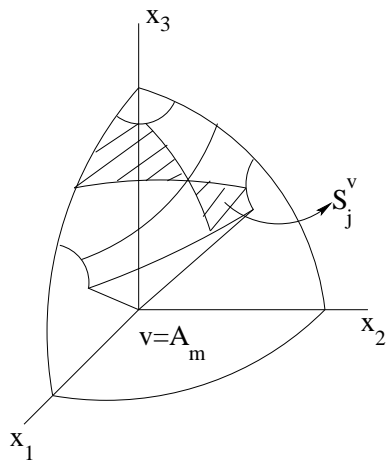}
\caption{ Mesh imposed on the spherical boundary $S^v$. }
\label{fig2.7}
\end{figure}
Let $S^v$ denote the intersection of the surface of the sphere ${B_{{\rho}_v}}(v)$ with
$\bar{\Omega}^v$, i.e.
\[S^v=\left\{x \in \bar{\Omega}^v:\:dist\:(x,v)=\rho_v\right\}.\]
We divide the surface $S^v$ into a set of triangular and quadrilateral elements as
shown in Figure \ref{fig2.8}. Let $S_j^v$ denote these elements where $1\leq j\leq I_v$.
Here $I_v$ denotes a fixed constant. Let $\mu_v$ be a positive constant less than
one which shall be used to define a geometric mesh in the vertex neighbourhood $\Omega^v$
of the vertex $v$. We now divide $\Omega^v$ into $N_v = I_v(N+1)$ curvilinear hexahedrons
and prisms $\{\Omega_l^v\}_{1 \leq l \leq N_v}$, where $\Omega_l^v$ is of the form
\[\Omega_l^v=\left\{x:\;(\phi,\theta) \in S_j^v, \ \rho_k^v<\rho<\rho_{k+1}^v\right\}\]
\begin{figure}[!ht]
\centering
\includegraphics[scale = 0.60]{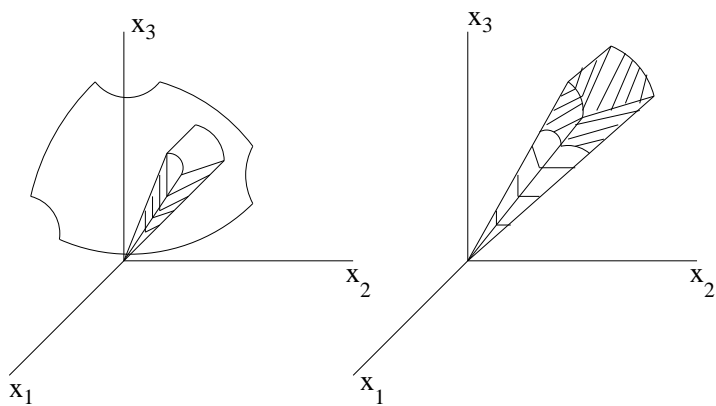}
\caption{ Geometrical mesh imposed on $\Omega^v$. }
\label{fig2.8}
\end{figure}
for $1 \leq j \leq I_v$ \ and \ $0 \leq k\leq N$. Here $\rho_k^v = \rho_v (\mu_v)^{N+1-k}$
and $0<\mu_v<1$ for ${1 \leq k \leq N+1}$. Moreover $\rho_0^v=0$. We introduce the vertex
coordinates $x^v$,
\begin{align}
x_1^v &= \phi \notag\\
x_2^v &= \theta \notag\\
x_3^v &= \chi = \;\ln \rho\;.\notag
\end{align}
Let $\tilde{\Omega}^v$ denote the image of $\Omega^v$ in $x^v$ coordinates and
$\tilde{\Omega}_l^v$ denote the image of the element $\Omega_l^v$. Then the geometric
mesh $\{\Omega_l^v\}_{1\leq l \leq N_v}$ which has been defined on $\Omega^v$, is
mapped to a quasi-uniform mesh  $\{\tilde{\Omega}_l^v\}_{1 \leq l \leq N_v}$ on
$\tilde{\Omega}^v$, except that the corner elements
$$\Omega_l^v =\left\{x:\;(\phi,\theta) \in S_j^v, \ 0< \rho <\rho_1^v\right\}$$
are mapped to the semi-infinite elements
$$\tilde{\Omega}_l^v =\left\{x^v:\; (\phi,\theta) \in S_j^v,\:
-\infty< \chi < \ln\rho_1^v\right\}\;.$$
We now specify the form of the spectral element functions $u_l^v(x^v)$ on the elements.
Consider first the case when $\tilde{\Omega}_l^v$ is a corner element of the form
$$\tilde{\Omega}_l^v =\left\{x^v:\; (\phi,\theta) \in S_j^v,\:
-\infty< \chi < \ln\rho_1^v\right\}\;.$$
In this case we define $u_l^v(x^v) = h^v$, where $h^v$ is a constant. Thus at all corner
elements the spectral element functions assume the same constant value for that corner.

Now there is an analytic map $M_l^v$ from $Q$, the master cube to $\tilde{\Omega}_l^v$,
which has an analytic inverse. Here the map $M_l^v$ is of the form
\[x^v = X_l^v(\lambda_1,\lambda_2,\lambda_3).\]
We define the spectral element function $u_l^v$ on $\tilde{\Omega}_l^v$ by
\[u_l^v(\lambda) = \sum_{t=0}^{W_l}\sum_{s=0}^{W_l}\sum_{r=0}^{W_l}\;\beta_{r,s,t}
\:\lambda_1^r\lambda_2^s\lambda_3^t\;.\]
Here $1\leq W_l\leq W$. Moreover as in~\cite{G1}, $W_l=[\mu_{1} i]$ for $1\leq i\leq N$,
where $\mu_{1}>0$ is a degree factor. Hereafter $[a]$ denotes the greatest positive integer
$\leq a$.

Let $v \in \mathcal V$ denote the vertices of $\Omega$. Define
\begin{equation}\label{eq2.20}
\mathcal V_{vertices}^{N,W}(\{\mathcal F_u\})
=\sum_{v \in \mathcal V}\mathcal V_v^{N,W}(\{\mathcal F_u\})
\end{equation}
and
\begin{equation}\label{eq2.21}
\mathcal U_{vertices}^{N,W}(\{\mathcal F_u\})
=\sum_{v \in \mathcal V} \mathcal U_v^{N,W}(\{\mathcal F_u\})\;.
\end{equation}
We now fix a vertex $v$ and define the quadratic forms
$\mathcal V_v^{N,W}(\{\mathcal F_u\})$ and $\mathcal U_v^{N,W}(\{\mathcal F_u\})$.
Consider the vertex neighbourhood $\Omega^v$ and let $\Omega_l^v$ be one of the
elements into which it is divided. Now $\Omega_l^v$ has $n_l^v$ faces
$\{\Gamma_{l,i}^v\}_{1\leq i\leq n_l^v}$. Let $\tilde{\Omega}_l^v$ be the image of
$\Omega^v_l$ and $\tilde{\Gamma}_{l,i}^v$ be the image of $\Gamma_{l,i}^v$ in $x^v$
coordinates.

Define $L^v u(x^v)$ so that
\begin{equation}\label{eq2.22}
\int_{\tilde{\Omega}_l^v}\left|\:L^v u(x^v)\:\right|^2\:dx^v =
\int_{\Omega_l^v} \rho^2\left|\:Lu(x)\:\right|^2\:dx\;.
\end{equation}
Here $dx^v$ denotes a volume element in $x^v$ coordinates and $dx$ a volume element
in $x$ coordinates. In Chapter $3$ it will be shown that
\begin{equation}\label{eq2.23}
L^v u(x^v)=-div_{x^v}\left(e^{\chi/2}\sqrt{\sin\phi}A^v\nabla_{x^v}u
\right)+\sum_{i=1}^3 \hat{b}_i^v u_{x_i^v}+\hat{c}^v u \;.
\end{equation}
In the above $A^v$ is a symmetric, positive definite matrix.

Let $\Gamma_{l,i}^v$ be one of the faces of $\Omega_l^v$ and
$\tilde{\Gamma}_{l,i}^v$ denote its image in $x^v$ coordinates.
Let $\tilde P$ be a point belonging to $\tilde{\Gamma}_{l,i}^v$
and $\nuw^v$ be the unit normal to $\tilde{\Gamma}_{l,i}^v$
at the point $\tilde P$. Then define
\begin{equation}\label{eq2.24}
\left(\frac {\partial u}{\partial \nuw^v}\right)_{A^v}
(\tilde P)= \left(\nuw^v\right)^T A^v\nabla_{x^v}u \,.
\end{equation}
Here the matrix $A^v$ is as in (\ref{eq2.21}).
\\
Let
$$R_{l,i}^v=\underset {x^v\in\tilde{\Gamma}_{l,i}^v}{sup}(e^{x_3^v}).$$
We now define
\begin{align}\label{eq2.25}
& \mathcal V_v^{N,W}\left(\{\mathcal F_u\}\right) =
\sum_{l=1,\mu(\tilde{\Omega}_l^v)<\infty}^{N_v}\int_{\tilde{\Omega}_l^v}
\left|\:L^v u_l^v(x^v)\:\right|^2\;dx^v \notag\\
&+\mathop{\sum_{\Gamma_{l,i}^v\subseteq\bar{\Omega}^v\setminus
\partial\Omega,}}_{\mu(\tilde{\Gamma}_{l,i}^v)<\infty}
\left(\left\|\:\sqrt{R_{l,i}^v}[u]\:\right\|_{0,\tilde{\Gamma}_{l,i}^v}^2
+\sum_{k=1}^3\left\|\:\sqrt{R_{l,i}^v}[u_{x_k^v}]\:\right\|_{1/2,{\tilde
{\Gamma}_{l,i}^v}}^2\right)\notag\\
&+\mathop{\sum_{{\Gamma_{l,i}^v}\subseteq{\Gamma^{[0]}},}}
_{\mu(\tilde{\Gamma}_{l,i}^v)<\infty}\left\|\:\sqrt{R_{l,i}^v} u_l^v\:
\right\|_{3/2,{\tilde{\Gamma}_{l,i}^v}}^2 +\mathop{\sum_{{\Gamma_{l,i}^v}
\subseteq{\Gamma^{[1]}},}}_{\mu(\tilde{\Gamma}_{l,i}^v)<\infty}\left\|\:
\sqrt{R_{l,i}^v}\left(\frac{\partial u_l^v}{\partial\nuw^v}\right)_{A^v}
\:\right\|_{1/2,{\tilde{\Gamma}_{l,i}^v}}^2.
\end{align}
The fractional Sobolev norms used above are as in (\ref{eq2.17}) and (\ref{eq2.18}).
Moreover $\mu$ denotes measure.
\newline
Finally, the quadratic form $\mathcal U_v^{N,W}(\{\mathcal{F}_u\})$ is given by
\begin{equation}\label{eq2.26}
\mathcal U_v^{N,W}(\{\mathcal F_u\}) = \sum_{l=1}^{N_v}
\int_{\tilde{\Omega}_l^v} e^{x_3^v}\sum_{|\alpha|\leq 2}
\:\left|\:D^\alpha_{x^v} u_l^v(x^v)\: \right|^2\;dx^v.
\end{equation}

We now define $\mathcal V_{vertex-edges}^{N,W}(\{\mathcal F_u\})$ and
$\mathcal U_{vertex-edges}^{N,W}(\{\mathcal F_u\})$. Let $v-e$ denote one of the vertex-edges
of $\Omega$. Here ${v-e \in \mathcal {V-E}}$, the set of vertex-edges of $\Omega$. Let
$\Omega^{v-e}$ denote the vertex-edge neighbourhood corresponding to the vertex-edge $v-e$.
We divide $\Omega^{v-e}$ into $N_{v-e}$ elements $\Omega^{v-e}_l$,\,$l=1,2,\ldots,N_{v-e}$,
using a geometric mesh.

Figure \ref{fig2.9} shows the vertex-edge neighbourhood $\Omega^{v-e}$ of the vertex $v$ and
the edge $e$. Now
\[\Omega^{v-e} =\left\{x \in \Omega:\; 0<x_3<\delta_v,\:0<\phi<\phi_v\:\right\}\;.\]
Here $\delta_v = \rho_v\cos\phi_v$. We impose a geometrical mesh on $\Omega^{v-e}$ as shown in
Figure \ref{fig2.9} by defining
$$(x_3)_0=0 \mbox{ and }(x_3)_i=\delta_v(\mu_v)^{N+1-i}$$
for $1\leq i\leq N+1$. Let
\[\zeta_i^{v-e} = \ln\left((x_3)_i\right)\]
for $0\leq i\leq N+1$.
\begin{figure}[!ht]
\centering
\includegraphics[scale = 0.60]{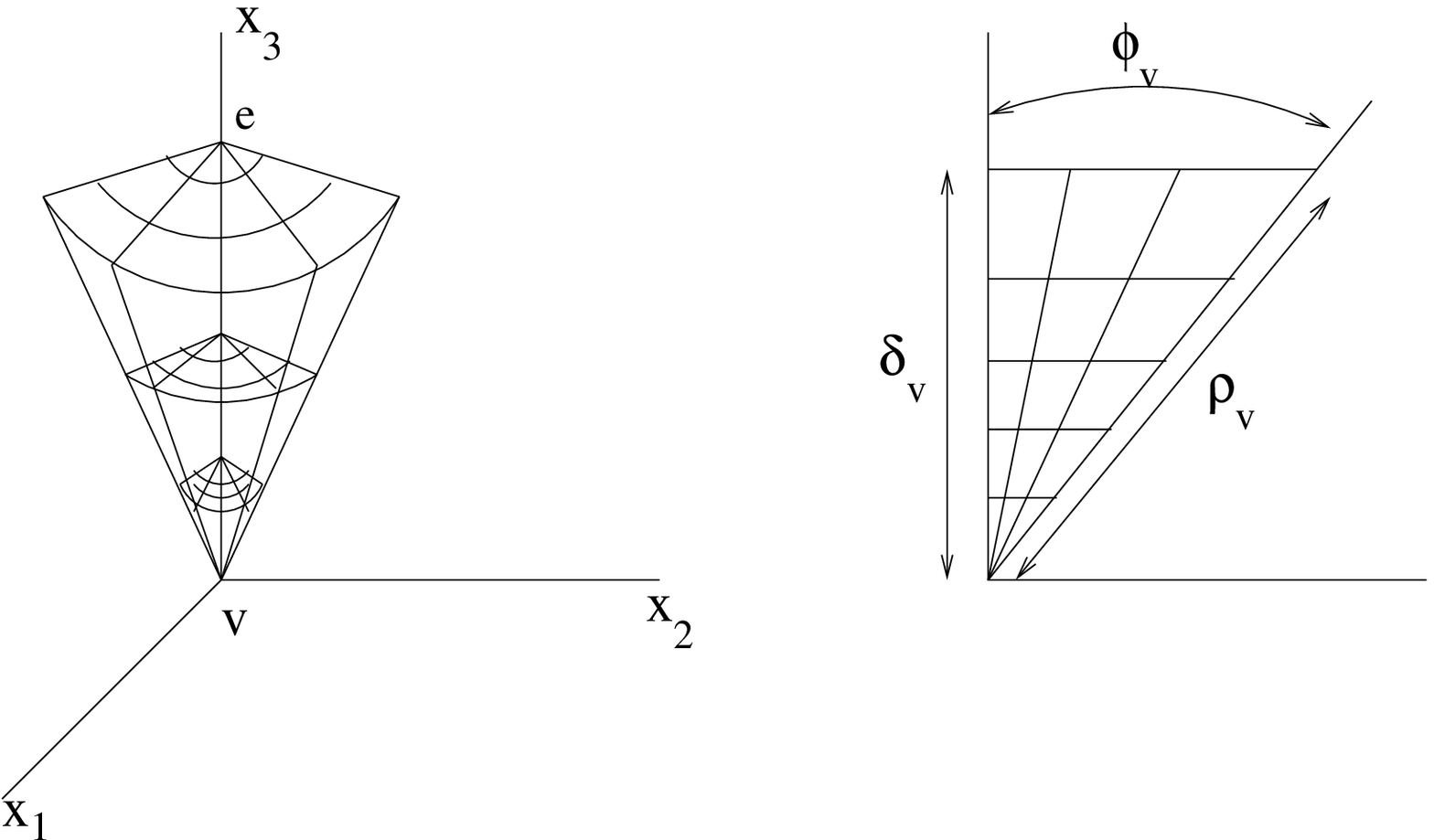}
\caption{ Geometrical mesh imposed on $\Omega^{v-e}$. }
\label{fig2.9}
\end{figure}

Let us introduce points $\phi_0^{v-e},\ldots,\phi_{N+1}^{v-e}$ such that
$\phi_{0}^{v-e} = 0$ and $\tan \phi_i^{v-e}=\mu_e^{N+1-i} \tan(\phi_v)$,
for $1\leq i\leq N+1$. Here $\mu_e$ is a positive constant less than one.
Thus we impose a geometrical mesh on $\phi$ with mesh ratio $\mu_e$.
Finally, $\theta_{v-e}^l<\theta<\theta_{v-e}^u$. A quasi-uniform mesh
\[\theta_{v-e}^l = \theta^{v-e}_0 < \theta^{v-e}_1
<\cdots<\theta^{v-e}_{I_{v-e}} = \theta_{v-e}^u\]
is imposed in $\theta$. We introduce new coordinates in $\Omega^{v-e}$ by
\begin{align*}
x_1^{v-e} &= \psi =\; \ln (\tan{\phi}) \\
x_2^{v-e} &= \theta  \\
x_3^{v-e} &= \zeta =\; \ln x_3\;.
\end{align*}
Let $\tilde{\Omega}^{v-e}$ be the image of $\Omega^{v-e}$ in $x^{v-e}$ coordinates. Thus
$\tilde{\Omega}^{v-e}$ is divided into $N_{v-e} = I_{v-e}(N+1)^2$ hexahedrons
$\{\tilde{\Omega}^{v-e}_n\}_{n=1,\ldots,N_{v-e}}$, where
\[\tilde{\Omega}^{v-e}_n =\left\{x^{v-e}:\;\psi_i^{v-e}<\psi<\psi_{i+1}^{v-e},
\;\theta_j^{v-e}<\theta<\theta_{j+1}^{v-e},\;\zeta_k^{v-e}<\zeta<\zeta_{k+1}^{v-e}\right\}.\]

We now define the spectral element functions on the elements in $\tilde{\Omega}^{v-e}$.
Consider an element
\[\tilde{\Omega}^{v-e}_n =\left\{x^{v-e}:\; \psi_i^{v-e}<\psi<\psi_{i+1}^{v-e},
\;\theta_j^{v-e}<\theta<\theta_{j+1}^{v-e},\;-\infty<\zeta<\zeta_{1}^{v-e}\right\}.\]
Then on $\tilde{\Omega}^{v-e}_n$
\[u_n^{v-e} = h_{v-e} = h_v\]
where $h_v$ is the same constant as for the spectral element function $u_m^v$ defined on the
corner element
\[\tilde{\Omega}^{v}_m = \left\{x^v:\;(\phi,\theta) \in
S_j^v,\; -\infty< \chi< \ln(\rho_1^v)\right\}.\]
Next, we consider the element
\[\tilde{\Omega}^{v-e}_p = \left\{x^{v-e}: -\infty<\psi<
\psi_{1}^{v-e},\;\theta_j^{v-e}<\theta<\theta_{j+1}
^{v-e},\;\zeta_k^{v-e}<\zeta<\zeta_{k+1}^{v-e}\right\}\,.\]
Here $k \geq 1$.
\newline
Then on $\tilde{\Omega}^{v-e}_p$ we define
$$u_p^{v-e}(x^{v-e}) = \sum_{l=0}^{W_p}\:\beta_l\:\zeta^l.$$
Here $1\leq W_p \leq W$. Moreover $W_p=[\mu_{2} k]$ for $1\leq k\leq N$, where $\mu_{2}>0$ is
a degree factor.
\newline
Now consider
\[\tilde{\Omega}^{v-e}_q = \left\{x^{v-e}:\;\psi_i^{v-e}<\psi<\psi_{i+1}^{v-e},\;
\theta_j^{v-e}<\theta<\theta_{j+1}^{v-e},\;\zeta_k^{v-e}<\zeta<\zeta_{k+1}^{v-e}\right\}\]
for $1\leq i \leq N$, $1\leq k \leq N$. Then on
$\tilde{\Omega}^{v-e}_q$ we define
\[u_q^{v-e}(x^{v-e})=\sum_{r=0}^{W_q}\sum_{s=0}^{W_q}\sum_{t=0}^{V_q}\;\gamma_{r,s,t}
\;\psi^r \theta^s \zeta^t.\]
Here $1\leq W_q \leq W$ and $1\leq V_q \leq W$. Moreover $W_q=[\mu_{1}i], V_q=[\mu_{2}k]$
for $1\leq i,k\leq N$, where $\mu_{1},\mu_{2}>0$ are degree factors \cite{G1}.

Let $\tilde{\Gamma}_{n,i}^{v-e}$ be one of the faces of $\tilde{\Omega}_{n}^{v-e}$ such
that $\mu(\tilde{\Gamma}_{n,i}^{v-e})<\infty$, where $\mu$ denotes measure. We introduce
a norm $|||\,u\,|||_{\tilde{\Gamma}_{n,i}^{v-e}}^2$ as follows:
\newline
Let $E_{n,i}^{v-e}=\underset{x^{v-e}\in{\tilde{\Gamma}_{n,i}
^{v-e}}}{sup}(\sin\phi)$ and $F_{n,i}^{v-e}=\underset{x^{v-e}
\in{\tilde{\Gamma}_{n,i}^{v-e}}}{sup}(e^{x_3^{v-e}})$. We
also define $G_{n,i}^{v-e}$ which is used in $(\ref{eq2.29})$.
\\
$1)$ If $\tilde{\Gamma}_{n,i}^{v-e} = \left\{ x^{v-e}:\;
\alpha_{0}< x_1^{v-e} <\alpha_1,\;\beta_{0}< x_2^{v-e}
<\beta_1,\;x_3^{v-e}= \gamma_{0}\right\}$ then define
$G_{n,i}^{v-e}=E_{n,i}^{v-e}$ and
\begin{subequations} \label{eq2.27}
\begin{align}\label{eq2.27a}
|||\,u\,|||_{\tilde{\Gamma}_{n,i}^{v-e}}^2 & = E_{n,i}^{v-e}
F_{n,i}^{v-e}\left(\int_{\beta_0}^{\beta_1}
\int_{\alpha_0}^{\alpha_1}u^2(\psi,\theta,\gamma_0)
\:d\psi\:d\theta\right. \notag\\
&+ \int_{\beta_0}^{\beta_1}\:d\theta \int_{\alpha_0}^{\alpha_1}
\int_{\alpha_0}^{\alpha_1} \frac{\left(u(\psi,\theta,\gamma_0)
-u(\psi^{\prime},\theta,\gamma_0)\right)^2}{(\psi-\psi^{\prime}
)^2}\:d\psi\:d\psi^{\prime} \notag\\
&+\left. \int_{\alpha_0}^{\alpha_1} \:d\psi
\int_{\beta_0}^{\beta_1}\int_{\beta_0}^{\beta_1}
\frac{(u(\psi,\theta,\gamma_0)-u(\psi,\theta^{\prime},
\gamma_0))^2}{(\theta-\theta^{\prime})^2}\:d\theta
\:d\theta^{\prime}\right).
\end{align}
$2)$ If $\tilde{\Gamma}_{n,i}^{v-e} = \left\{ x^{v-e}:
\;x_1^{v-e}=\alpha_0,\;\beta_{0}<x_2^{v-e}<\beta_1,\;
\gamma_{0}<x_3^{v-e}<\gamma_{1}\right\}$ then define
$G_{n,i}^{v-e}=1$ and
\begin{align}\label{eq2.27b}
|||\,u\,|||_{\tilde{\Gamma}_{n,i}^{v-e}}^2 &=F_{n,i}^{v-e}
\left(\int_{\gamma_0}^{\gamma_1} \int_{\beta_0}^{\beta_1}
u^2(\alpha_0,\theta,\zeta) \:d\theta\:d\zeta \right.\notag\\
&+ \int_{\gamma_0}^{\gamma_1} \:d\zeta
\int_{\beta_0}^{\beta_1}\int_{\beta_0}^{\beta_1}
\frac{(u(\alpha_0,\theta,\zeta)-u(\alpha_0,\theta^{\prime},
\zeta))^2}{(\theta-\theta^{\prime})^2}\:d\theta\:d\theta^
{\prime}\notag\\
&+\left.E_{n,i}^{v-e}\int_{\beta_0}^{\beta_1}
\:d\theta\int_{\gamma_0}^{\gamma_1}\int_{\gamma_0}
^{\gamma_1}\frac{(u(\alpha_0,\theta,\zeta)-u(\alpha_0,\theta,
\zeta^{\prime}))^2}{(\zeta-\zeta^{\prime})^2}\:d\zeta
\:d\zeta^{\prime}\right).
\end{align}
$3)$ If $\tilde{\Gamma}_{n,i}^{v-e} = \left\{ x^{v-e}
:\;\alpha_0<x_1^{v-e}<\alpha_1,\;x_2^{v-e} = \beta_0,
\;\gamma_{0}<x_3^{v-e}< \gamma_{1}\right\}$ then define
$G_{n,i}^{v-e}=1$ and
\begin{align}\label{eq2.27c}
|||\,u\,|||_{\tilde{\Gamma}_{n,i}^{v-e}}^2 &= F_{n,i}^{v-e}
\left(\int_{\gamma_0}^ {\gamma_1}\int_{\alpha_0}^{\alpha_1}
u^2(\psi,\beta_0,\zeta) \:d\psi\:d\zeta \right.\notag\\
&+ \int_{\gamma_0}^{\gamma_1}\:d\zeta\int_{\alpha_0}^{\alpha_1}
\int_{\alpha_0}^{\alpha_1}\frac{(u(\psi,\beta_0,\zeta)-u
(\psi^{\prime},\beta_0,\zeta))^2}{(\psi-\psi^{\prime})^2}
\:d\psi\:d\psi^{\prime}\notag\\
&+\left.E_{n,i}^{v-e}\int_{\alpha_0}^{\alpha_1}
\:d\psi\int_{\gamma_0}^{\gamma_1}\int_{\gamma_0}
^{\gamma_1}\frac{(u(\psi,\beta_0,\zeta)-u(\psi,\beta_0,
\zeta^{\prime}))^2}{(\zeta-\zeta^{\prime})^2}
\:d\zeta\:d\zeta^{\prime}\right).
\end{align}
\end{subequations}
Let $L^{v-e}$ be a differential operator such that
$$\int_{\tilde{\Omega}_n^{v-e}}\left|\:L^{v-e}u(x^{v-e})
\:\right|^2\:dx^{v-e} = \int_{\Omega_l^v}\rho^2\sin^2\phi
\:|Lu(x)|^2\:dx\:.$$
Here
$dx^{v-e}$ denotes a volume element in $x^{v-e}$ coordinates and
$dx$ a volume element in $x$ coordinates. In Chapter $3$ it will be
been shown that
\begin{equation}\label{eq2.28}
L^{v-e} u(x^{v-e}) = -div_{x^{v-e}}\left(e^{\zeta/2}A^{v-e}\nabla_{x^{v-e}}
u\right) + \sum_{i=1}^3 \hat{b}_i^{v-e}u_{x_i^{v-e}} + \hat{c}^{v-e} u.
\end{equation}
Here $A^{v-e}$ is a symmetric, positive definite matrix.
\newline
We now define the quadratic form
\begin{align}\label{eq2.29}
\mathcal V_{v-e}^{N,W}(\{\mathcal F_u\}) &= \sum_{l=1,
\mu(\tilde{\Omega}_l^{v-e})<\infty}^{N_{v-e}}
\int_{\tilde{\Omega}_l^{v-e}}\left|\:L^{v-e} u_l^{v-e}(x^{v-e})
\:\right|^2 \:dx^{v-e} \notag\\
&+ \mathop{\sum_{\Gamma_{n,i}^{v-e}\subseteq\bar{\Omega}^{v-e}
\setminus\partial{\Omega},}}_{\mu(\tilde{\Gamma}_{n,i}^{v-e})
<\infty}\left(\left\|\:\sqrt{F_{n,i}^{v-e}G_{n,i}^{v-e}}\;[u]
\:\right\|_{0,\tilde{\Gamma}_{n,i}^{v-e}}^2
+\big|\big|\big|\;[u_{x_1^{v-e}}]\;\big|\big|\big|_{\tilde
{\Gamma}_{n,i}^{v-e}}^2\right. \notag\\
&+\big|\big|\big|\:[u_{x_2^{v-e}}]\:\big|\big|
\big|_{\tilde{\Gamma}_{n,i}^{v-e}}^2+
\big|\big|\big|\:E_{n,i}^{v-e}\,[u_{x_3^{v-e}}]
\:\big|\big|\big|_{\tilde{\Gamma}_{n,i}^{v-e}}^2\Bigg) \notag\\
&+\mathop{\sum_{{\Gamma_{n,i}^{v-e}}\subseteq{\Gamma^{[0]}},}}
_{\mu(\tilde{\Gamma}_{n,i}^{v-e})<\infty}
\left(\left\|\,\sqrt{F_{n,i}^{v-e}}\;u_n^{v-e}\,
\right\|_{0,{\tilde{\Gamma}_{n,i}^{v-e}}}^2
+\big|\big|\big|\:u_{x_1^{v-e}}\:\big|\big|\big|^2_{\tilde
{\Gamma}_{n,i}^{v-e}}\right. \notag\\
&+\big|\big|\big|\:E_{n,i}^{v-e}
\,u_{x_3^{v-e}}\:\big|\big|\big|^2_{\tilde{\Gamma}_{n,i}
^{v-e}}\Bigg)+\mathop{\sum_{{\Gamma_{n,i}^{v-e}}\subseteq
{\Gamma^{[1]}},}}_{\mu(\tilde{\Gamma}_{n,i}^{v-e})<\infty}
\Big|\Big|\Big|\:\left(\frac{\partial u}{\partial\nuw^{v-e}}
\right)_{A^{v-e}}\:\Big|\Big|\Big|_{\tilde
{\Gamma}_{n,i}^{v-e}}^2.
\end{align}
Once more $\mu$ denotes measure.\\
Here the term
$\left(\frac{\partial u}{\partial \nuw^{v-e}}\right)_{A^{v-e}}$ is defined as
follows. Let $\tilde{\Gamma}_{n,i}^{v-e}$ be a face of $\tilde{\Omega}_{n,i}^{v-e}$,
$\tilde P$ be a point belonging to $\tilde{\Gamma}_{n,i}^{v-e}$ and $\nuw^{v-e}$
denote the unit normal to $\tilde{\Gamma}_{n,i}^{v-e}$ at the point $\tilde P$. Then
\begin{equation}\label{eq2.30}
\left(\frac{\partial u}{\partial \nuw^{v-e}}\right)_{A^{v-e}}
(\tilde P)= (\nuw^{v-e})^T A^{v-e}\nabla_{x^{v-e}}u.
\end{equation}
Now the quadratic form $\mathcal V_{vertex-edges}^{N,W}
(\{\mathcal F_u\})$ is given by
\begin{equation}\label{eq2.31}
\mathcal V_{vertex-edges}^{N,W}(\{\mathcal F_u\})=\sum_{{v-e}\in\mathcal {V-E}}
\mathcal V_{v-e}^{N,W} (\{\mathcal {F}_u\}).
\end{equation}
Next, we define the quadratic form
$\mathcal U_{v-e}^{N,W}(\{\mathcal F_u\})$. Let $w^{v-e}(x_1^{v-e})$ be a positive, smooth
weight function such that
\begin{eqnarray*}
w^{v-e}(x_1^{v-e}) = 1 \quad for \quad  x_1^{v-e} \geq
\zeta_1^{v-e}=\ln(\tan\phi_1^{v-e})
\end{eqnarray*}
and which satisfies
\[\int_{-\infty}^{\zeta_1^{v-e}}w^{v-e}
(x_1^{v-e})\:dx_1^{v-e} = 1.\]
We shall choose
$$w^{v-e}(x_1^{v-e}) = 1 \quad for \quad  x_1^{v-e} \geq \zeta_1^{v-e}-1$$
and
$$w^{v-e}(x_1^{v-e}) = 0 \quad for \quad  x_1^{v-e} < \zeta_1^{v-e}-1\:.$$
Then
\begin{align}\label{eq2.32}
\mathcal U_{v-e}^{N,W}(\{\mathcal F_u\}) &=\sum_{l=1,\mu(\tilde
{\Omega}_l^{v-e})<\infty}^{N_{v-e}}
\int_{\tilde{\Omega}_l^{v-e}}e^{x_3^{v-e}}
\left(\sum_{i,j=1,2}\left(\frac{\partial^2u_l^{v-e}} {\partial
x_i^{v-e}\partial x_j^{v-e}}\right)^2 \right. \notag\\
&+ \sum_{i=1}^2\sin^2\phi\left(\frac{\partial^2u_l^{v-e}}
{\partial x_i^{v-e} \partial x_3^{v-e}}\right)^2
+\sin^4\phi\left(\frac{\partial^2 u_l^{v-e}}{\left (\partial
x_{3}^{v-e}\right)^2}\right)^2 + \sum_{i=1}^2\left(\frac{\partial
u_l^{v-e}}{\partial x_i^{v-e}}\right)^2 \notag\\
&\left.+\sin^2\phi\left(\frac{\partial
u_l^{v-e}}{\partial x_3^{v-e}}\right)^2
+(u_l^{v-e})^2\right)\:dx^{v-e} \notag\\
&+\mathop{\sum_{l=1,}^{N_{v-e}}}_{\mu(\tilde{\Omega}_l^{v-e})=\infty}
\int_{\tilde{\Omega}_l^{v-e}}(u_l^{v-e})^2e^{x_3^{v-e}}
\:w^{v-e}(x_1^{v-e})\:dx^{v-e}\,.
\end{align}
The quadratic form $\mathcal U_{vertex-edges}^{N,W}
(\{\mathcal F_u\})$ is then given by
\begin{equation}\label{eq2.33}
\mathcal U_{vertex-edges}^{N,W}(\{\mathcal F_u\}) = \sum_{{v-e} \in
\mathcal {V-E}}\mathcal U_{v-e}^{N,W}(\{\mathcal F_u\}).
\end{equation}

Finally, we define the quadratic forms
$\mathcal V_{edges}^{N,W} \left(\{\mathcal F_u\}\right)$ and
$\mathcal U_{edges}^{N,W} \left(\{\mathcal F_u\}\right)$.
\begin{figure}[!ht]
\centering
\includegraphics[scale = 0.60]{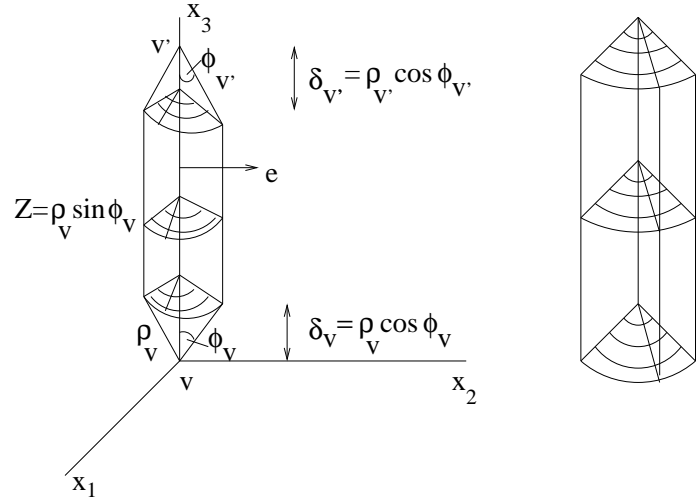}
\caption{ Geometrical mesh imposed on $\Omega^e$. }
\label{fig2.10}
\end{figure}
Consider the edge $e$ whose end points are $v$ and $v^{\prime}$. The edge $e$ coincides with
the $x_3$ axis and the vertex $v$ with the origin. Let the length of the edge $e$ be $l_e$.
Now the edge neighbourhood $\Omega^e$ is defined as
\[\Omega^e = \left\{x\in\Omega:\: 0<r<Z=\rho_v\sin\phi_v,\:\theta_{v-e}^l
<\theta<\theta_{v-e}^u,\:\delta_v<x_3<l_e-\delta_v^{\prime}\right\}\,.\]
Here $(r,\theta,x_3)$ denote cylindrical coordinates with origin at $v$,
$\delta_v=\rho_v\cos\phi_v$ and $\delta_v^{\prime}=\rho_v^{\prime}\cos\phi_v^{\prime}$
are as shown in Figure~\ref{fig2.10}.

A geometrical mesh in $r$ is imposed by defining $r_{0}^e=0$ and $r_j^e=Z(\mu_e)^{N+1-j}$
for $j=1,2,\ldots,N+1.$ We impose the same quasi-uniform mesh on $\theta$ as we did in the
vertex-edge neighbourhood, viz.
\[\theta_{v-e}^l = \theta^{v-e}_0 < \theta^{v-e}_1 <\cdots<
\theta^{v-e}_{I^{e}} = \theta_{v-e}^u\,.\] Here $I_e=I_{v-e}$ and
$\theta_k^e=\theta_k^{v-e}$ for $0\leq k\leq I_e$. A
quasi-uniform mesh is defined in $x_3$,\;by choosing
\[\delta_v=Z_0^e<Z_1^e<\cdots<Z_{J_e}^e
= l_e-\delta_v^{\prime}\,.\] Thus $\Omega^e$ is divided into $N_e=
I_e\,J_e\,(N+1)$ elements. We introduce new coordinates in $\Omega^e$ by
\begin{align*}
x_1^e &= \tau= \;\ln r\\
x_2^e &= \theta \\
x_3^e &= x_3 \,.
\end{align*}
Let $\tilde{\Omega}^e$ be the image of $\Omega^e$ in $x^e$ coordinates. Thus
$\tilde{\Omega}^e$ is divided into $N_e$ hexahedrons $\{\tilde{\Omega}^e_m\}_{m=1,\ldots,N_e}$
where
$$\tilde{\Omega}^e_m=\left\{x^e:\:\ln(r_i^e)<x_1^e
<\ln(r_{i+1}^e),\:\theta_j^e<x_2^e<\theta_{j+1}^e,
\:Z_k^e<x_3^e<Z_{k+1}^e\right\}\;.$$
We now define the spectral element functions on the elements in $\tilde{\Omega}^e$.
Consider an element
\[\tilde{\Omega}^e_p = \left\{x^e:\: -\infty<x_1^e<\ln(r_{1}^e),
\:\theta_j^e<x_2^e<\theta_{j+1}^e,\:Z_n^e<x_3^e<Z_{n+1}^e\right\}\;.\]
Then
\[u_p^e(x^e) = \sum_{t=0}^W\:\alpha_r(x_3^e)^t\:.\]
This representation is valid for all $j$ for fixed $n$.
\newline
Next, consider the element
\[\tilde{\Omega}^e_q = \left\{x^e:\: \ln (r_i^e)<x_1^e<\ln (r_{i+1}^e),
\:\theta_j^e<x_2^e<\theta_{j+1}^e,\:Z_n^e<x_3^e<Z_{n+1}^e\right\}\]
for $1\leq i\leq N\,.$
\newline
Then we define
\[u_q^e(x^e) = \sum_{r=0}^{W_q} \sum_{s=0}^{W_q}\sum_{t=0}^W \:\alpha_{r,s,t}
\:(x_1^e)^r(x_2^e)^s(x_3^e)^t\;.\]
Here $1\leq W_q\leq W$. Moreover $W_q=[\mu_{1}i]$ for all $1\leq i\leq N$, where
$\mu_{1}>0$ is a degree factor~\cite{G1}.
\newline
Let $\tilde{\Gamma}^e_{m,i}$ be one of the faces of
$\tilde{\Omega}^e_m$ such that $\mu(\tilde{\Gamma}_{m,i}^e)
<\infty$, where $\mu$ denotes measure. We define a norm
$|||\,u\,|||^2_{\tilde{\Gamma}^e_{m,i}}$ as follows:
\newline
Let $G_{m,i}^{e}=\underset{x^e\in{\tilde{\Gamma}_{m,i}^e}}{sup}\left(e^\tau\right)$.
We also define $H_{m,i}^e$ which is needed in (\ref{eq2.38}).
\newline
$1)$ If $\tilde{\Gamma}_{m,i}^e = \left\{ x^e:\:\alpha_{0}<x_1^e
<\alpha_1,\:\beta_{0}< x_2^e <\beta_1,\: x_3^e=\gamma_{0}\right\}$
then define $H_{m,i}^{e}=G_{m,i}^{e}$ and
\begin{subequations}\label{eq2.34}
\begin{align}\label{eq2.34a}
|||\,u\,|||_{\tilde{\Gamma}_{m,i}^e}^2
&=G_{m,i}^{e}\left(\int_{\beta_0}^{\beta_1} \int_{\alpha_0}
^{\alpha_1}u^2(\tau,\theta,\gamma_0)\:d\tau\:d\theta \right. \notag\\
&+ \int_{\beta_0}^{\beta_1} d\theta \int_{\alpha_0}^{\alpha_1}
\int_{\alpha_0}^{\alpha_1}\frac{(u(\tau,\theta,\gamma_0)-u(\tau^{\prime},
\theta,\gamma_0))^2}{(\tau-\tau^{\prime})^2}\:d\tau \:d\tau^{\prime} \notag\\
&+ \left.\int_{\alpha_0}^{\alpha_1}\:d\tau\int_{\beta_0}^{\beta_1}
\int_{\beta_0}^{\beta_1}\frac{(u(\tau,\theta,\gamma_0)
-u(\tau,\theta^{\prime},\gamma_0))^2}{(\theta-\theta^{\prime})^2}
\:d\theta\:d\theta^{\prime}\right)\;.
\end{align}
$2)$ If $\tilde{\Gamma}_{m,i}^{e} =\{ x^e:\; x_1^e =
\alpha_0,\;\beta_{0}< x_2^e <\beta_1,\; \gamma_{0}<
x_3^e<\gamma_{1}\}$ then define $H_{m,i}^{e}=1$ and
\begin{align}\label{eq2.34b}
|||\,u\,|||_{\tilde{\Gamma}_{m,i}^e}^2 &=
\left(\int_{\gamma_0}^{\gamma_1} \int_{\beta_0}^{\beta_1}
u^2(\alpha_0,\theta,x_3) \:d\theta\:dx_3\right. \notag\\
&+ \int_{\gamma_0}^{\gamma_1} \:dx_3\int_{\beta_0}^{\beta_1}
\int_{\beta_0}^{\beta_1}\frac{\left(u(\alpha_0,\theta,x_3)
-u(\alpha_0,\theta^{\prime},x_3)\right)^2}{(\theta-\theta
^{\prime})^2}\:d\theta\:d\theta^{\prime} \notag\\
&+G_{m,i}^{e}\left.\int_{\beta_0}^{\beta_1}
\:d\theta \int_{\gamma_0}^{\gamma_1}\int_{\gamma_0}^
{\gamma_1}\frac{(u(\alpha_0,\theta,x_3)-u(\alpha_0,
\theta,x_3^{\prime}))^2}{(x_3-x_3^{\prime})^2}
\:dx_3\:dx_3^{\prime}\right).
\end{align}
$3)$ If $\tilde{\Gamma}_{m,i}^e = \left\{ x^e: \;\alpha_0<
x_1^e<\alpha_1,\:x_2^e=\beta_0,\:\gamma_{0}< x_3^e<
\gamma_{1}\right\}$ then define $H_{m,i}^{e}=1$ and
\begin{align}\label{eq2.34c}
|||\,u\,|||_{\tilde{\Gamma}_{m,i}^e}^2 &=
\left(\int_{\gamma_0}^{\gamma_1} \int_{\alpha_0}
^{\alpha_1}u^2(\tau,\beta_0,x_3)\:d\tau\:dx_3 \right. \notag\\
&+ \int_{\gamma_0}^{\gamma_1}\:dx_3
\int_{\alpha_0}^{\alpha_1}\int_{\alpha_0}^{\alpha_1}
\frac{(u(\tau,\beta_0,x_3)-u(\tau^{\prime},
\beta_0,x_3))^2}{(\tau-\tau^{\prime})^2}\:d\tau\:d\tau^{\prime} \notag\\
&+\left. G_{m,i}^{e}\int_{\alpha_0}^{\alpha_1}\:d\tau
\int_{\gamma_0}^{\gamma_1} \int_{\gamma_0}^{\gamma_1}
\frac{(u(\tau,\beta_0,x_3)-u(\tau,\beta_0,x_3^{\prime}))^2}
{(x_3-x_3^{\prime})^2}\:dx_3\:dx_3^{\prime}\right)\;.
\end{align}
\end{subequations}
Let $L^e$ be a differential operator such that
\begin{equation}\label{eq2.35}
\int_{\tilde{\Omega}_m^e} |L^eu(x^e)|^2 dx^e = \int_
{\Omega_m^e} r^2 |Lu(x)|^2 dx\:.
\end{equation}
Here $dx^e$ denotes a volume element in $x^e$ coordinates and
$dx$ a volume element in $x$ coordinates.
\newline
In Chapter $3$ it will be shown that
\begin{equation}\label{eq2.36}
L^e u(x^e) = -div_{x^e}\left(A^e\nabla_{x^e}u\right)
+\sum_{i=1}^3 \hat{b}_i^e u_{x_i^e} + \hat{c}^e u
\end{equation}
where $A^e$ is a symmetric, positive definite matrix.
\newline
Let $\tilde{\Gamma}_{m,i}^e$ be one of the sides of
$\tilde{\Omega}_{m}^e$ and $\tilde P$ a point belonging to
$\tilde{\Gamma}_{m,i}^e$. Let $\nuw^e$ be the normal to
$\tilde{\Gamma}_{m,i}^e$ at $\tilde P$. Then
\begin{equation}\label{eq2.37}
\left(\frac {\partial u}{\partial \nuw^e}\right)_{A^e}
(\tilde P)=(\nu^e)^T A^e\nabla_{x^e}u(\tilde P)\:.
\end{equation}
We now define the quadratic form
\begin{align}\label{eq2.38}
\mathcal V_{e}^{N,W}(\{\mathcal F_u\}) &=
\sum_{l=1,\mu(\tilde{\Omega}_l^{e})<\infty}^{N_e}
\int_{\tilde{\Omega}_{l}^e}|L^e u_l^e(x^e)|^2\:dx^e \notag\\
&+\mathop{\sum_{\Gamma_{l,i}^e\subseteq\bar{\Omega}^e\setminus
\partial\Omega,}}_{\mu(\tilde{\Gamma}_{l,i}^{e})<\infty}
\left(\left\|\,\sqrt{H_{l,i}^e}\:[u]\,\right\|_{0,\tilde
{\Gamma}_{l,i}^e}^2
+\big|\big|\big|\,[u_{x_1^e}]\,\big|\big|\big|_{\tilde
{\Gamma}_{l,i}^e}^2 \right. \notag\\
&\left.+\big|\big|\big|\,[u_{x_2^e}]\,\big|\big|\big|
_{\tilde{\Gamma}_{l,i}^e}^2+\big|\big|\big|\,G_{l,i}^{e}
[u_{x_3^e}]\,\big|\big|\big|_{\tilde{\Gamma}_{l,i}^e}^2\right) \notag\\
&+ \mathop{\sum_{{\Gamma_{l,i}^e}\subseteq{\Gamma^{[0]}},}}_{\mu
(\tilde{\Gamma}_{l,i}^{e})<\infty}
\left(\left\|\:u_l^e\,\right\|_{0,{\tilde{\Gamma}
_{l,i}^e}}^2+\big|\big|\big|\,u_{x_1^e}\,\big|\big|\big|^2_{\tilde
{\Gamma}_{l,i}^e}+\big|\big|\big|\,G_{l,i}^{e}\,u_{x_3^e}\,
\big|\big|\big|^2_{\tilde{\Gamma}_{l,i}^e}\right) \notag\\
&+\mathop{\sum_{{\Gamma_{l,i}^e}\subseteq{\Gamma^{[1]}},}}_{\mu
(\tilde{\Gamma}_{l,i}^{e})<\infty}\Big|\Big|\Big|\left(\frac{\partial u}
{\partial\nuw^e}\right)_{A^e}\Big|\Big|\Big|_{\tilde{\Gamma}_{l,i}^e}^2.
\end{align}
The quadratic form $\mathcal V_{edges}^{N,W}(\{\mathcal F_u\})$
is given by
\begin{equation}\label{eq2.39}
\mathcal V_{edges}^{N,W}(\{\mathcal F_u\})=\sum_{e\in\mathcal E}
\mathcal V_e^{N,W}(\{\mathcal F_u\}).
\end{equation}
Next, let us define the quadratic form
$\mathcal U_e^{N,W}(\{\mathcal F_u\})$. Let $w^e(x_1^e)$ be a positive, smooth
weight function such
that
\begin{eqnarray*}
w^e(x_1^e) = 1 \quad {\it for} \quad  x_1^e \geq \ln (r_1^e)
\end{eqnarray*}
and
$$\int_{-\infty}^{\ln (r_1^e)}w^e(x_1^e)\; dx_1^e = 1\;.$$
We shall choose
$$w^e(x_1^e)=1\quad{\it for}\quad x_1^e\geq\ln(r_1^e)-1$$
and
$$w^e(x_1^e)=0\quad{\it for}\quad  x_1^e <\ln (r_1^e)-1\:.$$
\newline
Then
\begin{align}\label{eq2.40}
\mathcal U_e^{N,W}(\{\mathcal F_u\}) &= \sum_{l=1,\mu(\tilde
{\Omega}_l^{e})<\infty}^{N_e}
\int_{\tilde{\Omega}_l^e}\left(\sum_{i,j=1,2}\left(\frac
{\partial^2 u_l^e} {\partial x_i^e \partial x_j^e}\right)^2 +
e^{2\tau}\sum_{i=1}^2\left(\frac{\partial^2
u_l^e}{\partial x_i^e \partial x_3^e}\right)^2 \right. \notag\\
&+\left. e^{4\tau}\left(\frac{\partial^2 u_l^e}{\left(\partial
x_{3} ^e\right)^2}\right)^2 + \sum_{i=1}^2\left(\frac{\partial
u_l^e}{\partial x_i^e}\right)^2+ e^{2\tau}\left(\frac{\partial
u_l^e}{\partial x_3^e}\right)^2+(u_l^e)^2\right)\:dx^e \notag\\
&+\mathop{\sum_{l=1,}^{N_e}}_{\mu(\tilde{\Omega}_l^{e})=\infty}
\int_{\tilde{\Omega}_l^e}(u_l^e)^2\:w^e(x_1^e)\:dx^e\:.
\end{align}
Here $\mu$ denotes measure.
\newline
We define
\begin{equation}\label{eq2.41}
\mathcal U_{edges}^{N,W}(\{\mathcal F_u\}) = \sum_{e \in \mathcal
E} \mathcal U_e^{N,W}(\{\mathcal F_u\})\:.
\end{equation}
Finally, using $(\ref{eq2.14})$ and $(\ref{eq2.15})$ we can define the
quadratic forms
$\mathcal V^{N,W}(\{\mathcal F_u\})$ and $\mathcal U^{N,W}
(\{\mathcal F_u\}).$
\newline
We now state the main result of this chapter. It is assumed that $N$ is
proportional to $W$.
\begin{theo}\label{thm2.3.1}
Consider the elliptic boundary value problem $(\ref{eq2.1})$. Suppose
the boundary conditions are Dirichlet. Then
\begin{align}\label{eq2.42}
\mathcal U^{N,W}(\{\mathcal F_u\}) \leq C (\ln W)^2 \mathcal V^{N,W}
(\{\mathcal F_u\})\:.
\end{align}
\end{theo}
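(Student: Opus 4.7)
The plan is to establish the estimate region by region, exploiting the fact that in each modified coordinate system $x^v$, $x^e$, $x^{v-e}$ the geometric mesh becomes quasi-uniform, so that Sobolev's embedding theorem, the trace theorem, and local $H^2$ elliptic regularity all apply on each mesh element with a constant independent of the element index and of $N$. Accordingly, I would split the proof into four pieces corresponding to the decompositions (\ref{eq2.14})--(\ref{eq2.15}): the regular region, and the vertex, vertex-edge, and edge neighbourhoods. After bounding each $\mathcal{U}$-piece by the corresponding $\mathcal{V}$-piece, summation yields the full inequality.

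First I would verify that the transformed operators $L^v$, $L^{v-e}$, $L^e$ given in (\ref{eq2.23}), (\ref{eq2.28}), (\ref{eq2.36}) are uniformly strongly elliptic and of divergence form in their respective modified coordinates, with analytic coefficients that are uniformly bounded on quasi-uniform mesh elements. The weights $\rho^2$, $\rho^2\sin^2\phi$, $r^2$ built into (\ref{eq2.22}), (\ref{eq2.28}), (\ref{eq2.35}) are precisely the Jacobian factors needed so that $L^vu$, $L^{v-e}u$, $L^eu$ in modified coordinates are on an equal footing with $Lu$ on the regular region; similarly the scaled jump and trace weights $\sqrt{R_{l,i}^v}$, $\sqrt{F_{n,i}^{v-e}}$, $\sqrt{H_{m,i}^e}$, together with the anisotropic norms $|||\cdot|||$ in (\ref{eq2.27a})--(\ref{eq2.27c}) and (\ref{eq2.34a})--(\ref{eq2.34c}), are exactly what is produced by applying the trace theorem on the image element under the change of variables. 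Thus on every element $K$ of the pulled-back mesh, standard elliptic regularity yields
\begin{equation*}
\|u\|^2_{H^2(K)} \le C\Bigl(\|L^* u\|^2_{L^2(K)} + \|u\|^2_{H^{3/2}(\partial K\cap\Gamma^{[0]})} + \|u\|^2_{H^{1/2}(\partial K\setminus\Gamma^{[0]})} + \|(\partial u/\partial\nu)_A\|^2_{H^{1/2}(\partial K\setminus\Gamma^{[0]})}\Bigr),
\end{equation*}
where $L^*$ is whichever of $L$, $L^v$, $L^{v-e}$, $L^e$ is in force. The non-conforming character of $u$ on interior faces is handled by the jump terms in $\mathcal{V}$: one writes $u$ on a patch of adjacent elements as a conforming representative plus a discrete lifting of the face jumps $[u]$, $[u_{x_k}]$, and invokes the polynomial lifting theorems of Babu\v{s}ka--Szab\'o/Mu\~noz-Sola type in fractional Sobolev norms on the reference cube.

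The factor $(\ln W)^2$ enters exactly once, through the standard polynomial inverse inequality relating $\|p\|_{H^{1/2}}$ to $\|p\|_{H^{1}}$ (or equivalently to $\|p\|_{L^2}$ and $\|p'\|_{L^2}$) on polynomials of degree $W$ on the reference interval, which loses a factor of $\ln W$, applied twice (once for each of the two directions in a face). This overhead propagates through the face-by-face assembly but, because it is only invoked $O(1)$ times per element and the mesh is quasi-uniform in modified coordinates, does not compound further. Summation over the $N_r$, $I_v(N+1)$, $I_{v-e}(N+1)^2$, and $I_e J_e(N+1)$ elements (together with the patching identity that every inter-region interface is an inter-element face already accounted for in some $\mathcal{V}$-piece) then gives the claimed bound. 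The ``corner-most'' elements on which $u$ is a constant $h_v$ or a one-dimensional polynomial contribute to $\mathcal{U}$ only through the $L^2$-type term with weight $e^{x_3^v} w^{v-e}$ or $w^e$, and the constant $h_v$ is pinned down by the Dirichlet data via a chain of jump bounds propagating from $\Gamma^{[0]}$.

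The main obstacle I anticipate is controlling this last step, namely showing that the vertex value $h_v$ (and the coefficients of the one-dimensional polynomial at corner-most elements in vertex-edge and edge neighbourhoods) can be bounded by the Dirichlet trace residuals together with finitely many element-to-element jumps, in such a way that the chain length does not degrade the $(\ln W)^2$ bound. This requires verifying that the weights $R_{l,i}^v$, $F_{n,i}^{v-e}$, $H_{m,i}^e$ telescope correctly across the geometric layers so that the Poincar\'e-type constant in the resulting chain estimate is independent of $N$. A secondary technical point is the careful verification that the anisotropic weights $E_{n,i}^{v-e}$, $G_{n,i}^{v-e}$, $G_{m,i}^e$ in the face norms (\ref{eq2.27a})--(\ref{eq2.27c}), (\ref{eq2.34a})--(\ref{eq2.34c}) exactly match, after change of variables, the anisotropic traces arising in $\mathcal{U}_{v-e}$ and $\mathcal{U}_e$; any mismatch would force the use of a sub-optimal Sobolev inequality and degrade $(\ln W)^2$ to a power of $N$.
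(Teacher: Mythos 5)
There is a genuine gap, and it sits exactly where you flag your ``main obstacle.'' The element-wise estimate you write down,
\(\|u\|^2_{H^2(K)} \le C(\|L^* u\|^2_{L^2(K)} + \|u\|^2_{H^{3/2}(\partial K\cap\Gamma^{[0]})} + \cdots)\),
is not a valid local elliptic regularity estimate on an interior element: without boundary data on all of \(\partial K\) you can only get a G\aa rding-type inequality with an additional \(\|u\|^2_{H^1(K)}\) on the right, and your proposal supplies no mechanism for absorbing the accumulated lower-order terms other than ``a chain of jump bounds propagating from \(\Gamma^{[0]}\)'' to pin down the vertex constants \(h_v\). That chain is the step that fails: there are \(O(N)\) geometric layers between \(\Gamma^{[0]}\) and the corner-most elements, and telescoping Poincar\'e/jump estimates across \(O(N)\) interfaces produces a constant growing polynomially in \(N\) (a Cauchy--Schwarz over the chain already costs a factor of the chain length), which destroys the claimed \((\ln W)^2\) bound. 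So the architecture ``local regularity on each element + lifting of jumps'' does not close.

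The paper's proof uses two different devices precisely to avoid this. For the second derivatives it does not invoke local a priori estimates at all; it uses Grisvard's exact divergence identity (Theorem 3.1.1.2 of Grisvard, applied to the vector field \(A\nabla u\)) on each element, which expresses \(\sum_{i,j}\int|\partial^2 u/\partial x_i\partial x_j|^2\) in terms of \(\int|Lu|^2\) plus explicit boundary integrals involving the conormal derivative and the second fundamental form; the coordinate-change lemmas in Sections 3.2.2--3.2.4 show these boundary integrals match with opposite signs across interfaces between the regular, vertex, vertex-edge and edge regions, so that summing over elements leaves only differences that are bounded by the \(H^{1/2}\) jump norms with a \((\ln W)^2\) overhead (Lemmas 3.4.1--3.4.4), plus the lower-order volume terms. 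For those lower-order terms (and hence for \(h_v\)) the paper constructs a globally conforming correction \(p = u + \eta\) lying in \(H^1_0(\Omega)\) (Lemma 3.3.1), bounds the correction by \(\mathcal V^{N,W}\), and then controls \(\|p\|^2_{H^1(\Omega)}\) in one shot through the Lax--Milgram coercivity \(K\|p\|^2_{H^1(\Omega)} \le B(p,p)\) combined with element-wise integration by parts -- no layer-by-layer propagation is ever performed. Your identification of where the \(\ln W\) factors enter (polynomial trace and discrete-Sobolev inequalities on faces) and of the need to match the anisotropic weights \(E^{v-e}_{n,i}\), \(G^{v-e}_{n,i}\), \(G^e_{m,i}\) under the coordinate changes is correct and consistent with the paper, but without the divergence-identity cancellation and the global coercivity argument the proof as proposed does not yield the theorem.
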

Next, we state the corresponding result for general boundary conditions.
\begin{theo}\label{thm2.3.2}
If the boundary conditions for the elliptic boundary value problem
$(\ref{eq2.1})$ are mixed then
\begin{align}\label{eq2.43}
\mathcal U^{N,W}(\{\mathcal F_u\}) \leq C N^4 \mathcal V^{N,W}
(\{\mathcal F_u\})
\end{align}
provided $W=O(e^{N^\alpha})$ for $\alpha<1/2$.
\end{theo}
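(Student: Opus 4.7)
The plan is to adapt the proof strategy of Theorem 2.3.1 (the Dirichlet case) to the mixed boundary setting. The overall architecture is the same, but the final closure step acquires the additional factor $N^4$ because of the weaker control afforded by the Neumann boundary data. First, I would decompose $\Omega$ into the regular region $\Omega^r$, the vertex neighbourhoods $\Omega^v$, the edge neighbourhoods $\Omega^e$, and the vertex--edge neighbourhoods $\Omega^{v-e}$, and work in the appropriate modified coordinates introduced in Section 2.2. By construction the geometrical mesh becomes quasi-uniform in these coordinates, so Sobolev embeddings and trace theorems apply on each transformed element with constants that are uniform in $N$, and the analytic regularity supplied by Propositions 2.2.1--2.2.4 is directly usable.

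On each transformed element, the plan is to establish a local elliptic regularity inequality bounding the weighted $H^2$-type integrand appearing in $\mathcal{U}^{N,W}$ by (i) the corresponding weighted $L^2$ norm of the PDE residual, which is already present in $\mathcal{V}^{N,W}$ through the transformed operators $L$, $L^v$, $L^e$, $L^{v-e}$, and (ii) the traces of $u$ and its first derivatives on the element's faces. Assembling over all elements, the interior-face traces collapse into the inter-element jump terms appearing in $\mathcal{V}^{N,W}$, while the physical boundary traces are controlled by the Dirichlet data on $\Gamma^{[0]}$ and by the co-normal derivative terms on $\Gamma^{[1]}$. The corner-most elements of $\Omega^v$ and of $\Omega^{v-e}$, on which the spectral element function is restricted to the single constant $h_v$, must be treated separately using the weighted mass terms in $\mathcal{U}_v^{N,W}$ and the cut-off weight $w^{v-e}$ introduced for precisely this purpose.

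The crucial new ingredient, absent from the Dirichlet analysis, is controlling the lower-order and $L^2$ contributions in $\mathcal{U}^{N,W}$ when only the $H^{1/2}$ norm of the co-normal derivative is available on $\Gamma^{[1]}$. In the Dirichlet case the $H^{3/2}$ trace on $\Gamma^{[0]}$ anchors the function directly and closure costs only the polynomial trace-lifting factor $(\ln W)^2$. Here I plan to combine a Poincar\'{e}--Friedrichs type inequality (valid thanks to the Dirichlet portion of the boundary together with the Lax--Milgram hypothesis on $L$) with polynomial inverse inequalities of the form $\|p\|_{H^{s+1}} \le C W^2 \|p\|_{H^s}$ on each transformed element. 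Passing from information naturally expressed at the dual level down to $L^2$ control of the function requires two such inverse-inequality applications, which together produce the factor $W^4 \sim N^4$. The hypothesis $W = O(e^{N^\alpha})$ with $\alpha < 1/2$ ensures that the exponentially small approximation-error terms implied by Propositions 2.2.1--2.2.4 remain negligible relative to $N^{-4}$ and can be absorbed into the constant.

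The main obstacle will be the bookkeeping in the vertex--edge neighbourhoods $\Omega^{v-e}$, where the Sobolev norms are genuinely anisotropic, the weights involve both $e^{\zeta/2}$ and $\sin\phi$, and the transformed coefficient matrix $A^{v-e}$ degenerates toward the edge and the vertex. The triple norms $|||\cdot|||_{\tilde{\Gamma}^{v-e}_{n,i}}^2$ defined in (2.27a)--(2.27c) are arranged precisely to absorb this anisotropy, but verifying that the trace and inverse inequalities scale consistently across the three orientation types of interfaces, and that the interior and Neumann boundary contributions match up without losing further powers of $N$, will be the most delicate part of the argument. A secondary difficulty is propagating the shared constant value $h_v$ coherently through the stability estimate across the boundary between $\Omega^v$ and $\Omega^{v-e}$, which must be tracked explicitly to close the sum.
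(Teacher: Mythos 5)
Your architecture for the second-derivative part of $\mathcal U^{N,W}$ is essentially the paper's: element-by-element Grisvard-type identities in the modified coordinates, assembly of the face traces into the jump terms of $\mathcal V^{N,W}$, and absorption of the remaining trace terms into the Dirichlet/Neumann data. But the proposal has a genuine gap in the treatment of the lower-order terms, which is precisely where the factor $N^4$ lives. You invoke a Poincar\'e--Friedrichs inequality together with the Lax--Milgram coercivity of $L$, but the spectral element functions are \emph{nonconforming} --- they are not in $H^1(\Omega)$, so neither coercivity of the bilinear form $B(\cdot,\cdot)$ nor any global Poincar\'e inequality can be applied to them directly. The paper's proof (Theorem 3.3.1 via Lemma 3.3.1) first constructs an explicit conforming correction $p=u+\eta$, element by element in each of the four regions, with the cost of the correction controlled by the inter-element jump terms already present in $\mathcal V^{N,W}$; only then is $B(p,p)$ integrated by parts and the coercivity used. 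Without this correction step the argument does not close, and the step is nontrivial (it is where the distinction between conforming-on-the-wirebasket and fully nonconforming functions, and hence between $(\ln W)^2$ and $N^4$, is first felt).

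Your identification of the source of the $N^4$ is also not the paper's mechanism, and I do not think it can be made to work as stated. You attribute it to two applications of the polynomial inverse inequality $\|p\|_{H^{s+1}}\le CW^2\|p\|_{H^s}$; but such inverse inequalities are applied identically in the Dirichlet and mixed cases at the element level, yet the Dirichlet case yields only $(\ln W)^2$ --- so they cannot be what distinguishes the two. In the paper the factor arises as $(N^2)^2$: the quantity $\mathcal G(p)$ (the weighted $H^1$-type sum over elements, including the semi-infinite corner elements) satisfies $\mathcal G(p)\le C\,N^2\,\|p\|^2_{H^1(\Omega)}$ for mixed boundary conditions, the $N^2$ being a Poincar\'e constant reflecting that the geometric mesh of $N$ layers becomes, in the stretched modified coordinates, a quasi-uniform mesh of total extent $O(N)$ over which the function is anchored only at the Dirichlet portion; this $N^2$ is then squared by the absorption argument $B(p,p)\le c\,d\,\mathcal V^{N,W}+d^{-1}\mathcal G(p)$ with $d\sim N^2$. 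Finally, the hypothesis $W=O(e^{N^{\alpha}})$, $\alpha<1/2$, is not about the approximation-error content of Propositions 2.2.1--2.2.4 (those concern the exact solution and enter only in the Chapter 4 error estimates); it is needed in Lemmas 3.4.3, 3.4.4 and 3.5.2 to control the boundary contributions from the semi-infinite corner elements, whose faces have infinite measure and for which only a weighted $L^2$ norm of $u$ is available on the right-hand side.
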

The rapid growth of the factor $CN^4$ with $N$ creates difficulties in
parallelizing the numerical scheme. To overcome this problem we state a
version of Theorem \ref{thm2.3.2} when the spectral element functions vanish
on the wirebasket. Let $W\!B$ denotes the wirebasket along which the
spectral element functions need to be conforming. Here the wirebasket
denotes the union of the vertices and edges of the elements.
\begin{theo}\label{thm2.3.3}
If the spectral element functions $(\{\mathcal F_u\})$ are conforming on the wire
basket $W\!B$ and vanish on $W\!B$ then
\begin{align}\label{eq2.44}
\mathcal U^{N,W}(\{\mathcal F_u\}) \leq C (\ln W)^2 \mathcal V^{N,W}
(\{\mathcal F_u\})
\end{align}
provided $W=O(e^{N^\alpha})$ for $\alpha<1/2$.
\end{theo}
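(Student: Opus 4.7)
The plan is to follow the structure of the proof of Theorem 2.3.1 (the Dirichlet case), exploiting the fact that the wirebasket-vanishing hypothesis effectively imposes Dirichlet-like conditions on the $1$-skeleton of every element face. The obstruction that forces the $N^{4}$ factor in Theorem 2.3.2 arises on those open faces where the spectral element functions may fail to vanish along the edges of the face: the $H^{1/2}$ boundary residuals on such faces can then only be controlled by inverse inequalities with an algebraic loss in the polynomial degree. When the functions vanish on $W\!B$, every face of every element has zero polynomial trace on its boundary, and the standard trace and inverse inequalities on the square apply with only a $(\ln W)^{2}$ loss, exactly as in the pure Dirichlet case.

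First I would split $\mathcal V^{N,W}$ and $\mathcal U^{N,W}$ according to (2.14)--(2.15) into contributions from the regular region and from the vertex, edge and vertex-edge neighbourhoods, and then proceed element by element, pulling back each element to the master cube or to its image in the modified coordinates $x^{v}$, $x^{e}$, $x^{v-e}$. On a single transformed element one has a standard elliptic a priori estimate of the form $\|u_{l}\|_{H^{2}}^{2}\le C(\|Lu_{l}\|_{L^{2}}^{2}+\sum_{i}\mathcal{R}_{i})$, where $\mathcal{R}_{i}$ denotes the appropriate trace/flux residuals on each face $\Gamma_{l,i}$. In the modified coordinates the geometrical mesh becomes quasi-uniform by the construction in Section 2.2 and the differentiability estimates of Propositions 2.2.1--2.2.4, so the constants are uniform in the element index and in $N$.

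The central ingredient will be a sharp polynomial inequality: for a polynomial $p$ of degree at most $W$ on $S=(-1,1)^{2}$ vanishing on $\partial S$, both the $H^{3/2}(\partial S)$ Dirichlet trace and the $H^{1/2}(\partial S)$ conormal-derivative trace are controlled by the $H^{2}(S)$ norm with a constant of order $(\ln W)^{2}$. This is precisely the tool that drives Theorem 2.3.1 for Dirichlet faces, and under the wirebasket hypothesis it applies on \emph{every} face of every element: on interior faces where one is estimating the jumps $[u]$ and $[u_{x_{k}}]$, on Dirichlet faces $\Gamma^{[0]}$, and (crucially, in contrast to Theorem 2.3.2) on Neumann faces $\Gamma^{[1]}$, since on each such face the polynomial vanishes on its boundary. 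Using this inequality, each boundary term $\mathcal{R}_{i}$ is bounded by $(\ln W)^{2}$ times the corresponding summand of $\mathcal V^{N,W}(\{\mathcal F_{u}\})$.

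Summing the elementwise bounds over all elements of $\Omega^{r}$ and of every vertex, edge and vertex-edge neighbourhood, and absorbing the scaling weights $\sqrt{R_{l,i}^{v}}$, $\sqrt{F_{n,i}^{v-e}}$, $\sqrt{H_{m,i}^{e}}$ via the quasi-uniformity in the modified coordinates, then delivers (2.44). The main obstacle will be the anisotropic situation in edge and vertex-edge neighbourhoods, where the face norms $|||\cdot|||_{\tilde{\Gamma}_{n,i}^{v-e}}$ and $|||\cdot|||_{\tilde{\Gamma}_{m,i}^{e}}$ carry direction-dependent weights $E_{n,i}^{v-e}$, $F_{n,i}^{v-e}$, $G_{m,i}^{e}$ of different orders in the three coordinate directions. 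One must verify that the wirebasket-vanishing condition in the original variables $x$ pulls back to vanishing on the full $1$-skeleton of the image element in $x^{v}$, $x^{e}$, $x^{v-e}$, and then prove the anisotropic analogue of the polynomial inequality above on a rectangular panel in modified variables with the correct weighting. The growth hypothesis $W=O(e^{N^{\alpha}})$ with $\alpha<1/2$ enters exactly at this step: it ensures that the product of the polynomial constants of degree $W$ with the geometric-mesh factors $\mu_{v}^{N}$ and $\mu_{e}^{N}$ arising when one passes between physical and modified coordinates contributes only a subalgebraic factor, so that the $(\ln W)^{2}$ bound survives the assembly over $O(N^{2})$ elements per neighbourhood.
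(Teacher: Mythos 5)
You have misplaced the source of the $(\ln W)^2$ versus $N^4$ dichotomy, and as a result the proposal does not reach the actual difficulty. In the paper's argument the face and boundary residual estimates (Lemmas 3.4.1--3.5.4) already carry a factor $C_\epsilon(\ln W)^2$ on \emph{every} face --- interior jumps, Dirichlet faces and Neumann faces alike --- in all three stability theorems; nothing there degrades to $N^4$ for mixed boundary conditions, so strengthening those trace inequalities under the wirebasket hypothesis buys nothing. Those lemmas control only the second derivatives, and each of them (and each of the interior second-derivative estimates, e.g.\ (3.12), (3.26), (3.65), (3.85)) leaves a term $C\int\sum_{|\alpha|\le 1}|D^\alpha u|^2$ on the right-hand side. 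The entire dichotomy lives in how one controls these lower-order terms, i.e.\ in Theorem 3.3.1, and that step is irreducibly \emph{global}: one builds a conforming correction $p=u+\eta$ (Lemma 3.3.1), invokes coercivity $K\|p\|^2_{H^1(\Omega)}\le B(p,p)$ of the bilinear form over the whole of $\Omega$, integrates by parts element by element, and must then dominate the degenerately weighted quadratic form $\mathcal U^{N,W}_{(1)}$ (with weights $\sin^2\phi$, $e^{2\tau}$ collapsing at the edges) by $\|p\|^2_{H^1(\Omega)}$. For mixed problems this last comparison costs a factor $N^2$, which the $\epsilon$-absorption turns into $N^4$; your purely local scheme, with a ``standard elliptic a priori estimate'' on each transformed element, has no mechanism at all for bounding the lower-order terms and therefore cannot close.

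Consequently the key lemma you propose --- a $(\ln W)^2$ trace bound for polynomials vanishing on $\partial S$ --- is not the tool that distinguishes Theorem 2.3.3 from Theorem 2.3.2. What the paper actually uses is a discrete Poincar\'e inequality (via a scaling argument from \S 3.4 of~\cite{TW}): for a polynomial $s$ of degree $W$ on the unit square vanishing at a vertex, $\int_S s^2\,dy\le F(\ln W)\int_S|\nabla s|^2\,dy$. Because $p$ vanishes on the wirebasket, this applies in the scaled local coordinates of the edge and vertex-edge elements and yields $\mathcal G(p)\le F\ln W\,\|p\|^2_{H^1(\Omega)}$ in place of the $N^2$ bound, whence $K_{N,W}=C(\ln W)^2$ and the theorem follows by the same absorption as in the other two cases. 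Two further corrections: the hypothesis $W=O(e^{N^\alpha})$, $\alpha<1/2$, enters not in ``assembling $O(N^2)$ elements'' but in the estimates on the semi-infinite corner elements with $\mu(\tilde\Gamma)=\infty$ (the second halves of Lemmas 3.4.3, 3.4.4, 3.5.2, 3.5.4), where inverse inequalities in $W$ must be beaten by the geometric decay of the mesh; and you would also need to verify that the wirebasket corrections of Lemma 3.3.1 can be constructed with only an $O(\ln W)$ loss, which is a separate construction (Appendix C.1), not a consequence of a face trace inequality.
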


\chapcleardoublepage

\chapter{Proof of the Stability Theorem}
\section{Introduction}
In this chapter we prove the stability estimate, which we use to formulate our numerical scheme.

In Chapter $2$ we have introduced a set of local coordinate systems in various neighbourhoods of
the polyhedron $\Omega$, which we referred to as \textit{modified coordinates}, in the vertex,
vertex-edge and edge neighbourhoods. These local coordinates are modified versions of the spherical
and cylindrical co-ordinate systems in vertex and edge neighbourhoods respectively. In vertex-edge
neighbourhoods modified coordinates are a combination of spherical and cylindrical coordinates.
Away from the corners and edges in the regular region of the domain we use standard Cartesian
coordinates. The \textit{differentiability estimates} for the solution of the elliptic boundary
value problem in a polyhedral domain in terms of these new local coordinates were obtained. We
also stated a \textit{stability estimate} for a \textit{non-conforming} spectral element
representation of the solution. For problems with mixed Neumann and Dirichlet boundary conditions
the spectral element functions may be chosen to be continuous on the wirebasket of the elements.

We now briefly describe the contents of this chapter. In Sections $3.2$ and $3.3$, we derive
estimates for the second order derivatives and the lower order derivatives of the solution
respectively. Estimates for terms in the interior and on the boundary of the polyhedron
$\Omega$ are obtained in Sections $3.4$ and $3.5$. In Section $3.6$, we combine all the
results of sections $3.2,3.3,3.4$ and $3.5$ to complete the proof of the stability estimate
which has been stated in Chapter $2$.

In Chapter $4$ we shall give the numerical scheme and error estimates. It will be shown that
the error between the exact and the approximate solution is exponentially small in $N$, the
number of layers in the geometrical mesh. Optimal rate of convergence with respect to the number
of degrees of freedom is also provided.

\section{Estimates for the Second Derivatives of Spectral Element Functions}
\subsection{Estimates for the second derivatives in the interior}
We first obtain  estimates for elements in the regular region $\Omega^r$ of $\Omega$.
Divide $\Omega^r$ into $N_r$ elements, which may consist of curvilinear cubes, prisms
and tetrahedrons, $\Omega_l^r$ for $1\leq l \leq N_r$.

Let
\begin{equation}\label{eq3.1}
Lu = \sum_{i,j=1}^{3} -\left(a_{i,j} u_{x_j}\right)_{x_i} +
\sum_{i=1}^{3} b_i u_{x_i} + cu
\end{equation}
be a strongly elliptic operator that satisfies the Lax-Milgram conditions. Hence there
exists a positive constant $\mu_0$ such that
\[\sum_{i,j=1}^3 a_{i,j}\zeta_i\zeta_j\geq \mu_0|\zeta|^2\:.\]
We consider the mixed boundary value problem
\begin{subequations}\label{eq3.2}
\begin{equation}\label{eq3.2a}
Lw = f \;{\mbox{ in }} \Omega,
\end{equation}
\begin{equation}\label{eq3.2b}
\gamma_0 w = w|_{\Gamma^{[0]}} = g^{[0]},
\end{equation}
\begin{equation}\label{eq3.2c}
\gamma_1 w=\left.\left(\frac{\partial w}{\partial\nuw}\right)_{A}\right|_{\Gamma^{[1]}}=g^{[1]}.
\end{equation}
\end{subequations}
Let
\begin{equation}\label{eq3.3}
Mu = \sum_{i,j = 1}^{3} \left(a_{i,j} u_{x_j}\right)_{x_i}.
\end{equation}
Consider an element $\Omega_l^r$. Then $\Omega_l^r$ has $n_l$ faces
$\{\Gamma_{l,i}^r\}_{1 \leq i \leq n_l}$. Let $\partial\Gamma_{l,i}^r$ denote the boundary of
the face $\Gamma_{l,i}^r$.

To proceed we need to review some material in~\cite{G}. Let $O$ be a bounded open subset of
$R^3$ with a Lipschitz boundary $\partial O$. Assume in addition that $\partial O$ is piecewise
$C^2$. Let $P$ be a point on $\partial O$ in a neighbourhood of which $\partial O$ is $C^2$. It
is possible to find two curves of class $C^2$ in a neighbourhood of $P$, passing through $P$ and
being orthogonal there. Let us denote these curves by ${\mathcal C}_1, {\mathcal C}_2$ and by
{\mbox{\boldmath $\tau_1$}}, {\mbox{\boldmath $\tau_2$}} the unit tangent vectors to
${\mathcal C}_1, {\mathcal C}_2$ respectively and by $s_1,s_2$ the arc lengths along these curves.
We assume that {\mbox{\boldmath $\tau_1$}},{\mbox{\boldmath $\tau_2$}} has the direct orientation
at $P$. Let {\mbox{\boldmath $\nu$}} be the unit normal at $P$ defined as {\mbox{\boldmath $\nu$}}
$=$ {\mbox{\boldmath $\tau_1$}} $\times$ {\mbox{\boldmath $\tau_2$}}.
Then at $P$, ${\mathcal B}_P$, the second fundamental form at $P$ is the bilinear form
\begin{equation}\label{eq3.4}
(\zetau,\etav) |\rightarrow
-\sum_{j,k=1}^{2}\frac{\partial \nuw}
{\partial s_j} \cdot \taux_k \zeta_j \eta_k
\end{equation}
where $\zetau, \etav$ are the tangent vectors to $\partial O$ at $P$
and $\zetau = (\zeta_1, \zeta_2)$ and $\etav = (\eta_1, \eta_2)$ in
the basis $\{\taux_1, \taux_2 \}$. In other words,
\begin{equation}\label{eq3.5}
{\mathcal B}(\zetau, \etav) = -\frac{\partial \nuw}{\partial \zetau}\cdot \etav
\end{equation}
where $\frac{\partial}{\partial \zetau}$ denotes differentiation in the $\zetau$ direction.

Let $\wa$ be a vector field defined in a neighbourhood of $O$. If $P$ is a point on
$\partial O$ then by $\wa_{\nu}$ we shall denote the component of $\wa$ in the direction
of $\nuw$, while we shall denote by $\wa_T$, the projection of $\wa$ on the tangent
hyperplane to $\partial O$, i.e.
\begin{equation}\label{eq3.6}
\wa_{\nu} = \wa \cdot \nuw,
\end{equation}
\begin{equation}\label{eq3.7}
\wa_T = \wa-\wa_{\nu} \nuw = \wa_{\tau_1}\taux_1+\wa_{\tau_2}\taux_2.
\end{equation}
Here $\wa_{\tau_i} = \wa \cdot \taux_i$ for $i=1,2$.

We shall denote by $\nabla_T$ the projection of the gradient vector on the tangent
hyperplane
\begin{equation}\label{eq3.8}
\nabla_T u = \nabla u - \frac{\partial u}{\partial \nuw} \nuw =
\sum_{j=1}^2 \frac{\partial u}{\partial s_j} \taux_j.
\end{equation}
Let $\hd$ be a vector field defined on $\partial O$ such that $\hd$ is tangent to $O$
except on a set of zero measure. Then
\begin{equation}\label{eq3.9}
div_T(\hd)=\sum_{j=1}^2\left(\frac{\partial {\hd}}{\partial s_j}
\right) \cdot \taux_j.
\end{equation}
We now cite Theorem $3.1.1.2$ of~\cite{G}.
\begin{theo}\label{thm1}
Let $O$ be a bounded open subset of $R^3$ with a Lipschitz boundary $\partial O$. Assume,
in addition that $\partial O$ is piecewise $C^2$. Then for all $\wa \in (H^2(O))^3$ we have
\begin{align}\label{eq3.10}
\int_O \left(div (\wa)\right)^2 dx - \sum_{i,j=1}^{3} \int_O \frac
{\partial w_i}{\partial x_j}\frac{\partial w_j}{\partial x_i}\:dx
=\int_{\partial O} \left\{div_T(\wa_{\nu}\wa_{T}) - 2 \wa_T \cdot
\nabla_T \wa_{\nu} \right\} \:d\sigma  \notag\\
-\int_{\partial O} \left\{(tr {\mathcal B})\wa_{\nu}^2 + {\mathcal
B}(\wa_T, \wa_T) \right\} \:d\sigma.
\end{align}
Here $dx$ denotes a volume element and $d\sigma$ an element of surface area.
\end{theo}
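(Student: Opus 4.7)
The plan is to prove this in three stages: reduce to a smooth setting, establish a pointwise divergence identity that handles the bulk integral, then do the geometric rewriting of the resulting boundary integrand. For the reduction, I would use the density of $(C^\infty(\overline{O}))^3$ in $(H^2(O))^3$ (valid because $\partial O$ is Lipschitz), and note that the codimension-one ``edge'' set where $\partial O$ fails to be $C^2$ has $d\sigma$-measure zero, so it contributes nothing to the surface integral. After this reduction it suffices to prove the identity for a smooth vector field $\wa$ on a smoothly bounded piece of $O$, using continuity of both sides in the $H^2$ norm to pass back to the Lipschitz, piecewise $C^2$ case.

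The bulk step rests on the pointwise algebraic identity
\begin{equation*}
(\mathrm{div}\,\wa)^2 - \sum_{i,j=1}^{3}\frac{\partial w_i}{\partial x_j}\frac{\partial w_j}{\partial x_i} = \mathrm{div}\bigl(\wa\,\mathrm{div}\,\wa - (\wa\cdot\nabla)\wa\bigr),
\end{equation*}
which I would verify by expanding each side and cancelling second-order terms via $\partial_i\partial_j w_k = \partial_j\partial_i w_k$. Integrating over $O$ and applying the classical divergence theorem (valid for a Lipschitz domain and an $H^1$ vector field) converts the left-hand side of \eqref{eq3.10} into
\begin{equation*}
\int_{\partial O}\bigl[w_\nu\,\mathrm{div}\,\wa - \nuw\cdot(\wa\cdot\nabla)\wa\bigr]\,d\sigma.
\end{equation*}

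The heart of the proof is to recognise this integrand as the claimed combination of tangential divergences, tangential gradients, and the second fundamental form. I would fix a point $P$ of $\partial O$, choose the orthonormal frame $\{\taux_1,\taux_2,\nuw\}$ of Grisvard's setup, and decompose $\wa = w_\nu\nuw + \wa_T$. Then two classical surface-calculus identities do the work: first, the trace identity $\mathrm{div}\,\wa|_{\partial O} = \partial_\nu w_\nu + \mathrm{div}_T(\wa_T) - (\mathrm{tr}\,{\mathcal B})\,w_\nu$, and second, the analogous expansion of $\nuw\cdot(\wa\cdot\nabla)\wa$, in which the tangential derivatives of $\nuw$ generate exactly the bilinear form ${\mathcal B}(\wa_T,\wa_T)$ because $\partial\nuw/\partial s_j$ is tangential and $-(\partial\nuw/\partial s_j)\cdot\taux_k$ is by definition ${\mathcal B}_{jk}$. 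Subtracting these and grouping terms produces $w_\nu\,\partial_\nu w_\nu$ cancellations together with the two clean tangential pieces $w_\nu\,\mathrm{div}_T(\wa_T)$ and $-\wa_T\cdot\nabla_T w_\nu$; combining these via the product rule $\mathrm{div}_T(w_\nu\wa_T) = w_\nu\,\mathrm{div}_T(\wa_T) + \wa_T\cdot\nabla_T w_\nu$ yields the asymmetric pair $\mathrm{div}_T(w_\nu\wa_T) - 2\wa_T\cdot\nabla_T w_\nu$ that appears on the right-hand side.

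The main obstacle is the bookkeeping in the last step: keeping track of the asymmetric way $w_\nu$ and $\wa_T$ enter through $\mathrm{div}\,\wa$ versus through $\nuw\cdot(\wa\cdot\nabla)\wa$, so that the curvature term $(\mathrm{tr}\,{\mathcal B})w_\nu^2$ appears once (not twice) and the quadratic ${\mathcal B}(\wa_T,\wa_T)$ emerges with the correct sign. I would therefore carry out this computation in the normal frame at a single point $P$, exploiting $\nuw\cdot\partial_{\taux_j}\nuw = 0$ and the symmetry of the shape operator, and only afterwards integrate the resulting pointwise identity over $\partial O$ (handling the piecewise-$C^2$ junctions by the density/measure-zero argument from the first step).
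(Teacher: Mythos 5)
Your sketch is correct, but note that the paper does not prove this statement at all: it is quoted verbatim as Theorem $3.1.1.2$ of Grisvard~\cite{G}, and the text immediately following the theorem simply applies it. What you have written is essentially a reconstruction of Grisvard's own argument: the pointwise identity $(\mathrm{div}\,\wa)^2-\sum_{i,j}\partial_j w_i\,\partial_i w_j=\mathrm{div}\bigl(\wa\,\mathrm{div}\,\wa-(\wa\cdot\nabla)\wa\bigr)$ is exactly right, the divergence theorem reduces everything to the boundary integrand $w_\nu\,\mathrm{div}\,\wa-\nuw\cdot(\wa\cdot\nabla)\wa$, and your frame computation at $P$ produces the stated combination with the correct signs (I checked: the $(\mathrm{tr}\,{\mathcal B})w_\nu^2$ term comes only from $w_\nu\,\mathrm{div}\,\wa$, the quadratic ${\mathcal B}(\wa_T,\wa_T)$ only from the inertia term, and the product rule $\mathrm{div}_T(w_\nu\wa_T)=w_\nu\,\mathrm{div}_T\wa_T+\wa_T\cdot\nabla_T w_\nu$ yields the factor $2$). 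Two small points deserve explicit mention in a written version: the cancellation of $w_\nu\,\partial_\nu w_\nu$ against $w_\nu\,\nuw\cdot\partial_\nu\wa$ uses $\wa\cdot\partial_\nu\nuw=0$ at $P$, which holds only for a suitable extension of $\nuw$ off the surface (e.g.\ via the nearest-point projection), so the choice of extension should be fixed before the frame computation; and the surface identities are legitimate only on the open $C^2$ faces, so the pointwise argument must be carried out face by face, with the edge set entering solely through its $d\sigma$-null contribution, exactly as you propose. With those caveats the density passage from smooth fields to $H^2$ is routine, since both sides are continuous quadratic forms of the traces of $\wa$ and $\nabla\wa$.
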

Consider an element $\Omega^r_l$ which is assumed to be a curvilinear cube as shown in
Figure \ref{fig3.1}. Let $\Gamma_{l,i} ^r$ denote one of the faces of $\Omega^r_l$. Let
$Q$ be a point inside $\Gamma^r_{l,i}$. The unit tangent vectors $\taux_1,\taux_2$ and
the unit normal vector $\nuw$ at $Q$ are shown in Figure \ref{fig3.1}. Consider a point
$P \in \partial \Gamma^r_{l,i}$ and assume that $P$ is not a vertex of $\Omega^r_l$. Then
we can define the vector $\nb$ at $P$ as the vector belonging to the tangent hyperplane
which is orthogonal to the tangent vector to the curve $\partial\Gamma^r_{l,i}$ at $P$.
\begin{figure}[!ht]
\centering
\includegraphics[scale = 0.60]{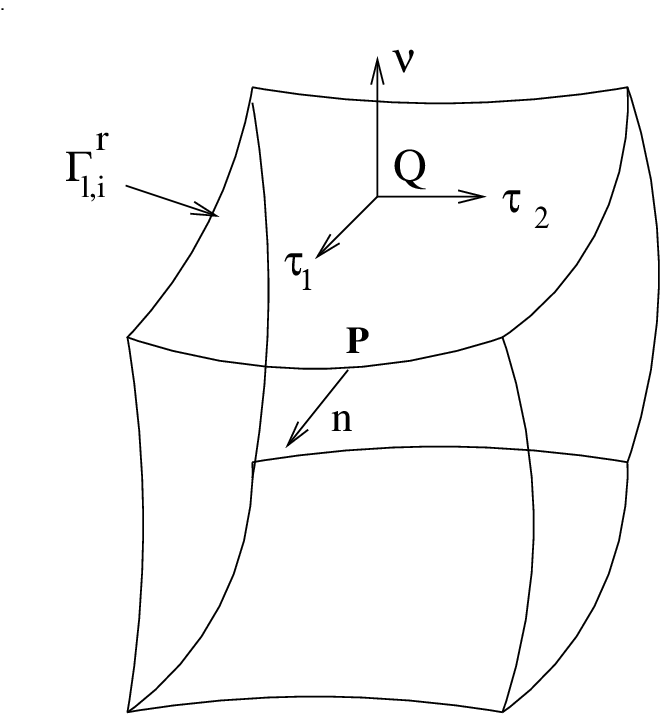}
\caption{The element $\Omega_l^r$.}
\label{fig3.1}
\end{figure}
Moreover $\nb$ is chosen to point out of $\Gamma_{l,i}^r$. Recall $A$ is the matrix
$(a_{i,j})$. Define
\begin{align}\label{eq3.11}
\left(\frac{\partial u}{\partial \nb}\right)_A(P)
&= \left(\nb\cdot A \nabla u \right)(P),\notag \\
\left(\frac{\partial u}{\partial \taux_1}\right)_A(P)
&= \left(\taux_1 \cdot A \nabla u \right)(P), \notag\\
\left(\frac{\partial u}{\partial \taux_2}\right)_A(P)
&= \left(\taux_2 \cdot A \nabla u\right)(P), \notag\\
\left(\frac{\partial u}{\partial \nuw}\right)_A(P) &= \left(\nuw
\cdot A \nabla u\right)(P).
\end{align}
Let $s_1,s_2$ denote arc lengths along $\taux_1$ and $\taux_2$ and $s$ denote arc
length measured along $\partial \Gamma_{l,i}^r$.

We can now prove the following result.
\begin{lem}\label{lem1}
Let $u \in H^3(\Omega^r_l)$. Then
\begin{align}\label{eq3.12}
& \frac{\mu_0^2}{2} \rho_v^2 \sin^2(\phi_v)\int_{\Omega^r_l}
\sum_{i,j=1}^{3} \left|\:\frac{\partial^2 u}{\partial x_i
\partial x_j}\:\right|^2\:dx \notag\\
&\quad\quad\quad\leq \rho_v^2\sin^2(\phi_v)\int_{\Omega^r_l}|L
u|^2 d x - \rho_v^2\sin^2(\phi_v)\left\{\sum_{i}\oint_{\partial
\Gamma_{l,i}^r}\left(\frac{\partial u}{\partial \nb}\right)_A
\left(\frac{\partial u}{\partial \nuw}\right)_A \:ds\right. \notag\\
&\quad\quad\quad-2 \left.\sum_{i} \int_{\Gamma_{l,i}^r}
\sum_{j=1}^2 \left(\frac{\partial u}{\partial \taux_j}\right)_A
\frac{\partial}{\partial s_j}\left(\left(\frac{\partial u
}{\partial \nuw}\right)_A\right)\:d\sigma\right\} \notag\\
&\quad\quad\quad+ C\,\int_{\Omega^r_l}\sum_{|\alpha| \leq 1}
\left|\:D^{\alpha}_x u\:\right|^2 \:d x\:.
\end{align}
Here $C$ denotes a constant. Moreover $dx$ denotes a volume element, $d\sigma$
an element of surface area and $ds$ an element of arc length.
\end{lem}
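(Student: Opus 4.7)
The plan is to apply Theorem \ref{thm1} to the vector field $\wa = A\nabla u$ on the element $\Omega_l^r$. With this choice $w_i = \sum_j a_{i,j} u_{x_j}$, so $\mathrm{div}\,\wa = Mu$; the normal component is $w_\nuw = (\partial u/\partial \nuw)_A$; and the tangential components are $w_{\taux_j} = (\partial u/\partial \taux_j)_A$. Because $\nb$ lies in the tangent plane of $\Gamma_{l,i}^r$ and is orthogonal to $\nuw$, integration of the tangential divergence $\mathrm{div}_T(w_\nuw \wa_T)$ over each face collapses, by the divergence theorem on the face, to the line integral
\[\oint_{\partial \Gamma_{l,i}^r} \Bigl(\tfrac{\partial u}{\partial \nuw}\Bigr)_A \Bigl(\tfrac{\partial u}{\partial \nb}\Bigr)_A ds,\]
which is the form appearing in the statement; the term $\wa_T \cdot \nabla_T w_\nuw$ likewise becomes $\sum_{j=1}^2 (\partial u/\partial \taux_j)_A \,(\partial/\partial s_j)\bigl((\partial u/\partial \nuw)_A\bigr)$.

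The central analytic ingredient I would next establish is the pointwise lower bound
\[\sum_{i,j=1}^3 \frac{\partial w_i}{\partial x_j}\, \frac{\partial w_j}{\partial x_i} \;\geq\; \mu_0^2 \sum_{i,j=1}^3 |u_{x_i x_j}|^2 - C\bigl(|\nabla u|^2 + u^2\bigr).\]
Expanding $\partial_{x_j} w_i = \sum_k a_{i,k}\, u_{x_k x_j} + \sum_k (a_{i,k})_{x_j}\, u_{x_k}$, the top-order contribution to the left-hand side is $\mathrm{tr}(A H A H)$, where $H = (u_{x_i x_j})$ denotes the Hessian and $A = (a_{i,j})$. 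Since $A \geq \mu_0 I$ as symmetric matrices, conjugation gives $H A H \geq \mu_0 H^2$, and pairing this nonnegative symmetric matrix against the positive matrix $A$ under the trace yields $\mathrm{tr}(AHAH) \geq \mu_0 \,\mathrm{tr}(AH^2) \geq \mu_0^2\, \mathrm{tr}(H^2)$. The mixed and remainder pieces (arising from derivatives of the $a_{i,j}$) are controlled by Young's inequality, absorbing a small fraction of $|D^2 u|^2$ and leaving only a $|\nabla u|^2$ contribution. This trace inequality is the principal obstacle, and it is the reason the \emph{full} Hessian, rather than only the combination $Mu$, is controllable.

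Substituting the two previous observations into the identity of Theorem \ref{thm1} applied to $\Omega_l^r$, and using $Mu = -Lu + \sum b_i u_{x_i} + cu$ to convert $|Mu|^2$ into $2|Lu|^2 + C(|\nabla u|^2 + u^2)$, one obtains the required bound modulo the curvature term $\int_{\partial \Omega_l^r}\{(\mathrm{tr}\,\mathcal{B})\,w_\nuw^2 + \mathcal{B}(\wa_T,\wa_T)\}\, d\sigma$. This term is dominated pointwise by $C|\nabla u|^2$ on $\partial \Omega_l^r$, and the standard trace inequality turns it into $C\bigl(\|\nabla u\|_{L^2(\Omega_l^r)}^2 + \epsilon\, \|D^2 u\|_{L^2(\Omega_l^r)}^2\bigr)$, where the small Hessian piece is absorbed on the left-hand side. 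Finally, multiplying through by the geometric constant $\rho_v^2 \sin^2(\phi_v)$, which is simply a uniform rescaling chosen to match the normalization used in the vertex, edge, and vertex-edge neighbourhoods elsewhere in the paper, produces the inequality stated in Lemma \ref{lem1}.
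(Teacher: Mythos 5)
Your proposal is correct and follows essentially the same route as the paper: apply Theorem \ref{thm1} to $\wa = A\nabla_x u$, reduce the face integral of $div_T(\wa_\nu\wa_T)$ to the line integral (this is exactly Lemma \ref{lem2}, whose content you should cite rather than treat as an immediate application of the surface divergence theorem, since the faces are curved and the paper needs Fermi coordinates there), use the coercivity of $\sum_{i,j}\frac{\partial w_i}{\partial x_j}\frac{\partial w_j}{\partial x_i}$ over the Hessian (the paper cites Lemma $3.1.3.4$ of~\cite{G}, whereas you supply a direct proof via $\mathrm{tr}(AHAH)\geq\mu_0^2\,\mathrm{tr}(H^2)$, which is a nice self-contained substitute), replace $Mu$ by $Lu$ modulo lower-order terms, and absorb the second-fundamental-form boundary terms through the trace/interpolation inequality. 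The only blemish is the factor $2$ you place in front of $\int_{\Omega_l^r}|Lu|^2\,dx$, where the statement has coefficient $1$; this is harmless for the downstream stability estimates and can be tightened by a Young-inequality absorption as in Lemma $3.4$ of~\cite{DT}, which is what the paper invokes.
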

\begin{proof}
The proof is similar to the proof of Lemma $3.1$ in~\cite{DT}. We define the vector
field $\wa$ by $\wa= A \nabla_x u$. Then
\begin{subequations}\label{eq3.13}
\begin{align}\label{eq3.13a}
Mu=\sum_{i,j=1}^3 \frac{\partial}{\partial x_i}\left(a_{i,j}\frac{\partial u}
{\partial x_j}\right)=div(\wa),
\end{align}
\begin{align}\label{eq3.13b}
\left(\frac{\partial u}{\partial \nuw}\right)_A=\sum_{i,j=1}^{3}
\nu_i a_{i,j} \frac{\partial u}{\partial x_j}=\wa_{\nu}\;\mbox{and}
\end{align}
\begin{align}\label{eq3.13c}
\left(\frac{\partial u}{\partial \taux_j}\right)_A=
\sum_{i,k=1}^{3}(\taux_j)_i a_{i,k} \frac{\partial u}{\partial x_k}=\wa_{\tau_j}.
\end{align}
\end{subequations}
Applying Theorem \ref{thm1}
\begin{align}\label{eq3.14}
&\quad \int_{\Omega^r_l} |Mu|^2 d x - \sum_{i,j=1}^3
\int_{\Omega^r_l}\frac{\partial w_i}{\partial x_j}\frac {\partial
w_j}{\partial x_i}\:dx = \left\{ \sum_{i} \int_{\Gamma_{l,i}^r}
div_T(\wa_{\nu}\wa_{T}) d\sigma\right. \notag\\
&\quad\quad-2 \left.\sum_i \int_{\Gamma_{l,i}^r} \sum_{j=1}^2
\left(\frac{\partial u}{\partial \taux_j}\right)_A \frac{\partial}
{ \partial s_j}\left(\left(\frac{\partial u}{\partial
\nuw}\right)_A\right) \:d\sigma \right\} - \sum_{i}
\int_{\Gamma_{l,i}^r}\left\{\left(tr {\mathcal B}\right)
\left(\frac{\partial u}{\partial \nuw}\right)_A^2\right. \notag\\
&\quad\quad+\left.{\mathcal B}\left(\sum_{j=1}^2
\left(\frac{\partial u}{\partial \taux_j}\right)_A
\taux_j,\sum_{j=1}^2 \left(\frac{\partial u}{\partial
\taux_j}\right)_A \taux_j\right)\right\}\:d\sigma.
\end{align}
Now by Lemma $3.1.3.4$ of~\cite{G} the following inequality holds
for all $u \in H^2(\Omega)$:
\begin{eqnarray*}
\mu_0^2 \sum_{i,j=1}^3 \left|\:\frac{\partial^2 u}{\partial x_i
\partial x_j}\:\right|^2 \leq \sum_{i,j=1}^3 \frac{\partial
w_i}{\partial x_j}\frac{\partial w_j}{\partial x_i} + 2
\sum_{i,j,k,l=1}^3 \left|\:a_{i,k} \frac{\partial^2 u}{\partial
x_j \partial x_k} \frac{\partial a_{j,l}}{\partial x_i}
\frac{\partial u}{\partial x_l}\:\right|dx\;.
\end{eqnarray*}
a.e. in $\Omega$. Integrating we have
\begin{align}
\mu^2_0 \sum_{i,j=1}^3 \int_{\Omega_l^r} \left|\:\frac{\partial^2
u} {\partial x_i \partial x_j}\:\right|^2 d x &\leq \sum_{i,j=1}^3
\int_{\Omega_l^r} \frac{\partial w_i}{\partial x_j}
\frac{\partial w_j}{\partial x_i} \:dx \notag\\
&+C \int_{\Omega^r_l} \sum_{i=1}^3 \left|\:\frac{\partial u}
{\partial x_i}\:\right| \sum_{i,j=1}^3 \left|\:\frac{\partial^2 u}
{\partial x_i \partial x_j}\:\right| \:dx\;. \notag
\end{align}
The constant $C$ in the above inequality depends on $M$ where $M$
is a common bound for all $C^1$ norms of the $a_{i,j}$. Hence
\begin{align}\label{eq3.15}
\frac{\mu_0^2}{2} \sum_{i,j=1}^3 \int_{\Omega_l^r} \left|\:\frac
{\partial^2 u}{\partial x_i \partial x_j}\:\right|^2\:dx &\leq
\sum_{i,j=1}^3 \int_{\Omega_l^r} \frac{\partial w_i}{\partial x_j}
\frac{\partial w_j}{\partial x_i} \:dx \notag\\
&+C \int_{\Omega_l^r} \sum_{|\alpha|=1} |D_x^{\alpha} u |^2\:dx.
\end{align}
Here $C$ denotes a generic constant. Now from Lemma \ref{lem2} we have
\begin{align}
\sum_{i}\int_{\Gamma_{l,i}^r} div_T(\wa_{\nu} \wa_T) d\sigma=
\sum_{i}\int_{\partial \Gamma_{l,i}^r} \wa_{\nu}\wa_n \:ds\;. \notag
\end{align}
Here $\wa_n = \wa \cdot \nb$ and $\nb$ is the vector depicted in Figure \ref{fig3.1}
lying on the tangent hyperplane at the point $P$ and orthogonal to the tangent vector
to the curve $\partial\Gamma_{l,i}^r$. Now
$$Mu= Lu - \sum_{i=1}^3 b_i u_{x_i} - c u.$$
Combining (\ref{eq3.14}) and (\ref{eq3.15}) and proceeding as in Lemma $3.4$ in~\cite{DT}
we obtain
\begin{align}
& \frac{\mu_0^2}{2} \rho_v^2 \sin^2(\phi_v)\int_{\Omega^r_l}
\sum_{i,j=1}^{3} \left|\:\frac{\partial^2 u}{\partial x_i
\partial x_j}\:\right|^2\:dx \notag\\
&\quad\quad\quad\leq \rho_v^2\sin^2(\phi_v)\int_{\Omega^r_l}|L
u|^2 d x - \rho_v^2\sin^2(\phi_v)\left\{\sum_{i}\oint_{\partial
\Gamma_{l,i}^r}\left(\frac{\partial u}{\partial \nb}\right)_A
\left(\frac{\partial u}{\partial \nuw}\right)_A \:ds\right. \notag\\
&\quad\quad\quad-2 \left.\sum_{i} \int_{\Gamma_{l,i}^r}
\sum_{j=1}^2 \left(\frac{\partial u}{\partial \taux_j}\right)_A
\frac{\partial}{\partial s_j}\left(\left(\frac{\partial u
}{\partial \nuw}\right)_A\right)\:d\sigma\right\} \notag\\
&\quad\quad\quad+ C\,\int_{\Omega^r_l}\sum_{|\alpha| \leq 1}
\left|\:D^{\alpha}_x u\:\right|^2 \:d x
+D \sum_{i}\int_{\Gamma_{l,i}^r} \sum_{|\alpha|=1}
\left|\:D^{\alpha}_x u\:\right|^2 \:d\sigma\:. \label{eq3.16}
\end{align}
Now for any $\epsilon>0$ there exists a constant $K_\epsilon$ such that
\[\int_{\Gamma_{l,i}^r} \sum_{|\alpha|=1}
\left|\:D^{\alpha}_x u\:\right|^2 \:d\sigma\leq
\epsilon\int_{\Omega^r_l}\sum_{|\alpha|=2}
\left|\:D^{\alpha}_x u\:\right|^2 \:d x
+K_\epsilon\int_{\Omega_l^r}\sum_{|\alpha|=1}
\left|\:D^{\alpha}_x u\:\right|^2 \:d x\:.\]
Using above in (\ref{eq3.16}) and choosing $\epsilon$ small enough (\ref{eq3.12}) follows.
\end{proof}
We now prove the following result which we have used in the proof of Lemma \ref{lem1}.
\begin{lem}\label{lem2} Let $w \in H^2(\Omega^r_l)$ and
$\Gamma_{l,k}^r$ denote one of the faces of $\Omega^r_l$. Then
\begin{align}\label{eq3.17}
\int_{\Gamma_{l,k}^r} div_T \left(\wa_{\nu}\wa_T \right)\:d\sigma =
\int_{\partial \Gamma_{l,k}^r}\wa _{\nu} \wa_n \:ds.
\end{align}
Here $d\sigma$ denotes an element of surface area and $ds$ an
element of arc length.
\end{lem}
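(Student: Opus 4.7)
The plan is to recognize the identity as the two-dimensional divergence theorem (Stokes' theorem) applied on the curvilinear face $\Gamma_{l,k}^r$ to the tangential vector field $\wa_{\nu}\wa_T$. First I would verify that the field is indeed tangential: by definition $\wa_T = \wa - \wa_{\nu}\nuw$ lies in the tangent plane of $\Gamma_{l,k}^r$ at every point, and multiplication by the scalar $\wa_{\nu}$ preserves this, so $\wa_{\nu}\wa_T$ is a valid tangent field to which the intrinsic surface divergence $\mathrm{div}_T$ defined in \eqref{eq3.9} applies.

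Next I would invoke the surface divergence theorem: for a sufficiently regular tangent field $\hd$ on a two-dimensional manifold $\Sigma$ with boundary $\partial\Sigma$,
\begin{equation*}
\int_{\Sigma}\mathrm{div}_T(\hd)\,d\sigma = \int_{\partial\Sigma}\hd\cdot\nb\,ds,
\end{equation*}
where $\nb$ is the outward unit conormal (lying in the tangent hyperplane of $\Sigma$ and orthogonal to $\partial\Sigma$). This is precisely the $\nb$ depicted in Figure \ref{fig3.1}. Applying it to $\hd = \wa_{\nu}\wa_T$ gives
\begin{equation*}
\int_{\Gamma_{l,k}^r}\mathrm{div}_T(\wa_{\nu}\wa_T)\,d\sigma = \int_{\partial\Gamma_{l,k}^r}\wa_{\nu}\,(\wa_T\cdot\nb)\,ds.
\end{equation*}
To conclude, I would simplify the boundary integrand using the orthogonality $\nb\perp\nuw$: since $\nb$ lies in the tangent hyperplane to $\Gamma_{l,k}^r$, one has $\wa_T\cdot\nb = (\wa - \wa_{\nu}\nuw)\cdot\nb = \wa\cdot\nb = \wa_n$, and the claimed identity \eqref{eq3.17} follows.

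The main obstacle is a technical regularity issue rather than anything conceptual: $\Gamma_{l,k}^r$ is a curvilinear quadrilateral whose boundary $\partial\Gamma_{l,k}^r$ is only piecewise $C^2$, with corners at the vertices of $\Omega_l^r$, and one must justify the surface divergence theorem in this setting with $\wa = A\nabla u \in (H^1(\Omega_l^r))^3$. I would address this by first establishing the identity for smooth $\wa$ by splitting $\partial\Gamma_{l,k}^r$ into finitely many $C^2$ arcs and summing the Stokes integrals (the corner terms contribute zero as they are isolated points), and then extending to $\wa_T\in (H^{1/2}(\Gamma_{l,k}^r))^3$ by a density argument, using that the trace $\wa_{\nu}\wa_T \cdot \nb$ is well-defined on each smooth arc of $\partial\Gamma_{l,k}^r$. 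This is in fact a direct corollary of Theorem $3.1.1.2$ of \cite{G}, applied to a field supported in a neighbourhood of $\Gamma_{l,k}^r$.
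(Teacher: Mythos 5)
Your route is essentially "this is the surface divergence theorem for a tangential field," and as a mathematical fact that is right: $\wa_\nu\wa_T$ is tangent to $\Gamma_{l,k}^r$, the curvature term in the general surface divergence theorem vanishes for tangential fields, and the boundary integrand simplifies via $\wa_T\cdot\nb=\wa\cdot\nb=\wa_n$ exactly as you say. The paper, however, does not cite such a theorem; it proves the identity from scratch using Fermi (geodesic) coordinates. The reason is that the definition of $div_T$ in \eqref{eq3.9} is frame-dependent in appearance — it is $\sum_j(\partial\hd/\partial s_j)\cdot\taux_j$ for an arbitrary orthonormal pair $\taux_1,\taux_2$ — and one must check that this agrees with the intrinsic divergence $\tfrac{1}{\sqrt{g}}\partial_i(\sqrt{g}\,h^i)$ before Stokes' theorem applies. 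The paper does this by triangulating $\Gamma_{l,k}^r$ finely enough that on each patch geodesic coordinates give $|G_\zeta/G|\leq\epsilon$ and $\partial\taux_j/\partial s_j\cdot T'=0$ at the base point, so the connection/curvature correction terms in \eqref{eq3.19}--\eqref{eq3.20} are bounded by $\epsilon\int|\wa|^2\,d\sigma$ and vanish in the limit. Your proposal buys brevity by outsourcing exactly this computation to a cited theorem; the paper's argument is self-contained and simultaneously justifies that \eqref{eq3.9} is a legitimate definition. Either is acceptable, but if you go your route you should state which version of the surface divergence theorem you are using and verify it is compatible with \eqref{eq3.9}.

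One concrete error: the lemma is \emph{not} a direct corollary of Theorem $3.1.1.2$ of~\cite{G}. That theorem is the three-dimensional identity \eqref{eq3.10}, whose right-hand side already contains $\int_{\partial O}div_T(\wa_\nu\wa_T)\,d\sigma$ over the \emph{closed} boundary surface; the present lemma is precisely the tool needed afterwards to convert that term into line integrals over the edges $\partial\Gamma_{l,i}^r$ (which do not cancel, since the conormals $\nb$ of adjacent faces differ along a common edge). So citing it here is circular in spirit and wrong in substance; drop that sentence and rely on the surface divergence theorem (or the paper's Fermi-coordinate computation) instead. Your handling of the piecewise-$C^2$ boundary and the density argument for $\wa\in(H^1)^3$ is otherwise fine.
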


\begin{proof}
We shall use geodesic coordinates to prove the result. For any
point $P \in$ closure($\Gamma_{l,k}^r$) there is an open subset
$U$ of $R^2$ containing $(0,0)$ such that $\pi: U \rightarrow R^3$
is an allowable surface patch for $\Gamma_{l,k}^r$ in a
neighbourhood of $P$. Moreover the first fundamental form of $\pi$
is $d \zeta^2 + G(\zeta, \eta) d \eta^2$ where $G$ is a smooth
function on $U$ such that $G(0, \eta) = 1$ and $G_{\zeta}(0, \eta)
= 0$ whenever $(0, \eta) \in U$. Hence for any $\epsilon > 0$ we
can choose a fine enough triangulation of $\Gamma_{l,k}^r$ so that
on each triangle there is a set of geodesic coordinates such that
$|G_{\zeta}/G| \leq \epsilon$ for all $(\zeta, \eta) \in U_i $ for
all $i$. Here the curve corresponding to $\zeta = 0$ is chosen to
be a geodesic. All the curves $\eta =$ constant are geodesics
orthogonal to the curve $\zeta = 0$. This can always be done if
the surface patch is small enough. Such a system of coordinates is
called \textit{Fermi coordinates}~\cite{WK}.
\newline
Now integrating over one such triangle we obtain
\begin{align}\label{eq3.18}
\int_{\pi_i(U_i)} div_T(\wa_\nu \wa_T) d \sigma &=\int_{\pi_i(U_i)}
\sum_{j=1}^2 \frac{\partial}{\partial s_j} \left(\wa_{\nu}(\wa_T
\cdot \taux_j)\right)\:d\sigma \notag\\
&-\int_{\pi_i(U_i)} \sum_{j=1}^2 \wa_{\nu} \left(\wa_T \cdot
\frac{\partial \taux_j}{\partial s_j}\right) \:d\sigma.
\end{align}
Clearly
\begin{eqnarray*}
\int_{\pi_i(U_i)} \sum_{j=1}^2 \frac{\partial}{\partial s_j}
\left(\wa_{\nu}(\wa_T \cdot \taux_j)\right)\:d\sigma &=& \int_{U_i}
\left\{ \frac{\partial}{\partial \zeta}\left(\wa_{\nu}(\wa_T \cdot
\taux_1)\right) \sqrt{G} \right.\\
 &+& \left.\frac{\partial}{\partial \eta} \left(\wa_{\nu} (\wa_T
\cdot \taux_2)\right) \right\} \:d\zeta \:d\eta \\
&=& \int_{U_i} \left\{ \frac{\partial}{\partial
\zeta}\left(\wa_{\nu}
(\wa_T \cdot \taux_1)\sqrt{G}\right)\right. \\
&+& \left.\frac{\partial}{\partial \eta}\left(\wa_{\nu}(\wa_T \cdot
\taux_2)
\right) \right\} \:d\zeta \:d\eta \\
&-& \int_{U_i} \left\{\wa_{\nu}(\wa_T \cdot \taux_1) \frac{\partial
\sqrt{G}}{\partial \zeta}/ \sqrt{G} \right\} \sqrt{G} \:d\zeta
\:d\eta.
\end{eqnarray*}
Hence
\begin{eqnarray*}
\int_{\pi_i(U_i)} \sum_{j=1}^2 \frac{\partial}{\partial
s_j}\left(\wa_{\nu} (\wa_T \cdot \taux_j)\right) \:d\sigma &=&
\int_{\partial U_i} \left(\wa_{\nu}(\wa_T \cdot \taux_1)\sqrt{G}
d\eta-\wa_{\nu}(\wa_{T} \cdot \taux_2)\:d \zeta\right)\\
&-& \int_{\pi_i(U_i)} \wa_{\nu}(\wa_T \cdot
\taux_1)\left(\frac{\partial \sqrt{G}}{\partial
\zeta}/\sqrt{G}\right) \:d\sigma\;.
\end{eqnarray*}
Now
\begin{eqnarray*}
\frac{d x}{d s} = x_{\zeta} \frac{d \zeta}{d s} + x_{\eta} \frac{d
\eta}{d s} =\taux_1 \frac{d \zeta}{d s} + \taux_2 \sqrt{G} \frac{d
\eta}{d s}.
\end{eqnarray*}
Hence $\nb=\taux_1 \sqrt{G} \frac{d \eta}{d s} - \taux_2 \frac{d
\zeta}{ds}$ is the unit outward normal to $\partial \pi_i(U_i)$.
And so
\begin{eqnarray*}
& & \int_{\pi_i(U_i)} \sum_{j=1}^2 \frac{\partial}{\partial s_j}
\left(\wa_{\nu}(\wa_T \cdot \taux_j)\right)\:d\sigma \\
&=& \int_{\partial \pi_i(U_i)} \wa_{\nu} \wa_n \:d s -
\int_{\pi_i(U_i)} \wa_{\nu}(\wa_T \cdot \taux_1) \left(\frac
{\partial \sqrt{G}}{\partial \zeta}/\sqrt{G}\right)\:d\sigma\;.
\end{eqnarray*}
Summing over all triangular elements of the form $\pi_i(U_i)$ we obtain using
(\ref{eq3.18})
\begin{align}\label{eq3.19}
\int_{\Gamma_{l,k}^r} div_T(\wa_{\nu} \wa_T) d \sigma &=
\int_{\partial \Gamma_{l,k}^r} \wa_{\nu} \wa_n \:ds - \sum_{i}
\int_{\pi_i(U_i)} \wa_{\nu} (\wa_T \cdot \taux_1)\left(\frac
{\partial \sqrt{G}}{\partial \zeta}/ \sqrt{G}\right) d \sigma \notag\\
&-\sum_{i}\int_{\pi_i(U_i)}\sum_{j=1}^2 \wa_{\nu}\left(\wa_T\cdot
\frac{\partial \taux_j}{\partial s_j}\right)\:d\sigma.
\end{align}
Now
\begin{subequations}\label{eq3.20}
\begin{align}\label{eq3.20a}
\left|\:\sum_{i} \int_{\pi_i(U_i)} \wa_{\nu}(\wa_T \cdot
\taux_1)\left(\frac{\partial \sqrt{G}}{\partial \zeta}/
\sqrt{G}\right) \:d\sigma\:\right| \leq \epsilon
\int_{\Gamma^r_{l,k}} |\wa|^2 \:d\sigma
\end{align}
Next at the point $P$ the the $\zeta$ parameter curves and the
$\eta$ parameter curves are geodesics. Hence at $P$,
$\frac{\partial \taux_j}{\partial s_j} \cdot T^{\prime} = 0$ for
any vector $T^{\prime}$ which lies on the tangent plane at $P$.
Thus for any $\epsilon > 0$ we can choose a fine enough
triangulation so that
\begin{align}\label{eq3.20b}
\left|\:\sum_{i} \int_{\pi_i(U_i)} \wa_{\nu}(\wa_T \cdot
\frac{\partial \taux_j}{\partial s_j})d \sigma\:\right| &\leq
\epsilon \int_{\Gamma_{l,i}^r}\left(\wa_{\nu}^2 + |\wa_T|^2\right)
\:d\sigma \notag\\
&\leq \epsilon \int_{\Gamma_{l.i}^r} |\wa|^2\:d\sigma.
\end{align}
\end{subequations}
Now from (\ref{eq3.19}) and (\ref{eq3.20}) we obtain the result since $\epsilon$ is arbitrary.
\end{proof}

\subsection{Estimates for second derivatives in vertex neighbourhoods}
In Figure \ref{fig2.2} the intersection of $\Omega$ with a sphere of radius $\rho_v$ with
center at the vertex $v$ is shown. On removing the vertex-edge neighbourhoods, which are
shaded, we obtain the vertex neighbourhood $\Omega^v$ where $v\in \mathcal V$ and $\mathcal V$
denotes the set of vertices. Choose $\rho_v $ and $\phi_v$ so that $\rho_v \sin(\phi_v) =Z$, a
constant, for all $v \in V$. Let $S^v$ denote the intersection of the closure of $\Omega^v$
with $B_{\rho_v}(v)=\left\{ x:\:|x-v|\leq \rho_v \right\}$. $S^v$ is divided into triangular
and quadrilateral elements $S_j^v$ for $1 \leq j \leq I_v$, where $I_v$ is a fixed constant.
We define a geometric mesh in the vertex neighbourhood $\Omega^v$ of the vertex $v$ as shown
in Figure \ref{fig2.8}. Thus $\Omega^v$ is divided into $N_v $ curvilinear cubes and prisms
$\{\Omega^v_l \}_{1 \leq l \leq N_v}$. Let us introduce the new coordinate system
\begin{align}
x_1^v &= \phi \notag\\
x_2^v &= \theta \notag\\
x_3^v &= \ln \rho = {\mathcal X}\:.
\end{align}
Let $\tilde{\Omega}_l^v$ be the image of $\Omega^v_l$ in $(x_1^v,x_2^v, x_3^v)$ coordinates.
Now
\begin{align}\label{eq3.22}
\int_{\Omega^v_l} \rho^2 |Lu|^2 \:d x = \int_{\tilde{\Omega}^v_l}
e^{{\mathcal X}} \sin \phi | e^{2 {\mathcal X}} Lu|^2 d \phi d
\theta \:d {\mathcal X}\:.
\end{align}
Define
\begin{equation}\label{eq3.23}
L^vu(x^v) = e^{\frac{\mathcal X}{2}} \sqrt{\sin \phi}\left(e^{2 {\mathcal X}} L u\right).
\end{equation}
We have the relation
\begin{equation}\label{eq3.24}
\rho \nabla_x u = Q^v \nabla_{x^v}u\:.
\end{equation}
Here
\begin{subequations}\label{eq3.25}
\begin{equation}\label{eq3.25a}
Q^v = O^v P^v
\end{equation}
where $O^v$ is the orthogonal matrix
\begin{align}
O^v  = \left[ \begin{array}{ccc}
\cos \phi \cos \theta   &-\sin \theta & \sin \phi \cos \theta \\
\cos \phi \sin \theta & \cos \theta & \sin \phi \sin \theta \\
-\sin \phi & 0 & \cos \phi
\end{array} \right]
\end{align}
and
\begin{align}
P^v =  \left[ \begin{array}{ccc}
1 & 0 & 0\\
0 & 1/\sin \phi & 0 \\
0 & 0 & 1
\end{array} \right].
\end{align}
Define
\begin{equation}\label{eq3.25d}
A^v = (Q^v)^T A Q^v.
\end{equation}
\end{subequations}
Since $\phi_0 < \phi < \pi - \phi_0$ where $\phi_0$ denotes a positive constant,
$\mu_0 I \leq A^v \leq \mu_1 I $ for some positive constants $\mu_0$ and $\mu_1$.
Let $\tilde {\Omega}^v_l$ be a curvilinear cube and let its faces be denoted by
$\{\tilde{\Gamma} _{l,i}^v \}$. We now prove the following result.
\begin{lem}\label{lem3}
There exist positive constants $C_v$ such that
\begin{align}\label{eq3.26}
& \frac{\mu_0^2}{2} \sin^2(\phi_v) \int_{\tilde{\Omega}_l^v}
e^{x_3^v} \sum_{r,s=1}^3 \left|\:\frac{\partial^2 u}{\partial
x^v_r \partial x^v_s}\:\right|^2 \:dx^v \notag\\
&\quad\leq \sin^2(\phi_v)\int_{\tilde{\Omega}^v_l}|L^v u(x^v)|^2 dx^v \notag\\
&\quad- \sin^2(\phi_v) \left\{ \sum_{i} \oint_{\partial
\tilde{\Gamma}^v_{l,i}} e^{x_3^v} \sin(x_1^v) \left(\frac
{\partial u}{\partial \nb^v}\right)_{A^v} \left(\frac{\partial
u}{\partial \nuw^v}\right)_{A^v} \:ds^v\right. \notag\\
&\quad- \left.2 \sum_{i} \int_{\tilde{\Gamma}_{l,i}^v} e^{x_3^v}
\sin(x_1^v) \sum_{j=1}^2 \left(\frac{\partial u}{\partial
\taux_j^v}\right)_{A^v} \frac{\partial}{\partial s_j^v}
\left(\left(\frac{\partial u}{\partial \nuw^v}\right)
_{A^v}\right) d \sigma^v \right\} \notag\\
&\quad+ C_v \int_{\tilde{\Omega}^v_l} \sum_{|\alpha| \leq 1}
e^{x_3^v}\left|D^{\alpha}_{x^v}u\right|^2 \:dx^v
\end{align}
for $u \in H^3( \tilde{\Omega}^v_l)$. Here $dx^v$ denotes a volume element in $x^v$
coordinates, $d\sigma^v$ an element of surface area and $ds^v$ an element of arc
length in $x^v$ coordinates.
\end{lem}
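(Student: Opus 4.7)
The plan is to mirror the proof of Lemma \ref{lem1} but carry it out entirely in the modified coordinates $x^v=(\phi,\theta,\chi)$ on the transformed element $\tilde{\Omega}_l^v$. The key point is that, since the vertex-edge neighbourhoods have been removed, $\phi_v\leq \phi\leq \pi-\phi_v$ on $\tilde{\Omega}_l^v$, so the matrix $A^v=(Q^v)^TAQ^v$ in \eqref{eq3.25d} is uniformly positive definite with constants independent of the layer index, and $\tilde{\Omega}_l^v$ has a Lipschitz, piecewise $C^2$ boundary of diameter bounded independent of $l$. This lets us apply both Theorem \ref{thm1} and Grisvard's pointwise inequality (Lemma 3.1.3.4 of \cite{G}) directly in $x^v$-coordinates with element-uniform constants.

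The concrete steps are as follows. Define the vector field $\wa^v=e^{\chi/2}\sqrt{\sin\phi}\,A^v\nabla_{x^v}u$ on $\tilde{\Omega}_l^v$; by \eqref{eq2.23}, $-\mathrm{div}_{x^v}(\wa^v)=L^vu-\sum_{i=1}^3\hat b_i^v u_{x_i^v}-\hat c^v u$. Applying Theorem \ref{thm1} to $\wa^v$ on $\tilde{\Omega}_l^v$ produces the identity
\begin{equation*}
\int_{\tilde{\Omega}_l^v}(\mathrm{div}_{x^v}\wa^v)^2\,dx^v-\sum_{i,j=1}^3\int_{\tilde{\Omega}_l^v}\partial_{x_j^v}w_i^v\,\partial_{x_i^v}w_j^v\,dx^v=(\text{boundary integrals as in Theorem \ref{thm1}}).
\end{equation*}
Grisvard's Lemma 3.1.3.4, applied to the matrix $e^{\chi/2}\sqrt{\sin\phi}\,A^v$, then gives the pointwise lower bound $\mu_0^2\,e^{\chi}\sin\phi\sum_{r,s=1}^3 |u_{x_r^v x_s^v}|^2$ on the quadratic form in the derivatives of $w^v$, up to first-order commutator terms that integrate to a contribution bounded by $C\int_{\tilde{\Omega}_l^v}e^{\chi}\sum_{|\alpha|\leq 1}|D_{x^v}^\alpha u|^2\,dx^v$. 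Using the pointwise comparison $\sin^2(\phi_v)\,e^\chi\leq \sin(\phi_v)\,e^\chi\sin\phi$ on $\tilde{\Omega}_l^v$ converts this into the left-hand side of \eqref{eq3.26}.

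On the right-hand side, the divergence term becomes $\int_{\tilde{\Omega}_l^v}|L^v u|^2\,dx^v$ after substituting \eqref{eq2.23} and absorbing the $\hat b_i^v u_{x_i^v}+\hat c^v u$ contributions into the $C_v$-remainder via Cauchy--Schwarz. For the boundary terms, Lemma \ref{lem2} is applied in $x^v$-coordinates to each face $\tilde{\Gamma}_{l,i}^v$: it rewrites the surface integrals of $\mathrm{div}_T(\wa^v_{\nu}\wa^v_T)$ as line integrals $\oint_{\partial\tilde{\Gamma}_{l,i}^v}\wa^v_{\nu}\wa^v_n\,ds^v$. Writing $\wa^v_{\nu}=e^{\chi/2}\sqrt{\sin\phi}\,(\partial u/\partial\nuw^v)_{A^v}$ and similarly for the tangential components produces the weight $e^{x_3^v}\sin(x_1^v)$ on each face and line integral, matching \eqref{eq3.26}. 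The contributions of the second fundamental form of $\partial\tilde{\Omega}_l^v$ (bounded since the lateral faces of $\tilde{\Omega}_l^v$ are $C^2$ surfaces with uniformly controlled curvature in $x^v$-coordinates) are lower order and, after a standard trace inequality $\|\nabla u\|_{0,\tilde{\Gamma}_{l,i}^v}^2\leq \varepsilon\|u\|_{H^2(\tilde{\Omega}_l^v)}^2+K_\varepsilon\|u\|_{H^1(\tilde{\Omega}_l^v)}^2$ with $\varepsilon$ small, are absorbed into the $C_v$-remainder and into the principal second-derivative term on the left of \eqref{eq3.26}.

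The main obstacle is tracking the non-constant weight $\omega=e^{\chi/2}\sqrt{\sin\phi}$ embedded in $\wa^v$. Every application of the chain rule produces commutator terms of the form $(\nabla_{x^v}\omega)\cdot(A^v\nabla_{x^v}u)$ and $\omega\,(\nabla_{x^v}A^v)\nabla_{x^v}u$, both in the interior (when computing $\mathrm{div}_{x^v}\wa^v$ and in the quadratic form from Grisvard's Lemma 3.1.3.4) and on the boundary (when evaluating $\nabla_T\wa^v_{\nu}$ and $\wa^v_T$). The crucial check is that $|\nabla_{x^v}\omega|/\omega=|\nabla_{x^v}(\tfrac12\chi+\tfrac12\ln\sin\phi)|$ is bounded on $\tilde{\Omega}_l^v$, so each such commutator contributes a term majorised by $C\,\omega\,|\nabla_{x^v}u|$, which folds cleanly into the $C_v\int e^{\chi}\sum_{|\alpha|\leq 1}|D^\alpha_{x^v}u|^2\,dx^v$ remainder. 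A secondary technicality is the identification of the conormal direction $\nuw^v$ and surface element $d\sigma^v$ in $x^v$-coordinates with those appearing in \eqref{eq3.26}, which is done by a direct calculation on each of the flat $\chi$-faces and the ruled $(\phi,\theta)$-lateral faces of $\tilde{\Omega}_l^v$.
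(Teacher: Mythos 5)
Your proposal follows essentially the same route as the paper's own proof: the same weighted vector field $\wa = e^{\chi/2}\sqrt{\sin\phi}\,A^v\nabla_{x^v}u$, the same application of Theorem \ref{thm1} and Lemma \ref{lem2} in the $x^v$-coordinates, the same bookkeeping of the commutator terms generated by the weight (bounded because $\phi$ is bounded away from $0$ and $\pi$ on $\tilde{\Omega}_l^v$), and the same final absorption of the boundary first-derivative terms via a trace inequality with small $\epsilon$. The argument is correct as outlined.
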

\begin{proof}
We let $\fe$ denote a vector field. Then
\[ \rho\:div_x (\fe) = \frac{e^{-2 {\mathcal X}}}{\sin
\phi} div_{x^v}\left(e^{2 {\mathcal X}} \sin \phi (Q^v)^T
\fe\right).\]
Take $\fe= A \nabla_x u$. Hence
\begin{eqnarray*}
\int_{\Omega^v_l} \left|\rho\:div_x (A \nabla_x u )\right|^2 \:dx
=\int_{\tilde{\Omega}^v_l} \frac{e ^{-{\mathcal X}}}{\sin \phi}
\left| div_{x^v} (e^{\mathcal X} \sin \phi (Q^v)^T A Q^v
\nabla_{x^v}u)\right|^2 \:dx^v.
\end{eqnarray*}
Define
\begin{equation}\label{eq3.27}
M^v u (x^v) = div_{x^v}\left(e^{{\mathcal X}/2}
\sqrt{\sin \phi} A^v \nabla_{x^v} u \right).
\end{equation}
Then
\begin{eqnarray*}
\frac{e^{-{\mathcal X}/2}}{\sqrt{\sin \phi}} \:div_{x^v}
\left(e^{{\mathcal X}}
\sin \phi A^v \nabla_{x^v} u\right) &=& M^v u(x^v) + \frac{1}{2}
e^{{\mathcal X}/2} \sqrt{\sin \phi} \sum_{j=1}^3 a^v_{3,j}
\frac{\partial u}{\partial x^v_j} \\
&+& \frac{1}{2} e^{{\mathcal X}/2} \frac{\cos \phi}{\sqrt{\sin
\phi}} \sum_{j=1}^3 a^v_{1,j} \frac{\partial u}{\partial x^v_j}\:.
\end{eqnarray*}
Here $A^v$ is as defined in (\ref{eq3.25d}). \\
Define the vector field $\wa$ by
\begin{subequations}\label{eq3.28}
\begin{equation}\label{eq3.28a}
\wa = e^{{\mathcal X}/2} \sqrt{\sin \phi} A^v \nabla_{x^v} u\:.
\end{equation}
Then
$$L^v u (x^v) = div_{x^v}(\wa) + \eta^v u(x^v)$$
where
\begin{align}\label{eq3.28b}
\eta^v u (x^v) &= -\frac{1}{2} e^{{\mathcal X}/2} \sqrt{\sin \phi}
\sum_{j=1}^3 a_{3,j}^v \frac{\partial u}{\partial x^v_j}
-\frac{1}{2} e^{{\mathcal X}/2} \frac{\cos \phi}{\sqrt{\sin \phi}}
\sum_{j=1}^3 a_{1,j}^v \frac{\partial u}{\partial x^v_j} \notag\\
&+ \sum_{i=1}^3 b_i^v \frac{\partial u}{\partial x^v_i} + c^v u\:.
\end{align}
\end{subequations}
Here
\begin{subequations}\label{eq3.29}
\begin{align}\label{eq3.29a}
\| b_i^v \|_{0, \infty, \tilde{\Omega}^v_l}=O(e^{3 {\mathcal X}/2}),
\end{align}
\begin{align}\label{eq3.29b}
\| c^v \|_{0, \infty, \tilde{\Omega}^v_l}=O(e^{5 {\mathcal X}/2}),
\end{align}
and
\begin{align}\label{eq3.29c}
\left\|\:(e^{{\mathcal X}/2} \sqrt{\sin \phi} A^v)
\:\right\|_{1, \infty,\tilde{\Omega}^v_l}=O(e^{{\mathcal X}/2})\:.
\end{align}
\end{subequations}
To obtain (\ref{eq3.26}) we shall use Theorem \ref{thm1} applied to the vector field
$\wa$ along with Lemma \ref{lem2}. Now
\begin{align}\label{eq3.30}
& 2 \int_{\tilde{\Gamma}_{l,i}^v} \wa_T \cdot \nabla_T \wa_{\nu}\:d\sigma^v \notag \\
&\quad= 2 \int_{\tilde{\Gamma}_{l,i}^v} \sum_{j=1}^2 e^{{\mathcal
X}/2} \sqrt{\sin \phi} \left( \frac{\partial u}{\partial
\taux^v_j}\right)_{A^v} \frac{\partial}{\partial
s^v_j}\left(e^{{\mathcal X}/2} \sqrt{\sin \phi}
\left(\frac{\partial u}{\partial \nuw^v}\right)_{A^v}\right)\:d\sigma^v \notag \\
&\quad=2\int_{\tilde{\Gamma}_{l,i}^v}\sum_{j=1}^2 e^{{\mathcal X}}
\sin\phi \left(\frac{\partial u}{\partial \taux_j^v}\right)_{A^v}
\frac{\partial}{\partial s^v_j}\left(\left(\frac{\partial u}{\partial
\nuw^v}\right)_{A^v}\right)\:d\sigma^v \notag \\
&\quad+ 2 \int_{\tilde{\Gamma}_{l,i}^v} \sum_{j=1}^2 e^{{\mathcal
X}/2} \sqrt{\sin \phi} \left(\frac{\partial u}{\partial \taux^v_j}
\right)_{A^v}\left(\frac{\partial u}{\partial \nuw^v}\right)_{A^v}
\frac{\partial}{\partial s^v_j} \left(e^{{\mathcal X}/2}
\sqrt{\sin \phi}\right)\:d\sigma^v\:.
\end{align}
And so using (\ref{eq3.27}), (\ref{eq3.28}) and (\ref{eq3.30}) we obtain
\begin{align*}
& \frac{\mu_0^2}{2} \sin^2(\phi_v) \int_{\tilde{\Omega}_l^v}
e^{x_3^v} \sum_{r,s=1}^3 \left|\:\frac{\partial^2 u}{\partial x^v_r \partial x^v_s}
\:\right|^2 \:dx^v \\
&\quad\leq \sin^2(\phi_v)\int_{\tilde{\Omega}^v_l}|L^v u(x^v)|^2 d x^v \\
&\quad- \sin^2(\phi_v) \left\{ \sum_{i} \oint_{\partial
\tilde{\Gamma}^v_{l,i}} e^{x_3^v} \sin(x_1^v) \left(\frac
{\partial u}{\partial \nb^v}\right)_{A^v} \left(\frac{\partial
u}{\partial \nuw^v}\right)_{A^v} \:ds^v\right.
\end{align*}
\begin{align*}
&\quad- \left.2 \sum_{i} \int_{\tilde{\Gamma}_{l,i}^v} e^{x_3^v}
\sin(x_1^v) \sum_{j=1}^2 \left(\frac{\partial u}{\partial
\taux_j^v}\right)_{A^v} \frac{\partial}{\partial s_j^v}
\left(\left(\frac{\partial u}{\partial \nuw^v}\right)
_{A^v}\right) d \sigma^v \right\}\\
&\quad+ C_v \int_{\tilde{\Omega}^v_l} \sum_{|\alpha| \leq 1}
e^{x_3^v}\left|D^{\alpha}_{x^v}u\right|^2 \:dx^v+ D_v \sum_i
\int_{\tilde{\Gamma}_{l,i}^v}\sum_{|\alpha|
=1}e^{x_3^v}\left|D^{\alpha}_{x^v}u\right|^2 \:d\sigma^v\;.
\end{align*}
Now for any $\epsilon>0$ there exists a constant $K_\epsilon$ such that
\begin{align*}
\int_{\tilde{\Gamma}_{l,i}^v} \sum_{|\alpha|=1}
e^{x_3^v}\left|\:D^{\alpha}_x u\:\right|^2 \:d\sigma^v
&\leq\epsilon\int_{\tilde{\Omega}^v_l}\sum_{|\alpha|=2}
e^{x_3^v}\left|\:D^{\alpha}_x u\:\right|^2 \:d x^v\\
&+K_\epsilon\int_{\tilde{\Omega}_l^v}\sum_{|\alpha|=1}
e^{x_3^v}\left|\:D^{\alpha}_x u\:\right|^2 \:d x^v\:.
\end{align*}
Choosing $\epsilon$ small enough $(\ref{eq3.26})$ follows from the above equation.
\end{proof}
We now show that the boundary integrals in Lemma \ref{lem1} and Lemma \ref{lem3}
coincide  when $\Gamma^v_{k,i} = \Gamma_{l,m}^r$ is a portion of the sphere
$B_{\rho_v} (v) = \left\{x:\:|x-v|=\rho_v \right\}$, except that they have opposite
signs. Let $Q^v$ be the matrix defined in (\ref{eq3.25a}). Then by (\ref{eq3.24})
$$\rho \nabla_x u = Q^v \nabla_{x^v}u\:.$$
Now if $\bf dx$ is a tangent vector to a curve in $x$ coordinates then its image
in $x^v$ coordinates is given by ${\bf dx}^v$ where
\begin{align}\label{eq3.31}
{\bf dx}^v = \frac{(Q^v)^T}{\rho} {\bf dx}\;.
\end{align}
Clearly the first fundamental form $d s^2$ in $x$ coordinates is
\begin{align}\label{eq3.32}
d s^2 &= {\bf dx}^T {\bf dx} \notag\\
&= \rho^2 ({\bf dx}^v)^T\left[(Q^v)^{-1}(Q^v)^{-T}\right]\:{\bf dx}^v \notag\\
&= e^{2 {\mathcal X}}(d \phi^2 + \sin^2 \phi d \theta^2 + d{\mathcal X}^2)\:.
\end{align}
Moreover on $\Gamma_{l,m}^r$
\begin{align}\label{eq3.33}
d \sigma = \rho_v^2 \sin\phi\:d\phi\:d\theta\:.
\end{align}
Choose $\taux_1^v = (1,0,0)^T$ and $\taux_2^v = (0,1,0)^T$. These are then orthogonal
unit tangent vectors on $\tilde{\Gamma}_{k,i}^v$ since $(d s^v)^2=d\phi^2+d\theta^2
+d{\mathcal X }^2$. Define
\begin{align}\label{eq3.34}
\taux_1 &= - (Q^v)^{-T}\taux_1^v = - e_{\phi}, \notag\\
\taux_2 &= \frac{(Q^v)^{-T}}{\sin \phi} \taux_2^v = e_{\theta}\:.
\end{align}
Let $\nuw^v = (0,0,1)^T$ denote the unit normal vector on $\tilde{\Gamma}_{k,i}^v.$ Then
\begin{equation}\label{eq3.35}
\nuw = - (Q^v)^{-T} \nuw^v
\end{equation}
denotes the unit normal to $\Gamma_{l,m}^r$. Finally let
${\bf d s}^v = (d \phi, d \theta,0)^T $ denote a tangent vector
field on $\tilde{\Gamma}_{k,i}^v$. Define
\[d s^v = \sqrt{d \phi^2 + d \theta^2},\]
\begin{equation}\label{eq3.36}
{\bf d s} = \rho_v (Q^v)^{-T} {\bf d s}^v
\end{equation}
and
\[d s = \rho_v \sqrt{d \phi^2 + \sin^2 (\phi) d \theta^2}\;.\]
Let
\begin{equation}\label{eq3.37}
\nb^v = \frac{(-d \theta, d \phi, 0 )^T}{\sqrt{d \theta^2 + d\phi^2}}
\end{equation}
be the unit outward normal to $\partial\tilde{\Gamma}_{k,i}^v$. Define
\begin{subequations}\label{eq3.38}
\begin{equation}\label{eq3.38a}
{\mc}^v = \left(-\sin \phi d \theta, \frac{d \phi}{\sin \phi},
0 \right)^T / \sqrt{d \phi^2 + \sin^2 \phi d \theta^2}\;.
\end{equation}
Then
\begin{equation}\label{eq3.38b}
\nb = (Q^v)^{-T} {\mc}^v
\end{equation}
\end{subequations}
is the unit normal vector to $\partial {\Gamma}_{l,m}^r$. We now prove
the following result.
\begin{lem}\label{lem4}
Let $\Gamma_{k,i}^v = \Gamma_{l,m}^r$. Then the following
identities hold.
\begin{align}\label{eq3.39}
& \rho_v^2 \sin^2(\phi_v) \oint_{\partial \Gamma_{l,m}^r} \left
(\frac{\partial u}{\partial \nb}\right)_A \left(\frac{\partial u}
{\partial \nuw}\right)_A \:ds \notag\\
&\quad=- \sin^2 (\phi_v) \oint_{\partial \tilde{\Gamma}_{k,i}^v}
e^{x_3^v} \sin (x_1^v) \left(\frac{\partial u}{\partial
\nb^v}\right) _{A^v}\left(\frac{\partial u}{\partial
\nuw^v}\right)_{A^v} \:ds^v
\end{align}
and
\begin{align}\label{eq3.40}
& \rho_v^2 \sin^2(\phi_v) \int_{ \Gamma_{l,m}^r} \sum_{j=1}^2
\left(\frac{\partial u}{\partial \taux_j}\right)_A \frac{\partial}
{\partial s_j}\left(\left(\frac{\partial u}{\partial
\nuw}\right)_A\right) \:d\sigma \notag\\
&\quad= - \sin^2 (\phi_v) \int_{ \tilde{\Gamma}_{k,i}^v} e^{x_3^v}
\sin (x_1^v) \sum_{j=1}^2 \left(\frac{\partial u}{\partial
\taux_j^v} \right)_{A^v}\frac{\partial }{\partial s^v_j}
\left(\left(\frac{\partial u} {\partial\nuw^v}\right)_{A^v}\right)
\:d\sigma^v\;.
\end{align}
\end{lem}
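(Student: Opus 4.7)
The plan is to prove both identities by a direct change-of-variables calculation, exploiting the specific algebraic structure of $Q^v=O^vP^v$ with $P^v=\mathrm{diag}(1,1/\sin\phi,1)$. The key algebraic observation is that $(Q^v)^{-1}AQ^v=(P^v)^{-2}A^v$, where $(P^v)^{-2}=\mathrm{diag}(1,\sin^2\phi,1)$. Combined with $\nabla_x u=\rho^{-1}Q^v\nabla_{x^v}u$ and the transformation rules (\ref{eq3.35}), (\ref{eq3.38b}), this yields $\nb^T A\nabla_x u=\rho^{-1}(\mc^v)^T(P^v)^{-2}A^v\nabla_{x^v}u$ and analogously for $\nuw$ and the $\taux_j$. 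Because the directions $\nuw^v=(0,0,1)^T$ and $\taux_1^v=(1,0,0)^T$ sit in the invariant subspaces where $(P^v)^{-2}$ acts as the identity, while $\taux_2^v=(0,1,0)^T$ picks up an extra factor $\sin^2\phi$, the contractions $(\nuw^v)^T(P^v)^{-2}A^v$ and $(\taux_1^v)^T(P^v)^{-2}A^v$ simplify immediately, and a factor $\sin^2\phi$ in the $\taux_2^v$ case will cancel against the factor $1/\sin\phi$ coming from $\taux_2=(Q^v)^{-T}\taux_2^v/\sin\phi$ in (\ref{eq3.34}).

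First I would carry out the calculation for (\ref{eq3.39}). Using the relation $(P^v)^{-2}\mc^v=\sin\phi\sqrt{d\phi^2+d\theta^2}/\sqrt{d\phi^2+\sin^2\phi\,d\theta^2}\cdot\nb^v$ (which follows from the definitions of $\mc^v$ and $\nb^v$), together with $\nuw=-(Q^v)^{-T}\nuw^v$, one obtains
\[\left(\tfrac{\partial u}{\partial\nb}\right)_A=\tfrac{\sin\phi\sqrt{d\phi^2+d\theta^2}}{\rho\sqrt{d\phi^2+\sin^2\phi\,d\theta^2}}\left(\tfrac{\partial u}{\partial\nb^v}\right)_{A^v},\qquad \left(\tfrac{\partial u}{\partial\nuw}\right)_A=-\tfrac{1}{\rho}\left(\tfrac{\partial u}{\partial\nuw^v}\right)_{A^v}.\]
Multiplying these and using $ds=\rho_v\sqrt{d\phi^2+\sin^2\phi\,d\theta^2}$ with $\rho=\rho_v$ on the spherical face, the radical cancels and what remains is $-\sin\phi\,ds^v/\rho_v$, which together with the prefactor $\rho_v^2\sin^2(\phi_v)$ and the substitutions $e^{x_3^v}=\rho_v$, $\sin(x_1^v)=\sin\phi$ reproduces the RHS of (\ref{eq3.39}).

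Second I would treat (\ref{eq3.40}) in the same spirit, with one extra ingredient: the tangential derivatives $\partial/\partial s_j$ along $\Gamma_{l,m}^r$ must be converted to $\partial/\partial s_j^v$. Since the induced metric on the sphere $\rho=\rho_v$ is $\rho_v^2(d\phi^2+\sin^2\phi\,d\theta^2)$, the chain rule gives $\partial/\partial s_1=-\rho_v^{-1}\partial/\partial s_1^v$ and $\partial/\partial s_2=(\rho_v\sin\phi)^{-1}\partial/\partial s_2^v$. Combining these with the expressions for $(\partial u/\partial\taux_j)_A$ and $(\partial u/\partial\nuw)_A$ from step one, a term-by-term check shows the $\sin\phi$ factors cancel between the $\taux_2$-term prefactor and the $s_2$-derivative, both summands acquire a common factor $-1/\rho_v^3$, and after multiplying by $d\sigma=\rho_v^2\sin\phi\,d\sigma^v$ and the prefactor $\rho_v^2\sin^2(\phi_v)$ the identity (\ref{eq3.40}) emerges. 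The minus signs in both identities trace back to the single minus sign in (\ref{eq3.35}), which is the analytic manifestation of the opposite orientations of $\Gamma_{l,m}^r$ as a face of $\Omega^r_l$ versus $\tilde\Gamma_{k,i}^v$ as a face of $\tilde\Omega^v_k$. The main obstacle will be purely bookkeeping — keeping the three families of geometric factors (the Jacobian powers of $\rho$, the $\sin\phi$ factors from $P^v$, and the radicals in the arc-length elements) aligned so that the algebraic identity $(Q^v)^{-1}AQ^v=(P^v)^{-2}A^v$ can be applied cleanly to the distinguished coordinate vectors $\nuw^v$, $\taux_1^v$, $\taux_2^v$.
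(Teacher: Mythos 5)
Your proposal is correct and follows essentially the same route as the paper's proof: the identity $(Q^v)^{-1}(Q^v)^{-T}=(P^v)^{-2}=\mathrm{diag}(1,\sin^2\phi,1)$ you exploit is exactly the simplification the paper performs when it verifies $(\nuw^v)^T(Q^v)^{-1}(Q^v)^{-T}=(\nuw^v)^T$ and computes $(\mc^v)^T(Q^v)^{-1}(Q^v)^{-T}$, and your intermediate relations for $(\partial u/\partial\nuw)_A$, $(\partial u/\partial\nb)_A\,ds$, $(\partial u/\partial\taux_j)_A$ and $\partial/\partial s_j$ coincide with the paper's equations (3.41)--(3.45). No gaps; the bookkeeping of the $\rho_v$, $\sin\phi$ and arc-length factors checks out against both identities.
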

\begin{proof}
We first evaluate $\rho_v^2 \sin^2(\phi_v) \oint_{\partial
\Gamma_{l,m}^r} \left(\frac{\partial u}{\partial \nb}\right)_A
\left(\frac{\partial u}{\partial \nuw}\right)_A \:ds$. By (\ref{eq3.24})
and (\ref{eq3.35})
\begin{eqnarray*}
\left(\frac{\partial u}{\partial \nuw}\right)_A &=& \nuw^T A
\nabla_x u
= - \frac{(\nuw^v)^T}{\rho_v} \left(( Q^v)^{-1} A Q^v\right)
\nabla_{x^v} u\\
&=& \frac{-\left((\nuw^v)^T (Q^v)^{-1}(Q^v)^{-T}\right)}{\rho_v}
A^v \nabla_{x^v}u\:.
\end{eqnarray*}
Now by (\ref{eq3.32}) and (\ref{eq3.35})
$$(\nuw^v)^T (Q^v)^{-1}(Q^v)^{-T} = (\nuw^v)^T.$$
Hence we can conclude that
\begin{equation}\label{eq3.41}
\left(\frac{\partial u}{\partial \nuw}\right)_A = \frac{-1}{\rho_v}
\left(\frac{\partial u}{\partial \nuw^v}\right)_{A^v}.
\end{equation}
Now by (\ref{eq3.38b})
\[ \left(\frac{\partial u}{\partial \nb}\right)_A = n^T A \nabla_x u
= \frac{1}{\rho_v}\left((\mc^v)^T (Q^v)^{-1}(Q^v)^{-T} A^v
\nabla_{x^v} u\right)\:. \]
Clearly
$$ \left( (\mc^v)^T (Q^v)^{-1}(Q^v)^{-T}\right) = \frac{\sin \phi
(- d\theta, d\phi, 0)}{\sqrt{d \phi^2 + \sin^2 \phi d
\theta^2}}\:.$$
Using (\ref{eq3.35}), (\ref{eq3.36}) and (\ref{eq3.37}) gives
$$\frac{1}{\rho_v}\left((\mc^v)^T (Q^v)^{-1}(Q^v)^{-T}\right)
= \sin \phi \frac{d s^v}{d s} (\nb^v)^T.$$ Hence
\begin{equation}\label{eq3.42}
\left(\frac{\partial u}{\partial \nb}\right)_A d s = \sin
\phi\left(\frac{\partial u}{\partial \nb^v}\right)_{A^v}
\:d s^v.
\end{equation}
Thus from (\ref{eq3.41}) and (\ref{eq3.42}) we obtain (\ref{eq3.39}).
Finally we evaluate the term
\[{\rho}_{v}^2 \sin^2(\phi_v)\int_{\Gamma_{l,m}^r} \sum_{j=1}^2
\left(\frac{\partial u}{\partial\taux_j}\right)_A \frac{\partial}
{\partial s_j}\left(\left(\frac{\partial u}{\partial \nuw}
\right)_A \right)\:d\sigma\;.\]
Using (\ref{eq3.34}) it is easy to show that
\begin{align}\label{eq3.43}
\left(\frac{\partial u}{\partial \taux_1}\right)_A &=
\frac{-1}{\rho_v} \left(\frac{\partial u}{\partial \taux_1^v}
\right)_{A^v},\:\mbox{ and } \notag\\
\left(\frac{\partial u}{\partial \taux_2}\right)_A &= \frac{\sin
\phi} {\rho_v} \left(\frac{\partial u}{\partial
\taux_2^v}\right)_{A^v}.
\end{align}
Moreover by (\ref{eq3.41})
\[\left(\frac{\partial u}{\partial \nuw}\right)_A = \frac{-1}
{\rho_v}\left(\frac{\partial u}{\partial \nuw^v}\right)_{A^v}.\]
Hence
\begin{align}\label{eq3.44}
\frac{\partial}{\partial s_1}\left(\left(\frac{\partial
u}{\partial \nuw} \right)_A\right) &= \frac{1}{\rho^2_v}
\frac{\partial}{\partial s_1^v} \left(\left(\frac{\partial
u}{\partial \nuw^v}\right)_{A^v}\right), \mbox{ and } \notag\\
\frac{\partial}{\partial s_2}\left(\left(\frac{\partial
u}{\partial \nuw}\right)_A\right)&= -\frac{1}{\rho^2_v \sin \phi }
\frac{\partial} {\partial s_2^v}\left(\left(\frac{\partial
u}{\partial \nuw^v}\right) _{A^v}\right).
\end{align}
Moreover from (\ref{eq3.32})
\begin{equation}\label{eq3.45}
d \sigma = \rho_v^2 \sin \phi \,d \sigma^v.
\end{equation}
Combining (\ref{eq3.43}), (\ref{eq3.44}) and (\ref{eq3.45}) we obtain (\ref{eq3.40}).
\end{proof}

\subsection{Estimates for second derivatives in vertex-edge neighbourhoods}
Figure \ref{fig2.4} shows the vertex-edge neighbourhood $\Omega^{v-e}$ of the vertex
$v$ and the edge $e$. As before we have $\rho_v \sin \phi_v = Z$. Now
\[\Omega^{v-e}=\left\{x \in \Omega: 0< x_3 < \delta_{v},\:0<\phi<\phi_v \right\}.\]
A geometric mesh is imposed on $\Omega^{v-e}$ as shown in Figure \ref{fig2.9}.
To proceed further we introduce new coordinates in $\Omega^{v-e}$ by
\begin{align}\label{eq3.46}
x_1^{v-e} &= \psi = \ln (\tan \phi) \notag \\
x_2^{v-e} &= \theta \notag \\
x_3^{v-e} &= \zeta = \ln x_3 = {\mathcal X} + \ln (\cos\phi)\:.
\end{align}
Here
\begin{align}\label{eq3.47}
x_1^{v} &= \phi \notag \\
x_2^{v} &= \theta \notag \\
x_3^{v} &= {\mathcal X} = \ln \rho
\end{align}
are the coordinates which have been introduced in the vertex
neighbourhood $\Omega^v$, adjoining $\Omega^{v-e}.$ Let $\tilde
{\Omega}^{v-e}$ be the image of $\Omega^{v-e}$ in $x^{v-e}$
coordinates. Thus $\tilde{\Omega}^{v-e}$ is divided into $N^{v-e}
= I^{v-e} (N+1)^2 $ hexahedrons  $\tilde{\Omega}^{v-e}_n$. Now
\begin{equation}\label{eq3.48}
\nabla_{x^v} u = J^{v-e} \nabla_{x^{v-e}} u
\end{equation}
where
\begin{equation}\label{eq3.49}
J^{v-e} = \left[ \begin{array}{ccc}
\sec^2 \phi \cot \phi  &0& -\tan \phi \\
0 & 1 & 0 \\
0 & 0 & 1
\end{array} \right].
\end{equation}
We now need to evaluate
\[\int_{\Omega^{v-e}_n} \rho^2 \sin^2 \phi | Lu(x)|^2 d x\:. \]
Let $\hat{\Omega}^{v-e}_n$ denote the image of $\Omega^{v-e}_n$ in
$x^v$ coordinates. Then
\[ \int_{\Omega_n^{v-e}} \rho^2 \sin^2 \phi |Lu(x)|^2 dx =
\int_{\hat{\Omega}^{v-e}_n} \sin^2 \phi | L^v u (x^v)|^2 d x^v.\]
Now
\begin{equation}\label{eq3.50}
d x^v = \sin \phi \cos \phi\:d x^{v-e}.
\end{equation}
Let $\fe$ denote a vector field. Then
\begin{equation}\label{eq3.51}
div_{x^v}(\fe) = \frac{1}{\sin \phi \cos \phi} div_{x^{v-e}}
\left(\sin \phi \cos \phi (J^{v-e})^T \fe\right)\;.
\end{equation}
Using $(\ref{eq3.50})$ we obtain
$$
\int_{\Omega^{v-e}_n} \rho^2 \sin^2 \phi \left|\:Lu(x)\:\right|^2
\:dx = \int_{\tilde{\Omega}^{v-e}_n} \sin ^3 \phi \cos \phi |L^v
u(x^v)|^2 \:d x^{v-e}\:.
$$
Define
\begin{equation}
L^{v-e}u(x^{v-e}) = (\sin \phi)^{3/2}(\cos \phi)^{1/2} L^v u(x^v)\:.
\end{equation}
Then using (\ref{eq3.27}), (\ref{eq3.48}) and (\ref{eq3.51})
\begin{align}\label{eq3.53}
L^{v-e} u(x^{v-e}) = (\sin \phi)^{1/2} (\cos \phi)^{-1/2}div_{x^{v-e}}
\left(e^{{\mathcal X}/2} (\sin \phi)^{3/2} \cos \phi \right. \notag\\
(J^{v-e})^T A^v J^{v-e} \nabla_{x^{v-e}}u\Big) + \sum_{i=1}^2
b_i^{v-e} u_{x_i^{v-e}} + c^{v-e} u\:.
\end{align}
Define
\[ M^{v-e}u(x^{v-e}) = div_{x^{v-e}}\left(e^{{\mathcal X}/2}
(\cos \phi)^{1/2} \sin^2 \phi(J^{v-e})^T A^v J^{v-e}
\nabla_{x^{v-e}} u \right).\]
Or
\begin{equation}\label{eq3.54}
M^{v-e} u (x^{v-e}) = div_{x^{v-e}}\left(e^{\zeta/2} A^{v-e}
\nabla_{x^{v-e}} u\right).
\end{equation}
Here
\[A^{v-e}= \sin^2\phi(J^{v-e})^T A^v J^{v-e}\;.\]
Or using $(\ref{eq3.25})$
\begin{equation}\label{eq3.55}
A^{v-e} = (K^{v-e})^T A K^{v-e}
\end{equation}
where
\[K^{v-e} = O^v R^{v-e} \]
and
\[
R^{v-e} = \left[ \begin{array}{ccc}
\frac{1}{\cos \phi}  &0& \frac{-\sin^2 \phi}{\cos \phi} \\
0 & 1 & 0 \\
0 & 0 &  \sin \phi
\end{array} \right].
\]
Now
\begin{eqnarray*}
(\tan \phi)^{1/2} div_{x^{v-e}} \left(e^{{\mathcal X}/2} (\sin
\phi)^{3/2} \cos \phi
(J^{v-e})^T A^v J^{v-e} \nabla_{x^{v-e}}u\right)\\
= M^{v-e} u(x^{v-e}) - \frac{1}{2} e^{\zeta/2} \sum_{j=1}^3
\hat{a}_{1,j}^{v-e} \frac{\partial u}{\partial x_j^{v-e}}\:.
\end{eqnarray*}
Hence using (\ref{eq3.53})
\begin{equation}\label{eq3.56}
L^{v-e} u (x^{v-e}) = M^{v-e} u (x^{v-e})+\eta^{v-e}u(x^{v-e})\:.
\end{equation}
Here
\begin{eqnarray*}
\eta^{v-e} u(x^{v-e}) &=& \frac{-1}{2}\:e^{\zeta/2} \sum_{j=1}^3
\hat{a}_{1,j}^{v-e} \frac{\partial u}{\partial x^{v-e}_j} +
(\sin\phi)^{3/2}(\cos\phi)^{1/2}\eta^v u(x^v)\\
&=& e^{\zeta/2} \sum_{j=1}^3 \hat{a}_{1,j}^{v-e} \frac{\partial
u}{\partial x^{v-e}_j} + \sum_{i=1}^3 \hat{b}_i^{v-e} \frac{\partial
u}{\partial x^{v-e}_i} + \hat{c}^{v-e}u\;.
\end{eqnarray*}
Moreover using (\ref{eq3.28}), (\ref{eq3.29}) and (\ref{eq3.48}),
it can be shown that
\begin{align}\label{eq3.57}
\|\:\hat{b}^{v-e}_i\:\|_{0, \infty, \tilde{\Omega}^{v-e}_n} &=
O(e^{\zeta/2}) \rm{\;\;for\;} \mbox{ i }= 1,2 , \notag \\
\|\:\hat{b}^{v-e}_3\:\|_{0, \infty, \tilde{\Omega}^{v-e}_n} &=
O(e^{\zeta/2} \sin\phi), \rm{\;\;and \;} \notag \\
\|\:\hat{c}^{v-e}\:\|_{0, \infty, \tilde{\Omega}^{v-e}_n} &=
O(e^{3{\zeta/2}} \sin^{\frac{3}{2}} \phi)\:.
\end{align}
Now consider the matrix $A^{v-e}$ defined in $(\ref{eq3.55})$. We note
that the matrix $A^{v-e}$ becomes singular as $\phi \rightarrow0$. To
overcome this problem we introduce a new set of local variables
$y=(y_1, y_2, y_3)$ in
$$
\Omega^{v-e}_n = \left\{ x:\:\phi_l^{v-e}< \phi < \phi_{l+1}^{v-e},
\: \theta_j^{v-e} < \theta < \theta^{v-e}_{j+1},\: \delta_{v}(\mu_v)^k
< x_3 < \delta_{v}(\mu_v)^{k-1} \right\}
$$
by
\begin{align}\label{eq3.58}
y_1 &= x_1^{v-e} \notag \\
y_2 &= x_2^{v-e} \notag \\
y_3 &= \frac{x_3^{v-e}}{\sin\left(\phi^{v-e}_{l+1}\right)}\:.
\end{align}
In making this transformation $\tilde{\Omega}^{v-e}_n$ is mapped to a
hexahedron $\hat{\Omega}^{v-e}_n$ such that the length of the $y_3$
side becomes large as $\Omega_n^{v-e}$ approaches the edge of the
domain $\Omega$. It is important to note that the trace and embedding
theorems in the theory of Sobolev spaces remain valid with a uniform
constant for all the domains $\hat{\Omega}^{v-e}_n$. Now
\begin{align}\label{eq3.59}
\nabla_{x^{v-e}} u =  \left[ \begin{array}{ccc}
1 & 0 & 0 \\
0 & 1 & 0 \\
0 & 0 & \frac{1}{\sin(\phi^{v-e}_{l+1})}
\end{array} \right] \nabla_y u,
\end{align}
\begin{equation}\label{eq3.60}
d x^{v-e} = \sin \left(\phi^{v-e}_{l+1}\right) d y,
\end{equation}
and
\begin{align}\label{eq3.61}
div_{x^{v-e}}(f) = div_y\left(\left[ \begin{array}{ccc}
1 & 0 & 0 \\
0 & 1 & 0 \\
0 & 0 & \frac{1}{\sin(\phi^{v-e}_{l+1})}
\end{array} \right]f \right).
\end{align}
Hence
\begin{subequations}\label{eq3.62}
\begin{equation}\label{eq3.62a}
M^{v-e}u(x^{v-e}) = div_y \left(e^{\zeta/2} A^y \nabla_y u\right)
\end{equation}
where
\begin{equation}\label{eq3.62b}
A^y = (N^y)^T A N^y.
\end{equation}
\end{subequations}
Here by (\ref{eq3.54}) and (\ref{eq3.55})
\begin{equation}\label{eq3.63}
N^y = O^v Q^y
\end{equation}
where
\[Q^y = \left[
\begin{array}{ccc}
\frac{1}{\cos(\phi)}   &0&  \frac{-\sin^2 \phi} {\sin
\left(\phi^{v-e}_{l+1}\right)
\cos (\phi)}\\0 & 1 & 0 \\
0 & 0 & \frac{\sin(\phi)}{ \sin \left(\phi^{v-e}_{l+1}\right)}
\end{array} \right]\:.\]
Clearly, there exist positive constants $\mu_0$ and $\mu_1$ such that
\begin{equation}\label{eq3.64}
\mu_0 I \leq A^y \leq \mu_1 I
\end{equation}
for all elements $\Omega^{v-e}_n.$ Moreover there exists a constant $C$ such
that $a^y_{i,j}$ and its derivatives with respect to $y$ are uniformly bounded
in $\hat{\Omega}^{v-e}_n$. Hence we obtain the following result.
\begin{lem}
Let $w^{v-e}(x_1^{v-e})$ be a smooth, positive weight function such that
$w^{v-e}(x_1^{v-e})=1$ for all $x_1^{v-e}$ such that
$$
x_1^{v-e}\geq \psi_1^{v-e}= \ln\left(\tan(\phi_1^{v-e})\right)
\;and\; \int_{-\infty}^{\psi_1^{v-e}} w^{v-e}(x_1^{v-e})=1.
$$
Then there exists a positive constant $C_{v-e}$ such that the estimate
\begin{align}\label{eq3.65}
&\frac{\mu_0^2}{2} \int_{\tilde{\Omega}^{v-e}_n}
e^{x_3^{v-e}}\left(\sum_{i,j=1}^2 \left(\frac{\partial^2 u}
{\partial x_i^{v-e}\partial x_j^{v-e}}\right)^2
+\sum_{i=1}^2\sin^2(\phi)\left(\frac{\partial^2 u}
{\partial x_i^{v-e}\partial x_3^{v-e}}\right)^2\right. \notag \\
&\quad\quad \left.+\sin^4(\phi)\left(\frac{\partial^2 u}
{\partial (x_3^{v-e})^2} \right)^2\right)
\:d x^{v-e}\leq \int_{\tilde{\Omega}^{v-e}_n} \left|\:L^{v-e}
u(x^{v-e})\:\right|^2 \:d x^{v-e} \notag \\
&\quad\quad-\left\{\sum_k\oint_{\partial \tilde
{\Gamma}_{n,k}^{v-e}}e^{x_3^{v-e}}\left(\frac{\partial u}
{\partial \nb^{v-e}}\right)_{A^{v-e}} \left(\frac{\partial u}
{\partial\nuw^{v-e}}\right)_{A^{v-e}}\:d s^{v-e}\right. \notag \\
&\qquad-\left. 2\sum_k\int_{\tilde{\Gamma}_{n,k}^{v-e}}
e^{x^{v-e}_3}\sum_{l=1}^2 \left(\frac{\partial u} {\partial
\taux_l^{v-e}}\right)_{A^{v-e}}\frac{\partial} {\partial
s^{v-e}_l}\left(\frac{\partial u} {\partial
\nuw^{v-e}}\right)_{A^{v-e}}\:d \sigma^{v-e}\right\} \notag \\
&\qquad+ C_{v-e}\Bigg(\int_{\tilde{\Omega}^{v-e}_n} e^{x^{v-e}_3}
\left(\sum_{i=1}^2 \left(\frac{\partial u}{\partial
x_i^{v-e}}\right)^2 + \sin^2 (\phi) \left(\frac{\partial u}
{\partial x_3^{v-e}}\right)^2\right)\:d x^{v-e}\notag \\
&\qquad+ \int_{\tilde{\Omega}_n^{v-e}}e^{x_3^{v-e}}u^2
\:w^{v-e}(x_1^{v-e})\:dx^{v-e}\Bigg)
\end{align}
holds for all $\Omega_n^{v-e}\subseteq\Omega^{v-e}$.
\end{lem}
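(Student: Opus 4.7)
The plan is to mimic the structure of Lemma \ref{lem3} (the vertex case), but to exploit the rescaled $y$-coordinates of (\ref{eq3.58}) so that the analysis is performed in a domain $\hat{\Omega}_n^{v-e}$ on which the operator is uniformly elliptic with constants independent of the element. Using the decomposition (\ref{eq3.56}), write $L^{v-e}u = M^{v-e}u + \eta^{v-e}u$ and, via (\ref{eq3.62a}), express $M^{v-e}u = div_y(e^{\zeta/2} A^y \nabla_y u)$. Set $\wa = e^{\zeta/2} A^y \nabla_y u$ and apply Theorem \ref{thm1} to $\wa$ on the hexahedral image $\hat{\Omega}_n^{v-e}$. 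Lemma \ref{lem2} is then used on each face $\hat{\Gamma}_{n,k}^{v-e}$ to rewrite the $div_T(\wa_\nu \wa_T)$ term as a line integral $\oint \wa_\nu \wa_n \, ds$, producing a Green-type identity in the $y$ variables that is completely analogous to (\ref{eq3.14}).

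Next, invoke Grisvard's Lemma $3.1.3.4$, together with the uniform ellipticity (\ref{eq3.64}), to bound $(\mu_0^2/2)\sum_{i,j=1}^{3}\int_{\hat{\Omega}_n^{v-e}} e^\zeta |u_{y_i y_j}|^2 \, dy$ from above by $\sum_{i,j}\int \partial_{y_j} w_i \, \partial_{y_i} w_j \, dy$ plus a lower-order term in $|\nabla_y u|^2$ (this mirrors the passage from (\ref{eq3.14}) to (\ref{eq3.16})). Now pull everything back to $x^{v-e}$ coordinates using (\ref{eq3.59})--(\ref{eq3.61}): the Jacobian introduces one factor of $\sin(\phi_{l+1}^{v-e})$ and each $y_3$-derivative contributes $\sin(\phi_{l+1}^{v-e})$; since on the element $\sin\phi$ is comparable to $\sin(\phi_{l+1}^{v-e})$ by the geometric mesh, one recovers exactly the anisotropic factors $\sin^2\phi$ (on mixed second derivatives) and $\sin^4\phi$ (on $\partial^2_{x_3^{v-e}}$) that appear on the left-hand side of the claim.

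For the right-hand side, replace $M^{v-e}u$ by $L^{v-e}u - \eta^{v-e}u$ and estimate $\eta^{v-e}u$ using the bounds (\ref{eq3.57}). The first-order pieces of $\eta^{v-e}u$ absorb into the $\int e^{x_3^{v-e}}\bigl(\sum_{i=1}^{2}(u_{x_i^{v-e}})^2 + \sin^2\phi\,(u_{x_3^{v-e}})^2\bigr)dx^{v-e}$ term via Cauchy--Schwarz. The zeroth-order contribution $\hat c^{v-e} u$ is the delicate point: on an interior hexahedron one controls it by a standard Poincaré inequality in $y$, but on the corner-most (semi-infinite in $x_1^{v-e}$) element no such inequality is available, which is precisely why the smooth cutoff $w^{v-e}(x_1^{v-e})$ is built into the statement; the integral $\int e^{x_3^{v-e}} u^2 w^{v-e}(x_1^{v-e}) \, dx^{v-e}$ fixes the additive constant of $u$ and closes the estimate. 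Finally, boundary integrals must be returned from $y$- to $x^{v-e}$-coordinates: by computing the metric identities in exact analogy with (\ref{eq3.41})--(\ref{eq3.45}), normal and tangential directional derivatives with respect to $A^y$ pass into the corresponding derivatives with respect to $A^{v-e}$, with the surface element reproducing the weight $e^{x_3^{v-e}}$ displayed in the statement.

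The main obstacle is the careful bookkeeping of scalings in the boundary integrals: the cross terms arising when $\partial/\partial s_j^{v-e}$ differentiates the factor $e^{\zeta/2}$ in $\wa$ (compare (\ref{eq3.30}) in the vertex case) must be absorbed into the lower-order gradient and $L^2$ terms via an $\epsilon$--$K_\epsilon$ argument, exactly as done at the end of the proof of Lemma \ref{lem3}. A secondary difficulty is matching the anisotropic weight $\sin^2\phi$ in the $[u_{x_3^{v-e}}]$ boundary norms of the stability theorem with what naturally emerges from the pullback; this is consistent once one observes that $\nuw^y$ aligned with the $y_3$-direction becomes, after inversion of (\ref{eq3.59}), a vector with $\sin(\phi_{l+1}^{v-e})$ scaling in its $x_3^{v-e}$-component, producing the $\sin^2\phi$ weight seen in the definition of $|||\,E_{n,i}^{v-e}\,[u_{x_3^{v-e}}]\,|||$.
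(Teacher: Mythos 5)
Your argument for the generic elements is sound and is exactly the route the paper intends: after the change of variables (\ref{eq3.58}) the operator $M^{v-e}u=div_y(e^{\zeta/2}A^y\nabla_y u)$ is uniformly elliptic by (\ref{eq3.64}), so one applies Theorem \ref{thm1} to $\wa=e^{\zeta/2}A^y\nabla_y u$ together with Lemma \ref{lem2} and Grisvard's Lemma $3.1.3.4$, absorbs $\eta^{v-e}u$ via (\ref{eq3.56})--(\ref{eq3.57}), handles the cross terms from differentiating $e^{\zeta/2}$ by an $\epsilon$--$K_\epsilon$ argument, and pulls back to $x^{v-e}$ to recover the $\sin^2\phi$ and $\sin^4\phi$ weights. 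That is a faithful (indeed more detailed) reconstruction of the machinery the paper develops in (\ref{eq3.46})--(\ref{eq3.64}) and of the proof of Lemma \ref{lem3}.

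The gap is in the degenerate case $\tilde{\Omega}^{v-e}_n\subseteq\{x^{v-e}:\,-\infty<x_1^{v-e}<\psi_1^{v-e}\}$, i.e. the corner-most elements in the angular direction --- which is, in fact, the only case the paper's own proof explicitly addresses. There the element is unbounded in $y_1$, so Theorem \ref{thm1} cannot be applied on a bounded Lipschitz domain, and your proposed fix --- ``the cutoff $w^{v-e}$ fixes the additive constant and closes the estimate'' --- does not by itself control $\int e^{x_3^{v-e}}u^2\,dx^{v-e}$ over a half-line in $\psi$ by $\int e^{x_3^{v-e}}u^2 w^{v-e}\,dx^{v-e}$ plus gradient terms; that would require a Hardy-type inequality you have not supplied. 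The paper's resolution is structural: on such elements the spectral element function is by construction a polynomial in $\zeta=x_3^{v-e}$ alone, so all $\psi$- and $\theta$-derivatives vanish, the faces are flat so the second fundamental form is identically zero, and $\int_{-\infty}^{\psi_1^{v-e}}w^{v-e}\,dx_1^{v-e}=1$ converts the weighted $u^2$ term into exactly the finite quantity needed. You should state this case separately and dispose of it by that observation rather than by the general Rellich identity.
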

\begin{proof}
Since the spectral element function $u(x^{v-e})$ is a function of only
$x_3^{v-e}$ if $\Omega_n^{v-e}\subseteq\left\{x:0<\phi<\phi_1^{v-e}
\right\}$ the result follows. Here we have used the fact that the second
fundamental form is identically zero.
\end{proof}
Now we can prove the following result.
\begin{lem}\label{lem3.2.6}
Let $\Gamma_{k,i}^v = \Gamma_{q,r}^{v-e}$. Then the following
identity holds.
\begin{align}\label{eq3.66}
&\sin^2 (\phi_v) \oint_{\partial \tilde{\Gamma}^v_{k,i}} e^{x_3^v}
\sin(x_1^v) \left(\frac{\partial u}{\partial \nb^v}\right)_{A^v}
\left(\frac{\partial u}{\partial \nuw^v}\right)_{A^v} \:d s^v \notag \\
&\quad - 2 \sin^2 (\phi_v) \int_{ \tilde{\Gamma}^v_{k,i}}
e^{x_3^v} \sin(x_1^v) \sum_{j=1}^2 \left(\frac{\partial
u}{\partial \taux_j^v} \right)_{A^v} \frac{\partial}{\partial
s^v_j} \left(\left(\frac {\partial u}{\partial\nuw^v}\right)_{A^v}\right)
\:d\sigma^v \notag \\
&= -\oint_{\partial \tilde{\Gamma}^{v-e}_{q,r}} e^{x_3^{v-e}}
\left(\frac{\partial u}{\partial \nb^{v-e}}\right)_{A^{v-e}}
\left(\frac{\partial u}{\partial \nuw^{v-e}}\right)_{A^{v-e}}
\:ds^{v-e} \notag \\
&\quad + 2\int_{ \tilde{\Gamma}^{v-e}_{q,r}} e^{x_3^{v-e}}
\sum_{j=1}^2 \left(\frac{\partial u}{\partial
\taux_j^{v-e}}\right)_{A^{v-e}} \frac{\partial}{\partial
s^{v-e}_j} \left(\left(\frac{\partial u}{\partial
\nuw^{v-e}}\right)_{A^{v-e}}\right) \:d \sigma^{v-e}.
\end{align}
\end{lem}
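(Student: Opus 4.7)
The identity is the direct analog of Lemma~\ref{lem4} (i.e., Lemma~3.1.4), but now for the interface between the vertex neighbourhood $\tilde{\Omega}^v$ and the vertex--edge neighbourhood $\tilde{\Omega}^{v-e}$ rather than between $\Omega^r$ and $\tilde{\Omega}^v$. The strategy is to use the explicit change of variables from $x^v$ to $x^{v-e}$ already set up in Section~3.2.3 to transport the tangent vectors, the normal, the surface-area/arc-length elements, and the conormal derivatives from one coordinate system to the other on the shared face, and then verify that the two boundary expressions agree up to a sign.

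First I would identify the shared face. From the definitions of $\Omega^v$ and $\Omega^{v-e}$, the face $\Gamma_{k,i}^v = \Gamma_{q,r}^{v-e}$ must lie on the conical surface $\phi = \phi_v$, i.e.\ $x_1^v = \phi_v$ and $x_1^{v-e} = \ln\tan\phi_v$. On this face, choose $\taux_2^v = (0,1,0)^T$, $\taux_3^v = (0,0,1)^T$ and $\nuw^v = (-1,0,0)^T$ as outward tangent/normal data for $\tilde{\Omega}^v$; and correspondingly $\taux_2^{v-e} = (0,1,0)^T$, $\taux_3^{v-e}=(0,0,1)^T$, $\nuw^{v-e} = (1,0,0)^T$ outward for $\tilde{\Omega}^{v-e}$. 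The opposite orientation of the two normals is what produces the global sign difference asserted in (\ref{eq3.66}).

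The key algebraic inputs are already assembled: on the face $\phi=\phi_v$ we have $J^{v-e}$ in (\ref{eq3.49}) constant, $\nabla_{x^v}u = J^{v-e}\nabla_{x^{v-e}}u$, $A^{v-e}=\sin^2\phi\,(J^{v-e})^T A^v J^{v-e}$, and $\zeta = \chi+\ln(\cos\phi_v)$, so that $e^{\chi}\sin\phi_v = e^{\zeta}\tan\phi_v$ and $\sin^2\phi_v\,e^\chi\sin\phi_v = \sin\phi_v\,e^\zeta\sin\phi_v$. For the surface element I would use $dx^v = \sin\phi\cos\phi\,dx^{v-e}$ restricted to $\phi=\phi_v$: the induced $2$-forms on the shared face satisfy $d\sigma^v = \cos\phi_v\,d\sigma^{v-e}$ and the $1$-forms on $\partial(\cdot)$ satisfy an analogous relation depending on which edge of the face one is integrating over. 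The conormal directional derivatives then transform via
\[
\Bigl(\tfrac{\partial u}{\partial \nuw^v}\Bigr)_{A^v}
=(\nuw^v)^T A^v \nabla_{x^v}u
=(\nuw^v)^T A^v J^{v-e}\nabla_{x^{v-e}}u,
\]
and using $A^{v-e}=\sin^2\phi_v(J^{v-e})^T A^v J^{v-e}$ together with the explicit forms of $\nuw^v,\nuw^{v-e}$ one verifies that
$\sin\phi_v(\partial u/\partial \nuw^v)_{A^v} = -(1/\sin\phi_v)(\partial u/\partial \nuw^{v-e})_{A^{v-e}}$ on the face, and similarly for the tangential derivatives $(\partial u/\partial \taux_j^v)_{A^v}$ vs.\ $(\partial u/\partial \taux_j^{v-e})_{A^{v-e}}$.

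Finally I would assemble the pieces. The boundary integral in (\ref{eq3.66}) with factor $e^{x_3^v}\sin(x_1^v)=e^\chi\sin\phi_v$ combines with the transformation rules for the normals, tangents, matrix $A^v\to A^{v-e}$, and area/arc-length elements to produce exactly $e^{x_3^{v-e}}=e^\zeta$ times the corresponding integrand in $x^{v-e}$ coordinates, with an overall sign flip inherited from reversing $\nuw$. The same bookkeeping applied to the tangential term $\sum_{j=1}^2(\partial u/\partial\taux_j^v)_{A^v}\partial_{s_j^v}((\partial u/\partial\nuw^v)_{A^v})$ produces the corresponding $x^{v-e}$ term, again with reversed sign. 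Summing the two contributions yields (\ref{eq3.66}). The main obstacle will be the careful accounting of all $\sin\phi_v$ and $\cos\phi_v$ factors: the matrix $A^{v-e}$ carries a $\sin^2\phi$, the Jacobian $J^{v-e}$ carries $\sec^2\phi\cot\phi$ and $-\tan\phi$, the surface element carries $\sin\phi\cos\phi$, and the exponential weight changes via $\zeta=\chi+\ln\cos\phi$; ensuring that all of these conspire exactly as in (\ref{eq3.66}) is the only nontrivial step, the rest being a mechanical coordinate change paralleling the proof of Lemma~\ref{lem4}.
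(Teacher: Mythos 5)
Your overall strategy --- transporting tangents, normals, area and arc-length elements through the explicit map from $x^v$ to $x^{v-e}$ on the cone $\phi=\phi_v$, in parallel with Lemma 3.1.4 --- is the right one, and your identification of the shared face and of the sign as coming from the opposite orientation of the two outward normals are both correct. The genuine gap is your assumption that the correspondence is term by term: that the line integral on the left maps (up to sign) onto the line integral on the right, and the tangential surface integral onto the tangential surface integral. This fails because the map $J^{v-e}$ mixes the normal and tangential directions on the face. Concretely, the tangent vector in the $\chi$-direction satisfies $\left(\frac{\partial u}{\partial \taux_2^v}\right)_{A^v}=-\left(\frac{\partial u}{\partial \nuw^{v-e}}\right)_{A^{v-e}}-\frac{1}{\sin^2\phi}\left(\frac{\partial u}{\partial \taux_2^{v-e}}\right)_{A^{v-e}}$, and the boundary conormal on the edges $\theta=\mathrm{const}$ picks up an analogous tangential admixture. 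As a result the transformed line integral equals minus the desired line integral \emph{plus} residual edge terms of the form $\int\rho\sin^3(\phi_v)\cot(\phi_v)\left(\frac{\partial u}{\partial\nuw^{v-e}}\right)^2_{A^{v-e}}d\theta$ on the two edges $\zeta=\mathrm{const}$, while the transformed tangential surface term produces, besides the desired term, a piece $\int\rho\sin^3(\phi_v)\cot(\phi_v)\,\frac{\partial}{\partial\zeta}\left(\frac{\partial u}{\partial\nuw^{v-e}}\right)^2_{A^{v-e}}d\theta\,d\zeta$. The identity holds only because this last piece, after an integration by parts in $\zeta$, exactly cancels the residual edge terms. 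Your plan, which sums two independently sign-flipped contributions, never generates this cancellation and therefore cannot close.

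A smaller but real inconsistency: your claimed relation $\sin\phi_v\left(\frac{\partial u}{\partial\nuw^v}\right)_{A^v}=-\frac{1}{\sin\phi_v}\left(\frac{\partial u}{\partial\nuw^{v-e}}\right)_{A^{v-e}}$ does not agree with the correct one, $\left(\frac{\partial u}{\partial\nuw^v}\right)_{A^v}=-\cot\phi\left(\frac{\partial u}{\partial\nuw^{v-e}}\right)_{A^{v-e}}$, so the $\sin\phi_v$/$\cos\phi_v$ bookkeeping you flag as ``the only nontrivial step'' is already off before the cancellation issue arises. To repair the proof you should compute $(\nuw^v)^T(J^{v-e})^{-1}(J^{v-e})^{-T}$ and $A^{v-e}=\sin^2\phi\,(J^{v-e})^TA^vJ^{v-e}$ explicitly on the face, record all four directional-derivative relations including the cross terms, evaluate the line integral edge by edge, and carry out the integration by parts in $\zeta$ that eliminates the extra terms.
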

\begin{proof}
The proof is provided in Appendix B.1.
\end{proof}

\subsection{Estimates for second derivatives in edge neighbourhoods}
Consider the edge $e$ whose end points are $v$ and $v^{\prime}$. Let the length of
$e$ be $l_e$. We define the edge neighbourhood (Figure \ref{fig2.3})
\begin{eqnarray*}
\Omega^e = \left\{ x \in \Omega: 0<r< \rho_v \sin \phi_v = Z,\:
\theta^{l}_{v-e} < \theta < \theta^u_{v-e},\:\delta_v < x_3< l_e -
\delta_{v^{\prime}} \right\}.
\end{eqnarray*}
Here $(r,\theta,x_3)$ are polar coordinates with origin at $v$. We define a
geometrical mesh on $\Omega^e$ as in Figure \ref{fig2.10}.

Let $\Omega_u^e$ denote an element
\begin{equation}\label{eq3.67}
\Omega_u^e =\left\{ x: r^e_j < r < r^e_{j+1},\:
\theta^e_k < \theta < \theta^e_{k+1},\: Z_m^e < x_3 < Z_{m+1}^e
\right\}
\end{equation}
of the geometrical mesh imposed on the edge
neighbourhood $\Omega^e$. We now introduce a new set of
coordinates in the edge neighbourhood
\begin{align}\label{eq3.68}
x^e_1 &= \tau = \ln r \notag\\
x^e_2 &= \theta \notag\\
x_3^e &= x_3\:.
\end{align}
Let $\tilde\Omega_u^{e}$ denote the image of $\Omega_u^{e}$ in
$x^e$ coordinate. Now
\begin{equation}\label{eq3.69}
\nabla_x u = R^e \nabla_{x^e} u
\end{equation}
where
\begin{align}\label{eq3.70}
R^e = \left[ \begin{array}{ccc}
e^{-\tau} \cos \theta  & - e^{-\tau} \sin \theta & 0\\
e^{-\tau} \sin \theta  &   e^{-\tau} \cos \theta & 0 \\
0 & 0 & 1
\end{array} \right].
\end{align}
Hence
\begin{equation}\label{eq3.71}
d x = e^{2 \tau} d x^e\:.
\end{equation}
Moreover
\begin{equation}\label{eq3.72}
div_x(\fe)= e^{-2 \tau} div_{x^e} (e^{2 \tau}(R^e)^T \fe)\:.
\end{equation}
Here $\fe$ denotes a vector field.
\newline
We need to evaluate
\[\int_{\Omega^e_u} \left(\rho^2 \sin^2 \phi\right) |
Lu(x)|^2 d x = \int_{\Omega^e_u} r^2 |Lu(x)|^2 d x\:.\]
Clearly
\begin{equation}\label{eq3.73}
\int_{\Omega_u^{e}} r^2 |Lu(x)|^2 dx = \int_{\tilde\Omega_u^{e}}
|e^{2 \tau} Lu(x) |^2 d x^e.
\end{equation}
Let
\[Mu(x)=div(A\nabla_xu)\]
Now
\[e^{2\tau} M u(x) = div_{x^e} \left(e^{2 \tau} (R^e)^T A R^e
\nabla_{x^{e}} u\right).\]
Or
\begin{equation}\label{eq3.74}
e^{2\tau} M u(x) = div_{x^e}\left(A^e \nabla_{x^e} u\right).
\end{equation}
Here
\begin{equation}\label{eq3.75}
A^e = (S^e)^T A S^e
\end{equation}
and
\begin{align}\label{eq3.76}
S^e=\left[ \begin{array}{ccc}
\cos \theta  & -\sin \theta & 0\\
\sin \theta  &  \cos \theta & 0 \\
0 & 0 & e^{\tau}
\end{array} \right].
\end{align}
Hence
\begin{equation}\label{eq3.77}
e^{2\tau} Lu(x) = div_{x^e}\left(A^e \nabla_{x^e}u\right)
+ \sum_{i=1}^3 \hat{b}_i^e u_{x^e_i} + \hat{c}^e u\:.
\end{equation}
Now the matrix $A^e$ becomes singular as the element
$\Omega_u^e$ approaches the edge $e$. We note that
\begin{align}\label{eq3.78}
\| \hat{b}^e \|_{0, \infty, \tilde{\Omega}^e} &= O(e^{\tau})
\mbox{ and } \notag \\
\| \hat{c}^e \|_{0, \infty, \tilde{\Omega}^e} &= O(e^{2\tau})\:.
\end{align}
Now
\[\Omega^e_u = \left\{ x: r^e_j < r < r^e_{j+1},\: \theta^e_k <
\theta < \theta^e_{k+1},\: \delta_v < x_3 < l_e - \delta_{v^{\prime}}
\right\}.\]
To overcome the singular nature of $A^e$ as $j \rightarrow 0$ we once
again introduce a set of local coordinates $z$ in $\tilde{\Omega}^e_u$
defined as
\begin{align}\label{eq3.79}
z_1 &= x_1^e \notag\\
z_2 &= x^e_2 \notag\\
z_3 &= \frac{x_3^e}{r^e_{j+1}}\:.
\end{align}
Then $\tilde{\Omega}^e_u$ is mapped onto the hexahedron
$\hat{\Omega}^e_u$ such that the length of the $z_3$ side becomes
large as $\Omega^e_u$ approaches the edge of the domain $\Omega$.
\newline
Define
\begin{equation}\label{eq3.80}
M^e u (x^e) = e^{2 \tau} M u(x) = div_{x^e}\left(A^e \nabla_{x^e}u\right).
\end{equation}
Then
\begin{equation}\label{eq3.81}
M^e u(x^e) = M^z u(z) = div_z\left(A^z \nabla_z u\right).
\end{equation}
Here
\begin{equation}\label{eq3.82}
A^z = (T^z)^T A T^z
\end{equation}
where
\begin{align}\label{eq3.83}
T^z = \left[ \begin{array}{ccc}
\cos \theta  & \sin \theta & 0\\
-\sin \theta & \cos \theta & 0 \\
0 & 0 & \frac{e^{\tau}}{r^e_{j+1}}
\end{array} \right].
\end{align}
Clearly there exist positive constants $\mu_0$ and $\mu_1$ such that
\begin{equation}\label{eq3.84}
\mu_0 I \leq A^z \leq \mu_1 I\:.
\end{equation}
Moreover there exists a constant $C$ such that $a_{i,j}^z$ and its derivatives
with respect to $z$ are uniformly bounded in $\hat{\Omega}_u^e$.  Here
$a_{i,j}^z$ denotes the elements of the matrix $A^z$. Hence we obtain the
following result.
\begin{lem}
Let $w^e(x_1^e)$ be a smooth, positive weight function such that
$w^e(x_1^e)=1$ for all \;$x_1^{e}\geq \tau_1^e=\ln(r_1^{e})$
and $\int_{-\infty}^{\tau_1^e} w^e(x_1^{e}) \:d x_1^{e}=1.$ Then
there exists a positive constant $C_e$ such that
\begin{align}\label{eq3.85}
&\frac{\mu_0^2}{2} \int_{\tilde{\Omega}^e_u} \left(\sum_{i,j=1}^2
\left(\frac{\partial^2 u}{\partial x_i^e \partial x_j^e}\right)^2
+ e^{2 \tau} \sum_{i=1}^2 \left(\frac{\partial^2 u}{\partial x_i^e
\partial x_3^e}\right)^2 + e^{4 \tau}\left(\frac{\partial^2 u}
{\partial {x_3^e}^2 }\right)^2\right) \:d x^e \notag \\
&\quad\leq \int_{\tilde{\Omega}^e_u} | L^e u (x^e)|^2
\:d x^e - \left(\sum_k \oint_{\partial
\tilde{\Gamma}_{u,k}^e} \left(\frac {\partial u}{\partial
\nb^e}\right)_{A^e} \left(\frac{\partial u}{\partial
\nuw^e}\right)_{A^e} \:d s^e \right. \notag \\
&\quad- \left. 2 \sum_k \int_{\tilde{\Gamma}^e_{u,k}} \sum_{l=1}^2
\left(\frac {\partial u}{\partial \taux_l^e}\right)_{A^e}
\frac{\partial} {\partial s^e_l} \left(\left(\frac{\partial
u}{\partial \nuw^e}\right)_{A^e}\right)\:d \sigma^e\right) \notag \\
&\quad+ C_e \left(\int_{\tilde{\Omega}^e_u} \left(\sum_{i=1}^2
\left(\frac{\partial u}{\partial x^e_i}\right)^2 + e^{2 \tau}
\left(\frac{\partial u} {\partial x_3^e}\right)^2\right)\:dx^e
+ \int_{\tilde{\Omega}_u^e}u^2w^e(x_1^{e})\:d x^e\right)
\end{align}
holds for all $\Omega_u^{e}\subseteq \Omega^{e}$.
\end{lem}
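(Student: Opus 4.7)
The proof will parallel the strategy used for the vertex-edge analogue: rescale to the locally quasi-uniform coordinates $z=(z_1,z_2,z_3)$ of (\ref{eq3.79}), in which the transformed coefficient matrix $A^z$ satisfies $\mu_0 I\leq A^z\leq \mu_1 I$ uniformly in the element by (\ref{eq3.84}); apply Grisvard's second-order identity (Theorem \ref{thm1}) together with Lemma 3.1.3.4 of \cite{G} to the vector field $\wa=A^z\nabla_z u$ on $\hat{\Omega}_u^e$; and then pull the resulting estimate back to the $x^e$-coordinates. The need to rescale arises because $A^e$ in (\ref{eq3.75}) degenerates as $\tau\to-\infty$, so Grisvard's inequality cannot be applied directly in $\tilde{\Omega}_u^e$ with constants uniform in $j$.

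First, I would split into two geometric cases. For a corner element, $-\infty<x_1^e<\ln r_1^e$, the spectral element function $u_p^e(x^e)=\sum_{t=0}^{W}\alpha_t (x_3^e)^t$ depends only on $x_3^e$, so every second derivative except $\partial^2 u/(\partial x_3^e)^2$ vanishes identically, and the face-integrands for the sides at $\tau=\ln r_1^e$ have zero second fundamental form. The weighted $x_3^e$-derivative is controlled directly by $|L^e u|^2$ after expanding (\ref{eq3.77}), while the semi-infinite $\tau$-integral of $u^2$ is absorbed into $\int_{\tilde{\Omega}_u^e}u^2 w^e(x_1^e)\,dx^e$ using the normalization $\int_{-\infty}^{\tau_1^e}w^e(x_1^e)\,dx_1^e=1$.

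For a regular element with $j\geq 1$, I pass to $z$-coordinates so that $\hat{\Omega}_u^e$ is a hexahedron with uniformly bounded aspect ratio. Applying Theorem \ref{thm1} to $\wa=A^z\nabla_z u$ and combining with Lemma 3.1.3.4 of \cite{G}, exactly as in Lemma \ref{lem3}, gives
\begin{align*}
\frac{\mu_0^2}{2}\int_{\hat{\Omega}_u^e}\sum_{i,j=1}^{3}\Bigl|\frac{\partial^2 u}{\partial z_i\partial z_j}\Bigr|^2 dz
&\leq \int_{\hat{\Omega}_u^e}|M^z u|^2\,dz + (\text{boundary terms in } z) \\
&\quad + C\int_{\hat{\Omega}_u^e}|\nabla_z u|^2\,dz.
\end{align*}
Transforming back via (\ref{eq3.79})--(\ref{eq3.83}), one has $dz=(r_{j+1}^e)^{-1}dx^e$ and $\partial/\partial z_3=r_{j+1}^e\,\partial/\partial x_3^e$, so pure $z_3$ second derivatives produce a factor $(r_{j+1}^e)^4\sim e^{4\tau}$ and mixed $z_3$ derivatives produce a factor $(r_{j+1}^e)^2\sim e^{2\tau}$. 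This reproduces the anisotropic weights in the left-hand side of (\ref{eq3.85}).

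The boundary terms require the analogue of the cancellations in Lemma \ref{lem4} and Lemma \ref{lem3.2.6}: one checks that $(\partial u/\partial\nb^e)_{A^e}$, $(\partial u/\partial\nuw^e)_{A^e}$ and the tangential surface derivatives transform from $z$ to $x^e$ so that the Jacobians of normals, tangents, $d\sigma$ and $ds$ combine into the unweighted boundary expressions appearing in (\ref{eq3.85}). Finally, replacing $M^e u$ by $L^e u$ costs only the lower-order term $\eta^e u$; by (\ref{eq3.78}) its contribution is absorbed into $C_e\int_{\tilde{\Omega}_u^e}\bigl(\sum_{i=1}^{2}(\partial u/\partial x_i^e)^2+e^{2\tau}(\partial u/\partial x_3^e)^2\bigr)dx^e$. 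Any surface integral $\sum_{|\alpha|=1}\int|D_{x^e}^\alpha u|^2 d\sigma^e$ that appears is removed, as in the last step of Lemma \ref{lem3}, by a trace inequality with small parameter $\epsilon$ and absorbed into the left-hand side.

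The main obstacle is the careful bookkeeping of the boundary-term transformations under $x^e\leftrightarrow z$, since surface measure, arc length, and the normal/tangent frame all rescale in a non-diagonal way (with the added complication that the orientations must match across the two faces shared by adjacent elements to reproduce the un-weighted boundary integrals in (\ref{eq3.85})); this is the exact analogue of Lemma \ref{lem3.2.6} for the edge geometry. Once those identities are verified, the assembly of (\ref{eq3.85}) is routine.
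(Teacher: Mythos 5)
Your proposal is correct and follows essentially the same route as the paper: the general element is handled by the rescaling $z_3=x_3^e/r_{j+1}^e$ so that $A^z$ is uniformly elliptic with uniformly bounded derivatives, after which the Grisvard identity and Lemma 3.1.3.4 of \cite{G} are applied exactly as in the vertex-neighbourhood Lemma, while the corner element is dispatched by noting that $u$ depends only on $x_3^e$ and the second fundamental form of the flat faces vanishes. The paper's printed proof consists only of the one-sentence corner-element remark, so your write-up simply makes explicit the machinery the paper invokes implicitly via the preceding estimates (\ref{eq3.79})--(\ref{eq3.84}).
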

\begin{proof}
Since the spectral element function $u(x^e)$ is only a function of $x_3^{e}$ if
$\Omega_u^{e}\subseteq\{x:r<r_1^{e}\}$ the result follows. Here we have used the
fact that the second fundamental form is zero.
\end{proof}
Finally we state the following results.
\begin{lem}\label{lem3.2.8}
Let $\Gamma_{u,k}^e = \Gamma_{n,l}^{v-e}$. Then
\begin{align}\label{eq3.86}
& \oint_{\partial \tilde{\Gamma}_{n,l}^{v-e}} e^{x_3^{v-e}}
\left(\frac{\partial u}{\partial \nb^{v-e}}\right)_{A^{v-e}}
\left(\frac{\partial u}{\partial \nuw^{v-e}}\right)_{A^{v-e}}
\:ds^{v-e} \notag \\
&\quad -2\sum_{j=1}^2 \int_{\tilde{\Gamma}^{v-e}_{n,l}}
e^{x_3^{v-e}} \left(\frac{\partial u}{\partial
\taux_j^{v-e}}\right)_{A^{v-e}} \frac{\partial}{\partial
s^{v-e}_j} \left(\left(\frac{\partial u} {\partial
\nuw^{v-e}}\right)_{A^{v-e}}\right)\:d\sigma^{v-e} \notag \\
&\quad\quad\quad= -\oint_{\partial \tilde{\Gamma}_{u,k}^{e}}
\left(\frac{\partial u}{\partial \nb^{e}}\right)_{A^{e}}
\left(\frac{\partial u}{\partial \nuw^{e}}\right)_{A^{e}}
ds^{e} \notag \\
&\quad\quad\quad+2 \sum_{j=1}^2 \int_{\tilde{\Gamma}^{e}_{u,k}}
\left(\frac{\partial u}{\partial \taux_j^{e}}\right)_{A^{e}}
\frac{\partial}{\partial s^{e}_j}\left(\left(\frac{\partial
u}{\partial \nuw^e}\right)_{A^{e}}\right)d\sigma^{e}\:.
\end{align}
\end{lem}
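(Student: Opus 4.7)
The plan is to prove Lemma 3.2.8 by a direct change-of-coordinates argument entirely analogous to the proof of Lemma 3.2.6 given in Appendix B.1. The shared face $\Gamma_{u,k}^e = \Gamma_{n,l}^{v-e}$ is a flat portion of the plane $x_3 = \delta_v$, forming the top of the vertex-edge element on the $v-e$ side and the bottom of the edge element on the $e$ side. In the $x^e$ system it is the level set $x_3^e = \delta_v$; in the $x^{v-e}$ system it is the level set $\zeta = x_3^{v-e} = \ln \delta_v$. The two tangential parametrizations on the face are $(\psi,\theta)$ and $(\tau,\theta)$, and since on this face $\psi = \ln\tan\phi = \ln(r/\delta_v) = \tau - \ln\delta_v$, they agree up to a constant shift. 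Consequently $d\sigma^{v-e}=d\sigma^e$ and $ds^{v-e}=ds^e$. The overall minus sign in (3.86) arises from the fact that the outward unit normals $\nuw^{v-e}$ and $\nuw^e$ point in opposite directions along the common face.

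First I would write down the linear map $\nabla_{x^{v-e}}u = J\,\nabla_{x^e}u$ on the face, obtained by composing the Jacobians $(J^{v-e})^{-1}$ and $(R^e)^{-1}$ from Sections 3.2.3 and 3.2.4 together with the relation $r=\delta_v\tan\phi$ valid on the face. Using (3.55) and (3.75) I would then verify the matrix identity relating $A^{v-e}$ and $A^e$ on the face, and in particular show that the conormal derivative in the $\nuw^{v-e}$ direction satisfies, up to sign,
\[
\Bigl(\tfrac{\partial u}{\partial \nuw^{v-e}}\Bigr)_{A^{v-e}}
\;=\; -\,e^{-x_3^{v-e}/2}\,\delta_v\,\Bigl(\tfrac{\partial u}{\partial \nuw^e}\Bigr)_{A^e},
\]
so that the extra weight $e^{x_3^{v-e}}$ in the $v-e$ integrand is absorbed exactly, leaving an integrand on the $e$-side with no residual weight. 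Analogous identities for $(\partial u/\partial \taux_j^{v-e})_{A^{v-e}}$ and for the tangential operators $\partial/\partial s_j^{v-e}$ acting on the conormal derivative can be checked in the same way, using the fact that tangential differentiation on the flat face commutes up to a constant with $\partial/\partial s_j^e$.

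Assembling these pointwise identities with the equalities of surface and arc-length measures, and taking into account the opposite orientations of $\nuw^{v-e}$ and $\nuw^e$, yields the identity (3.86) term by term. The main obstacle will be the careful bookkeeping of the weight factors: the matrix $A^{v-e}$ carries $\sin\phi$ and $\cos\phi$ factors through $R^{v-e}$ in (3.55) which must be evaluated at $\tan\phi=r/\delta_v$ on the face, while the factor $e^{x_3^{v-e}}=\delta_v$ has to combine cleanly with these trigonometric factors and with the Jacobian of the coordinate change to reproduce precisely the unweighted $e$-coordinate integrals. The boundary term on $\partial\tilde{\Gamma}^{v-e}_{n,l}$ requires a one-dimensional version of the same computation, with the in-face normal $\nb$ transforming analogously to $\nuw$ on the face. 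Once this bookkeeping is carried out, no further analytic input is required and the identity follows.
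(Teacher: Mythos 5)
Your overall strategy coincides with the paper's proof in Appendix B.2: a direct change of coordinates on the common face $x_3=\delta_v$, with $\tau=\psi+\ln\delta_v$ there, $d\sigma^{v-e}=d\sigma^e$, $ds^{v-e}=ds^e$, and the sign coming from the opposite orientations of $\nuw^{v-e}$ and $\nuw^e$. That setup is all correct. However, your key scaling identity is wrong. Writing $\nabla_{x^{v-e}}u=J^e\,\nabla_{x^e}u$ with $J^e$ the Jacobian of $(\psi,\theta,\zeta)\mapsto(\tau,\theta,x_3)$ (so $u_\psi=u_\tau$, $u_\theta=u_\theta$, $u_\zeta=u_\tau+x_3u_{x_3}$), one finds
\[
\left(\frac{\partial u}{\partial \nuw^{v-e}}\right)_{A^{v-e}}=-\frac{1}{x_3}\left(\frac{\partial u}{\partial \nuw^{e}}\right)_{A^{e}}=-e^{-x_3^{v-e}}\left(\frac{\partial u}{\partial \nuw^{e}}\right)_{A^{e}},
\]
not $-e^{-x_3^{v-e}/2}\delta_v$ times the $e$-side conormal, which on the face equals $-\delta_v^{1/2}$ and would leave a residual factor $\delta_v^{3/2}$ in the product rather than absorbing the weight. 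The correct bookkeeping is that the entire weight $e^{x_3^{v-e}}=x_3$ is cancelled by the single factor $1/x_3$ carried by the normal conormal derivative, while the tangential derivatives transform with factor one; nothing is split as $e^{-x_3^{v-e}/2}$ per factor.

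The second problem is the claim that the identity follows ``term by term.'' Because $J^e$ has an off-diagonal entry, on the two edges of the face parametrized by $\theta$ the in-face conormal satisfies $\left(\frac{\partial u}{\partial \nb^{v-e}}\right)_{A^{v-e}}=\left(\frac{\partial u}{\partial \taux_1^{e}}\right)_{A^{e}}+\frac{1}{x_3}\left(\frac{\partial u}{\partial \nuw^{e}}\right)_{A^{e}}$, so converting the line integral produces extra terms $\mp\int\frac{1}{x_3}\left(\frac{\partial u}{\partial \nuw^{e}}\right)_{A^{e}}^2d\theta$ that have no counterpart on the $e$-side of (3.86). Identical terms with opposite signs arise when converting the tangential surface integral involving $\left(\frac{\partial u}{\partial \taux_1^{v-e}}\right)_{A^{v-e}}$, and the lemma holds only after these cancel --- the same mechanism as the $\cot\phi_v\left(\frac{\partial u}{\partial\nuw^{v-e}}\right)^2_{A^{v-e}}$ cancellation in the proof of Lemma 3.2.6 that you cite as your model. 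As written, your plan asserts a termwise correspondence that is false; you need to exhibit this cancellation explicitly for the argument to close.
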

\begin{proof}
The proof is provided in Appendix B.2.
\end{proof}
\begin{lem}\label{lem3.2.9}
Let $\Gamma^e_{u,k} = \Gamma^r_{l,j}$. Then
\begin{align}\label{eq3.87}
\oint_{\partial \tilde{\Gamma}^e_{u,k}} \left(\frac{\partial
u}{\partial \nuw^e}\right)_{A^e} \left(\frac{\partial u}{\partial
\nb^e} \right)_{A^e} \:d s^e = -\rho_v^2 \sin^2(\phi_v)
\oint_{\partial \Gamma^r_{l,j}} \left(\frac{\partial u}{\partial
\nb}\right)_A \left(\frac{\partial u}{\partial \nuw}\right)_A \:d s,
\end{align}
and
\begin{align}\label{eq3.88}
& \sum_{m=1}^2 \int_{\tilde{\Gamma}^e_{u,k}} \left(\frac{\partial
u} {\partial \taux^e_m}\right)_{A^e} \frac{\partial}{\partial
s^e_m}\left(\frac{\partial u}{\partial \nuw^e}\right)_{A^e}
d \sigma^e \notag \\
&= - \rho_v^2 \sin^2(\phi_v)\left(\sum_{m=1}^2\int_{\Gamma_{l,j}^r}
\left(\frac{\partial u}{\partial\taux_m}\right)_A\frac{\partial}
{\partial s_m}\left(\frac{\partial u}{\partial \nuw}\right)_{A}
\:d \sigma \right).
\end{align}
\end{lem}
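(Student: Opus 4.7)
The plan is to mirror the computations behind Lemma 3.2.6 and Lemma 3.2.8 (already deferred to Appendices B.1 and B.2), but now for the interface between an edge-neighbourhood element and a regular-region element, which lies on the cylinder $r = Z = \rho_v \sin\phi_v$. I first observe that if $\Gamma_{u,k}^e = \Gamma_{l,j}^r$ then this shared face corresponds in $x$-coordinates to $r = Z$, parametrised by $(\theta, x_3) \in [\theta_k^e,\theta_{k+1}^e]\times[Z_m^e,Z_{m+1}^e]$, and in $x^e$-coordinates to $\{x_1^e = \ln Z\}$ (the ``outermost'' face of $\tilde{\Omega}_u^e$ in $x_1^e$). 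Crucially, $\tilde{\Omega}_u^e$ lies in $x_1^e < \ln Z$ while $\Omega_l^r$ lies in $r > Z$, so the outward unit normals satisfy $\nuw^e = (1,0,0)^T$ in $x^e$ coordinates while $\nuw = -e_r$ in $x$ coordinates, and this is the origin of the minus sign on the right-hand sides of (3.87)--(3.88).

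The next step is to derive the exact scaling relations on this face. Using $R^e$ from (3.70) and $S^e$ from (3.76), one checks directly that $R^e = e^{-\tau} S^e$ (with $e^\tau = Z$ on the face), so $S^e \nabla_{x^e} u = Z\,\nabla_x u$. Combined with $A^e = (S^e)^T A S^e$ and a short computation, this yields
\begin{align*}
\left(\tfrac{\partial u}{\partial \nuw^e}\right)_{A^e} &= -Z\left(\tfrac{\partial u}{\partial \nuw}\right)_A, \\
\left(\tfrac{\partial u}{\partial \taux_1^e}\right)_{A^e} &= -Z\left(\tfrac{\partial u}{\partial \taux_1}\right)_A, \qquad
\left(\tfrac{\partial u}{\partial \taux_2^e}\right)_{A^e} = Z^2\left(\tfrac{\partial u}{\partial \taux_2}\right)_A,
\end{align*}
after choosing compatibly oriented orthonormal tangent frames (e.g.\ $\taux_1^e=(0,1,0)^T,\taux_2^e=(0,0,1)^T$ and $\taux_1=-e_\theta,\taux_2=e_3$). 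The area elements satisfy $d\sigma = Z\,d\sigma^e$, and because $s_1^e = \theta$ while $s_1 = -Z\theta+\mathrm{const}$, the arc-length derivatives along the face obey $\partial/\partial s_1^e = -Z\,\partial/\partial s_1$ and $\partial/\partial s_2^e = \partial/\partial s_2$.

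Substituting these into the tangential-gradient integrand and using that $Z$ is constant on the face produces
\[
\left(\tfrac{\partial u}{\partial \taux_m^e}\right)_{A^e}\tfrac{\partial}{\partial s_m^e}\!\left(\left(\tfrac{\partial u}{\partial \nuw^e}\right)_{A^e}\right)
= -Z^3\left(\tfrac{\partial u}{\partial \taux_m}\right)_A\tfrac{\partial}{\partial s_m}\!\left(\left(\tfrac{\partial u}{\partial \nuw}\right)_A\right)
\]
for both $m=1,2$; integrating and converting $d\sigma^e = Z^{-1}d\sigma$ yields the factor $-Z^2 = -\rho_v^2 \sin^2(\phi_v)$, which is exactly (3.88). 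For (3.87) I treat the four straight edges of $\partial\tilde{\Gamma}_{u,k}^e$ separately, choose the in-face outward unit normals $\nb^e\in\{(0,\pm1,0)^T,(0,0,\pm1)^T\}$ and the corresponding $\nb$ on $\partial\Gamma_{l,j}^r$ (which by the same orthonormality argument equals $\pm e_\theta$ or $\pm e_3$), compute $(\partial u/\partial \nb^e)_{A^e}$ in the same way as above (distinguishing the $\theta$-edges where the scaling factor is $-Z$ from the $x_3$-edges where it is $Z^2$), multiply by $(\partial u/\partial \nuw^e)_{A^e} = -Z(\partial u/\partial \nuw)_A$, and track the arc-length rescalings ($ds = Z\,ds^e$ on the $\theta$-edges, $ds = ds^e$ on the $x_3$-edges). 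All four contributions assemble to the single factor $-Z^2 = -\rho_v^2\sin^2(\phi_v)$, giving (3.87).

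The main technical nuisance — and hence the reason this lemma is deferred — is bookkeeping: because the coordinate change is a rotation composed with a logarithmic stretch in $r$ only, the scalings on the two tangential directions $\taux_1^e$ (angular) and $\taux_2^e$ (axial) are different ($-Z$ versus $Z^2$), and one must verify that when paired with the matching $\partial/\partial s_m^e$ on the normal derivative and with $d\sigma$, all contributions nevertheless produce the single common prefactor $-\rho_v^2\sin^2(\phi_v)$. No inequalities or cutoff arguments are needed; the whole lemma is an identity obtained by direct coordinate-chain computation, entirely parallel to (but simpler than) Lemma 3.2.6 in appendix B.1.
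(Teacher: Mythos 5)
Your proposal follows the same route as the paper's Appendix B.3: a direct coordinate-chain computation establishing $(\partial u/\partial \nuw^e)_{A^e}=-e^{\tau}(\partial u/\partial \nuw)_A$, the two distinct tangential scalings $e^{\tau}$ (angular) and $e^{2\tau}$ (axial), $\partial/\partial s_1^e = e^{\tau}\partial/\partial s_1$, $\partial/\partial s_2^e=\partial/\partial s_2$, and $d\sigma=e^{\tau}d\sigma^e$, assembled with $e^{\tau}=Z=\rho_v\sin\phi_v$ constant on the interface; your treatment of (3.88) is exactly the paper's. The one caveat is in your four-edge case split for (3.87), where as written the normal-derivative factors are paired with the wrong arc-length rescalings (the edges along which $\theta$ varies have $\nb^e$ in the $x_3$-direction, hence factor $Z^2$ together with $ds=Z\,ds^e$, while the edges along which $x_3$ varies have $\nb^e$ in the $\theta$-direction, hence factor $+Z$ together with $ds=ds^e$); the paper sidesteps this bookkeeping entirely by proving the single identity $(\partial u/\partial \nb)_A\,ds=e^{-\tau}(\partial u/\partial \nb^e)_{A^e}\,ds^e$ valid on all of $\partial\Gamma^r_{l,j}$ at once.
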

\begin{proof}
The proof is provided in Appendix B.3.
\end{proof}

\section{Estimates for Lower Order Derivatives}
\begin{lem}\label{lem3.3.1}
We can define a set of corrections $\{\eta_l^r\}_{l=1,\ldots,N_r}$,
$\{\eta_l^v\}_{l=1,\ldots,N_v}$ for $v\in \mathcal V$,
$\{\eta_l^{v-e}\} _{l=1,\ldots,N_{v-e}}$ for\, ${v-e}\in \mathcal{V-E}$
and $\{\eta_l^e\} _{l=1,\ldots,N_e}$ for\, $e\in \mathcal E$ such that
the corrected spectral element function $p$ defined as
\begin{align}\label{eq3.89}
p_l^r=u_l^r+\eta_l^r \hspace{1.4cm} & {\it for} \ l=1,\ldots,N_r, \notag \\
p_l^v=u_l^v+\eta_l^v \hspace{1.4cm} & {\it for}\ l=1,\ldots,N_v
\quad and\quad v\in\mathcal V, \notag \\
p_l^{v-e}=u_l^{v-e}+\eta_l^{v-e} \hspace{0.3cm} & {\it for} \
l=1,\ldots,N_{v-e}\quad and\quad {v-e}\in\mathcal {V-E}, \notag \\
p_l^e=u_l^e+\eta_l^e \hspace{1.4cm} & {\it for }\ l=1,\ldots,N_e
\quad and \quad e\in\mathcal E,
\end{align}
is conforming and $p\in H_0^1(\Omega)$ i.e. $p\in H^1(\Omega)$
and $p$ vanishes on $\Gamma^{[0]}$. Define
\begin{align}\label{eq3.90}
\mathcal U^{N,W}_{(1)}(\{\mathcal F_s\})&=\sum_{l=1}^{N_r} \left\|
\:s_l^r(x_1,x_2,x_3)\:\right\|
^2_{1,\Omega_l^r}+\sum_{v\in \mathcal V}\sum_{l=1}^{N_v}
\left\|\:s_l^v(x_1^v,x_2^v,x_3^v) e^{{x_3^v}/2}
\:\right\|^2_{1,\tilde{\Omega}_l^v} \notag \\
&\quad+\sum_{{v-e}\in \mathcal {V-E}}\Bigg(\mathop{\sum_{l=1}}
_{\mu(\tilde{\Omega}_l^{v-e})<\infty}^{N_{v-e}}
\int_{\tilde{\Omega}_l^{v-e}}e^{x_3^{v-e}}\left(\sum_{i=1}^2
\left(\frac{\partial s_l^{v-e}}{\partial x_i^{v-e}}\right)^2
+\sin^2\phi\left(\frac{\partial s_l^{v-e}}{\partial x_3^{v-e}}
\right)^2\right. \notag \\
&\quad+ (s_l^{v-e})^2\Bigg)\:dx^{v-e}
+\mathop{\sum_{l=1}}_{\mu(\tilde{\Omega}_l^{v-e})=\infty}
^{N_{v-e}}\int_{\tilde{\Omega}_l^{v-e}}e^{x_3^{v-e}}
(s_l^{v-e})^2\:w^{v-e}(x_{1}^{v-e})\:dx^{v-e}\Bigg) \notag \\
&\quad +\sum_{e\in \mathcal E}\Bigg(\mathop{\sum_{l=1}} _{\mu
(\tilde{\Omega}_l^e)<\infty}^{N_e}\int_{\tilde{\Omega}_l^e}
\left(\sum_{i=1}^2\left(\frac{\partial s_l^e}{\partial
x_i^e}\right)^2\right.+\left.e^{2\tau} \left(\frac
{\partial s_l^e}{\partial x_3^e}\right)^2+(s_l^e)^2\right)
\:dx^e \notag \\
&\quad+\mathop{\sum_{l=1}}_{\mu(\tilde{\Omega}_l^e)=\infty}
^{N_e}\int_{\tilde{\Omega}_l^e}(s_l^e)^2\:w^e(x_1^e)\:dx^e\Bigg)
\end{align}
Then the estimate
\begin{equation}\label{eq3.91}
\mathcal U^{N,W}_{(1)}(\{\mathcal F_\eta\})
\leq C_W \mathcal V^{N,W}(\{\mathcal F_u\})
\end{equation}
holds. Here $C_W$ is a constant, if the spectral element functions
are conforming on the wirebasket $W\!B$ of the elements, otherwise
$C_W=C(\ln W)$, where $C$ is a constant.
\end{lem}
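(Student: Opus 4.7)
The plan is to build the corrections face by face, using polynomial extension operators adapted to each type of neighbourhood. The residual functional $\mathcal V^{N,W}$ already controls the jumps $[u]$, $[u_{x_k}]$ across interior interfaces and the traces of $u$, $\left(\frac{\partial u}{\partial\nuw}\right)_A$ on $\Gamma^{[0]}\cup\Gamma^{[1]}$ in exactly the Sobolev norms (fractional, weighted, or anisotropic) that are dual to the $H^1$--type norm we want to bound $\eta$ in.

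First, I would work in the regular region $\Omega^r$. Enumerate the faces $\Gamma_{l,i}^r$ shared by two elements, together with the faces lying on $\Gamma^{[0]}$. On each such face I would split the jump $[u]$ symmetrically, assigning half to each side (and on a Dirichlet face assigning the full trace). For the piece assigned to element $\Omega_l^r$, pull back to the reference cube $Q$ via $M_l^r$ and invoke a polynomial extension theorem of Babu\v{s}ka--Suri type: given a polynomial $g$ of degree $\leq W$ on one face of $Q$ (with $g$ possibly supported all the way up to the boundary of the face), there exists a polynomial $E(g)$ of degree $\leq W$ on $Q$ whose trace on that face is $g$, vanishing on the other faces, with
$$\|E(g)\|_{H^1(Q)}^2 \leq C(\ln W)\,\|g\|_{H^{1/2}(S)}^2,$$
and the $\ln W$ factor drops to an absolute constant when $g$ vanishes on the boundary of the face. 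Summing such extensions over all faces of $\Omega_l^r$ produces $\eta_l^r$. The resulting $\|\eta_l^r\|_{1,\Omega_l^r}^2$ is bounded by the face contributions in $\mathcal V_{regular}^{N,W}$, with a factor $C\ln W$ in general and a factor $C$ if the SEF is already conforming on the wirebasket $WB$ (since then each face trace vanishes on $\partial\Gamma_{l,i}^r$).

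Second, I would repeat the construction in each vertex neighbourhood $\Omega^v$, working in the modified coordinates $x^v$. Since the geometric mesh becomes quasi-uniform in $\tilde\Omega^v$, the reference-cube extension estimate above applies uniformly on every element $\tilde\Omega_l^v$ with $\mu(\tilde\Omega_l^v)<\infty$. The weight $e^{x_3^v}$ appearing in $\mathcal U_{(1)}^{N,W}$ matches the weight $\sqrt{R_{l,i}^v}$ used in $\mathcal V_v^{N,W}$ on the face, since $e^{x_3^v}\sim R_{l,i}^v$ on $\tilde\Gamma_{l,i}^v$. Corner elements of infinite measure are handled by declaring the SEF to be the common constant $h_v$ at every such element, so no jump corrections are needed there and the $L^2$ terms with weight $w^{v-e}$, $w^e$ absorb the constant part. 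In edge and vertex-edge neighbourhoods I would apply the same construction after passing to the further local variables $y$, $z$ of $(3.58)$, $(3.79)$ that make the mesh genuinely quasi-uniform and uniformly isotropic; the anisotropic weights $\sin^2\phi$ and $e^{2\tau}$ on the $x_3$-derivatives on the left-hand side of $(3.90)$ are produced by the Jacobian of those auxiliary rescalings, and the anisotropic triple-integral norms $|||\cdot|||$ on faces, introduced in $(2.27)$ and $(2.34)$, coincide up to uniform constants with the standard $H^{1/2}$ norm of the trace in the $y,z$ coordinates on that face. Thus the same extension lemma yields the required control.

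Finally, I would verify two compatibility issues. (i) At faces between different types of neighbourhoods (regular/vertex, vertex/vertex--edge, vertex--edge/edge) the definitions of the jumps and of the half-splits must be consistent; Lemmas $3.2.6$, $3.2.8$, $3.2.9$ already show that the geometric/surface-measure factors match across these interfaces, so the half-jumps extend compatibly. (ii) After summing all local extensions, the corrected function $p$ has no remaining inter-element jumps, lies in $H^1(\Omega)$ and vanishes on $\Gamma^{[0]}$, i.e.\ $p\in H^1_0(\Omega,\Gamma^{[0]})$. Collecting the local bounds then gives
$$\mathcal U_{(1)}^{N,W}(\{\mathcal F_\eta\}) \leq C_W\,\mathcal V^{N,W}(\{\mathcal F_u\}),$$
with $C_W = C\ln W$ in the general non-conforming case and $C_W = C$ when the SEF is conforming on $WB$. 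The main obstacle I expect is the careful matching, at vertex--edge/edge and vertex/vertex--edge interfaces, of the anisotropic $|||\cdot|||$ norms with the $H^{1/2}$ norm in the rescaled coordinates, together with the verification that the extension operator respects the weight $e^{x_3^v}$ (resp.\ $e^{x_3^{v-e}}$ and the $\sin^2\phi$, $e^{2\tau}$ weights) with the same constant on every element of the geometric mesh.
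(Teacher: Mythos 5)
Your overall architecture (build $\eta$ face by face from the jumps, bound its $H^1$-type norm by the face terms already present in $\mathcal V^{N,W}$, track the $\ln W$ loss) matches the paper's, but two steps in your route have real gaps. First, your claim that the auxiliary rescalings $(3.58)$ and $(3.79)$ make the elements ``genuinely quasi-uniform and uniformly isotropic'' is false: in the $y$ (resp.\ $z$) variables the images $\hat{\Omega}_l^{v-e}$, $\hat{\Omega}_l^e$ are long thin hexahedra whose length in the $y_3$ (resp.\ $z_3$) direction grows like $1/\sin(\phi_{i+1}^{v-e})$ (resp.\ $1/r_{j+1}^e$) as the element approaches the edge. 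A reference-cube extension operator applied after such a rescaling picks up constants depending on the aspect ratio. The paper handles exactly this by replacing the global blend in the long direction with the localized ramp functions $r(y_3)$, $s(y_3)$ of width $h_k^i=\min\bigl(1,(\kappa_{k+1}^i-\kappa_k^i)/2\bigr)$, and, for faces parallel to the long direction, by subdividing the thin hexahedron into $\Theta(1/G_{m,i}^e)$ unit-size pieces; without some such device your uniform-constant claim does not hold.

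Second, the extension lemma you invoke --- a polynomial lifting of a face trace $g$ of degree $W$ that does \emph{not} vanish on $\partial S$, which vanishes on all other faces of $Q$ and satisfies $\|E(g)\|^2_{H^1(Q)}\leq C(\ln W)\|g\|^2_{H^{1/2}(S)}$ --- is the crux of your argument and is not established; in the substructuring literature the corresponding face lemma requires first removing the wirebasket contribution and typically costs $(\ln W)^2$, not $\ln W$. The paper avoids this by a two-stage construction: it first corrects the nodal and edge values so that the functions become conforming on the wirebasket, paying the single $\ln W$ through the polynomial edge-trace inequality $\|w\|^2_{H^1(S)}\leq C\ln W\,\|w\|^2_{H^{3/2}(R)}$ (together with $\|w\|^2_{L^\infty(R)}\leq C\|w\|^2_{H^{3/2}(R)}$ for the nodes), and only then lifts face data that already vanishes on the face boundary, for which no further log is incurred --- which is also why $C_W$ is a pure constant in the wirebasket-conforming case. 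Your proposal never performs this preliminary nodal/edge matching, so in the fully nonconforming case the half-jump traces you feed to your extension operator do not vanish on the face boundaries and the claimed $\ln W$ bound is unsupported.
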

\begin{proof}
The proof is provided in Appendix C.1.
\end{proof}
\begin{theo}\label{thm3.3.1}
The following estimate for the spectral element functions holds
\begin{equation}\label{eq3.92}
\mathcal U^{N,W}_{(1)}(\{\mathcal F_u\})
\leq K_{N,W} \mathcal V^{N,W}(\{\mathcal F_u\})
\end{equation}
Here $K_{N,W}=CN^4$, when the boundary conditions are mixed and
$K_{N,W}=C(\ln W)^2$ when the boundary conditions are Dirichlet.
\newline
If the spectral element functions vanish on the wirebasket $W\!B$ of
the elements then $K_{N,W}=C(\ln W)^2$, where $C$ is a constant.
\end{theo}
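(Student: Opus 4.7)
The plan is to reduce Theorem 3.3.1 to a conforming problem via the correction constructed in Lemma 3.3.1 and then exploit the coercivity of the elliptic operator $L$. Setting $p_l = u_l + \eta_l$ on each element so that $\{\mathcal F_p\}$ defines a globally conforming function in $H^1(\Omega)$ which vanishes on $\Gamma^{[0]}$, the triangle inequality gives
\[
\mathcal U^{N,W}_{(1)}(\{\mathcal F_u\}) \leq 2\,\mathcal U^{N,W}_{(1)}(\{\mathcal F_p\}) + 2\,\mathcal U^{N,W}_{(1)}(\{\mathcal F_\eta\}),
\]
and the second term is already bounded by $2C_W\,\mathcal V^{N,W}(\{\mathcal F_u\})$ thanks to Lemma 3.3.1. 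It therefore suffices to estimate $\mathcal U^{N,W}_{(1)}(\{\mathcal F_p\})$ by $\mathcal V^{N,W}(\{\mathcal F_u\})$.

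For this I would test $Lp = Lu + L\eta$ against $p$ itself, performing the integration by parts element by element in the modified coordinates of each neighbourhood and using the local forms of $L^v$, $L^{v-e}$, $L^e$ derived in Section 3.2. Because $\{\mathcal F_p\}$ is conforming across interior faces and satisfies the Dirichlet condition on $\Gamma^{[0]}$, the interior face contributions telescope away and only the Neumann-trace term on $\Gamma^{[1]}$ survives. The Lax--Milgram hypothesis then delivers an estimate of the shape
\[
\alpha\,\mathcal U^{N,W}_{(1)}(\{\mathcal F_p\}) \leq \sum_{l}\int_{\Omega_l}(Lu_l + L\eta_l)\,p_l\,dx + \oint_{\Gamma^{[1]}}\!\!\left(\frac{\partial p}{\partial\nu}\right)_{\!A} p\,d\sigma + \text{(lower order terms)}.
\]
The $Lu_l$ volume contribution is controlled directly by Cauchy--Schwarz against the first summand of $\mathcal V^{N,W}$; the Neumann-trace contribution of $u$ pairs with the $H^{1/2}$ boundary summands of $\mathcal V^{N,W}$; and the $\eta$-contributions are absorbed using Lemma 3.3.1 together with polynomial inverse estimates that promote the $H^1$-control of $\eta$ to the required $L^2(L\eta)$ and $H^{1/2}$-trace controls.

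The hard part will be tracking the correct power of $W$ (equivalently $N$, since $W \propto N$) in each of the three regimes. For pure Dirichlet boundary conditions every interior and boundary flux is dual-tested against $H^{1/2}$-traces of $p$, so the only losses come from the factor $\ln W$ already present in Lemma 3.3.1 together with one more $\ln W$ from the weighted fractional trace estimate, yielding $(\ln W)^2$. In the mixed case the recovery of the Neumann residual forces a passage from the $H^{1/2}$-norm in $\mathcal V^{N,W}$ to an $L^2$-pairing, and this step uses a polynomial inverse estimate on element faces that costs a factor of order $W^2$; squared through Cauchy--Schwarz this produces the $N^4$ loss. Finally, when the spectral element functions are conforming on the wirebasket $W\!B$ and vanish there, the edge-type discontinuities of the correction $\eta$ are absent, so the face inverse estimate is only required in its milder $H^{1/2}_{00}$-form, which avoids the polynomial blow-up and restores the Dirichlet-type bound $(\ln W)^2$. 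Throughout, the assumption $W = O(e^{N^\alpha})$ with $\alpha < 1/2$ is precisely what allows the vertex and vertex-edge weights $e^{x_3^v}$ and $e^{x_3^{v-e}}$ to be absorbed into the constants.
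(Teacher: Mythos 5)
Your overall architecture matches the paper's: correct $u$ to a conforming $p=u+\eta$ via Lemma 3.3.1, use the coercive bilinear form $B(\cdot,\cdot)$ from the Lax--Milgram hypothesis, integrate by parts element by element in the modified coordinates, and recombine with the triangle inequality. However, two of your key steps do not hold as stated. First, the interior face contributions do \emph{not} telescope away. After elementwise integration by parts you obtain $\sum_l\int_{\Omega_l}Lu_l\,p\,dx+\sum_{\Gamma_{l,i}}\int_{\Gamma_{l,i}}\bigl(\tfrac{\partial u_l}{\partial\nuw}\bigr)_A p\,d\sigma+\sum_l B(\eta_l,p)$; the conformity of $p$ only guarantees that the two contributions from a shared face combine into $\int_{\Gamma_{l,i}}\bigl[\bigl(\tfrac{\partial u}{\partial\nuw}\bigr)_A\bigr]p\,d\sigma$, a jump in the conormal derivative of the \emph{non-conforming} $u$, which certainly does not vanish. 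These surviving jump terms are precisely what must be dual-paired against the trace of $p$ and absorbed by the $\sum_k\|[u_{x_k}]\|^2_{1/2}$ summands of $\mathcal V^{N,W}$ via a Cauchy--Schwarz inequality with a free parameter $d$; omitting them removes the main reason $\mathcal V^{N,W}$ contains derivative-jump terms at all, so the proof as written is incomplete.

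Second, your accounting for the $N^4$ versus $(\ln W)^2$ dichotomy is not the mechanism that works. The paper does not use a face inverse estimate costing $W^2$ on the Neumann residual. Instead, the dichotomy enters through the inequality $\mathcal G(p)\leq C_{N,W}\,\|p\|^2_{H^1(\Omega)}$, where $\mathcal G(p)$ is the weighted lower-order functional $\mathcal U^{N,W}_{(1)}$ evaluated at the conforming $p$: for mixed boundary conditions the Poincar\'e-type constant relating the weighted edge and vertex-edge norms to the global $H^1$ norm degrades like $N^2$ with the number of geometric layers, whereas for Dirichlet conditions (where $p$ vanishes on $\Gamma^{[0]}$) it is uniformly bounded, and when $p$ vanishes on the wirebasket a discrete Poincar\'e inequality for polynomials gives the constant $F\ln W$. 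Feeding $\mathcal G(p)\leq C_{N,W}\|p\|^2_{H^1}\leq (C_{N,W}/K)B(p,p)$ back into $B(p,p)\leq cd\,\mathcal V^{N,W}+d^{-1}\mathcal G(p)$ and optimizing $d\sim C_{N,W}$ squares the constant, producing $N^4$, a constant, or $(\ln W)^2$ respectively. Your proposed route (an $L^2$--$H^{1/2}$ inverse estimate costing $W^2$ per face, ``squared through Cauchy--Schwarz'') neither identifies where the loss actually occurs nor obviously yields the stated exponents, so this part of the argument would need to be replaced rather than merely tightened.
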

\begin{proof}
The proof is similar to the proof of Theorem $3.1$ in~\cite{DT}
and is provided in Appendix C.2.
\end{proof}

\section{Estimates for Terms in the Interior}
\subsection{Estimates for terms in the interior of $\Omega^r$}
\begin{lem}\label{lem3.4.1}
Let $\Omega_m^r$ and $\Omega_p^r$ be elements in the regular
region $\Omega^r$ of $\Omega$ and $\Gamma_{m,i}^r$ be a face of
$\Omega_m^r$ and $\Gamma_{p,j}^r$ be a face of $\Omega_p^r$ such
that $\Gamma_{m,i}^r=\Gamma_{p,j}^r$. Then for any $\epsilon>0$
there exists a constant $C_{\epsilon}$ such that for $W$ large
enough
\begin{align}\label{eq3.93}
&\left|\int_{\partial{\Gamma}_{m,i}^r}\left(\left(\frac{\partial
u_m^r}{\partial \nuw}\right)_A\left(\frac{\partial u_m^r}
{\partial\nb}\right)_A-\left(\frac{\partial u_p^r}{\partial\nuw}
\right)_A\left(\frac{\partial u_p^r}{\partial\nb}\right)_A\right)
\:ds\right| \notag \\
&\leq C_{\epsilon}(\ln W)^2\sum_{k=1}^3 \left\|[u_{x_k}]
\right\|^2_{1/2,{\Gamma_{m,i}^r}}
+\epsilon\sum_{1\leq |\alpha|\leq 2} \left(\|D_x^{\alpha}u_m^r
\|^2_{0,\Omega_m^r}+\|D_x^{\alpha}u_p^r \|^2_{0,\Omega_p^r}\right)\:.
\end{align}
\end{lem}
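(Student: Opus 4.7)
The plan is to treat this as a purely estimate-based result: decompose the product difference into ``jump $\times$ average'' pieces, realise the derivative jumps as linear combinations of the Cartesian jumps $[u_{x_k}]$, and then transport a one--dimensional line integral over $\partial\Gamma_{m,i}^r$ to a two--dimensional $H^{1/2}$ statement on $\Gamma_{m,i}^r$ by means of a polynomial boundary--trace inequality. The two factors of $\ln W$ will ultimately arise from using such an inequality on \emph{both} sides of a Cauchy--Schwarz/Young splitting.

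First I would use the identity
\begin{equation*}
a_m b_m - a_p b_p
= \tfrac{1}{2}(a_m-a_p)(b_m+b_p) + \tfrac{1}{2}(a_m+a_p)(b_m-b_p),
\end{equation*}
with $a_\bullet=(\partial u_\bullet^r/\partial\nuw)_A$ and $b_\bullet=(\partial u_\bullet^r/\partial\nb)_A$. Since the coefficients $a_{i,j}$ and the vectors $\nuw,\nb$ are smooth and uniformly bounded on the closed elements, the differences $a_m-a_p$ and $b_m-b_p$ are linear combinations, with bounded coefficients, of the three Cartesian jumps $[u_{x_k}]|_{\Gamma_{m,i}^r}$, $k=1,2,3$. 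Hence the left-hand side is bounded by a fixed number of line integrals of the form $\int_{\partial\Gamma_{m,i}^r}[u_{x_k}]\,q\,ds$, where each $q$ is the restriction to $\Gamma_{m,i}^r$ of a first derivative of $u_m^r$ or $u_p^r$, and in particular is a polynomial of degree $\leq W$ on the face.

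To estimate a typical term I would apply Cauchy--Schwarz on $\partial\Gamma_{m,i}^r$ followed by Young's inequality with a free parameter $\delta>0$:
\begin{equation*}
\Bigl|\int_{\partial\Gamma_{m,i}^r}[u_{x_k}]\,q\,ds\Bigr|
\leq \tfrac{1}{2\delta}\|[u_{x_k}]\|_{0,\partial\Gamma_{m,i}^r}^2
+ \tfrac{\delta}{2}\|q\|_{0,\partial\Gamma_{m,i}^r}^2.
\end{equation*}
The key polynomial inverse estimate I intend to invoke is that for any polynomial $p$ of degree $\leq W$ on the two--dimensional face $\Gamma_{m,i}^r$,
\begin{equation*}
\|p\|_{0,\partial\Gamma_{m,i}^r}^2 \leq C(\ln W)\,\|p\|_{1/2,\Gamma_{m,i}^r}^2,
\end{equation*}
which is the three--dimensional analogue of the logarithmic bound for polynomial traces used in~\cite{DT,DTK1,SKT1}. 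Applying this to $p=[u_{x_k}]$ directly produces the $\|[u_{x_k}]\|_{1/2,\Gamma_{m,i}^r}^2$ term. Applying it to $p=q$ and then using the standard Sobolev trace $\|q\|_{1/2,\Gamma_{m,i}^r}\leq C\|q\|_{1,\Omega_m^r}\cup\|q\|_{1,\Omega_p^r}$ (with a similar argument on the $\Omega_p^r$ side) controls $\|q\|_{0,\partial\Gamma_{m,i}^r}^2$ by $(\ln W)$ times the full $H^2$ seminorm of $u_m^r$ (or $u_p^r$), since $q$ is a first--order derivative of one of the two spectral element functions.

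Finally I would optimise the parameter $\delta$: choosing $\delta=\epsilon/(C\ln W)$ absorbs one factor $\ln W$ into the coefficient of the jump term and leaves $\epsilon$ in front of the $H^2$ seminorm, which produces the asserted constants $C_\epsilon(\ln W)^2$ and $\epsilon$. Summing over $k=1,2,3$ and over the finitely many product terms coming from the algebraic identity in the first step closes the estimate. The main obstacle I foresee is precisely the polynomial boundary--trace inequality above: it is exactly this step that generates the logarithmic loss and it has to be stated on the physical (curvilinear) face $\Gamma_{m,i}^r$ rather than on the reference square, which requires pulling back by the analytic map $M_m^r$ and using that the jacobian and its inverse are uniformly bounded on $\Omega_m^r$. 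Everything else reduces to bounded algebraic manipulation and standard Sobolev trace theorems on the fixed curvilinear hexahedron.
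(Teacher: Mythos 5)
Your overall architecture matches the paper's: the same jump-times-average splitting, Young's inequality with a $\ln W$-weighted parameter, and logarithmic polynomial trace inequalities as the source of the two factors of $\ln W$. However, there is a genuine gap at exactly the step you flag as the main obstacle, and your proposed remedy (pulling back by $M_m^r$ and using bounded Jacobians) does not close it.

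The inequality $\|p\|_{0,\partial\Gamma_{m,i}^r}^2\leq C(\ln W)\,\|p\|_{1/2,\Gamma_{m,i}^r}^2$ is an inverse estimate valid only for polynomials of bounded degree; it fails for general $H^{1/2}$ functions, since $H^{1/2}$ of a face does not embed into $L^2$ of its one-dimensional boundary. After pulling back to the reference square $S$, the quantities you apply it to are not polynomials: $\partial u_m^r/\partial x_k=\sum_j\beta_{k,j}\,\partial u_m^r/\partial\lambda_j$ with $\beta_{k,j}$ analytic but non-polynomial entries of the inverse Jacobian, so the jump $[u_{x_k}]$ is a sum of terms of the form (analytic coefficient) times (polynomial), and boundedness of the Jacobian does not make the trace inequality applicable to it. For the ``average'' factor $q$ you can peel the coefficients off in $L^\infty$ and apply the inequality to the $\lambda$-derivatives, which \emph{are} polynomials (this is what the paper does in its estimate (D.5)); but for the jump factor you cannot, because the two elements carry different maps, so differences of $\lambda$-derivatives across the face are not controlled by $\|[u_{x_k}]\|_{1/2,\Gamma_{m,i}^r}$, and the norm you must land on is precisely the $H^{1/2}$ norm of the jump of the $x$-derivatives.

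The paper's proof (Appendix D.1) closes this by projecting the coefficients $\beta_{k,j}$ onto polynomials of degree $W$ to form approximate derivatives $\left(\partial u^r/\partial x_k\right)^a$, which are genuine polynomials of degree $2W$ on $S$; the logarithmic trace inequality is then applied to the polynomial difference $\left(\partial u_m^r/\partial x_k\right)^a-\left(\partial u_p^r/\partial x_k\right)^a$, and the replacement errors between exact and approximate derivatives are shown to be $O(W^{-4})$ times the $H^2$ norms (spectral accuracy of the projection beating the inverse inequalities), so they are absorbed into the $\epsilon$ term for $W$ large. This coefficient-filtering step is the substantive content missing from your argument; the rest of your accounting of the $(\ln W)^2$ and $\epsilon$ constants is sound.
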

\begin{proof}
The proof is similar to the proof of Lemma $3.1$ in~\cite{DTK1}
and is provided in Appendix D.1.
\end{proof}
\begin{lem}\label{lem3.4.2}
Let $\Omega_m^r$ and $\Omega_p^r$ be elements in the regular
region $\Omega^r$ of $\Omega$ and $\Gamma_{m,i}^r$ be a face of
$\Omega_m^r$ and $\Gamma_{p,j}^r$ be a face of $\Omega_p^r$ such
that $\Gamma_{m,i}^r=\Gamma_{p,j}^r$. Then for any $\epsilon>0$
there exists a constant $C_{\epsilon}$ such that for $W$ large
enough
\begin{align}\label{eq3.94}
&\left|\sum_{j=1}^2\left(\int_{{\Gamma}_{m,i}^r}\left(\frac
{\partial u_m^r}{\partial \taux_j}\right)_A\frac{\partial}
{\partial s_j}\left(\frac{\partial u_m^r}{\partial\nuw}\right)
_A\: d\sigma-\int_{{\Gamma}_{p,j}^r}\left(\frac{\partial u_p^r}
{\partial\taux_j}\right)_A\frac{\partial}{\partial s_j}
\left(\frac{\partial u_p^r}{\partial\nuw}\right)_A
d\sigma\right)\right| \notag \\
&\leq C_{\epsilon}(\ln W)^2 \sum_{k=1}^3\left\|[u_{x_k}]
\right\|^2_{1/2,{{\Gamma}_{p,j}^r}}
+\epsilon\sum_{1\leq|\alpha|\leq 2}\left(\|D_x^{\alpha}
u_m^r\|^2_{0,\Omega_m^r}+\|D_x^{\alpha}u_p^r\|^2_{0,\Omega_p^r}\right)\:.
\end{align}
\end{lem}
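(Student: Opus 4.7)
The plan is to reduce the estimate, exactly as in Lemma 3.4.1, to controlling integrals in which one factor is a jump belonging to the right-hand side of (3.94) and the other factor is a trace of first/second derivatives of $u_m^r$ or $u_p^r$ which will be absorbed into the $\epsilon$-term. The new ingredient is the extra tangential derivative $\partial/\partial s_j$ inside the integrand; it will be handled by integration by parts along the face.

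First I would apply the algebraic identity $a_m b_m - a_p b_p = a_m (b_m - b_p) + (a_m - a_p)\,b_p$ with $a = (\partial u/\partial \taux_j)_A$ and $b = \partial_{s_j}\bigl((\partial u/\partial \nuw)_A\bigr)$. Writing the integrand on $\Gamma_{m,i}^r = \Gamma_{p,j}^r$ this way decomposes the difference into
\begin{align*}
\mathrm{I} &= \int_{\Gamma_{m,i}^r} \Bigl(\tfrac{\partial u_m^r}{\partial \taux_j}\Bigr)_{\!A}\, \partial_{s_j}\!\Bigl[\bigl(\tfrac{\partial u}{\partial \nuw}\bigr)_{\!A}\Bigr]\,d\sigma,\\
\mathrm{II} &= \int_{\Gamma_{m,i}^r} \Bigl[\bigl(\tfrac{\partial u}{\partial \taux_j}\bigr)_{\!A}\Bigr]\, \partial_{s_j}\!\Bigl(\tfrac{\partial u_p^r}{\partial \nuw}\Bigr)_{\!A}\,d\sigma,
\end{align*}
so each term contains a jump of a gradient component, which is what appears on the right-hand side of (3.94). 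Since the entries of $A$ are smooth, the jumps $[(\partial u/\partial \taux_j)_A]$ and $[(\partial u/\partial \nuw)_A]$ are expressible as linear combinations of the $[u_{x_k}]$ modulo lower-order jumps in $u$ itself, which are controlled via Lemma 3.4.1--type considerations.

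For term $\mathrm{I}$, I would integrate by parts tangentially along $\Gamma_{m,i}^r$. This produces (a) a line integral on $\partial \Gamma_{m,i}^r$ of $(\partial u_m^r/\partial \taux_j)_A\,[(\partial u/\partial \nuw)_A]$ times a direction cosine, which is exactly of the form treated by Lemma 3.4.1, and (b) an area integral $-\int_{\Gamma_{m,i}^r} \partial_{s_j}(\partial u_m^r/\partial \taux_j)_A\, [(\partial u/\partial \nuw)_A]\,d\sigma$, in which the leading factor is a trace of a second derivative of $u_m^r$ and the other factor is a gradient jump. For term $\mathrm{II}$ no integration by parts is needed; it is already of that form with the roles of $m$ and $p$ switched. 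In both cases I pair a jump against a trace of $D^2 u$ by Cauchy--Schwarz on $\Gamma_{m,i}^r$; an application of Young's inequality $2ab \leq \delta a^2 + \delta^{-1} b^2$ with a small $\delta$ isolates the $\epsilon$-term $\sum_{|\alpha|=2}(\|D_x^\alpha u_m^r\|_{0,\Omega_m^r}^2 + \|D_x^\alpha u_p^r\|_{0,\Omega_p^r}^2)$ after a standard $H^1(\Omega)\hookrightarrow L^2(\partial \Omega)$ trace inequality on the reference element.

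The main obstacle is squeezing the $L^2$-on-the-face norm of the jump through to the $H^{1/2}$ norm on the right-hand side of (3.94) with only a logarithmic loss. Here I invoke the polynomial inverse estimate of Bernardi--Maday/Babu\v{s}ka--Suri type: for a polynomial $\chi$ of degree $W$ on the reference square $S$ one has $\|\chi\|_{0,S}^2 \leq C (\ln W)\, \|\chi\|_{1/2,S}^2$ (up to adjustment for the mean), and it is applied to the jumps $[u_{x_k}]$ which are traces of the polynomial spectral element functions. Applying this estimate once to term $\mathrm{I}$ and once to term $\mathrm{II}$ produces the advertised factor $(\ln W)^2$ in (3.94). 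Lower-order terms arising from differentiating $A$ across the face are absorbed by choosing $W$ large, exactly as in the corresponding step of Lemma 3.1 of~\cite{DTK1}, so the full estimate follows by assembling the bounds for $\mathrm{I}$ and $\mathrm{II}$ and summing over $j=1,2$.
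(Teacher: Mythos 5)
There is a genuine gap, and it is located exactly where you put the new idea: the tangential integration by parts. After you move $\partial/\partial s_j$ off the jump, the area term you are left with is
\begin{align}
\int_{\Gamma_{m,i}^r}\partial_{s_j}\!\left(\left(\frac{\partial u_m^r}{\partial \taux_j}\right)_{\!A}\right)\Bigl[\Bigl(\frac{\partial u}{\partial\nuw}\Bigr)_{\!A}\Bigr]\:d\sigma\,,\notag
\end{align}
and your plan is to bound it by Cauchy--Schwarz on the face, putting the second--derivative factor in $L^2(\Gamma_{m,i}^r)$ and absorbing it via a trace inequality. But the right-hand side of (\ref{eq3.94}) only contains $\sum_{1\leq|\alpha|\leq 2}\|D_x^{\alpha}u\|^2_{0,\Omega}$: the ordinary trace inequality $\|v\|^2_{0,\Gamma}\leq \epsilon\|v\|^2_{1,Q}+K_\epsilon\|v\|^2_{0,Q}$ applied to $v=D^2u_m^r$ would require third derivatives, which are not available, while the polynomial trace estimate $\|v\|^2_{0,\Gamma}\leq CW^2\|v\|^2_{0,Q}$ costs a factor $W^2$. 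After Young's inequality that factor lands on the jump term and you obtain $CW^2$ where $(\ln W)^2$ is claimed. The "inverse estimate" you invoke, $\|\chi\|^2_{0,S}\leq C(\ln W)\|\chi\|^2_{1/2,S}$, is trivially true without the logarithm (the $H^{1/2}$ norm in (\ref{eq2.17}) contains the $L^2$ norm by definition) and so cannot supply the $(\ln W)^2$; the logarithmic estimates that do real work here live either on $\partial S$ (codimension two) or in a bilinear pairing, not in $L^2(S)$ versus $H^{1/2}(S)$.

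The paper's proof never integrates by parts on the face. It pulls everything back to the reference square, replaces the analytic coefficients and the physical derivatives by polynomial (filtered) approximations $\hat a_{i,j}$, $\hat b_k$, $(\partial u/\partial x_i)^a$ so that both factors are polynomials of degree $\leq 2W$, and then applies the key bilinear estimate of Theorem $4.1$ of~\cite{DTK1}: for polynomials $e,f$ of degree $W$ on $S$,
\begin{align}
\left|\int_S e\,\frac{\partial f}{\partial\lambda_j}\:d\lambda_1 d\lambda_2\right|\leq C(\ln W)\,\|e\|_{1/2,S}\,\|f\|_{1/2,S}\,.\notag
\end{align}
This keeps the tangential derivative inside an $H^{1/2}\times H^{1/2}$ duality at the price of a single $\ln W$; Young's inequality then yields $C_\eta(\ln W)^2$ on the jump (measured in $H^{1/2}$, which is what (\ref{eq3.94}) demands) and $\eta$ times trace-controlled volume norms of $D^\alpha u$ with $1\leq|\alpha|\leq 2$ on the other factor. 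Your decomposition $a_mb_m-a_pb_p=a_m(b_m-b_p)+(a_m-a_p)b_p$ is the same as the paper's, and your treatment of the line integral on $\partial\Gamma_{m,i}^r$ would go through by the Lemma~\ref{lem3.4.1} machinery; what is missing is a substitute for this bilinear logarithmic estimate, without which the area term cannot be closed with only a $(\ln W)^2$ loss.
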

\begin{proof}
The proof is similar to the proof of Lemma $3.3$ in~\cite{DTK1}
and is provided in Appendix D.2.
\end{proof}

\subsection{Estimates for terms in the interior of $\Omega^e$}
\begin{lem}\label{lem3.4.3}
Let $\Omega_m^e$ and $\Omega_p^e$ be elements in the edge
neighbourhood $\Omega^e$ of $\Omega$ and $\Gamma_{m,i}^e$
be a face of $\Omega_m^e$ and $\Gamma_{p,j}^e$ be a face
of $\Omega_p^e$ such that $\Gamma_{m,i}^e=\Gamma_{p,j}^e$
and $\mu{(\tilde{\Gamma}_{m,i}^e)<\infty}$.
Then for any $\epsilon>0$ there exists a constant
$C_{\epsilon}$ such that for $W$ large enough
\begin{subequations}\label{eq3.95}
\begin{align}\label{eq3.95a}
&\left|\oint_{\partial\tilde{\Gamma}_{m,i}^e}\left(\left
(\frac{\partial u_m^e}{\partial \nb^e}\right)_{A^e}\left(
\frac{\partial u_m^e}{\partial\nuw^e}\right)_{A^e}-\left(
\frac{\partial u_p^e}{\partial\nb^e}\right)_{A^e}\left(
\frac{\partial u_p^e}{\partial\nuw^e}\right)_{A^e}\right)
\:d s^e\right| \notag \\
&\quad\quad\leq C_{\epsilon}(\ln W)^2\left(\big|\big|\big|
\:[u_{x_1^e}]\:\big|\big|\big|^2_{\tilde{\Gamma}_{m,i}^e}
+\big|\big|\big|\:[u_{x_2^e}]\:\big|\big|\big|^2_{\tilde
{\Gamma}_{m,i}^e}+\big|\big|\big|\:G_{m,i}^e[u_{x_3^e}]
\:\big|\big|\big|^2_{\tilde{\Gamma}_{m,i}^e}\right) \notag \\
&\quad\quad+\epsilon\sum_{k=m,p}\left(\int_{\tilde
{\Omega}_k^e}\left(\sum_{i,j=1,2}\left(\frac{\partial^2
u_k^e}{\partial x_i^e\partial x_j^e} \right)^2
+e^{2\tau}\sum_{i=1}^2\left(\frac{\partial^2 u_k^e}
{\partial x_i^e\partial x_3^e}\right)^2+e^{4\tau}\left
(\frac{\partial^2 u_k^e}{\left(\partial x_3^e\right)^2}
\right)^2\right.\right. \notag \\
&\quad\quad\left.\left.+\sum_{i=1}^2\left(\frac {\partial
u_k^e}{\partial x_i^e}\right)^2+e^{2\tau}\left(\frac
{\partial u_k^e}{\partial x_3^e}\right)^2\right)
\;dx^e\right)\:.
\end{align}
Here $C_\epsilon$ is a constant which depends on $\epsilon$
but is uniform for all $\tilde{\Gamma}_{m,i}^e\subseteq
\tilde{\Omega}^e$, and $G_{m,i}^e=\underset{x^e\in
{\tilde{\Gamma}_{m,i}^e}}{sup}(e^\tau)$.\\
If $\mu(\tilde{\Gamma}_{m,i}^e)=\infty$ then for any
$\epsilon>0$
\begin{align}\label{eq3.95b}
&\left|\oint_{\partial\tilde{\Gamma}_{m,i}^e}\left(\left
(\frac{\partial u_m^e}{\partial \nb^e}\right)_{A^e}\left(
\frac{\partial u_m^e}{\partial\nuw^e}\right)_{A^e}-\left(
\frac{\partial u_p^e}{\partial\nb^e}\right)_{A^e}\left(
\frac{\partial u_p^e}{\partial\nuw^e}\right)_{A^e}\right)
\:d s^e\right| \notag \\
&\quad\leq\epsilon\left(\int_{\tilde{\Omega}_m^e}(u_m^e)^2
w^e(x_1^e)\:dx^e+\int_{\tilde{\Omega}_p^e}(u_p^e)^2
w^e(x_1^e)\:dx^e\right)
\end{align}
\end{subequations}
provided $W=O(e^{{N}^\alpha})$ with $\alpha<1/2$.
\end{lem}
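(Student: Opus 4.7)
The plan is to adapt the argument of Lemma 3.4.1 from the regular region to the edge neighbourhood, working throughout in the modified coordinates $x^e$ introduced in (3.68) and accounting for the anisotropic scaling induced by $A^e = (S^e)^T A S^e$ as $\tau \to -\infty$. The first step is a polarization identity: writing $AB - CD = \tfrac12(A-C)(B+D) + \tfrac12(A+C)(B-D)$ with $A = (\partial u_m^e/\partial\nb^e)_{A^e}$, $B = (\partial u_m^e/\partial\nuw^e)_{A^e}$, $C, D$ the analogous quantities for $u_p^e$. Since $\nb^e$, $\nuw^e$ are built from the three coordinate directions, the differences $A-C$, $B-D$ are linear combinations of the jumps $[\partial u/\partial x_k^e]$ across $\tilde\Gamma_{m,i}^e$ with coefficients drawn from $A^e$, while $A+C$, $B+D$ are the corresponding averaged derivatives.

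For the finite-measure case (3.95a), I would recast the line integral $\oint_{\partial\tilde\Gamma_{m,i}^e}$ by a polynomial trace inequality on $\partial\tilde\Gamma$ combined with Cauchy--Schwarz, producing a product of two terms: one that measures the jump $[\partial u/\partial x_k^e]$ in the $|||\cdot|||_{\tilde\Gamma_{m,i}^e}$ norm of (2.34), and one that measures the averaged derivative on $\tilde\Gamma_{m,i}^e$ and is then bounded by an $H^{1/2}$-trace into $\tilde\Omega_m^e\cup\tilde\Omega_p^e$, controlling the interior second derivatives that appear on the right-hand side of (3.95a). The weights $G_{m,i}^e = \sup_{\tilde\Gamma_{m,i}^e} e^\tau$ built into the norm and the factors $e^{2\tau}$, $e^{4\tau}$ on the interior second derivatives exactly absorb the $e^\tau$ scalings coming from the entries of $A^e$ that couple to the $x_3^e$ derivative; the tangential jumps $[u_{x_1^e}]$, $[u_{x_2^e}]$ appear unweighted. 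The $(\ln W)^2$ factor is produced by one application of the one-dimensional polynomial inverse inequality $\|p\|_{L^\infty(I)} \leq C\sqrt{\ln W}\,\|p\|_{H^{1/2}(I)}$ needed to pass from $\partial\tilde\Gamma$ to $\tilde\Gamma$, squared after Cauchy--Schwarz.

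To keep all constants uniform across the geometric mesh, I would, as a separate step, restrict to a single element and apply the affine rescaling $z \mapsto x^e$ of (3.79): in $z$-coordinates $\hat\Omega_u^e$ has quasi-uniform aspect ratio, $A^z$ satisfies (3.84) with ellipticity independent of the layer, and the entries of $A^z$ with their derivatives are uniformly bounded, so the trace and polynomial inverse inequalities carry uniform constants. The main obstacle here is checking that the weights $G_{m,i}^e$ match on both sides of the identity; this requires tracking the $e^\tau$ factors through the polarization and is the only genuine computation in the argument.

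For the infinite-measure case (3.95b), both $\tilde\Omega_m^e$ and $\tilde\Omega_p^e$ are corner-most elements, so $u_m^e$ and $u_p^e$ are polynomials of degree $\leq W$ in $x_3^e$ alone and all derivatives in $x_1^e$, $x_2^e$ vanish. Inspection of $A^e$ via (3.75)--(3.76) shows that $A^e_{i,3}$ carries a factor $e^\tau$ for $i=1,2$ and $A^e_{3,3}$ carries $e^{2\tau}$, so the integrand of the boundary integral contains an overall $e^{3\tau}$ factor times a polynomial in $x_3^e$; this provides the exponential decay that makes the integral over $\partial\tilde\Gamma$ absolutely convergent on the semi-infinite face. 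Converting $(\partial u^e/\partial x_3^e)^2$ back to $(u^e)^2$ by a one-dimensional polynomial inverse inequality on $x_3^e \in (Z_n^e, Z_{n+1}^e)$ costs a factor of $W^2$, which is dominated by the exponential decay against $w^e(x_1^e)$ under the hypothesis $W = O(e^{N^\alpha})$, $\alpha < 1/2$, yielding (3.95b) with an $\epsilon$ absorbing the remaining constants. The expected principal difficulty is the consistent book-keeping of the $e^\tau$ weights in Step 2 across the polarization, since a misplaced power would either break uniformity or fail to produce exactly the norm $|||\cdot|||_{\tilde\Gamma_{m,i}^e}$ defined in (2.34).
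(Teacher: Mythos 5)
Your overall strategy is the one the paper actually uses: pass to the local variables $z_1=x_1^e$, $z_2=x_2^e$, $z_3=x_3^e/G_{m,i}^e$ so that the rescaled matrix $\widehat{A}^e=(\widehat{S}^e)^TA\widehat{S}^e$ is uniformly elliptic with uniformly bounded entries and derivatives, pull the factor $G_{m,i}^e$ out of the boundary integral, and then run the Lemma~3.4.1 machinery (polarization of $AB-CD$, coefficient projection, $\epsilon$-Cauchy--Schwarz, and the two polynomial trace/inverse inequalities that each contribute a $\ln W$). Your identification of where the weights $G_{m,i}^e$ and the factors $e^{2\tau}$, $e^{4\tau}$ come from, and your treatment of the corner-most (infinite-measure) case via the fact that $u^e$ there depends on $x_3^e$ alone together with the exponential smallness of $e^{\tau}\le Z\mu_e^{N}$ against the polynomial loss from the inverse inequality, both match the paper's argument.

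The one point that needs repair is your claim that in $z$-coordinates $\hat{\Omega}_u^e$ has quasi-uniform aspect ratio. It does not: the $x_3^e$ side of $\tilde{\Omega}_u^e$ has length $\Theta(1)$, so after dividing by $G_{m,i}^e$ the $z_3$ side has length $\Theta(1/G_{m,i}^e)$, which blows up as the element approaches the edge. For a face perpendicular to $x_3^e$ this is harmless, since that face remains of size $\Theta(1)\times\Theta(1)$ in $(z_1,z_2)$. But for a face parallel to the edge (the case $\tilde{\Gamma}_{m,i}^e=\{x_2^e=\beta_0\}$ or $\{x_1^e=\alpha_0\}$) the face itself is a long thin rectangle of size $\Theta(1)\times\Theta(1/G_{m,i}^e)$, and the trace and $H^{1/2}$ inequalities with constants independent of the layer do not follow from quasi-uniformity; the paper handles this by subdividing the rescaled element into $\Theta(1/G_{m,i}^e)$ boxes each of unit length in $z_3$ and applying the estimates box by box. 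Without that subdivision (or an explicit verification that the relevant trace constants are uniform over domains elongated in one direction) the uniformity of $C_\epsilon$ over all layers of the geometric mesh, which is the entire point of the lemma, is not established. The rest of your bookkeeping of the $e^\tau$ powers through the polarization is consistent with the norm $|||\cdot|||_{\tilde{\Gamma}_{m,i}^e}$ of (2.34).
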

\begin{proof}
The proof is provided in Appendix D.3.
\end{proof}
\begin{lem}\label{lem3.4.4}
Let $\Omega_m^e$ and $\Omega_p^e$ be elements in the edge
neighbourhood $\Omega^e$ of $\Omega$ and $\Gamma_{m,i}^e$
be a face of $\Omega_m^e$ and $\Gamma_{p,j}^e$ be a face
of $\Omega_p^e$ such that $\Gamma_{m,i}^e=\Gamma_{p,j}^e$
and $\mu{(\tilde{\Gamma}_{m,i}^e)<\infty}$.
Then for any $\epsilon>0$ there exists a constant
$C_{\epsilon}$ such that for $W$ large enough
\begin{subequations}\label{eq3.96}
\begin{align}\label{eq3.96a}
&\left|\int_{\tilde{\Gamma}_{m,i}^e}\sum_{l=1}^2\left(\left(
\frac{\partial u_m^e}{\partial \taux_l^e}\right)_{A^e}\frac
{\partial}{\partial s_l^e}\left(\left(\frac{\partial u_m^e}
{\partial\nuw^e}\right)_{A^e}\right)
-\left(\frac{\partial u_p^e}
{\partial\taux_l^e}\right)_{A^e}\frac{\partial}{\partial s_l^e}
\left(\left(\frac{\partial u_p^e}{\partial\nuw^e}\right)_{A^e}
\right)\right)\: d \sigma^e\right| \notag \\
&\quad\quad\leq C_{\epsilon}(\ln
W)^2\left(\big|\big|\big|\:[u_{x_1^e}]
\:\big|\big|\big|^2_{\tilde{\Gamma}_{m,i}^e}+\big|\big|\big|\:
[u_{x_2^e}]\:\big|\big|\big|^2_{\tilde{\Gamma}_{m,i}^e}+
\big|\big|\big|\:G_{m,i}^e[u_{x_3^e}]\:\big|\big|\big|^2_{\tilde
{\Gamma}_{m,i}^e}\right) \notag \\
&\quad\quad+\epsilon\sum_{k=m,p}\left(\int_{\tilde
{\Omega}_k^e}\left(\sum_{i,j=1,2}\left(\frac{\partial^2 u_k^e}
{\partial x_i^e\partial x_j^e}\right)^2
+e^{2\tau}\sum_{i=1}^2\left(\frac{\partial^2 u_k^e}
{\partial x_i^e \partial x_3^e}\right)^2+e^{4\tau}
\left(\frac{\partial^2 u_k^e}{\left(\partial x_3^e
\right)^2}\right)^2\right.\right. \notag \\
&\quad\quad\left.\left.+\sum_{i=1}^2\left(\frac
{\partial u_k^e}{\partial x_i^e}\right)^2+ e^{2\tau}
\left(\frac{\partial u_k^e}{\partial x_3^e}\right)^2\right)\;dx^e\right)\:.
\end{align}
If $\mu(\tilde{\Gamma}_{m,i}^e)=\infty$ then for any $\epsilon>0$
\begin{align}\label{eq3.96b}
&\left|\int_{\tilde{\Gamma}_{m,i}^e}\sum_{l=1}^2\left(\left(
\frac{\partial u_m^e}{\partial \taux_l^e}\right)_{A^e}\frac
{\partial}{\partial s_l^e}\left(\left(\frac{\partial u_m^e}
{\partial\nuw^e}\right)_{A^e}\right)-\left(\frac{\partial u_p^e}
{\partial\taux_l^e}\right)_{A^e}\frac{\partial}{\partial s_l^e}
\left(\left(\frac{\partial u_p^e}{\partial\nuw^e}\right)_{A^e}
\right)\right)\: d \sigma^e\right| \notag \\
&\quad\leq\epsilon\left(\int_{\tilde{\Omega}_m^e}(u_m^e)^2
w^e(x_1^e)\:dx^e+\int_{\tilde{\Omega}_p^e}(u_p^e)^2 w^e(x_1^e)\:dx^e\right)
\end{align}
\end{subequations}
provided $W=O(e^{{N}^\alpha})$ with $\alpha<1/2$.
\end{lem}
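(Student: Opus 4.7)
The plan is to adapt the argument used in Lemma 3.4.3 but with the tangential derivative operators $\partial/\partial s_l^e$ applied to $(\partial u/\partial \nuw^e)_{A^e}$ replacing the boundary-curve integrand. The starting point will be the algebraic identity
\[
a b - c d \;=\; \tfrac{1}{2}(a-c)(b+d) + \tfrac{1}{2}(a+c)(b-d),
\]
applied with $a=(\partial u_m^e/\partial\taux_l^e)_{A^e}$, $c=(\partial u_p^e/\partial\taux_l^e)_{A^e}$, and $b,d$ the corresponding quantities $\partial/\partial s_l^e((\partial u/\partial\nuw^e)_{A^e})$. This separates the integrand into a product of a \emph{jump} factor and an \emph{average} factor, so that the jump factor can be absorbed into the right-hand side of (\ref{eq3.96a}) via the triple-norm definitions in (\ref{eq2.34a})--(\ref{eq2.34c}), while the average factor will be controlled by the $H^2$-type contributions on $\tilde{\Omega}_m^e\cup\tilde{\Omega}_p^e$ appearing in the $\epsilon$-term.

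First I would reduce to the bounded reference coordinates $z$ of (\ref{eq3.79}) so that the matrix $A^z$ (cf.\ (\ref{eq3.82})--(\ref{eq3.84})) and its derivatives are uniformly bounded independent of the element, and the standard trace and inverse inequalities apply with a uniform constant. Writing $(\partial u/\partial\taux_l^e)_{A^e}$ and $(\partial/\partial s_l^e)(\partial u/\partial\nuw^e)_{A^e}$ in terms of $\nabla_{x^e} u$, $\nabla^2_{x^e} u$ and the entries of $A^e$, and keeping track of the weight factors $e^{\tau}$ appearing in columns/rows of $S^e$ (cf.\ (\ref{eq3.76})), the product splits into three types of face integrals according to whether $\tilde{\Gamma}_{m,i}^e$ is of the form in case (1), (2), or (3) of (\ref{eq2.34}); the weight $G_{m,i}^e=\sup_{x^e\in\tilde{\Gamma}_{m,i}^e}e^\tau$ automatically absorbs the $e^\tau$ factors that come from differentiating in $x_3^e$. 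Then the jump factor is estimated by the fractional seminorm $|||\cdot|||_{\tilde{\Gamma}_{m,i}^e}$, while the average factor is bounded by a trace inequality into $H^{1/2}$ followed by a polynomial inverse inequality, yielding the $(\ln W)^2$ blow-up.

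For the estimate (\ref{eq3.96b}) on faces with $\mu(\tilde{\Gamma}_{m,i}^e)=\infty$, the neighboring element is a corner element in the direction of the edge, on which $u_p^e(x^e)=\sum_t\alpha_t(x_3^e)^t$ is a one-dimensional polynomial. The $\taux_l^e$ and $\nuw^e$ derivatives then collapse to derivatives in $x_3^e$ for $u_p^e$, and the hypothesis $W=O(e^{N^\alpha})$ with $\alpha<1/2$ is used exactly as in Lemma~\ref{lem3.4.3} to dominate the polynomial-in-$W$ inverse constants by the exponentially-growing weight $w^e(x_1^e)$ after integration over the semi-infinite $\tilde{\Omega}_m^e$ or $\tilde{\Omega}_p^e$.

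The main obstacle will be the bookkeeping needed to certify that each anisotropic weight factor produced by the chain rule for $\partial/\partial s_l^e$ on curved surfaces matches precisely one of the weight factors ($1$, $1$, or $G_{m,i}^e$) in the triple norms (\ref{eq2.34a})--(\ref{eq2.34c}); in particular showing that a tangential derivative along a face of type (2) or (3) never produces an unweighted $e^\tau$ on the $\nuw$-derivative side which would be uncovered by the right-hand side. This is where Lemma~\ref{lem2} and the geodesic-coordinate computation on $\tilde{\Gamma}_{m,i}^e$ will be reused, together with the fact that the second fundamental form contributions vanish on the flat coordinate faces of the $\tilde{\Omega}_m^e$ in $x^e$ variables, which streamlines the identification with the triple norms. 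The rest is a direct combination of Cauchy--Schwarz, polynomial inverse inequalities on the reference cube, and the standard trace inequality $\|\cdot\|_{L^2(\partial Q)}\le C\|\cdot\|_{H^{1/2+\epsilon}(Q)}$ with the $\epsilon$ chosen to generate the logarithmic factor.
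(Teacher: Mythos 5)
Your overall strategy---pass to the scaled coordinates $z$ so that $\widehat{A}^e$ is uniformly well conditioned, split the difference of products into jump and average factors, and match the anisotropic weights against the triple norms of (\ref{eq2.34})---is the same skeleton as the paper's proof in Appendix D.4, which reduces both face orientations to the representation (\ref{eqD.53})/(\ref{eqD.55}) and then invokes the machinery of Lemma \ref{lem3.4.2}. The remarks about the geodesic coordinates and the second fundamental form are superfluous here (the faces are flat in $x^e$ coordinates), but harmless.

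There is, however, a genuine gap in the way you propose to close the estimate. Whichever splitting you use (your symmetric one or the paper's one-sided one), one of the resulting terms necessarily carries the tangential derivative $\partial/\partial s_l^e$ acting on the \emph{jump} $\bigl[(\partial u/\partial \nuw^e)_{A^e}\bigr]$, while the triple norms on the right-hand side of (\ref{eq3.96a}) control only the $H^{1/2}$ norm of the undifferentiated jump. Your stated toolkit---Cauchy--Schwarz on the face, polynomial inverse inequalities, and the trace bound $\|\cdot\|_{L^2(\partial Q)}\leq C\|\cdot\|_{H^{1/2+\epsilon}(Q)}$---cannot bridge this: bounding $\|\partial_{s}[v]\|_{0,\Gamma}$ by $\|[v]\|_{1/2,\Gamma}$ via an inverse inequality costs a factor of order $W$, which would leave you with $W^2(\ln W)$ in place of $(\ln W)^2$. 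The paper avoids this by first replacing the analytic coefficients and the Cartesian derivatives by polynomial projections and then applying the bilinear duality estimate (\ref{eqD.33}), namely $\bigl|\int_S e\,\partial_{\lambda_j} f\,d\lambda\bigr|\leq C(\ln W)\|e\|_{1/2,S}\|f\|_{1/2,S}$ for polynomials of degree $W$ (Theorem $4.1$ of~\cite{DTK1}); it is this pairing of the derivative against an $H^{1/2}$ function, not a trace inequality, that produces the $(\ln W)^2$ after Young's inequality. Without citing or reproving an estimate of this type, your argument does not yield the claimed logarithmic growth. The treatment of the case $\mu(\tilde{\Gamma}_{m,i}^e)=\infty$ via the one-dimensional structure of $u$ on corner elements and the hypothesis $W=O(e^{N^{\alpha}})$, $\alpha<1/2$, is consistent with the paper.
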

\begin{proof}
The proof is provided in Appendix D.4.
\end{proof}
We now state estimates for terms in the interior of vertex neighbourhoods and
vertex-edge neighbourhoods the proofs of which are similar to those for Lemma
\ref{lem3.4.1} to Lemma \ref{lem3.4.4}.

\subsection{Estimates for terms in the interior of {$\Omega^v$}}
\begin{lem}
Let $\Omega_m^v$ and $\Omega_p^v$ be elements in the vertex neighbourhood
$\Omega^v$ of $\Omega$ and $\Gamma_{m,i}^v$ be a face of $\Omega_m^v$ and
$\Gamma_{p,j}^v$ be a face of $\Omega_p^v$ such that $\Gamma_{m,i}^v=\Gamma_{p,j}^v$.
Then for any $\epsilon>0$ there exists a constant $C_{\epsilon}$ such that
for $W$ large enough
\begin{align}\label{eq3.97}
&\left|\oint_{\partial\tilde{\Gamma}_{m,i}^v}e^{x_3^v}
\sin(x_1^v)\left(\left (\frac{\partial u_m^v}{\partial
\nb^v}\right)_{A^v} \left(\frac{\partial
u_m^v}{\partial\nuw^v}\right)_{A^v} -\left(\frac{\partial
u_p^v}{\partial\nb^v}\right)_{A^v} \left(\frac{\partial
u_p^v}{\partial\nuw^v}\right)_{A^v}\right)\:d s^v\right| \notag \\
&\quad\leq C_\epsilon(\ln W)^2\sum_{k=1}^3R_{m,i}^v
\left\|[u_{x_k}]\right\|^{2}_{1/2,\tilde{\Gamma}_{m,i}^v} \notag \\
&\quad+\epsilon\sum_{1\leq|\alpha|\leq 2}
\left(\|e^{{x_3^v}/2}D_{x^v}^\alpha u_m^v\|_{0,\tilde
{\Omega}_m^v}^2+\|e^{{x_3^v}/2}D_{x^v}^\alpha
u_p^v\|_{0,\tilde{\Omega}_p^v}^2\right).
\end{align}
Here $R_{m,i}^v=\underset{x^v\in \tilde{\Gamma}_{m,i}^v}{sup}(e^{x_3^v})$.
\newline
If $\mu(\tilde{\Gamma}_{m,i}^v)=\infty$ then the integral in left hand side
of (\ref{eq3.97}) is zero.
\end{lem}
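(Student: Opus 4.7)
The strategy parallels Lemma \ref{lem3.4.1}: rewrite the integrand $a_m b_m - a_p b_p$ (with $a = (\partial u/\partial \nb^v)_{A^v}$, $b = (\partial u/\partial \nuw^v)_{A^v}$) via the identity $a_m b_m - a_p b_p = (a_m - a_p)b_m + a_p(b_m - b_p)$ so that each summand contains a factor that is a linear combination of the jumps $[u_{x_k^v}]$, $k=1,2,3$, across the shared face $\tilde\Gamma_{m,i}^v = \tilde\Gamma_{p,j}^v$. Since the entries of the matrix $A^v$ are uniformly bounded on $\bar\Omega^v$ (because $\phi$ is bounded away from $0$ and $\pi$ in the vertex neighbourhood, by construction of $\Omega^v$), the factors $(a_m-a_p)$ and $(b_m - b_p)$ are pointwise bounded on $\partial\tilde\Gamma_{m,i}^v$ by $C\sum_{k=1}^3 |[u_{x_k^v}]|$. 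The weight $e^{x_3^v}\sin(x_1^v)$ on the face is bounded above by $R_{m,i}^v\cdot C_0$ since $\sin(x_1^v)=\sin\phi$ is uniformly bounded; this produces the prefactor $R_{m,i}^v$ in the right-hand side.

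\textbf{Reduction to surface $H^{1/2}$-norms with a logarithmic loss.} After splitting, I bound the two line integrals on $\partial\tilde\Gamma_{m,i}^v$ by Cauchy--Schwarz and then invoke the standard spectral-element trace inequality
\begin{equation*}
\|g\|_{L^2(\partial\tilde\Gamma)}^2 \;\le\; C\,(\ln W)\,\|g\|_{H^{1/2}(\tilde\Gamma)}^2
\end{equation*}
valid for polynomials of degree $\le W$ on the (quasi-uniform image) face. Applied to $g = [u_{x_k^v}]$ this yields the $(\ln W)$ factor; a second $(\ln W)$ appears when the factor $a_p$ (respectively $b_m$), which is a first derivative, is bounded on the line $\partial\tilde\Gamma_{m,i}^v$ in terms of its interior $H^2$-norm. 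Concretely, using
\begin{equation*}
\|a_p\|_{L^2(\partial\tilde\Gamma_{m,i}^v)}^2 \;\le\; C\,(\ln W)\,\|a_p\|_{H^{1/2}(\tilde\Gamma_{m,i}^v)}^2
\;\le\; C\,(\ln W)\,\|u_p^v\|_{H^2(\tilde\Omega_p^v)}^2,
\end{equation*}
combined with Cauchy--Schwarz with weight $\epsilon/\epsilon^{-1}$, produces the bilinear bound $C_\epsilon(\ln W)^2 \sum_k \|[u_{x_k^v}]\|_{1/2,\tilde\Gamma_{m,i}^v}^2 + \epsilon\sum_{|\alpha|\le 2}\|D_{x^v}^\alpha u_{m,p}^v\|_{0,\tilde\Omega_{m,p}^v}^2$. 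Finally I insert the weight: since $R_{m,i}^v=\sup_{\tilde\Gamma_{m,i}^v} e^{x_3^v}$ and, by the quasi-uniformity of the mesh in $x^v$ coordinates, $e^{x_3^v}$ varies on $\bar{\tilde\Omega}_{m,p}^v$ by at most a fixed multiplicative constant, $R_{m,i}^v$ is comparable to $e^{x_3^v}$ on the adjoining elements, which converts the plain $L^2(\tilde\Omega_{m,p}^v)$-norm into the weighted $\|e^{x_3^v/2}D_{x^v}^\alpha u\|_{0,\tilde\Omega_{m,p}^v}^2$ norm appearing on the right-hand side (this comparability is used to absorb the $R_{m,i}^v$ from the weighted line integral into the interior norm).

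\textbf{The degenerate case.} If $\mu(\tilde\Gamma_{m,i}^v)=\infty$, then $\tilde\Gamma_{m,i}^v$ belongs to a corner element of the form $\{x^v:(\phi,\theta)\in S_j^v,\ -\infty<\chi<\ln\rho_1^v\}$ on at least one side, and the spectral element function is defined to be a constant $h^v$ on every such corner element (as specified in Section 2.3). Consequently $\nabla_{x^v} u_m^v \equiv 0$ on that element and its face, so both products $(\partial u/\partial\nb^v)_{A^v}(\partial u/\partial \nuw^v)_{A^v}$ reduce to the same expression built from the other (finite-measure) element's data; in fact, by construction all corner elements sharing the same vertex carry the same constant $h^v$, which kills the integrand identically and makes the boundary integral zero.

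\textbf{Main obstacle.} The delicate point is the combined use of the two logarithmic trace inequalities in the right order: one must apply them to the polynomial factors (the first derivatives of $u_{m,p}^v$ pulled back to the master cube via $M_l^v$) rather than to $u$ itself, and one must track constants through the mappings $M_l^v$ carefully so that the bound is uniform in $l$ -- this uniformity is precisely what the quasi-uniform mesh in the $x^v$-variables delivers. A secondary subtlety is the precise placement of the weight $e^{x_3^v}$: the identification $R_{m,i}^v \asymp e^{x_3^v}|_{\tilde\Omega_{m,p}^v}$ only works because of the geometric mesh in $\rho$ producing a quasi-uniform mesh of bounded aspect ratio in $\chi=\ln\rho$, and this has to be invoked explicitly to justify the weighted norm on the right.
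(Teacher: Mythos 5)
Your proposal is correct and follows essentially the route the paper intends: the paper gives no separate proof of this lemma, remarking only that it is ``similar to those for Lemma 3.4.1 to Lemma 3.4.4,'' and your argument is precisely the adaptation of the proof of Lemma 3.4.1 (Appendix D.1) to the $x^v$-coordinates --- splitting $a_mb_m-a_pb_p$ so that jumps $[u_{x_k^v}]$ appear, applying the polynomial trace inequality with its $\ln W$ loss twice, using Cauchy--Schwarz with $\epsilon$, absorbing the weight $e^{x_3^v}\sin(x_1^v)$ via $R_{m,i}^v$ and the quasi-uniformity of the mesh in $\chi=\ln\rho$, and observing that infinite-measure faces are shared by two corner elements carrying the same constant $h^v$ so the integrand vanishes. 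The one device you leave implicit is the paper's replacement of the analytic entries of $A^v$ by polynomial projections (so that the factors $\left(\frac{\partial u}{\partial \nb^v}\right)_{A^v}$ become polynomials of degree $O(W)$ to which the logarithmic trace inequality actually applies, the projection error being $O(W^{-m})$ and hence absorbable), but you flag exactly this issue in your closing paragraph, so the proposal matches the paper's intended proof.
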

\begin{lem}
Let $\Omega_m^v$ and $\Omega_p^v$ be elements in the vertex neighbourhood
$\Omega^v$ of $\Omega$ and $\Gamma_{m,i}^v$ be a face of $\Omega_m^v$ and
$\Gamma_{p,j}^v$ be a face of $\Omega_p^v$ such that $\Gamma_{m,i}^v=\Gamma_{p,j}^v$.
Then for any $\epsilon>0$ there exists a constant $C_{\epsilon}$ such that
for $W$ large enough
\begin{align}\label{eq3.98}
&\left|\sum_{l=1}^2\left(\int_{{\Gamma}_{m,i}^v}e^{x_3^v}
\sin(x_1^v)\left(\left(\frac {\partial u_m^v}{\partial
\taux_l^v}\right)_{A^v}\frac{\partial} {\partial
s_l^v}\left(\left(\frac{\partial u_m^v}{\partial\nuw^{v}}
\right)_{A^v}\right)\:d\sigma^v\right.\right.\right. \notag \\
&\quad\left.\left.\left.-\int_{{\Gamma}_{p,j}^v}\left(\frac
{\partial u_p^v}{\partial\taux_l^v}\right)_{A^v}\frac
{\partial}{\partial s_l^v}\left(\left(\frac{\partial u_p^v}
{\partial\nuw^{v}}\right)_{A^v}\right)\:d\sigma^v\right)\right)
\right| \notag \\
&\quad\leq C_\epsilon(\ln W)^2\sum_{k=1}^3R_{m,i}^v
\left\|[u_{x_k}]\right\|^{2}_{1/2,\tilde{\Gamma}_{m,i}^v} \notag\\
&\quad+\epsilon\sum_{1\leq|\alpha|\leq 2}
\left(\|e^{{x_3^v}/2} D_{x^v}^\alpha u_m^v\|_{0,\tilde
{\Omega}_m^v}^2+\|e^{{x_3^v}/2} D_{x^v}^\alpha
u_p^v\|_{0,\tilde{\Omega}_p^v}^2\right).
\end{align}
If $\mu(\tilde{\Gamma}_{m,i}^v)=\infty$ then the integral in left hand side
of (\ref{eq3.98}) is zero.
\end{lem}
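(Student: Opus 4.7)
The plan is to mimic the proof of Lemma \ref{lem3.4.2} (itself a restatement of Lemma 3.3 of~\cite{DTK1}), adapted to the vertex neighbourhood in the modified coordinates $x^v$. First I would exploit the fact that the geometric mesh in $\rho$ becomes a quasi-uniform mesh in $\tilde{\Omega}^v$ after the change of variables $x_3^v=\chi=\ln\rho$. On any element $\tilde{\Omega}_m^v$ of finite measure the weight $e^{x_3^v}$ varies by only a bounded multiplicative factor, so $e^{x_3^v}\sim R_{m,i}^v$ uniformly on $\tilde{\Gamma}_{m,i}^v$ and on $\tilde{\Omega}_m^v$, $\tilde{\Omega}_p^v$. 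Likewise $\sin(x_1^v)=\sin\phi$ is bounded above by $1$ and from below by a positive constant since $\phi\in(\phi_0,\pi-\phi_0)$ in a vertex neighbourhood away from the vertex-edge wedges. Finally the matrix $A^v$ and its first derivatives in $x^v$ are uniformly bounded on every element (compare the derivation of \eqref{eq3.25} and the observation $\mu_0 I\le A^v\le \mu_1 I$). Consequently the factor $R_{m,i}^v$ can be pulled out of the integral up to universal constants, and we are reduced to an unweighted estimate on the image of $\tilde{\Omega}_m^v\cup\tilde{\Omega}_p^v$ under the analytic maps $M_m^v,M_p^v$ to the master cube $Q$, on which $u_m^v,u_p^v$ are polynomials of degree at most $W$.

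Next I would apply the telescoping identity
\begin{align*}
&\left(\frac{\partial u_m^v}{\partial\taux_l^v}\right)_{\!A^v}\frac{\partial}{\partial s_l^v}\!\left(\left(\frac{\partial u_m^v}{\partial\nuw^v}\right)_{\!A^v}\right)-\left(\frac{\partial u_p^v}{\partial\taux_l^v}\right)_{\!A^v}\frac{\partial}{\partial s_l^v}\!\left(\left(\frac{\partial u_p^v}{\partial\nuw^v}\right)_{\!A^v}\right)\\
&\quad =\left[\left(\frac{\partial u}{\partial\taux_l^v}\right)_{\!A^v}\right]\frac{\partial}{\partial s_l^v}\!\left(\left(\frac{\partial u_m^v}{\partial\nuw^v}\right)_{\!A^v}\right)+\left(\frac{\partial u_p^v}{\partial\taux_l^v}\right)_{\!A^v}\frac{\partial}{\partial s_l^v}\!\left(\left[\left(\frac{\partial u}{\partial\nuw^v}\right)_{\!A^v}\right]\right).
\end{align*}
The first summand I would estimate by the $H^{1/2}$--$H^{-1/2}$ duality pairing on $\tilde{\Gamma}_{m,i}^v$, bounding $\|[\,(\partial u/\partial\taux_l^v)_{A^v}\,]\|_{1/2,\tilde{\Gamma}_{m,i}^v}^2$ by a sum of the face jump norms $\|[u_{x_k^v}]\|_{1/2,\tilde{\Gamma}_{m,i}^v}^2$, and controlling the negative norm of $\partial_{s_l^v}(\partial u_m^v/\partial\nuw^v)_{A^v}$ by $\|D_{x^v}^2 u_m^v\|_{0,\tilde{\Omega}_m^v}$ via trace and inverse inequalities on the polynomial space; each passage between integer and fractional order norms on a polynomial of degree $W$ costs one logarithmic factor, yielding the $(\ln W)^2$. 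Young's inequality $ab\le C_\epsilon a^2+\epsilon b^2$ then delivers the stated split. For the second summand I would first integrate by parts along the tangential direction $s_l^v$ on the smooth face $\tilde{\Gamma}_{m,i}^v$ to transfer $\partial/\partial s_l^v$ onto $(\partial u_p^v/\partial\taux_l^v)_{A^v}$ (a boundary term on $\partial\tilde{\Gamma}_{m,i}^v$ does not arise because the cross-derivative is computed locally and the face is closed in $s_l^v$, or it is estimated identically), producing a mixed second derivative of $u_p^v$ paired with $[(\partial u/\partial\nuw^v)_{A^v}]$, and then apply the same $H^{1/2}$--$H^{-1/2}$ argument. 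Reinstating the factor $R_{m,i}^v$ gives exactly \eqref{eq3.98}.

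For the semi-infinite case $\mu(\tilde{\Gamma}_{m,i}^v)=\infty$, the only element $\tilde{\Omega}_m^v$ adjacent to $\tilde{\Gamma}_{m,i}^v$ with infinite measure is a corner element of the form $\{x^v:(\phi,\theta)\in S_j^v,-\infty<\chi<\ln\rho_1^v\}$, on which, by the construction in Section 2.3, $u_m^v\equiv h^v$ is a constant. All the derivatives appearing in the integrand therefore vanish, so the left-hand side of \eqref{eq3.98} is identically zero.

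The main obstacle is the careful bookkeeping of the $(\ln W)^2$ constant through the trace inequality, inverse inequality, and duality pairing on polynomials of bounded degree, combined with the weighted measure $e^{x_3^v}\sin(x_1^v)\,d\sigma^v$; once quasi-uniformity in $x^v$ coordinates is invoked and the weight is factored out as $R_{m,i}^v$, this reduces to the unweighted argument of~\cite{DTK1} used for Lemma \ref{lem3.4.2} in the regular region.
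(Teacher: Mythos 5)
Your overall strategy is the one the paper intends: it does not prove this lemma separately but states that the vertex-neighbourhood estimates are proved "similarly to Lemma \ref{lem3.4.1} to Lemma \ref{lem3.4.4}", i.e.\ by the argument of Appendix D.2 after the weight is factored out. Your reduction is sound on those points where it matters most: quasi-uniformity of the mesh in the $x^v$ coordinates makes $e^{x_3^v}$ comparable to $R_{m,i}^v$ on the whole element, $\sin(x_1^v)$ is bounded above and below since $\phi_0<\phi<\pi-\phi_0$ in $\Omega^v$, and $A^v$ satisfies $\mu_0 I\le A^v\le\mu_1 I$ with bounded derivatives; and your disposal of the case $\mu(\tilde{\Gamma}_{m,i}^v)=\infty$ is correct, since both elements sharing an infinite face are corner elements on which $u\equiv h^v$.

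There is, however, one step that as written would fail. For the second summand of your telescoping identity you integrate by parts in $s_l^v$ and assert that no boundary term arises "because the face is closed in $s_l^v$". The face $\tilde{\Gamma}_{m,i}^v$ is a bounded quadrilateral, not closed, so the integration by parts produces a line integral over $\partial\tilde{\Gamma}_{m,i}^v$ of a product of first derivatives; these are precisely the terms treated in the companion lemma (the analogue of Lemma \ref{lem3.4.1}) and cannot simply be dropped. The integration by parts is in fact unnecessary: the paper's route (Appendix D.2, estimate (D.33) quoting Theorem 4.1 of~\cite{DTK1}) applies the bilinear bound $\bigl|\int_S e\,\partial_{\lambda_j}f\,d\lambda\bigr|\le C(\ln W)\|e\|_{1/2,S}\|f\|_{1/2,S}$ directly to each summand, with $f$ the jump and $e$ the remaining trace, and Young's inequality then yields the $(\ln W)^2/\epsilon$ split. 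To invoke that bound, and the $\ln W$ trace and inverse inequalities, you also need all paired quantities to be polynomials of degree $O(W)$; this requires the coefficient-filtering step of Appendix D.2 (replacing the analytic entries of $A^v$, of the Jacobians and of the weight by their degree-$W$ projections, with an $O(W^{-m})$ error absorbed into the $\epsilon$ term), which your write-up omits. With the integration by parts replaced by the direct duality estimate and the filtering inserted, your proof coincides with the paper's.
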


\subsection{Estimates for terms in the interior of {$\Omega^{v-e}$}}
\begin{lem}
Let $\Omega_m^{v-e}$ and $\Omega_p^{v-e}$ be elements in the vertex-edge
neighbourhood $\Omega^{v-e}$ of $\Omega$ and $\Gamma_{m,i}^{v-e}$ be a face
of $\Omega_m^{v-e}$ and $\Gamma_{p,j}^{v-e}$ be a face of $\Omega_p^{v-e}$
such that $\Gamma_{m,i}^{v-e}=\Gamma_{p,j}^{v-e}$. Then for any $\epsilon>0$
there exists a constant $C_{\epsilon}$ such that for $W$ large enough
\begin{subequations}\label{eq3.99}
\begin{align}\label{eq3.99a}
&\left|\oint_{\partial\tilde{\Gamma}_{m,i}^{v-e}}e^{x_3^{v-e}}
\left(\left(\frac{\partial u_m^{v-e}}{\partial
\nb^{v-e}}\right)_{A^{v-e}} \left(\frac{\partial
u_m^{v-e}}{\partial\nuw^{v-e}}\right)_{A^{v-e}}
\right.\right. \notag \\
&\hspace{2.0cm}\left.\left. -\left(\frac{\partial
u_p^{v-e}}{\partial\nb^{v-e}}\right)_{A^{v-e}}
\left(\frac{\partial u_p^{v-e}}{\partial\nuw^{v-e}}
\right)_{A^{v-e}}\right)\:d s^{v-e}\right| \notag \\
&\quad\leq C_{\epsilon}(\ln W)^2\left(\big|\big|\big|
\:[u_{x_1^{v-e}}]\:\big|\big|\big|^2_{\tilde{\Gamma}_{m,i}
^{v-e}}+\big|\big|\big|\:[u_{x_2^{v-e}}]\:
\big|\big|\big|^2_{\tilde{\Gamma}_{m,i}^{v-e}}\right. \notag \\
&\quad\left.+\big|\big|\big|\:E_{m,i}^{v-e}[u_{x_3^{v-e}}]
\:\big|\big|\big|^2_{\tilde{\Gamma}_{m,i}^{v-e}}\right)
+\epsilon\sum_{k=m,p}\left(\int_{\tilde{\Omega}_k^{v-e}}
\left(\sum_{i,j=1,2}\left(\frac{\partial^2 u_k^{v-e}}
{\partial x_i^{v-e}\partial x_j^ {v-e}}\right)^2\right.
\right. \notag \\
&\quad+\sin^2\phi\sum_{i=1}^2\left(\frac{\partial^2 u_k^{v-e}}
{\partial x_i^{v-e} \partial x_3^{v-e}} \right)^2
+\sin^4\phi\left(\frac{\partial^2 u_k^{v-e}}{\left(\partial
x_3^{v-e}\right)^2} \right)^2+\sum_{i=1}^2\left(\frac{\partial
u_k^{v-e}}{\partial x_i^{v-e}}\right)^2 \notag \\
&\quad\left.\left.+\sin^2\phi\left(\frac{\partial
u_k^{v-e}}{\partial x_3^{v-e}}\right)^2\right)e^{x_3^{v-e}}
\;dx^{v-e}\right)\:.
\end{align}
Here $C_\epsilon$ is a constant which depend on $\epsilon$
but is uniform for all $\tilde{\Gamma}_{m,i}^{v-e}\subseteq
\tilde{\Omega}^{v-e}$, and $E_{m,i}^{v-e}=\underset{x^{v-e}\in
{\tilde{\Gamma}_{m,i}^{v-e}}}{sup}(\sin\phi)$.
\newline
If $\mu(\tilde{\Gamma}_{m,i}^{v-e})=\infty$ then for any $\epsilon>0$
\begin{align}\label{eq3.99b}
&\left|\oint_{\partial\tilde{\Gamma}_{m,i}^{v-e}}
e^{x_3^{v-e}}\left(\left(\frac{\partial u_m^{v-e}}
{\partial\nb^{v-e}}\right)_{A^{v-e}} \left(\frac
{\partial u_m^{v-e}}{\partial\nuw^{v-e}}\right)_{A^{v-e}}
\right.\right. \notag \\
&\hspace{2.0cm}\left.\left. -\left(\frac{\partial
u_p^{v-e}}{\partial\nb^{v-e}}\right)_{A^{v-e}}
\left(\frac{\partial u_p^{v-e}}{\partial\nuw^{v-e}}
\right)_{A^{v-e}}\right)\:d s^{v-e}\right| \notag \\
&\quad\leq\epsilon\left(\sum_{k=m,p}\int_{\tilde{\Omega}
_k^{v-e}}(u_k^{v-e})^2e^{x_3^{v-e}}w^{v-e}(x_1^{v-e})\:dx^{v-e}\right)
\end{align}
\end{subequations}
provided $W=O(e^{{N}^\alpha})$ with $\alpha<1/2$.
\end{lem}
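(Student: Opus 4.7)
The plan is to mirror the proof structure of Lemma~\ref{lem3.4.3} (for edge neighbourhoods) but adapted to the anisotropic geometry of the vertex-edge neighbourhood. First, I would work in the local coordinates $y$ defined in (\ref{eq3.58}), in which $\tilde{\Omega}_n^{v-e}$ is mapped onto a quasi-uniform hexahedron $\hat{\Omega}_n^{v-e}$ and the transformed matrix $A^y$ in (\ref{eq3.62b}) satisfies the uniform ellipticity bound (\ref{eq3.64}) with coefficients whose $C^1$ norms are uniformly bounded. This makes Sobolev trace and embedding theorems valid with a uniform constant across all elements, so that the standard fractional-order trace machinery can be applied without a blow-up as $\Omega_n^{v-e}$ approaches the edge.

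Second, I would expand $(\partial u/\partial \nb^{v-e})_{A^{v-e}}$ and $(\partial u/\partial \nuw^{v-e})_{A^{v-e}}$ as linear combinations of $\partial u/\partial x_i^{v-e}$ with coefficients drawn from $A^{v-e}$, and decompose the integrand via the product-difference identity
\begin{equation*}
a_mb_m-a_pb_p=\tfrac{1}{2}(a_m-a_p)(b_m+b_p)+\tfrac{1}{2}(a_m+a_p)(b_m-b_p).
\end{equation*}
Each difference factor $(a_m-a_p)$ or $(b_m-b_p)$ becomes a jump $[u_{x_i^{v-e}}]$ weighted by entries of $A^{v-e}$; since $A^{v-e}=\sin^2\phi\,(J^{v-e})^TA^v J^{v-e}$, the weighting produces exactly the factors $E_{n,i}^{v-e}=\sup\sin\phi$ on the $x_3^{v-e}$-component and unit factors on the tangential components, matching the three subcases in the definition (\ref{eq2.27}) of $|||\,\cdot\,|||_{\tilde{\Gamma}_{n,i}^{v-e}}$. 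The overall $e^{x_3^{v-e}}$ weight gives the $F_{n,i}^{v-e}=\sup e^{x_3^{v-e}}$ factor.

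Third, the jump factors are bounded by $C|||\,[u_{x_i^{v-e}}]\,|||_{\tilde{\Gamma}_{m,i}^{v-e}}^2$ using the supremum weights on the face, while the companion sum factors are controlled by the trace of first derivatives of $u_m^{v-e}$ and $u_p^{v-e}$. Here I would invoke the logarithmic spectral trace inequality for polynomials of degree $\leq W$ on the quasi-uniform reference hexahedron,
\begin{equation*}
\|v\|_{L^\infty(\hat{\Gamma})}^2\leq C(\ln W)\,\|v\|_{H^{1/2}(\hat{\Gamma})}^2,
\end{equation*}
together with the standard trace embedding $H^{1/2}(\hat{\Gamma})\hookrightarrow H^1(\hat{\Omega})$; squaring gives the factor $(\ln W)^2$. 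A weighted AM--GM inequality then splits the product into the jump-norm term with constant $C_\epsilon(\ln W)^2$ and an $H^2$-interior term with constant $\epsilon$, whose weights after pulling back from $y$ to $x^{v-e}$ coordinates are precisely $e^{x_3^{v-e}}$, $e^{x_3^{v-e}}\sin^2\phi$, and $e^{x_3^{v-e}}\sin^4\phi$ on $(\partial^2 u/\partial x_i^{v-e}\partial x_j^{v-e})^2$, mixed second derivatives, and $(\partial^2 u/\partial{x_3^{v-e}}^2)^2$ respectively, as appearing on the right-hand side of (\ref{eq3.99a}).

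For the degenerate case $\mu(\tilde{\Gamma}_{m,i}^{v-e})=\infty$, the adjacent corner-most elements carry spectral element functions that are either a single constant $h_v$ or a one-dimensional polynomial in $\zeta$ (by the representation in Section 2.3), so many of the tangential derivatives vanish identically, and the surviving contributions are bounded by the weighted $L^2$-norms $\int(u_k^{v-e})^2 e^{x_3^{v-e}}w^{v-e}(x_1^{v-e})\,dx^{v-e}$; the assumption $W=O(e^{N^\alpha})$ with $\alpha<1/2$ ensures that polynomial inverse factors are absorbed by the geometric decay of $e^{x_3^{v-e}}$. The hardest step will be the bookkeeping in the third paragraph: making sure that the three subcases of $|||\,\cdot\,|||_{\tilde{\Gamma}_{n,i}^{v-e}}$ in (\ref{eq2.27}) are matched correctly to the weighting induced by $A^{v-e}$ on each of the three possible orientations of the coordinate face $\tilde{\Gamma}_{m,i}^{v-e}$, and that the $\sin\phi$ factors coming from the singular matrix $R^{v-e}$ in (\ref{eq3.55}) are absorbed into $E_{m,i}^{v-e}$ rather than creating extra unbounded boundary contributions.
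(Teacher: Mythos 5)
Your proposal is correct and follows essentially the route the paper intends: the paper gives no separate proof for this lemma, stating only that it is ``similar to those for Lemma \ref{lem3.4.1} to Lemma \ref{lem3.4.4}'', and your plan is a faithful adaptation of the Appendix D.3--D.4 argument to the vertex-edge setting, using the local variables $y$ of (\ref{eq3.58}), the uniformly elliptic matrix $A^y$ of (\ref{eq3.62b})--(\ref{eq3.64}), the product-difference splitting into jump and sum factors, and the two logarithmic trace inequalities that produce the $(\ln W)^2$ factor. Your explicit attention to matching the three face orientations of (\ref{eq2.27}) to the $\sin\phi$ weights coming from $R^{v-e}$, and to the degenerate corner elements where the spectral element function depends only on $\zeta$, covers exactly the bookkeeping the paper leaves implicit.
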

\begin{lem}
Let $\Omega_m^{v-e}$ and $\Omega_p^{v-e}$ be elements in the edge neighbourhood
$\Omega^{v-e}$ of $\Omega$ and $\Gamma_{m,i}^{v-e}$ be a face of $\Omega_m^{v-e}$
and $\Gamma_{p,j}^{v-e}$ be a face of $\Omega_p^{v-e}$ such that
$\Gamma_{m,i}^{v-e}=\Gamma_{p,j}^{v-e}$. Then for any $\epsilon>0$ there exists
a constant $C_{\epsilon}$ such that for $W$ large enough
\begin{subequations}\label{eq3.100}
\begin{align}\label{eq3.100a}
&\left|\int_{\tilde{\Gamma}_{m,i}^{v-e}}e^{x_3^{v-e}}\left(
\sum_{l=1}^2\left(\left(\frac{\partial u_m^{v-e}}{\partial\taux_l^{v-e}}
\right)_{A^{v-e}}\frac{\partial}{\partial s_l^{v-e}}
\left(\left(\frac{\partial u_m^{v-e}}{\partial\nuw^{v-e}}
\right)_{A^{v-e}}\right)\right.\right.\right. \notag \\
&\quad\quad\quad\quad\quad-\left.\left.\left.\left(\frac{\partial
u_p^{v-e}}{\partial\taux_l^{v-e}}\right)_{A^{v-e}}\frac
{\partial}{\partial s_l^{v-e}} \left(\left(\frac{\partial
u_p^{v-e}}{\partial\nuw^{v-e}}\right)_{A^{v-e}}
\right)\right)\right)\: d \sigma^{v-e}\right| \notag \\
&\quad\leq C_{\epsilon}(\ln W)^2\left(\big|\big|\big|
\:[u_{x_1^{v-e}}]\:\big|\big|\big|^2_{\tilde{\Gamma}_{m,i}
^{v-e}}+\big|\big|\big|\:[u_{x_2^{v-e}}]\:
\big|\big|\big|^2_{\tilde{\Gamma}_{m,i}^{v-e}}\right.\notag \\
&\quad\left.+\big|\big|\big|\:E_{m,i}^{v-e}[u_{x_3^{v-e}}]
\:\big|\big|\big|^2_{\tilde{\Gamma}_{m,i}^{v-e}}\right)
+\epsilon\sum_{k=m,p}\left(\int_{\tilde{\Omega}_k^{v-e}}
\left(\sum_{i,j=1,2}\left(\frac{\partial^2 u_k^{v-e}}{\partial
x_i^{v-e}\partial x_j^ {v-e}}\right)^2\right.\right. \notag \\
&\quad+\sin^2\phi\sum_{i=1}^2\left(\frac{\partial^2 u_k^{v-e}}
{\partial x_i^{v-e} \partial x_3^{v-e}} \right)^2
+\sin^4\phi\left(\frac{\partial^2 u_k^{v-e}}{\left(\partial
x_3^{v-e}\right)^2} \right)^2+\sum_{i=1}^2\left(\frac{\partial
u_k^{v-e}}{\partial x_i^{v-e}}\right)^2 \notag \\
&\quad\left.\left.+\sin^2\phi\left(\frac{\partial u_k^{v-e}}
{\partial x_3^{v-e}}\right)^2\right)e^{x_3^{v-e}}\;dx^{v-e}\right)\:.
\end{align}
If $\mu(\tilde{\Gamma}_{m,i}^{v-e})=\infty$ then for any $\epsilon>0$
\begin{align}\label{eq3.100b}
&\left|\int_{\tilde{\Gamma}_{m,i}^{v-e}}e^{x_3^{v-e}}\left(
\sum_{l=1}^2\left(\left(\frac{\partial u_m^{v-e}}{\partial\taux_l^{v-e}}
\right)_{A^{v-e}}\frac{\partial}{\partial s_l^{v-e}}
\left(\left(\frac{\partial u_m^{v-e}}{\partial\nuw^{v-e}}
\right)_{A^{v-e}}\right)\right.\right.\right. \notag \\
&\quad\quad\quad\quad\quad-\left.\left.\left.\left(\frac{\partial
u_p^{v-e}}{\partial\taux_l^{v-e}}\right)_{A^{v-e}}\frac
{\partial}{\partial s_l^{v-e}} \left(\left(\frac{\partial
u_p^{v-e}}{\partial\nuw^{v-e}}\right)_{A^{v-e}}
\right)\right)\right)\: d \sigma^{v-e}\right| \notag \\
&\quad\leq\epsilon\left(\sum_{k=m,p}\int_{\tilde{\Omega}
_k^{v-e}}(u_k^{v-e})^2e^{x_3^{v-e}}w^{v-e}(x_1^{v-e})\:dx^{v-e}\right)
\end{align}
\end{subequations}
provided $W=O(e^{{N}^\alpha})$ with $\alpha<1/2$.
\end{lem}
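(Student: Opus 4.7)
The plan is to mirror the structure used for Lemma \ref{lem3.4.4} in the edge-neighbourhood case, but now with the anisotropic fractional norms $|||\cdot|||_{\tilde{\Gamma}_{m,i}^{v-e}}$ from (\ref{eq2.27}). First I would write
\[
a_m b_m - a_p b_p \;=\; a_m (b_m-b_p) + (a_m-a_p) b_p,
\]
with $a_\bullet = (\partial u_\bullet^{v-e}/\partial \taux_l^{v-e})_{A^{v-e}}$ and $b_\bullet = (\partial/\partial s_l^{v-e})(\partial u_\bullet^{v-e}/\partial \nuw^{v-e})_{A^{v-e}}$, so that the difference on $\tilde\Gamma_{m,i}^{v-e}$ becomes a sum of products, each of which contains one factor of the jump $[u]$, $[u_{x_j^{v-e}}]$. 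Using (\ref{eq3.54})--(\ref{eq3.55}), the entries of $A^{v-e}$ carry weights $1,\sin\phi,\sin^2\phi$ in the three coordinate directions, and the factor $e^{x_3^{v-e}}$ in front of the integral will be split as $e^{x_3^{v-e}/2}\cdot e^{x_3^{v-e}/2}$ so that each of the two factors gets half of it.

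Next I would pass to the rescaled $y$-coordinates of (\ref{eq3.58}), in which $\tilde\Omega_n^{v-e}$ is mapped to a hexahedron $\hat\Omega_n^{v-e}$ of uniformly bounded aspect ratio, so that Sobolev trace and embedding inequalities hold with a constant \emph{independent} of the element. On each of the three orientations of face described in (\ref{eq2.27}a--c), I apply Cauchy--Schwarz to each product term, bounding the jump factor by the appropriate part of $|||\,[u_{x_k^{v-e}}]\,|||^2_{\tilde\Gamma_{m,i}^{v-e}}$ and the remaining factor (a tangential derivative of a polynomial of degree $W$) by an $H^{1/2}$-type seminorm on the face. To lift the boundary $H^{1/2}$-seminorm back to the $L^2$ norm of the second derivatives inside the element I use the polynomial inverse inequality $\|p\|_{L^2(F)}\le C\sqrt{\ln W}\|p\|_{H^{1/2}(F)}$ for polynomials of degree $\le W$ (applied twice, once to the tangential factor and once to the normal-derivative factor), which produces the stated $C_\epsilon(\ln W)^2$ constant on the jump side together with an $\epsilon$-weighted $H^2$-type term on the $u_m,u_p$ side. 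The weights $\sin\phi$ and $E_{m,i}^{v-e}$ must be tracked carefully: writing $(\partial u/\partial \taux_l^{v-e})_{A^{v-e}}$ out as a linear combination of $\partial_{x_k^{v-e}}u$ with coefficients bounded by $1$, $\sin\phi$, or $\sin^2\phi$, one sees that each resulting term matches exactly one of the anisotropic second-derivative quantities appearing in (\ref{eq2.32}).

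For the case $\mu(\tilde\Gamma_{m,i}^{v-e}) = \infty$ (a face of a corner-most element in the direction transverse to the edge), the spectral element functions $u_m^{v-e},u_p^{v-e}$ take on the very restricted forms described after (\ref{eq2.33}) and, in particular, near the vertex they reduce to a one-dimensional polynomial in $\zeta$ or to the constant $h_v$. The integrand therefore decays like $e^{x_3^{v-e}}=x_3$ times a polynomial of degree $O(W)$ in $\zeta = \ln x_3$, and the condition $W = O(e^{N^\alpha})$ with $\alpha<1/2$ ensures this polynomial growth is absorbed by the exponential weight $e^{x_3^{v-e}}w^{v-e}(x_1^{v-e})$ present in $\mathcal U_{v-e}^{N,W}$, yielding (\ref{eq3.100b}) for any $\epsilon > 0$.

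The main obstacle I anticipate is the bookkeeping in step two: the operator $A^{v-e}$ degenerates as $\phi \to 0$, and each of the three face orientations in (\ref{eq2.27}a--c) pairs the three different weights $\{1,\sin\phi,\sin^2\phi\}$ with tangential derivatives in a different way. One must distribute the weight $\sqrt{E_{m,i}^{v-e}}$ correctly so that the $H^{1/2}$-seminorm of the jump in $u_{x_3^{v-e}}$ picks up the $E_{m,i}^{v-e}$ prefactor exactly as in the definition of $|||\,E_{m,i}^{v-e}[u_{x_3^{v-e}}]\,|||^2_{\tilde\Gamma_{m,i}^{v-e}}$ in (\ref{eq2.29}), while the remaining anisotropic factors combine with the bounds (\ref{eq3.57}) on $\hat b_i^{v-e},\hat c^{v-e}$ and with (\ref{eq3.64}) on $A^y$ to produce precisely the $H^2$-type seminorm appearing on the right of (\ref{eq2.32}). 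Once this matching is done on each of the three face types, summation over $l=1,2$ gives the claim.
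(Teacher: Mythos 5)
Your proposal is essentially the paper's own intended argument: the paper gives no separate proof of this lemma, stating only that the proofs for the vertex and vertex-edge interior terms ``are similar to those for Lemma \ref{lem3.4.1} to Lemma \ref{lem3.4.4}'', and your adaptation (the $a_mb_m-a_pb_p$ splitting, the rescaling (\ref{eq3.58}) to secure uniform trace constants, the polynomial inverse/bilinear-form inequalities producing the $(\ln W)^2$ factor against the jump norms with an $\epsilon$-weighted anisotropic $H^2$ remainder, and the reduction of the $\mu=\infty$ case to the restricted form of the spectral element functions on corner elements under $W=O(e^{N^\alpha})$) is exactly that adaptation. The only slip is cosmetic: the rescaled hexahedron $\hat\Omega_n^{v-e}$ does not have bounded aspect ratio (its $y_3$ side grows near the edge), but the property you actually use — uniformity of the trace and embedding constants, obtained in the paper by subdividing the elongated element into unit-size pieces — is the correct one.
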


\section{Estimates for Terms on the Boundary}
\subsection{Estimates for terms on the boundary of $\Omega^r$}
To simplify the presentation we assume the face constituting part of the boundary of
$\Omega$ lies on the $x_2-x_3$ plane. The contributions from the boundary which have
to be estimated will then consist of terms from the regular region, the vertex region,
the vertex-edge region and the edge region as shown in Figure \ref{fig3.2}.
\begin{figure}[!ht]
\centering
\includegraphics[scale = 0.60]{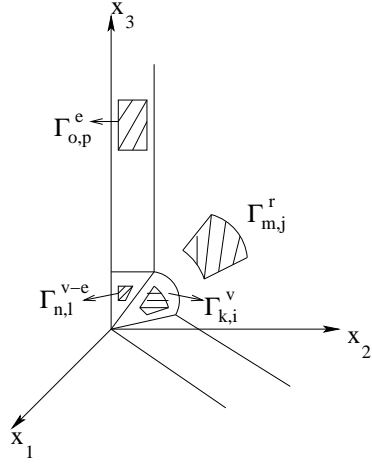}
\caption{Boundary terms.}
\label{fig3.2}
\end{figure}
We first examine how to estimate the terms on the boundary of
$\Omega$ for the regular region, terms from the other regions can
be estimated similarly.
\begin{lem}\label{lem3.5.1}
Let $\Gamma_{m,j}^r$ be part of the boundary of the element
$\Omega_m^r$ which lies on the $x_2-x_3$ axis. Define the
contributions from $\Gamma_{m,j}^r$ by
\begin{align}\label{eq3.101}
(BT)_{m,j}^r&=\rho_v^2\sin^2(\phi_v)\left(-\oint_{\partial
\Gamma_{m,j}^r}\left(\frac{\partial u}{\partial \nb} \right)_A
\left(\frac{\partial u}{\partial \nuw}\right)_A\: d s\right. \notag\\
&+\left. 2\int_{\Gamma_{m,j}^r}\sum_{j=1}^2\left(\frac{\partial
u}{\partial \taux_j}\right)_A\frac{\partial}{\partial s_j}\left(
\left(\frac{\partial u}{\partial \nuw}\right)_A\right)\:d\sigma\right)\:.
\end{align}
If Dirichlet boundary conditions are imposed on $\Gamma_{m,j}^r$
then for any $\epsilon>0$ there exist constants $C_\epsilon$ and
$K_\epsilon$ such that
\begin{align}\label{eq3.102}
|(BT)_{m,j}^r|&\leq C_{\epsilon}(\ln W)^2\|u_m^r\|^2_{3/2,
\Gamma_{m,j}^r}+K_{\epsilon}\sum_{|\alpha|=1}
\|D_x^\alpha u_m^r\|^2_{0,\Omega_m^r} \notag \\
&+\epsilon\sum_{|\alpha|=2}\|D_x^\alpha u_m^r\|^2_{0,\Omega_m^r}\:.
\end{align}
If Neumann boundary conditions are imposed on $\Gamma_{m,j}^r$
then for any $\epsilon>0$ there exists a constant $C_\epsilon$
such that
\begin{align}\label{eq3.103}
\left|(BT)_{m,j}^r\right|\leq C_{\epsilon}(\ln W)^2\left\|\left(
\frac{\partial u _m^r}{\partial \nuw}\right)_A\right\|^2_{1/2,\Gamma_{m,j}^r}
+\epsilon\sum_{1\leq|\alpha|\leq 2}\|D_x^{\alpha} u_m^r\|^2_{2,\Omega_m^r}\:.
\end{align}
\end{lem}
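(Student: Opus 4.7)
The plan is to adapt the techniques used in Lemmas~\ref{lem3.4.1} and \ref{lem3.4.2} for interior jump terms, replacing the contribution from the neighboring element by a bound in terms of the prescribed Dirichlet or Neumann data. First I would pull back the face $\Gamma_{m,j}^r$ to the corresponding face $F \subset \partial Q$ of the master cube via the analytic map $M_m^r$. The trace of $u_m^r$ on $\Gamma_{m,j}^r$ is then a two-variable polynomial of degree $\leq W$, and the change of coordinates preserves all the relevant norms up to uniform constants. Without loss of generality, $F$ may be taken to be $\{\lambda_1 = 1\} \cap \overline{Q}$.

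The key algebraic step is a decomposition: since the face lies in the $x_2$--$x_3$ plane and $\nuw,\taux_1,\taux_2,\nb$ are smooth vector fields with $\nb$ lying in the tangent plane to $\Gamma_{m,j}^r$, each of $(\partial u/\partial \nuw)_A$, $(\partial u/\partial \nb)_A$, and $(\partial u/\partial \taux_k)_A$ splits into a \emph{tangential} part (a linear combination of the surface derivatives $\partial u_m^r/\partial s_1,\partial u_m^r/\partial s_2$, hence controlled by $u_m^r|_{\Gamma_{m,j}^r}$) and a \emph{transverse} part involving $\partial u_m^r/\partial \lambda_1$ on $F$. Expanding the products in \eqref{eq3.101} yields three classes of terms: tangential$\times$tangential, tangential$\times$transverse, and transverse$\times$transverse.

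In the Dirichlet case, the purely tangential-tangential terms are bounded by $\|u_m^r\|_{3/2,\Gamma_{m,j}^r}^2$ using the standard $H^{1/2}$--$H^{-1/2}$ duality between $(\partial u/\partial \taux_j)_A$ and $\partial/\partial s_j\bigl((\partial u/\partial \nuw)_A\bigr)$. For the mixed and transverse-transverse terms the polynomial trace inequality
\[
\|p\|_{H^{s}(F)}^2 \;\leq\; C\,(\ln W)\,\|p\|_{H^{s+1/2}(Q)}^2,
\]
valid for polynomials of degree $\leq W$, together with the analogous estimate on $\partial F$, lets me bound $\partial u_m^r/\partial\lambda_1$ in the appropriate fractional norm on $F$ or $\partial F$ by the interior $H^2(\Omega_m^r)$ norm with a single logarithmic loss. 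A Cauchy--Schwarz followed by Young's inequality with parameter $\epsilon$ then places the interior second-derivative factor with weight $\epsilon$ (to be absorbed on the left of the stability estimate) and produces the coefficient $C_\epsilon (\ln W)^2$ on $\|u_m^r\|_{3/2,\Gamma_{m,j}^r}^2$. First-derivative contributions that arise from the variable coefficients $a_{i,j}$ and the non-flat coordinate map $M_m^r$ are grouped into the $K_\epsilon$ term. The Neumann case is dual in structure: now $(\partial u/\partial \nuw)_A$ itself is the ``data factor,'' controlled by $\|(\partial u/\partial \nuw)_A\|_{1/2,\Gamma_{m,j}^r}^2$, while the companion factors $(\partial u/\partial \nb)_A$ on $\partial\Gamma_{m,j}^r$ and $(\partial u/\partial \taux_j)_A$ on $\Gamma_{m,j}^r$ are bounded by the interior $H^2$ norm using the same polynomial trace inequality, and $\epsilon$-Young yields \eqref{eq3.103}.

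The main obstacle is securing the sharp exponent $(\ln W)^2$ rather than a polynomial growth in $W$. This depends on the polynomial-specific trace inequality on $Q$, which is strictly finer than the continuous trace theorem and exploits the tensor-product structure of $u_m^r$; getting the exponent right requires applying this inequality to the right intermediate quantity (the $\lambda_1$-derivative on $F$, not the $H^2$-trace on $F$) and iterating it once when restricting further from $F$ to $\partial F$ for the line integral. A secondary nuisance is the bookkeeping for the tangential/transverse splitting when both $A$ and the Jacobian of $M_m^r$ are variable, so that the $\epsilon$ weight on the second-derivative $L^2(\Omega_m^r)$ norm is never double counted across the line and surface integrals.
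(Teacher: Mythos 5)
Your framework (pullback to the master cube, tangential/transverse splitting of the conormal quantities, polynomial trace inequalities with logarithmic losses, and $\epsilon$-Young) matches the paper's treatment of the \emph{cross} terms, but it has a genuine gap at the pure transverse terms in the Dirichlet case. Writing $\left(\frac{\partial u}{\partial \taux_k}\right)_A=\alpha_{k1}\frac{\partial u}{\partial\taux_1}+\alpha_{k2}\frac{\partial u}{\partial\taux_2}+\alpha_{k3}\left(\frac{\partial u}{\partial\nuw}\right)_A$ and $\left(\frac{\partial u}{\partial\nb}\right)_A=\gamma_{11}\frac{\partial u}{\partial\taux_1}+\gamma_{12}\frac{\partial u}{\partial\taux_2}+\gamma_{13}\left(\frac{\partial u}{\partial\nuw}\right)_A$, the decomposition produces the surface term $2\int_{\Gamma_{m,j}^r}\alpha_{k3}\left(\frac{\partial u}{\partial\nuw}\right)_A\frac{\partial}{\partial s_k}\left(\left(\frac{\partial u}{\partial\nuw}\right)_A\right)d\sigma$ and the line term $-\oint_{\partial\Gamma_{m,j}^r}\gamma_{13}\left(\frac{\partial u}{\partial\nuw}\right)_A^2\,ds$. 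Neither factor in these products is controlled by the Dirichlet data, so Cauchy--Schwarz plus Young cannot place an $\epsilon$ in front of the second-derivative norm: the $H^{1/2}$--$H^{-1/2}$ pairing on the face gives a fixed (non-small) constant times $\sum_{|\alpha|\le 2}\|D_x^\alpha u_m^r\|^2_{0,\Omega_m^r}$, and the restriction of $\left(\frac{\partial u}{\partial\nuw}\right)_A^2$ to the curve $\partial\Gamma_{m,j}^r$ costs a factor $\ln W$ (via the polynomial edge-trace inequality) multiplying those same second derivatives. Either way the coefficient of the $|\alpha|=2$ block is not at your disposal, and the stability argument that absorbs these terms breaks down.

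The paper's proof (Appendix D.5) avoids this by an exact cancellation that your proposal does not contain. One integrates by parts on the face, $2\alpha_{k3}v\,\partial_{s_k}v=\partial_{s_k}(\alpha_{k3}v^2)-(\partial_{s_k}\alpha_{k3})v^2$ with $v=\left(\frac{\partial u}{\partial\nuw}\right)_A$, and applies the divergence theorem; the resulting contour integral carries the coefficient $\{(\nb\cdot\taux_1)\alpha_{13}+(\nb\cdot\taux_2)\alpha_{23}\}=\frac{\nuw^{t}A\nb}{\nuw^{t}A\nuw}=\gamma_{13}$ and therefore cancels identically against the $\gamma_{13}v^2$ piece of the contour term in $(BT)_{m,j}^r$. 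What survives is only $\int_{\Gamma_{m,j}^r}\bigl(\sum_k\partial_{s_k}\alpha_{k3}\bigr)v^2\,d\sigma$, an $L^2(\Gamma_{m,j}^r)$ quantity of a first derivative, which interpolates as $K_\epsilon|u_m^r|^2_{1,\Omega_m^r}+\epsilon|u_m^r|^2_{2,\Omega_m^r}$, together with terms in which at least one factor is a genuine tangential derivative of $u_m^r$ (hence data) — and those are exactly the terms your duality-plus-$(\ln W)$ machinery handles. You need to add this integration-by-parts and cancellation step; without it the claimed bound \eqref{eq3.102} is not reachable by the route you describe.
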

\begin{proof}
The proof is provided in Appendix D.5.
\end{proof}

\subsection{Estimates for terms on the boundary of $\Omega^e$}
\begin{lem}\label{lem3.5.2}
Let $\Gamma_{m,j}^e$ be part of the boundary of the element
$\Omega_m^e$ which lies on the $x_2-x_3$ axis. Define the
contributions from $\Gamma_{m,j}^e$ by
\begin{align}\label{eq3.104}
(BT)_{m,j}^e&=-\oint_{\partial\tilde{\Gamma}_{m,j}^e}
\left(\frac{\partial u}{\partial
\nb^e}\right)_{A^e}\left(\frac{\partial u}
{\partial \nuw^e}\right)_{A^e}\: d s^e \notag \\
&-2\int_{\tilde{\Gamma}_{m,j}^e}\sum_{l=1}^2\left(\frac
{\partial u}{\partial \taux_l^e}\right)_{A^e}\frac{\partial}
{\partial s_l^e}\left(\left(\frac{\partial u}{\partial
\nuw^e}\right)_{A^e}\right)\:d\sigma^e\:.
\end{align}
If Dirichlet boundary conditions are imposed on
$\Gamma_{m,j}^e$ and $\mu{(\tilde{\Gamma}_{m,j}^e)}<\infty$
then for any $\epsilon>0$ there exists constants\,
$C_\epsilon, K_\epsilon$ such that for $W$ large enough
\begin{align}\label{eq3.105}
|(BT)_{m,j}^e|&\leq C_{\epsilon}(\ln W)^2
\left(\|u_m^e\|_{0,\tilde{\Gamma}_{m,i}^e}^2+\Big|\Big|\Big|
\left(\frac{\partial u_m^e}{\partial x_1^e}\right)
\Big|\Big|\Big|^2_{\tilde{\Gamma}_{m,j}^e}+
\Big|\Big|\Big|G_{m,j}^e\left( \frac{\partial u_m^e}{\partial
x_3^e}\right)\Big|\Big|\Big|^2_{\tilde{\Gamma}
_{m,j}^e}\right) \notag \\
&+K_\epsilon\int_{\tilde{\Omega}_m^e}\left(\sum_{i=1}^2
\left(\frac{\partial u_m^e}{\partial x_i^e}\right)^2
+e^{2\tau}\left(\frac{\partial u_m^e}{\partial x_3^e}
\right)^2\right)\:dx^e \notag \\
&+\epsilon\int_{\tilde{\Omega}_m^e}\left(\sum_{i,j=1,2}
\left(\frac{\partial^2 u_m^e}{\partial x_i^e\partial x_j^e}
\right)^2+e^{2\tau}\sum_{i=1}^2\left(\frac{\partial^2 u_m^e}
{\partial x_i^e\partial x_3^e}\right)^2
+e^{4\tau}\left(\frac{\partial^2 u_m^e}{\left(\partial x_3^e
\right)^2}\right)^2\right)dx^e
\end{align}
If Neumann boundary conditions are imposed on $\Gamma_{m,j}^e$
and $\mu{(\tilde{\Gamma}_{m,j}^e)}<\infty$ then for any
$\epsilon>0$ there exists a constant $C_\epsilon$ such that
\begin{align}\label{eq3.106}
\left|(BT)_{m,j}^e\right|&\leq C_{\epsilon}(\ln
W)^2\Big|\Big|\Big|\left( \frac{\partial u}{\partial
\nuw^e}\right)_{A^e}\Big|\Big|\Big|^2_{\tilde{
\Gamma}_{m,j}^e}+\epsilon\left(\int_{\tilde{\Omega}_m^e}
\left(\sum_{i,j=1}^2 \left(\frac{\partial^2 u_m^e}{\partial x_i^e
\partial x_j^e}\right)^2\right.\right. \notag \\
&\left.\left.+ e^{2\tau}\sum_{i=1}^2\left(\frac {\partial^2
u_m^e}{\partial x_i^e \partial x_3^e}\right)^2
+e^{4\tau}\left(\frac{\partial^2 u_m^e}{(\partial
x_3^e)^2}\right)^2 + \sum_{i=1}^2\left(\frac{\partial u_m^e}
{\partial x_i^e}\right)^2\right.\right. \notag \\
&\left.\left.+ e^{2\tau}\left(\frac{\partial u_m^e} {\partial
x_3^e}\right)^2\right)\:dx^e\right)\:.
\end{align}
If $\mu{(\tilde{\Gamma}_{m,j}^e)}=\infty$ then for any $\epsilon>0$ for
$N,W$ large enough
\begin{align}\label{eq3.107}
\left|(BT)_{m,j}^e\right|&\leq \epsilon
\int_{\tilde{\Omega}_m^e}(u_m^e)^2w^e(x_1^e)\:dx^e
\end{align}
provided $W=O(e^{N^\alpha})$ for $\alpha<1/2$.
\end{lem}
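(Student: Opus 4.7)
The plan is to mimic the strategy of Lemma \ref{lem3.5.1} (whose proof is deferred to Appendix D.5), transported to the modified edge coordinates $x^e = (\tau,\theta,x_3)$ and dressed with the anisotropic norm $|||\,\cdot\,|||_{\tilde\Gamma_{m,j}^e}^2$ defined in $(\ref{eq2.34})$. First I would parametrize $\tilde\Gamma_{m,j}^e$, which lies on the $x_2^e$--$x_3^e$ plane, with outer normal $\nuw^e$ along $x_1^e$ and tangent vectors $\taux_1^e,\taux_2^e$ along $x_2^e,x_3^e$. The elliptic matrix $A^e$ in (\ref{eq3.75})--(\ref{eq3.76}) satisfies $\mu_0 I \leq A^e$ only after the further $z$-rescaling; however on a face with $x_1^e=$ const one can expand
\[
\left(\tfrac{\partial u}{\partial\nb^e}\right)_{A^e}=\alpha_1\,u_{x_1^e}+\alpha_2\,u_{x_2^e}+\alpha_3\,e^{\tau}u_{x_3^e},\qquad
\left(\tfrac{\partial u}{\partial\nuw^e}\right)_{A^e}=\beta_1\,u_{x_1^e}+\beta_2\,u_{x_2^e}+\beta_3\,e^{\tau}u_{x_3^e},
\]
with smooth uniformly bounded $\alpha_i,\beta_i$. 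Substituting into $(\ref{eq3.104})$, the only unknown (in the Dirichlet case) first-order quantity on $\tilde\Gamma_{m,j}^e$ is $u_{x_1^e}$; the tangential derivatives $u_{x_2^e}$ and $e^\tau u_{x_3^e}$ are boundary data, and they are exactly the quantities controlled by the three pieces of the norm $|||\,\cdot\,|||_{\tilde\Gamma_{m,j}^e}^2$.

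Next I would extract the unknown $u_{x_1^e}$ terms by Cauchy--Schwarz in the anisotropic inner product that underlies $|||\,\cdot\,|||$. For the line integral around $\partial\tilde\Gamma_{m,j}^e$ I would use the trace inequality on the \emph{face} $\tilde\Gamma_{m,j}^e$, viewed as a 2D rectangle in $(\theta,x_3)$ with $x_1^e$ fixed, combined with the fractional-order polynomial inverse inequality
\[
\|p\|_{0,\partial E}^2 \leq C(\ln W)\,\|p\|_{1/2,E}^2
\qquad\text{for }p\in\mathbb{P}_W(E),
\]
to convert an $L^2(\partial\tilde\Gamma_{m,j}^e)$-type integrand into the $|||\cdot|||$-type face norm at the cost of a $(\ln W)^2$ factor (two uses, one for each factor). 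For the surface integral involving $\frac{\partial}{\partial s_j^e}$ I would first integrate by parts tangentially on $\tilde\Gamma_{m,j}^e$ to move the derivative off $\left(\frac{\partial u}{\partial\nuw^e}\right)_{A^e}$, picking up boundary pieces that go into the $\partial\tilde\Gamma_{m,j}^e$ term and interior pieces involving $\partial_{s_j}\left(\frac{\partial u}{\partial\taux_l^e}\right)_{A^e}$; the latter are controlled by trace theorems applied element-wise to $\tilde\Omega_m^e$, introducing precisely the $\epsilon$-weighted sum of second derivatives on the right-hand side of $(\ref{eq3.105})$--$(\ref{eq3.106})$ via a Young inequality $ab\leq \epsilon a^2 + C_\epsilon b^2$. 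This will yield $(\ref{eq3.105})$ for Dirichlet data and $(\ref{eq3.106})$ for Neumann data; in the Neumann case the role played by $\|u\|_{3/2}$ on the face in $(\ref{eq3.102})$ is replaced by the single $|||(\partial u/\partial\nuw^e)_{A^e}|||_{\tilde\Gamma_{m,j}^e}^2$ and the tangential components must be re-expressed through interior quantities, producing only the second-derivative $\epsilon$-terms.

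For the infinite-measure case, a corner element $\tilde\Omega_p^e$ carries a spectral element function $u_p^e = \sum_{t=0}^{W}\alpha_t(x_3^e)^t$ that is independent of $\tau$ and $\theta$. Consequently $u_{x_1^e}\equiv 0$ and $u_{x_2^e}\equiv 0$ on $\tilde\Gamma_{m,j}^e$, so both integrands in $(\ref{eq3.104})$ are controlled by $e^{2\tau}|u_{x_3^e}|^2$. I would then use the identity
\[
\int_{\tilde\Omega_m^e} e^{2\tau}|u_{x_3^e}|^2\,dx^e = \int_{\theta_j^e}^{\theta_{j+1}^e}\int_{Z_n^e}^{Z_{n+1}^e}|u_{x_3^e}|^2\left(\int_{-\infty}^{\ln r_1^e}e^{2\tau}\,d\tau\right)dx_3^e\,d\theta
\]
together with a polynomial inverse inequality in $x_3^e$ to relate $u_{x_3^e}^2$ on the face to $u^2$ on the element, and then dominate $\int e^{2\tau}d\tau$ (a bounded quantity for $\tau \leq \ln r_1^e$) by the normalized weight $w^e(x_1^e)$. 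This produces $(\ref{eq3.107})$; the hypothesis $W = O(e^{N^\alpha})$, $\alpha<1/2$, is used here to absorb the polynomial powers of $W$ arising from the inverse inequality into the exponentially small factor $e^{2\ln r_1^e}=(r_1^e)^2 \leq Z^2\mu_e^{2N}$.

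The main obstacle will be the bookkeeping in the Dirichlet step: because the face norm $|||\,\cdot\,|||_{\tilde\Gamma_{m,j}^e}$ is anisotropic and carries the factor $G_{m,j}^e=\sup e^\tau$ only on $u_{x_3^e}$, whereas the second fundamental form and the principal-part matrix $A^e$ mix all three variables, every Cauchy--Schwarz has to match the weight $e^\tau$ on the right factor. Getting the \emph{correct} weighted trace inequalities with sharp $(\ln W)$-dependence (rather than a power of $W$), and simultaneously preserving the tensor structure of $|||\,\cdot\,|||$, is where the real work lies; the other cases then follow by the same mechanism with only the absorbed term relabelled.
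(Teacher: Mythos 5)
Your overall scaffolding (the rescaling $z_3=x_3^e/G_{m,j}^e$, asymmetric Young inequalities that place the $(\ln W)^2$ factor on the data term, and the reduction of the corner-element case to a one-dimensional inverse inequality played off against the decay $e^{2\tau}\le Z^2\mu_e^{2N}$ under $W=O(e^{N^\alpha})$, $\alpha<1/2$) matches the paper's, and your treatment of the case $\mu(\tilde{\Gamma}_{m,j}^e)=\infty$ is essentially the one used. The Dirichlet case, however, has a genuine gap: you never isolate, and therefore never cancel, the term $\oint_{\partial\tilde{\Gamma}_{m,j}^e}\frac{(\nuw^e)^tA^e\nb^e}{(\nuw^e)^tA^e\nuw^e}\bigl(\frac{\partial u}{\partial\nuw^e}\bigr)_{A^e}^2\,ds^e$ hidden inside the line integral. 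The conormal derivative is not Dirichlet data, so both factors there are unknown; your plan of applying the inverse-trace inequality $\|p\|^2_{0,\partial E}\le C(\ln W)\|p\|^2_{1/2,E}$ ``once for each factor'' leaves $C(\ln W)^2$ multiplying $\bigl\|(\partial u/\partial\nuw^e)_{A^e}\bigr\|^2_{1/2,\tilde{\Gamma}_{m,j}^e}$, a quantity that must be absorbed with a pure $\epsilon$ and hence cannot tolerate any $W$-dependent coefficient. The paper's proof (Appendix D.6, mirroring D.5) avoids this by the decomposition $A^e\taux_k^e=\alpha^e_{k1}\taux_1^e+\alpha^e_{k2}\taux_2^e+\alpha^e_{k3}A^e\nuw^e$ of eq.~(D.85), the identity $2\alpha^e_{k3}\bigl(\frac{\partial u}{\partial\nuw^e}\bigr)_{A^e}\frac{\partial}{\partial s_k^e}\bigl(\frac{\partial u}{\partial\nuw^e}\bigr)_{A^e}=\frac{\partial}{\partial s_k^e}\bigl(\alpha^e_{k3}\bigl(\frac{\partial u}{\partial\nuw^e}\bigr)_{A^e}^2\bigr)-\bigl(\frac{\partial}{\partial s_k^e}\alpha^e_{k3}\bigr)\bigl(\frac{\partial u}{\partial\nuw^e}\bigr)_{A^e}^2$, and the divergence theorem on the face: the resulting boundary term cancels the conormal-squared piece of $\oint(\partial u/\partial\nb^e)_{A^e}(\partial u/\partial\nuw^e)_{A^e}\,ds^e$ exactly, leaving only cross terms (data times unknown, handled asymmetrically) and a face integral of $\bigl(\frac{\partial u}{\partial\nuw^e}\bigr)_{A^e}^2$ against bounded coefficients, which the standard $\epsilon$-trace inequality absorbs. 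This cancellation is the crux of the Dirichlet estimate and is absent from your plan.

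Two further problems. First, your tangential integration by parts on $\tilde{\Gamma}_{m,j}^e$ produces $\frac{\partial}{\partial s_j^e}\bigl(\frac{\partial u}{\partial\taux_l^e}\bigr)_{A^e}$, an $L^2$ quantity of second derivatives on a face; controlling it by $\epsilon$ times second derivatives on the element would require third derivatives of $u$ (or an inverse inequality costing a power of $W$, not a logarithm). The paper leaves the derivative in place and invokes the bilinear estimate $\bigl|\int_S e\,\partial_{\lambda_j}f\,d\lambda\bigr|\le C(\ln W)\|e\|_{1/2,S}\|f\|_{1/2,S}$ for polynomials (eq.~(D.33), from Theorem 4.1 of~\cite{DTK1}), which needs no second derivatives on the face. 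Second, the boundary face of $\Omega$ inside an edge neighbourhood is at constant $\theta=x_2^e$ (the paper takes $\nuw^e=(0,-1,0)$, $\taux_1^e=(1,0,0)$, $\taux_2^e=(0,0,1)$), not at constant $x_1^e=\tau$ as you assume; the unknown conormal direction is therefore $\theta$, which changes which components of the anisotropic norm $|||\cdot|||_{\tilde{\Gamma}_{m,j}^e}$ serve as data in your bookkeeping.
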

\begin{proof}
The proof is provided in Appendix D.6.
\end{proof}

We now state estimates for terms on the boundary of vertex neighbourhoods
and vertex-edge neighbourhoods.

\subsection{Estimates for terms on the boundary of {$\Omega^v$}}
\begin{lem}\label{3.5.3}
Let $\Gamma_{m,j}^v$ be part of the boundary of the element
$\Omega_m^v$ which lies on the $x_2-x_3$ axis. Define the
contributions from $\Gamma_{m,j}^v$ by
\begin{align}\label{eq3.108}
(BT)_{m,j}^v&=\sin^2(\phi_v)\left(-\oint_{\partial\tilde
{\Gamma}_{m,j}^v}e^{x_3^v}\sin(x_1^v)\left(\frac{\partial
u}{\partial \nb^v}\right)_{A^v}\left(\frac{\partial u}
{\partial \nuw^v}\right)_{A^v}\: d s^v\right. \notag \\
&\left. -2\int_{\tilde{\Gamma}_{m,j}^v}
e^{x_3^v}\sin(x_1^v)\sum_{l=1}^2\left(\frac{\partial u}
{\partial\taux_l^v}\right)_{A^v}\frac{\partial}{\partial s_l^v}
\left(\left(\frac{\partial u}{\partial
\nuw^v}\right)_{A^v}\right)\:d \sigma^v \right).
\end{align}
If Dirichlet boundary conditions are imposed on $\Gamma_{m,j}^v$ and
$\mu{(\tilde{\Gamma}_{m,j}^v)}<\infty$ then for any $\epsilon>0$ there
exists constants\, $C_\epsilon$ and $K_\epsilon$ such that
\begin{align}\label{eq3.109}
|(BT)_{m,j}^v|&\leq C_{\epsilon}(\ln W)^2 R_{m,j}^v
\|u_m^v\|^2_{3/2,\tilde{\Gamma}_{m,j}^v}
+K_{\epsilon}\sum_{|\alpha|=1}\|e^{{x_3^v}/2}
D_{x^v}^\alpha u_m^v\|^2_{0,\tilde{\Omega}_m^e} \notag \\
&+\epsilon\sum_{|\alpha|=2} \|e^{{x_3^v}/2}D_{x^v}^\alpha
u_m^v\|^2_{0,\tilde{\Omega}_m^v}\:.
\end{align}
If Neumann boundary conditions are imposed on $\Gamma_{m,j}^v$ then
\begin{align}\label{eq3.110}
\left|(BT)_{m,j}^v\right|&\leq C_{\epsilon}(\ln W)^2R_{m,j}^v
\left\|\left(\frac{\partial u_m^v}{\partial\nuw^v}\right)_{A^v}
\right\|^2_{1/2,\tilde{\Gamma}_{m,j}^v} \notag \\
&+\epsilon\sum_{1\leq|\alpha|\leq2}\|e^{{x_3^v}/2}D_{x^v}^\alpha
u_m^v\|^2_{0,\tilde{\Omega}_m^v}\:.
\end{align}
If $\mu{(\tilde{\Gamma}_{m,j}^v)}=\infty$ then $(BT)_{m,j}^v=0$.
\end{lem}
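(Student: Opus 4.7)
The plan is to follow the same strategy used in Lemma \ref{lem3.5.1} and Lemma \ref{lem3.5.2}, but now carried out in the modified vertex coordinates $x^v = (\phi,\theta,\chi)$, keeping careful track of the conformal weight $e^{x_3^v}\sin(x_1^v)$ that appears in $(BT)_{m,j}^v$. I will handle the three cases in order of increasing difficulty.

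\textbf{Step 1: the case $\mu(\tilde{\Gamma}_{m,j}^v)=\infty$.} By the definition of the spectral element representation in Chapter~$2$, if $\tilde{\Gamma}_{m,j}^v$ has infinite measure then $\tilde{\Omega}_m^v$ is a corner element of the form $\{(\phi,\theta)\in S_j^v,\ -\infty<\chi<\ln\rho_1^v\}$, on which $u_m^v\equiv h^v$ is a constant. Hence every derivative of $u_m^v$ vanishes, so both the line integral along $\partial\tilde{\Gamma}_{m,j}^v$ and the surface integral over $\tilde{\Gamma}_{m,j}^v$ in \eqref{eq3.108} are identically zero, giving $(BT)_{m,j}^v=0$.

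\textbf{Step 2: the Neumann case with $\mu(\tilde{\Gamma}_{m,j}^v)<\infty$.} Here I would rewrite the two integrals in \eqref{eq3.108} using that on $\Gamma^{[1]}$ the conormal derivative $(\partial u/\partial\nuw^v)_{A^v}$ equals the prescribed datum lifted to $x^v$-coordinates, and that $\tilde{\Gamma}_{m,j}^v$ is a bounded quasi-uniform face after the change of variables. The line integral over $\partial\tilde{\Gamma}_{m,j}^v$ is handled by writing $(\partial u/\partial\nb^v)_{A^v}$ as a linear combination of one tangential derivative on the face and the conormal derivative, then applying a polynomial trace-and-inverse inequality (the source of the $(\ln W)^2$ factor) to bound the resulting line integral by $R_{m,j}^v \big\|(\partial u_m^v/\partial\nuw^v)_{A^v}\big\|_{1/2,\tilde{\Gamma}_{m,j}^v}^2$ times $(\ln W)^2$, plus an $\epsilon$-multiple of the $H^2$-seminorm of $u_m^v$ weighted by $e^{x_3^v}$. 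The surface integral is estimated analogously via integration by parts in $s^v_j$ along $\tilde{\Gamma}_{m,j}^v$, transferring a derivative from $(\partial u/\partial\nuw^v)_{A^v}$ onto $(\partial u/\partial\taux_l^v)_{A^v}$ and again using the $H^{1/2}$--$H^{-1/2}$ duality on the face; this yields \eqref{eq3.110}. The factor $\sin^2(\phi_v)$ is uniformly bounded and is absorbed into the constants, while $R_{m,j}^v$ is preserved exactly to capture the $e^{x_3^v}$-weight on the face.

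\textbf{Step 3: the Dirichlet case with $\mu(\tilde{\Gamma}_{m,j}^v)<\infty$.} The strategy is the same as in Step 2, but now the conormal derivative $(\partial u/\partial\nuw^v)_{A^v}$ is not controlled by the data. Instead I decompose every normal derivative on the face into its tangential components (controlled by $\|u_m^v\|_{3/2,\tilde{\Gamma}_{m,j}^v}$ after pulling the Dirichlet trace back to $x^v$-coordinates) plus one remaining normal derivative. For the line integral on $\partial\tilde{\Gamma}_{m,j}^v$ I again use the polynomial trace-and-inverse inequality on the face to convert an $L^2$ line integral into an $H^{1/2}$ face integral at the cost of $(\ln W)^2$; the tangential derivatives of $u$ on the face are then bounded by $\|u_m^v\|_{3/2,\tilde{\Gamma}_{m,j}^v}^2$, while the remaining normal derivative is absorbed by an $\epsilon$-term from the $H^2$ second-derivative norm of $u_m^v$ weighted by $e^{x_3^v}$, and a $K_\epsilon$-term from the first-derivative norm. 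Summing the contributions and weighting with $\sin^2(\phi_v)$ produces \eqref{eq3.109}.

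\textbf{Main obstacle.} The principal technical difficulty is keeping the weight $e^{x_3^v}\sin(x_1^v)$ compatible with both sides of the inequality: it must be absorbed into $R_{m,j}^v$ on the face without destroying the uniformity of the constants across all elements $\tilde{\Omega}_m^v$ in the geometric mesh. Since the mesh in $x^v$ is quasi-uniform and the matrix $A^v$ is uniformly positive definite with uniformly bounded $C^1$-coefficients on each element, the trace and inverse estimates apply with an $m$-independent constant; the $(\ln W)^2$ factor then arises exactly as in Lemmas \ref{lem3.5.1}--\ref{lem3.5.2} from the polynomial $L^\infty$-to-$L^2$ inequality on the face $\tilde{\Gamma}_{m,j}^v$, and not from the corner structure itself.
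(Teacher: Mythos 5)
Your Steps 1 and 2 are sound, and your overall plan (work on the quasi-uniform face $\tilde{\Gamma}_{m,j}^v$ in $x^v$-coordinates, absorb $\sin^2(\phi_v)$ into the constants, keep $R_{m,j}^v=\sup e^{x_3^v}$ to track the weight) is exactly what the paper intends; the paper omits the proof of this lemma and relies on the argument of Lemma \ref{lem3.5.1} in Appendix D.5. But Step 3 has a genuine gap. When you decompose $\left(\frac{\partial u}{\partial \nb^v}\right)_{A^v}$ on $\partial\tilde{\Gamma}_{m,j}^v$ into tangential derivatives plus a multiple of the conormal derivative, the line integral produces the pure quadratic term
$$\oint_{\partial\tilde{\Gamma}_{m,j}^v}e^{x_3^v}\sin(x_1^v)\,\frac{(\nuw^v)^{T}A^v\nb^v}{(\nuw^v)^{T}A^v\nuw^v}\left(\frac{\partial u}{\partial\nuw^v}\right)_{A^v}^{2}\:ds^v.$$
This is the square of the conormal derivative on a codimension-two set, and it cannot be ``absorbed by an $\epsilon$-term from the $H^2$ second-derivative norm and a $K_\epsilon$-term from the first-derivative norm'': the only available polynomial trace estimate down to $\partial\tilde{\Gamma}_{m,j}^v$ (the analogue of (\ref{eqD.5a})) costs a factor $\ln W$ in front of $\sum_{1\le|\alpha|\le 2}\|e^{x_3^v/2}D^\alpha_{x^v}u_m^v\|^2$, and since the term is a perfect square there is no free Cauchy--Schwarz parameter to trade against it. A bound of the form $C\ln W\sum_{|\alpha|=2}\|\cdot\|^2$ is fatal, because the stability proof requires the second-derivative contributions to carry the coefficient $\epsilon$ so that they can be absorbed. (The mixed terms, tangential derivative times conormal derivative, are fine: there Cauchy--Schwarz with parameter $\eta\sim\epsilon/\ln W$ gives the $(\ln W)^2\|u_m^v\|^2_{3/2}$ and $\epsilon$-terms you describe.)

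The paper eliminates this term by an exact cancellation, not by estimation, and this is the step your proposal is missing. One also decomposes $(\partial u/\partial\taux_l^v)_{A^v}=\sum_k\alpha^v_{lk}\,\partial u/\partial\taux_k^v+\alpha^v_{l3}(\partial u/\partial\nuw^v)_{A^v}$ inside the surface integral, writes $2\alpha^v_{l3}(\partial u/\partial\nuw^v)_{A^v}\,\partial_{s_l^v}(\partial u/\partial\nuw^v)_{A^v}$ as $\partial_{s_l^v}\bigl(\alpha^v_{l3}(\partial u/\partial\nuw^v)^2_{A^v}\bigr)$ minus a zeroth-order remainder, and applies the divergence theorem on the face; the algebraic identity $\sum_l(\nb^v\cdot\taux_l^v)\alpha^v_{l3}=(\nuw^v)^{T}A^v\nb^v/\bigl((\nuw^v)^{T}A^v\nuw^v\bigr)$ then makes the resulting edge term cancel the dangerous line integral exactly (this is (D.66)--(D.73) in the flat case). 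In the vertex neighbourhood you must carry the weight $e^{x_3^v}\sin(x_1^v)$ through this integration by parts; the cancellation survives because the weight is a common scalar factor, and the extra surface remainder $-\int_{\tilde{\Gamma}_{m,j}^v}\sum_l\partial_{s_l^v}\bigl(e^{x_3^v}\sin(x_1^v)\alpha^v_{l3}\bigr)\bigl(\partial u/\partial\nuw^v\bigr)^2_{A^v}\,d\sigma^v$ is a weighted $|u_m^v|^2_{1,\tilde{\Gamma}_{m,j}^v}$, which the interpolated trace inequality splits into $K_\epsilon\sum_{|\alpha|=1}+\epsilon\sum_{|\alpha|=2}$. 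Only after this cancellation do the surviving terms reduce to the products you already know how to handle, yielding (\ref{eq3.109}).
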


\subsection{Estimates for terms on the boundary of {$\Omega^{v-e}$}}
\begin{lem}\label{lem3.5.4}
Let $\Gamma_{m,j}^{v-e}$ be part of the boundary of the element
$\Omega_m^{v-e}$ which lies on the $x_2-x_3$ axis. Define the
contributions from $\Gamma_{m,j}^{v-e}$ by
\begin{align}\label{eq3.111}
(BT)_{m,j}^{v-e}&=-\oint_{\partial\tilde{\Gamma}_{m,j}^{v-e}}
e^{x_3^{v-e}}\left(\frac{\partial u}{\partial
\nb^{v-e}}\right)_{A^{v-e}}\left(\frac{\partial u}
{\partial \nuw^{v-e}}\right)_{A^{v-e}}\: d s^{v-e} \notag \\
&-2\int_{\tilde{\Gamma}_{m,j}^{v-e}}e^{x_3^{v-e}}
\sum_{l=1}^2\left(\frac {\partial u}{\partial
\taux_l^{v-e}}\right)_{A^{v-e}}\frac{\partial}{\partial
s_l^{v-e}}\left( \left(\frac{\partial u}{\partial
\nuw^{v-e}}\right)_{A^{v-e}}\right)\:d\sigma^{v-e}\:.
\end{align}
If Dirichlet boundary conditions are imposed on
$\Gamma_{m,j}^{v-e}$ and $\mu{(\tilde{\Gamma}_{m,j}^{v-e})}
<\infty$ then for any $\epsilon>0$ there exists constants
\, $C_\epsilon$ and $K_\epsilon$ such that
\begin{align}\label{eq3.112}
|(BT)_{m,j}^{v-e}|&\leq C_{\epsilon}(\ln
W)^2\left(\|u_m^{v-e}\|_{0,\tilde{\Gamma}_{m,i}^{v-e}}^2
+\Big|\Big|\Big|\left(\frac{\partial u_m^{v-e}}
{\partial x_1^{v-e}}\right) \Big|\Big|\Big|^2_{\tilde{\Gamma}
_{m,j}^{v-e}}+\Big|\Big|\Big|E_{m,j}^{v-e} \left(\frac{\partial
u_m^{v-e}}{\partial x_3^{v-e}}\right)\Big|\Big|\Big|^2_{\tilde
{\Gamma}_{m,j}^{v-e}}\right) \notag \\
&+K_\epsilon\int_{\tilde{\Omega}_m^{v-e}}\left(\sum_{i=1}^2
\left(\frac{\partial u_m^{v-e}}{\partial x_i^{v-e}}\right)^2
+\sin^2\phi\left(\frac{\partial u_m^{v-e}}{\partial x_3^{v-e}}
\right)^2\right)e^{x_3^{v-e}}\:dx^{v-e} \notag \\
&+\epsilon\int_{\tilde{\Omega}_m^{v-e}}\left(\sum_{i,j=1,2}
\left(\frac{\partial^2 u_m^{v-e}}{\partial x_i^{v-e}\partial x_j^{v-e}}
\right)^2+\sin^2\phi\sum_{i=1}^2\left(\frac{\partial^2 u_m^{v-e}}
{\partial x_i^{v-e}\partial x_3^{v-e}}\right)^2\right. \notag \\
&\left.+\sin^4\phi\left(\frac{\partial^2 u_m^{v-e}}{\left(\partial
x_3^{v-e}\right)^2}\right)^2\right)e^{x_3^{v-e}}\:dx^{v-e}.
\end{align}
If Neumann boundary conditions are imposed on $\Gamma_{m,j}^{v-e}$
and $\mu(\tilde{\Gamma}_{m,j}^{v-e})<\infty$ then for any $\epsilon>0$
there exists a constant $C_\epsilon$ such that
\begin{align}\label{eq3.113}
\left|(BT)_{m,j}^{v-e}\right|&\leq C_{\epsilon}(\ln
W)^2\Big|\Big|\Big|\left( \frac{\partial u}{\partial
\nuw^{v-e}}\right)_{A^{v-e}}\Big|\Big|\Big|^2_{\tilde{
\Gamma}_{m,j}^{v-e}}+\epsilon\left(\int_{\tilde{\Omega}_m^{v-e}}
\left(\sum_{i,j=1}^2 \left(\frac{\partial^2 u_m^{v-e}}{\partial
x_i^{v-e}\partial x_j^{v-e}}\right)^2\right.\right. \notag \\
&\left.\left.+ \sin^2\phi\sum_{i=1}^2\left(\frac {\partial^2
u_m^{v-e}}{\partial x_i^{v-e} \partial x_3^{v-e}}\right)^2
+\sin^4\phi\left(\frac{\partial^2 u_m^{v-e}}{(\partial
x_3^{v-e})^2}\right)^2 + \sum_{i=1}^2\left(\frac{\partial
u_m^{v-e}}{\partial x_i^{v-e}}\right)^2\right.\right. \notag \\
&\left.\left.+ \sin^2\phi\left(\frac{\partial u_m^{v-e}}{\partial
x_3^{v-e}}\right)^2\right)e^{x_3^{v-e}}\:dx^{v-e}\right).
\end{align}
If $\mu{(\tilde{\Gamma}_{m,j}^{v-e})}=\infty$ then for any $\epsilon>0$
for $N,W$ large enough
\begin{align}
\left|(BT)_{m,j}^{v-e}\right|&\leq \epsilon
\int_{\tilde{\Omega}_m^{v-e}}(u_m^{v-e})^2e^{x_3^{v-e}}w^{v-e}(x_1^{v-e})
\:dx^{v-e} \notag
\end{align}
provided $W=O(e^{N^\alpha})$ for $\alpha<1/2$.
\end{lem}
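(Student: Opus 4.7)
The plan is to follow the template already used for Lemma \ref{lem3.5.2} (the edge case) and Lemma \ref{lem3.5.1} (the regular region), adapted to the anisotropic geometry and the modified coordinates $x^{v-e}=(\psi,\theta,\zeta)$ introduced in Section~$2$. First I would write the boundary term $(BT)_{m,j}^{v-e}$ in terms of the matrix $A^{v-e}$ of \eqref{eq3.55} and the transformation \eqref{eq3.59}, using that the face $\Gamma_{m,j}^{v-e}$ lies on the $x_2$–$x_3$ plane; in the $x^{v-e}$ coordinates this face becomes one of the three canonical coordinate faces described in \eqref{eq2.27a}--\eqref{eq2.27c}, so the normal $\nuw^{v-e}$ and tangent vectors $\taux_1^{v-e}, \taux_2^{v-e}$ take particularly simple forms and the line element $ds^{v-e}$, surface element $d\sigma^{v-e}$ and weight factors $e^{x_3^{v-e}}$, $\sin^2\phi$ can be read off directly.

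For the Dirichlet case with $\mu(\tilde{\Gamma}_{m,j}^{v-e})<\infty$, I would integrate by parts in the tangential direction on $\tilde{\Gamma}_{m,j}^{v-e}$ to transfer the derivative $\partial/\partial s_l^{v-e}$ off of $(\partial u/\partial\nuw^{v-e})_{A^{v-e}}$, apply Cauchy–Schwarz, and then split each resulting product by Young's inequality. The non-differentiated factors carry the trace norm $|||\cdot|||_{\tilde{\Gamma}_{m,j}^{v-e}}$ defined in \eqref{eq2.27a}--\eqref{eq2.27c}, which is exactly the norm appearing in \eqref{eq3.112}; the differentiated factors will be absorbed into the $\epsilon$-term using the trace theorem in the anisotropic spaces, valid with a uniform constant because the geometric mesh has been reduced to a quasi-uniform mesh in $x^{v-e}$ (and further rescaled to $\hat{\Omega}_n^{v-e}$ via \eqref{eq3.58}). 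The extra $(\ln W)^2$ factor arises exactly as in Lemma~\ref{lem3.5.1}: a polynomial inverse estimate of Jackson–Bernstein type for spectral element functions on the faces, since $u_n^{v-e}$ is a polynomial of degree $\leq W$ in the local variables on each non-corner element, and the endpoint line integrals on $\partial\tilde{\Gamma}_{m,j}^{v-e}$ pick up the $\ln W$ factor twice. The Neumann case is simpler: the only new quantity on the boundary is $(\partial u/\partial\nuw^{v-e})_{A^{v-e}}$, which already appears in the data norm, and the remaining tangential term is handled by the same integration-by-parts plus trace argument.

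For the corner case $\mu(\tilde{\Gamma}_{m,j}^{v-e})=\infty$, the element $\tilde{\Omega}_m^{v-e}$ is of the form described in Section~$2.3$: either $u_m^{v-e}$ reduces to the constant $h_v$ or to a one-dimensional polynomial $\sum_{l=0}^{W_p}\beta_l\zeta^l$. In both subcases, most of the derivatives appearing in $(BT)_{m,j}^{v-e}$ vanish identically, and what remains is a one-dimensional integral in $\zeta$ (or is identically zero). Using the normalization $\int_{-\infty}^{\zeta_1^{v-e}} w^{v-e}(x_1^{v-e})\,dx_1^{v-e}=1$, a polynomial inverse estimate controlled by $W$, and the hypothesis $W=O(e^{N^{\alpha}})$ with $\alpha<1/2$, one bounds the remaining term by $\epsilon\int_{\tilde{\Omega}_m^{v-e}}(u_m^{v-e})^2 e^{x_3^{v-e}}w^{v-e}(x_1^{v-e})\,dx^{v-e}$; the growth restriction on $W$ is precisely what allows the large constants from the $e^{\zeta/2}$ and $\sin\phi$ weight factors on the semi-infinite slab to be absorbed into $\epsilon$ for $N$ sufficiently large.

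The main obstacle, as in the vertex-edge interior estimates \eqref{eq3.99}--\eqref{eq3.100}, will be the careful bookkeeping of the two singular weights $e^{x_3^{v-e}}$ and $\sin^2\phi$ and matching them against the asymmetric definition of $|||\cdot|||_{\tilde{\Gamma}_{m,j}^{v-e}}$ in the three cases of \eqref{eq2.27a}--\eqref{eq2.27c}: for instance, the factor $G_{n,i}^{v-e}=E_{n,i}^{v-e}$ appears only on the $\zeta=$const face, while on the $\psi=$const and $\theta=$const faces the $\zeta$-difference quotient already carries an $E_{n,i}^{v-e}$. Ensuring that the trace inequality and the integration-by-parts produce exactly the combinations of $E_{m,j}^{v-e}$, $F_{m,j}^{v-e}$ and $\sin^2\phi$ that appear on the right-hand side of \eqref{eq3.112}--\eqref{eq3.113} — rather than ones which are too weak to be controlled by $\mathcal{U}_{v-e}^{N,W}$ in \eqref{eq2.32} — is the delicate point. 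This is precisely the technical check deferred to Appendix~D.
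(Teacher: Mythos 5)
Your proposal follows essentially the same route the paper intends: the paper states this lemma without a separate proof, relying on the direct adaptation of the arguments for Lemma \ref{lem3.5.1} and Lemma \ref{lem3.5.2} (Appendices D.5 and D.6) that you describe — the conormal decomposition of the tangential derivatives, the divergence-theorem manipulation that cancels the $\left(\left(\frac{\partial u}{\partial \nuw^{v-e}}\right)_{A^{v-e}}\right)^2$ line integrals between the two pieces of $(BT)_{m,j}^{v-e}$, trace and polynomial inverse estimates in the rescaled variables \eqref{eq3.58} producing the $(\ln W)^2$ factor, and the degeneration of the spectral element function to a constant or one-dimensional polynomial on corner elements for the $\mu(\tilde{\Gamma}_{m,j}^{v-e})=\infty$ case. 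Your identification of the weight bookkeeping against the anisotropic face norms \eqref{eq2.27} as the delicate point is also accurate.
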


\section{Proof of the Stability Theorem}
We are now in a position to prove the stability estimates stated in Theorems
\ref{thm2.3.1}, \ref{thm2.3.2} and \ref{thm2.3.3}. We recall them again.
\begin{thm2.3.1}
Consider the elliptic boundary value problem (\ref{eq2.1}). Suppose the boundary
conditions are Dirichlet. Then
\[\mathcal U^{N,W}(\{\mathcal F_u\})\leq C(\ln W)^2\mathcal V^{N,W}(\{\mathcal F_u\})\:.\]
\end{thm2.3.1}
\begin{thm2.3.2}
If the boundary conditions for the elliptic boundary value problem (\ref{eq2.1}) are
mixed then
\[\mathcal U^{N,W}(\{\mathcal F_u\})\leq CN^4 \mathcal V^{N,W}(\{\mathcal F_u\})\]
provided $W=O(e^{N^\alpha})$ for $\alpha<1/2$.
\end{thm2.3.2}
\begin{thm2.3.3}
If the boundary conditions are mixed and the spectral element functions $(\{\mathcal F_u\})$
are conforming on the wirebasket $W\!B$ and vanish on $W\!B$ then
\[\mathcal U^{N,W}(\{\mathcal F_u\}) \leq C (\ln W)^2\mathcal V^{N,W}(\{\mathcal F_u\})\]
provided $W=O(e^{N^\alpha})$ for $\alpha<1/2$.
\end{thm2.3.3}
\begin{proof}
Define
\begin{align*}
\mathcal U^{N,W}_{(2)}(\{\mathcal F_u\})&=\sum_{l=1}^{N_r}
\int_{\Omega_l^r}\sum_{i,j=1}^3\left(\frac{\partial^2u_l^r}
{\partial x_i\partial x_j}\right)^2 \:dx \\
&+\sum_{v\in \mathcal V}\left(\sum_{l=1,\mu{(\tilde{\Omega}_l^v)
<\infty}}^{N_v}\int_{\tilde{\Omega}_l^v}e^{x_3^v}\sum_{i,j=1}^3
\left(\frac{\partial^2u_l^v} {\partial x_i^v\partial x_j^v}
\right)^{2}\:dx^v\right)\\
&+\sum_{{v-e}\in \mathcal V-E}\left(\sum_{l=1,\mu{(\tilde
{\Omega}_l^{v-e})<\infty}}^{N_{v-e}}\int_{\tilde{\Omega}_l^{v-e}}
e^{x_3^{v-e}}\left(\sum_{i,j=1,2}
\left(\frac{\partial^2 u_l^{v-e}}{\partial x_i^{v-e}
\partial x_j^{v-e}}\right)^2\right.\right.\\
&\left.\left.+\sin^2\phi\sum_{i=1}^2\left(\frac{\partial^2
u_l^{v-e}}{\partial x_i^{v-e}\partial x_3^{v-e}}\right)^2
+\sin^4\phi\left(\frac{\partial^2 u_l^{v-e}}
{(\partial x_3^{v-e})^2}\right)^2\right)\:dx^{v-e}\right)\\
&+\sum_{e\in \mathcal E}\left(\int_{\tilde{\Omega}_l^e}\left(
\sum_{i,j=1,2}\left(\frac{\partial^2 u_l^e}{\partial x_i^e \partial
x_j^e}\right)^2+e^{2\tau}\sum_{i=1}^2\left(\frac{\partial^2 u_l^e}
{\partial x_i^e\partial x_3^e}\right)^2\right.\right.\\
&\left.\left.+e^{4\tau}\left(\frac{\partial^2 u_l^e}
{\left(\partial x_3^e\right)^2}\right)^2\right)dx^e\right)\:.
\end{align*}
Then combining the results in Sections $3.1, 3.2, 3.3$ and $3.4$ we
obtain that for any $\epsilon>0$ there exist constants $C_\epsilon$
and $K_\epsilon$ such that
\begin{align}
\mathcal U^{N,W}_{(2)}(\{\mathcal F_u\})&\leq C_\epsilon(\ln W)^2
\mathcal V^{N,W}(\{\mathcal F_u\})+\epsilon\:\mathcal U^{N,W}_{(2)}
(\{\mathcal F_u\}) \notag \\
&+K_\epsilon\:\mathcal U^{N,W}_{(1)}(\{\mathcal F_u\})\:.
\end{align}
Here $\mathcal U^{N,W}_{(1)}(\{\mathcal F_u\})$ is as defined in
$(\ref{eq3.90})$. Hence choosing $\epsilon$ small enough we obtain
\begin{equation}\label{eq3.115}
\mathcal U^{N,W}_{(2)}(\{\mathcal F_u\})\leq 2\left(C_\epsilon(\ln W)^2
\mathcal V^{N,W}(\{\mathcal F_u\})+K_\epsilon\:\mathcal U^{N,W}_{(1)}
(\{\mathcal F_u\})\right)\:.
\end{equation}
At the same time using Theorem \ref{thm3.3.1} we have
\begin{equation}\label{eq3.116}
\mathcal U^{N,W}_{(1)}(\{\mathcal F_u\})\leq K_{N,W}\mathcal V^{N,W}
(\{\mathcal F_u\})\:.
\end{equation}
Now
\begin{equation}\label{eq3.117}
\mathcal U^{N,W}(\{\mathcal F_u\})=\mathcal U^{N,W}_{(1)}(\{\mathcal F_u\})
+\mathcal U^{N,W}_{(2)}(\{\mathcal F_u\})\:.
\end{equation}
Combining $(\ref{eq3.115}), (\ref{eq3.116})$ and $(\ref{eq3.117})$ the
result follows.
\end{proof}

\chapcleardoublepage

\chapter{The Numerical Scheme and Error Estimates}
\section{Introduction}
In Chapters $2$ and $3$ we have described a method for solving three dimensional
elliptic boundary value problems on non-smooth domains using $h-p$ spectral
element methods.

In this chapter we provide the numerical scheme based on the stability estimates
of Chapters $2$ and $3$. We shall define a functional which is closely related
to the quadratic forms defined in Chapter $2$ in the regular region and in the
various neighbourhoods of vertices, edges and vertex-edges. We seek a solution
which minimizes this functional.

The functional which we minimize is the sum of a weighted squared norm of the
residuals in the partial differential equations and the squared norm of the
residuals in the boundary conditions in fractional Sobolev spaces and the sum
of the squares of the jumps in the function and its derivatives across inter-element
boundaries in fractional Sobolev norms. The Sobolev spaces in vertex-edge and
edge neighbourhoods are anisotropic and become singular at the corners and edges.

In section $4.3$ we obtain error estimates and show that the error is exponentially
small in terms of the number of degrees of freedom and number of layers in the
geometric mesh.


\section{The Numerical Scheme}
In Chapter $2$, we had divided the polyhedral domain $\Omega$ into a regular
region $\Omega^r$, a set of vertex neighborhoods $\Omega^v$, a set of edge
neighborhoods $\Omega^e$ and a set of vertex-edge neighborhoods $\Omega^{v-e}$.
We had further divided each of these sub domains into still smaller elements as
curvilinear hexahedrons, tetrahedrons and prisms and by virtue of the fact that
a tetrahedron can be split into four hexahedrons and a prism can be split into
three hexahedrons we can assume that all our elements are hexahedrons to keep
the exposition simple (and to keep the programming simple).

In Chapter $2$, we had introduced a spectral element representation of the
function $u$ on each of these elements in various parts of the domain $\Omega$.
To formulate the numerical scheme we will define a functional
$\mathcal R^{N,W}(\{\mathcal F_u\})$, closely related to the quadratic form
$\mathcal V^{N,W}(\{\mathcal F_u\})$ as follows:
\begin{align}\label{eq4.1}
\mathcal R^{N,W}\left(\{\mathcal F_u\}\right)
&=\mathcal R_{regular}^{N,W}\left(\{\mathcal F_u\}\right)
+\mathcal R_{vertices}^{N,W}\left(\{\mathcal F_u\}\right)
+\mathcal R_{vertex-edges}^{N,W}\left(\{\mathcal F_u\}\right)\notag\\
&+\mathcal R_{edges}^{N,W}(\{\mathcal F_u\}).
\end{align}

Let us first consider the regular region $\Omega^r$ of $\Omega$ and define the
functional $\mathcal R_{regular}^{N,W}(\{\mathcal F_u\})$. $\Omega^r$ is
divided into $N_r$ curvilinear hexahedrons $\Omega^r_l, l=1,2,\ldots,N_r$ and we
map each of these $\Omega^r_l$ onto the master cube
\[Q=(-1,1)^3=\{\lambda=(\lambda_1,\lambda_2,\lambda_3)|
-1\leq\lambda_i\leq 1,1\leq i\leq 3\}\]
using an analytic map $M^r_l$ having an analytic inverse.

We now define a non-conforming spectral element representation on each of these
elements as follows:
\[u_l^r(\lambda) = \sum_{i=0}^W \sum_{j=0}^W \sum_{k=0}^W
\:\alpha_{i,j,k}\:\lambda_1^i\lambda_2^j\lambda_3^k.\]

Let $\lambda=(\lambda_1,\lambda_2,\lambda_3)$ and let
$f^r_l(\lambda)=f(M^r_l(\lambda_1,\lambda_2,\lambda_3))$
where $\lambda\in Q$ for $l=1,2,\ldots,N_r$ and let $J^r_l(\lambda)$
denote the Jacobian of the mapping $M^r_l$. Define
\[F_{l}^{r}(\lambda)=f^r_l(\lambda)\sqrt{ J^r_l(\lambda)}\;,\]
and
\[L_{l}^{r}u_{l}^{r}(\lambda)=Lu_{l}^{r}(M_{l}^{r}(\lambda))\sqrt{J^r_l(\lambda)}\;.\]

Now consider the boundary conditions $w=g_k$ on $\Gamma_k$ for
$k\in \mathcal D=\Gamma^{[0]}$ and $\left(\frac{\partial w}{\partial \nuw}\right)_A=h_k$
on $\Gamma_k$ for $k\in \mathcal N=\Gamma^{[1]}$. Let
$\Gamma^r_{i,k}=\Gamma_k\cap\partial \Omega^r_i$ be the image of the mapping $M^r_i$
corresponding to $\lambda_1=-1$. Let $g^r_{i,k}=g_k(M^r_i(-1,\lambda_2,\lambda_3))$ and
$h^r_{i,k}=h_k(M^r_i(-1,\lambda_2,\lambda_3))$ where $-1\leq\lambda_2,\lambda_3\leq 1$.

We now define
\begin{align}\label{eq4.2}
\mathcal R_{regular}^{N,W}(\{\mathcal F_u\})&=\sum_{l=1}^{N_r}\int_{Q=(M_l^r)^{-1}
(\Omega_l^r)}\left|\:L_{l}^{r}u_l^r(\lambda)-F^r_l(\lambda)\:\right|^2\:d\lambda\notag\\
&+\sum_{\Gamma_{l,i}^r\subseteq\bar{\Omega}^r\setminus\partial\Omega}
\left(\|[u]\|_{0,\Gamma_{l,i}^r}^2 +\sum_{k=1}^3\left \|[u_{x_k}]
\right\|_{1/2,{\Gamma_{l,i}^r}}^2\right) \notag \\
&+\mathop{\sum_{{\Gamma_{l,i}^r}\subseteq{\Gamma^{[0]}}}}
\left\|u_l^r-g^r_{l,i}\right\|_{3/2,{\Gamma_{l,i}^r}}^2 \notag \\
&+\sum_{{\Gamma_{l,i}^r}\subseteq{\Gamma^{[1]}}}\left\|\left(\frac{\partial u_l^r}
{\partial \nuw}\right)_A-h^r_{l,i}\right\|_{1/2,{\Gamma_{l,i}^r}}^2.
\end{align}

Let $v$ be one of the vertices of $\Omega$. Consider the vertex neighborhood $\Omega^v$
of the vertex $v \in \mathcal V$, the set of vertices (defined in Chapter $2$).
$\Omega^v$ is divided into $N_v$ curvilinear hexahedrons $\Omega^v_l, l=1,2,\ldots,N_v$.
We define a non-conforming spectral element representation as follows:
Let $\tilde{\Omega}^v_l$ be the image of ${\Omega}^v_l$ in $x^v$ coordinates. Then there
is an analytic map $M^v_l:Q\rightarrow\tilde{\Omega}^v_l$ having an analytic inverse.
If $\tilde{{\Omega}}^v_l$ is a corner element of the form
\[\tilde{{\Omega}}^v_l=\{x^v:(\phi,\theta)\in S_j^v,-\infty<\chi<\ln(\rho_1^v)\}\]
then we define $u_l^v=h_v$, where $h_v$ is a constant.

If ${\tilde{\Omega}}^v_l$ is of the form
\[\tilde{{\Omega}}^v_l=\{x^v:(\phi,\theta)\in S_j^v, \ln(\rho_i^v)<\chi<\ln(\rho_{i+1}^v)\}\]
we define
\[u_l^v(x^v)=\sum_{t=0}^{W_l}\sum_{s=0}^{W_l}\sum_{r=0}^{W_l}\;\beta_{r,s,t}\:\phi^r\theta^s\chi^t\;.\]
Here $1\leq W_l\leq W$. Moreover as in~\cite{G1}, $W_l=[\mu_{1}i]$ for all $1\leq i\leq N$,
where $\mu_{1}>0$ is a degree factor. Hereafter $[a]$ denotes the greatest positive integer
$\leq a$.

Let $A_v=(x_1^v,x_2^v,x_3^v)$ denote one of the vertices of $\Omega$. Let $F_l^v(x^v)=e^{5/2\chi}
\sqrt{\sin\phi} f(x(x^v))$ for $x_l^v\in\tilde{\Omega}_l^v, 1\leq l\leq N_v$.

We now consider the boundary conditions $w=g_i$ on $\Gamma_i$ for $i\in \mathcal D=\Gamma^{[0]}$
and $\left(\frac{\partial w}{\partial \nuw}\right)_{A}=h_i$ on $\Gamma_i$ for
$i\in\mathcal N =\Gamma^{[1]}$. Let $\Gamma^v_{l,i}=\Gamma_i\cap\partial\Omega^v_l$ and suppose
$\Omega^v_l$ is not a corner element. Moreover it is assumed that $\Gamma^v_{l,i}$ lies on the
$x_2-x_3$ plane for simplicity. Define
\begin{align*}
g^v_{l,i}(x^v)&=w=g_i(x(x^v)) \;\mbox{for}\; \Gamma^v_{l,i}\subseteq\Gamma^{[0]},\\
h^v_{l,i}(x^v)&=\left(\frac{\partial w}{\partial \nuw^v}\right)_{A^{v}}
=\frac{e^\chi}{\sin\phi} h_i(x(x^v)) \;\mbox{for}\; \Gamma^v_{l,i}\subseteq\Gamma^{[1]}.
\end{align*}
Let
$$R_{l,i}^v=\underset {x^v\in\tilde{\Gamma}_{l,i}^v}{sup}(e^{x_3^v}).$$
We now define the functional
\begin{align}\label{eq4.3}
\mathcal R_v^{N,W}\left(\{\mathcal F_u\}\right) &=\sum_{l=1,\mu(\tilde{\Omega}_l^v)<\infty}^{N_v}
\int_ {\tilde{\Omega}_l^v}\left|\:L^v u_l^v(x^v)-F_l^v(x^v)\:\right|^2\;dx^v \notag \\
&+ \mathop{\sum_{\Gamma_{l,i}^v\subseteq\bar{\Omega}^v\setminus\partial\Omega}}_{\mu(\tilde
{\Gamma}_{l,i}^v)<\infty}\left(\left\|\:\sqrt{R_{l,i}^v}[u]\:\right\|_{0,\tilde{\Gamma}_{l,i}^v}^2
+\sum_{k=1}^3\left\|\:\sqrt{R_{l,i}^v}[u_{x_k^v}]\:\right\|_{1/2,{\tilde{\Gamma}_{l,i}^v}}^2\right)\notag\\
&+\mathop{\sum_{{\Gamma_{l,i}^v}\subseteq{\Gamma^{[0]}},}}_{\mu(\tilde{\Gamma}_{l,i}^v)<\infty}
\left\|\:\sqrt{R_{l,i}^v}\left( u_l^v-g^v_{l,i}\right)\:\right\|_{3/2,{\tilde{\Gamma}_{l,i}^v}}^2\notag\\
&+\mathop{\sum_{{\Gamma_{l,i}^v}\subseteq{\Gamma^{[1]}},}}_{\mu(\tilde{\Gamma}_{l,i}^v)<\infty}
\left\|\:\sqrt{R_{l,i}^v}\left(\left(\frac{\partial u_l^v}{\partial\nuw^v}\right)_{A^v}-h^v_{l,i}\right)\:
\right\|_{1/2,{\tilde{\Gamma}_{l,i}^v}}^2.
\end{align}
The functional $\mathcal R_{vertices}^{N,W}(\{\mathcal F_u\})$ is then given by
\begin{equation}\label{eq4.4}
\mathcal R_{vertices}^{N,W}(\{\mathcal F_u\})=\sum_{v\in\mathcal V}
\mathcal R_v^{N,W}(\{\mathcal F_u\})\;.
\end{equation}

Next, we define $\mathcal R_{vertex-edges}^{N,W}(\{\mathcal F_u\})$. Let $\Omega^{v-e}$ denote
the vertex-edge neighborhood of the vertex-edge $v-e\in\mathcal{V-E}$. $\Omega^{v-e}$ is divided
into $N_{v-e}$ elements $\Omega^{v-e}_l, l=1,2,\ldots,N_{v-e}$. As in Chapter $2$, let us define
a non-conforming spectral element representation on each of these sub-domains as follows:

Let $\tilde{\Omega}^{v-e}_l$ be the image of ${\Omega}^{v-e}_l$ in $x^{v-e}$ coordinates.
If $\tilde{\Omega}^{v-e}_n$ is a corner element of the form
\[\tilde{\Omega}^{v-e}_n=\{x^{v-e}:\;\psi_i^{v-e}<\psi<\psi_{i+1}^{v-e},\;\theta_j^{v-e}<\theta
<\theta_{j+1}^{v-e},\;-\infty<\zeta<\zeta_{1}^{v-e}\}\]
then on $\tilde{\Omega}^{v-e}_n$ we define
\[u_n^{v-e} = h_{v-e} = h_v\;.\]

Next, suppose $\tilde{\Omega}^{v-e}_p$ is a corner element of the form
\[\tilde{\Omega}^{v-e}_p = \left\{x^{v-e}: -\infty<\psi<\psi_{1}^{v-e},\;\theta_j^{v-e}<
\theta<\theta_{j+1}^{v-e},\;\zeta_n^{v-e}<\zeta<\zeta_{n+1}^{v-e}\right\}\]
with $k \geq 1$. Then on $\tilde{\Omega}^{v-e}_p$ we define
$$u_p^{v-e}(x^{v-e}) = \sum_{l=0}^{W_p}\:\beta_l\:\zeta^l.$$
Here $1\leq W_p \leq W$. Moreover $W_p=[\mu_{2}n]$ for $1\leq n\leq N$, where $\mu_{2}>0$
is a degree factor.
\newline
Finally, suppose $\tilde{\Omega}^{v-e}_q$ is of the form
\[\tilde{\Omega}^{v-e}_q = \left\{x^{v-e}:\;\psi_i^{v-e}<\psi<\psi_{i+1}^{v-e},\;\theta_j^{v-e}
<\theta<\theta_{j+1}^{v-e},\;\zeta_n^{v-e}<\zeta<\zeta_{n+1}^{v-e}\right\}\]
with $i \geq 1$, $n \geq 1$. Then on $\tilde{\Omega}^{v-e}_q$ we define
\[u_q^{v-e}(x^{v-e})=\sum_{r=0}^{W_q}\sum_{s=0}^{W_q}\sum_{t=0}^{V_q}\;\gamma_{r,s,t}
\;\psi^r \theta^s \zeta^t.\]
Here $1\leq W_q \leq W$ and $1\leq V_q \leq W$. Moreover $W_q=[\mu_{1}i], V_q=[\mu_{2}n]$ for
$1\leq i,n\leq N$, where $\mu_{1},\mu_{2}>0$ are degree factors~\cite{G1}.

Let $F_l^{v-e}(x^{v-e})=e^{2x_1^{v-e}}e^{\frac{5}{2}x_3^{v-e}}f(x(x^{v-e}))$ for $x^{v-e}\in
\tilde{\Omega}_l^{v-e}, 1\leq l\leq N_{v-e}$.

We now consider the boundary conditions $w=g_k$ on $\Gamma_k$ for $k\in \mathcal D=\Gamma^{[0]}$
and $\left(\frac{\partial w}{\partial\nuw}\right)_{A}=h_k$ on $\Gamma_k$ for $k\in\mathcal N=\Gamma^{[1]}$.
Then $\left(\frac{\partial w}{\partial\nuw^{v-e}}\right)_{A^{{v-e}}}=e^{x_3^{v-e}}e^{x_1^{v-e}}h_k(x(x^{v-e}))$.
Let $\Gamma^{v-e}_{l,k}=\Gamma_k\cap\partial \Omega^{v-e}_l$ and suppose $\Omega^{v-e}_{l}$ is not
a corner element. Moreover it is assumed that $\Gamma^{v-e}_{l,k}$ lies on the $x_2-x_3$ plane for
simplicity and $\mu(\tilde{\Gamma}^{v-e}_{l,k})<\infty$. Define
\begin{align*}
g^{v-e}_{l,k}(x^{v-e})&=w=g_k(x(x^{v-e})) \;\mbox{for}\;\Gamma^{v-e}_{l,k}\subseteq\Gamma^{[0]},\\
h^{v-e}_{l,k}(x^{v-e})&=\left(\frac{\partial w}{\partial \nuw^{v-e}}\right)_{A^{v-e}}
=e^{x_3^{v-e}}e^{x_1^{v-e}} h_k(x(x^{v-e})) \;\mbox{for}\;\Gamma^{v-e}_{l,k}\subseteq\Gamma^{[1]}.
\end{align*}

We define the functional
\begin{align}\label{eq4.5}
\mathcal R_{v-e}^{N,W}(\{\mathcal F_u\}) &= \sum_{l=1,\mu(\tilde{\Omega}_l^{v-e})<\infty}^{N_{v-e}}
\int_{\tilde{\Omega}_l^{v-e}}\left|\:L^{v-e} u_l^{v-e}(x^{v-e})-F_l^{v-e}(x^{v-e})\:\right|^2\:dx^{v-e}\notag\\
&+ \mathop{\sum_{\Gamma_{l,k}^{v-e}\subseteq\bar{\Omega}^{v-e}
\setminus\partial{\Omega},}}_{\mu(\tilde{\Gamma}_{l,k}^{v-e})<\infty}\left(\left\|\:\sqrt{F_{l,k}^{v-e}
G_{l,k}^{v-e}}\;[u]\:\right\|_{0,\tilde{\Gamma}_{l,k}^{v-e}}^2
+\big|\big|\big|\;[u_{x_1^{v-e}}]\;\big|\big|\big|_{\tilde{\Gamma}_{l,k}^{v-e}}^2\right. \notag \\
&+\big|\big|\big|\:[u_{x_2^{v-e}}]\:\big|\big|\big|_{\tilde{\Gamma}_{l,k}^{v-e}}^2
+\big|\big|\big|\:E_{l,k}^{v-e}\,[u_{x_3^{v-e}}]\:\big|\big|\big|_{\tilde{\Gamma}_{l,k}^{v-e}}^2\Bigg)\notag\\
&+\mathop{\sum_{{\Gamma_{l,k}^{v-e}}\subseteq{\Gamma^{[0]}},}}_{\mu(\tilde{\Gamma}_{l,k}^{v-e})<\infty}
\left(\left\|\,\sqrt{F_{l,k}^{v-e}}\;\left(u_l^{v-e}-g_{l,k}^{v-e}\right)\,
\right\|_{0,{\tilde{\Gamma}_{l,k}^{v-e}}}^2\right. \notag \\
&+\big|\big|\big|\:\left(u_{x_1^{v-e}}-{(g_{l,k}^{v-e})}_{x_1^{v-e}}\right)
\:\big|\big|\big|^2_{\tilde{\Gamma}_{l,k}^{v-e}} \notag \\
&+\big|\big|\big|\:E_{l,k}^{v-e}\,\left(u_{x_3^{v-e}}-{(g_{l,k}^{v-e})}_{x_3^{v-e}}
\right)\:\big|\big|\big|^2_{\tilde{\Gamma}_{l,k}^{v-e}}\Bigg) \notag \\
&+\mathop{\sum_{{\Gamma_{l,k}^{v-e}}\subseteq
{\Gamma^{[1]}},}}_{\mu(\tilde{\Gamma}_{l,k}^{v-e})<\infty}
\Big|\Big|\Big|\:\left(\frac{\partial u}{\partial\nuw^{v-e}}
\right)_{A^{v-e}}-h_{l,k}^{v-e}\:\Big|\Big|\Big|_{\tilde{\Gamma}_{l,k}^{v-e}}^2.
\end{align}
Here $E_{l,k}^{v-e}=\underset{x^{v-e}\in{\tilde{\Gamma}_{l,k}^{v-e}}}{sup}(\sin\phi)$ and
$G_{l,k}^{v-e}=\underset{x^{v-e}\in{\tilde{\Gamma}_{l,k}^{v-e}}}{sup}(e^{x_3^{v-e}})$. Moreover,
$G_{l,k}^{v-e}$ is defined in (\ref{eq2.27}). 
\newline
Then the functional $\mathcal R_{vertex-edges}^{N,W}(\{\mathcal F_u\})$ is defined as follows:
\begin{equation}\label{eq4.6}
\mathcal R_{vertex-edges}^{N,W}(\{\mathcal F_u\})=\sum_{v-e\in
\mathcal V-E}\mathcal R_{v-e}^{N,W}(\{\mathcal F_u\})\;.
\end{equation}

Finally, we define the functional $\mathcal R_{edges}^{N,W}(\{\mathcal F_u\})$. Let
$\Omega^{e}$ denote the edge neighborhood of the edge $e\in\mathcal{E}$. $\Omega^{e}$
is divided into $N_{e}$ elements $\Omega^{e}_l, l=1,2,\ldots,N_{e}$. Once again we define
a non-conforming spectral element representation on each of these sub-domains as follows:

Let $\tilde{\Omega}^{e}_{m}$ be the image of ${\Omega}^{e}_{m}$ in $x^{e}$ coordinates.
If $\tilde{\Omega}^{e}_p$ is a corner element of the form
\[\tilde{\Omega}^e_p = \left\{x^e:\: -\infty<x_1^e<\ln(r_{1}^e),
\:\theta_j^e<x_2^e<\theta_{j+1}^e,\:Z_n^e<x_3^e<Z_{n+1}^e\right\}\]
then we define
\[u_p^e(x^e) = \sum_{t=0}^W\:\alpha_t(x_3^e)^t\:.\]

Next, let $\tilde{\Omega}^{e}_q$ be of the form
\[\tilde{\Omega}^e_q = \left\{x^e:\: \ln (r_i^e)<x_1^e<\ln (r_{i+1}^e),
\:\theta_j^e<x_2^e<\theta_{j+1}^e,\:Z_n^e<x_3^e<Z_{n+1}^e\right\}\]
with $1\leq i\leq N,\,0\leq j\leq I_e,\,0\leq n\leq J_e\,.$\\
Then we define
\[u_q^e(x^e) = \sum_{r=0}^{W_q} \sum_{s=0}^{W_q}\sum_{t=0}^W \:\alpha_{r,s,t}
\:(x_1^e)^r(x_2^e)^s(x_3^e)^t\;.\]
Here $1\leq W_q\leq W$. Moreover $W_q=[\mu_{1}i]$ for $1\leq i\leq N$, $\mu_{1}>0$ is
a degree factor.

Let $F_l^{e}(x^{e})=e^{2x_1^{e}}f(x(x^{e}))$ for $x^e\in\tilde{\Omega}_l^e$, $1\leq l\leq N_{e}$.

We now consider the boundary conditions $w=g_k$ on $\Gamma_k$ for $k\in \mathcal D=\Gamma^{[0]}$ and
$\left(\frac{\partial w}{\partial\nuw}\right)_{A}=h_k$ on $\Gamma_k$ for $k\in \mathcal N=\Gamma^{[1]}$.
Then $\left(\frac{\partial w}{\partial \nuw^{e}}\right)_{A^{{e}}}=e^{x_1^e} h_k(x(x^{e}))$. Let
$\Gamma^e_{m,k}=\Gamma_k\cap\partial \Omega^{e}_m$ and suppose $\Omega_m^e$ is not a corner element.
Moreover it is assumed that $\Gamma^{e}_{m,k}$ lies on the $x_2-x_3$ plane for simplicity and
$\mu(\tilde{\Gamma}^{e}_{m,k})<\infty$. Define
\begin{align*}
g^{e}_{m,k}(x^{e})&=w=g_k(x(x^{e})) \;\mbox{for}\; \Gamma^{e}_{m,k}\subseteq\Gamma^{[0]},\\
h^{e}_{m,k}(x^{e})&=\left(\frac{\partial w}{\partial \nuw^{e}}\right)_{A^{e}}=e^{x_1^e} h_k(x(x^{e}))
\;\mbox{for}\; \Gamma^{e}_{m,k}\subseteq\Gamma^{[1]}\:.
\end{align*}

We define
\begin{align}\label{eq4.7}
\mathcal R_{e}^{N,W}(\{\mathcal F_u\}) &= \sum_{l=1,\mu(\tilde{\Omega}_l^{e})<\infty}^{N_{e}}
\int_{\tilde{\Omega}_l^{e}}\left|\:L^{e} u_l^{e}(x^{e})-F_l^{e}(x^{e})\:\right|^2\:dx^{e}\notag\\
&+ \mathop{\sum_{\Gamma_{m,k}^{e}\subseteq\bar{\Omega}^{e}
\setminus\partial{\Omega},}}_{\mu(\tilde{\Gamma}_{m,k}^{e})<\infty}\left(\left\|
\:\sqrt{H_{m,k}^{e}}\;[u]\:\right\|_{0,\tilde{\Gamma}_{m,k}^{e}}^2
+\big|\big|\big|\;[u_{x_1^{e}}]\;\big|\big|\big|_{\tilde{\Gamma}_{m,k}^{e}}^2\right. \notag \\
&+\big|\big|\big|\:[u_{x_2^{e}}]\:\big|\big|\big|_{\tilde{\Gamma}_{m,k}^{e}}^2+
\big|\big|\big|\:G_{m,k}^{e}\,[u_{x_3^{e}}]\:\big|\big|\big|_{\tilde{\Gamma}_{m,k}^{e}}^2\Bigg)\notag\\
&+\mathop{\sum_{{\Gamma_{m,k}^{e}}\subseteq{\Gamma^{[0]}},}}_{\mu(\tilde{\Gamma}_{m,k}^{e})<\infty}
\left(\left\|\;\left(u_m^{e}-g_{m,k}^{e}\right)\,\right\|_{0,{\tilde{\Gamma}_{m,k}^{e}}}^2
+\big|\big|\big|\:\left(u_{x_1^{e}}-{(g_{m,k}^{e})}_{x_1^{e}}\right)
\:\big|\big|\big|^2_{\tilde{\Gamma}_{m,k}^{e}} \right.\notag\\
&+\big|\big|\big|\:G_{m,k}^{e}\,\left(u_{x_3^{e}}-{(g_{m,k}^{e})}_{x_3^{e}}
\right)\:\big|\big|\big|^2_{\tilde{\Gamma}_{m,k}^{e}}\Bigg) \notag\\
&+\mathop{\sum_{{\Gamma_{m,k}^{e}}\subseteq{\Gamma^{[1]}},}}_{\mu(\tilde{\Gamma}_{m,k}^{e})<\infty}
\Big|\Big|\Big|\:\left(\frac{\partial u}{\partial\nuw^{e}}\right)_{A^{e}}-h_{m,k}^{e}
\:\Big|\Big|\Big|_{\tilde{\Gamma}_{m,k}^{e}}^2.
\end{align}
Here $G_{m,k}^{e}=\underset{x^e\in{\tilde{\Gamma}_{m,k}^e}}{sup}\left(e^\tau\right)$ and $H_{m,k}^{e}$
is as defined in (\ref{eq2.34}).
\newline
The functional $\mathcal R_{edges}^{N,W}(\{\mathcal F_u\})$ can now
be defined as:
\begin{equation}\label{eq4.8}
\mathcal R_{edges}^{N,W}(\{\mathcal F_u\})=\sum_{e\in\mathcal E}
\mathcal R_{e}^{N,W}(\{\mathcal F_u\})\;.
\end{equation}
Finally using (\ref{eq4.2}), (\ref{eq4.4}), (\ref{eq4.6}) and (\ref{eq4.8})
in (\ref{eq4.1}) we can define $\mathcal R^{N,W}(\{\mathcal F_u\})$.

Our numerical scheme may now be formulated as follows:

\textit{Find $\mathcal F_s\in {\mathcal S}^{N,W}$ which minimizes the functional
$\mathcal R^{N,W}(\{\mathcal F_u\})$ over all $\mathcal F_u\in{\mathcal S}^{N,W}$.
Here ${\mathcal S}^{N,W}$ denotes the space of spectral element functions
$\mathcal F_u$.}

\section{Error Estimates}
It is well known that for three dimensional elliptic problems containing singularities in
the form of vertices and edges, the geometric mesh and a proper choice of element degree
distribution leads to exponential convergence and efficiency of computations
(see~\cite{BG4,BG5,G1,G2} and references therein).

Let $N$ denote the number of refinements in the geometrical mesh and $W$ denote an upper
bound on the degree of the polynomial representation of the spectral element functions. We
assume that $N$ is proportional to $W$.

In this Section we show that the error obtained from the proposed method is exponentially
small in $N$. The optimal rate of convergence with respect to $N_{dof}$, the number of degrees
of freedom is also provided.

Our analysis of error estimates is similar to that in two dimensions (see~\cite{BG6,BG9,KDU,SKT2}
and references therein). Here, we briefly describe the main steps of the proof, for details one
may refer to~\cite{KDU}.

Let ${\mathcal S}^{N,W}(\Omega^v)$, ${\mathcal S}^{N,W}(\Omega^{v-e})$, ${\mathcal S}^{N,W}
(\Omega^{e})$, and ${\mathcal S}^{N,W}(\Omega^r)$ denote the space of spectral element functions
(SEF) over the set of vertex neighbourhoods $\Omega^v$, vertex-edge neighbourhoods $\Omega^{v-e}$,
edge neighbourhoods $\Omega^{e}$ and regular region $\Omega^{r}$ respectively. Let us denote by
$S^{N,W}(\Omega)$ the space of SEF over the whole domain $\Omega$ whose restrictions to $\Omega^v$,
$\Omega^{v-e}$, $\Omega^{e}$ and $\Omega^{r}$ belong to ${\mathcal S}^{N,W}(\Omega^v)$,
${\mathcal S}^{N,W}(\Omega^{v-e})$, ${\mathcal S}^{N,W}(\Omega^{e})$, and ${\mathcal S}^{N,W}(\Omega^r)$
respectively.



Let $\{\mathcal F_{z}\}$ minimize $\mathcal R^{N,W}(\{\mathcal F_u\})$ over all
$\{\mathcal F_{u}\}\in {\mathcal S}^{N,W}(\Omega)$, the space of spectral element functions.
We write one more representation for $\{\mathcal F_{z}\}$ as follows:-
\begin{align*}
\{\mathcal F_{z}\}=\left\{\left\{z_l^{r}(\lambda_{1},\lambda_{2},\lambda_{3})\right\}_{l=1}^{N_{r}},
\left\{z_l^{v}(\phi,\theta,\chi)\right\}_{l=1}^{N_{v}},\left\{z_l^{v-e}(\psi,\theta,\zeta)
\right\}_{l=1}^{N_{v-e}},\left\{z_l^{e}(\tau,\theta,x_{3})\right\}_{l=1}^{N_{e}}\right\}\:.
\end{align*}

Here $z_{l}^{r}(\lambda_{1},\lambda_{2},\lambda_{3})$ is a polynomial of degree $W$ in each of its
variables.

$z_l^{v}=a_{v}$, where $a_v$ is a constant, on corner elements $\tilde{\Omega}_{l}^{v}$ with
$\mu(\tilde{\Omega}_{l}^{v})=\infty$. In all other elements in the vertex neighbourhoods,
$z_l^{v}(\phi,\theta,\chi)$ is a polynomial of degree $W_{l}, 1\leq W_{l}\leq W, W_{l}=[\mu_{1}i]$
for all $1\leq i\leq N+1$, in $\phi,\theta$ and $\chi$ variables separately, where $\mu_{1}>0$ is a
degree factor.

$z_l^{v-e}=a_{v-e}=a_{v}$, on corner elements $\tilde{\Omega}_{l}^{v-e}$ of the form
\[\tilde{\Omega}^{v-e}_l=\{x^{v-e}: \; \psi_i^{v-e}<\psi<\psi_{i+1}^{v-e},\;\theta_j^{v-e}<\theta<
\theta_{j+1}^{v-e},\;-\infty<\zeta<\zeta_{1}^{v-e}\}\]
and $z_l^{v-e}$ is a polynomial of degree $V_l$ in $\zeta$, $1\leq V_{l}\leq W, V_{l}=[\mu_{2}n]$
for all $1\leq n\leq N, \mu_{2}>0$, on corner elements $\tilde{\Omega}_{l}^{v-e}$ of the form
\[\tilde{\Omega}^{v-e}_l = \left\{x^{v-e}: -\infty<\psi<\psi_{1}^{v-e},\;\theta_j^{v-e}<\theta<
\theta_{j+1}^{v-e},\;\zeta_n^{v-e}<\zeta<\zeta_{n+1}^{v-e}\right\}\]
with $n \geq 1$.

On the remaining elements in the vertex-edge neighbourhoods, $z_{l}^{v-e}(\psi,\theta,\zeta)$
is a polynomial of degree $W_{l}, 1\leq W_{l}\leq W, W_{l}=[\mu_{1}i]$ for all $1\leq i\leq N$,
in $\psi,\theta$ variables and of degree $V_l$, $1\leq V_{l}\leq W, V_{l}=[\mu_{2}n]$ for all
$1\leq n\leq N$, in $\zeta$ variable with $\mu_{1}>0,\mu_{2}>0$.

Finally, on corner elements $\tilde{\Omega}_{l}^{e}$ with $\mu(\tilde{\Omega}_{l}^{e})=\infty$,
$z_l^{e}$ is a polynomial of degree $W$ in $x_{3}$ and on the remaining elements $\tilde{\Omega}_{l}^{e}$
away from edges $z_{l}^{e}(\tau,\theta,x_{3})$ is a polynomial of degree $W_{l},1\leq W_{l}\leq W, W_{l}
=[\mu_{1}i],1\leq i\leq N, \mu_{1}>0$ in $\tau,\theta$ variables and of degree $W$ in the $x_{3}$
variable.
\newline
\textbf{Approximation in the regular region:}

Let us first consider the regular region $\Omega^{r}$ of $\Omega$. In Chapter $2$, $\Omega^{r}$
has been divided into $\Omega_{l}^{r}$, $l=1,\ldots,N_{r}$ curvilinear hexahedrons, tetrahedrons
and prisms. Let $M_{l}^{r}$ be the analytic map from $Q$ to $\Omega_{l}^{r}$.

Let $\Pi^{W,W,W}(w(M_{l}^{r}(\lambda)))$ denote the projection of the solution $w$ into the space
of polynomials of degree $N$ in each of its variables with respect to the usual inner product in
$H^2(Q)$. Then
on $\Omega_{l}^{r}$ we define
$$s_l^r(\lambda)=\Pi^{W,W,W}(w(M_{l}^{r}(\lambda)))=\Pi^{W,W,W}(w(\lambda)),\:\mbox{for}\:\lambda\in Q.$$
\textbf{Approximation in vertex neighbourhoods:}

Let us now consider the vertex neighbourhood $\Omega^v$ of the vertex $v\in\mathcal V$, where
$\mathcal V$ denotes the set of vertices of $\Omega$ (see Figure \ref{fig2.8}). We had divided
$\Omega^v$ into $\Omega_{l}^{v},l=1,\ldots,N_{v}$ elements (Chapter $2$).
If $\tilde{{\Omega}}^v_l$ is a corner element of the form
$$\tilde{{\Omega}}^v_l=\{x^v:(\phi,\theta)\in S_j^v,-\infty<\chi<\ln(\rho_1^v)\}$$
then on $\tilde{{\Omega}}^v_l$ we define
$$s_l^v=w_v,$$
where $w_v=w(v)$ denotes the value of $w$ at the vertex $v$ defined as in our differentiability
estimates of Chapter $2$.

If ${\tilde{\Omega}}^v_l$ is of the form
\[\tilde{{\Omega}}^v_l=\{x^v:(\phi,\theta)\in S_j^v,\ln(\rho_i^v)<\chi<\ln(\rho_{i+1}^v)\}\]
then on $\tilde{{\Omega}}^v_l$ we approximate $(w(x^v)-w_v)$ by its projection, denoted by
$\Pi^{W_{l},W_{l},W_{l}}$, into the space of polynomials of degree $N$ in each of its variables
separately with respect to the usual inner product in $H^{2}(\tilde{{\Omega}}^v_l)$ and define
$$s_l^v(x^v)=\Pi^{W_{l},W_{l},W_{l}}(w(x^v)-w_v)+w_v.$$
Here $1\leq W_l\leq W$, $W_l=[\mu_{1} i]$ for all $1\leq i\leq N$, where $\mu_{1}>0$ is a degree
factor~\cite{G1}.
\newline
\textbf{Approximation in vertex-edge neighbourhoods:}

We now consider the vertex-edge neighborhood $\Omega^{v-e}$ of the vertex-edge
$v-e\in\mathcal{V-E}$ (see Figure \ref{fig2.9}). Here, as earlier, $\mathcal{V-E}$
denotes the set of vertex-edges of the domain $\Omega$. $\Omega^{v-e}$ is divided into
$\Omega_{q}^{v-e},q=1,\ldots,N_{v-e}$ elements using a geometric mesh in $\phi,x_{3}$
variables and a quasi-uniform mesh in $\theta$ variable.

Let $\tilde{\Omega}^{v-e}_q$ be the image of ${\Omega}^{v-e}_q$ in $x^{v-e}$ coordinates.
If $\tilde{\Omega}^{v-e}_q$ is a corner element of the form
\[\tilde{\Omega}^{v-e}_q=\{x^{v-e}: \; \psi_i^{v-e}<\psi<\psi_{i+1}^{v-e},\;\theta_j^{v-e}<
\theta<\theta_{j+1}^{v-e},\;-\infty<\zeta<\zeta_{1}^{v-e}\}\]
then on $\tilde{\Omega}^{v-e}_q$ we define
\[s_l^{v-e} = w_{v-e} = w_v\;.\]
Here $w_{v}$ is the value of $w$ at the vertex $v$ (Chapter $2$).

Next, suppose $\tilde{\Omega}^{v-e}_q$ is a corner element of the form
\[\tilde{\Omega}^{v-e}_q = \left\{x^{v-e}: -\infty<\psi<\psi_{1}^{v-e},\;\theta_j^{v-e}<
\theta<\theta_{j+1}^{v-e},\;\zeta_n^{v-e}<\zeta<\zeta_{n+1}^{v-e}\right\}\]
with $n \geq 1$. Let $s(x_3^{v-e})=w(x_1,x_2,x_3)|_{\left(x_1=0,x_2=0\right)}$ be the value of
$w$ along the edge $e$. Define
$$\sigma(x_3^{v-e})=s(x_3^{v-e})-w_v.$$
Let $\Pi^{V_q}(\sigma(x_3^{v-e}))$ be the orthogonal projection of $\sigma(x_3^{v-e})$ into
the space of polynomials in $H^{2}(I)$. Then we define
$$s_l^{v-e}(x_3^{v-e}) = \Pi^{V_q}(\sigma(x_3^{v-e}))+w_v=\Pi^{V_q}s(x_3^{v-e}).$$
Here $1\leq V_q \leq W$. Moreover $W_q=[\mu_{2} n]$ for all $1\leq n\leq N$, where $\mu_{2}>0$
is a degree factor.

The remaining elements $\tilde{\Omega}^{v-e}_q$ in $\tilde{\Omega}^{v-e}$ are of the form
\[\tilde{\Omega}^{v-e}_q = \left\{x^{v-e}:\;\psi_i^{v-e}<\psi<\psi_{i+1}^{v-e},
\;\theta_j^{v-e}<\theta<\theta_{j+1}^{v-e},\;\zeta_n^{v-e}<\zeta<\zeta_{n+1}^{v-e}\right\}\]
with $i \geq 1$, $k \geq 1$. Let us write $\alpha(x^{v-e})=w(x^{v-e})-s(x_3^{v-e})$. Then on
$\tilde{\Omega}^{v-e}_q$ we approximate $\alpha(x^{v-e})$ by its projection, denoted by
$\Pi^{W_{q},W_{q},V_{q}}$, into the space of polynomials with respect to the usual inner
product in $H^{2}(\tilde{\Omega}^{v-e}_q)$. We now define
\[s_l^{v-e}(x^{v-e})=\Pi^{W_{q},W_{q},V_{q}}(\alpha(x^{v-e}))+\Pi^{V_q}(s(x_3^{v-e})).\]
Here $1\leq W_q \leq W$ and $1\leq V_q \leq W$. Moreover $W_q=[\mu_{1} i], V_q=[\mu_{2} n]$ for
all $1\leq i,n\leq N$, where $\mu_{1},\mu_{2}>0$ are degree factors~\cite{G1}.


Finally, we discuss approximation in the edge neighbourhood elements
and define comparison functions there.
\newline
\textbf{Approximation in edge neighbourhoods:}

Consider the edge neighborhood $\Omega^{e}$ of the edge $e\in\mathcal{E}$ (see Figure
\ref{fig2.10}). Here, as before, $\mathcal E$ denotes the set of edges of the domain
$\Omega$. In Chapter $2$, we had divided $\Omega^{e}$ into $\Omega_{p}^{e},p=1,\ldots,N_{e}$
elements.

Let $\tilde{\Omega}^{e}_{p}$ be the image of ${\Omega}^{e}_{p}$ in $x^{e}$ coordinates.
Let $\tilde{\Omega}^{e}_p$ be a corner element of the form
\[\tilde{\Omega}^e_p = \left\{x^e:\: -\infty<x_1^e<\ln(r_{1}^e),
\:\theta_j^e<x_2^e<\theta_{j+1}^e,\:Z_n^e<x_3^e<Z_{n+1}^e\right\}\:.\]

Let $s(x_3^{e})=w\left(x_1,x_2,x_3\right)|_{\left(x_1=0,x_2=0\right)}$. Then on
$\tilde{\Omega}^{e}_{p}$ we approximate $s(x_3^{e})$ by its projection onto the space of
polynomials with respect to the usual inner product in $H^{2}(I)$. Let $\Pi^{W}(s(x_3^{e}))$
denote this projection, then we define
$$s_l^{e}(x_3^{e})=\Pi^{W}(s(x_3^{e})).$$

Next, let $\tilde{\Omega}^{e}_p$ be of the form
\[\tilde{\Omega}^e_p = \left\{x^e:\: \ln (r_i^e)<x_1^e<\ln (r_{i+1}^e),
\:\theta_j^e<x_2^e<\theta_{j+1}^e,\:Z_n^e<x_3^e<Z_{n+1}^e\right\}\]
with $1\leq i\leq N,\,0\leq j\leq I_e,\,0\leq n\leq J_e$. Let us write $\beta(x^{e})=w(x^{e})-s(x_3^{e})$.
Then on $\tilde{\Omega}^{e}_p$ we approximate $\beta(x^{e})$ by its projection, denoted by
$\Pi^{W_{p},W_{p},W}$, into the space of polynomials with respect to the usual inner product in
$H^{2}(\tilde{\Omega}^e_p)$. Define
\[s_l^{e}(x^{e}) = \Pi^{W_{p},W_{p},W}(\beta(x^{e}))+\Pi^{W}(s(x_3^{e})).\]
Here $1\leq W_p\leq W$. Moreover $W_p=[\mu_{1} i]$ for all $1\leq i\leq N$, where $\mu_{1}>0$
is a degree factor~\cite{G1}.
\newline

Now consider the set of functions $\left\{\{s_{l}^{r}\}_{l=1}^{N_{r}},\{s_{l}^{v}\}_{l=1}^{N_{v}},
\{s_{l}^{v-e}\}_{l=1}^{N_{v-e}},\{s_{l}^{e}\}_{l=1}^{N_{e}}\right\}$ and denote it by $\{\mathcal{F}_{s}\}$.

We will show that the functional defined by
\begin{align}\label{eq4.10}
\mathcal R^{N,W}\left(\{\mathcal F_s\}\right)&=\mathcal R_{regular}^{N,W}\left(\{\mathcal F_s\}\right)
+\mathcal R_{vertices}^{N,W}\left(\{\mathcal F_s\}\right) \notag\\
&+\mathcal R_{vertex-edges}^{N,W}\left(\{\mathcal F_s\}\right)+\mathcal R_{edges}^{N,W}(\{\mathcal F_s\})
\end{align}
is exponentially small in $N$. Here the functionals $\mathcal R_{regular}^{N,W}\left(\{\mathcal F_s\}\right)$,
$\mathcal R_{vertices}^{N,W}\left(\{\mathcal F_s\}\right)$ etc. are similar to those as defined in Section $4.2$.

Using results on approximation theory in~\cite{BG9,BG4,KDU} it follows that there exist constants $C$ and $b>0$
such that the estimate
\begin{align}\label{eq4.11}
\mathcal R^{N,W}\left(\{\mathcal F_s\}\right)\leq Ce^{-bN}
\end{align}
holds.

Now $\{\mathcal F_z\}$ minimizes $\mathcal R^{N,W}\left(\{\mathcal F_u\}\right)$ over all $\{\mathcal F_u\}
\in\mathcal{S}^{N,W}(\Omega)$, the space of spectral element functions. Then from (\ref{eq4.11}), we have
\begin{align}\label{eq4.12}
\mathcal R^{N,W}\left(\{\mathcal F_z\}\right)\leq Ce^{-bN}.
\end{align}
Let $\mathcal V^{N,W}$ be the quadratic form as defined in Chapter $2$. Then from (\ref{eq4.11}) and
(\ref{eq4.12}) we can conclude that
\begin{align}\label{eq4.13}
\mathcal V^{N,W}\left(\{\mathcal F_{(s-z)}\}\right)\leq Ce^{-bN}
\end{align}
where $C$ and $b$ are generic constants.
\newline
Hence using the Stability Theorem \ref{thm2.3.1} we obtain
\begin{align}\label{eq4.14}
\mathcal U^{N,W}\left(\{\mathcal F_{(s-z)}\}\right)\leq Ce^{-bN}.
\end{align}
Here the quadratic form $\mathcal U^{N,W}\left(\{\mathcal F_{(s-z)}\}\right)$ is defined similar to the
quadratic form $\mathcal U^{N,W}\left(\{\mathcal F_u\}\right)$ as in (\ref{eq2.15}) of Chapter $2$.

Let $U_l^r(\lambda)=w(X_{l}^{r}(\lambda_{1},\lambda_{2},\lambda_{3}))\equiv w(M_{l}^{r} (\lambda))$ for
$\lambda\in Q$, $U_l^v(x^{v})=w(x^{v})$ for $x^v\in\tilde{\Omega}_l^{v}$, $U_l^{v-e}(x^{v-e})=w(x^{v-e})$
for $x^{v-e}\in\tilde{\Omega}_l^{v-e}$ and $U_l^{e}(x^{e})=w(x^{e})$ for $x^e\in\tilde{\Omega}_l^{e}$.
Here $w$ is the solution of the boundary value problem (\ref{eq2.1}).

We now define another quadratic form $\mathcal{E}^{N,W}(\{z-U\})$ as follows:-
\begin{align}\label{eq4.15}
\mathcal E^{N,W}(\{z-U\})&=\mathcal E_{regular}^{N,W}(\{z_{l}^{r}-U_{l}^{r}\})
+\mathcal E_{vertices}^{N,W}(\{z_{l}^{v}-U_{l}^{v}\}) \notag\\
&+\mathcal E_{vertex-edges}^{N,W}(\{z_{l}^{v-e}-U_{l}^{v-e}\})
+\mathcal E_{edges}^{N,W}(\{z_{l}^{e}-U_{l}^{e}\})\:,
\end{align}
where
\begin{align}
\mathcal E_{regular}^{N,W}(\{z_{l}^{r}-U_{l}^{r}\})&=\sum_{l=1}^{N_{r}}\int_{Q=(M_{l}^{r})^{-1}(\Omega_{l}^{r})}
\sum_{|\alpha|\leq 2}\left|D_{\lambda}^{\alpha}\left(z_{l}^{r}-U_{l}^{r}\right)(\lambda)\right|^{2}d\lambda\:,\notag\\
\mathcal E_{vertices}^{N,W}(\{z_{l}^{v}-U_{l}^{v}\})&=\sum_{v\in\mathcal{V}}\mathcal E_{v}^{N,W}
\left(z_{l}^{v}-U_{l}^{v}\right)\:,\notag\\
\mathcal E_{v}^{N,W}(\{z_{l}^{v}-U_{l}^{v}\})&=\sum_{l=1}^{N_{v}}\int_{\tilde{\Omega}_{l}^{v}}e^{x_{3}^{v}}
\sum_{|\alpha|\leq 2}\left|D_{x^{v}}^{\alpha}\left(z_{l}^{v}-U_{l}^{v}\right)(x^{v})\right|^{2}dx^{v}\:,\notag\\
\mathcal E_{vertex-edges}^{N,W}(\{z_{l}^{v-e}-U_{l}^{v-e}\})&=\sum_{v-e\in\mathcal{V-E}}\mathcal E_{v-e}^{N,W}
(z_{l}^{v-e}-U_{l}^{v-e})\:,\notag\\
\mathcal E_{v-e}^{N,W}(\{z_{l}^{v-e}-U_{l}^{v-e}\})&=\sum_{l=1}^{N_{v-e}}\int_{\tilde{\Omega}_{l}^{v-e}}
e^{x_{3}^{v-e}}\sum_{|\alpha|\leq 2}\left|D_{x^{v-e}}^{\alpha}\left(z_{l}^{v-e}-U_{l}^{v-e}\right)(x^{v-e})
\right|^{2}dx^{v-e}\:,\notag\\
\mathcal E_{edges}^{N,W}(\{z_{l}^{e}-U_{l}^{e}\})&=\sum_{e\in\mathcal{E}}\mathcal E_{e}^{N,W}(z_{l}^{e}-U_{l}^{e})
\:,\notag\\
\mathcal E_{e}^{N,W}(\{z_{l}^{e}-U_{l}^{e}\})&=\sum_{l=1}^{N_{e}}\int_{\tilde{\Omega}_{l}^{e}}
\sum_{|\alpha|\leq 2}\left|D_{x^{e}}^{\alpha}\left(z_{l}^{e}-U_{l}^{e}\right)(x^{e})\right|^{2}dx^{e}\:.\notag
\end{align}
It is easy to show that
\begin{align}\label{eq4.16}
\mathcal E_{regular}^{N,W}(\{s_{l}^{r}-U_{l}^{r}\}) &\leq Ce^{-bN}\:,\notag\\
\mathcal E_{vertices}^{N,W}(\{s_{l}^{v}-U_{l}^{v}\}) &\leq Ce^{-bN}\:,\notag\\
\mathcal E_{vertex-edges}^{N,W}(\{s_{l}^{v-e}-U_{l}^{v-e}\}) &\leq Ce^{-bN}\:,\notag\\
\mathcal E_{edges}^{N,W}(\{s_{l}^{e}-U_{l}^{e}\}) &\leq Ce^{-bN}
\end{align}
where the quadratic forms $\mathcal E_{regular}^{N,W}\left(\{s_{l}^{r}-U_{l}^{r}\}\right)$,
$\mathcal E_{vertices}^{N,W}\left(\{s_{l}^{v}-U_{l}^{v}\}\right)$ etc. are defined similar to those in
(\ref{eq4.15}). Now define
\begin{align}
\mathcal E^{N,W}(\{s-U\})&=\mathcal E_{regular}^{N,W}(\{s_{l}^{r}-U_{l}^{r}\})
+\mathcal E_{vertices}^{N,W}(\{s_{l}^{v}-U_{l}^{v}\}) \notag\\
&+\mathcal E_{vertex-edges}^{N,W}(\{s_{l}^{v-e}-U_{l}^{v-e}\})
+\mathcal E_{edges}^{N,W}(\{s_{l}^{e}-U_{l}^{e}\})\:. \notag
\end{align}
Then from (\ref{eq4.16}) it follows that
\begin{align}\label{eq4.17}
\mathcal E^{N,W}\left(\{s-U\}\right)\leq Ce^{-bN}.
\end{align}
Finally, using estimates (\ref{eq4.14}) and (\ref{eq4.17}), we obtain
\begin{align}
\mathcal U^{N,W}\left(\{\mathcal F_{(z-U)}\}\right)\leq Ce^{-bN}. \notag
\end{align}
Our main theorem on error estimates is now stated
\begin{theo}\label{thm4.3.1}
Let $\{\mathcal F_{z}\}$ minimize $\mathcal R^{N,W}(\{\mathcal F_u\})$ over all
$\{\mathcal F_{u}\}\in {\mathcal S}^{N,W}(\Omega)$. Then there exist constants $C$
and $b$ (independent of $N$) such that
\begin{align}\label{eq4.18}
\mathcal U^{N,W}\left(\{\mathcal F_{(z-U)}\}\right)\leq Ce^{-bN}.
\end{align}
Here $\mathcal U^{N,W}\left(\{\mathcal F_{(z-U)}\}\right)$ is as defined in (\ref{eq2.15}) of Chapter $2$.
\end{theo}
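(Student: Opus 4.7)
The plan is to bound the error of the minimizer $\{\mathcal F_z\}$ by constructing an explicit comparison function $\{\mathcal F_s\}$ built from projections of the true solution $w$ onto the appropriate polynomial spaces, showing that the functional $\mathcal R^{N,W}$ is exponentially small on this comparison function, and then invoking the Stability Theorem (Theorem 2.3.1) together with a triangle inequality.

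First, I would construct $\{\mathcal F_s\}$ exactly as outlined in the text immediately preceding the theorem. In the regular region $\Omega^r$, set $s_l^r = \Pi^{W,W,W} w$, the $H^2(Q)$-orthogonal projection onto tensor-product polynomials of degree $W$. In a vertex neighbourhood $\Omega^v$, set $s_l^v = w_v$ on the corner element (where the spectral element function is forced to be the constant value $w_v = w(v)$), and $s_l^v = \Pi^{W_l,W_l,W_l}(w - w_v) + w_v$ elsewhere, where $W_l = [\mu_1 i]$ grows linearly with the layer index. Analogous constructions give $s_l^{v-e}$ and $s_l^e$, where on corner-most elements in the direction of the edge the projection acts only on the edge-trace function $s(x_3)$, and on interior elements the projection is applied to $w - s(x_3)$ in the anisotropic polynomial space matching the form of the admissible spectral element functions. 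By construction $\{\mathcal F_s\} \in \mathcal S^{N,W}(\Omega)$.

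Next, I would prove the key approximation bound
\begin{equation*}
\mathcal R^{N,W}(\{\mathcal F_s\}) \le C e^{-bN},
\end{equation*}
exactly as asserted in (\ref{eq4.11}). This is the technical heart of the argument. It uses the differentiability estimates of Propositions \ref{prop2.2.1}--\ref{prop2.2.4}, which place $w$ in the countably normed spaces $\mathbf B^2_{\beta_v}$, $\mathbf B^2_{\beta_e}$, $\mathbf B^2_{\beta_{v-e}}$, and then the standard exponential approximation results on geometric meshes with linearly increasing polynomial degree (of Babu\v{s}ka--Guo type, adapted as in \cite{BG9,BG4,KDU}). Each term in $\mathcal R^{N,W}$ must be controlled: the weighted $L^2$ residual of $Lu - f$ on each element, the jump terms in fractional Sobolev norms across interelement boundaries, and the boundary-data mismatch terms. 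The geometric mesh becomes quasi-uniform in the modified coordinates, which makes the trace and embedding constants uniform across elements and allows the standard exponential estimates for analytic functions on the master cube to be imported, while the small elements near corners and edges absorb the weak singularity thanks to the weight factors $e^{x_3^v}$, $e^{\zeta}$, etc. The hardest case is the vertex-edge region, where one must simultaneously control the vertex decay $\rho^{-2\beta_v}$ and the edge anisotropy $(\sin\phi)^{-2\beta_e}$, and where the norms $|||\cdot|||_{\tilde\Gamma_{n,i}^{v-e}}$ mix these weights; this requires splitting $w = (w-s(x_3)) + (s(x_3)-w_v) + w_v$ and estimating each piece separately with matching scaling.

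With the approximation bound in hand, the remainder of the proof is clean. Since $\{\mathcal F_z\}$ minimizes $\mathcal R^{N,W}$ over $\mathcal S^{N,W}(\Omega)$, we immediately get $\mathcal R^{N,W}(\{\mathcal F_z\}) \le \mathcal R^{N,W}(\{\mathcal F_s\}) \le Ce^{-bN}$. Because the exact solution $U$ has zero PDE residual, zero jumps, and matches both Dirichlet and Neumann data, $\mathcal V^{N,W}$ applied to $s - z$ coincides with $\mathcal R^{N,W}$ applied to $s - z$ (after subtracting the common data $U$), so the triangle inequality gives
\begin{equation*}
\mathcal V^{N,W}(\{\mathcal F_{s-z}\}) \le 2\bigl(\mathcal R^{N,W}(\{\mathcal F_s\}) + \mathcal R^{N,W}(\{\mathcal F_z\})\bigr) \le Ce^{-bN}.
\end{equation*}
Applying the Stability Theorem \ref{thm2.3.1} to the spectral element function $s - z$ yields $\mathcal U^{N,W}(\{\mathcal F_{s-z}\}) \le C(\ln W)^2 e^{-bN} \le C e^{-b'N}$ since $W$ is proportional to $N$ and the logarithmic factor is absorbed by adjusting $b$.

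Finally, I would establish the companion bound $\mathcal E^{N,W}(\{s - U\}) \le Ce^{-bN}$ in (\ref{eq4.17}) directly from the same approximation theory, since this is just the weighted $H^2$ error of the projections used to define $s$ on each element. A triangle inequality in the quadratic form $\mathcal U^{N,W}$ (noting that on each element $\mathcal U^{N,W}$ is a weighted $H^2$-type seminorm whose integrand controls the $\mathcal E^{N,W}$ integrand together with the lower-order correction terms) then combines these two estimates to give $\mathcal U^{N,W}(\{\mathcal F_{z-U}\}) \le Ce^{-bN}$, which is (\ref{eq4.18}). The principal obstacle in this whole argument is the anisotropic exponential approximation estimate for $w$ in the vertex-edge neighbourhood; once that is in place, the stability estimate and triangle inequalities close everything out.
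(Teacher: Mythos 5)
Your proposal follows essentially the same route as the paper: construct the comparison function $\{\mathcal F_s\}$ by elementwise projections matching the admissible polynomial spaces, establish $\mathcal R^{N,W}(\{\mathcal F_s\})\leq Ce^{-bN}$ from the differentiability estimates and Babu\v{s}ka--Guo type approximation theory, use minimality to deduce $\mathcal V^{N,W}(\{\mathcal F_{(s-z)}\})\leq Ce^{-bN}$, apply the Stability Theorem to pass to $\mathcal U^{N,W}(\{\mathcal F_{(s-z)}\})$, and combine with the direct projection-error bound $\mathcal E^{N,W}(\{s-U\})\leq Ce^{-bN}$ via a triangle inequality. This is exactly the argument given in Section~4.3 (the paper likewise defers the technical anisotropic approximation estimates to the cited references), so no further comparison is needed.
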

\begin{rem}\label{rem4.3.0}
After having obtained the non-conforming spectral element solution we can make a correction to it
so that the corrected solution is conforming and the error in the $H^1$ norm is exponentially small
in $N$. These corrections are similar to those described in~\cite{KDU,SKT1,SKT2} in two dimensions
and are explained in the proof of Lemma \ref{lem3.3.1} in Appendix C.1.
\end{rem}

To end this chapter, let us estimate the error in terms of number of degrees of freedom in various
subregions of the domain $\Omega$.
\newline
\textbf{The regular region $\Omega^r$:}

The regular region $\Omega^r$ contains no vertices and edges of the domain $\Omega$. Here the
solution $w$ has no singularity and is analytic.

There are $O(1)$ number of elements in this region and each element has $O(W^{3})$ degrees of
freedom. Let $N_{dof}(\Omega^{r})$ denotes the number of degrees of freedom in $\Omega^r$. Then
$$N_{dof}(\Omega^{r})=O(W^{3})=O(N^{3})\:.$$
\textbf{The vertex neighbourhoods $\Omega^v$:}

In a vertex neighbourhood $\Omega^v$ there are $O(N)$ elements with $O(W^{3})$ degrees of freedom
in each element. If $N_{dof}(\Omega^{v})$ denotes the number of degrees of freedom in $\Omega^v$. Then
$$N_{dof}(\Omega^{v})=O(NW^{3})=O(N^{4}).$$
\textbf{The vertex-edge neighbourhoods $\Omega^{v-e}$:}

There are $O(N^2)$ number of elements in each of the vertex-edge neighbourhoods $\Omega^{v-e}$ and
each element has $O(W^{3})$ degrees of freedom. Then
$$N_{dof}(\Omega^{v-e})=O(N^2W^{3})=O(N^{5})\:.$$
Here $N_{dof}(\Omega^{v-e})$ denotes the number of degrees of freedom in $\Omega^{v-e}$.
\newline
\textbf{The edge neighbourhoods $\Omega^{e}$:}

An edge neighbourhood $\Omega^{e}$ has $O(N)$ elements with $O(W^{3})$ degrees of freedom within
each element. Let $N_{dof}(\Omega^{e})$ be the number of degrees of freedom in $\Omega^{e}$. Then
$$N_{dof}(\Omega^{e})=O(NW^{3})=O(N^{4})\:.$$
Hence, the error estimate Theorem \ref{thm4.3.1} in terms of number of degrees of freedom
assumes the form
\begin{theo}\label{thm4.3.2}
Let $\{\mathcal F_{z}\}$ minimizes $\mathcal R^{N,W}(\{\mathcal F_u\})$ over all
$\{\mathcal F_{u}\}\in {\mathcal S}^{N,W}(\Omega)$. Then there exist constants $C$
and $b$ (independent of $N$) such that
\begin{align}\label{eq4.19}
\mathcal U^{N,W}\left(\{\mathcal F_{(z-U)}\}\right)\leq Ce^{-bN_{dof}^{1/5}}.
\end{align}
Here $\mathcal U^{N,W}\left(\{\mathcal F_{(z-U)}\}\right)$ is as defined in (\ref{eq2.15}) and
$N_{dof}=dim(\mathcal{S}^{N,W}(\Omega))$ is the number of degrees of freedom.
\end{theo}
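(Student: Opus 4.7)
The plan is to bootstrap from Theorem \ref{thm4.3.1}, which already establishes exponential decay of $\mathcal U^{N,W}(\{\mathcal F_{(z-U)}\})$ in the mesh refinement parameter $N$, and re-express this bound in terms of the total number of degrees of freedom $N_{dof}$ by using the per-region counts $N_{dof}(\Omega^r),\,N_{dof}(\Omega^v),\,N_{dof}(\Omega^{v-e}),\,N_{dof}(\Omega^e)$ tabulated immediately before the theorem. So the proof is essentially a two-line reduction plus a careful counting argument, and there is no new analytical content beyond what has already been proved.

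First I would invoke Theorem \ref{thm4.3.1}: under the assumption $W=O(N)$, the minimizer $\{\mathcal F_z\}$ satisfies
\begin{equation*}
\mathcal U^{N,W}(\{\mathcal F_{(z-U)}\}) \leq C\,e^{-bN}
\end{equation*}
for some constants $C,b>0$ independent of $N$. Next I would collect the degree-of-freedom counts for the four subregions: the regular region contributes $O(N^3)$, each vertex neighbourhood contributes $O(N^4)$, each edge neighbourhood contributes $O(N^4)$, and each vertex-edge neighbourhood contributes $O(N^5)$. Since there are only a fixed, $N$-independent number of such neighbourhoods (determined by the geometry of the polyhedron $\Omega$), adding these gives
\begin{equation*}
N_{dof} = \dim(\mathcal S^{N,W}(\Omega)) = O(N^5),
\end{equation*}
with the dominant contribution coming from the vertex-edge neighbourhoods.

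From $N_{dof}\leq K N^5$ for some constant $K$ (independent of $N$), I would invert to obtain $N \geq (N_{dof}/K)^{1/5}$, and then substitute into the estimate of Theorem \ref{thm4.3.1}:
\begin{equation*}
\mathcal U^{N,W}(\{\mathcal F_{(z-U)}\}) \leq C\,e^{-bN} \leq C\,e^{-b\,K^{-1/5}\,N_{dof}^{1/5}} = C\,e^{-b'\,N_{dof}^{1/5}},
\end{equation*}
where $b' = b\,K^{-1/5}$ is again a positive constant independent of $N$. Absorbing $K^{-1/5}$ into the exponent constant yields the stated bound (\ref{eq4.19}).

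There is no real obstacle: the only point requiring a little care is to verify that the vertex-edge degree-of-freedom count is genuinely $O(N^5)$ rather than something smaller once one takes into account the trunk/degree-factor prescription $W_q=[\mu_1 i],\,V_q=[\mu_2 n]$ in the spectral element representation in $\tilde\Omega^{v-e}$. Since each of the $O(N^2)$ hexahedral elements carries a polynomial space of dimension $(W_q+1)^2(V_q+1)=O(W^3)=O(N^3)$, the total is indeed $O(N^5)$, which controls the sum and determines the exponent $1/5$ in the final estimate. Once this observation is made, the remainder of the argument is purely arithmetic.
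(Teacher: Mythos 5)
Your proposal is correct and is essentially the paper's own argument: the paper proves Theorem \ref{thm4.3.2} simply by citing Theorem \ref{thm4.3.1} together with the per-region degree-of-freedom counts listed just before the theorem, where the $O(N^2)$ elements in each vertex-edge neighbourhood with $O(W^3)=O(N^3)$ modes apiece make $N_{dof}=O(N^5)$ the dominant count and hence fix the exponent $1/5$. Your additional check that the graded degree factors $W_q=[\mu_1 i]$, $V_q=[\mu_2 n]$ do not reduce the count below $O(N^5)$ is a sensible precaution, but it matches what the paper implicitly assumes.
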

\begin{proof}
Follows from Theorem \ref{thm4.3.1}.
\end{proof}
\begin{rem}\label{rem4.3.1}
From the above theorem it is clear that the exponential rate of
convergence will be visible only for a large value of $N_{dof}$, as
a result we need to sufficiently refine the geometric mesh both in
the direction of edges and in the direction perpendicular to the
edges.
\end{rem}
\begin{rem}\label{rem4.3.4}
Since the majority of degrees of freedom is present in the vertex-edge neighbourhoods the factor $N_{dof}^{1/5}$
in the theorem is due to the vertex-edge singularity in the solution.
Hence the optimal convergence rate will be $e^{-bN_{dof}^{1/5}}$.
\end{rem}
\begin{rem}\label{rem4.3.3}
It was conjectured in~\cite{BG5,G1} that for $h-p$ version of the finite element method in $\mathbb{R}^3$
the optimal convergence rate will be $e^{-bN_{dof}^{1/5}}$, and it can not be improved further with any
mesh and any anisotropic polynomial order within the elements.
\end{rem}

\chapcleardoublepage

\chapter{Solution Techniques on Parallel Computers}
\section{Introduction}
In Chapter 4 we had described the numerical scheme and the error estimates for the
numerical solution were stated.

The method is essentially a \textit{least-squares} collocation method. To solve
the minimization problem we need to solve the \textit{normal equations} arising
from the minimization problem. We use \textit{preconditioned conjugate gradient
method} (PCGM) to solve the normal equations using a \textit{block diagonal
preconditioner} which is nearly optimal as the condition number of the
preconditioned system is polylogarithmic in $N$, the number of refinements in
the geometric mesh. Moreover, to compute the residuals in the normal equations
we do not need to compute mass and stiffness matrices.

We use only non-conforming spectral element functions. We shall examine spectral
element functions which are conforming on the wirebasket in future work.

It is shown that we can define a preconditioner for the minimization problem which
allows the problem to decouple. Construction of preconditioner on each element in
various regions of the polyhedron is also provided. It will be shown as in~\cite{DBR}
that there exists a new preconditioner which can be diagonalized in a new set of
basis functions using separation of variables technique.

The coefficients in the differential operator and the data are projected into the
polynomial spaces (we call them \textit{filtered coefficients} of the differential
operator and \textit{filtered data}) so that the integrands involved in the numerical
formulation are exactly evaluated using Gauss-Lobatto-Legendre (GLL) quadrature.
However, in our computations we do not need to filter the coefficients of the
differential operator and the data. Of course, in doing so we commit an error and it
can be argued as in~\cite{SKT2} that this error is spectrally small.

In section $5.3$ we shall describe the steps involved in computing the residuals in the
normal equations without computing mass and stiffness matrices on a distributed memory
parallel computer. The evaluation of these residuals on each processor requires the
interchange of boundary values between neighbouring processors. Hence the communication
involved is quite small. Thus we can compute the residuals in the normal equations
inexpensively and efficiently. Finally, we shall discuss computational complexity of
our method.

\section{Preconditioning}
We now come to the aspect of preconditioning. Our construction of preconditioners is
similar to that for elliptic problems in two dimensions (see~\cite{DTK1,DT,SKT1,SKT2}).

\subsection{Preconditioners on the regular region}
In the regular region the preconditioner which needs to be examined corresponds to the
quadratic form
\begin{align}\label{eq5.7}
\mathcal B({u}) =||{u}||^{2}_{H^{2}(Q)}
\end{align}
where $Q=(-1,1)^{3}=$master cube, ${u}={u}(\lambda)={u}(\lambda_1,\lambda_2,\lambda_3)$ is
a polynomial of degree $W$ in $\lambda_1,\lambda_2$ and $\lambda_3$ separately.

Let ${u}(\lambda_1,\lambda_2,\lambda_3)$ be the spectral element function, defined on
$Q=(-1,1)^{3}$, as
\begin{equation}\label{eq5.8}
u(\lambda_1,\lambda_2,\lambda_3)=\sum_{i=0}^W\sum_{j=0}^W\sum_{k=0}^W a_{i,j,k}L_{i}(\lambda_1)
L_{j}(\lambda_2)L_{k}(\lambda_3).
\end{equation}
\noindent
Here $L_{i}(\cdot)$ denotes the Legendre polynomial of degree $i$.
\newline
The quadratic form $\mathcal B({u})$ can be written as
\begin{equation}\label{eq5.9}
\mathcal{B}({u})=\int_{Q}\sum_{|\alpha|\le 2}{|D^{\alpha}_{\lambda}{u}|}^2d\lambda.
\end{equation}
\noindent We shall show that there is another quadratic form
$\mathcal{C}(u)$ which is spectrally equivalent to $\mathcal{B}(u)$
and which can be easily diagonalized using separation of variables.

Let $I$ denote the interval $(-1,1)$ and
\begin{equation}\label{eq5.10}
v(\lambda_1) = \sum_{i=0}^{W}\beta_{i}L_{i}(\lambda_1) \;.
\end{equation}
Moreover $b={(\beta_{0},\beta_{1},\ldots,\beta_{W})}^T $. We now define the quadratic
form
\begin{equation}\label{eq5.11}
\mathcal{E}(v) = \int_{I}(v_{\lambda_1\lambda_1}^2+v_{\lambda_1}^2 )d\lambda_1
\end{equation}
and
\begin{equation}\label{eq5.12}
\mathcal{F}(v) = \int_{I} v^2 d\lambda_1 \;.
\end{equation}
Clearly there exist $(W+1)\times(W+1)$ matrices $E$ and $F$ such that
\begin{equation}\label{eq5.13}
\mathcal{E}(v) = b^T E b
\end{equation}
and
\begin{equation}\label{eq5.14}
\mathcal{F}(v) = b^T F b.
\end{equation}
Here the matrices $E$ and $F$ are symmetric and $F$ is positive definite.
\newline
Hence there exist $W+1$ eigenvalues $0 \le \mu_0 \le \mu_1\le\cdots\le \mu_W$ and $W+1$
eigenvectors $b_0, b_1,\ldots,b_W$ of the symmetric eigenvalue problem
\begin{equation}\label{eq5.15}
(E-\mu F)b=0  \;.
\end{equation}
Here
$$(E-\mu_i F)b_i=0\;.$$
The eigenvectors $b_i$ are normalized so that
\begin{subequations}\label{eq5.16}
\begin{equation}\label{eq5.16a}
b_i^T F b_j= \delta^i_j  \;.
\end{equation}
Moreover the relations
\begin{align}\label{eq5.16b}
b_i^T E b_j=\mu_i \delta^i_j
\end{align}
\end{subequations}
hold. Let $b_i = (b_{i,0}, b_{i,1},\ldots, b_{i,W})$. We now define the polynomial
\begin{equation}\label{eq5.17}
\phi_i(\lambda_1)=\sum_{j=0}^{W}b_{i,j}L_j(\lambda_1)\:\:\mbox{for}\:\:0\le i\le W\:.
\end{equation}
Next, let $\psi_{i,j,k}$ denote the polynomial
\begin{equation}\label{eq5.18}
\psi_{i,j,k}(\lambda_1,\lambda_2,\lambda_3)=\phi_i(\lambda_1)\phi_j(\lambda_2)
\phi_k(\lambda_3)
\end{equation}
for\quad $0 \leq i \le W,\;0 \le j\le W,\;0 \le k\le W $.

Let $u(\lambda_1,\lambda_2,\lambda_3)$ be a polynomial as in (\ref{eq5.8}). Define the
quadratic form
\begin{equation}\label{eq5.19}
\mathcal{C}(u)=\int_Q(u_{\lambda_1\lambda_1}^2+u_{\lambda_2\lambda_2}^2
+u_{\lambda_3\lambda_3}^2+u_{\lambda_1}^2+u_{\lambda_2}^2+u_{\lambda_3}^2+u^2)
d\lambda_1 d\lambda_2 d\lambda_3 \;.
\end{equation}
Then the quadratic form $\mathcal{C}(u)$ is spectrally equivalent to the quadratic form
$\mathcal{B}(u)$, defined in (\ref{eq5.7}). Moreover, the quadratic form $\mathcal{C}(u)$
can be diagonalized in the basis $\psi_{i,j,k}(\lambda_1,\lambda_2,\lambda_3)$. Note that
${\{\psi_{i,j,k}(\lambda_1,\lambda_2,\lambda_3)\}}_{i,j,k}$ is the tensor product of the
polynomials $\phi_i(\lambda_1)$, $\phi_j(\lambda_2)$ and $\phi_k(\lambda_3)$. The eigenvalue
$\mu_{i,j,k}$ corresponding to the eigenvector $\psi_{i,j,k}$ is given by the relation
\begin{equation}\label{eq5.20}
\mu_{i,j,k}=\mu_i+\mu_j+\mu_k+1\;.
\end{equation}
Hence the matrix corresponding to the quadratic form $\mathcal{C}(u)$ is easy to invert.

Using the extension theorems in~\cite{ADAM} and Lemma $2.1$
in~\cite{DBR} we can extend $u(\lambda_1,\lambda_2,\lambda_3)$
defined in (\ref{eq5.8}) to $U(\lambda_1,\lambda_2,\lambda_3)$ by
the method of reflection (see Theorem $4.26$ of~\cite{ADAM}). This
extension $U(\lambda_1,\lambda_2,\lambda_3)$ of
${u}(\lambda_1,\lambda_2,\lambda_3)$ is such that
$U(\lambda_1,\lambda_2,\lambda_3)\in H^{2}(\mathbb{R}^3)$ and
satisfies the estimate
\begin{align}
\int_{{\mathbb{R}}^3}\left(U^2_{\lambda_1\lambda_1}+U^2_{\lambda_2\lambda_2}+U^2_{\lambda_3
\lambda_3}+U^2\right)d\lambda\le K\int_Q\left({u}^2_{\lambda_1\lambda_1}+u^2_{\lambda_2\lambda_2}
+ {u}^2_{\lambda_3\lambda_3} + {u}^2\right)d\lambda\;.\notag
\end{align}
Here $K$ is a constant independent of $W$. Now making use of Theorem $2.1$ of~\cite{DBR}
and extending it to three dimensions it follows that there exists a constant $L$ (independent
of $W$) such that
\begin{align}
\frac{1}{L}||{u}||^{2}_{H^{2}(Q)} &\leq\int_{Q}\left(|{u}_{\lambda_1\lambda_1}|^{2}
+|{u}_{\lambda_2\lambda_2}|^{2}+|{u}_{\lambda_3\lambda_3}|^{2}+|{u}_{\lambda_1}|^2
+|{u}_{\lambda_2}|^2+|{u}_{\lambda_3}|^2+|{u}|^{2}\right)d\lambda \notag\\
&\leq ||{u}||^{2}_{H^{2}(Q)}\:.\notag
\end{align}
i.e. the quadratic forms $\mathcal{B}({u})$ and $\mathcal{C}({u})$ are spectrally equivalent.
\begin{theo}\label{thm5.1}
The quadratic forms $\mathcal{B}({u})$ and $\mathcal{C}({u})$ are spectrally equivalent.
\end{theo}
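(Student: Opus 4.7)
The inequality $\mathcal{C}(u)\le \mathcal{B}(u)$ is immediate, since every integrand appearing in $\mathcal{C}(u)$ also appears in the $H^2(Q)$ norm that defines $\mathcal{B}(u)$; the content of the theorem is therefore the reverse bound $\mathcal{B}(u)\le L\,\mathcal{C}(u)$ with a constant $L$ independent of $W$. The plan is to control the mixed second derivatives $u_{\lambda_i\lambda_j}$ ($i\ne j$) and the first derivatives $u_{\lambda_i}$ by the pure second derivatives $u_{\lambda_i\lambda_i}$ and by $u$ itself, uniformly in the polynomial degree. Pointwise polynomial identities on $Q$ are delicate, so the strategy is to lift the estimate to the whole space $\mathbb{R}^3$, where Fourier analysis makes the required inequalities transparent, and then restrict back.

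First, I would invoke the reflection-type extension theorem (Theorem $4.26$ of~\cite{ADAM}) together with the polynomial-preserving refinement in Lemma $2.1$ of~\cite{DBR}, extended componentwise to three dimensions, to produce $U\in H^2(\mathbb{R}^3)$ with $U|_Q=u$ and
\begin{equation*}
\int_{\mathbb{R}^3}\bigl(U^2_{\lambda_1\lambda_1}+U^2_{\lambda_2\lambda_2}+U^2_{\lambda_3\lambda_3}+U^2\bigr)\,d\lambda\le K\int_Q\bigl(u^2_{\lambda_1\lambda_1}+u^2_{\lambda_2\lambda_2}+u^2_{\lambda_3\lambda_3}+u^2\bigr)\,d\lambda,
\end{equation*}
with $K$ independent of $W$ (the key point being that the reflection is built from a finite, degree-independent linear combination of translates/reflections of $u$). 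Next, I would use the Plancherel identity on $\mathbb{R}^3$ to write each derivative as a Fourier multiplier: $\widehat{U_{\lambda_i\lambda_j}}(\xi)=-\xi_i\xi_j\,\widehat{U}(\xi)$. A Cauchy--Schwarz argument at the Fourier level yields
\begin{equation*}
\int_{\mathbb{R}^3}|\xi_i\xi_j|^{\,2}|\widehat{U}|^{2}\,d\xi\le \tfrac{1}{2}\int_{\mathbb{R}^3}\bigl(\xi_i^{4}+\xi_j^{4}\bigr)|\widehat{U}|^{2}\,d\xi,
\end{equation*}
which is exactly $\|U_{\lambda_i\lambda_j}\|_{L^2(\mathbb{R}^3)}^{2}\le \tfrac{1}{2}\bigl(\|U_{\lambda_i\lambda_i}\|_{L^2(\mathbb{R}^3)}^{2}+\|U_{\lambda_j\lambda_j}\|_{L^2(\mathbb{R}^3)}^{2}\bigr)$, and analogously $\|U_{\lambda_i}\|_{L^2(\mathbb{R}^3)}^{2}\le \tfrac{1}{2}\bigl(\|U\|_{L^2(\mathbb{R}^3)}^{2}+\|U_{\lambda_i\lambda_i}\|_{L^2(\mathbb{R}^3)}^{2}\bigr)$ from $|\xi_i|^{2}\le \tfrac{1}{2}(1+\xi_i^{4})$.

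Combining these two steps and restricting back from $\mathbb{R}^3$ to $Q$ (which only decreases $L^2$ integrals) gives
\begin{equation*}
\mathcal{B}(u)\le C_0\int_{\mathbb{R}^3}\bigl(U_{\lambda_1\lambda_1}^{2}+U_{\lambda_2\lambda_2}^{2}+U_{\lambda_3\lambda_3}^{2}+U^{2}\bigr)d\lambda\le C_0 K\,\mathcal{C}(u),
\end{equation*}
for a universal $C_0$, which proves the remaining inequality with $L=C_0K$.

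The main obstacle I expect is verifying the extension step: one needs an extension $U$ of the polynomial $u$ whose $H^2(\mathbb{R}^3)$ norm is bounded in terms of the weaker, pure-second-derivative quantity on the right-hand side displayed above, with a constant independent of $W$. The Stein/Adams reflection and the one-dimensional construction of Lemma~$2.1$ in~\cite{DBR} provide exactly this in lower dimensions; the work here is the tensor-product / iterated one-variable-at-a-time argument needed to extend that construction to a cube in $\mathbb{R}^3$ while keeping the constant uniform in the polynomial degree. Once this extension is in hand, the Fourier multiplier estimates on $\mathbb{R}^3$ and the restriction to $Q$ are immediate, and the spectral equivalence $\frac{1}{L}\mathcal{B}(u)\le \mathcal{C}(u)\le \mathcal{B}(u)$ follows.
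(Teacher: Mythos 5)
Your proposal is correct and follows essentially the same route as the paper: both rely on the reflection extension (Theorem $4.26$ of~\cite{ADAM} together with Lemma $2.1$ of~\cite{DBR}) to obtain $U\in H^{2}(\mathbb{R}^3)$ controlled by the pure second derivatives and $u$ alone, and then deduce control of the mixed and first derivatives on all of $\mathbb{R}^3$ before restricting back to $Q$. The only difference is that you spell out the Fourier-multiplier inequalities explicitly where the paper simply invokes Theorem $2.1$ of~\cite{DBR} extended to three dimensions.
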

We now show that the quadratic form $\mathcal{C}({u})$ defined in (\ref{eq5.19}) as
\begin{equation}
\mathcal{C}({u})=\int_Q\left({u}_{\lambda_1\lambda_1}^2+{u}_{\lambda_2\lambda_2}^2
+{u}_{\lambda_3\lambda_3}^2+{u}_{\lambda_1}^2+{u}_{\lambda_2}^2+{u}_{\lambda_3}^2+{u}^2\right)
d\lambda_1 d\lambda_2 d\lambda_3 \notag
\end{equation}
can be diagonalized in the basis $\{\psi_{i,j,k}\}_{i,j,k}$. Here ${u}$ is a polynomial
in $\lambda_1$, $\lambda_2$ and $\lambda_3$ as defined in (\ref{eq5.8}). Let $\widetilde
{\mathcal{C}}(f,g)$ denote the bilinear form induced by the quadratic form $\mathcal{C}({u})$.
Then
\begin{align}\label{eq5.21}
\widetilde{\mathcal{C}}(f,g)=\int_Q\left(f_{\lambda_1\lambda_1}g_{\lambda_1\lambda_1}
+f_{\lambda_2\lambda_2}g_{\lambda_2\lambda_2}+f_{\lambda_3\lambda_3}g_{\lambda_3\lambda_3}
+f_{\lambda_1}g_{\lambda_1}+f_{\lambda_2}g_{\lambda_2}\right. \notag\\
\left.+f_{\lambda_3}g_{\lambda_3}+fg\right)\:d\lambda_1 d\lambda_2 d\lambda_3 \;.
\end{align}
Let $\mathcal{E}(v)$ and $\mathcal{F}(v)$ be the quadratic forms defined in (\ref{eq5.11})
and (\ref{eq5.12}) and let $\widetilde{\mathcal{E}}(f,g)$ and $\widetilde{\mathcal{F}}(f,g)$
denote the bilinear forms induced by $\mathcal{E}(v)$ and $\mathcal{F}(v)$ respectively. Then
\begin{subequations}\label{eq5.22}
\begin{align}\label{eq5.22a}
\widetilde{\mathcal{E}}(f,g)=\int_{I}(f_{\lambda_1\lambda_1}g_{\lambda_1\lambda_1}
+f_{\lambda_1}g_{\lambda_1})d\lambda_1
\end{align}
and
\begin{align}\label{eq5.22b}
\widetilde{\mathcal{F}}(f,g)=\int_{I}fg\:d\lambda_1\:.
\end{align}
\end{subequations}
Here $I$ denotes the unit interval and $f(\lambda_1)$, $g(\lambda_1)$ are polynomials
of degree $W$ in $\lambda_1$.

Finally, let $\phi_i(\lambda_1)$ be the polynomial as defined in (\ref{eq5.17}). Then relation
(\ref{eq5.16a}) may be written as
\begin{subequations}\label{eq5.23}
\begin{align}\label{eq5.23a}
\widetilde{\mathcal{F}}(\phi_i,\phi_j)=\int_{I}\phi_i(\lambda_1)\phi_j(\lambda_1)
\:d\lambda_1=\delta_{j}^{i}\:.
\end{align}
Moreover, relation (\ref{eq5.16b}) may be stated as
\begin{align}\label{eq5.23b}
\widetilde{\mathcal{E}}(\phi_i,\phi_j)=\int_{I}((\phi_i)_{\lambda_1\lambda_1}
(\phi_j)_{\lambda_1\lambda_1}+(\phi_i)_{\lambda_1}(\phi_j)_{\lambda_1})
\:d\lambda_1=\mu_{i}\delta_{j}^{i}\:.
\end{align}
\end{subequations}
Recalling that $\psi_{i,j,k}(\lambda_1,\lambda_2,\lambda_3)=\phi_{i}(\lambda_1)\phi_{j}
(\lambda_2)\phi_{k}(\lambda_3)$ and using (\ref{eq5.23}) in (\ref{eq5.21}) it is easy to
show that
\begin{align}
\widetilde{\mathcal{C}}(\psi_{i,j,k},\psi_{l,m,n})&=(\mu_i+\mu_j+\mu_k+1)\delta_{l}^{i}
\delta_{m}^{j}\delta_{n}^{k} \notag\\
&=\mu_{i,j,k}\delta_{l}^{i}\delta_{m}^{j}\delta_{n}^{k}\:. \notag
\end{align}
Hence the eigenvectors of the quadratic form $\mathcal{C}(u)$ are $\{\psi_{i,j,k}\}_{i,j,k}$
and the eigenvalues are $\{\mu_{i,j,k}\}_{i,j,k}$. Moreover the quadratic form
$\mathcal{C}({u})$ can be diagonalized in the basis ${\{\psi_{i,j,k}\}}_{i,j,k}$ and
consequently the matrix corresponding to $\mathcal{C}({u})$ is easy to invert.

Let
$${u}(\lambda_1,\lambda_2,\lambda_3)=\sum_{i=0}^{W}\sum_{j=0}^{W}\sum_{k=0}^{W}
\beta_{i,j,k}L_i(\lambda_1)L_j(\lambda_2)L_k(\lambda_3)$$
and $\beta$ denotes the column vector whose components are $\beta_{i,j,k}$ arranged in
lexicographic order. Then there is a ${(W+1)}^3\times{(W+1)}^3$ matrix $C$ such that
$$\mathcal{C}({u})=\beta^T C \beta\;.$$
As in~\cite{DBR} it can be shown that the system of equations
\begin{equation}
C\beta =\rho \notag
\end{equation}
can be solved in $O(W^4)$ operations. Therefore the quadratic form $\mathcal{C}({u})$
can be inverted in $O(W^4)$ operations.

Let $\kappa$ denote the condition number of the preconditioned system obtained by using the
quadratic form $\mathcal{C}(u)$ as a preconditioner for the quadratic form $\mathcal{B}(u)$.
Then the values of $\kappa$ as a function of $W$ are shown in Table \ref{tab5.1}.
\pagebreak
\begin{table}[!ht]
\caption{Condition number $\kappa$ as a function of W}
\label{tab5.1}
\begin{center}
         \begin{tabular}{|c|c|}
            \hline
             \text{W} & \text{$\kappa$}\\
            \hline
            \hline
             2 & 3.69999999999999 \\
            \hline
             4 & 4.90406593328559 \\
            \hline
             6 & 5.27448215795748 \\
            \hline
             8 & 5.48239323328901 \\
            \hline
             10 & 5.62480021244268 \\
            \hline
             12 & 5.72673215953223 \\
            \hline
             14 & 5.80192403338903 \\
            \hline
             16 & 5.85907843805046 \\
            \hline
        \end{tabular}
\end{center}
\end{table}

In Figure \ref{fig5.1}, the condition number $\kappa$ is plotted against the polynomial
order $W$.
\begin{figure}[!ht]
\centering
\input{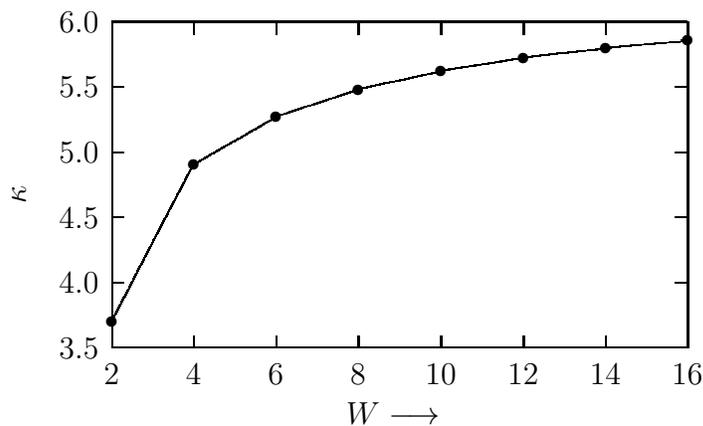}
\caption[$\kappa$ vs. $W$.]
{Condition number $\kappa$ vs. $W$.}
\label{fig5.1}
\end{figure}

\subsection{Preconditioners on singular regions}
In Chapter $2$ we had divided the polyhedral domain $\Omega$ into a regular region
$\Omega^r$, a set of vertex neighbourhoods $\Omega^v$, a set of edge neighbourhoods
$\Omega^e$ and a set of vertex-edge neighbourhoods $\Omega^{v-e}$. $\Omega^r$ is
divided into a set of curvilinear hexahedrons, tetrahedrons and prisms and the
singular regions in the neighbourhoods of vertices, edges and vertex-edges are
divided into hexahedrons and prisms using a geometric mesh.

A set of spectral element functions has been defined on all elements in the regular region
and various singular regions.

Let $N$ denote the number of refinements in the geometric mesh. We
shall assume that $N$ is proportional to $W$. Then the proposed
method gives exponentially accurate solution in $N$ provided the
data satisfy usual regularity conditions~\cite{G1,GOH1,SKT2}.

We choose our spectral element functions to be fully non-conforming. Let ${\mathcal F}_u$
denote the spectral element representation of the function $u$.
We define the quadratic form
\begin{align}\label{eq5.24}
{\mathcal W}^{N,W}(\{{\mathcal F}_u\})&={\mathcal W}^{N,W}_{regular}(\{{\mathcal F}_u\})
+{\mathcal W}^{N,W}_{vertices}(\{{\mathcal F}_u\})+{\mathcal W}^{N,W}_{vertex-edges}
(\{{\mathcal F}_u\})\notag \\
&+{\mathcal W}^{N,W}_{edges}(\{{\mathcal F}_u\})\:.
\end{align}
Here ${\mathcal W}^{N,W}_{regular}(\{{\mathcal F}_u\})$, ${\mathcal W}^{N,W}_{vertices}
(\{{\mathcal F}_u\})$, ${\mathcal W}^{N,W}_{vertex-edges}(\{{\mathcal F}_u\})$ and
${\mathcal W}^{N,W}_{edges}(\{{\mathcal F}_u\})$ are defined similar to the quadratic
forms ${\mathcal U}^{N,W}_{regular}(\{{\mathcal F}_u\})$, ${\mathcal U}^{N,W}_{vertices}
(\{{\mathcal F}_u\})$, ${\mathcal U}^{N,W}_{vertex-edges}(\{{\mathcal F}_u\})$ and
${\mathcal U}^{N,W}_{edges}(\{{\mathcal F}_u\})$ as in (\ref{eq2.19}), (\ref{eq2.21}),
(\ref{eq2.33}) and (\ref{eq2.41}) respectively. Then for problems with Dirichlet boundary
conditions by Theorem \ref{thm2.3.1} it follows that the following estimate is true
\begin{equation}\label{eq5.25}
\mathcal W^{N,W}(\{\mathcal F_u\}) \leq C (\ln W)^2 \mathcal V^{N,W}(\{\mathcal F_u\})
\end{equation}
provided $W=O(e^{N^{\alpha}})$ for $\alpha<1/2$.

At the same time using trace theorems for Sobolev spaces there exists a constant $k$ such
that
\begin{equation}\label{eq5.26}
\frac{1}{k}\mathcal V^{N,W}(\{\mathcal F_u\}) \leq \mathcal W^{N,W}(\{\mathcal F_u\})\:.
\end{equation}
Hence using (\ref{eq5.25}) and (\ref{eq5.26}) we conclude that the two quadratic forms
$\mathcal W^{N,W}(\{\mathcal F_u\})$ and $\mathcal V^{N,W}(\{\mathcal F_u\})$ are spectrally
equivalent and there exists a constant $K$ such that
\begin{equation}\label{eq5.27}
\frac{1}{K}\mathcal V^{N,W}(\{\mathcal F_u\}) \leq \mathcal W^{N,W}(\{\mathcal F_u\})
\leq K (\ln W)^2 \mathcal V^{N,W}(\{\mathcal F_u\})
\end{equation}
provided $W=O(e^{N^{\alpha}})$ for $\alpha<1/2$.

We can now use the quadratic form $\mathcal W^{N,W}(\{\mathcal F_u\})$ which consists
of a decoupled set of quadratic forms on each element as a preconditioner. It follows
that the condition number of the preconditioned system is $O(\ln W)^2$.

The other case is when the boundary conditions are of mixed Neumann and Dirichlet type.
In this case, as above, using Theorem \ref{thm2.3.2} and trace theorems for Sobolev spaces
it follows that for $W$ and $N$ large enough the following estimate holds
\begin{equation}
\frac{1}{K}\mathcal V^{N,W}(\{\mathcal F_{u}\})\leq\mathcal W^{N,W}(\{\mathcal F_{u}\})
\leq K N^4 \mathcal V^{N,W}(\{\mathcal F_{u}\})\:. \notag
\end{equation}
Here $K$ is a constant. It is clear that the quadratic form
$\mathcal W^{N,W}(\{\mathcal F_{u}\})$ can be used as a preconditioner and the
condition number of the preconditioned system is $O(N^4)$.
\newline
%

We will now construct preconditioners on each of the elements in the neighbourhoods of vertices,
edges, vertex-edges and the regular region. Here $u$ denotes the spectral element function which
is a polynomial of degree $W$ in each of its variables separately defined in various regions of
the polyhedron.

The quadratic forms which need to be examined are
\begin{align}\label{eq5.28}
\mathcal{B}_{regular}(u)&={\Arrowvert u \Arrowvert}^2_{H^{2}(Q)}=\int_{Q}\sum_{|\alpha|\le 2}
{|D^{\alpha}_{\lambda_1,\lambda_2,\lambda_3}u|}^2\:d\lambda_1 d\lambda_2 d\lambda_3\:,\\
\mathcal{B}_{vertices}(u)&={\Arrowvert e^{\chi/2}u \Arrowvert}^2_{H^{2}(\tilde{\Omega}^{v}_{l})}
=\int_{\tilde{\Omega}^{v}_{l}}e^{\chi}\sum_{|\alpha|\le 2}{|D^{\alpha}_{\phi,\theta,\chi}u|}^2
\:d\phi d\theta d\chi\:, \\
\mathcal{B}_{vertex-edges}(u)&=\int_{\tilde{\Omega}^{v-e}_{l}}e^{\zeta}(u^2_{\psi\psi}+u^2_{\theta
\theta}+u^2_{\psi\theta}+\sin^{2}\phi u^2_{\phi\zeta}+\sin^{2}\phi u^2_{\theta\zeta}\notag\\
&+\sin^{4}\phi u^2_{\zeta\zeta}+u^2_{\psi}+u^2_{\theta}+\sin^{2}\phi u^2_{\zeta}
+u^2)\:d\psi d\theta d\zeta\:,\\
\mathcal{B}_{edges}(u)&=\int_{\tilde{\Omega}^{e}_{l}}(u^2_{\tau\tau}+u^2_{\theta\theta}
+u^2_{\tau\theta}+e^{2\tau}u^2_{\tau x_3}+e^{2\tau}u^2_{\theta x_3}+e^{4\tau}u^2_{x_3x_3}\notag\\
&+u^2_{\tau}+u^2_{\theta}+e^{2\tau}u^2_{x_3}+u^2)\:d\tau d\theta dx_3\:.
\end{align}
Here $(\phi,\theta,\chi), (\psi,\theta,\zeta)$ and $(\tau,\theta,x_3)$ denote the modified systems
of coordinates introduced in Chapter $2$ in vertex neighbourhoods, vertex-edge neighbourhoods
and edge neighbourhoods respectively. Moreover $\tilde{\Omega}^{v}_{l}$, $\tilde{\Omega}^{v-e}_{l}$
and $\tilde{\Omega}^{e}_{l}$ denote elements in the vertex neighbourhood, vertex-edge neighbourhood
and edge neighbourhood respectively.

The construction of preconditioners corresponding to the quadratic
forms $\mathcal{B}_{regular}(u)$ and $\mathcal{B}_{vertices}(u)$ is
similar to the case of a smooth domain already discussed so we omit
the details. It follows that there exist quadratic forms
$\mathcal{C}_{regular}(u)$ and $\mathcal{C}_{vertices}(u)$ which are
spectrally equivalent to $\mathcal{B}_{regular}(u)$ and
$\mathcal{B}_{vertices}(u)$ respectively and which can be
diagonalized using separation of variables technique. We will now
obtain preconditioners for edge and vertex-edge neighbourhood
elements. For this purpose we observe that for quadratic forms in
edge and vertex-edge neighbourhoods it is enough to examine the
quadratic form
\begin{align}\label{eq5.29}
\mathcal{B}^{\star}(u)&=\int_{-1}^{1}\int_{-1}^{1}\int_{-1}^{1}(u^2_{xx}+u^2_{yy}
+\eta^{2}u^2_{xz}+\eta^{2}u^2_{yz}+\eta^{4}u^2_{zz}\notag\\
&\hspace{3.0cm}+u^2_{x}+u^2_{y}+\eta^{2}u^2_{z}+u^2)\:dx dy dz\:.
\end{align}
Here $\eta=\sin\phi$ and $\eta=e^{\tau}$ for vertex-edge and edge neighbourhood elements respectively.
We remark that the factor $\eta$ becomes smaller towards the vertices and edges of the domain $\Omega$.

Making the transformation $\tilde{z}=\frac{z}{\eta}$, so that $\frac{d}{dz}=\frac{1}{\eta}\frac{d}
{d\tilde{z}}$, the quadratic form $\mathcal{B}^{\star}(u)$ assumes the form
\begin{align}
\mathcal{B}^{\star}(u)&=\int_{-{\frac{1}{\eta}}}^{\frac{1}{\eta}}\int_{-1}^{1}\int_{-1}^{1}
(u^2_{xx}+u^2_{yy}+u^2_{xy}+u^2_{x\tilde{z}}+u^2_{y\tilde{z}}+u^2_{\tilde{z}\tilde{z}}+u^2_{x}
+u^2_{y}+u^2_{\tilde{z}}+u^2)\:dxdyd\tilde{z}\:.\notag
\end{align}
Let us define the quadratic form
\begin{align}\label{eq5.30}
\mathcal{C}^{\star}(u)&=\int_{-1}^{1}\int_{-1}^{1}\int_{-1}^{1}(u^2_{xx}+u^2_{yy}
+\eta^{4}u^2_{zz}+u^2_{x}+u^2_{y}+\eta^{2}u^2_{z}+u^2)\:dx dy dz\:.
\end{align}

We now show that the quadratic form $\mathcal{C}^{\star}(u)$ is
spectrally equivalent to the quadratic form
$\mathcal{B}^{\star}(u)$, defined in $(\ref{eq5.29})$. Moreover,
$\mathcal{C}^{\star}(u)$ can be diagonalized using separation of
variables technique.

Let
\begin{equation}
v(x) = \sum_{i=0}^{W}\beta_{i}L_{i}(x) ,\quad\mbox{and}\quad
v(z) = \sum_{i=0}^{W}\gamma_{i}L_{i}(z) \;. \notag
\end{equation}
Moreover $b={(\beta_{0},\beta_{1},\ldots,\beta_{W})}^T $ and
$d={(\gamma_{0},\gamma_{1},\ldots,\gamma_{W})}^T$.
\newline
We now define the quadratic form
\begin{equation}\label{eq5.31}
\mathcal{E}(v) = \int_{I}(v_{xx}^2+v_{x}^2 )dx
\end{equation}
and
\begin{equation}\label{eq5.32}
\mathcal{F}(v) = \int_{I} v^2 dx \;.
\end{equation}
We also define the quadratic form
\begin{equation}\label{eq5.33}
\mathcal{G}(v) = \int_{I}(\eta^{4}v_{zz}^2+\eta^{2}v_{z}^2 )dz
\end{equation}
and
\begin{equation}\label{eq5.34}
\mathcal{H}(v) = \int_{I} v^2 dz \;.
\end{equation}
Here $I$ denotes the unit interval $(-1,1)$. 
Clearly there exist $(W+1)\times(W+1)$ matrices $E,G$ and $F,H$ such that
\begin{equation}\label{eq5.35}
\mathcal{E}(v) = b^T E b\:,\quad\mathcal{G}(v) = d^T G d
\end{equation}
and
\begin{equation}\label{eq5.36}
\mathcal{F}(v) = b^T F b\:,\quad\mathcal{H}(v) = d^T H d.
\end{equation}
Here the matrices $E,G$ and $F,H$ are symmetric and $F,H$ are positive definite.
\newline
Hence there exist $W+1$ eigenvalues $0 \le \mu_0 \le \mu_1\le\cdots\le \mu_W$ and $W+1$
eigenvectors $b_0, b_1,\ldots,b_W$ of the symmetric eigenvalue problem
\begin{equation}\label{eq5.37}
(E-\mu F)b=0\;.
\end{equation}
Here
$$(E-\mu_i F)b_i=0\;.$$
Similarly, there exist $W+1$ eigenvalues $0 \le \nu_0 \le \nu_1\le\cdots\le\nu_W$ and
$W+1$ eigenvectors $d_0, d_1,\ldots,d_W$ of the symmetric eigenvalue problem
\begin{align}\label{eq5.38}
(G-\nu H)d=0\;.
\end{align}
Here
\[(G-\nu_i H)d_i=0\;.\]
The eigenvectors $b_i$ and $c_i$ are normalized so that
\begin{subequations}\label{eq5.39}
\begin{equation}\label{eq5.39a}
b_i^T F b_j= \delta^i_j,\:\:\mbox{and}\:\:
d_i^T H d_j= \delta^i_j \;.
\end{equation}
Moreover the relations
\begin{align}\label{eq5.39b}
b_i^T E b_j=\mu_i \delta^i_j\:\:\mbox{and}\:\:
d_i^T G d_j=\nu_i \delta^i_j
\end{align}
\end{subequations}
hold. Let $b_i = (b_{i,0}, b_{i,1},\ldots, b_{i,W})$ and $d_i = (d_{i,0},d_{i,1},\ldots,d_{i,W})$.
We now define the polynomials
\begin{subequations}\label{eq5.40}
\begin{align}
\phi_i(x)=\sum_{m=0}^{W}b_{i,m}L_m(x)\:\:\mbox{for}\:\:0\le i\le W\:,\\
\phi_j(y)=\sum_{m=0}^{W}b_{j,m}L_m(y)\:\:\mbox{for}\:\:0\le j\le W\:,\\
\theta_k(z)=\sum_{m=0}^{W}d_{k,m}L_m(z)\:\:\mbox{for}\:\:0\le k\le W\:.
\end{align}
\end{subequations}
Next, let $\chi_{i,j,k}$ denote the polynomial
\begin{equation}\label{eq5.41}
\chi_{i,j,k}(x,y,z)=\phi_i(x)\phi_j(y)\theta_k(z)
\end{equation}
for\quad $0 \leq i \le W,\;0 \le j\le W,\;0 \le k\le W $.
\newline
Note that ${\{\chi_{i,j,k}(x,y,z)\}}_{i,j,k}$ is the tensor product of the polynomials
$\phi_i(x)$, $\phi_j(y)$ and $\theta_k(z)$. The eigenvalue $\sigma_{i,j,k}$ corresponding to
the eigenvector $\chi_{i,j,k}$ is given by the relation
\begin{equation}\label{eq5.42}
\sigma_{i,j,k}=\mu_i+\mu_j+\nu_k+1\;.
\end{equation}
Let $\widetilde{\mathcal{C}}^{\star}(f,g)$ be the bilinear form induced by the quadratic
form $\mathcal{C}^{\star}(u)$. Then
\begin{align}
\widetilde{\mathcal{C}}^{\star}(f,g)&=\int_{-1}^{1}\int_{-1}^{1}\int_{-1}^{1}(f_{xx}f_{xx}
+f_{yy}g_{yy}+\eta^{4}f_{zz}g_{zz}+f_{x}g_{x}+f_{y}g_{y}\notag\\
&\hspace{2.7cm}+\eta^{2}f_{z}g_{z}+fg)\:dx dy dz\:. \notag
\end{align}
It is easy to show that
\begin{align}
\widetilde{\mathcal{C}}^{\star}(\chi_{i,j,k},\chi_{l,m,n})&=(\mu_i+\mu_j+\nu_k+1)
\delta_{l}^{i}\delta_{m}^{j}\delta_{n}^{k} \notag\\
&=\sigma_{i,j,k}\delta_{l}^{i}\delta_{m}^{j}\delta_{n}^{k}\:. \notag
\end{align}
Hence the eigenvectors of the quadratic form $\mathcal{C}^{\star}(u)$ are $\{\chi_{i,j,k}\}_{i,j,k}$
and the eigenvalues are $\{\sigma_{i,j,k}\}_{i,j,k}$. Thus, the quadratic form $\mathcal{C}^{\star}({u})$
can be diagonalized in the basis ${\{\chi_{i,j,k}\}}_{i,j,k}$. Therefore, the matrix corresponding to the
quadratic form $\mathcal{C}^{\star}(u)$ is easy to invert.

Now proceeding as earlier, it can be shown that the quadratic forms $\mathcal{B}^{\star}(u)$
and $\mathcal{C}^{\star}(u)$ are spectrally equivalent. Moreover, the quadratic form
$\mathcal{C}^{\star}(u)$ can be inverted in $O(W^{4})$ operations.

Thus, it follows that there exist quadratic forms
$\mathcal{C}_{vertex-edges}(u)$ and $\mathcal{C}_{edges}(u)$ which
are spectrally equivalent to $\mathcal{B}_{vertex-edges}(u)$ and
$\mathcal{B}_{edges}(u)$ respectively and which can be diagonalized
using separation of variables technique.


\section{Parallelization Techniques}
In minimizing the functional $\mathcal R^{N,W}(\{\mathcal F_v\})$ we seek a solution which
minimizes the sum of weighted norms of the residuals in the partial differential equation and
a fractional Sobolev norm of the residuals in the boundary conditions and enforce continuity
by adding a term which measures the sum of squares of the jumps in the function and its
derivatives at inter-element boundaries in appropriate Sobolev norms, suitably weighted in
various regions of the polyhedron.

In order to obtain a solution using PCGM we must be able to compute residuals in the normal
equations inexpensively. Since we are minimizing $\mathcal R^{N,W}(\{\mathcal F_v\})$ over
all $\{\mathcal F_v\}\in \mathcal S^{N,W}$ (space of spectral element functions) we have
\[\mathcal R^{N,W}(U+\epsilon V)=\mathcal R^{N,W}(U)+2\epsilon V^{t}(XU-YG)+O(\epsilon^2)\]
for all $V$, where $U$ is a vector assembled from the values of
\[\left\{\left\{u_l^{r}(\lambda)\right\}_{l=1}^{N_r},\left\{u_l^{v}(x^{v})\right\}_{l=1}^{N_v},
\left\{u_l^{v-e}(x^{v-e})\right\}_{l=1}^{N_{v-e}},\left\{u_l^{e}(x^{e})\right\}_{l=1}^{N_e}\right\}.\]
$V$ is a vector similarly assembled and $G$ is assembled from the data. Here $X$ and $Y$ denote
matrices. Thus we have to solve $XU-YG=0$ and so we must be able to compute $XV-YG$ economically
during the iterative process. We will now explain in brief how this can be done. The idea is very
similar to the case of two dimensional problems so we refer the reader to~\cite{SKT1} for details.

The above minimization amounts to an overdetermined system of
equations consisting of collocating the residuals in the partial
differential equation, the residuals in the boundary conditions and
jumps in the function and its derivatives at inter-element
boundaries at an over determined set of collocation points, weighted
suitably. In fact, we collocate the partial differential equation on
a finer grid of Gauss-Lobatto-Legendre (GLL) points and then we
apply the adjoint differential operator to these residuals and
project these values back to the original grid. Such a treatment
obviously involves integration by parts and hence leads to
evaluation of terms at the boundaries. These boundary terms can be
evaluated be a collocation procedure and the other boundary terms
corresponding to jump terms at the inter-element boundaries can be
easily calculated.

\subsection{Integrals on the element domain}
We first show how to compute the integrals on the element domain. Since each element
is mapped onto the master cube $Q$=$(-1,1)^{3}=(-1,1)\times(-1,1)\times(-1,1)$, so we
consider the computations only on $Q$.
\newline
Consider the integral
\begin{equation}\label{eq5.44}
{\Arrowvert{({\mathcal{L}})}^a {{u}}-{\widehat{F}}\Arrowvert}^2_{Q}\,.
\end{equation}
Its variation is
\begin{equation}\label{eq5.45}
\int_{Q}{(\mathcal{L})}^{a}{v}\,({(\mathcal{L})}^{a}{u}-\widehat{F})\,d\xi_1\,d\xi_2\,d\xi_3.
\end{equation}
Here
\begin{align}
({\mathcal{L}})^{a}{w}
&=\widehat{A}w_{\xi_1\xi_1}+\widehat{B}w_{\xi_2\xi_2}+\widehat{C}w_{\xi_3\xi_3}
+\widehat{D}w_{\xi_1\xi_2}+\widehat{E}w_{\xi_2\xi_3}+\widehat{F}w_{\xi_3\xi_1}\notag\\
&+\widehat{G}w_{\xi_1}+\widehat{H}w_{\xi_2}+\widehat{I}w_{\xi_3}+\widehat{J}{w},\notag
\end{align}
where the coefficients $\widehat{A}, \widehat{B}, \widehat{C}$ etc. are polynomials of degree
$N$ and $\widehat{F}$ is a filtered representation of $F$.
\newline
Let,
\[{u}(\xi_1,\xi_2,\xi_3) = \sum_{i=0}^{2N}\sum_{j=0}^{2N}\sum_{k=0}^{2N} \alpha_{i,j,k}
{\xi_1}^i{\xi_2}^j{\xi_3}^k\]
and
\[{v}(\xi_1,\xi_2,\xi_3) = \sum_{i=0}^{2N}\sum_{j=0}^{2N}\sum_{k=0}^{2N} \beta_{i,j,k}
{\xi_1}^i{\xi_2}^j{\xi_3}^k.\]
The differential operator $({\mathcal{L}})^{a}$ is obtained from the differential operator
\begin{align}
L{w}&={A}{w}_{\xi_1\xi_1}+{B}{w}_{\xi_2\xi_2}+{C}{w}_{\xi_3\xi_3}+{D}{w}_{\xi_1\xi_2}
+{E}{w}_{\xi_2\xi_3}+{F}{w}_{\xi_3\xi_1} \notag \\
&+{G}{w}_{\xi_1}+{H}{w}_{\xi_2}+{I}{w}_{\xi_3}+{J}{w}, \notag
\end{align}
with analytic coefficients $A,B,C$ etc. We choose the coefficients $\widehat{A}, \widehat{B},
\widehat{C}$ etc. so that they are the orthogonal projections of $A,B,C$ etc. into the space
of polynomials of degree $N$ with respect to the usual inner product in $H^2(Q)$.
\newline
Let $(\mathcal{L}^{a})^{T}$ denote the formal adjoint of the differential operator
${(\mathcal{L})}^{a}$.
\newline
Then
\begin{align}
({\mathcal{L}}^{a})^{T}{w}
&=\left(\widehat{A}{w}\right)_{\xi_1\xi_1}+\left(\widehat{B}{w}\right)_{\xi_2\xi_2}
+\left(\widehat{C}{w}\right)_{\xi_3\xi_3}+\left(\widehat{D}{w}\right)_{\xi_1\xi_2}
+\left(\widehat{E}{w}\right)_{\xi_2\xi_3} \notag \\
&+\left(\widehat{F}{w}\right)_{\xi_3\xi_1}-\left(\widehat{G}{w}\right)_{\xi_1}
-\left(\widehat{H}{w}\right)_{\xi_2}-\left(\widehat{I}{w}\right)_{\xi_3}+\widehat{J}{w},
\notag
\end{align}
Let $\mathfrak{z}$ denote $(\mathcal{L})^{a}{u}-\widehat{F}$. Integrating (\ref{eq5.45}) by
parts we obtain
\begin{align}\label{eq5.46}
&\int_{Q}{(\mathcal{L})}^{a}{v}\mathfrak{z}\,d\xi_1\,d\xi_2\,d\xi_3\notag\\
&=\int_{Q}{v}(\mathcal{L}^{a})^{T}\mathfrak{z}\,d\xi_1\,d\xi_2\,d\xi_3\notag\\
&+\int_{-1}^{1}\int_{-1}^{1}\left((\widehat{A}v_{\xi_1}+\widehat{G}v)\mathfrak{z}
-v\left((\widehat{A}\mathfrak{z})_{\xi_1}+(\widehat{F}\mathfrak{z})_{\xi_3}
+(\widehat{D}\mathfrak{z})_{\xi_2}\right)\right)(1,\xi_2,\xi_3)d\xi_2\,d\xi_3\notag\\
&-\int_{-1}^{1}\int_{-1}^{1}\left((\widehat{A}v_{\xi_1}+\widehat{D}v_{\xi_2}+\widehat{G}v)
\mathfrak{z}-v\left((\widehat{A}\mathfrak{z})_{\xi_1}+(\widehat{F}\mathfrak{z})_{\xi_3}\right)
\right)(-1,\xi_2,\xi_3)d\xi_2\,d\xi_3\notag\\
&+\int_{-1}^{1}v(\widehat{D}\mathfrak{z})(1,1,\xi_3)d\xi_3-\int_{-1}^{1}v
(\widehat{D}\mathfrak{z})(1,-1,\xi_3)d\xi_3\notag\\
&+\int_{-1}^{1}\int_{-1}^{1}\left((\widehat{B}v_{\xi_2}+\widehat{H}v)\mathfrak{z}
-v\left((\widehat{B}\mathfrak{z})_{\xi_2}+(\widehat{D}\mathfrak{z})_{\xi_1}
+(\widehat{E}\mathfrak{z})_{\xi_3}\right)\right)(\xi_1,1,\xi_3)d\xi_1\,d\xi_3\notag\\
&-\int_{-1}^{1}\int_{-1}^{1}\left((\widehat{B}v_{\xi_2}+\widehat{E}v_{\xi_3}+\widehat{H}v)
\mathfrak{z}-v\left((\widehat{B}\mathfrak{z})_{\xi_2}+(\widehat{D}\mathfrak{z})
_{\xi_1}\right)\right)(\xi_1,-1,\xi_3)d\xi_1\,d\xi_3\notag\\
&+\int_{-1}^{1}v(\widehat{E}\mathfrak{z})(\xi_1,1,1)d\xi_1-\int_{-1}^{1}v(\widehat{E}
\mathfrak{z})(\xi_1,1,-1)d\xi_1\notag\\
&+\int_{-1}^{1}\int_{-1}^{1}\left((\widehat{C}v_{\xi_3}+\widehat{I}v)\mathfrak{z}
-v\left((\widehat{C}\mathfrak{z})_{\xi_3}+(\widehat{E}\mathfrak{z})_{\xi_2}+(\widehat{F}
\mathfrak{z})_{\xi_1}\right)\right)(\xi_1,\xi_2,1)d\xi_1\,d\xi_2\notag\\
&-\int_{-1}^{1}\int_{-1}^{1}\left((\widehat{C}v_{\xi_3}+{F}v_{\xi_1}+{I}v)
\mathfrak{z}-v\left((\widehat{C}\mathfrak{z})_{\xi_3}+(\widehat{E}\mathfrak{z})
_{\xi_2}\right)\right)(\xi_1,\xi_2,-1)d\xi_1\,d\xi_2\notag\\
&+\int_{-1}^{1}v(\widehat{F}\mathfrak{z})(1,\xi_2,1)d\xi_2-\int_{-1}^{1}v
(\widehat{F}\mathfrak{z})(-1,\xi_2,1)d\xi_2
\end{align}
Now $u,v$
and $\widehat{A}$, $\widehat{B}$, $\widehat{C}$ etc. are polynomials of degree $N$ in each of
the variables $\xi_1$, $\xi_2$ and $\xi_3$. Moreover $\widehat{F}$ is a polynomial of degree
$2N$. Hence all the integrands are polynomials of degree $4N$ in each of the variables $\xi_1$,
$\xi_2$ and $\xi_3$ and so the integral may be exactly evaluated by the Gauss-Lobatto-Legendre
(GLL) quadrature formula with $2N+1$ points. Let $\xi_{1,0}^{2N},\ldots,\xi_{1,2N}^{2N}$,
$\xi_{2,0}^{2N},\ldots,\xi_{2,2N}^{2N}$ and $\xi_{3,0}^{2N},\ldots,\xi_{3,2N}^{2N}$ represent
the quadrature points in $\xi_1$, $\xi_2$ and $\xi_3$ directions respectively and $w_{0}^{2N},
\ldots,w_{2N}^{2N}$ the corresponding weights.
\begin{prop}
Let the matrix $D^{2N}=d_{i,j}^{2N}$ denote the differentiation matrix. Then
\begin{equation}\label{eq5.47}
\frac{dl}{d\xi}\left(\xi_{i}^{2N}\right)=\sum_{j=0}^{2N}d_{i,j}^{2N}l\left(\xi_{j}^{2N}\right)
\end{equation}
if $l$ is a polynomial of degree less than or equal to $2N$.
\end{prop}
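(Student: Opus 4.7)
The plan is to recognize that this proposition is really just a restatement, in matrix form, of the fact that at $2N+1$ distinct nodes, any polynomial of degree $\leq 2N$ is uniquely reconstructed by Lagrange interpolation. The strategy is therefore: (i) fix the Lagrange basis associated with the GLL nodes $\{\xi_j^{2N}\}_{j=0}^{2N}$, (ii) expand $l$ in this basis, (iii) differentiate term by term and evaluate at $\xi_i^{2N}$, and (iv) identify the resulting coefficients as the entries $d_{i,j}^{2N}$.

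First I would introduce the Lagrange basis polynomials $\{L_j^{2N}\}_{j=0}^{2N}$ of degree $2N$ associated with the GLL nodes, defined uniquely by $L_j^{2N}(\xi_i^{2N})=\delta_{i,j}$. Since the space of polynomials of degree $\leq 2N$ has dimension $2N+1$, the family $\{L_j^{2N}\}$ is a basis of that space. Hence for any polynomial $l$ with $\deg l\leq 2N$,
\begin{equation*}
l(\xi)=\sum_{j=0}^{2N} l\!\left(\xi_j^{2N}\right) L_j^{2N}(\xi),
\end{equation*}
the two sides being polynomials of degree $\leq 2N$ that agree at the $2N+1$ nodes $\xi_i^{2N}$.

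Next I would differentiate this identity with respect to $\xi$ and evaluate at $\xi=\xi_i^{2N}$, which yields
\begin{equation*}
\frac{dl}{d\xi}\!\left(\xi_i^{2N}\right)=\sum_{j=0}^{2N} l\!\left(\xi_j^{2N}\right) \frac{dL_j^{2N}}{d\xi}\!\left(\xi_i^{2N}\right).
\end{equation*}
Defining
\begin{equation*}
d_{i,j}^{2N}:=\frac{dL_j^{2N}}{d\xi}\!\left(\xi_i^{2N}\right),
\end{equation*}
this gives exactly the claimed formula. I would conclude by noting that the matrix $D^{2N}=(d_{i,j}^{2N})$ is independent of $l$, since the Lagrange basis is fixed by the nodes alone, and thus acts as a universal differentiation operator on the $(2N+1)$-dimensional space of polynomials of degree $\leq 2N$.

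There is essentially no analytic obstacle here; the only point deserving care is the degree bookkeeping, namely that $\dim\{\text{polynomials of degree}\le 2N\}=2N+1$, so that Lagrange interpolation at the $2N+1$ GLL nodes is exact (not merely an approximation) on the admissible class. Once this dimension count is in place, the rest is immediate. I would also remark, in passing, that explicit closed-form expressions for $d_{i,j}^{2N}$ in terms of the Legendre polynomial $L_{2N}$ and the nodes $\xi_j^{2N}$ (as in Canuto--Hussaini--Quarteroni--Zang~\cite{CHQZ2}) are available and will be useful for the evaluation of boundary and interior residuals in Section~$5.3$, but are not needed for the proof of the proposition itself.
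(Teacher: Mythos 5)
Your proof is correct and is precisely the standard argument this proposition rests on: since the paper states the result without proof, the Lagrange-interpolation route you take (exactness of interpolation on polynomials of degree $\leq 2N$ at the $2N+1$ GLL nodes, followed by termwise differentiation and the identification $d_{i,j}^{2N}=\frac{dL_j^{2N}}{d\xi}(\xi_i^{2N})$) is exactly what is intended. The only cosmetic caution is notational: the paper already uses $L_i(\cdot)$ for Legendre polynomials in Section $5.2$, so you should choose a different symbol for the Lagrange basis to avoid a clash.
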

Using the GLL quadrature rule we obtain
\begin{align}
&\int_{Q}(\mathcal{L})^{a}{v}((\mathcal{L})^{a}{u}-\tilde{F})d\xi_1 d\xi_2 d\xi_3
\notag\\
= &\sum_{i=0}^{2N}\sum_{j=0}^{2N}\sum_{k=0}^{2N}{v}\left(\xi_{1,i}^{2N},\xi_{2,j}^{2N}
,\xi_{3,k}^{2N}\right)\left(w_{i}^{2N}w_{j}^{2N}w_{k}^{2N}{L}^{T}\mathfrak{z}
\left(\xi_{1,i}^{2N},\xi_{2,j}^{2N},\xi_{3,k}^{2N}\right)\right)   \notag \\
& +\sum_{i=0}^{2N}\sum_{j=0}^{2N}\sum_{k=0}^{2N}{v}\left(\xi_{1,i}^{2N},\xi_{2,j}^{2N},\xi_{3,k}^{2N}
\right)\left(w_{j}^{2N}w_{k}^{2N}d_{2N,i}^{2N}{A}(1,\xi_{2,j}^{2N},\xi_{3,k}^{2N})
\mathfrak{z}\left(1,\xi_{2,j}^{2N},\xi_{3,k}^{2N}\right)\right) \notag \\
& +\sum_{j=0}^{2N}\sum_{k=0}^{2N}{v}\left(1,\xi_{2,j}^{2N},\xi_{3,k}^{2N}\right)
\left(w_{j}^{2N}w_{k}^{2N}\left({G}\mathfrak{z}-({A}\mathfrak{z})_{\xi_1}
-({F}\mathfrak{z})_{\xi_3}-({D}\mathfrak{z})_{\xi_2}\right)\left(1,\xi_{2,j}^{2N},
\xi_{3,k}^{2N}\right)\right) \notag \\
& -\sum_{i=0}^{2N}\sum_{j=0}^{2N}\sum_{k=0}^{2N}{v}\left(\xi_{1,i}^{2N},\xi_{2,j}^{2N},\xi_{3,k}^{2N}
\right)\left(w_{j}^{2N}w_{k}^{2N}\left(d_{0,i}^{2N}{A}(-1,\xi_{2,j}^{2N},\xi_{3,k}^{2N})
\mathfrak{z}(-1,\xi_{2,j}^{2N},\xi_{3,k}^{2N})\right.\right. \notag \\
&\hspace{2.4cm}\left.\left.+d_{j,i}^{2N}{D}(-1,\xi_{2,j}^{2N},\xi_{3,k}^{2N})
\mathfrak{z}(-1,\xi_{2,j}^{2N},\xi_{3,k}^{2N})\right)\right) \notag \\
& -\sum_{j=0}^{2N}\sum_{k=0}^{2N}{v}\left(-1,\xi_{2,j}^{2N},\xi_{3,k}^{2N}\right)
\left(w_{j}^{2N}w_{k}^{2N}\left({G}\mathfrak{z}-({A}\mathfrak{z})_{\xi_1}
-({F}\mathfrak{z})_{\xi_3}\right)\left(-1,\xi_{2,j}^{2N},\xi_{3,k}^{2N}\right)\right) \notag \\
& +\sum_{i=0}^{2N}\sum_{j=0}^{2N}\sum_{k=0}^{2N}{v}\left(\xi_{1,i}^{2N},\xi_{2,j}^{2N},\xi_{3,k}^{2N}
\right)\left(w_{i}^{2N}w_{k}^{2N}d_{2N,j}^{2N}{B}(\xi_{1,i}^{2N},1,\xi_{3,k}^{2N})
\mathfrak{z}\left(\xi_{1,i}^{2N},1,\xi_{3,k}^{2N}\right)\right) \notag\\ 
& +\sum_{i=0}^{2N}\sum_{k=0}^{2N}{v}\left(\xi_{1,i}^{2N},1,\xi_{3,k}^{2N}\right)
\left(w_{i}^{2N}w_{k}^{2N}\left({H}\mathfrak{z}-({B}\mathfrak{z})_{\xi_2}
-({D}\mathfrak{z})_{\xi_1}-({E}\mathfrak{z})_{\xi_3}\right)\left(\xi_{1,i}^{2N},1,
\xi_{3,j}^{2N}\right)\right) \notag \\
& -\sum_{i=0}^{2N}\sum_{j=0}^{2N}\sum_{k=0}^{2N}{v}\left(\xi_{1,i}^{2N},\xi_{2,j}^{2N},\xi_{3,k}^{2N}
\right)\left(w_{i}^{2N}w_{k}^{2N}\left(d_{0,j}^{2N}{B}(\xi_{1,i}^{2N},-1,\xi_{3,k}^{2N})
\mathfrak{z}(\xi_{1,i}^{2N},-1,\xi_{3,k}^{2N})\right.\right. \notag \\
&\hspace{2.4cm}\left.\left.+d_{k,j}^{2N}{E}(\xi_{1,i}^{2N},-1,\xi_{3,k}^{2N})
\mathfrak{z}(\xi_{1,i}^{2N},-1,\xi_{3,k}^{2N})\right)\right) \notag \\
& -\sum_{i=0}^{2N}\sum_{k=0}^{2N}{v}\left(\xi_{1,i}^{2N},-1,\xi_{3,k}^{2N}\right)
\left(w_{i}^{2N}w_{k}^{2N}\left({H}\mathfrak{z}-({B}\mathfrak{z})_{\xi_2}
-({D}\mathfrak{z})_{\xi_1}\right)(\xi_{1,i}^{2N},-1,\xi_{3,k}^{2N})\right) \notag 
\end{align}
\begin{align}\label{eq5.48}
& +\sum_{i=0}^{2N}\sum_{j=0}^{2N}\sum_{k=0}^{2N}{v}\left(\xi_{1,i}^{2N},\xi_{2,j}^{2N},\xi_{3,k}^{2N}
\right)\left(w_{i}^{2N}w_{j}^{2N}d_{2N,k}^{2N}{C}(\xi_{1,i}^{2N},\xi_{2,j}^{2N},1)
\mathfrak{z}\left(\xi_{1,i}^{2N},\xi_{2,j}^{2N},1\right)\right) \notag \\
& +\sum_{i=0}^{2N}\sum_{j=0}^{2N}{v}\left(\xi_{1,i}^{2N},\xi_{2,j}^{2N},1\right)
\left(w_{i}^{2N}w_{j}^{2N}\left({I}\mathfrak{z}-({C}\mathfrak{z})_{\xi_3}
-({E}\mathfrak{z})_{\xi_2}-({F}\mathfrak{z})_{\xi_1}\right)\left(\xi_{1,i}^{2N},
\xi_{2,j}^{2N},1\right)\right) \notag \\
& -\sum_{i=0}^{2N}\sum_{j=0}^{2N}\sum_{k=0}^{2N}{v}\left(\xi_{1,i}^{2N},\xi_{2,j}^{2N},\xi_{3,k}^{2N}
\right)\left(w_{i}^{2N}w_{j}^{2N}\left(d_{0,k}^{2N}{C}(\xi_{1,i}^{2N},\xi_{2,j}^{2N},-1)
\mathfrak{z}(\xi_{1,i}^{2N},\xi_{2,j}^{2N},-1)\right.\right. \notag \\
&\hspace{2.4cm}+\left.\left.d_{i,k}^{2N}{F}(\xi_{1,i}^{2N},\xi_{2,j}^{2N},-1)
\mathfrak{z}(\xi_{1,i}^{2N},\xi_{2,j}^{2N},-1)\right)\right) \notag \\
& -\sum_{i=0}^{2N}\sum_{j=0}^{2N}{v}\left(\xi_{1,i}^{2N},\xi_{2,j}^{2N},-1\right)
\left(w_{i}^{2N}w_{j}^{2N}\left({I}\mathfrak{z}-({C}\mathfrak{z})_{\xi_3}
-({E}\mathfrak{z})_{\xi_2}\right)(\xi_{1,i}^{2N},\xi_{2,j}^{2N},-1)\right) \notag \\
&+ \sum_{k=0}^{2N}{v}(1,1,\xi_{3,k}^{2N})w_{k}^{2N}{D}(1,1,\xi_{3,k}^{2N})\mathfrak{z}
(1,1,\xi_{3,k}^{2N})-\sum_{k=0}^{2N}{v}(1,-1,\xi_{3,k}^{2N})w_{k}^{2N}{D}(1,-1,\xi_{3,k}^{2N})\notag \\
& \times \mathfrak{z}(1,-1,\xi_{3,k}^{2N}) + \sum_{i=0}^{2N}{v}(\xi_{1,i}^{2N},1,1)w_{i}^{2N}
{E}(\xi_{1,i}^{2N},1,1)\mathfrak{z}(\xi_{1,i}^{2N},1,1)\notag \\
&-\sum_{i=0}^{2N}{v}(\xi_{1,i}^{2N},1,-1)w_{i}^{2N}{E}(\xi_{1,i}^{2N},1,-1)
\mathfrak{z}(\xi_{1,i}^{2N},1,-1) \notag \\
&+ \sum_{j=0}^{2N}{v}(1,\xi_{2,j}^{2N},1)w_{j}^{2N}{F}(1,\xi_{2,j}^{2N},1)\mathfrak{z}
(1,\xi_{2,j}^{2N},1)-\sum_{k=0}^{2N}{v}(-1,\xi_{2,j}^{2N},1)w_{j}^{2N}{F}(-1,\xi_{2,j}^{2N},1)
 \notag \\
& \times \mathfrak{z}(-1,\xi_{2,j}^{2N},1)
\end{align}
Note that we have used unfiltered coefficients $A,B,C$ etc. in the right hand side.
\begin{rem}
The stability estimate is stated in terms of unfiltered coefficients of the differential operator,
and we use unfiltered coefficients in our computations. Of course, in writing the above we commit
an error. It can be argued as in \cite{DT,DTK1,SKT1,SKT2} that this error is spectrally small. In
fact, if we assume that the boundary of the domain $\Omega$ is analytic, the coefficients of the
differential operator are analytic and the data is analytic then the error committed is
exponentially small in $N$. Hence we do not need to filter the coefficients of the differential
and boundary operators or the data in any of our computations.
\end{rem}
Rewrite ${u}\left(\xi_{1,i}^N,\xi_{2,j}^N,\xi_{3,k}^N \right)$ by arranging them in lexicographic
order and denote
$$U_{(N+1)^2k+(N+1)j+i}^{N}={u}\left(\xi_{1,i}^N,\xi_{2,j}^N,\xi_{3,k}^N\right)\qquad \text{for}
\;0\leq i,j,k \le N\:,$$
and let
\begin{equation}
U_{(2N+1)^2k+(2N+1)j+i}^{2N}={u}\left(\xi_{1,i}^{2N},\xi_{2,j}^{2N},\xi_{3,k}^{2N}\right)
\qquad \text{for}\;0\leq i,j,k \le 2N.\notag \\
\end{equation}
Similarly
\begin{equation}
Z_{(2N+1)^2k+(2N+1)j+i}^{2N}=({L}{u}-F)\left(\xi_{1,i}^{2N},\xi_{2,j}^{2N},
\xi_{3,k}^{2N}\right),\notag
\end{equation}
for $0\leq i,j,k \le 2N$, are arranged in lexicographic order.
\newline
Then we may write
$$
\int_{Q}(\mathcal{L})^{a}{v}\,((\mathcal{L})^{a}{u}-\widehat{F})\, d\xi_1 d\xi_2 d\xi_3=
\left(V^{2N}\right)^{T}RZ^{2N},
$$
where $R$ is a matrix such that $RZ^{2N}$ is easily computed.

\subsection{Integrals on the boundary of the elements}
We now show how to evaluate the boundary terms. For this we have to examine the norm
$H^{1/2}(E)$, here $E$ denote either $S$ or $T$.
Now
\begin{align}
\|l\|_{1/2, E}^2 = \|l\|_{0,E}^2 + \int_E\int_E\frac{\left(l(\xi_1,\xi_2)-l(\xi_1^{\prime},
\xi_2^{\prime})\right)^2}{\left((\xi_1-\xi_1^{\prime})^2+(\xi_2-\xi_2^{\prime})^2\right)^{3/2}}
d\xi_1\:d\xi_2\:d\xi_1^{\prime}\:d\xi_2^{\prime}\:.\notag
\end{align}
\newline
However if $E$ is $S$, which is always the case, then we prefer to use the equivalent norm~\cite{LM}
\begin{align}
\|l\|_{1/2, S}^2 = \|l\|_{0,S}^2 + \int_{-1}^{1}\int_{-1}^{1}\int_{-1}^{1}\frac{(l(\xi_1,\xi_2)
-l(\xi_1^{\prime},\xi_2))^2}{(\xi_1-\xi_1^{\prime})^{2}}\,d\xi_1 d\xi_1^{\prime}d\xi_2 \notag\\
+\int_{-1}^{1}\int_{-1}^{1}\int_{-1}^{1}\frac{(l(\xi_1,\xi_2)-l(\xi_1,\xi_2^{\prime}))^2}
{(\xi_2-\xi_2^{\prime})^{2}}\,d\xi_2 d\xi_2^{\prime}d\xi_1\,.\notag
\end{align}
Let $l(\xi_1,\xi_2)$ be a polynomial of degree less than or equal to $2N$. Then $\frac{(l(\xi_1,
\xi_2)-l(\xi_1^{\prime},\xi_2))}{(\xi_1-\xi_1^{\prime})}$ is a polynomial of degree less than or
equal to $2N$ in $\xi_1$ and $\xi_1^{\prime}$. Now using the GLL quadrature rule we can write
\begin{align}
\|l\|_{1/2, S}^2 &=\sum_{i=0}^{2N}\sum_{j=0}^{2N}w_{i}^{2N}w_{j}^{2N}l^2(\xi_{1,i}^{2N},
\xi_{2,j}^{2N})\notag\\
&+ \sum_{j=0}^{2N}\sum_{{i=0},{i^{\prime}\neq i}}^{2N}\sum_{i^{\prime}=0}^{2N}w_{j}^{2N}w_{i}^{2N}
w_{i^{\prime}}^{2N}\left(\frac{{l(\xi_{1,i}^{2N},\xi_{2,j}^{2N})-l(\xi_{1,i^{\prime}}^{2N},
\xi_{2,j}^{2N})}}{({\xi_{1,i}^{2N}-\xi_{1,i^{\prime}}^{2N}})}\right)^{2} \notag\\
&+ \sum_{i=0}^{2N}\sum_{{j=0},{j^{\prime}\neq j}}^{2N}\sum_{j^{\prime}=0}^{2N}w_{i}^{2N}w_{j}^{2N}
w_{j^{\prime}}^{2N}\left(\frac{{l(\xi_{1,i}^{2N},\xi_{2,j}^{2N})-l(\xi_{1,i}^{2N},
\xi_{2,j^{\prime}}^{2N})}}{({\xi_{2,j}^{2N}-\xi_{2,j^{\prime}}^{2N}})}\right)^{2} \notag\\
&+ \sum_{i=0}^{2N}\sum_{j=0}^{2N}(w_{i}^{2N})^{2}w_{j}^{2N}\left(\sum_{k=0}^{2N}d_{i,k}
l(\xi_{1,k}^{2N},\xi_{2,j}^{2N})\right)^{2} \notag\\
&+ \sum_{j=0}^{2N}\sum_{i=0}^{2N}(w_{j}^{2N})^{2}w_{i}^{2N}\left(\sum_{k=0}^{2N}d_{j,k}
l(\xi_{1,i}^{2N},\xi_{2,k}^{2N})\right)^{2} \notag
\end{align}
Thus there is a symmetric positive definite matrix $H^{2N}$ such that
\begin{equation}\label{eq5.49}
\Arrowvert l \Arrowvert_{1/2, S}^2=\sum_{i=0}^{2N}\sum_{j=0}^{2N}\sum_{i^{\prime}=0}^{2N}
\sum_{j^{\prime}=0}^{2N}l(\xi_{i}^{2N},\eta_{j}^{2N})H_{i,j,i^{\prime},j^{\prime}}^{2N}
l(\xi_{i^{\prime}}^{2N},\eta_{j^{\prime}}^{2N})
\end{equation}
A typical boundary term will be of the form
\begin{align}
&{\Arrowvert ({\widehat{P}}{u}_{\xi_1}+{\widehat{Q}}{u}_{\xi_2}+{\widehat{R}}{u}_{\xi_3})
(\xi_1,\xi_2,1)-{\widehat{g}(\xi_1,\xi_2)}\Arrowvert}_{1/2,S} \notag
\end{align}
and its variation is given by
\begin{align}\label{eq5.50}
\sum_{i=0}^{2N}\sum_{j=0}^{2N}\sum_{i^{\prime}=0}^{2N}\sum_{j^{\prime}=0}^{2N}
({\widehat{P}}{v}_{\xi_1}+{\widehat{Q}}{v}_{\xi_2}+{\widehat{R}}{v}_{\xi_3})
(\xi_{1,i}^{2N},\xi_{2,j}^{2N},1)H_{i,j,i^{\prime},j^{\prime}}^{2N} \notag\\
\times(({\widehat{P}}{u}_{\xi_1}+{\widehat{Q}}{u}_{\xi_2}+{\widehat{R}}{u}_{\xi_3})
(\xi_{1,i^{\prime}}^{2N},\xi_{2,j^{\prime}}^{2N},1)-\widehat{g}(\xi_{1,i^{\prime}}^{2N},
\xi_{2,j^{\prime}}^{2N}))\:.
\end{align}
Let
\begin{equation}
\sigma_{i,j}^{2N}=\sum_{i^{\prime}=0}^{2N}\sum_{j^{\prime}=0}^{2N}H_{i,j,i^{\prime},
j^{\prime}}^{2N}(\left({\widehat{P}}{u}_{\xi_1}+{\widehat{Q}}{u}_{\xi_2}+{\widehat{R}}
{u}_{\xi_3}\right)(\xi_{1,i^{\prime}}^{2N},\xi_{2,j^{\prime}}^{2N},1)
-\widehat{g}(\xi_{1,i^{\prime}}^{2N},\xi_{2,j^{\prime}}^{2N})).\notag
\end{equation}
Then
\begin{align}
&\sum_{i=0}^{2N}\sum_{j=0}^{2N}({\widehat{P}}{v}_{\xi_1}+{\widehat{Q}}{v}_{\xi_2}
+{\widehat{R}}{v}_{\xi_3})(\xi_{1,i}^{2N},\xi_{2,j}^{2N},1)\sigma_{i,j}^{2N}\notag\\
&=\sum_{i=0}^{2N}\sum_{i^{\prime}=0}^{2N}\sum_{j=0}^{2N}{v}(\xi_{1,i^{\prime}}^{2N},\xi_{2,j}^{2N},1)
d_{i,i^{\prime}}^{2N}\left({\widehat{P}}(\xi_{1,i}^{2N},\xi_{2,j}^{2N},1)\sigma_{i,j}^{2N}\right)\notag\\
&+\sum_{i=0}^{2N}\sum_{j=0}^{2N}\sum_{j^{\prime}=0}^{2N}{v}(\xi_{1,i}^{2N},\xi_{2,j^{\prime}}^{2N},1)
d_{j,j^{\prime}}^{2N}\left({\widehat{Q}}(\xi_{1,i}^{2N},\xi_{2,j}^{2N},1)\sigma_{i,j}^{2N}\right).\notag\\
&+\sum_{i=0}^{2N}\sum_{j=0}^{2N}\sum_{k=0}^{2N}{v}(\xi_{1,i}^{2N},\xi_{2,j}^{2N},\xi_{3,k}^{2N})
d_{2N,k}^{2N}\left({\widehat{R}}(\xi_{1,i}^{2N},\xi_{2,j}^{2N},1)\sigma_{i,j}^{2N}\right).\notag
\end{align}
\begin{prop}
Let
$$
Y_{i,j}^{2N}=({P}{u}_{\xi_1}+{Q}{u}_{\xi_2}+{R}{u}_{\xi_3}-{g})(\xi_{1,i}^{2N},\xi_{2,j}^{2N},1)
,\quad X^{2N}=H^{2N}Y^{2N},
$$
and let ${\widehat{P}}$, ${\widehat{Q}}$ and ${\widehat{R}}$ be filtered representations of $P$,
$Q$ and $R$. Similarly $\widehat{g}$ is a filtered representation of the actual boundary data $g$.
\newline
Then we may write
\begin{align}
&\sum_{i=0}^{2N}\sum_{j=0}^{2N}\sum_{i^{\prime}=0}^{2N}\sum_{j^{\prime}=0}^{2N}
({\widehat{P}}{v}_{\xi_1}+{\widehat{Q}}{v}_{\xi_2}+{\widehat{R}}{v}_{\xi_3})
(\xi_{1,i}^{2N},\xi_{2,j}^{2N},1)H_{i,j,i^{\prime},j^{\prime}}^{2N} \notag\\
&\times(({\widehat{P}}{u}_{\xi_1}+{\widehat{Q}}{u}_{\xi_2}+{\widehat{R}}{u}_{\xi_3})
(\xi_{1,i^{\prime}}^{2N},\xi_{2,j^{\prime}}^{2N},1)-\widehat{g}(\xi_{1,i^{\prime}}^{2N},
\xi_{2,j^{\prime}}^{2N})) \notag\\
&=\sum_{i=0}^{2N}\sum_{i^{\prime}=0}^{2N}\sum_{j=0}^{2N}{v}(\xi_{1,i^{\prime}}^{2N},\xi_{2,j}^{2N},1)
d_{i,i^{\prime}}^{2N}\left({P}(\xi_{1,i}^{2N},\xi_{2,j}^{2N},1)X_{i,j}^{2N}\right)\notag\\
&+\sum_{i=0}^{2N}\sum_{j=0}^{2N}\sum_{j^{\prime}=0}^{2N}{v}(\xi_{1,i}^{2N},\xi_{2,j^{\prime}}^{2N},1)
d_{j,j^{\prime}}^{2N}\left({Q}(\xi_{1,i}^{2N},\xi_{2,j}^{2N},1)X_{i,j}^{2N}\right) \notag\\
&+\sum_{i=0}^{2N}\sum_{j=0}^{2N}\sum_{k=0}^{2N}{v}(\xi_{1,i}^{2N},\xi_{2,j}^{2N},\xi_{3,k}^{2N})
d_{2N,k}^{2N}\left({R}(\xi_{1,i}^{2N},\xi_{2,j}^{2N},1)X_{i,j}^{2N}\right)\notag\\
&=\left(V^{2N}\right)^{t}TX^{2N}. \notag
\end{align}
\end{prop}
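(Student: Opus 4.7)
The plan is to derive the matrix formula $(V^{2N})^T T X^{2N}$ by transferring all differentiations acting on the variation $v$ onto the residual side, using the discrete differentiation-matrix identity of Proposition 5.3.1, and then replacing filtered coefficients by unfiltered ones via the spectrally-small-error argument of the Remark immediately preceding the statement.

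First I would start from the variation (5.50) and recognize the inner sum over $i',j'$ as exactly $\sigma_{i,j}^{2N}$, the quantity already defined in the text, built from the filtered residual $(\widehat{P}u_{\xi_1}+\widehat{Q}u_{\xi_2}+\widehat{R}u_{\xi_3})-\widehat{g}$ paired with $H^{2N}$. This reduces (5.50) to the single sum $\sum_{i,j}(\widehat{P}v_{\xi_1}+\widehat{Q}v_{\xi_2}+\widehat{R}v_{\xi_3})(\xi_{1,i}^{2N},\xi_{2,j}^{2N},1)\,\sigma_{i,j}^{2N}$, which is precisely the identity displayed just above the proposition.

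Next I would apply Proposition 5.3.1 to express each derivative of $v$ at a quadrature node as a linear combination of $v$-values via the differentiation matrix $d^{2N}$. For the tangential derivatives this gives $v_{\xi_1}(\xi_{1,i}^{2N},\xi_{2,j}^{2N},1)=\sum_{i'=0}^{2N}d^{2N}_{i,i'}v(\xi_{1,i'}^{2N},\xi_{2,j}^{2N},1)$ and similarly $v_{\xi_2}(\xi_{1,i}^{2N},\xi_{2,j}^{2N},1)=\sum_{j'=0}^{2N}d^{2N}_{j,j'}v(\xi_{1,i}^{2N},\xi_{2,j'}^{2N},1)$. The normal derivative $v_{\xi_3}$ is evaluated on the boundary face $\xi_3=1$ but must access $v$ at interior $\xi_3$-nodes, yielding $v_{\xi_3}(\xi_{1,i}^{2N},\xi_{2,j}^{2N},1)=\sum_{k=0}^{2N}d^{2N}_{2N,k}v(\xi_{1,i}^{2N},\xi_{2,j}^{2N},\xi_{3,k}^{2N})$. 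Substituting these three identities into the reduced sum, interchanging the order of summation, and collecting the coefficient of each $v$-value produces the three triple sums displayed in the statement.

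Finally, the transition from the filtered coefficients $\widehat{P},\widehat{Q},\widehat{R}$ and filtered data $\widehat{g}$ (inside $\sigma^{2N}$) to the unfiltered coefficients $P,Q,R$ and unfiltered data $g$ (inside $Y^{2N}$, and hence $X^{2N}=H^{2N}Y^{2N}$) is justified by the preceding Remark: under the analyticity hypotheses on the coefficients and the data, the difference between filtered and unfiltered quantities is spectrally small in $N$, and therefore committing this substitution costs only a spectrally small error in the assembled residual. Packaging the resulting expression as $(V^{2N})^T T X^{2N}$ then amounts to defining $T$ as the sparse matrix whose action on $X^{2N}$ precisely reproduces the three triple sums, with rows indexed by $v$-node locations (both boundary and interior) and columns by boundary-node locations. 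The main technical obstacle is the careful bookkeeping for the $v_{\xi_3}$ contribution, which, unlike the tangential pieces, couples a boundary-face residual entry $X_{i,j}^{2N}$ to an interior volume index $k$ when assembling $T$; this is what forces $T$ to have off-face rows and must be handled consistently with the element-interior contributions of (5.48) so that the assembled global residual vector has a single unambiguous layout.
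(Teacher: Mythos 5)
Your proposal is correct and follows essentially the same route the paper takes (which it leaves largely implicit): reduce the variation to $\sum_{i,j}(\widehat{P}v_{\xi_1}+\widehat{Q}v_{\xi_2}+\widehat{R}v_{\xi_3})\sigma_{i,j}^{2N}$, transfer the derivatives of $v$ onto the residual via the differentiation-matrix identity $\frac{dl}{d\xi}(\xi_i^{2N})=\sum_j d_{i,j}^{2N}l(\xi_j^{2N})$ (with the row index $2N$ picking out the face $\xi_3=1$), and then replace the filtered coefficients and data by the unfiltered ones at the cost of a spectrally small error, exactly as the paper's closing remark asserts. Your observation that the $v_{\xi_3}$ term couples the $(2N+1)^2$ face entries of $X^{2N}$ to interior $\xi_3$-nodes, forcing $T$ to be $(2N+1)^3\times(2N+1)^2$, is also consistent with the paper.
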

Here $T$ is a $\left(2N+1\right)^{3}\times\left(2N+1\right)^{2}$ matrix and $TX^{2N}$
can be easily computed. In writing the above we are again committing an error and this error can be
shown to be exponentially small in $N$ once more. Hence there is no need to filter the coefficients
of the boundary operators or the data.
\newline
Adding all the terms we obtain
\begin{align}
&\int_{Q}(\mathcal{L})^{a}{v}\,((\mathcal{L})^{a}{u}-\widehat{F})\, d\xi_1 d\xi_2 d\xi_3
\notag \\
&+ \sum_{i=0}^{2N}\sum_{j=0}^{2N}\sum_{i^{\prime}=0}^{2N}\sum_{j^{\prime}=0}^{2N}
({\widehat{P}}{v}_{\xi_1}+{\widehat{Q}}{v}_{\xi_2}+{\widehat{R}}{v}_{\xi_3})
(\xi_{1,i}^{2N},\xi_{2,j}^{2N},1)H_{i,j,i^{\prime},j^{\prime}}^{2N} \notag\\
& \times(({\widehat{P}}{u}_{\xi_1}+{\widehat{Q}}{u}_{\xi_2}+{\widehat{R}}{u}_{\xi_3})
(\xi_{1,i^{\prime}}^{2N},\xi_{2,j^{\prime}}^{2N},1)-\widehat{g}(\xi_{1,i^{\prime}}^{2N},
\xi_{2,j^{\prime}}^{2N})) + \cdots \notag \\
&= \left(V^{2N}\right)^{t}\left(RZ^{2N}+TX^{2N}+\cdots\right)
=\left(V^{2N}\right)^{t}O^{2N} \notag
\end{align}
where $O^{2N}=RZ^{2N}+TX^{2N}+\cdots$ is a $\left(2N+1\right)^{3}$ vector which can be easily
computed. Now there exists a matrix $G^{N}$ such that $V^{2N}=G^{N}V^{N}$.
Hence
$$
\left(V^{2N}\right)^{t}O^{2N}=\left(V^{N}\right)^{T}\left(\left(G^{N}\right)^{T}O^{2N}\right).
$$
In~\cite{SKT1,SKT2} it has been shown how $\left(G^{N}\right)^{T}O^{2N}$ can be computed for
two dimensional problems without having to compute and store any mass and stiffness matrices.
Same idea extends here and so we just describe the steps involved in brief and refer the
reader to~\cite{SKT1,SKT2} for details.
\newline
Let $\gamma_{m}^{N}$ be the normalizing factors used in computing the \textit{discrete Legendre
transform} as
\begin{eqnarray*}
&  & \gamma_{m}^{N}=\left\{ \begin{array}{c}
(m+1/2)^{-1}\,\,\,\textrm{if}\,\, m<N,\\
2/N\hspace{1.8cm}\textrm{if}\,\, m=N\end{array}\right.\;.
\end{eqnarray*}
Let $\left\{O_{i,j,k}\right\} _{0\leq i,j,k\leq2N}$ be defined as $O_{i,j,k}=O_{k{(2N+1)}^2
+j(2N+1)+i}^{2N}.$ Now we perform the following set of operations.
\begin{enumerate}
\item Define $O_{i,j,k}\leftarrow O_{i,j,k}/w_{i}^{2N}w_{j}^{2N}w_{k}^{2N}.$
\item Calculate $\left\{ \Delta_{i,j,k}\right\} _{0\leq i,j,k\leq 2N}$ the Legendre
transform of $\left\{ O_{i,j,k}\right\} _{0\leq i,j,k\leq2N}.$ Define
$$
\Delta_{i,j,k}\leftarrow\gamma_{i}^{2N}\gamma_{j}^{2N}\gamma_{k}^{2N}\Delta_{i,j,k}.
$$
\item Calculate $\mu_{i,j,k}\leftarrow\Delta_{i,j,k}/\gamma_{i}^{N}\gamma_{j}^{N}\gamma_{k}^{N},
\:0\leq i,j,k\leq N.$
\item Compute $\varepsilon$, the \textit{inverse Legendre transform} of $\mu$. Define
$$
\varepsilon_{i,j,k}\leftarrow w_{i}^{N}w_{j}^{N}w_{k}^{N}\varepsilon_{i,j,k},\:0\leq i,j,k
\leq N.
$$
\item Define a vector $J$ of dimension $\left(N+1\right)^{3}$ as
$$
J_{k{(N+1)}^2+j(N+1)+i}=\varepsilon_{i,j,k}\;\mathrm{for}\;0\leq i,j,k\leq N.
$$
\end{enumerate}
Then $J=\left(G^{N}\right)^{t}O^{2N}$. Thus we see to compute $J$ we do not need to compute
and store any matrices such as the mass and stiffness matrices.
\newline
Clearly, $O(N^{4})$ operations are required to compute the residual vector on a parallel
computer. In order to compute the residual vector we need to communicate the values of the
function and its derivatives on the boundary of the element between neighbouring elements.
Moreover, when computing the two global scalars, needed to update the approximate solution
and the search direction during each step of PCGM, some scalars have to be exchanged among
all the processors. Thus we see that inter processor communication is small.

\chapcleardoublepage

\chapter{Numerical Results}
\section{Introduction}
In this chapter we present the results of the numerical simulations which have been
performed to validate the asymptotic error estimates and estimates of computational
complexity that we have obtained. First, we consider test problems on non-smooth domains
with smooth solutions (i.e. solutions without singularities) which include Laplace,
Poisson and Helmholtz problems with different types of homogeneous and non-homogeneous
boundary conditions (e.g. Dirichlet, mixed, Neumann and Robin). Next, we consider a
general elliptic equation with mixed boundary conditions. Numerical results for a
variable coefficient problem on $L$-shaped domain are also presented which confirm
the theoretical estimates obtained.

The method works for non self-adjoint problems too. Computational results for a non
self-adjoint problem with variable coefficients are provided.

In order to show the effectiveness of the method in dealing with elliptic problems
on non-smooth domains containing singularities we shall consider various examples of
Laplace and Poisson equations on a number of polyhedral domains containing all three
types of singularities namely, vertex, edge and vertex-edge singularities with
Dirichlet and mixed type of boundary conditions to include all possible cases of
singularities that may arise due to the presence of corners and edges. We show that
the error between the exact and the approximate solution is exponentially small.

The spectral element functions are nonconforming. The method we have described is a
least-squares collocation method and to obtain our approximate solution we solve the
normal equations using Preconditioned Conjugate Gradient Method (PCGM). The residuals
in the normal equations can be obtained without computing and storing mass and stiffness
matrices. A preconditioner is defined for the method which is a block diagonal matrix,
where each diagonal block corresponds to an element. It is also shown in Chapter $5$
that there exists a new preconditioner which is a diagonal preconditioner on each
element. Let $N$ denote the number of refinements in the geometrical mesh and $W$ denote
an upper bound on the polynomial degree. We shall assume that $N$ is proportional to $W$.
Then the condition number of the preconditioned system is $K_{N,W}$,
where $K_{N,W}=O((lnW)^2)$ for Dirichlet problems and $K_{N,W}=O(N^4)$ for mixed problems
with Neumann and Dirichlet boundary conditions, provided $W=O(e^{N^{\alpha}})$ for
$\alpha<1/2$.

As stated in previous chapters after obtaining our approximate solution consisting of
non-conforming spectral element functions we can make a correction to it so that the
corrected solution is conforming and is an exponentially accurate approximation to the
true solution in the $H^1$ norm over the whole domain. These corrections are
explained in the proof of Lemma \ref{lem3.3.1} in Appendix C.1.

We perform the PCGM until a stopping criterion on the relative norm of the residual
vector for the normal equations becoming less than $\epsilon$, a specified number,
is satisfied. Since we would need to perform
$O\left(\frac{\sqrt{\kappa}}{2}|\log\left(\frac{2}{\epsilon}\right)|\right)$ iterations
of the PCGM to satisfy the stopping criterion, we would need to perform $O(N(ln(N))$ and
$O(N^3)$ iterations of the PCGM for Dirichlet and mixed problems respectively to obtain
the approximate solution. Here $\kappa$ denotes condition number of the preconditioner.

We shall denote by $\Omega,\Omega^{(v)},\Omega^{(e)},\Omega^{(v-e)}$ etc. polyhedral
domains in $R^{3}$. Throughout this chapter, $(x_{1},x_{2},x_{3})$, $(\phi,\theta,\rho)$
and $(r,\theta,x_{3})$ will denote the Cartesian, the spherical and the cylindrical coordinates,
respectively of points in these domains. In case the solution is smooth we shall denote
the Cartesian coordinates by $(x,y,z)$.

Let $u_{appx.}$ be the spectral element solution obtained from the minimization
problem and $u_{ex.}$ be the exact solution. Then the relative error is defined as
\begin{align}
||E||_{rel}=\frac{||u_{appx.}-u_{ex.}||_{H^{1}}}{||u_{ex.}||_{H^{1}}}\,.\notag
\end{align}

The computations are carried out on a distributed memory parallel computer which consists
of V20Z and V40Z Sun Fire Servers. Each of the Sun Fire Servers is a 2 CPU 64 bit 2.4 GHZ
Oeptron AMD machine with 4GB RAM. The library used for inter processor communication is
Message Passing Interface (MPI).

\section{Test Problems in Regular Regions}
We start with the numerical results for various test problems on polyhedral domains on
which the solution is analytic. Let $\Omega = Q = {(-1,1)}^3$ denote the standard cube
in $\mathbb{R}^{3}$ with boundary $\partial \Omega$. Let $N$ denote the number of
refinements in each direction and $W$ denote the degree of the polynomials used for
approximation. In each of the examples that will follow we have used three different
meshes with uniform mesh size $h=2.0,1.0$ and $0.67$ in each direction which corresponds
to $N=1,2$ and $3$ respectively (Figure \ref{fig6.1}).
\begin{figure}[!ht]
\centering
\subfigure[]{
\includegraphics[scale = 0.55]{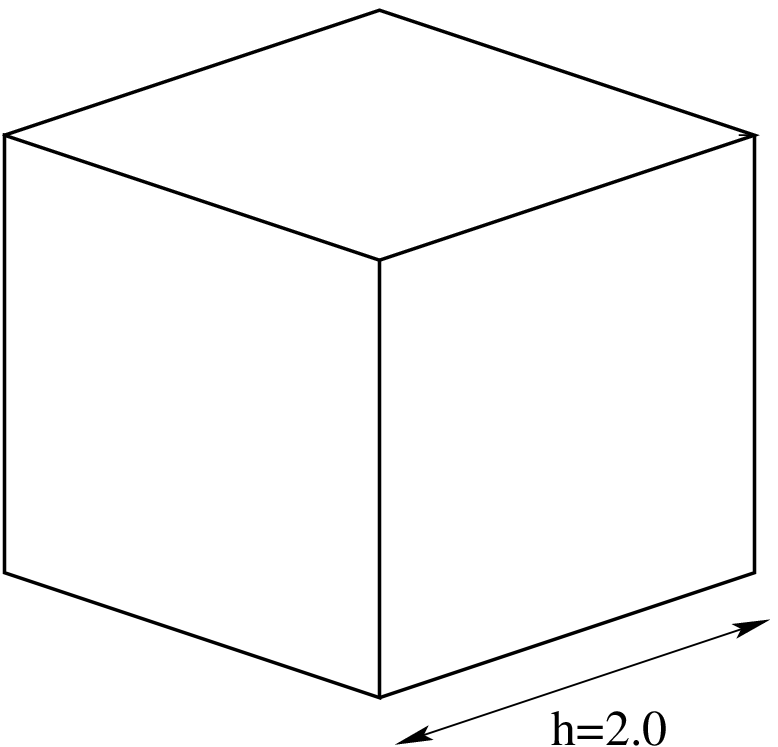}
\label{fig6.1a}
}
\hspace{0.5cm}
\subfigure[]{
\includegraphics[scale = 0.55]{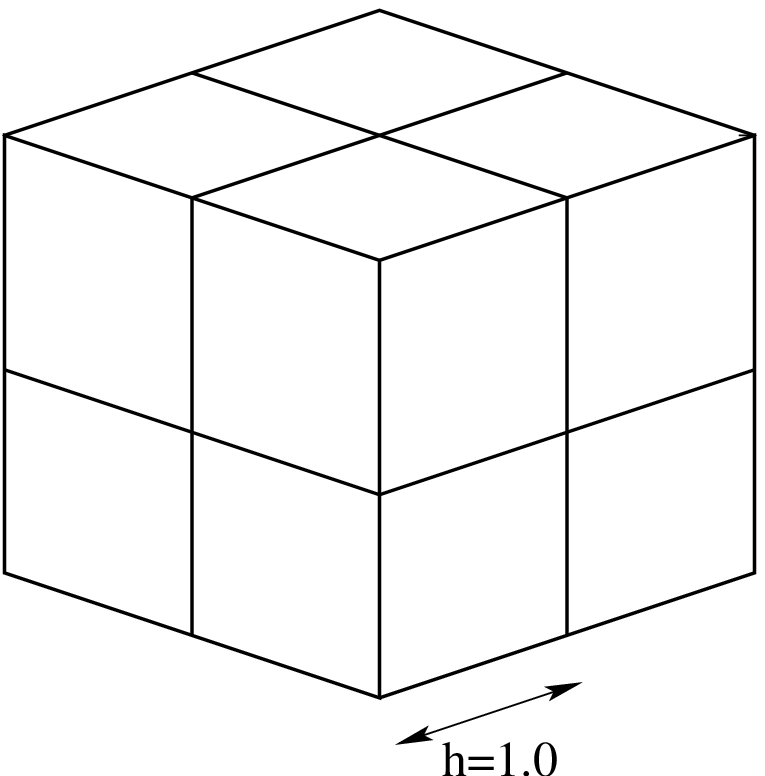}
\label{fig6.1b}
}
\hspace{0.5cm}
\subfigure[]{
\includegraphics[scale = 0.55]{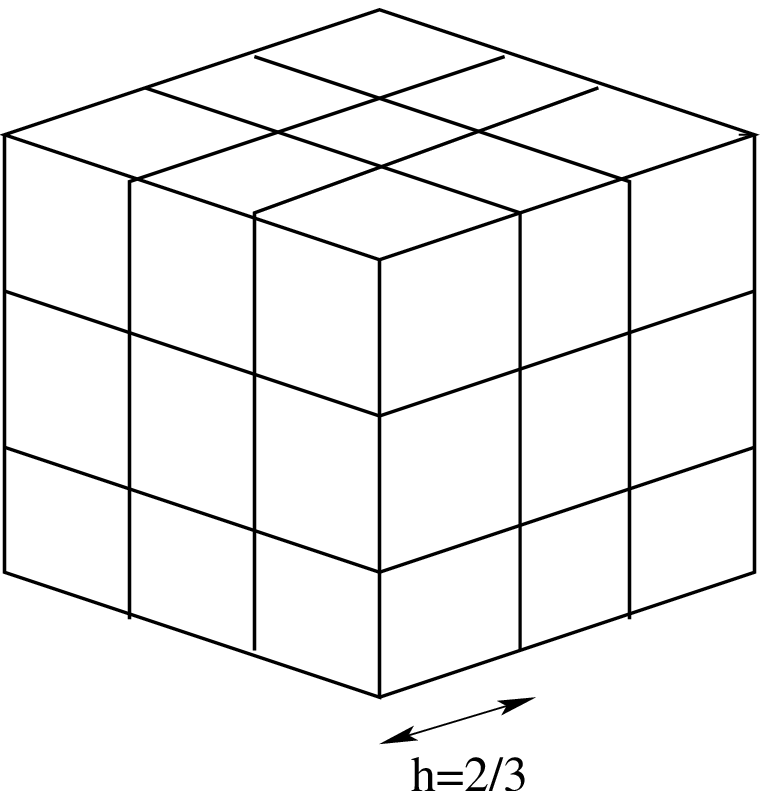}
\label{fig6.1c}
}
\caption[The domain $\Omega=Q=$ standard cube with uniform mesh refinements.]
{The domain $\Omega=Q=(-1,1)^{3}=$ standard cube with uniform mesh refinements.
{\subref{fig6.1a} Mesh 1, \subref{fig6.1b} Mesh 2 and \subref{fig6.1c} Mesh 3.}}
\label{fig6.1}
\end{figure}

It is known from Chapter $4$ that the error in the regular (smooth) region obeys
\begin{align}\label{eq6.1}
||u_{appx.}-u_{ex.}||_{H^{1}}\leq C e^{-bN_{dof}^{1/3}}\,.
\end{align}
Here $N_{dof}$ denotes the number of degrees of freedom (DOF).

Thus, in case the solution is analytic on $\bar{\Omega}$, exponential convergence can
be achieved by increasing the polynomial order and keeping the number of elements fixed.
Hence, for practical implementation it is enough to compute the error for $p-$version
of the method. In what follows by iterations we always mean the total number of iterations
required to compute the solution upto desired accuracy by PCGM.
\begin{rem}
For computational simplicity, in all our computations we assume that the degrees of the
approximating polynomials are uniform within each element.
\end{rem}
\begin{guess8}
\bf{(Laplace equation with Dirichlet boundary conditions)}:
\end{guess8}
Our first example is the Laplace equation in the unit cube $\Omega=(0,1)^3$ shown
in Figure \ref{fig6.2}, with Dirichlet boundary conditions:
\begin{align}
\triangle{u} & = u_{xx}+u_{yy}+u_{zz} = 0 \qquad \text{in} \quad \Omega\,,\notag \\
u & = g \qquad \text{on} \quad \partial \Omega \notag
\end{align}
where the data $g$ is chosen so that the exact solution is
$$u(x,y,z)=\frac{1}{\pi^2\sinh\sqrt{2}\pi}\sin(\pi x)\sin(\pi y)\sinh(\sqrt{2}\pi z).$$
\begin{figure}[!ht]\label{fig6.2}
\centering
\includegraphics[scale = 0.60]{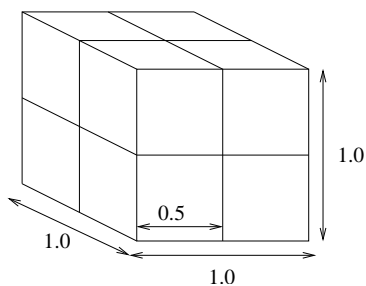}
\caption[Mesh imposed on $\Omega=(0,1)^{3}$ with mesh size $h=0.5$.]
{Mesh imposed on $\Omega=(0,1)^{3}$ with mesh size $h=0.5$.}
\end{figure}
The results are given in Table \ref{tab6.1}. The relative error (in $\%$) against polynomial
order $p$ and iterations against $p$ are plotted in Figure \ref{fig6.3a} and \ref{fig6.3b}
respectively. The error curve is a straight line and this shows the exponential rate
of convergence. The relative error is plotted on a $\log$-scale in each case.
\pagebreak
\begin{table}[!ht]
    \caption{Performance of the $p-$version for \textbf{Laplace} equation with Dirichlet boundary
             conditions}
    \begin{center}
        \label{tab6.1}
        \begin{tabular}{|c|c|c|c|}
          \hline
        \text{$p=W$} & \text{DOF} & \text{Iterations} & \text{$||E||_{rel}$($\%$)}\\
          \hline
             2 & 64 & 51 & 0.380275E+02 \\
            \hline
             4 & 512 & 213 & 0.204875E+01 \\
            \hline
             6 & 1728 & 303 & 0.269917E-01 \\
            \hline
             8 & 4096 & 380 & 0.221613E-03 \\
            \hline
             10 & 8000 & 456 & 0.106885E-05 \\
            \hline
             12 & 13824 & 523 & 0.452056E-08 \\
            \hline
        \end{tabular}
    \end{center}
\end{table}
\vspace{1mm}
\begin{figure}[!ht]
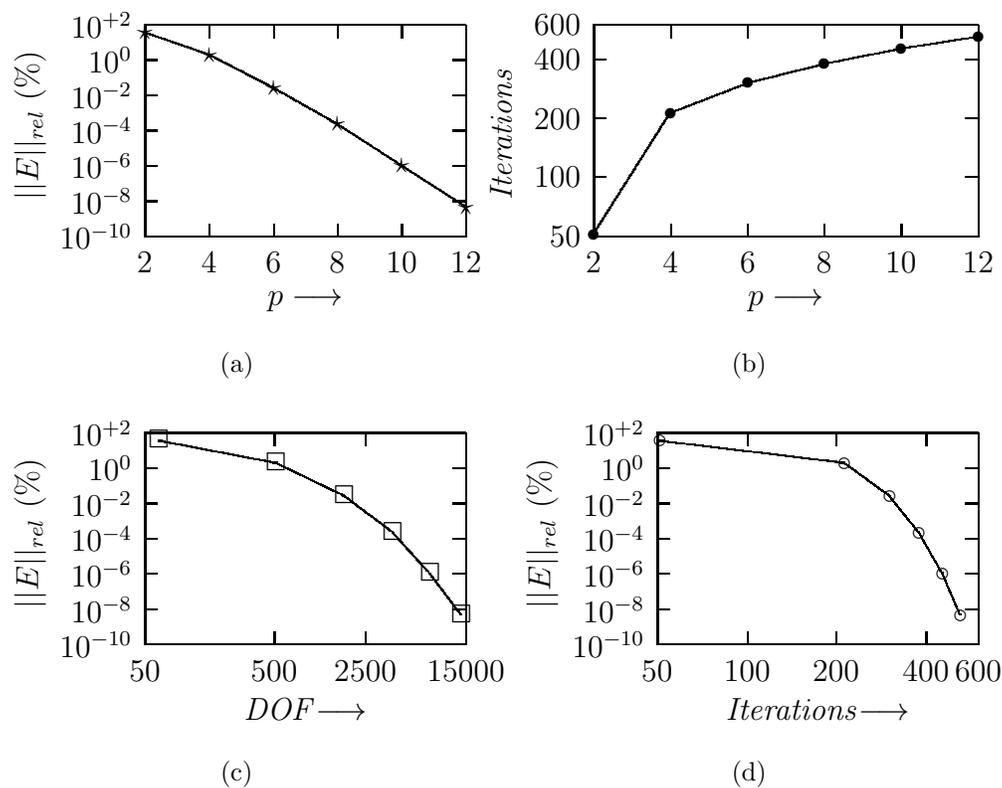

\centering
\subfigure[]{
\input{./3D_dirichlet_Laplace_err}
\label{fig6.3a}
}
\hspace{-1.0cm}
\subfigure[]{
\input{./3D_dirichlet_Laplace_itr}
\label{fig6.3b}
}
\vspace{1.0mm}
\subfigure[]{
\input{./3D_dirichlet_Laplace_dof}
\label{fig6.3c}
}
\hspace{-1.0cm}
\subfigure[]{
\input{./3D_dirichlet_Laplace_err_itr}
\label{fig6.3d}
}
\caption[Error vs. $p$, Iterations vs. $p$, Error vs. DOF and Error vs. Iterations for
Laplace equation.]
{\subref{fig6.3a} Error, \subref{fig6.3b} Iterations vs. $p$, \subref{fig6.3c} Error vs.
DOF and \subref{fig6.3d} Error vs. Iterations for Laplace equation with Dirichlet boundary
conditions.}
\label{fig6.3}
\end{figure}
\pagebreak

In Figure \ref{fig6.3c} a graph is drawn for $||E||_{rel}$ against degrees of
freedom on a $\log-\log$ scale. It is clear that the error obeys (\ref{eq6.1}).
\begin{guess8}
\bf{(Poisson equation with homogeneous boundary conditions)}:
\end{guess8}
We now consider the Poisson equation posed on $\Omega=Q=(-1,1)^3$ with homogeneous
Dirichlet boundary conditions as follows:
\begin{align}
Lu & =-\triangle{u} = f \qquad \text{in} \quad \Omega\,,\notag \\
u & = 0 \qquad \text{on} \quad \partial \Omega \notag
\end{align}
where the right hand side function $f$ is chosen so that the exact solution is
$$u(x,y,z)=\sin(\pi x)\sin(\pi y)\sin(\pi z).$$
\begin{table}[!ht]
    \caption{Performance of the $p-$version for \textbf{Poisson} equation with homogeneous
             boundary conditions}
    \begin{center}
        \label{tab6.2}
        \begin{tabular}{|c|c|c|c|}
            \hline
        \text{$p=W$} & \text{DOF} & \text{Iterations} & \text{$||E||_{rel}$($\%$)}\\
            \hline
             2 & 64 & 10 & 0.131056E+02 \\
            \hline
             4 & 512 & 56 & 0.178835E+01 \\
            \hline
             6 & 1728 & 87 & 0.607343E-01 \\
            \hline
             8 & 4096 & 100 & 0.872751E-03 \\
            \hline
             10 & 8000 & 109 & 0.717092E-05 \\
            \hline
             12 & 13824 & 116 & 0.384300E-07 \\
            \hline
        \end{tabular}
    \end{center}
\end{table}
The performance of the $p-$version on Mesh $2$ of Figure \ref{fig6.1b} is given in
Table \ref{tab6.2} for different values of the polynomial order $W$.
\begin{figure}[!ht]
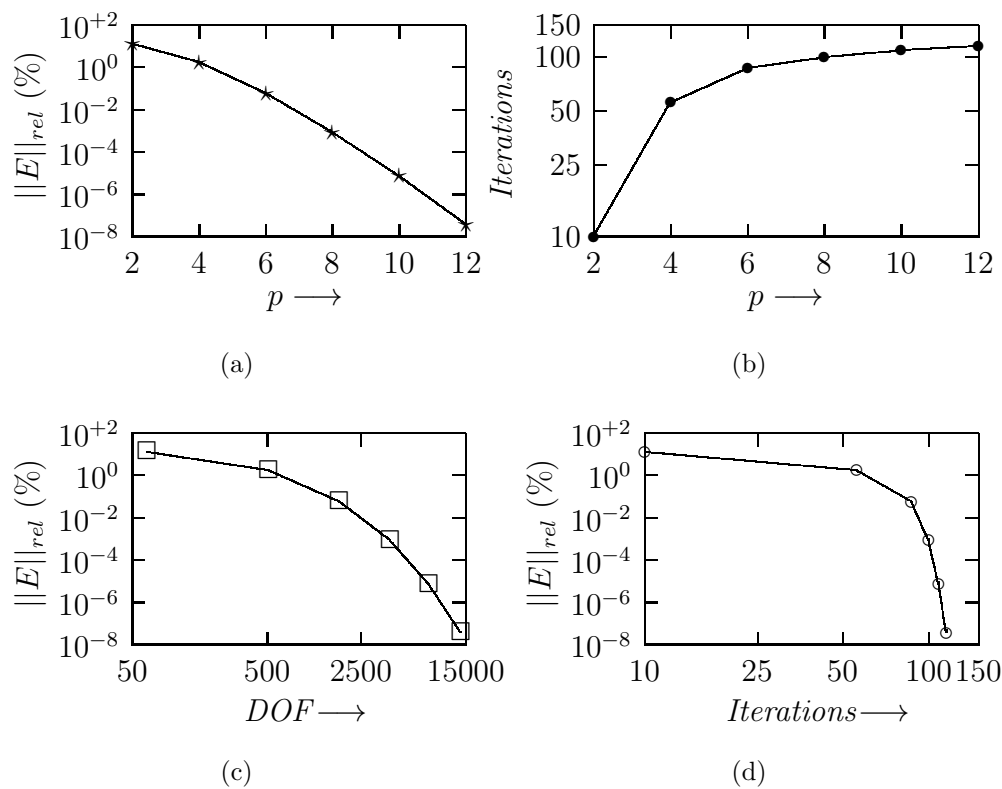

\centering
\subfigure[]{
\input{./3D_dirichlet_Poisson_err}
\label{fig6.4a}
}
\hspace{-1.0cm}
\subfigure[]{
\input{./3D_dirichlet_Poisson_itr}
\label{fig6.4b}
}
\vspace{1.0mm}
\subfigure[]{
\input{./3D_dirichlet_Poisson_dof}
\label{fig6.4c}
}
\hspace{-1.0cm}
\subfigure[]{
\input{./3D_dirichlet_Poisson_err_itr}
\label{fig6.4d}
}
\caption[Error vs. $p$, Iterations vs. $p$, Error vs. DOF and Error vs. Iterations for
Poisson equation.]
{\subref{fig6.4a} Error, \subref{fig6.4b} Iterations as a function of $p$, \subref{fig6.4c}
Error vs. DOF and \subref{fig6.4d} Error vs. Iterations for Poisson equation with homogeneous
boundary conditions.}
\label{fig6.4}
\end{figure}
In Figure \ref{fig6.4a}, the relative error (in \%) is plotted against $p$. The error
decays to $\approx 10^{-8}\,\%$ with $p=12$. Figure \ref{fig6.4b} depicts iterations
against $p$. The relative error is plotted on a $\log$-scale. The relative error against
degrees of freedom and iterations is plotted on a $\log-\log$ scale in Figure \ref{fig6.4c}
and Figure \ref{fig6.4d} respectively.
\begin{guess8}\label{ex6.3}
\bf{(Poisson equation with mixed boundary conditions)}:
\end{guess8}
Now, let us consider the Poisson equation posed on $\Omega=(-1,1)^3$ with mixed
Dirichlet and Neumann boundary conditions as follows:
\begin{align}
Lu & =-\triangle{u} = f \qquad \text{in} \quad \Omega\,,\notag \\
u & = g \qquad \text{on} \quad {\mathcal{D}}\subset\partial \Omega \notag \\
\frac{\partial{u}}{\partial\nu} & = h \qquad \text{on} \quad {\mathcal{N}}
=\partial\Omega\setminus{\mathcal{D}}\:.\notag
\end{align}
Here $\mathcal{D}=\Gamma_{1}\cup\Gamma_{2}\cup\Gamma_{3}$, where $\Gamma_{1},\Gamma_{2}$
and $\Gamma_{3}$ are the faces corresponding to $x=-1,x=1$ and $y=-1$ respectively.
$\mathcal{N}=\Gamma_{4}\cup\Gamma_{5}\cup\Gamma_{6}$, where $\Gamma_{4},\Gamma_{5}$ and
$\Gamma_{6}$ are the faces corresponding to $y=1,z=-1$ and $z=1$ respectively. Moreover,
$\nu$ denotes the outer unit normal to the faces where Neumann boundary conditions are imposed.
\pagebreak
\begin{table}[!ht]
    \caption{Performance of the $p-$version for \textbf{Poisson} equation with mixed boundary
             conditions}
    \begin{center}
        \label{tab6.3}
        \begin{tabular}{|c|c|c|c|}
            \hline
        \text{$p=W$} & \text{DOF} & \text{Iterations} & \text{$||E||_{rel}$($\%$)}\\
            \hline
             2 & 64 & 97 & 0.103976E+02 \\
            \hline
             4 & 512 & 159 & 0.540972E+00 \\
            \hline
             6 & 1728 & 183 & 0.614615E-02 \\
            \hline
             8 & 4096 & 201 & 0.284761E-04 \\
            \hline
             10 & 8000 & 217 & 0.716345E-07 \\
            \hline
             12 & 13824 & 232 & 0.876167E-09 \\
            \hline
        \end{tabular}
    \end{center}
\end{table}
\vspace{1.5mm}
\begin{figure}[!ht]
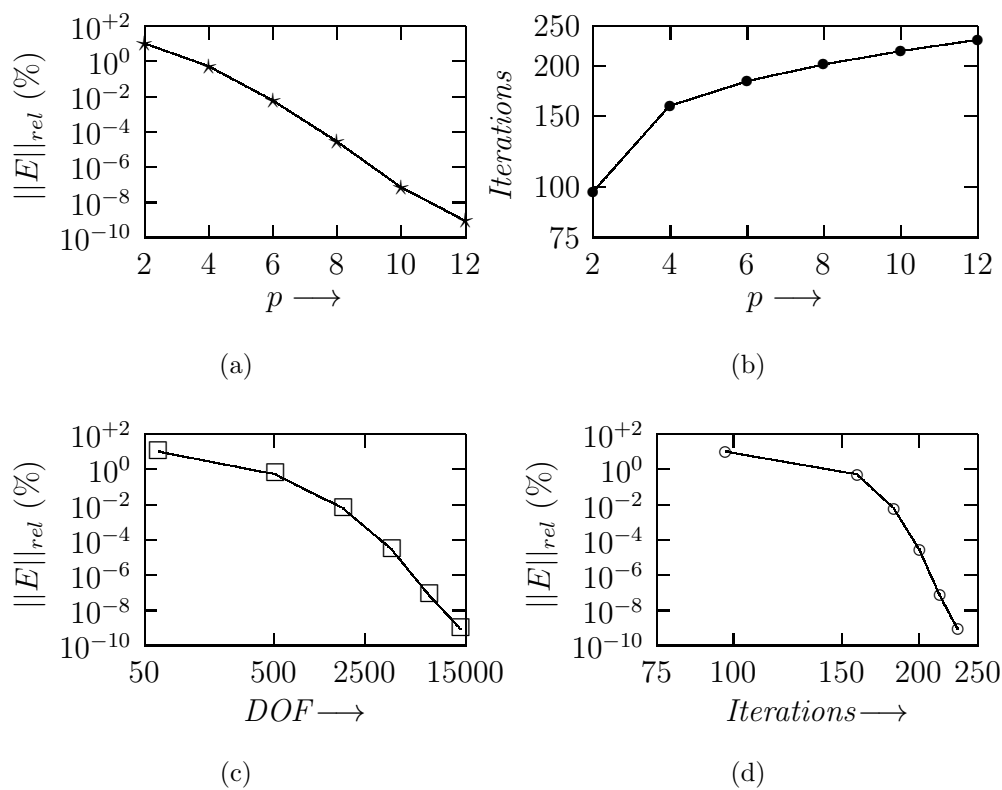

\centering
\subfigure[]{
\input{./3D_mixed_Poisson_err}
\label{fig6.5a}
}
\hspace{-1.0cm}
\subfigure[]{
\input{./3D_mixed_Poisson_itr}
\label{fig6.5b}
}
\vspace{1.0mm}
\subfigure[]{
\input{./3D_mixed_Poisson_dof}
\label{fig6.5c}
}
\hspace{-1.0cm}
\subfigure[]{
\input{./3D_mixed_Poisson_err_itr}
\label{fig6.5d}
}
\caption[Error vs. $p$, Iterations vs. $p$, Error vs. DOF and Error vs. Iterations for Poisson
equation with mixed boundary conditions.]
{\subref{fig6.5a} Error vs. $p$, \subref{fig6.5b} Iterations vs. $p$, \subref{fig6.5c} Error
vs. DOF and \subref{fig6.5d} Error vs. Iterations for Poisson equation with mixed boundary
conditions.}
\label{fig6.5}
\end{figure}
\pagebreak

The right hand side function $f$ and the data $g$, $h$ are chosen so that the true solution
is
$$u(x,y,z)=\frac{2}{\pi^2}\sin\left(\frac{\pi x}{2}\right)\left(\cos\left(\frac{\pi y}{2}
\right)-\sin\left(\frac{\pi z}{2}\right)\right).$$

Results for $p-$version of the method for the mesh in Figure \ref{fig6.1b} are provided in
Table \ref{tab6.3}.
The relevant curves are given in Figure \ref{fig6.5}. In Figure \ref{fig6.5a}, $\log||E||_{rel}$
is plotted against $W$ and the graph is almost linear.

In our next example, we consider a constant coefficient elliptic problem of Helmholtz
type with mixed boundary conditions and examine the performance of the $h-$version for
the three meshes shown in Figure \ref{fig6.1} i.e. for $h=2,1$ and $0.67$. Polynomials
with a uniform degree $W=4$ are used. Comparision with $p-$version of the spectral
element method is provided to show its effectiveness over $h-$version.
\begin{guess8}
\bf{(Helmholtz equation with constant coefficients)}:
\end{guess8}
Consider the elliptic boundary value problem
\begin{align}
Lu & =-\triangle{u} + u = f \qquad \text{in} \quad \Omega\,,\notag \\
Bu & = g \qquad \text{on} \quad \partial \Omega \notag
\end{align}
where $\Omega=(-1,1)^{3}$ and let us choose the data $f$ and $g$ so that the exact solution
is
$$u(x,y,z)=e^{x}\cos{y}\sin{z}.$$
Different choices of the operator $B$, give rise to different boundary conditions. For our
analysis we impose mixed Dirichlet and Neumann boundary conditions on $\partial\Omega$ by
choosing $B$ as follows:
\begin{align}
&B_{1}(u)=u(-1,y,z), B_{2}(u)=u(1,y,z), B_{3}(u)=u(x,-1,z),\notag\\
&B_{4}(u)=\frac{\partial u}{\partial y}(x,1,z), B_{5}(u)=-\frac{\partial u}
{\partial z}(x,y,-1)\:\text{and}\:B_{6}(u)=\frac{\partial u}{\partial z}(x,y,1).\notag
\end{align}
\textbf{The $h-$version:}

Table \ref{tab6.6} shows relative errors ($\%$) against different mesh sizes $h$ for
$W=4$. The relative error decays to $\approx 0.05 \%$ when $h=2/3$ and $N_{dof}\approx 1800$.
\newpage
\textbf{The $p-$version:}

Performance of the $p-$version is given in Table \ref{tab6.7}. A uniform mesh of
size $h=1$ is used which corresponds to the Mesh $2$ (Figure \ref{fig6.1}).

In Figure \ref{fig6.8a}, the relative error is plotted against $p$. Figure \ref{fig6.8b},
contains iterations vs. $p$. The relative error ($\%$) is plotted on a $\log$-scale.
\begin{table}[!ht]
    \caption{Performance of the $h-$version for \textbf{Helmholtz} problem}
    \begin{center}
        \label{tab6.6}
        \begin{tabular}{|c|c|c|c|}
            \hline
        \text{$h$} & \text{$N_{dof}$} &  \text{Iterations} & $||E||_{rel}(\%)$ \\
            \hline
             2 & 64 & 54  & 0.839583E+00 \\
            \hline
             1 & 512 & 76 & 0.114335E+00 \\
            \hline
             2/3 & 1728 & 216 & 0.377972E-01 \\
            \hline
        \end{tabular}
    \end{center}
\end{table}
\begin{table}[!ht]
    \caption{Performance of the $p-$version for \textbf{Helmholtz} problem on Mesh $2$}
    \begin{center}
        \label{tab6.7}
        \begin{tabular}{|c|c|c|c|}
            \hline
        \text{$p$=$N$} & \text{$N_{dof}$} & Iterations & {$||E||_{rel}$($\%$)} \\
            \hline
             2 & 64 & 25 & 0.696654E+01 \\
            \hline
             4 & 512 & 76 & 0.114335E+00 \\
            \hline
             6 & 1728 & 135 & 0.521586E-03 \\
            \hline
             8 & 4096 & 222 & 0.993449E-06 \\
            \hline
             10 & 8000 & 294 & 0.109613E-08 \\
            \hline
             12 & 13824 & 302 & 0.149336E-08 \\
            \hline
        \end{tabular}
    \end{center}
\end{table}

Comparison between the $h$ and $p$ versions are shown in Figure \ref{fig6.9}, where
the relative errors are plotted against the number of degrees of freedom on a
$log-log$ scale. It is clear that the $p-$version is superior to the $h-$version as
expected.
\pagebreak
\begin{figure}[!ht]
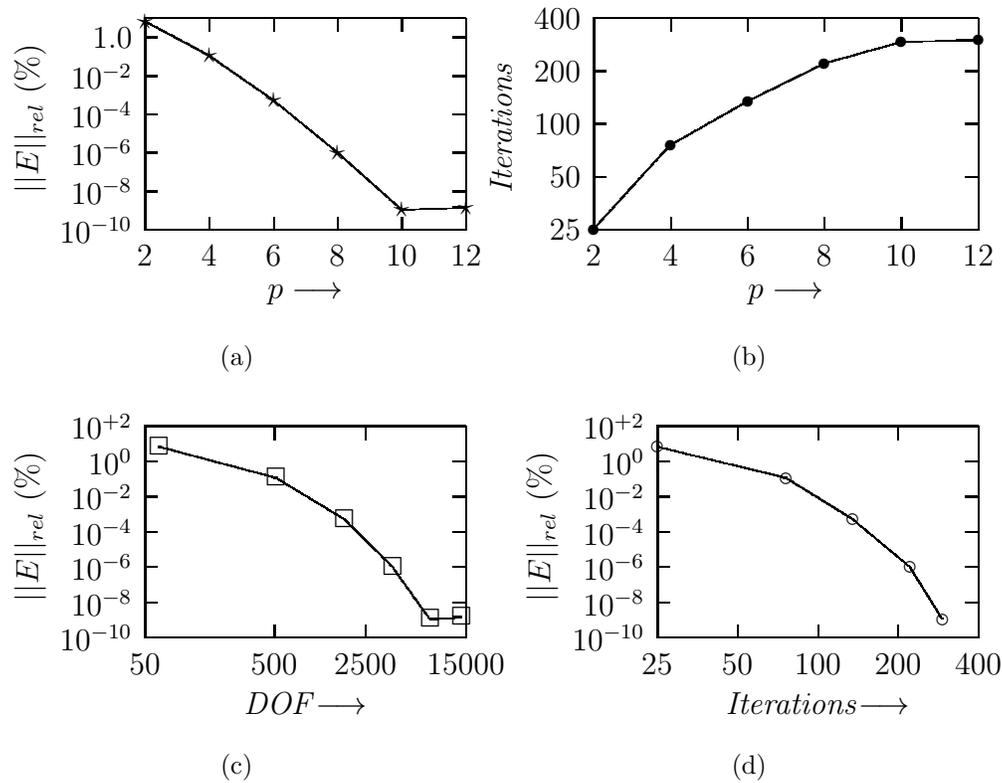

\centering
\subfigure[]{
\input{./3D_mixed_elliptic_err}
\label{fig6.8a}
}
\hspace{-1.0cm}
\subfigure[]{
\input{./3D_mixed_elliptic_itr}
\label{fig6.8b}
}
\vspace{1.0mm}
\subfigure[]{
\input{./3D_mixed_elliptic_dof}
\label{fig6.8c}
}
\hspace{-1.0cm}
\subfigure[]{
\input{./3D_mixed_elliptic_err_itr}
\label{fig6.8d}
}
\caption[Error vs. $p$, Iterations vs. $p$, Error vs. DOF and Error vs. Iterations for
Helmholtz problem.]
{\subref{fig6.8a} Error vs. $p$, \subref{fig6.8b} Iterations vs. $p$, \subref{fig6.8c}
Error vs. DOF and \subref{fig6.8d} Error vs. Iterations for Helmholtz problem with mixed
boundary conditions.}
\label{fig6.8}
\end{figure}
\begin{figure}[!ht]
\centering
\input{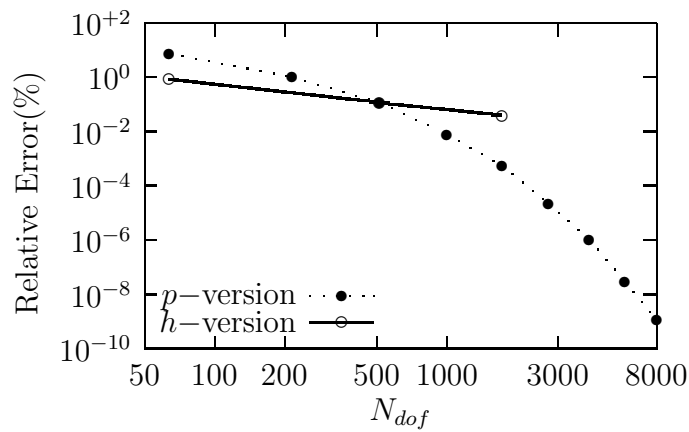}
\caption[Comparison between $h$ and $p$ versions: Error vs. DOF.]
{Comparison between $h$ and $p$ versions: Error vs. Degrees of Freedom.}
\label{fig6.9}
\end{figure}

The examples presented so far deal with constant coefficient differential operators.
However, the method is equally efficient for a general elliptic problem too. To
illustrate this we now give more examples which include problems with variable
coefficients, non self-adjoint problems and a general elliptic problem on $L$-shaped
domain.
\begin{guess8}
\bf{(Elliptic equation with variable coefficients)}:
\end{guess8}
Let us consider the following variable coefficient boundary value problem with Robin
boundary conditions:
\begin{align}
a(x,y,z){u_{xx}}+b(x,y,z){u_{yy}}+c(x,y,z){u_{zz}}+d(x,y,z)u & = f \quad
\text{in} \quad \Omega=(-1,1)^3 ,\notag \\
\frac{\partial u}{\partial \nu}+u & = g \quad\text{on}\quad\partial\Omega\notag
\end{align}
where $\nu$ denotes the outward unit normal to the boundary $\partial\Omega$. We choose
the coefficients as:
\newline
$a(x,y,z)=-(1.00+0.01y\sin x), b(x,y,z)=-(2.50+0.02\cos(x^2+z))$,
\newline
$c(x,y,z)=-(3.00+0.03ye^{z})$ and $d(x,y,z)=0.15\sin(2\pi(y+z))$.
\newline
Moreover, the right hand side function $f$ and the data $g$ are chosen such that the exact
solution is
$$u(x,y,z)=\cos(\pi(x+y))\exp(z).$$
\begin{table}[!ht]
    \caption{Performance of the $p-$version for elliptic problem with \textbf{variable}
                 coefficients}
    \begin{center}
        \label{tab6.12}
        \begin{tabular}{|c|c|c|c|}
            \hline
        \text{$p=W$} & DOF & \text{Iterations} & \text{Relative Error($\%$)} \\
            \hline
             2 & 216 & 231 & 0.284009E+02 \\
            \hline
             3 & 729 & 332 & 0.737578E+01 \\
            \hline
             4 & 1728 & 369 & 0.203203E+01 \\
            \hline
             5 & 3375 & 397 & 0.318021E+00 \\
            \hline
             6 & 5832 & 414 & 0.352678E-01 \\
            \hline
             7 & 9216 & 428 & 0.337654E-02 \\
            \hline
             8 & 13824 & 437 & 0.263263E-03 \\
            \hline
             9 & 19683 & 445 & 0.185170E-04 \\
            \hline
        \end{tabular}
    \end{center}
\end{table}
We examine the $p$-version of the spectral element method with fixed mesh size
$h=2/3$ that corresponds to the Mesh $3$ in Figure \ref{fig6.1}. Results are given
in Table \ref{tab6.12}.

Figure \ref{fig6.11a} shows a graph between $\log||E||_{rel}$ against polynomial
degree $p$. The relationship is almost linear demonstrating that the error decays
exponentially.
In Figure \ref{fig6.11b}, $\log(iterations)$ is drawn against $p$
while in Figure \ref{fig6.11c} error against degrees of freedom is plotted on a
$log-log$ scale.
\vspace{6mm}
\begin{figure}[!ht]
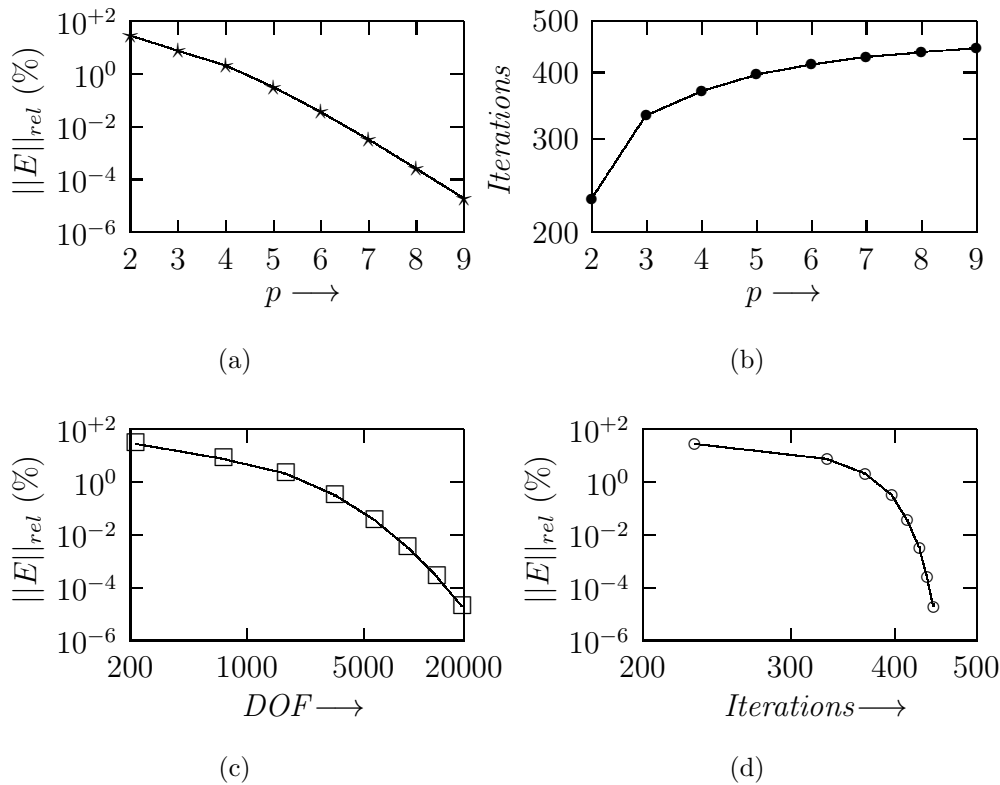

\centering
\subfigure[]{
\input{./3D_var_robin_err}
\label{fig6.11a}
}
\hspace{-1.0cm}
\subfigure[]{
\input{./3D_var_robin_itr}
\label{fig6.11b}
}
\vspace{1.0mm}
\subfigure[]{
\input{./3D_var_robin_dof}
\label{fig6.11c}
}
\hspace{-1.0cm}
\subfigure[]{
\input{./3D_var_robin_err_itr}
\label{fig6.11d}
}
\caption[Error vs. $p$, Iterations vs. $p$, Error vs. DOF and Error vs. Iterations for
elliptic problem with variable coefficients.]
{\subref{fig6.11a} Error vs. $p$, \subref{fig6.11b} Iterations vs. $p$, \subref{fig6.11c}
Error vs. DOF and \subref{fig6.11d} Error vs. Iterations for elliptic problem with variable
coefficients.}
\label{fig6.11}
\end{figure}
\begin{guess8}
\bf{(Variable coefficient problem on L shaped domain)}:
\end{guess8}
In our next example we consider the following variable coefficients problem with
Dirichlet boundary conditions on the solution domain $\Omega$ shown in Figure
\ref{fig6.12}:
\begin{align}
a(x,y,z){u_{xx}}+b(x,y,z){u_{yy}}+c(x,y,z){u_{zz}}+d(x,y,z)u_x\notag \\
+e(x,y,z)u_y+h(x,y,z)u_z+l(x,y,z)u & = f \quad \text{in} \quad \Omega ,\notag \\
u & = g \quad \text{on} \quad {\mathcal{D}}=\partial \Omega \notag
\end{align}
where the choice of the coefficients is as follows:
\newline
$a(x,y,z)=-(0.50+0.01y\sin x), b(x,y,z)=-(1.50+0.02\cos(x^2+z))$,\newline
$c(x,y,z)=-(2.00+0.03ye^{z})$, $d(x,y,z)=0.25\sin(2\pi(y+z))$,\newline
$e(x,y,z)=0.25\sin(2\pi(z+x))$, $h(x,y,z)=0.25\sin(2\pi(x+y))$,\newline
and $l(x,y,z)=2.50-0.025\exp\left(\frac{\pi(x+y+z)}{2}\right)$.
\newline
Moreover, the right hand side function $f$ and the data $g$ are chosen such that the
exact solution is
$$u(x,y,z)=\sin\left(\frac{\pi(x+y+z)}{2}\right)\exp\left(\frac{\pi z}{2}\right).$$
\begin{figure}[!ht]
\centering
\includegraphics[scale = 0.65]{./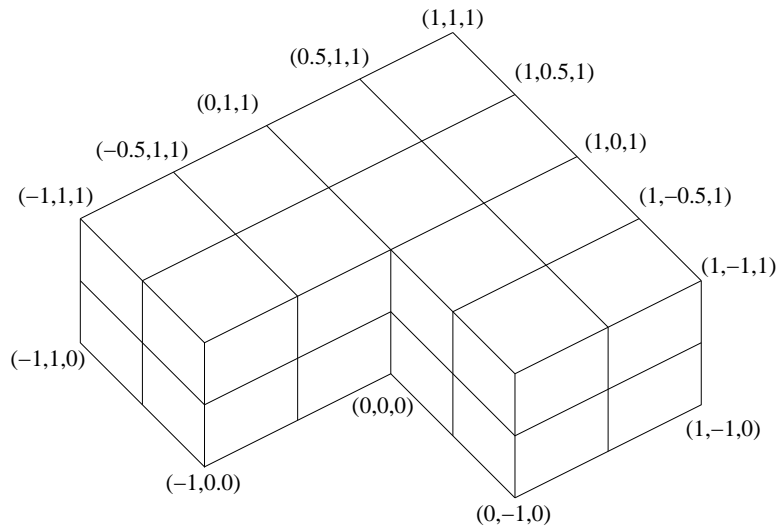}
\caption{Mesh imposed on domain $\Omega$ containing 24 brick elements}
\label{fig6.12}
\end{figure}

Performance of the $p-$version on mesh (containing brick elements) shown in Figure
\ref{fig6.12} is provided in Table \ref{tab6.13}. The relative error decays to
$\approx 10^{-8}$ when $p=10$ and $N_{dof}=240000$.

Relative error and iterations are plotted against polynomial degree $p$ in Figure
\ref{fig6.13a} and \ref{fig6.13b} respectively.
In Figure \ref{fig6.13c} we plot the relative error as a function of degrees of freedom.
The graph would be a straight line if the error obeys (\ref{eq6.1}) exactly.

\pagebreak
\begin{table}[!ht]
    \caption{Performance of the $p-$version for elliptic problem on \textbf{L-shaped} domain}
    \begin{center}
        \label{tab6.13}
        \begin{tabular}{|c|c|c|c|}
            \hline
        \text{$p=W$} & \text{DOF} & Iterations & \text{Relative Error($\%$)}\\
            \hline
             2 & 384 & 54 & 0.143522E+02 \\
            \hline
             3 & 1944 & 143 & 0.131046E+01 \\
            \hline
             4 & 6144 & 192 & 0.209803E+00 \\
            \hline
             5 & 15000 & 243 & 0.149981E-01 \\
            \hline
             6 & 31104 & 308 & 0.101674E-02 \\
            \hline
             7 & 57624 & 336 & 0.785544E-04 \\
            \hline
             8 & 98304 & 393 & 0.611724E-05 \\
            \hline
             9 & 157464 & 420 & 0.549101E-06 \\
            \hline
             10 & 240000 & 488 & 0.447650E-07 \\
            \hline
        \end{tabular}
    \end{center}
\end{table}
\begin{figure}[!ht]
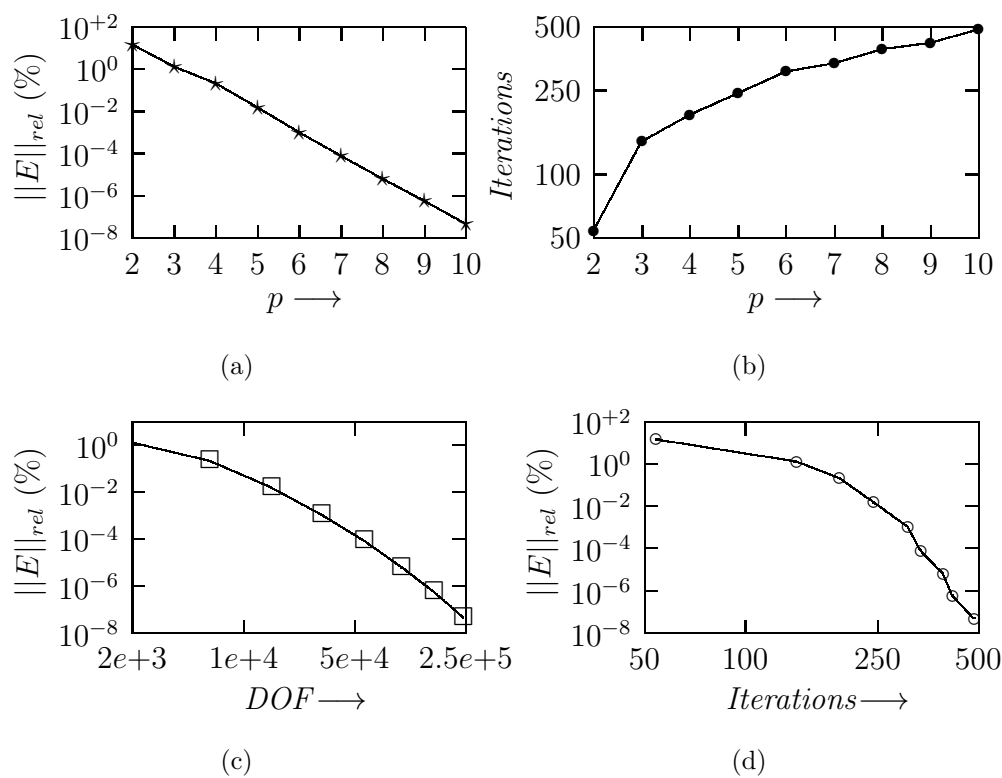

\centering
\subfigure[]{
\input{./3D_var_Lshape_err}
\label{fig6.13a}
}
\hspace{-1.0cm}
\subfigure[]{
\input{./3D_var_Lshape_itr}
\label{fig6.13b}
}
\subfigure[]{
\input{./3D_var_Lshape_dof}
\label{fig6.13c}
}
\hspace{-1.0cm}
\subfigure[]{
\input{./3D_var_Lshape_err_itr}
\label{fig6.13d}
}
\caption[Error vs. $p$, Iterations vs. $p$, Error vs. DOF and Error vs. Iterations
for elliptic problem on L-shaped domain.]
{\subref{fig6.13a} Error vs. $p$, \subref{fig6.13b} Iterations vs. $p$, \subref{fig6.13c}
Error vs. DOF and \subref{fig6.13d} Error vs. Iterations for elliptic problem on
L-shaped domain.}
\label{fig6.13}
\end{figure}
\pagebreak

\begin{guess8}
\bf{(General elliptic equation with variable coefficients: A non self-adjoint problem)}:
\end{guess8}
The method works for non self-adjoint problems too. To verify this, let us consider
the following non self-adjoint general elliptic problem with mixed boundary conditions.
\begin{align}
a(x,y,z){u_{xx}}+b(x,y,z){u_{yy}}+c(x,y,z){u_{zz}}\notag\\
+d(x,y,z)(u_{xy}+u_{yz}+u_{zx})+e(x,y,z)u & = f \quad\text{in}
\quad\Omega=(-1,1)^3\,,\notag \\
u & = g \quad \text{on} \quad {\mathcal{D}}\subset\partial \Omega \notag \\
\frac{\partial u}{\partial \nu} & = h \quad \text{on}\quad {\mathcal{N}}=\partial
\Omega \setminus{\mathcal{D}}\:. \notag
\end{align}

Here $\mathcal{D}$ and $\mathcal{N}$ denote the Dirichlet and Neumann boundary part of
$\partial\Omega$ respectively and are chosen similar to those in example \ref{ex6.3}.
Further, we choose the coefficients of the problem as follows:
\newline
$a(x,y,z)=-(0.50+0.05\exp(xyz)), b(x,y,z)=-(1.00+0.015\cos(x+y))$,
\newline
$c(x,y,z)=-(2.50+0.02\exp(y+z)), d(x,y,z)=-0.001\sin(\pi(x+y+z))$
\newline
and $e(x,y,z)=\Bigg(4.05+0.045\cos\Big(\frac{\pi(x+y+z)}{2}\Big)\Bigg)$.

Moreover, the right hand side function $f$ and the data $g$ and $h$ are chosen such
that the true solution is
$$u(x,y,z)=\Big(\sin(\pi x)+\sin\left(\frac{\pi y}{2}\right)\Big)\cos(\pi z).$$

We examine the $p-$version of the method on different meshes in Table \ref{tab6.14}
for polynomial degree $p=2,\ldots,10$. It is clear that the method performs best on
Mesh 3 and the error reduces to approximately $10^{-6}\%$. However, on Mesh 1 the
relative error decays slowly.
Figure \ref{fig6.15} shows $\log||E||_{rel}$ plotted against $p$ for different meshes.

In Figure \ref{fig6.16a} we plot error against polynomial order $p$. Error as a function
of degrees of freedom is plotted in Figure \ref{fig6.16c} on a $log-log$ scale.
\pagebreak
\begin{table}[!ht]
    \caption{Performance of the $p-$version on different meshes for general
                 elliptic (non self-adjoint) problem with \textbf{variable} coefficients}
    \begin{center}
        \label{tab6.14}
        \begin{tabular}{|c|c|c|c|}
            \hline
        \text{$p=W$} & \text{Mesh 1} & \text{Mesh 2} & \text{Mesh 3}\\
            \hline
             2 & 0.498666E+02 & 0.485737E+02 & 0.291328E+02 \\
            \hline
             3 & 0.507328E+02 & 0.107829E+02 & 0.586317E+01 \\
            \hline
             4 & 0.227058E+02 & 0.517751E+01 & 0.149092E+01 \\
            \hline
             5 & 0.143968E+02 & 0.814482E+00 & 0.237686E+00 \\
            \hline
             6 & 0.408230E+01 & 0.234810E+00 & 0.292025E-01 \\
            \hline
             7 & 0.184822E+01 & 0.279261E-01 & 0.289156E-02 \\
            \hline
             8 & 0.311612E+00 & 0.430208E-02 & 0.236661E-03 \\
            \hline
             9 & 0.104765E+00 & 0.377498E-03 & 0.168223E-04 \\
            \hline
             10 & 0.136308E-01 & 0.431639E-04 & 0.352480E-05 \\
            \hline
        \end{tabular}
    \end{center}
\end{table}
\vspace{8mm}
\begin{figure}[!ht]
\centering
\input{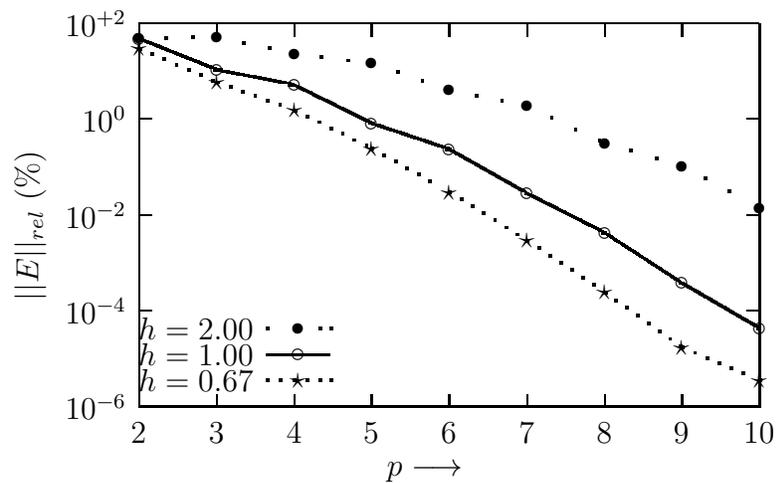}
\caption[Error as a function of $W$ for different values of $h$ for general elliptic
(non self-adjoint) problem.]
{Error as a function of $W$ for different values of $h$ for general elliptic
(non self-adjoint) problem.}
\label{fig6.15}
\end{figure}
\begin{table}[!ht]
    \caption{Performance of the $p-$version for \textbf{non self-adjoint} problem.}
    \begin{center}
        \label{tab6.15}
        \begin{tabular}{|c|c|c|c|}
            \hline
        \text{$p=W$} & \text{DOF} & Iterations & \text{Relative Error($\%$)}\\
            \hline
             2 & 64 & 207 & 0.670184E+02 \\
            \hline
             3 & 216 & 364 & 0.741499E+01 \\
            \hline
             4 & 512 & 464 & 0.536971E+01 \\
            \hline
             5 & 1000 & 519 & 0.725120E+00 \\
            \hline
             6 & 1728 & 547 & 0.228276E+00 \\
            \hline
             7 & 2744 & 605 & 0.267290E-01 \\
            \hline
             8 & 4096 & 642 & 0.420000E-02 \\
            \hline
             9 & 5832 & 671 & 0.364194E-03 \\
            \hline
             10 & 8000 & 694 & 0.424524E-04 \\
            \hline
        \end{tabular}
    \end{center}
\end{table}
\begin{figure}[!ht]
\centering
\subfigure[]{
\input{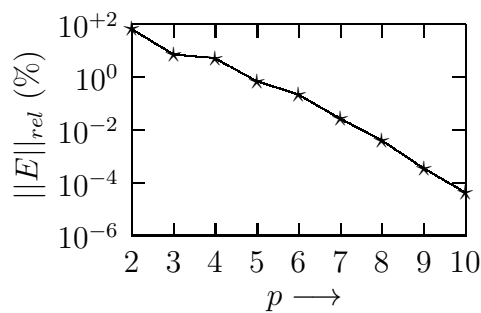}
\label{fig6.16a}
}
\hspace{-1.0cm}
\subfigure[]{
\input{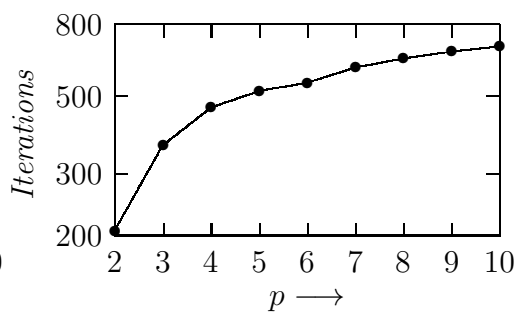}
\label{fig6.16b}
}
\subfigure[]{
\input{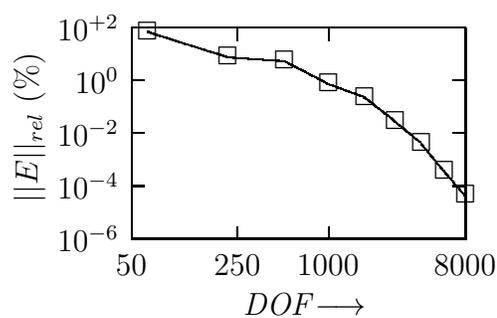}
\label{fig6.16c}
}
\hspace{-1.0cm}
\subfigure[]{
\input{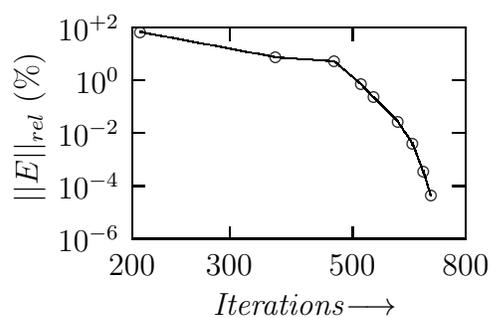}
\label{fig6.16d}
}
\caption[Error vs. $p$, Iterations vs. $p$, Error vs. DOF and Error vs. Iterations
for general elliptic (non self-adjoint) problem.]
{\subref{fig6.16a} Error vs. $p$, \subref{fig6.16b} Iterations vs. $p$, \subref{fig6.16c}
Error vs. DOF and \subref{fig6.16d} Error vs. Iterations for general elliptic (non
self-adjoint) equation with variable coefficients.}
\label{fig6.16}
\end{figure}

\section{Test Problems in Singular Regions}
In order to show the effectiveness of the proposed method in dealing with three dimensional
elliptic boundary value problems containing singularities we now present results of numerical
simulations for model problems on various polyhedral domains containing different types of
singularities discussed in the previous chapters which aim to verify the validity of the
asymptotic error estimates and estimates of computational complexity that have been obtained.

Hereafter \textbf{$N$ will always denote the number of layers in the geometric mesh and $W$,
the polynomial order used. It is assumed that $N$ is proportional to $W$}. In case of examples
with vertex and edge singularities all our calculations are based on a parallel computer
with $O(N)$ processors and each element is being mapped onto a separate processor. However,
in case of examples with vertex-edge singularities we have used a parallel computer with $O(N^{2})$
processors. The geometric mesh factors in the neighbourhoods of vertices and edges are chosen
as $\mu_{v}=0.15$ and $\mu_{e}=0.15$ which give optimal results.

The examples reported here will include all three types of singularities. In addition, to
compare our results with those existing in the literature we shall consider a few examples
similar to those of Guo and Oh~\cite{GOH1} as well.

Our first example is the Poisson equation containing only a vertex singularity with Dirichlet
boundary conditions. For computational simplicity we shall assume that the singularity arises
only at one vertex of the domain under consideration. However, in general, a typical domain
may contain singularities at more than one corner in which case the same technique can be
applied to each vertex containing a singularity.
\begin{guess8}
\bf{(Poisson equation with vertex singularity)}
\end{guess8}
Consider the domain $\Omega^{(v)}$ shown in Figure \ref{fig6.17} defined by
\[\Omega^{(v)}=\{(\phi,\theta,\rho)\:|\:\pi/6\leq\phi\leq\pi/3,\:0\leq\theta\leq 3\pi/2,
\:\rho\leq 1\}.\]
We consider the following model problem:
\begin{align}\label{eq6.2}
-\triangle{u} & = f \quad\text{in} \quad\Omega^{(v)}, \notag \\
            u & = g \quad \text{on} \quad \partial \Omega^{(v)}.
\end{align}
\begin{figure}[!ht]
\centering
\includegraphics[scale = 0.75]{./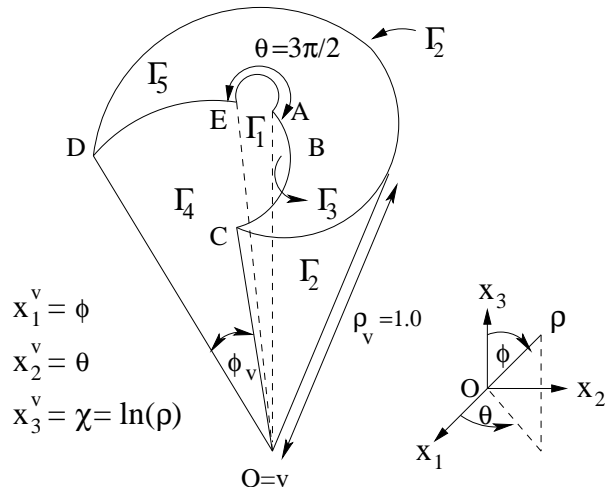}
\caption[The domain $\Omega^{(v)}$ containing a vertex singularity.]
{The domain $\Omega^{(v)}$ containing a vertex singularity.}
\label{fig6.17}
\end{figure}
Let $w(\phi,\theta,\rho)=\rho^{1/2}\sin(\frac{\phi}{2})$. Then this axially
symmetric function $w$ is the true solution of the model problem \ref{eq6.2}.
Note that $w$ has a vertex singularity at the origin and none other. Let
$x_{1}^{v},x_{2}^{v}$ and $x_{3}^{v}$ denote the modified coordinates in the
vertex neighbourhood introduced in Chapter $2$.
\begin{figure}[!ht]
\centering
\includegraphics[scale = 0.65]{./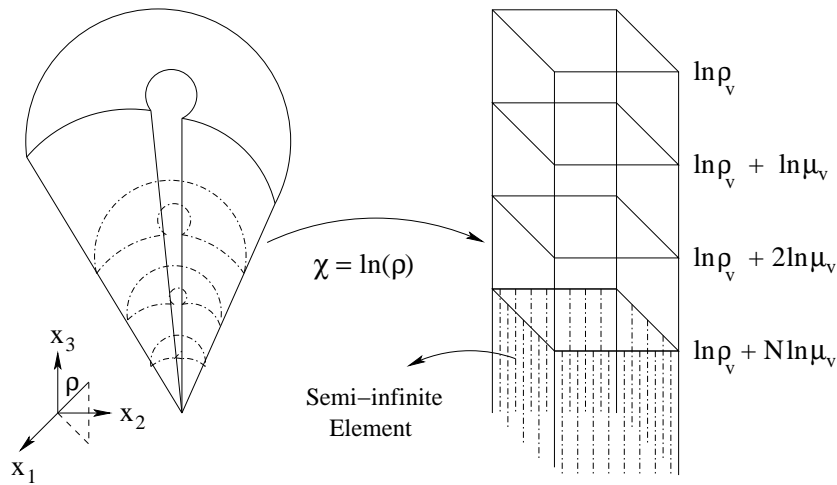}
\caption[Geometric mesh imposed on $\Omega^{(v)}$ and elements after mapping.]
{Geometric mesh imposed on $\Omega^{(v)}$ (front view) and elements after mapping.}
\label{fig6.30}
\end{figure}
\pagebreak

Performance of the $h-p$ version of the spectral element method is analyzed
on geometrically refined meshes shown in Figure \ref{fig6.30} with mesh ratio
$\mu_{v}=0.15$ (optimal for geometric mesh refinement~\cite{BG7,BG5,BGOH,G1,GOH1}
for the $h-p$ version of the finite element method). Results are given in Table
\ref{tab6.16}.
\begin{table}[!ht]
    \caption[Performance of the $h-p$ version for \textbf{Dirichlet} problem on
                 $\Omega^{(v)}$ containing a \textbf{vertex singularity}.]
                {Performance of the $h-p$ version for Dirichlet problem on domain
                 $\Omega^{(v)}$ containing a vertex singularity}
    \begin{center}
        \label{tab6.16}
        \begin{tabular}{|c|c|c|c|}
            \hline
        \text{$p=W$} & \text{DOF($N_{dof}$)} & Iterations & \text{Relative Error($\%$)}\\
            \hline
             2 & 9 & 46 & 0.100821E+01 \\
            \hline
             3 & 55 & 56 & 0.841462E-01 \\
            \hline
             4 & 193 & 86 & 0.309263E-02 \\
            \hline
             5 & 501 & 109 & 0.273612E-03 \\
            \hline
             6 & 1081 & 123 & 0.733920E-04 \\
            \hline
             7 & 2059 & 143 & 0.180276E-04 \\
            \hline
             8 & 3585 & 158 & 0.503131E-05 \\
            \hline
             9 & 5833 & 179 & 0.984438E-06 \\
            \hline
             10 & 9001 & 191 & 0.480789E-06 \\
            \hline
        \end{tabular}
    \end{center}
\end{table}
\begin{figure}[!ht]
\centering
\subfigure[]{
\input{./vertex_dirichlet_err}
\label{fig6.18a}
}
\hspace{-1.0cm}
\subfigure[]{
\input{./vertex_dirichlet_itr}
\label{fig6.18b}
}
\vspace{1.0mm}
\subfigure[]{
\input{./vertex_dirichlet_dof}
\label{fig6.18c}
}
\hspace{-1.0cm}
\subfigure[]{
\input{./vertex_dirichlet_err_itr}
\label{fig6.18d}
}
\caption[Error vs. $p$, Iterations vs. $N$, Error vs. $N_{dof}$ and Error vs. Iterations
for Poisson equation containing a vertex singularity.]
{\subref{fig6.18a} Error vs. $p$, \subref{fig6.18b} Iterations vs. $N$, \subref{fig6.18c}
Error vs. $N_{dof}$ and \subref{fig6.18d} Error vs. Iterations for Poisson equation
containing a vertex singularity.}
\label{fig6.18}
\end{figure}

We know that the error in the neighbourhoods of vertices satisfies
\begin{align}\label{eq6.3}
||u_{appx.}-u_{ex.}||_{H^{1}}\leq C e^{-bN_{dof}^{1/4}}\,.
\end{align}
Here $N_{dof}$ denotes the number of degrees of freedom.

In figure \ref{fig6.18a} the relative error versus polynomial degree $p$ is drawn.
Iterations as a function of $N$, the number of layers are given in Figure \ref{fig6.18b}.
Relative error against degrees of freedom is depicted on a $\log-\log$ scale in Figure
\ref{fig6.18c}, the curve is a straight line which confirms the estimate (\ref{eq6.3}).
The error decays to $\approx 10^{-7}$ when $p=10$. Further, the number of iterations
required to obtain the solution to the desired accuracy is nearly 200 which is fairly low.
\pagebreak

The second example illustrates the effectiveness of the method for problems with mixed
boundary conditions containing a vertex singularity and is similar to that of Guo and
Oh reported in~\cite{GOH1}.
\begin{guess8}
\bf{(Mixed problem containing vertex singularity)}:
\end{guess8}
Consider the axisymmetric Poisson equation with mixed boundary conditions:
\begin{align}\label{eq6.4}
-\triangle{u} & = f \quad\text{in} \quad\Omega^{(v)}, \notag \\
u & = g \quad \text{on}\quad{\mathcal D}\subset\partial \Omega^{(v)},\notag\\
\frac{\partial u}{\partial \nu} &= h \quad \text{on} \quad
{\mathcal N}=\partial \Omega^{(v)}\setminus{\mathcal D},
\end{align}
where the domain $\Omega^{(v)}$ is shown in Figure \ref{fig6.17} and
\begin{align}
{\mathcal D} & =\Gamma_{1}\cup\Gamma_{2}\cup\Gamma_{3}\cup\Gamma_{4}=
\Gamma^{1}_{\mathcal D}\cup\Gamma^{2}_{\mathcal D}\:, \notag\\
\Gamma^{1}_{\mathcal D} & =\{(\phi,\theta,\rho):\:\phi=\pi/6,\pi/3,\:0\leq
\theta\leq 3\pi/2,\:0\leq\rho\leq 1\}\:,\notag\\
\Gamma^{2}_{\mathcal D} & =\{(\phi,\theta,\rho):\:\pi/6\leq\phi\leq\pi/3,
\:\theta=0,3\pi/2,\:0\leq\rho\leq 1\}\:, \notag\\
{\mathcal N} & =\Gamma_{5}=\{(\phi,\theta,\rho):\:\pi/6\leq\phi\leq\pi/3,
\:0\leq\theta\leq 3\pi/2,\:\rho=1\}\:. \notag
\end{align}
We choose data $f$, $g$ and $h$ such that the function
$w=\rho^{0.1}(1-\rho)\sin2\phi$
is the true solution of (\ref{eq6.4}) satisfying prescribed boundary conditions.
Here $\nu$ denotes the exterior unit normal to the part of the boundary where we
impose Neumann boundary conditions.
\begin{table}[!ht]
    \caption[Performance of the $h-p$ version for \textbf{mixed} problem on $\Omega^{(v)}$
                 containing a \textbf{vertex singularity}.]
                {Performance of the $h-p$ version for mixed problem on polyhedral
                 domain $\Omega^{(v)}$ containing a vertex singularity}
    \begin{center}
        \label{tab6.17}
        \begin{tabular}{|c|c|c|c|}
            \hline
        \text{$p=W$} & \text{DOF($N_{dof}$)} & Iterations & \text{Relative Error($\%$)}\\
            \hline
             2 & 9 & 16 & 0.962637E+01 \\
            \hline
             3 & 55 & 39 & 0.252012E+01 \\
            \hline
             4 & 193 & 128 & 0.191490E+00 \\
            \hline
             5 & 501 & 176 & 0.212320E-01 \\
            \hline
             6 & 1081 & 314 & 0.192391E-02 \\
            \hline
             7 & 2059 & 409 & 0.884830E-03 \\
            \hline
             8 & 3585 & 743 & 0.412629E-03 \\
            \hline
             9 & 5833 & 814 & 0.470681E-04 \\
            \hline
        \end{tabular}
    \end{center}
\end{table}

Table \ref{tab6.17} contains the relative error obtained by applying the method
on geometrically refined mesh in $\rho$. The relative error versus degrees of
freedom is depicted in Figure \ref{fig6.18}. It is clear that the method gives
exponential accuracy.
\begin{figure}[!ht]
\centering
\subfigure[]{
\input{./vertex_mixed_err}
\label{fig6.19a}
}
\hspace{-1.0cm}
\subfigure[]{
\input{./vertex_mixed_itr}
\label{fig6.19b}
}
\vspace{1.0mm}
\subfigure[]{
\input{./vertex_mixed_dof}
\label{fig6.19c}
}
\hspace{-1.0cm}
\subfigure[]{
\input{./vertex_mixed_err_itr}
\label{fig6.19d}
}
\caption[Error vs. $p$, Iterations vs. $N$, Error vs. $N_{dof}$ and Error vs. Iterations
for mixed problem containing a vertex singularity.]
{\subref{fig6.19a} Error vs. $p$, \subref{fig6.19b} Iterations vs. $N$, \subref{fig6.19c}
Error vs. $N_{dof}$ and \subref{fig6.19d} Error vs. Iterations for mixed problem containing
a vertex singularity.}
\label{fig6.19}
\end{figure}

Next, we apply our method to an elliptic problem containing an edge singularity.
\pagebreak

\begin{guess8}
\bf{(Elliptic equation containing edge singularity)}
\end{guess8}
Consider the following Laplace equation:
\begin{align}\label{eq6.5}
-\triangle{u} & = 0 \quad\text{in} \quad\Omega^{(e)}_{1}, \notag \\
            u & = g \quad \text{on} \quad \partial\Omega^{(e)}_{1}.
\end{align}
where the domain $\Omega^{(e)}_{1}$ is shown in Figure \ref{fig6.20} and is given
by
\[\Omega^{(e)}_{1}=\{(r,\theta,x_{3}):\: 0\leq r\leq 1,\:0\leq\theta\leq\pi/2,\:
0\leq x_{3}\leq1\}.\]
We impose Dirichlet boundary conditions on all the faces marked as
$\Gamma_{i},i=1,\ldots,5$.

Let $w(r,\theta,x_{3})=r^{\frac{1}{3}}\sin(\frac{\theta}{3})x_{3}$. Then $w$ is the
exact solution of (\ref{eq6.5}) satisfying the Dirichlet boundary conditions
$u|_{\partial \Omega^{(e)}_{1}}=w$.
\begin{figure}[!ht]
\centering
\subfigure[]{
\includegraphics[scale = 0.65]{./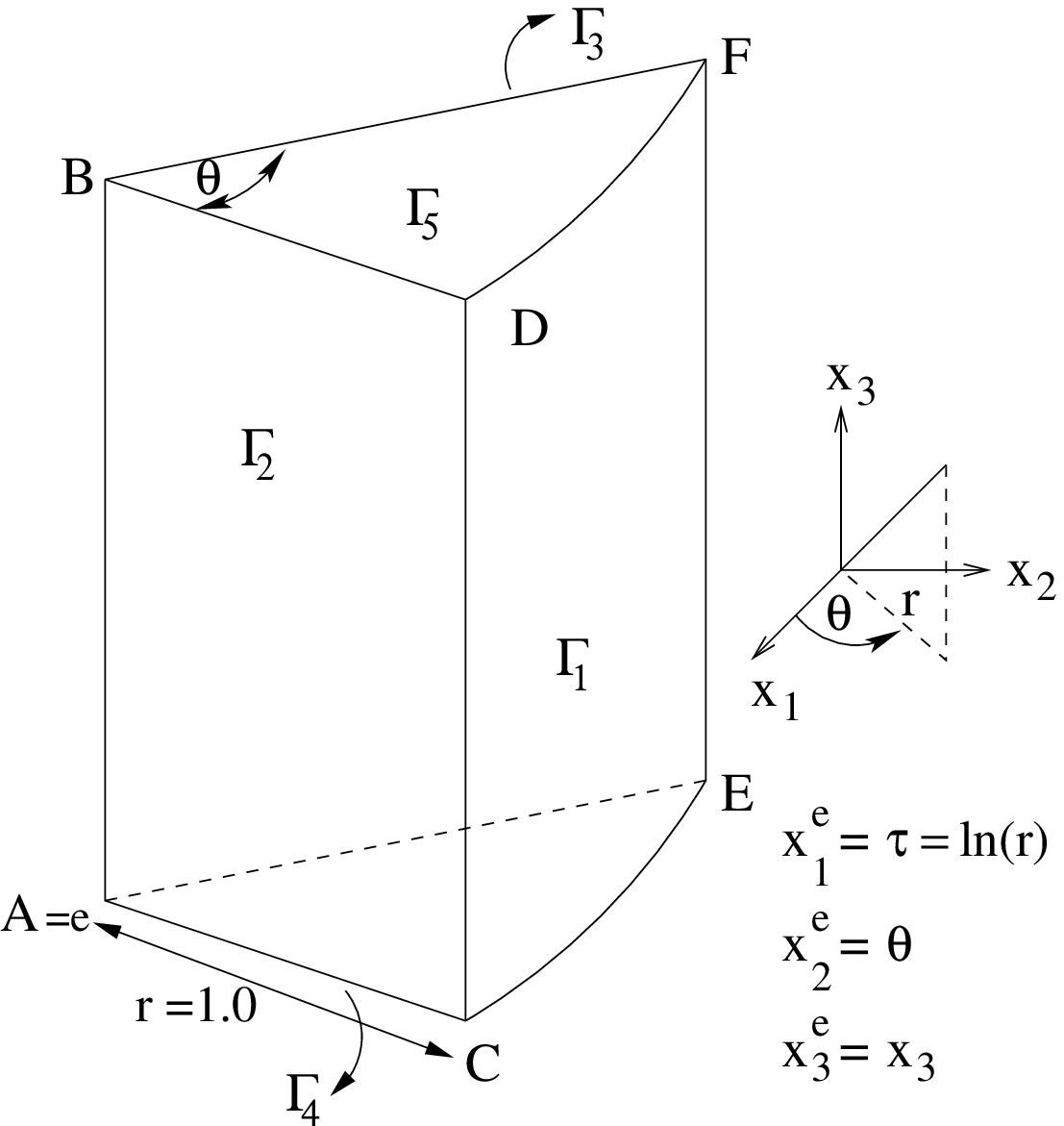}
\label{fig6.20a}
}
\hspace{1.0cm}
\subfigure[]{
\includegraphics[scale = 0.70]{./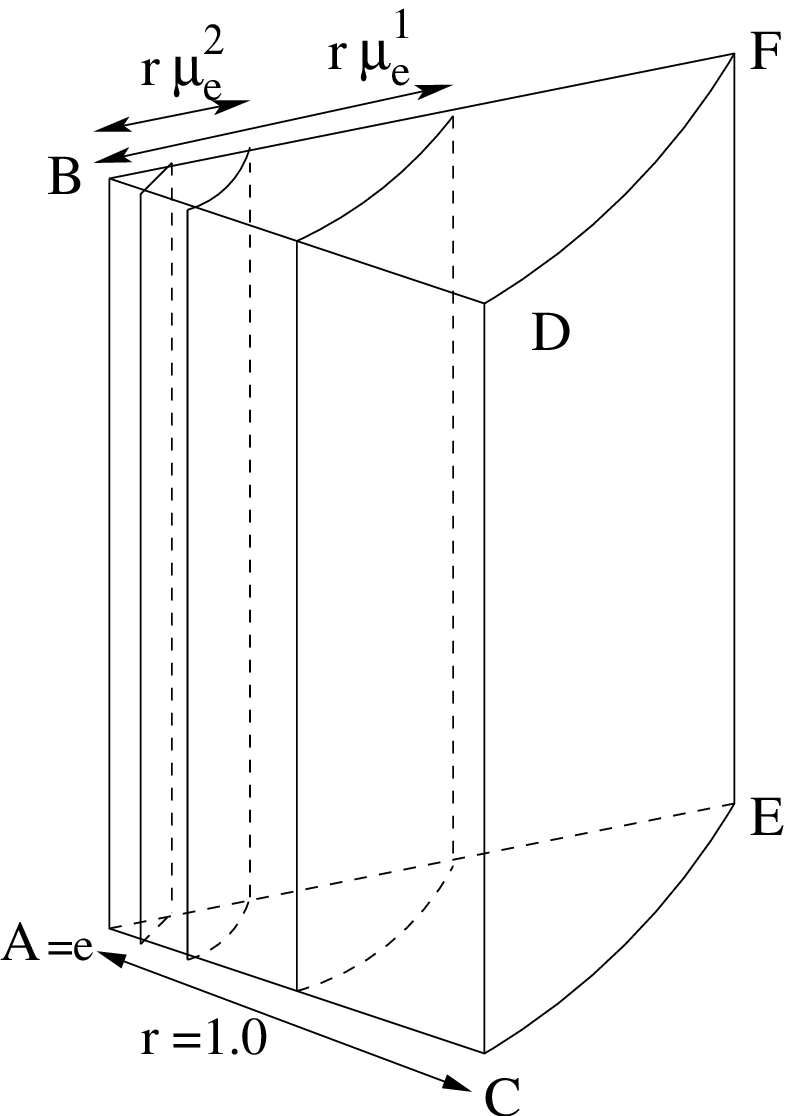}
\label{fig6.20b}
}
\caption[The domain $\Omega^{(e)}_{1}$ containing an edge singularity.]
{\subref{fig6.23a} The domain $\Omega^{(e)}_{1}$ containing an edge singularity,
\subref{fig6.23b} Geometrical mesh imposed on $\Omega^{(e)}_{1}$.}
\label{fig6.20}
\end{figure}
Note that $w$ has an edge singularity. Let $x_{1}^{e},x_{2}^{e}$ and $x_{3}^{e}$
denote the modified coordinates in the edge neighbourhood introduced in Chapter $2$.

Table \ref{tab6.18} contains the numerical results and it shows that $\approx 10^{-4} (\%)$
of relative error in the $H^{1}$-norm is achieved with $p=8$ and $N_{dof}\approx 4000$.
\begin{table}[!ht]
    \caption[Performance of the $h-p$ version for \textbf{Laplace} equation on
                 $\Omega^{(e)}_{1}$.]
                {Performance of the $h-p$ version for Laplace equation on $\Omega^{(e)}_{1}$}
    \begin{center}
        \label{tab6.18}
        \begin{tabular}{|c|c|c|c|}
            \hline
        \text{$p=W$} & \text{DOF($N_{dof}$)} & Iterations & \text{Relative Error($\%$)}\\
            \hline
             2 & 10 & 24 & 0.464542E+00 \\
            \hline
             3 & 57 & 34 & 0.131359E+00 \\
            \hline
             4 & 196 & 38 & 0.402204E-01 \\
            \hline
             5 & 504 & 49 & 0.123974E-01 \\
            \hline
             6 & 1085 & 58 & 0.364617E-02 \\
            \hline
             7 & 2064 & 68 & 0.107525E-02 \\
            \hline
             8 & 3592 & 75 & 0.315249E-03 \\
            \hline
             9 & 5840 & 85 & 0.920349E-04 \\
            \hline
             10 & 9001 & 98 & 0.279848E-04 \\
            \hline
        \end{tabular}
    \end{center}
\end{table}

The relative error against polynomial degree $p$ for $p=2,\ldots,10$ is drawn in Figure
\ref{fig6.21a}. It follows that the error decays exponentially. In Figures \ref{fig6.21c}
and \ref{fig6.21d} error as a function of degrees of freedom and iterations is plotted on
a $\log-\log$ scale.
\begin{figure}[!ht]
\centering
\subfigure[]{
\input{./edge_dirichlet_err}
\label{fig6.21a}
}
\hspace{-1.0cm}
\subfigure[]{
\input{./edge_dirichlet_itr}
\label{fig6.21b}
}
\vspace{1.0mm}
\subfigure[]{
\input{./edge_dirichlet_dof}
\label{fig6.21c}
}
\hspace{-1.0cm}
\subfigure[]{
\input{./edge_dirichlet_err_itr}
\label{fig6.21d}
}
\caption[Error vs. $p$, Iterations vs. $N$, Error vs. $N_{dof}$ and Error vs. Iterations
for Laplace equation containing an edge singularity.]
{\subref{fig6.21a} Error vs. $p$, \subref{fig6.21b} Iterations vs. $N$, \subref{fig6.21c}
Error vs. $N_{dof}$ and \subref{fig6.21d} Error vs. Iterations for Laplace equation containing
an edge singularity.}
\label{fig6.21}
\end{figure}

In the following example, we consider Laplace equation containing an edge
singularity with mixed boundary conditions which is similar to that examined
by Guo~\cite{GOH1}.
\begin{guess8}
\bf{(Laplace equation with mixed boundary conditions containing edge singularity:
A crack problem)}
\end{guess8}
Let us consider the Laplace equation:
\begin{align}\label{eq6.7}
-\triangle{u} & = 0 \quad\text{in} \quad\Omega^{(e)}_{2}, \notag \\
u & = 0 \quad\text{on} \quad {\Gamma^{[0]}}\subset\partial\Omega^{(e)}_{2},\notag\\
\frac{\partial u}{\partial \nu} & = h \quad \text{on} \quad {\Gamma^{[1]}}=\partial
\Omega^{(e)}_{2}\setminus{\Gamma^{[0]}},
\end{align}
where
\begin{align}
\Omega^{(e)}_{2} & =\{(r,\theta,x_{3}):\: 0\leq r\leq 1,\:0<\theta< 2\pi,\:
0\leq x_{3}\leq 1\},\notag\\
{\Gamma^{[0]}} & = \{(r,\theta,x_{3}):\: 0\leq r\leq 1,\:\theta=0,2\pi,\:0\leq x_{3}\leq 1\},
\notag\\
{\Gamma^{[1]}} & = {\Gamma^{[1]}_{1}}\cup{\Gamma^{[1]}_{2}} = {\Gamma_{2}}\cup
{\Gamma_{5}}\cup{\Gamma_{6}},\notag\\
\Gamma^{[1]}_{1} & = \{(r,\theta,x_{3}):\: r=1,\:0<\theta<2\pi,\:0<x_{3}<1\},\notag\\
{\Gamma^{[1]}_{2}} & = \{(r,\theta,x_{3}):\: 0<r<1,\:0<\theta<2\pi,\:x_{3}=0,1\},
\notag\\
h & = \left\{\begin{array}{ccc}
      \frac{1}{2}r^{\frac{1}{2}}\sin(\frac{\theta}{2}) & \mbox{on} \quad
      {\Gamma^{[1]}_{1}} \\
                                                    0  & \mbox{on} \quad
      {\Gamma^{[1]}_{2}}.
      \end{array}\right.
\end{align}
Then the function $w(r,\theta,x_{3})=r^{\frac{1}{2}}\sin(\frac{\theta}{2})$ is the exact
solution of (\ref{eq6.7}) satisfying the given boundary conditions $w|_{\Gamma^{[0]}}=0$
and $w|_{\Gamma^{[1]}}=h$. Note that $w$ has an edge singularity along $x_{3}$-axis. The
domain $\Omega^{(e)}_{2}$ and its plane section are shown in Figure \ref{fig6.22}.
\begin{figure}[!ht]
\centering
\subfigure[]{
\includegraphics[scale = 0.60]{./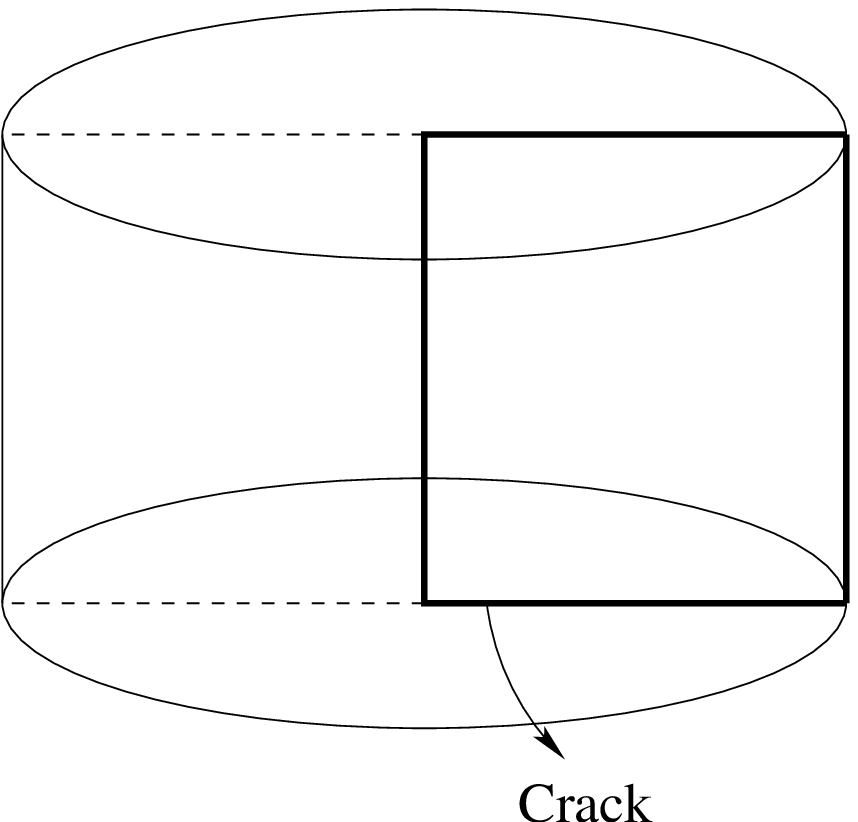}
\label{fig6.22a}
}
\hspace{1.0cm}
\subfigure[]{
\includegraphics[scale = 0.60]{./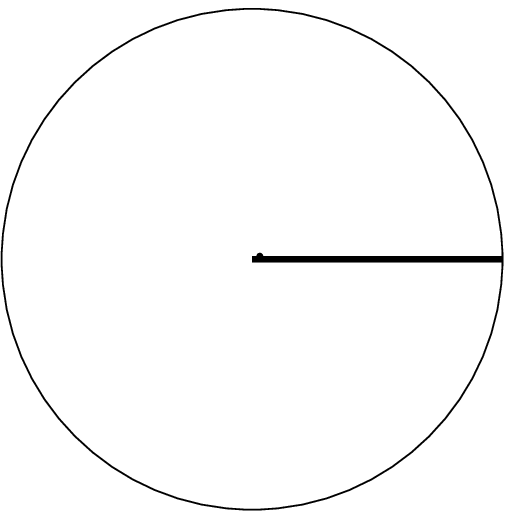}
\label{fig6.22b}
}
\caption[The domain $\Omega^{(e)}_{2}$ containing an edge singularity.]
{\subref{fig6.22a} The domain $\Omega^{(e)}_{2}$ with a crack, \subref{fig6.22b} Plane section
of the domain $\Omega^{(e)}_{2}$.}
\label{fig6.22}
\end{figure}

Note that the domain ${\Omega}^{(e)}_{2}$ is obtained by removing a vertical slit of unit
height from $r=0$ to $r=1$ i.e. a crack along $\theta=0$, from a cylinder of unit height and
radius $1$ with centre at the origin.

We impose a geometric mesh on ${\Omega}^{(e)}_{2}$ as shown in Figure \ref{fig6.23a} which
is refined geometrically in $r$ with mesh ratio $\mu_{e}=0.15$. All the elements in the
geometric mesh are of unit height. Cross section of the mesh on ${\Omega}^{(e)}_{2}$ is
shown in Figure \ref{fig6.23b}.
\begin{figure}[!ht]
\centering
\subfigure[]{
\includegraphics[scale = 0.60]{./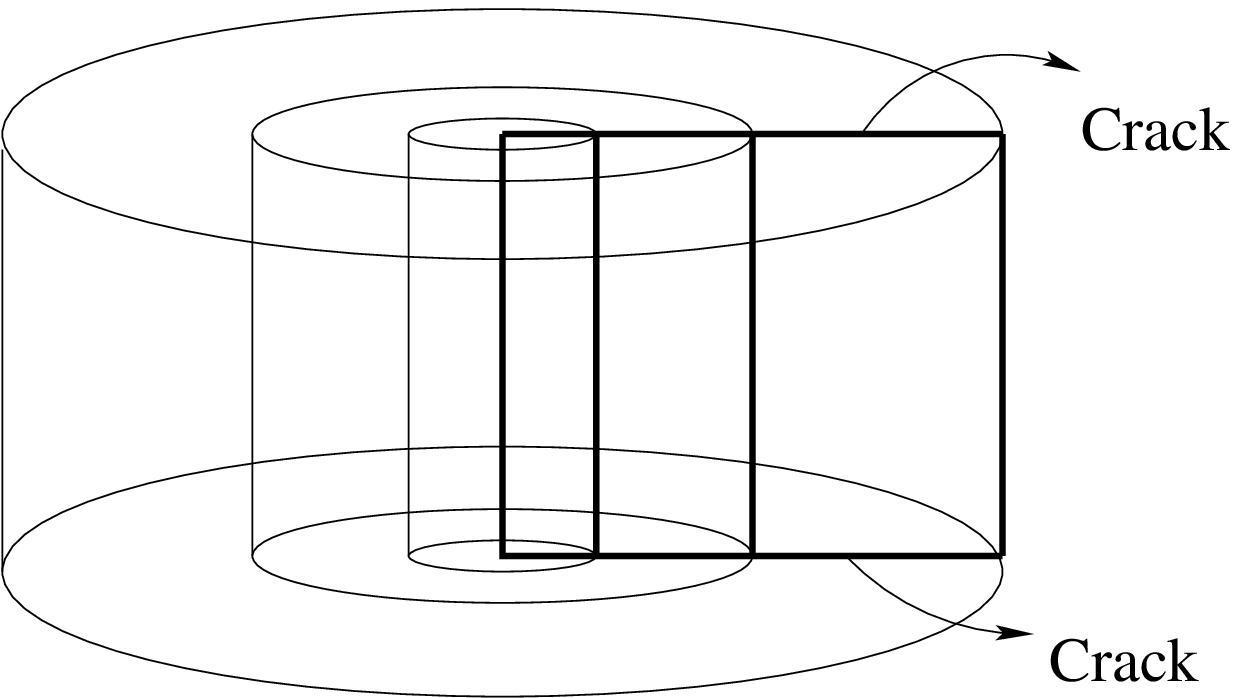}
\label{fig6.23a}
}
\hspace{1.0cm}
\subfigure[]{
\includegraphics[scale = 0.60]{./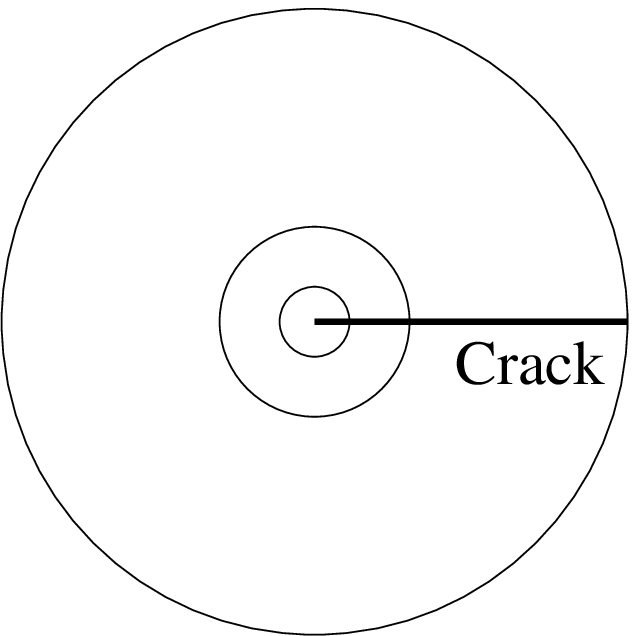}
\label{fig6.23b}
}
\caption[Geometric mesh imposed on ${\Omega}^{(e)}_{2}$.]
{\subref{fig6.23a} Geometric mesh imposed on ${\Omega}^{(e)}_{2}$,
\subref{fig6.23b} Cross section of the mesh imposed on ${\Omega}^{(e)}_{2}$.}
\label{fig6.23}
\end{figure}

Table \ref{tab6.19} contains the relative error and iterations for different values of
the polynomial order used in computations.

The relative error is plotted against polynomial degree $p$ in Figure \ref{fig6.24a}.
It is clear that the error decays rapidly with increase in the polynomial order.
In Figures \ref{fig6.24c} and \ref{fig6.24d} error as a function of degrees of
freedom and iterations is plotted on a $\log-\log$ scale.

\begin{table}[!ht]
    \caption[Performance of the method for \textbf{Laplace} equation with mixed boundary
                 conditions on $\Omega^{(e)}_{2}$ containing an \textbf{edge singularity}.]
                {Performance of the $h-p$ version for Laplace equation with mixed boundary
                 conditions on $\Omega^{(e)}_{2}$ containing an edge singularity}
    \begin{center}
        \label{tab6.19}
        \begin{tabular}{|c|c|c|c|}
            \hline
        \text{$p=W$} & \text{$N_{dof}$} & Iterations & \text{Relative Error($\%$)}\\
            \hline
             2 & 26 & 129 & 0.121468E+02 \\
            \hline
             3 & 138 & 167 & 0.979093E+01 \\
            \hline
             4 & 452 & 225 & 0.395164E+00 \\
            \hline
             5 & 1129 & 244 & 0.234473E+00  \\
            \hline
             6 & 2382 & 400 & 0.571179E-02 \\
            \hline
             7 & 4466 & 462 & 0.326770E-02 \\
            \hline
             8 & 7688 & 768 & 0.610394E-04 \\
            \hline
        \end{tabular}
    \end{center}
\end{table}
\begin{figure}[!ht]
\centering
\subfigure[]{
\input{./edge_mixed_old_err}
\label{fig6.24a}
}
\hspace{-1.0cm}
\subfigure[]{
\input{./edge_mixed_old_itr}
\label{fig6.24b}
}
\vspace{1.0mm}
\subfigure[]{
\input{./edge_mixed_old_dof}
\label{fig6.24c}
}
\hspace{-1.0cm}
\subfigure[]{
\input{./edge_mixed_old_err_itr}
\label{fig6.24d}
}
\caption[Error vs. $p$, Iterations vs. $N$, Error vs. $N_{dof}$ and Error vs. Iterations
for mixed problem containing an edge singularity.]
{\subref{fig6.24a} Error vs. $p$, \subref{fig6.24b} Iterations vs. $p$, \subref{fig6.24c}
Error vs. $N_{dof}$ and \subref{fig6.24d} Error vs. Iterations for mixed problem containing
an edge singularity.}
\label{fig6.24}
\end{figure}

\pagebreak

In the next example, we consider Poisson equation on a polyhedral domain containing vertex,
edge and vertex-edge singularities and analyze the performance of the method.
\begin{guess8}
\bf{(Poisson equation containing vertex-edge singularity)}
\end{guess8}
Let us consider the problem:
\begin{align}\label{eq6.8}
-\triangle{u} & = f \quad\text{in} \quad\Omega^{(v-e)}, \notag \\
            u & = g \quad\text{on} \quad\partial\Omega^{(v-e)}.
\end{align}
Let $w(\phi,\theta,\rho)=\rho^{3/4}(\sin\phi)^{1/2}\sin\left(\frac{\theta}{2}\right)$ and
$f=-\triangle w$. Then $w$ is the exact solution of (\ref{eq6.8}) in $\Omega^{(v-e)}$
satisfying the Dirichlet boundary conditions $u|_{\partial\Omega^{(v-e)}}=g$, where
$\Omega^{(v-e)}$ is the domain in Figure \ref{fig6.25} defined by
\[\Omega^{(v-e)}=\{(\phi,\theta,\rho):\: 0\leq\phi\leq\pi/6,\:0\leq\theta\leq
3\pi/2,\:\rho\leq 1\}.\]
Let us note that $w$ has a vertex singularity at the origin, an edge singularity along the
$z$-axis and a vertex-edge singularity. A geometrical mesh is imposed on $\Omega^{(v-e)}$
(Figure \ref{fig6.31}) in both $\phi$ (angular direction) and $x_{3}$ (radial direction)
variables with geometric mesh factors $\mu_{e}$ and $\mu_{v}$ respectively.
\begin{figure}[!ht]
\centering
\includegraphics[scale = 0.75]{./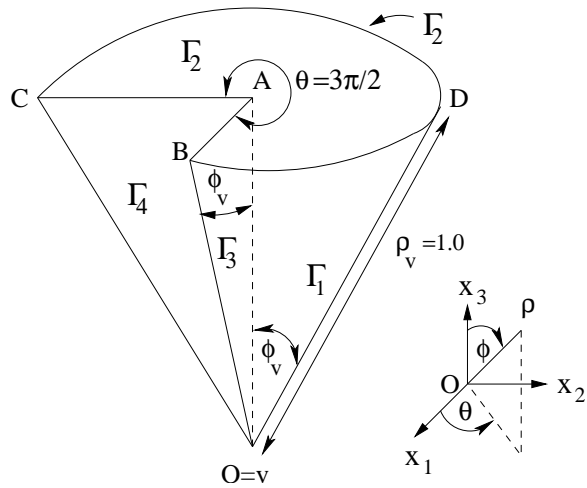}
\caption[The domain $\Omega^{(v-e)}$.]
{The domain $\Omega^{(v-e)}$.}
\label{fig6.25}
\end{figure}
\begin{figure}[!ht]
\centering
\includegraphics[scale = 0.75]{./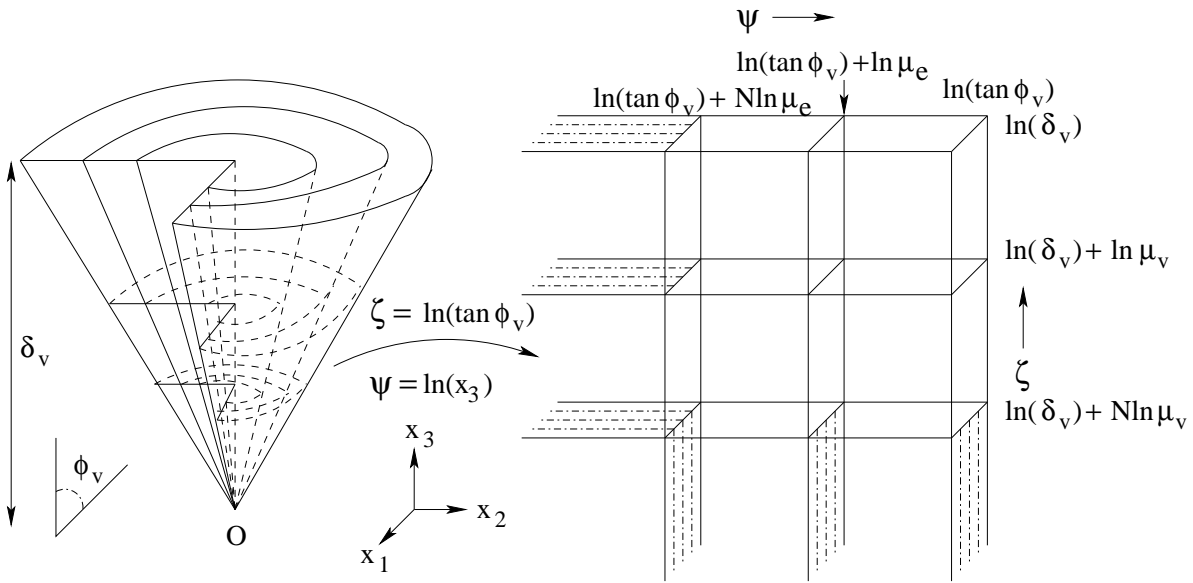}
\caption[Geometrical mesh imposed on $\Omega^{(v-e)}$ and the elements after mapping.]
{Mesh imposed on $\Omega^{(v-e)}$ and the elements after mapping.}
\label{fig6.31}
\end{figure}

To obtain the exponential convergence and efficiency of computations it is essential
to refine the mesh geometrically in both angular and radial direction along with
\begin{table}[!ht]
    \caption[Performance of the method for \textbf{Poisson} equation on $\Omega^{(v-e)}$
                 containing a \textbf{vertex-edge singularity}.]
                {Performance of the $h-p$ version for Poisson equation on $\Omega^{(v-e)}$
                 containing a vertex-edge singularity}
    \begin{center}
        \label{tab6.20}
        \begin{tabular}{|c|c|c|c|}
            \hline
        \text{$p=W$} & \text{$N_{dof}$} & Iterations & \text{Relative Error($\%$)}\\
            \hline
             2 & 19 & 15 & 0.473188E+01 \\
            \hline
             3 & 117 & 51 & 0.106067E+01 \\
            \hline
             4 & 592 & 61 & 0.363064E+00 \\
            \hline
             5 & 2025 & 69 & 0.165742E+00 \\
            \hline
             6 & 5436 & 77 & 0.693299E-01 \\
            \hline
        \end{tabular}
    \end{center}
\end{table}
\begin{figure}[!ht]
\centering
\subfigure[]{
\input{./vertex-edge_err}
\label{fig6.26a}
}
\hspace{-1.0cm}
\subfigure[]{
\input{./vertex-edge_itr}
\label{fig6.26b}
}
\subfigure[]{
\input{./vertex-edge_dof}
\label{fig6.26c}
}
\hspace{-1.0cm}
\subfigure[]{
\input{./vertex-edge_err_itr}
\label{fig6.26d}
}
\caption[Error vs. $p$, Iterations vs. $N$, Error vs. $N_{dof}$ and Error vs. Iterations
for Poisson equation containing a vertex-edge singularity.]
{\subref{fig6.26a} Error vs. $p$, \subref{fig6.26b} Iterations vs. $N$, \subref{fig6.26c}
Error vs. $N_{dof}$ and \subref{fig6.26d} Error vs. Iterations for Poisson equation
containing a vertex-edge singularity.}
\label{fig6.26}
\end{figure}
a proper choice of polynomial degree distribution because the solution possesses the combined
effect of a vertex singularity and an edge singularity.

The numerical results are given in Table \ref{tab6.20}. The relative error reduces to
nearly $0.07\%$ with $p=6$.

Figure \ref{fig6.26a} contains the relative error versus polynomial order $p$. The error
is plotted on a $\log-$scale. The curve is almost a straight line and it confirms the
theoretical estimates obtained. Further in Figure \ref{fig6.26b} the error is drawn as
a function of $N_{dof}$ on a $\log-\log$ scale.

\section{Conclusions and Future Work}

\subsection{Summary and conclusions}
There are three different types of singularities for elliptic problems on non-smooth
domains in $R^{3}$, namely, the vertex, the edge and the vertex-edge. In addition,
the solutions to these problems are anisotropic in the neighbourhoods of edges and
vertex-edges.

Among many approaches that have been attempted over the past three decades to provide
accurate and economical solutions to the elliptic boundary value problems containing
singularities, the two principal approaches are the adaptive mesh refinement (very small
elements near the singularities and large elements elsewhere) and the use of singular
basis functions. In three dimensional problems, the first approach suffers the problems
of limited storage capacity and high computational cost while ill-conditioning occur in
the latter in presence of singularities. As an extension of the second approach, the method
of auxiliary mapping (MAM) was introduced by Babu\v{s}ka and Oh (see~\cite{BABU,LUCA,OH1,OH2}
and references therein). The MAM has proven to be highly successful in dealing with
elliptic boundary value problems containing singularities in $R^{2}$~\cite{BABU,LUCA,OH1,OH2}
and $R^{3}$~\cite{GOH1,LEE} using $p$-version of the finite element method. However, the
success of this method depends on optimal choices of auxiliary mappings.

In the present work we have proposed an exponentially accurate $h-p$ spectral
element method for three dimensional elliptic problems on non-smooth domains
using parallel computers.

We choose as our solution the spectral element function which minimizes the sum
of a weighted squared norm of the residuals in the partial differential equations
and the squared norm of the residuals in the boundary conditions in fractional
Sobolev spaces and enforce continuity by adding a term which measures the jump
in the function and its derivatives at inter-element boundaries in fractional
Sobolev norms, to the functional being minimized.

The present study can be summarized as follows:-
\begin{itemize}
\item To resolve the singularities which arise in the neighbourhoods of vertices,
edges and vertex-edges we use modified systems of coordinates that are modified
versions of spherical and cylindrical coordinate systems. These modified coordinates
serve as our auxiliary mappings in different neighbourhoods.

\item The Sobolev spaces in vertex-edge and edge neighbourhoods are anisotropic and
become singular at the corners and edges.




\item We choose spectral element functions (SEF) which are fully non-conforming. i.e.
there are no set of common boundary values, and hence we do not need to solve the Schur
complement system.

\item The method is essentially a least-squares method and to solve the minimization
problem we need to solve the normal equations for the least-squares problem.

\item The residuals in the normal equations can be obtained without computing and storing
mass and stiffness matrices.

\item We use Preconditioned Conjugate Gradient Method (PCGM) to solve normal equations
using a block diagonal preconditioner. Moreover, there exists a new preconditioner which
can be diagonalized in a new set of basis functions, and hence it is easily inverted
on each element.

\item For Dirichlet problems the condition number of the preconditioner is $O((lnW)^2)$,
provided $W=O(e^{N^{\alpha}})$ for $\alpha<1/2$. However, it grows like $O(N^4)$ for
mixed problems.

\item For Dirichlet problems the overall complexity of the method is $O(N^{5}ln(N))$
operations on a parallel computer with $O(N^{2})$ processors to compute the solution.
For mixed problems it is equal to $O(N^{7})$ operations on a parallel computer with
$O(N^{2})$ processors.

\item Computational results for a number of test problems on smooth as well as non-smooth
domains confirm the estimates obtained for the error and computational complexity.

\item In each iteration of the PCGM we need to communicate the values of the function
and its derivatives on the boundary of the element between neighbouring elements.
Moreover, we need to compute two global scalars to update the approximate solution and
search direction. Thus, the inter processor communication involved is small.

\item Though the method is efficient and delivers exponential accuracy, more experience
is needed in computations and implementation. e.g. how to implement the distribution of
element degrees, what are the optimal degree factors, etc.
\end{itemize}

\subsection{Proposed future work}
Rapid growth of the factor $N^{4}$ for mixed problems creates difficulty in parallelizing
the numerical scheme. To overcome this difficulty another version of the method can be
defined in which we choose spectral element functions to be conforming only on the wirebasket
of the elements.

The values of the spectral element functions at the wirebasket of the elements constitute
the set of common boundary values and we need to solve the Schur complement matrix system
corresponding to the common boundary values. We intend to consider this in future work.

\chapcleardoublepage


\renewcommand{\theequation}{A.\arabic{equation}}
\setcounter{equation}{0}
\addcontentsline{toc}{chapter}{Appendix A}
\markboth{\small Appendix A}{\small Appendix A}
\begin{appendix}
\section*{Appendix A}
\subsection*{A.1}
\bf{Proof of Proposition \ref{prop2.2.1}}
\begin{prop2.2.1}
There exists a constant $\beta_v\in(0,{1/2})$ such that for all $0<\nu\leq \rho_v$
the estimate
\begin{align}\label{A.1}
\underset{\tilde{\Omega}^v\cap \{x^v:\: x_3^v\leq\ln(\nu)\}}\int\sum_{|\alpha|\leq m}
e^{x_3^v}\left|\:D_{x_v}^\alpha\left(w(x^v)-w_v\right)\:\right|^2\;dx^v
\leq C\,(d^m\;m!)^2\,\nu^{(1-2\beta_v)}
\end{align}
{\it holds for all integers $m\geq 1$}.
\end{prop2.2.1}
\begin{proof}
By Theorem $3.19$ of~\cite{BG1} for $\beta_v \in (0,1/2)$,
${\bf H}_{\beta_v}^{2,2}(\Omega^v)$ is embedded in ${\bf C}^0(\bar{\Omega}^v)$ and
\[\left\|w\right\|_{{\bf C}^0(\bar{\Omega}^v)}\leq C\;\left
\|w\right\|_{{\bf H}_{\beta_v}^{2,2}(\Omega^v)}\;.\]
Here $C$ denotes a constant and $\bar{\Omega}^v$ the closure of $\Omega^v$.
\newline
Hence we can define $w_v=w(v)$, the value of $w$ at the vertex $v$ and
$$|w_v|\leq C\;\|w\|_{{\bf H}_{\beta_v}^{2,2}(\Omega^v)}\:.$$
Let $\mathcal D^\alpha u = u_{\phi^{\alpha_1}\theta^{\alpha_2}\rho^{\alpha_3}}$.
Here $\alpha = \left(\alpha_1,\alpha_2,\alpha_3\right)$ and
$\alpha^{\prime} = \left(\alpha_1,\alpha_2\right)$.
\newline
Define
$$\Phiy_{\beta_v}^{\alpha,2}(x) =
\left\{\begin{array}{ccc}
\rho^{{\beta_v}+|\alpha|-2}\ &  for \;|\alpha|\geq 2\\
1 &  for \;|\alpha|< 2
\end{array}\right.
$$
as in $(2.3)$ of~\cite{BG1}. Let
\begin{eqnarray*}
{\mathcal H}_{\beta_v}^{k,2}(\Omega^v) &=&
\left\{u|\:\left\|u\right\| ^2_{{\bf \mathcal H}_{\beta_v}
^{k,2}(\Omega^v)} = \underset{|\alpha|\leq k} \sum
\left\|\Phiy_{\beta_v}^{\alpha,2}\:\rho^{-|\alpha^{\prime}|}
\mathcal D^{\alpha} u\:\right\|^2_{L^2{({\Omega}^v)}}<\infty\right\}
\end{eqnarray*}
and
\begin{eqnarray*}
\mathcal B^2_{\beta_v}(\Omega^v) &=& \left\{u|\:u \in
\mathcal H_{\beta_v} ^{k,2}(\Omega^v)\:\mbox{ for all }
\:k\geq 2, \left\|\Phiy_{\beta_v}^{\alpha,2}
\rho^{-|\alpha^{\prime}|}{\mathcal D}^{\alpha} u\;\right\|
^2_{L^2{({\Omega}^v)}}\leq C\,d^{\alpha}\,\alpha!
\right\}.
\end{eqnarray*}
Then from Theorem $4.13$ of~\cite{BG1} we have that $w\in\mathcal B^2_{\beta_v}(\Omega^v)$
iff $w\in{\bf B}^2_{\beta_v}(\Omega^v)$.
\newline
Hence
\begin{align}\label{A.2}
\underset{2\leq |\alpha|\leq m}{\sum}\underset{{\Omega}^{v}}{\int}
\left|\:\rho^{\beta_v-2}\rho^{\alpha_3}w_{\phi^{\alpha_1}\theta^{\alpha_2}
\rho^{\alpha_3}}\:\right|^2\rho^2\:\sin\phi\:d\rho\,d\phi\,d\theta
\leq (C\,d^m\,m!)^2\:. 
\end{align}
Define $\chi=\ln\rho$. Then
\[\frac{\partial}{\partial\chi}=\rho\frac{\partial}
{\partial\rho} \mbox{ and } \frac{d\rho}{\rho}=d\chi\:.\]
Now as in~\cite{BG1} it can be shown using (\ref{A.2}) that
\[\underset{2 \leq |\alpha| \leq m}{\sum}\underset{\tilde{\Omega}^{v}}
{\int} e^{(2\beta_v-1)\chi}\left|\:w_{\phi^{\alpha_1} \theta^{\alpha_2}
\chi^{\alpha_3}}\:\right|^2\:d\chi\,d\phi\,d\theta\leq(C\,d^m\,m!)^2.\]
Here $C$ and $d$ denote generic constants.
\newline
Hence
\begin{equation}\label{A.3}
\underset{2 \leq |\alpha| \leq m}{\sum}
\underset{\tilde{\Omega}^{v} \cap \{x^v:\:\chi\leq
\ln\nu\}}{\int}\left|\:D^{\alpha}_{x^v}w\:\right|^2\:dx^v
\leq(C\,d^m\,m!)^2\:\nu^{1-2\beta_v}.
\end{equation}

We now obtain estimates for $0\leq|\alpha|\leq 1$. By Lemma $5.5$ of~\cite{BG1}
since $w \in {\bf H}_{\beta_v}^{2,2}(\Omega^v)$ the estimate
\[\underset{\Omega^v}{\int}\rho^{2(\beta_v-2)}\left|w-w(v)\right|^2\:dx
\leq C\:\left\|u\right\|^2_{{\bf H}_{\beta_v}^{2,2}(\Omega^v)}\]
holds.
\newline
Hence
\[\underset{\tilde{\Omega}^v}{\int} e^{(2\beta_v-1)\chi}\left
|w-w_v\right|^2 \:d\chi\,d\phi\,d\theta \leq (C\,d^m\,m!)^2.\]
Thus we conclude that
\begin{equation}\label{A.4}
\underset{\tilde{\Omega}^{v} \cap \{x^v:\: \chi\leq\ln\nu\}}{\int}
|w-w_v|^2\:dx^v \leq C\,(d^m\,m!)^2\nu^{1-2\beta_v}.
\end{equation}

Finally, let ${\bf C}^2_{\beta_v}(\Omega^v)$ denote the set of functions
$u(x)\in{\bf C}^0(\bar{\Omega}^v)$ such that for all $|\alpha| \geq 0$
\begin{equation}\label{A.5}
\left|\:D_x^{\alpha}(u(x)-u(v))\:\right|\leq C\,d^{\alpha}\,
\alpha!\,\rho^{-(\beta_v+|\alpha|-{1/2})}(x).
\end{equation}
Then by Theorem $5.6$ of~\cite{BG1}, ${\bf B}^2_{\beta_v}(\Omega^v)\subseteq
{\bf C}^2_{\beta_v}(\Omega^v)$.
\newline
Now
\begin{equation}\label{A.6}
\rho \nabla_xu = Q^v \nabla_{x_v}u.
\end{equation}
Here \[Q^v = O^v P^v\]
where $O^v$ is the orthogonal matrix
\[O^v = \left[ \begin{array}{ccc}
\cos \phi \cos \theta   & -\sin \theta & \sin \phi \cos \theta \\
\cos \phi \sin \theta & \cos \theta & \sin \phi \sin \theta \\
-\sin \phi & 0 & \cos \phi
\end{array} \right]\]
and
\[P^v = \left[ \begin{array}{ccc}
1 & 0  & 0\\
0 & \frac{1}{\sin\phi} & 0 \\
0 & 0 & 1
\end{array} \right].\]
Now in $\Omega^{v}$
\[\phi^{v}<\phi<\pi-\phi^{v}.\]
Hence from (\ref{A.5}) and (\ref{A.6}) we can conclude that
\begin{equation}\label{A.7}
|\:\nabla_{x^v}w\:| \leq C\,\rho^{-\beta_v+{1/2}}\:.
\end{equation}
Using (\ref{A.7}) the estimate
\begin{equation}\label{A.8}
\underset{\tilde{\Omega}^{v} \cap \{x^v:\:\chi\leq
\ln\nu\}}{\int}\underset{|\alpha|=1}{\sum}\left|\:D_{x^v}^
{\alpha}w\:\right|^2\:dx^v \leq C\,\int_{-\infty}^{\ln\nu}
e^{(-2\beta_v+1)\chi} \:d\chi \leq C\,\nu^{1-2\beta_v}.
\end{equation}
follows.
\newline
Combining (\ref{A.3}), (\ref{A.4}) and (\ref{A.8}) we obtain the result.
\end{proof}

\subsection*{A.2}
\textbf{Proof of Proposition \ref{prop2.2.2}}
\begin{prop2.2.2}
Let $s(x_3)=w(x_1,x_2,x_3)|_{(x_1=0,x_2=0)}$. Then
\begin{equation}\label{A.9}
\int_{\delta_v}^{l_e-\delta_{v^{\prime}}} \underset
{k\leq m}{\sum}\left|\:\frac{d^k}{\left(dx_3^e\right)^k}
s(x_3^e)\right|^2\:dx_3^e \leq (C\,d^m\,m!)^2.
\end{equation}
Moreover there exists a constant $\beta_e \in (0,1)$ such that for $\mu\leq Z$
\begin{equation}\label{A.10}
\underset{\tilde{\Omega}^{e} \cap \{x^e: x_1^e\leq\ln\mu\}}{\int}\underset{|\alpha|
\leq m}{\sum}\left|\:D_{x^e}^{\alpha}\left(w(x^e)-s(x_3^e)\right)
\right|^2 \:dx^e\leq\:C\,(d^m\,m!)^2\mu^{2(1-\beta_e)}
\end{equation}
for all integers $m\geq 1$.
\end{prop2.2.2}
\begin{proof}
We denote by ${\bf C}^2_{\beta_e}(\Omega^e),\ \beta_e \in (0,1)$, the set of functions
$u \in {\bf C}^0(\bar{\Omega}^e)$ such that for $|\alpha| \geq 0$
\begin{equation}\label{A.11}
\left\|\:r^{\beta_e+\alpha_1+\alpha_2-1}D_x^{\alpha}
\left(u(x)-u(0,0,x_3)\right)\right\|_{{\bf C}^0(\bar{\Omega}^e)}
\leq C\,d^{\alpha}\,\alpha!
\end{equation}
and for $k\geq 0$
\begin{equation}\label{A.12}
\left\|\frac{d^k}{(dx_3)^k}u(0,0,x_3)\right\|_{{\bf C}^0
(\bar{\Omega}^e\cap\{x:\:x_1=x_2=0\})}\leq C\,d^k\,k!
\end{equation}
as in $(5.1)$ and $(5.2)$ of~\cite{BG1}. Here $\bar{\Omega}^e$ denotes the closure
of $\Omega^e$.
\newline
Then by Theorem $5.3$ of~\cite{BG1}, ${\bf B}^2_{\beta_e}(\Omega^e)\subseteq
{\bf C}^2_{\beta_e}(\Omega^e)$.
\newline
Now
\begin{align*}
x_1^e &= \tau \ =\:\ln r \\
x_2^e &= \theta \\
x_3^e &= x_3 \: .
\end{align*}
Define $s(x_3)=w(x_1,x_2,x_3)|_{(x_1=0,x_2=0)}$. Then (\ref{A.9}) follows
immediately from (\ref{A.12}) since $w\in{\bf B}^2_{\beta_e}(\Omega^e)$ and
hence $w \in{\bf C}^2_{\beta_e}(\Omega^e)$.
\newline
Let
$$p(x)=w(x)-s(x_3)\:.$$
Then by (\ref{A.11}) we have that
\begin{equation}\label{A.13}
\left\|\:r^{\beta_e+\alpha_1+\alpha_2-1}D_x^{\alpha}p(x)
\:\right\|_{{\bf C}^0(\tilde{\Omega}^e)}\leq C\,d^{\alpha}
\,\alpha!
\end{equation}
Now we can show just as in Theorem $4.1$ of~\cite{BG1} that
\begin{equation}\label{A.14}
\left\|r^{\beta_e-1}\:D_{x^e}^{\alpha}p(x^e)\:\right\|_{{\bf C}^0
(\tilde{\Omega}^e)}\leq C\,d^{\alpha}\,\alpha!
\end{equation}
using the estimate (\ref{A.13}).
\newline
Hence
\[\left|\:D_{x^e}^{\alpha} p(x^e)\:\right| \leq
C\,d^{\alpha}\,\alpha!\:e^{(1-\beta_e)x_1^{e}}\]
for $x^e \in \tilde{\Omega}^e$.
\newline
Using the above we conclude that
\begin{align*}
\underset{\tilde{\Omega}^{e} \cap \{x^e: x_1^e\leq \ln\mu\}}
{\int}\underset{|\alpha|\leq m}{\sum}\left|\:D_{x^e}^{\alpha}
p(x^e)\:\right|^2 dx^e \leq C\,(d^m\,m!)^2\:\int_{-\infty}
^{\ln\mu}e^{2(1-\beta_e)\tau}\,d\tau
\end{align*}
and this gives the required estimate (\ref{A.10}).
\end{proof}

\subsection*{A.3}
\textbf{Proof of Proposition \ref{prop2.2.3}}
\begin{prop2.2.3}
Let $w_v=w(v)$, the value of $w$ evaluated at the vertex $v$, and
$s(x_3)=w(x_1,x_2,x_3)|_{(x_1=0,x_2=0)}$. Then there exists a constant
$\beta_v \in (0,1/2)$ such that for any $0<\nu\leq\delta_v$
\begin{equation}\label{A.15}
\underset{-{\infty}}\int^{\ln\nu} e^{x_3^{v-e}}
\sum_{k\leq m}\left|\:D_{x_3^{v-e}}^k(s(x_3^{v-e})-w_v)\:\right|^2 dx_3^{v-e}
\leq C\,(d^m\,m!)^2\,\nu^{(1-2\beta_v)}\,.
\end{equation}
Moreover there exists a constant $\beta_e \in (0,1)$ such that for any
$0<\alpha \leq \tan\phi_v$ and $0<\nu \leq \delta_v$
\begin{align}\label{A.16}
&\underset{{\tilde{\Omega}^{v-e}} \cap \{x^{v-e}:
x_1^{v-e}<\ln\alpha, \ x_3^{v-e}<\ln\nu\}} \int e^{x_3^{v-e}}
\sum_{|\gamma|\leq m}\left|\:D_{x^{v-e}}^{\gamma}
\left(\:w(x_3^{v-e})-s(x_3^{v-e})\right)\:\right|^2\:dx_3^{v-e} \notag \\
&\hspace{4.5cm}\leq C\,(d^m\,m!)^2\,\alpha^{2(1-\beta_e)}\,\nu^{(1-2\beta_v)}
\end{align}
for all integers $m \geq 1$.
\end{prop2.2.3}
\begin{proof}
By ${\bf C}^2_{\beta_{v-e}}(\Omega^{v-e})$, where $\beta_{v-e}=(\beta_v,\beta_e),
 \ \beta_v \in(0,1/2)$ and $\beta_e \in (0,1)$
we denote the set of functions $u(x)\in {\bf C}^0(\bar{\Omega}^{v-e})$ such that
\begin{equation}\label{A.17}
\left\|\:\rho^{\beta_v+|\alpha|-{1/2}}\:
(\sin\phi)^{\beta_e+\alpha_1+\alpha_2-1}\:D^{\alpha}_x\left(u(x)-u(0,0,x_3)\right)
\:\right\|_{{\bf C}^0({\bar{\Omega}^{v-e}})}\leq C\,d^{\alpha}\,\alpha!
\end{equation}
and
\begin{equation}\label{A.18}
\left|\:|x_3|^{\beta_v+k-{1/2}} \frac{d^k}{dx_3^k}
\left(u(0,0,x_3)-u(v)\right)\right|_{{\bf C}^0\left(\bar{\Omega}
^{v-e}\cap\{x:\:x_1=x_2=0 \}\right)} \leq C\,d^k\,k!
\end{equation}
as described in $(5.26)$ and $(5.27)$ of~\cite{BG1}. Here $\bar{\Omega}^{v-e}$ denotes
the closure of $\Omega^{v-e}$.

Now by Theorem $5.9$ of~\cite{BG1}, ${\bf B}^2_{\beta_{v-e}}
(\Omega^{v-e})\subseteq {\bf C}^2_{\beta_{v-e}}(\Omega^{v-e})$.
Since $w\in {\bf B}^2_{\beta_{v-e}}(\Omega^{v-e})$ we conclude that
$w\in {\bf C}^2_{\beta_{v-e}}(\Omega^{v-e})$. Let $s(x_3)=w(0,0,x_3)$ and $w_v=w(v)$.
Then
\[\left|\:|x_3|^{\beta_v+k-{1/2}} \frac{d^k}{dx_3^k}\left(s(x_3)
-w_v\right)\:\right|\leq C\,d^k\,k!.\]
Now $x_3^{v-e}=\ln x_3$. Hence it can be shown as before that
\[\int_{-\infty}^{\ln\nu}\underset{k\leq m}{\sum}\left|\:
D^k_{x_3^{v-e}}\left(s(x_3^{v-e})-w_v\right)\:\right|^2
\:dx_3^{v-e}\leq C\,(d^m\,m!)^2\,\nu^{(1-2\beta_v)}.\]
Let $p(x)=w(x)-s(x_3)$. Then by (\ref{A.17}) we have that
\[\left\|\:\rho^{\beta_v+|\alpha|-{1/2}} (\sin\phi)^{\beta_e+
\alpha_1+\alpha_2-1}\:D^{\alpha}_x p(x)\:\right\|_{{\bf C}^0
({\bar{\Omega}}^{v-e})}\leq C\,d^{\alpha}\,\alpha!\,.\]
It can be shown as in Theorem $4.8$ of~\cite{BG1} that
\[\left\|\:\left(\rho^{\beta_v-{1/2}}(\sin\phi)^{\beta_e-1}\right)\,\rho^{\alpha_3}
(\sin\phi)^{\alpha_1}\,p_{\phi^{\alpha_1}\theta^{\alpha_2}\rho^{\alpha_3}}
\:\right\|_{{\bf C}^0(\breve{\Omega}^{v-e})} \leq C\,d^{\alpha}\,\alpha!\,.\]
Here $\breve{\Omega}^{v-e}$ is the image of $\bar{\Omega}^{v-e}$ in $(\phi,\theta,\rho)$
coordinates.
\newline
From the above the estimate
\begin{equation}\label{A.19}
\left\|\:e^{(\beta_v-{1/2})\chi}\:(\sin\phi)^{\beta_e-1}\,(\sin\phi)^{\alpha_1}
\,p_{\phi^{\alpha_1}\theta^{\alpha_2}\chi^{\alpha_3}}
\:\right\|_{{\bf C}^0(\widehat{\Omega}^{v-e})}\leq C\,d^{\alpha}\,\alpha!
\end{equation}
follows. Here $\widehat{\Omega}^{v-e}$ is the image of $\bar{\Omega}^{v-e}$ in $x^v$
coordinates, $x^v=(\phi,\theta,\chi)$ and $\chi=\ln\rho$.
\newline
Now the vertex-edge coordinates are defined as
\begin{align*}
x_1^{v-e} &= \psi \ =\;\ln(\tan\phi) \\
x_2^{v-e} &= \theta \\
x_3^{v-e} &= \zeta \ =\;\ln(x_3) \ =\;\chi+\ln(\cos\phi).
\end{align*}
Thus
\[\nabla_{x^v}u = J^{v-e}\nabla_{x^{v-e}}u\]
where
\[J^{v-e} =  \left[ \begin{array}{ccc}
\sec^2 \phi \cot \phi   &0&  -\tan \phi\\
0 & 1 & 0 \\
0 & 0 &  1
\end{array} \right].\]
Hence
\[\nabla_{x^{v-e}}u = (J^{v-e})^{-1} \nabla_{x^v}u.\]
Here
\[(J^{v-e})^{-1} =  \left[ \begin{array}{ccc}
\cos\phi \sin\phi  & 0 &  \sin^{2} \phi\\
0 & 1 & 0 \\
0 & 0 &  1
\end{array} \right].\]
Hence
\begin{align*}
\frac{\partial u}{\partial \psi} &= \cos\phi \sin\phi
\frac{\partial u}{\partial\phi}+\sin^2\phi\frac{\partial u}
{\partial\chi},\\
\frac{\partial u}{\partial \zeta} &= \frac{\partial u}
{\partial\chi}.
\end{align*}
From the above we obtain
\begin{equation}\label{A.20}
\frac{\partial^mu}{\partial\psi^m}=\sum_{k=1}^m\sum_{\alpha_1+\alpha_2=k}
\left(\sum_{j_1+j_2=2m-\alpha_1} a^m_{\alpha_1,\alpha_2,j_1,j_2}
\:(\cos\phi)^{j_1}\:(\sin\phi)^{j_2}\:((\sin\phi)^{\alpha_1})
u_{\phi^{\alpha_1}\chi^{\alpha_2}}\right)
\end{equation}
It can be shown that the coefficients
$a^m_{\alpha_1,\alpha_2,j_1,j_2}$ satisfy the recurrence relation
\begin{align}\label{A.21}
a^{m+1}_{\alpha_1,\alpha_2,j_1,j_2} &=a^m_{{\alpha_1-1},\alpha_2,{j_1-1},j_2}
+(\alpha_1+j_2)a^m_{\alpha_1,\alpha_2,{j_1-2},j_2}\\
&- j_1a^m_{\alpha_1,\alpha_2,j_1,{j_2-2}} + a^m_{\alpha_1,{\alpha_2-1},j_1,{j_2-2}}
\end{align}
for $|\alpha| \leq m.$
\newline
For $|\alpha|=m+1$
\begin{align}
a^{m+1}_{\alpha_1,\alpha_2,j_1,j_2} = \left\{\begin{array}{ccc}
1 & if \:j_1=\alpha_1, j_2=2m+2-\alpha_1 \notag \\
0 & \:otherwise\,.
\end{array}\right.
\end{align}
Since $0\leq \phi \leq \phi_v$, where $\phi_v<{\pi/2}$, we can conclude from (\ref{A.19})
that
\begin{equation}\label{A.23}
\left\|\:e^{(\beta_v-{1/2})\zeta}\:e^{(\beta_e-1)
\psi}\:(\sin\phi)^{\alpha_1}\:p_{\phi^{\alpha_1}\theta^{\alpha_2}
\chi^{\alpha_3}}\:\right\|_{{\bf C}^0_{(\widehat{\Omega}^{v-e})}}
\leq C\,d^{\alpha}\,\alpha! \:.
\end{equation}
Here $\widehat{\Omega}^{v-e}$ denotes the image of $\bar{\Omega}^{v-e}$ in $x^{v-e}$
coordinates. From (\ref{A.23}) the estimate
\begin{equation}\label{A.24}
\left \|\:e^{(\beta_v-{1/2})x_3^{v-e}}\:e^{(\beta_e-1)x_1^{v-e}}
\:D^{\alpha}_{x^{v-e}}\:p\:\right\|_{{\bf C}^0_{(\tilde
{\Omega}^{v-e})}} \leq C\,d^{\alpha}\,\alpha!\:.
\end{equation}
follows.

As in~\cite{BG1} we show (\ref{A.24}) for the cases $\alpha=(m,0,0)$, $\alpha=(0,m,0)$
and $\alpha=(0,0,m)$ since the general case can be shown in the same way. It is enough
to prove (\ref{A.24}) for $\alpha=(m,0,0)$ since the other two cases are trivial.
\newline
Let
\begin{equation}\label{A.25}
A_k^m = \sum_{\alpha_1+\alpha_2=k} \sum_{j_1+j_2=2m-\alpha_1}\left|
a^m_{\alpha_1,\alpha_2,j_1,j_2}\right|.
\end{equation}
Then
\begin{equation}\label{A.26}
A_m^m \leq 4^m.
\end{equation}
Moreover for $k<m$
\begin{equation}\label{A.27}
A_k^m \leq 4^m \frac{m!}{k!}.
\end{equation}
The proof is by induction. Using the recurrence relation (\ref{A.21}) we obtain
\begin{align}\label{A.28}
A_k^{m+1} &\leq 2m A_k^m + 2A_{k-1}^m \notag \\
& \leq 2m\left(\frac{4^m m!}{k!}\right)+2\left(\frac{4^m m!}
{(k-1)!}\right) \notag \\
& \leq 4^{m+1}\frac{(m+1)!}{k!}\left(\frac{2m}{4(m+1)}+
\frac{2k}{4(m+1)}\right) \notag \\
& \leq \frac{4^{m+1}}{k!}(m+1)!
\end{align}
Now using (\ref{A.19}), (\ref{A.20}) and (\ref{A.28}) it can
be shown that
\begin{align*}
&\left\|\:e^{(\beta_v-{1/2})x_3^{v-e}}\:e^{(\beta_e-1)x_1^{v-e}}
\:\frac{\partial^m p}{\partial \psi^m}\:\right\|_{{\bf C}^0(\tilde{\Omega}^{v-e})}\\
&\leq \sum_{k=1}^m\sum_{\alpha_1+\alpha_2=k}\sum_{j_1+j_2=2m-\alpha_1}
\left|a^m_{\alpha_1,\alpha_2,j_1,j_2}\right|
\:C\,d^{\alpha_1+\alpha_2}\,{\alpha_1}!{\alpha_2}!\\
&\leq \sum_{k=1}^m A^m_k(C\,d^k\,k!)\\
&\leq C\,d^m\,m!
\end{align*}
Here $C$ and $d$ denote generic constants.
\newline
The inequality (\ref{A.24}) is obtained in the same way. Now the estimate (\ref{A.16})
follows immediately from (\ref{A.24}).
\end{proof}

\subsection*{A.4}
\textbf{Proof of Proposition \ref{prop2.2.4}}
\begin{prop2.2.4}The estimate
\begin{equation}\label{A.29}
\underset{{\Omega^r}}\int\sum_ {|\alpha|\leq m}\left|\:D_x^\alpha w(x)\:\right|^2
\leq C\,(d^m\,m!)^2
\end{equation}
holds for all integers $m\geq 1$.
\end{prop2.2.4}
\begin{proof}
Now $w(x)$ is analytic in an open neighbourhood of $\bar{\Omega}^r$. Hence (\ref{A.29})
follows.
\end{proof}
\end{appendix}
\thispagestyle{empty}
\chapcleardoublepage

\renewcommand{\theequation}{B.\arabic{equation}}
\renewcommand{\thefigure}{B.\arabic{figure}}
\setcounter{equation}{0}
\setcounter{figure}{0}
\addcontentsline{toc}{chapter}{Appendix B}
\markboth{\small Appendix B}{\small Appendix B}
\begin{appendix}
\section*{Appendix B}
\subsection*{B.1}
\textbf{Proof of Lemma \ref{lem3.2.6}}
\begin{lem3.2.6}
Let $\Gamma_{k,i}^v = \Gamma_{q,r}^{v-e}$. Then the following
identity holds.
\begin{align}
&\sin^2 (\phi_v) \oint_{\partial \tilde{\Gamma}^v_{k,i}} e^{x_3^v}
\sin(x_1^v) \left(\frac{\partial u}{\partial \nb^v}\right)_{A^v}
\left(\frac{\partial u}{\partial \nuw^v}\right)_{A^v} \:d s^v \notag \\
&- 2\sin^2 (\phi_v) \int_{ \tilde{\Gamma}^v_{k,i}} e^{x_3^v}
\sin(x_1^v) \sum_{j=1}^2 \left(\frac{\partial u}{\partial
\taux_j^v}\right)_{A^v} \frac{\partial}{\partial s^v_j}
\left(\left(\frac{\partial u}
{\partial \nuw^v}\right)_{A^v}\right) \:d\sigma^v \notag \\
&= -\oint_{\partial \tilde{\Gamma}^{v-e}_{q,r}} e^{x_3^{v-e}}
\left(\frac{\partial u}{\partial \nb^{v-e}}\right)_{A^{v-e}}
\left(\frac{\partial u}{\partial \nuw^{v-e}}\right)_{A^{v-e}}
\:ds^{v-e} \notag\\
&+ 2\int_{ \tilde{\Gamma}^{v-e}_{q,r}} e^{x_3^{v-e}} \sum_{j=1}^2
\left(\frac{\partial u}{\partial \taux_j^{v-e}}\right)_{A^{v-e}}
\frac{\partial}{\partial s^{v-e}_j} \left(\left(\frac{\partial u}
{\partial \nuw^{v-e}}\right)_{A^{v-e}}\right) \:d\sigma^{v-e}.\notag
\end{align}
\end{lem3.2.6}
\begin{proof}
Let
\begin{align}\label{B.1}
& x_1^v = \phi \notag \\
& x_2^v = \theta \notag \\
& x_3^v = {\mathcal X} = \ln\rho\:.
\end{align}
\begin{figure}[!ht]
\centering
\includegraphics[scale = 0.60]{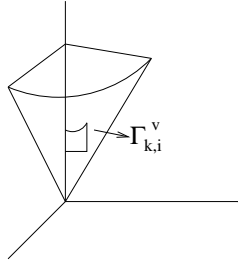}
\caption{Interior common boundary face $\Gamma_{k,i}^v={\Gamma}_{q,r}^{v-e}$.}
\label{figB.1}
\end{figure}
and
\begin{align} \label{B.2}
& x_1^{v-e} = \psi = \ln (\tan x_1^v) \notag \\
& x_2^{v-e} = \theta \notag \\
& x_3^{v-e} = \zeta = \ln x_3 = x_3^v + \ln \cos (x_1^v)\:.
\end{align}
Clearly
\begin{align} \label{B.3}
\nabla_{x^v} u = J^{v-e} \nabla_{x^{v-e}} u
\end{align}
where
\begin{align} \label{B.4}
J^{v-e} =  \left[ \begin{array}{ccc}
\sec^2 \phi \cot \phi   &0&  -\tan \phi\\
0 & 1 & 0 \\
0 & 0 &  1
\end{array} \right].
\end{align}
Now if ${\bf dx}^v$ is a tangent vector to a curve in $x^v$ coordinates then its image
in $x^{v-e}$ coordinates is given by ${\bf dx}^{v-e}$ where
\begin{equation} \label{B.5}
{\bf dx}^{v-e} = (J^{v-e})^T {\bf dx}^v.
\end{equation}
Clearly the first fundamental form $(d s^v)^2$ in $x^v $ coordinates is
\begin{equation} \label{B.6}
(d s^v)^2 = ({\bf dx}^v)^T ({\bf dx}^v) = ({\bf dx}^{v-e})^T
(J^{v-e})^{-1} (J^{v-e})^{-T} {\bf dx}^{v-e}.
\end{equation}
Now
\begin{align*}
(J^{v-e})^{-1}  =
\left[ \begin{array}{ccc}
\cos \phi \sin \phi   &0&  \sin^2 \phi\\
0 & 1 & 0 \\
0 & 0 &  1
\end{array} \right].
\end{align*}
Hence
\begin{align} \label{B.7}
(ds^v)^2 &= \sin^2 \phi (dx_1^{v-e})^2 + (d x_2^{v-e})^2 + (d
x_3^{v-e})^2 +2 \sin^2 \phi dx_1^{v-e} d x_3^{v-e} \notag \\
&= \sin^2 \phi d \psi^2 + d \theta^2 + d \zeta^2 + 2 \sin^2 \phi
\:d\psi \:d\zeta.
\end{align}
Moreover, on $\tilde{\Gamma}^v_{k,i}$,
\begin{equation} \label{B.8}
d \sigma^v = d \theta d \zeta
\end{equation}
since by (\ref{B.7})
\[(ds^v)^2 = d \theta^2 + d\zeta^2\]
on $\tilde{\Gamma}^v_{k,i}$. Choose ${\bf\tau}_1^{v-e}=(0,1,0)^T$ and
${\bf\tau}_2^{v-e}= -(0,0,1)^T$. These are orthogonal unit vectors on
$\tilde{\Gamma}^{v-e}_{q,r}$ since
\begin{equation} \label{B.9}
(ds^{v-e})^2 = d \psi^2 +d\theta^2+d\zeta^2.
\end{equation}
Define
\begin{equation} \label{B.10}
\taux_1^v = (J^{v-e})^{-T} \taux_1^{v-e}
\end{equation}
\begin{equation} \label{B.11}
\taux_2^v = - (J^{v-e})^{-T} \taux_2^{v-e}.
\end{equation}
Let $\nuw^{v-e} = -(1,0,0)^{T}$ denote the unit normal vector on
$\tilde{\Gamma}^{v-e}_{q,r}$ and
\begin{equation} \label{B.12}
\mc^{v-e} = (\sec^2\phi\cot\phi, 0, -\tan\phi)^T.
\end{equation}
Then
\begin{equation} \label{B.13}
\nuw^v = (1,0,0)^T = (J^{v-e})^{-T} \mc^{v-e}
\end{equation}
is the unit normal to $\partial \tilde{\Gamma}^v_{k,i}$. Finally let
${\bf ds}^{v-e} = (0, d \theta, d \zeta)^T$ denote a tangent vector field on
$\tilde{\Gamma}^{v-e}_{q,r}$. Define
\begin{equation} \label{B.14}
ds^{v-e} = \sqrt{d \theta^2 + d \zeta^2}
\end{equation}
\begin{equation} \label{B.15}
{\bf ds}^v = (J^{v-e})^{-T} {\bf ds}^{v-e} =
(0, d \theta, d {\mathcal X})^T
\end{equation}
and
\begin{equation} \label{B.16}
ds^v = \sqrt{d \theta^2 + d {\mathcal X}^2}.
\end{equation}
Let
\begin{equation} \label{B.17}
\nb^{v-e} = (0, d \zeta, - d \theta)^T/\sqrt{d \zeta^2 + d
\theta^2}
\end{equation}
be the unit outward normal to $\partial \tilde{\Gamma}^{v-e}_{q,r}$. Then
\begin{equation} \label{B.18}
\nb^v= (J^{v-e})^{-T} \nb^{v-e}
\end{equation}
is the unit outward normal to $\partial \tilde{\Gamma}^v_{k,i}$.
\newline
Now
\begin{align}
\left(\frac{\partial u}{\partial \nb^v}\right)_{A^v}
&= (\nb^v)^T (A^v) \nabla_{x^v}u \notag \\
&= (\nb^{v-e})^T(J^{v-e})^{-1}(A^v) \nabla_{x^v}u \notag \\
&=\frac{(\nb^{v-e})^T}{\sin^2 \phi}
\left((J^{v-e})^{-1}(J^{v-e})^{-T}
\right)(\sin^2\phi (J^{v-e})^T A^v J^{v-e}) \nabla_{x^{v-e}}u \notag \\
&= \frac{(\nb^{v-e})^T}{\sin^2 \phi}  \left[ \begin{array}{ccc}
\sin^2 \phi   &0&  \sin^2 \phi \\
0 & 1 & 0 \\
\sin^2 \phi & 0 &  1
\end{array} \right] A^{v-e} \nabla_{x^{v-e}}u \notag \\
&= \frac{(-d\theta\sin^2\phi, d\zeta, -d\theta)^T}{\sin^2 \phi
\sqrt{d \zeta^2 + d \theta^2}} A^{v-e} \nabla_{x^{v-e}}u. \notag
\end{align}
Hence referring to Figure \ref{figB.2},
\begin{figure}[!ht]
\centering
\includegraphics[scale = 0.60]{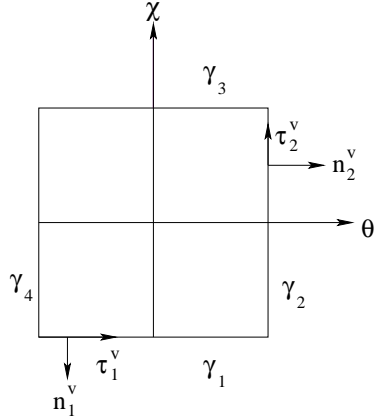}
\caption{The face $\tilde{\Gamma}_{k,i}^{v}$}
\label{figB.2}
\end{figure}
\begin{align} \label{B.19}
\left(\frac{\partial u}{\partial \nb_1^v}\right)_{A^{v}} &=
\left(\frac{\partial u}{\partial \nuw^{v-e}}\right)_{A^{v-e}} +
\frac{1}{\sin^2 \phi}\left(\frac{\partial u}{\partial
\taux_2^{v-e}}\right)_{A^{v-e}}
\end{align}
and
\begin{align} \label{B.20}
\left(\frac{\partial u}{\partial \nb_2^v}\right)_{A^{v}} &=
\frac{1}{\sin^2 \phi}\left(\frac{\partial u}{\partial
\taux_1^{v-e}}\right)_{A^{v-e}}\:.
\end{align}
Moreover
\[\left(\frac{\partial u}{\partial \nuw^v}\right)_{A^v}
= \frac{(\sec^2 \phi \cot \phi, 0, -\tan \phi)}{\sin^2 \phi}
 \left[ \begin{array}{ccc}
\sin^2 \phi   &0&  \sin^2 \phi\\
0 & 1 & 0 \\
\sin^2 \phi & 0 &  1
\end{array} \right] A^{v-e} \nabla_{x^{v-e}}u\:. \]
Hence
\begin{equation} \label{B.21}
\left(\frac{\partial u}{\partial \nuw^v}\right)_{A^v}
= -\cot \phi \left(\frac{\partial u}{\partial \nuw^{v-e}}
\right)_{A^{v-e}}.
\end{equation}
Finally
\begin{align} \label{B.22}
\left(\frac{\partial u}{\partial \taux_1^v}\right)_{A^v} &=
\frac{(0,1,0)^T}{\sin^2 \phi} \left[ \begin{array}{ccc}
\sin^2 \phi   &0&  \sin^2 \phi\\
0 & 1 & 0 \\
\sin^2 \phi & 0 &  1
\end{array} \right] A^{v-e} \nabla_{x^{v-e}}u \notag\\
&= \frac{1}{\sin^2 \phi}\left(\frac{\partial u}{\partial
\taux_1^{v-e}}\right)_{A^{v-e}}.
\end{align}
And
\begin{align} \label{B.23}
\left(\frac{\partial u}{\partial \taux_2^v}\right)_{A^v} &=
\frac{(0,0,1)^T}{\sin^2 \phi} \left[
\begin{array}{ccc}
\sin^2 \phi   &0&  \sin^2 \phi\\
0 & 1 & 0 \\
\sin^2 \phi & 0 &  1
\end{array} \right] A^{v-e} \nabla_{x^{v-e}}u \notag \\
&= - \left(\frac{\partial u}{\partial \nuw^{v-e}}\right)_{A^{v-e}}
- \frac{1}{\sin^2 \phi} \left(\frac{\partial u}{\partial
\taux_2^{v-e}}\right)_{A^{v-e}}.
\end{align}
Now referring to Figure \ref{figB.3}
\begin{figure}[!ht]
\centering
\includegraphics[scale = 0.60]{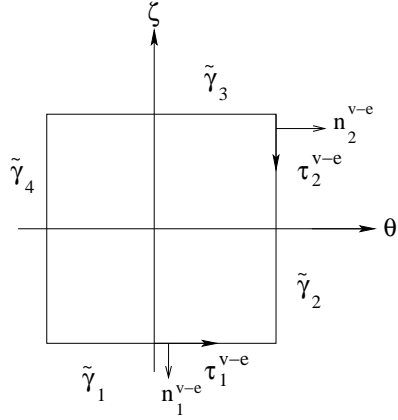}
\caption{The face $\tilde{\Gamma}_{q,r}^{v-e}$}
\label{figB.3}
\end{figure}
\begin{align}
&\oint_{\partial\tilde{\Gamma_{k,i}^v}} \sin^2 (\phi_v)
e^{x_3^{v}} \sin(x_1^v) \left(\frac{\partial u}{\partial
\nb^v}\right)_{A^v}\left(\frac{\partial u}{\partial \nuw^v}
\right)_{A^v}\:ds^v \notag \\
&\quad =\int_{\gamma_1} \rho \sin^3 (\phi_v) \left(\frac{\partial
u} {\partial \nb_1^v}\right)_{A^v}\left(\frac{\partial u}{\partial
\nuw^v}\right)_{A^v} \:d\theta \notag \\
&\quad + \int_{\gamma_2} \rho \sin^3 (\phi_v) \left(\frac{\partial
u} {\partial \nb_2^v}\right)_{A^v}\left(\frac{\partial u}{\partial
\nuw^v}\right)_{A^v} \:d {\mathcal X} \notag \\
&\quad - \int_{\gamma_3} \rho \sin^3 (\phi_v) \left(\frac{\partial
u} {\partial \nb_1^v}\right)_{A^v}\left(\frac{\partial u}{\partial
\nuw^v}\right)_{A^v} \:d\theta \notag \\
&\quad - \int_{\gamma_4} \rho \sin^3 (\phi_v) \left(\frac{\partial
u} {\partial \nb_2^v}\right)_{A^v}\left(\frac{\partial u}{\partial
\nuw^v}\right)_{A^v} \:d{\mathcal X} \notag \\
&\quad= -\int_{\tilde{\gamma}_1} \rho \sin^3 (\phi_v) \cot(\phi_v)
\left(\frac{\partial u}{\partial \nuw^{v-e}}\right)^2_{A^{v-e}}
\:d\theta \notag \\
&\quad - \int_{\tilde{\gamma}_1} \rho \cos(\phi_v)
\left(\frac{\partial u} {\partial
\taux_2^{v-e}}\right)_{A^{v-e}}\left(\frac{\partial u}{\partial
\nuw^{v-e}}\right)_{A^{v-e}} \:d\theta \notag 
\end{align}
\begin{align}\label{eqB.24}
&\quad - \int_{\tilde{\gamma}_2} \rho \cos (\phi_v)
\left(\frac{\partial u}{\partial\taux_1^{v-e}}\right)_{A^{v-e}}
\left(\frac{\partial u}{\partial \nuw^{v-e}}\right)_{A^{v-e}}
\:d\zeta \notag \\
&\quad + \int_{\tilde{\gamma}_3}\rho \sin^3(\phi_v)
\cot(\phi_v)\left(\frac{\partial u}{\partial
\nuw^{v-e}}\right)^2_{A_{v-e}}
\:d {\theta} \notag \\
&\quad + \int_{\tilde{\gamma}_3} \rho \cos(\phi_v)
\left(\frac{\partial u}{\partial
\taux_2^{v-e}}\right)_{A^{v-e}}\left(\frac{\partial u}{\partial
\nuw^{v-e}}\right)_{A^{v-e}} \:d{\theta} \notag \\
&\quad + \int_{\tilde{\gamma}_4} \rho \cos(\phi_v)
\left(\frac{\partial u}{\partial
\taux_1^{v-e}}\right)_{A^{v-e}}\left(\frac{\partial u}{\partial
\nuw^{v-e}}\right)_{A^{v-e}} \:d{\zeta}\;.
\end{align}
Hence
\begin{align}\label{eqB.25}
&\oint_{\partial\tilde\Gamma_{k,i}^v}\sin^2(\phi_v)e^{x_3^v}
\sin(x_1^v)\left(\frac{\partial u}{\partial \nb^v}\right)_{A^v}
\left(\frac{\partial u}{\partial \nuw^v}\right)_{A^v}\:d s^v \notag \\
&\quad=-\oint_{\partial\tilde\Gamma_{q,r}^{v-e}}e^{x_3^{v-e}}\left
(\frac{\partial u}{\partial \nb^{v-e}}\right)_{A^{v-e}}\left
(\frac{\partial u}{\partial \nuw^{v-e}}\right)_{A^{v-e}}\:d s^{v-e}
\notag \\
&\quad -\int_{\tilde\gamma_1}\rho\sin^3(\phi^v)\cot(\phi_v)\left
(\frac{\partial u}{\partial \nuw^{v-e}}\right)^2_{A^{v-e}}
\:d {\theta} \notag \\
&\quad+\int_{\tilde\gamma_3}\rho\sin^3(\phi_v)\cot(\phi_v)\left
(\frac{\partial u}{\partial \nuw^{v-e}}\right)^2 \:d{\theta}
\end{align}
Next,
\begin{align}\label{eqB.26}
&2 \sin^2 (\phi_v) \int_{\tilde{\Gamma}^v_{k,i}}
e^{x_3^{v}} \sin(x_1^v) \left(\frac{\partial u}{\partial
\taux_1^v}\right)_{A^v} \frac{\partial} {\partial
s_1^v}\left(\left(\frac{\partial u}{\partial \nuw^v}\right)
_{A^v}\right) \:d \sigma^v \notag \\
&\quad= 2 \sin^2 (\phi_v) \int_{\tilde{\Gamma}^v_{k,i}}
\rho \sin(\phi_v) \left(\frac{\partial u}{\partial
\taux_1^v}\right)_{A^v} \frac{\partial} {\partial
s_1^v}\left(\left(\frac{\partial u}{\partial \nuw^v}\right)
_{A^v}\right) \:d \sigma^v \notag \\
&\quad= -2 \int_{\tilde{\Gamma}^{v-e}_{q,r}}\rho\sin^3
(\phi_v)\frac{1}{\sin^2 (\phi_v)} \left(\frac{\partial u}
{\partial \taux_1^{v-e}}\right)_{A^{v-e}} \notag \\
&\quad\quad\frac{\partial}
{\partial \theta}\left(\cot(\phi_v)\left(\frac{\partial u}
{\partial \nuw^{v-e}}\right)_{A^{v-e}}\right) \:d\theta
\:d\zeta \notag \\
&\quad= -2 \int_{\tilde\Gamma_{q,r}^{v-e}} e^{x_3^{v-e}}\left
(\frac{\partial u}{\partial \taux_1^{v-e}}\right)_{A^{v-e}}
\frac{\partial}{\partial s_1^{v-e}}\left(\left(\frac{\partial
u}{\partial \nuw^{v-e}}\right)_{A^{v-e}}\right) \:d \sigma^{v-e}.
\end{align}
Moreover
\begin{align}\label{eqB.27}
&2 \sin^2 (\phi_v) \int_{\tilde{\Gamma}^v_{k,i}} e^{x_3^{v}}
\sin(x^v_1) \left(\frac{\partial u}{\partial
\taux_2^{v}}\right)_{A^v} \frac{\partial} {\partial
s_2^{v}}\left(\left(\frac{\partial u}{\partial
\nuw^v}\right)_{A^{v}}\right) \:d \sigma^{v} \notag \\
&\quad= 2 \sin^2 (\phi_v) \int_{\tilde{\Gamma}^v_{q,r}}
\rho\sin(\phi_v) \left(\frac{\partial u}{\partial
\taux_2^{v}}\right)_{A^v} \frac{\partial}{\partial s_2^{v}}
\left(\left(\frac{\partial u}{\partial
\nuw^v}\right)_{A^{v}}\right)\:d \sigma^{v} \notag \\
&\quad= 2 \int_{\tilde{\Gamma}^{v-e}_{q,r}}\left(\rho \sin^3
(\phi_v)\left(\left(\frac{\partial u}{\partial \nuw^{v-e}}\right)
_{A^{v-e}}\right.\right. \notag \\
&\quad+ \frac{1}{\sin^2 (\phi_v)} \left.\left.\left(\frac{\partial u}
{\partial\taux_2^{v-e}}\right)_{A^{v-e}}\right)\right)\frac
{\partial}{\partial {\zeta}}\left(\cot(\phi_v) \left(\frac
{\partial u}{\partial\nuw^{v-e}}\right)_{A^{v-e}}\right)
\:d{\theta}\:d {\zeta} \notag 
\end{align}
\begin{align}
&= \int_{\tilde{\Gamma}^{v-e}_{q,r}}\rho\sin^3
(\phi_v)\cot(\phi_v) \frac{\partial}{\partial \zeta}\left(\frac
{\partial u}{\partial \nuw^{v-e}}\right)^2_{A^{v-e}} \:d
\theta \:d \zeta \notag \\
&+ 2 \int_{\tilde{\Gamma}^{v-e}_{q,r}} e^{\zeta}
\left(\frac{\partial u} {\partial \taux_2^{v-e}}
\right)_{A^{v-e}}\frac{\partial }{\partial \zeta}
\left(\frac{\partial u}{\partial\nuw^{v-e}}\right)
_{A^{v-e}}\:d \theta d \zeta \notag \\
&= - \int_{\tilde{\gamma}_1} \rho \sin^3 (\phi_v) \cot
(\phi_v)\left(\frac{\partial u}{\partial \nuw^{v-e}}\right)
_{A^{v-e}}^2 \:d\theta \notag \\
&+ \int_{\tilde{\gamma}_3} \rho \sin^3 (\phi_v) \cot
(\phi_v)\left(\frac{\partial u}{\partial \nuw^{v-e}}\right)
_{A^{v-e}}^2 \:d\theta \notag \\
&- 2 \int_{\tilde{\Gamma}_{q,r}^{v-e}}{e^{{x_3}^{v-e}}}
\left(\frac{\partial u}{\partial\taux_2^{v-e}}\right)_{A^{v-e}}
\frac{\partial}{\partial s_2^{v-e}}\left(\frac{\partial u}
{\partial \nuw^{v-e}}\right)_{A^{v-e}}\:d\sigma^{v-e} \:.
\end{align}
Combining (\ref{eqB.24})-(\ref{eqB.27}) we obtain the result.
\end{proof}

\subsection*{B.2}
\textbf{Proof of Lemma \ref{lem3.2.8}}
\begin{lem3.2.8}
Let $\Gamma_{u,k}^e = \Gamma_{n,l}^{v-e}$. Then
\begin{align}
& \oint_{\partial \tilde{\Gamma}_{n,l}^{v-e}} e^{x_3^{v-e}}
\left(\frac{\partial u}{\partial \nb^{v-e}}\right)_{A^{v-e}}
\left(\frac{\partial u}{\partial \nuw^{v-e}}\right)_{A^{v-e}}
w^{v-e}(x_1^{v-e})\:ds^{v-e} \notag \\
&-2\sum_{j=1}^2 \int_{\tilde{\Gamma}^{v-e}_{n,l}} e^{x_3^{v-e}}
\left(\frac{\partial u}{\partial \taux_j^{v-e}}\right)_{A^{v-e}}
\frac{\partial}{\partial s^{v-e}_j} \left(\left(\frac{\partial u}
{\partial \nuw^{v-e}}\right)_{A^{v-e}}\right)w^{v-e}(x_1^{v-e})
\:d\sigma^{v-e} \notag \\
&= -\oint_{\partial \tilde{\Gamma}_{u,k}^{e}} \left(\frac{\partial
u}{\partial \nb^{e}}\right)_{A^{e}}\left(\frac{\partial
u}{\partial \nuw^{e}}\right)_{A^{e}}w^e(x_1^e) \:ds^{e} \notag \\
&+2 \sum_{j=1}^2 \int_{\tilde{\Gamma}^{e}_{u,k}}
\left(\frac{\partial u}{\partial \taux_j^{e}}\right)_{A^{e}}
\frac{\partial}{\partial s^{e}_j}\left(\left(\frac{\partial
u}{\partial \nuw^e}\right)_{A^{e}} \right)w^e(x_1^e)\:d\sigma^{e}.
\notag
\end{align}
\end{lem3.2.8}
\begin{proof}
We have
\begin{align}\label{B.28}
x_1^e &= \tau = \ln r \notag \\
x_2^e &= \theta \notag \\
x_3^e &= x_3\,.
\end{align}
\begin{figure}[!ht]
\centering
\includegraphics[scale = 0.60]{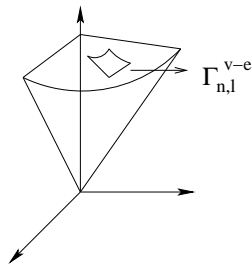}
\caption{ Interior common boundary face ${\Gamma}_{u,k}^e=
{\Gamma}_{n,l}^{v-e}$ }
\label{figB.4}
\end{figure}
and
\begin{align}\label{B.29}
x_1^{v-e} &= \psi = \ln \tan \phi \notag \\
x_2^{v-e} &= \theta \notag \\
x_3^{v-e} &= \zeta = \ln x_3\,.
\end{align}
Hence
\begin{align}\label{B.30}
x_1^e &= x_1^{v-e} + x_3^{v-e} = \psi + \zeta \notag \\
x_2^e &= x_2^{v-e} \notag \\
x_3^e &= e^{x_3^{v-e}} = e^{\zeta}\;.
\end{align}
Clearly
\begin{equation}\label{B.31}
\nabla_{x^{v-e}}u = J^e \nabla_{x^e}u\,.
\end{equation}
Here
\begin{align}\label{B.32}
J^e=\left[ \begin{array}{ccc}
1 & 0 & 0 \\
0 & 1 & 0 \\
1 & 0 & x_3
\end{array} \right] \:.
\end{align}
Now if ${\bf dx}^{v-e}$ is a tangent vector to a curve in $x^{v-e}$ coordinates
then its image in $x^e$ coordinates is given by ${\bf dx}^e$ where
\begin{equation}\label{B.33}
{\bf dx}^e = (J^e)^T {\bf dx}^{v-e}.
\end{equation}
Clearly the first fundamental form $(ds^{v-e})^2$ in $x^{v-e}$ coordinates is
\begin{equation}\label{B.34}
(ds^{v-e})^2 = ({\bf dx}^{v-e})^T ({\bf dx}^{v-e}) =
({\bf dx}^e)^T((J^e)^{-1}(J^e)^{-T}){\bf dx}^{v-e}.
\end{equation}
Now
\begin{align}\label{B.35}
(J^e)^{-1}= \left[ \begin{array}{ccc}
1              & 0 & 0 \\
0              & 1 & 0 \\
-\frac{1}{x_3} & 0 & \frac{1}{x_3}
\end{array} \right].
\end{align}
Hence
\begin{align}\label{B.36}
(ds^{v-e})^2 &= (dx_1^e)^2 + (dx_2^e)^2 + \frac{2}{x_3^2}
(dx_3^e)^2 - \frac{2}{x_3} dx_1^e d x_3^e \notag \\
&=d\tau^2 + d \theta^2 + \frac{2}{x_3^2} d x_3^2-\frac{2}{x_3}
d \tau d x_3.
\end{align}
Moreover on $\tilde{\Gamma}^{v-e}_{n,l}$
$$d \sigma^{v-e}=d\tau d \theta$$
since
$$(d s^{v-e})^2 = d \tau^2 + d \theta^2$$
on $\tilde{\Gamma}^{v-e}_{n,l}$. Choose $\taux^e_1=(1,0,0)^T$ and $\taux_2^e=-(0,1,0)^T$.
These are orthogonal unit vectors on $\tilde{\Gamma}^e_{u,k}$ since
\begin{equation}\label{B.37}
(ds^e)^2 = d \tau^2 + d \theta^2 + d x_3^2\:.
\end{equation}
Define
\begin{align}\label{B.38}
\taux_1^{v-e} &= (J^e)^{-T} \taux_1^e,\rm{\;and\;} \notag\\
\taux_2^{v-e} &= -(J^e)^{-T} \taux^e_2 \:.
\end{align}
Let $\nuw^e=-(0,0,1)^T$ denote the unit normal vector to $\tilde{\Gamma}^e_{u,k}$. Let
\begin{equation}\label{B.39}
\mc^e = (1,0, x_3)^T.
\end{equation}
Then
\begin{equation}\label{B.40}
\nuw^{v-e} = (0,0,1)^T = (J^e)^{-T} \mc^e
\end{equation}
is the unit normal to $\tilde{\Gamma}^{v-e}_{n,l}$. Finally let
$${\bf ds}^e = (d \tau, d\theta, 0)^T$$
denote a tangent vector field on $\tilde{\Gamma}^e_{u,k}$. Define
\begin{equation}\label{B.41}
d s^e = \sqrt{d \tau^2 + d \theta^2}
\end{equation}
\begin{equation}\label{B.42}
{\bf ds}^{v-e} = (J^e)^{-T} {\bf ds}^e =
(d \psi, d \theta, 0)^{T}
\end{equation}
\begin{equation}\label{B.43}
d s^{v-e}=\sqrt{d\theta^2 + d\psi^2}=\sqrt{d\tau^2 + d\theta^2}=d s^e\,.
\end{equation}
Let
\begin{equation}\label{B.44}
\nb^e = \frac{(d\theta, - d\tau, 0)^T}{\sqrt{d \tau^2 + d
\theta^2}}
\end{equation}
be the unit outward normal to $\partial\tilde{\Gamma}^e_{u,k}$. Then the unit outward
normal $\nb^{v-e}$ to $\partial\tilde{\Gamma}^{v-e}_{n,l}$ is
\begin{equation}\label{B.45}
\nb^{v-e} = (J^e)^{-T} \nb^e\,.
\end{equation}
Now
\begin{align*}
\left(\frac{\partial u}{\partial \nb^{v-e}}\right)_{A^{v-e}}
&=(\nb^{v-e})^T A^{v-e}\nabla_{x^{v-e}}u \\
&=(\nb^e)^T (J^e)^{-1} A^{v-e} J^e \nabla_{x^e} u \\
&=(\nb^e)^{T}((J^e)^{-1}(J^e)^{-T})\left((J^e)^T A^{v-e}
J^e\right)\nabla_{x^e}u \\
&=(\nb^e)^T  \left[ \begin{array}{ccc}
1              & 0 & -\frac{1}{x_3}  \\
0              & 1 & 0               \\
-\frac{1}{x_3} & 0 & \frac{2}{x^2_3}
\end{array} \right] A^e \nabla_{x^e} u \\
&=\frac{(d\theta, -d\tau, -\frac{d\theta}{x_3})^T}
{\sqrt{d\tau^2 + d \theta^2}} A^e \nabla_{x^e} u\:.
\end{align*}
Here
\begin{align*}
\nb^e=\frac{(d \theta, -d \tau, 0)^T}{\sqrt{d \theta^2 + d \tau^2}}\:.
\end{align*}
It should be noted that $A^e=(J^e)^T A^{v-e} J^e$. Hence referring to Figure \ref{figB.5}
\begin{figure}[!ht]
\centering
\includegraphics[scale = 0.60]{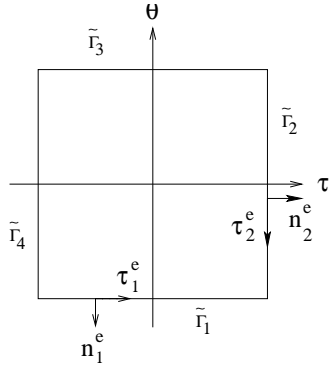}
\caption{The face $\tilde{\Gamma}_{u,k}^e$}
\label{figB.5}
\end{figure}
\begin{align}\label{B.46}
\left(\frac{\partial u}{\partial \nb_1^{v-e}}\right)_{A^{v-e}}
&=\left(\frac{\partial u}{\partial \taux^e_2}\right)_{A^e} \:,
\end{align}
and
\begin{align}\label{B.47}
\left(\frac{\partial u}{\partial \nb_2^{v-e}}\right)_{A^{v-e}}
&=\left(\frac{\partial u}{\partial \taux^e_1}\right)_{A^e} +
\frac{1}{x_3}\left(\frac{\partial u}{\partial \nuw^e}
\right)_{A^e}.
\end{align}
Moreover
\begin{align}\label{B.48}
\left(\frac{\partial u}{\partial \nuw^{v-e}}\right)_{A^{v-e}}
&=
\left[\begin{array}{ccc} 1 & 0 & x_3
\end{array} \right]
\left[ \begin{array}{ccc}
1 & 0 & -\frac{1}{x_3} \\
0 & 1 & 0              \\
-\frac{1}{x_3} & 0 &  \frac{2}{x_3^2}
\end{array} \right] A^e \nabla_{x^e} u \notag \\
&= \left[0,0,\frac{1}{x_3}\right]A^e\nabla_{x^e}u \notag \\
&= -\frac{1}{x_3} \left(\frac{\partial u} {\partial
\nuw^e}\right)_{A^e}\,.
\end{align}
\begin{figure}[!ht]
\centering
\includegraphics[scale = 0.60]{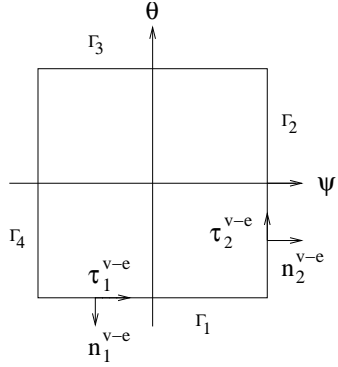}
\caption{The face $\tilde{\Gamma}_{n,l}^{v-e}$}
\label{figB.6}
\end{figure}
Now
\begin{align}\label{B.49}
& \oint_{\partial \tilde{\Gamma}^{v-e}_{n,l}}e^{x_3^{v-e}}
\left(\frac{\partial u}{\partial\nb^{v-e}}\right)_{A^{v-e}}
\left(\frac{\partial u}{\partial \nuw^{v-e}}\right)_{A^{v-e}}
\:d s^{v-e} \notag \\
&= \int_{\Gamma_1} x_3 \left(\frac{\partial u} {\partial
\nb_1^{v-e}}\right)_{A^{v-e}} \left(\frac{\partial u}{\partial
\nuw^{v-e}}\right)_{A^{v-e}} \:d \psi \notag \\
&+ \int_{\Gamma_2} x_3\left(\frac{\partial u}{\partial
\nb_2^{v-e}}\right)_{A^{v-e}}\left(\frac{\partial u}{\partial
\nuw^{v-e}}\right)_{A^{v-e}} \:d \theta \notag \\
&- \int_{\Gamma_3} x_3\left(\frac{\partial u}{\partial
\nb_1^{v-e}}\right)_{A^{v-e}}\left(\frac{\partial u}{\partial
\nuw^{v-e}}\right)_{A^{v-e}} \:d \psi \notag \\
&- \int_{\Gamma_4} x_3\left(\frac{\partial u}{\partial
\nb_2^{v-e}}\right)_{A^{v-e}}\left(\frac{\partial u}{\partial
\nuw^{v-e}}\right)_{A^{v-e}}\:d \theta \notag \\
&= -\int_{\tilde{\Gamma}_1} \left(\frac{\partial u} {\partial
\taux^e_2}\right)_{A^e}\left(\frac{\partial u}
{\partial \nuw^e}\right)_{A^e}\:d \tau \notag 
\end{align}
\begin{align}
&- \int_{\tilde{\Gamma}_2}\left(\left(\frac{\partial u}
{\partial \taux^e_1}\right)_{A^e} + \frac{1}{x_3}\left(\frac
{\partial u}{\partial \nuw^e}\right)_{A^e}\right)
\left(\frac{\partial u}{\partial \nuw^e}\right)_{A^e}
\:d\theta \notag \\
&+ \int_{\tilde{\Gamma}_3}\left(\frac{\partial u}{\partial
\taux^e_2}\right)_{A^e}\left(\frac{\partial u}{\partial
\nuw^e}\right)_{A^e} \:d\tau \notag \\
&+ \int_{\tilde{\Gamma}_4} \left(\left(\frac{\partial u}
{\partial \taux^e_1}\right)_{A^e} + \frac{1}{x_3}\left(\frac
{\partial u}{\partial \nuw^e}\right)_{A^e}\right)
\left(\frac{\partial u}{\partial \nuw^e}\right)_{A^e}\:d\theta \notag \\
&=- \oint_{\partial \tilde{\Gamma}_{u,k}^e}\left(\frac
{\partial u}{\partial \nb^e}\right)_{A^e}\left(\frac{\partial u}
{\partial \nuw^e}\right)_{A^e} \:d s^e \notag \\
&- \int_{\tilde{\Gamma_2}}\frac{1}{x_3}\left(\frac{\partial u}
{\partial \nuw^e}\right)^2_{A^e}\:d\theta +
\int_{\tilde{\Gamma_4}}\frac{1}{x_3}\left(\frac{\partial u}
{\partial \nuw^e}\right)^2_{A^e}\:d\theta
\end{align}
Now $\taux_1^{v-e} = (1,0,0)^T$ and $\taux_1^e = (J^e)^{-T}\taux_1^{v-e}$. Moreover
$\taux_2^{v-e} = (0,1,0)^T$ and $\taux_2^e = -(J^e)^T \taux_2^{v-e}$.
\newline
Hence
\begin{align}\label{B.50}
&  2 \int_{\tilde{\Gamma}^{v-e}_{n,l}} e^{x_3^{v-e}}
\left(\frac{\partial u}{\partial \taux_1^{v-e}}\right)
_{A^{v-e}}\frac{\partial}{\partial s_1^{v-e}}\left(\left
(\frac{\partial u}{\partial \nuw^{v-e}}\right)_{A^{v-e}}
\right) \:d \sigma^{v-e} \notag \\
&= -2 \int_{\tilde{\Gamma}^{e}_{u,k}}\left(\left(\frac
{\partial u}{\partial \taux_1^{e}}\right)_{A^{e}} + \frac{1}
{x_3}\left(\frac{\partial u}{\partial \nuw^{e}}\right)_{A^{e}}
\right)\frac{\partial}{\partial \tau}\left(\left(\frac
{\partial u}{\partial \nuw^{e}}\right)_{A^{e}}\right)
\:d \tau \:d \theta \notag \\
&=  -2 \int_{\tilde{\Gamma}^{e}_{u,k}} \left(\frac {\partial
u}{\partial \taux_1^{e}}\right)_{A^{e}} \frac {\partial}{\partial
s_1^{e}}\left(\left(\frac{\partial u}
{\partial \nuw^{e}}\right)_{A^{e}}\right) \:d \sigma^{e} \notag \\
&- \int_{\tilde{\Gamma}_2} \frac{1}{x_3} \left(\frac{\partial
u}{\partial \nuw^e}\right)^2_{A^e} \:d\theta
+\int_{\tilde{\Gamma}_4} \frac{1}{x_3} \left(\frac{\partial
u}{\partial \nuw^e}\right)^2_{A_e} d \theta\:.
\end{align}
And
\begin{align}\label{B.51}
& 2 \int_{\tilde{\Gamma}^{v-e}_{n,l}} e_3^{x^{v-e}}
\left(\frac{\partial u}{\partial \taux_2^{v-e}}\right)_{A^{v-e}}
\frac{\partial}{\partial s_2^{v-e}}\left(\left(\frac{\partial u}
{\partial \nuw^{v-e}}\right)_{A^{v-e}}\right) \:d \sigma^{v-e}
\notag \\
&=-2\int_{\tilde{\Gamma}^{e}_{u,k}}\left(\frac{\partial u}
{\partial \taux_2^{e}}\right)_{A^{e}} \frac{\partial}{\partial
\theta}\left(\left(\frac{\partial u}
{\partial \nuw^{e}}\right)_{A^{e}}\right) \:d\tau \:d\theta \notag \\
&= -2  \int_{\tilde{\Gamma}^{e}_{u,k}}\left(\frac{\partial u}
{\partial \taux_2^{e}}\right)_{A^{e}} \frac{\partial}{\partial
s_2^e}\left(\left(\frac{\partial u}{\partial \nuw^{e}}\right)
_{A^{e}}\right) \:d\sigma^e\:.
\end{align}
Combining (\ref{B.49}), (\ref{B.50}) and (\ref{B.51}) we obtain the result.
\end{proof}

\subsection*{B.3}
\textbf{Proof of Lemma \ref{lem3.2.9}}
\begin{lem3.2.9}
Let $\Gamma^e_{u,k} = \Gamma^r_{l,j}$. Then
\begin{align}
\rho_v^2 \sin^2(\phi_v)
\oint_{\partial \Gamma^r_{l,j}} \left(\frac{\partial u}{\partial
\nb}\right)_A\left(\frac{\partial u}{\partial \nuw}\right)_A\:d s
=-\oint_{\partial \tilde{\Gamma}^e_{u,k}} \left(\frac{\partial
u}{\partial \nuw^e}\right)_{A^e} \left(\frac{\partial u}{\partial
\nb^e} \right)_{A^e} \:d s^e \notag
\end{align}
and
\begin{align}
\rho_v^2 \sin^2(\phi_v)\left(\sum_{m=1}^2
\int_{\Gamma_{l,j}^r} \left(\frac{\partial u}{\partial\taux_m}
\right)_A\frac{\partial}{\partial s_m}\left(\frac{\partial u}
{\partial \nuw}\right)_{A} \:d \sigma \right) \notag \\
\quad=-\sum_{m=1}^2 \int_{\tilde{\Gamma}^e_{u,k}} \left(\frac
{\partial u}{\partial \taux^e_m}\right)_{A^e} \frac{\partial}
{\partial s^e_m}\left(\frac{\partial u}{\partial \nuw^e}
\right)_{A^e} d \sigma^e \notag
\end{align}
\end{lem3.2.9}
\begin{proof}
We have
\begin{align}\label{B.52}
x_1^e &= \tau = \ln r \notag \\
x_2^e &= \theta \notag \\
x_3^e &= x_3\,.
\end{align}
Clearly,
\begin{align}\label{B.53}
\nabla_{x} u = R^e \nabla_{x^e} u
\end{align}
where
\begin{align}\label{B.54}
R^e =  \left[ \begin{array}{ccc}
e^{-\tau}\cos\theta & -e^{-\tau}\sin\theta & 0 \\
e^{-\tau}\sin\theta &  e^{-\tau}\cos\theta & 0 \\
0                   & 0                    & 1
\end{array} \right].
\end{align}
Now if ${\bf dx}$ is a tangent vector to a curve in $x^v$
coordinates then its image in $x^e$ coordinates is given by
${\bf dx}^e$ where
\begin{equation}\label{B.55}
{\bf dx}^e = (R^e)^T {\bf dx}.
\end{equation}
Clearly, the first fundamental form $(d s)^2$ in $x$ coordinates
is
\begin{equation}\label{B.56}
(d s)^2 = ({\bf dx})^T ({\bf dx}) = ({\bf dx}^e)^T
(R^e)^{-1}(R^e)^{-T} {\bf dx}^e.
\end{equation}
Now
\begin{align}\label{B.57}
{(R^e)}^{-1} =  \left[ \begin{array}{ccc}
e^{\tau}\cos\theta  & e^{\tau}\sin\theta & 0 \\
-e^{\tau}\sin\theta & e^{\tau}\cos\theta & 0 \\
0                   & 0                  &  1
\end{array} \right].
\end{align}
Hence
\begin{equation}\label{B.58}
(d s)^2 = e^{2\tau}(d \tau)^2+e^{2\tau}(d \theta)^2+(d {x_3^e})^2\:.
\end{equation}
Moreover on $\Gamma_{l,j}^r$
\begin{equation} \label{B.59}
d \sigma = e^\tau d\theta d x_3^e
\end{equation}
since
\[(d s)^2 = e^{2\tau}(d \theta)^2+(d {x_3^e})^2\]
on $\Gamma_{l,j}^r$. Choose $\taux_1^e=(0,1,0)^T$ and $\taux_2^e=(0,0,1)^T$. These
are orthogonal unit tangent vectors on $\tilde{\Gamma}_{u,k}^e$ since
$(d s)^2 = e^{2\tau}(d \tau)^2+e^{2\tau}(d \theta)^2+(d {x_3^e})^2$.
\newline
Define
\begin{align} \label{B.60}
\taux_1 & = e^{-\tau}(R^e)^{-T}\taux_1^e\:, \notag \\
\taux_2 & = (R^e)^{-T}\taux_2^e \:.
\end{align}
Let $\nuw^e=(1,0,0)^T$ denotes the unit normal vector on $\tilde{\Gamma}_{u,k}^e$.
Then
\[\nuw = -e^{-\tau}(R^e)^{-T}\nuw^e\]
denotes the unit normal to $\Gamma_{l,j}^r$. Finally let
\[{\bf d s}^e = (0,d \theta, d x_3^e)\]
denotes a tangent vector field on $\tilde{\Gamma}_{u,k}^e$. Define
\begin{align}\label{B.61}
ds^e & = \sqrt{(d \theta)^2+(d {x_3^e})^2}\:,
\end{align}
\begin{align}\label{B.62}
{\bf ds} &= (R^e)^{-T}{\bf d s}^e
\end{align}
and
\begin{equation}\label{B.63}
d s = \sqrt{e^{2\tau}(d \theta)^2+(d {x_3^e})^2}\:.
\end{equation}
Let
\begin{equation}\label{B.64}
\nb^e =\frac{(0,-d x_3^e, d\theta)^T}{\sqrt{(d \theta)^2
+(d {x_3^e})^2}}
\end{equation}
be the outward unit normal to
$\partial\tilde{\Gamma}_{u,k}^e$. Define
\begin{equation} \label{B.65}
\mc^e =\frac{(0,-e^{-\tau}d x_3^e, e^{\tau}d\theta)^T}
{\sqrt{e^{2\tau}(d\theta)^2+(d {x_3^e})^2}}\;.
\end{equation}
Then
\begin{equation}\label{B.66}
\nb = (R^e)^{-T}\mc^e
\end{equation}
is the outward unit normal to $\partial\Gamma_{l,j}^r$.
\newline
Now
\begin{align*}
\left(\frac{\partial u}{\partial\nuw}\right)_A &= \nuw^T
A\nabla_x u\\
&=-e^{-\tau}{(\nuw^e)}^T(R^e)^{-1} A (R^e)\nabla_x^e u\\
&=-e^{-\tau}{(\nuw^e)}^T e^{-2\tau}((R^e)^{-1}(R^e)^{-T})
A^e\nabla_x^e u \:.
\end{align*}
And
\[{(\nuw^e)}^T e^{-2\tau}((R^e)^{-1}(R^e)^{-T})={(\nuw^e)}^T\:.\]
Hence we conclude that
\begin{align} \label{B.67}
\left(\frac{\partial u}{\partial\nuw}\right)_A
&=-e^{-\tau}\left(\frac{\partial u}{\partial\nuw^e}\right)_{A^e}.
\end{align}
Also from (\ref{B.65})
\begin{align*}
\left(\frac{\partial u}{\partial\nb}\right)_A&=\nb^TA\nabla_x u
=((\mc^e)^T(R^e)^{-1}e^{-2\tau}(R^e)^{-T})A^e\nabla_x^e u\:.
\end{align*}
Clearly
\[((\mc^e)^T(R^e)^{-1}e^{-2\tau}(R^e)^{-T}))=e^{-\tau}\frac{d s^e}{d s}(\nb^e)^T\:.\]
Hence
\begin{equation} \label{B.68}
\left(\frac{\partial u}{\partial\nb}\right)_A\:d s
=e^{-\tau}\left(\frac{\partial u}{\partial\nb^e}\right)_{A^e}ds^e\:.
\end{equation}
Thus from (\ref{B.67}) and (\ref{B.68}) we get
\begin{align} \label{B.69}
\rho_v^2 \sin^2(\phi_v)
&\oint_{\partial \Gamma^r_{l,j}} \left(\frac{\partial u}{\partial
\nb}\right)_A\left(\frac{\partial u}{\partial \nuw}\right)_A
\:d s \notag \\
\quad\quad&=-\rho_v^2 \sin^2(\phi_v)\oint_{\partial \tilde{\Gamma}
^e_{u,k}}e^{-2\tau}\left(\frac{\partial u}{\partial \nuw^e}\right)
_{A^e}\left(\frac{\partial u}{\partial\nb^e} \right)_{A^e}
\:d s^e \notag \\
\quad\quad&=-\oint_{\partial \tilde{\Gamma}^e_{u,k}}\left(\frac
{\partial u}{\partial \nuw^e}\right)_{A^e}\left(\frac{\partial u}
{\partial\nb^e}\right)_{A^e}\:d s^e\:.
\end{align}
Using (\ref{B.60}) it is easy to show that
\begin{align} \label{B.70}
\left(\frac{\partial u}{\partial\taux_1}\right)_A
&=e^{-\tau}\left(\frac{\partial u}{\partial\taux_1^e}\right)_{A^e},
\notag \\ 
\left(\frac{\partial u}{\partial\taux_1}\right)_A
&=e^{-2\tau}\left(\frac{\partial u}{\partial\taux_1^e}\right)_{A^e}
\end{align}
Moreover using (\ref{B.67}) we get
\begin{align} \label{B.71}
\frac{\partial}{\partial s_1}\left(\left(\frac{\partial u}{\partial
\nuw}\right)_A\right)= e^{-2\tau}\frac{\partial}{\partial s_1^e}
\left(\left(\frac{\partial u}{\partial\nuw^e}\right)_{A^e}\right),
\notag \\ 
\frac{\partial}{\partial s_2}\left(\left(\frac{\partial
u}{\partial \nuw}\right)_A\right)=e^{-\tau}\frac{\partial}{\partial s_2^e}
\left(\left(\frac{\partial u}{\partial\nuw^e}\right)_{A^e}\right)\:.
\end{align}
Combining (\ref{B.69}), (\ref{B.70}) and (\ref{B.71}) we obtain the result.
\end{proof}
\end{appendix}
\thispagestyle{empty}
\chapcleardoublepage

\renewcommand{\theequation}{C.\arabic{equation}}
\renewcommand{\thefigure}{C.\arabic{figure}}
\setcounter{equation}{0}
\setcounter{figure}{0}
\addcontentsline{toc}{chapter}{Appendix C}
\markboth{\small Appendix C}{\small Appendix C}
\begin{appendix}
\section*{Appendix C}
\subsection*{C.1}
\textbf{Proof of Lemma \ref{lem3.3.1}}
\begin{lem3.3.1}
We can define a set of corrections
$\{\eta_l^r\}_{l=1,\ldots,N_r}$, $\{\eta_l^v\}_{l=1,\ldots,N_v}$
for $v\in \mathcal V$, $\{\eta_l^{v-e}\}_{l=1,\ldots,N_{v-e}}$
for ${v-e}\in \mathcal {V-E}$ and $\{\eta_l^e\} _{l=1,\ldots,N_e}$
for $e\in \mathcal E$ such that the corrected spectral element
function $p$ defined as
\begin{align}
p_l^r=u_l^r+\eta_l^r \hspace{1.4cm} & {\it for} \ l=1,\ldots,N_r,
\notag \\
p_l^v=u_l^v+\eta_l^v \hspace{1.4cm} & {\it for}\ l=1,\ldots,N_v
\quad and\quad v\in\mathcal V, \notag \\
p_l^{v-e}=u_l^{v-e}+\eta_l^{v-e} \hspace{0.3cm} & {\it for} \
l=1,\ldots,N_{v-e}\quad and\quad {v-e}\in\mathcal {V-E}, \notag \\
p_l^e=u_l^e+\eta_l^e \hspace{1.4cm} & {\it for }\ l=1,\ldots,N_e
\quad and \quad e\in\mathcal E, \notag
\end{align}
is conforming and $p\in H_0^1(\Omega)$. i.e. $p\in H^1{(\Omega)}$
and $p$ vanishes on $\Gamma^{[0]}$. Define
\begin{align} \label{eqC.1}
\mathcal U^{N,W}_{(1)}(\{\mathcal F_s\})&=\sum_{l=1}^{N_r}
\left\|\:s_l^r(x_1,x_2,x_3)\:\right\|
^2_{1,\Omega_l^r}+\sum_{v\in \mathcal V}\sum_{l=1}^{N_v}
\left\|\:s_l^v(x_1^v,x_2^v,x_3^v) e^{{x_3^v}/2}
\:\right\|^2_{1,\tilde{\Omega}_l^v} \notag \\
&\quad+\sum_{{v-e}\in \mathcal {V-E}}\Bigg(\mathop{\sum_{l=1}}
_{\mu(\tilde{\Omega}_l^{v-e})<\infty}^{N_{v-e}}
\int_{\tilde{\Omega}_l^{v-e}}e^{x_3^{v-e}}\left(\sum_{i=1}^2
\left(\frac{\partial s_l^{v-e}}{\partial x_i^{v-e}}\right)^2
+\sin^2\phi\left(\frac{\partial s_l^{v-e}}{\partial x_3^{v-e}}
\right)^2\right. \notag \\
&\quad+ (s_l^{v-e})^2\Bigg)\:dx^{v-e}
+\mathop{\sum_{l=1}}_{\mu(\tilde{\Omega}_l^{v-e})=\infty}
^{N_{v-e}}\int_{\tilde{\Omega}_l^{v-e}}e^{x_3^{v-e}}
(s_l^{v-e})^2\:w^{v-e}(x^{v-e})\:dx^{v-e}\Bigg) \notag \\
&\quad +\sum_{e\in \mathcal E}\Bigg(\mathop{\sum_{l=1}}
_{\mu(\tilde{\Omega}_l^e)<\infty}^{N_e}\int_{\tilde{\Omega}_l^e}
\left(\sum_{i=1}^2\left(\frac{\partial s_l^e}{\partial
x_i^e}\right)^2\right.+\left.e^{2\tau} \left(\frac
{\partial s_l^e}{\partial x_3^e}\right)^2+(s_l^e)^2\right)
\:dx^e \notag \\
&\quad+\mathop{\sum_{l=1}}_{\mu(\tilde{\Omega}_l^e)=\infty}
^{N_e}\int_{\tilde{\Omega}_l^e}(s_l^e)^2\:w^e(x_1^e)\:dx^e\Bigg)\:.
\end{align}
Then the estimate
\begin{align}\label{C.2}
\mathcal U^{N,W}_{(1)}(\{\mathcal F_\eta\})\leq C_W \mathcal V^{N,W}(\{\mathcal F_u\})
\end{align}
holds. Here $C_W$ is a constant, if the spectral element functions are conforming
on the wirebasket $W\!B$ of the elements, otherwise $C_W=C(\ln W)$, where $C$ is a
constant.
\end{lem3.3.1}
\begin{proof}
We first consider the case when the spectral element functions are conforming on the
wirebasket. Consider an element $\Omega_l^r\subseteq\Omega^r$ which, for simplicity
is assumed to be a hexahedral domain. Let $Q$ denote the cube $(-1,1)^3$ and
$M_l^r(\lambda_1,\lambda_2,\lambda_3)$ denote an analytic mapping from $Q$ to
$\Omega_l^r$. Let $f_1$ be the face corresponding to $\lambda_3=-1$, $f_2$ the face
corresponding to $\lambda_3=1$ and $\left.{[u(\lambda_1,\lambda_2)]}\right|_{f_1}$
denote the jump in the value of $u$ across the face $f_1$ so that $u_l^r(\lambda_1,\lambda_2)
+{1/2}\left.{[u(\lambda_1,\lambda_2)]}\right|_{f_1}=$ average value of $u$ on the face
$f_1$. Similarly by $\left.{[u(\lambda_1,\lambda_2)]}\right|_{f_2}$ we denote the jump
in the value of $u$ across the face $f_2$ so that $u_l^r(\lambda_1,\lambda_2)+{1/2}
\left.{[u(\lambda_1,\lambda_2)]}\right|_{f_2}=$ average value of $u$ on the face $f_2$.

Let $f_3$ be the face corresponding to $\lambda_2=-1$ and $f_4$ the face corresponding
to $\lambda_2=1$. We can now define $\left.{[u(\lambda_1,\lambda_3)]}\right|_{f_3}$, the
jump in $u$ across the face $f_3$, and $\left.{[u(\lambda_1,\lambda_3)]}\right|_{f_4}$,
the jump in $u$ across the face $f_4$.
\begin{figure}[!ht]
\centering
\includegraphics[scale = 0.60]{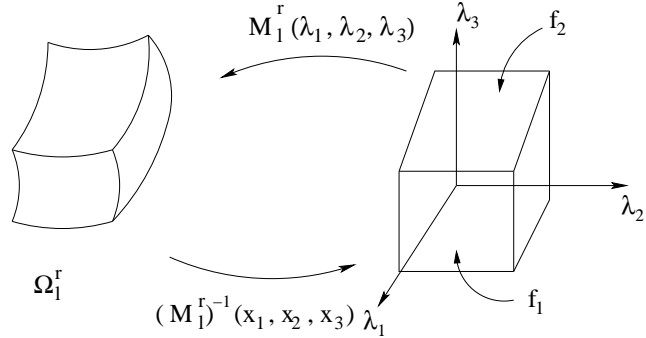}
\caption{The element $\Omega_l^r$ and the master cube.}
\label{figC.1}
\end{figure}
Finally, let $f_5$ denote the face corresponding to $\lambda_1=-1$ and $f_6$ the face
corresponding to $\lambda_1=1$. Once again, we define $\left.{[u(\lambda_2,\lambda_3)]}
\right|_{f_5}$, the jump in $u$ across the face $f_5$, and $\left.{[u(\lambda_2,\lambda_3)]}
\right|_{f_6}$, the jump in $u$ across the face $f_6$. Let us now define the correction
\begin{align}
\nu_l^r(\lambda_1,\lambda_2,\lambda_3)
&=\left.{\frac{(1-\lambda_3)}{2}
{\frac{[u(\lambda_1,\lambda_2)]}{2}}}\right|_{f_1}+
\left.{\frac{(1+\lambda_3)}{2}
{\frac{[u(\lambda_1,\lambda_2)]}{2}}}\right|_{f_2} \notag \\
&+ \left.{\frac{(1-\lambda_2)}{2}
{\frac{[u(\lambda_1,\lambda_3)]}{2}}}\right|_{f_3}
+\left.{\frac{(1+\lambda_2)}{2} {\frac{[u(\lambda_1,
\lambda_3)]}{2}}}\right|_{f_4} \notag \\
&+\left.{\frac{(1-\lambda_1)}{2}
{\frac{[u(\lambda_2,\lambda_3)]}{2}}}\right|_{f_5}
+\left.{\frac{(1+\lambda_1)}{2}
{\frac{[u(\lambda_2,\lambda_3)]}{2}}}\right|_{f_6}. 
\end{align}
Then
\[\eta_l^r(x_1,x_2,x_3) = \nu_l^r\left((M_l^r)^{-1}
(x_1,x_2,x_3)\right)\:.\]

Next, we define the corrections in the vertex neighbourhood $\Omega^v$ for
$v \in \mathcal V$, the set of vertex neighbourhoods. Suppose $\Omega_l^v$ is
not a corner element, nor does it have a face in common with a corner element.
Then the correction $\eta_l^v(x_1^v,x_2^v,x_3^v)$ is defined in the same way as
in the regular region.
\newline
If $\Omega_l^v$ is a corner element then define
$$\eta_l^v (x_1^v,x_2^v,x_3^v)\equiv 0.$$
If $\Omega_l^v$ is an element which has a face in common with a corner element, i.e.
$$\tilde{\Omega}_l^v=\left\{x^v:\:(\theta,\phi) \in
S_j^v,\:\ln(\rho_1^v)<\chi<\ln(\rho_2^v)\:\right\}$$
then the corrected element function will assume the constant value $h_v$ on the side
$\chi=\ln(\rho_1^v)$. Hence the correction will assume the value
$[u(\phi,\theta,\ln(\rho_1^v))]$ instead of
$\frac{1}{2}\left[u(\phi,\theta,\ln(\rho_1^v))\right]$ on the side $\chi=\ln(\rho_1^v)$.
In the vertex neighbourhood $\Omega^v$ the vertex coordinates are
\begin{align*}
x_1^v &= \phi \\
x_2^v &= \theta \\
x_3^v &= \chi \ =\:\ln\rho\;.
\end{align*}
Moreover
\[\int_{\Omega_l^v}\left|\:\nabla_x u_l^v\:\right|^2 dx =
\int_{\tilde{\Omega}_l^v} \left(u_{\phi}^2+\frac{1}{\sin^2
\phi}\: u_\theta^2+(\rho u_{\rho})^2\right)\:\sin\phi\:d
\rho\:d\phi\:d\theta.\]
Hence
\[\int_{\Omega_l^v}|\nabla_{x} u_l^v|^2 dx\]
is uniformly equivalent to
\[\int_{\tilde{\Omega}_l^v}e^{x_3^v}\left|\:\nabla_{x^v}
u_l^v\:\right|^2\:dx^v\]
for all $\Omega_l^v, v \in \mathcal V.$
\newline
Here we have used the fact that
\[0<\phi_0\leq \phi_v\leq \pi-\phi_0\]
for all $v \in \mathcal V$.

Next, we define the corrections in $\Omega^{v-e}$, a vertex-edge neighbourhood,
where $v-e \in \mathcal {V-E}$, the set of vertex edge neighbourhoods. Let
$\Omega_l^{v-e}$ be an element which is not a corner element, nor which has
a face in common with a corner vertex-edge element. Then
\[\tilde{\Omega}_l^{v-e}=\left\{x^{v-e}:\:\psi_i^{v-e}<
\psi<\psi_{i+1}^{v-e},\:\theta_j^{v-e}<\theta<
\theta_{j+1}^{v-e},\:\zeta_k^{v-e}<\zeta<\zeta_{k+1}^{v-e}
\right\}\]
with $2\leq i,2\leq k.$ If $\Omega_l^{v-e}$ is a corner element then
$\eta_l^{v-e}(x^{v-e})=0$. Here
\begin{align*}
x_1^{v-e} &= \psi=\:\ln(\tan\phi)\\
x_2^{v-e} &= \theta \\
x_3^{v-e} &= \zeta=\:\ln x_3=\chi+\ln(\cos(\phi)).
\end{align*}

We introduce local variables
\begin{align*}
y_1 &= x_1^{v-e}\\
y_2 &= x_2^{v-e}\\
y_3 &= \frac{x_3^{v-e}}{\sin\left(\phi_{i+1}^{v-e}\right)}\:.
\end{align*}

Then $\Omega_l^{v-e}$ is mapped to a rectangular hexahedron $\widetilde{\Omega}_l^{v-e}$
in $x^{v-e}$ coordinates and to a long, thin hexahedron $\widehat{\Omega}_l^{v-e}$ in the
local $y$ coordinates such that the length of the $y_3$ side becomes large as
$\Omega_l^{v-e}$ approaches the edge of the domain $\Omega^{v-e}$, as shown in Figure
\ref{figC.2}. Clearly
$$\int_{\Omega_l^{v-e}}\left|\:\nabla_x u\:\right|^2\:dx$$
is uniformly equivalent to
$$\int_{\widehat{\Omega}_l^{v-e}}\left(u_{\phi}^2+\frac{1}{\sin^2\phi}
u_\theta^2 + u_{\chi}^2\right)
\:e^{\chi}\sin\phi\:d\phi\:d\theta\:d\chi$$ for all $\Omega_l^{v-e}
\subseteq \Omega^{v-e}, v-e \in \mathcal {V-E}$. Here $\widehat{\Omega}_l^{v-e}$ denotes
the image of $\Omega_l^{v-e}$ in $x^v$ coordinates.

Now $$\nabla_{x^v}u=J^{v-e}\nabla_{x^{v-e}}u$$ where
$$J^{v-e} =  \left[ \begin{array}{ccc}
\sec^2 \phi \cot \phi   &0&  -\tan \phi\\
0 & 1 & 0 \\
0 & 0 &  1
\end{array} \right]$$
and
$$dx^v=\sin\phi \cos\phi \:dx^{v-e}.$$
Using the above we can show that
$$\int_{\Omega_l^{v-e}}\left|\:\nabla_x u\:\right|^2 dx$$
is uniformly equivalent to
$$\int_{\widetilde{\Omega}_l^{v-e}}\left(\sum_{m=1}^2 \left(\frac
{\partial u}{\partial x_m^{v-e}}\right)^2 + \sin^2\phi\left
(\frac{\partial u}{\partial x_3^{v-e}}\right)^2\right)
\:e^{x_3^{v-e}}\:dx^{v-e}$$
for all $\Omega_l^{v-e} \subseteq \Omega^{v-e}, v-e \in
\mathcal {V-E}$.
\newline
Hence $$\int_{\Omega_l^{v-e}}\left|\:\nabla_x u\:\right|^2 dx$$
is uniformly equivalent to
$$\int_{\widehat{\Omega}_{l}^{v-e}} e^{\sin\left
(\phi_{i+1}^{v-e}\right)y_3}\sin\left(\phi_{i+1}^{v-e}\right)
|\nabla_y u|^2\:d y.$$
\begin{figure}[!ht]
\centering
\includegraphics[scale = 0.60]{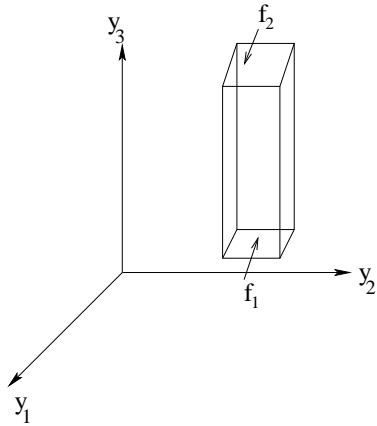}
\caption{Image of ${\Omega}_{l}^{v-e}$ in $y$-variables.}
\label{figC.2}
\end{figure}
Let $f_1$ denote the face $y_3=\frac{\zeta_{k}}{\sin\left(\phi_{i+1}^{v-e}\right)}
=\kappa^i_{k}$ and $f_2$ the face $y_3=\frac{\zeta_{k+1}}{\sin\left(\phi_{i+1}^{v-e}\right)}
=\kappa_{k+1}^i.$

Let $\left.{[u(y_1,y_2)]}\right|_{f_1}$ denote the jump in $u$ across the face $f_1$ so
that $u(y_1,y_2)\left|\right._{f_1}+1/2[u(y_1,y_2)]\left|\right._{f_1}$ denotes the
average value of $u$ on the face $f_1$. Similarly let $[u(y_1,y_2)]\left|\right._{f_2}$
denote the jump in the value of $u$ across the face $f_2$. We now define the correction
\[\nu_l^{v-e}(y_1,y_2,y_3)=\alpha_l^{v-e}(y_1,y_2,y_3)+
\beta_l^{v-e}(y_1,y_2,y_3)+\gamma_l^{v-e}(y_1,y_2,y_3).\]
$\alpha_l^{v-e}(y_1,y_2,y_3)$ is first defined as follows:
\newline
Let
\[h^i_k = min\left(1,\frac{\kappa_{k+1}^i-\kappa_{k}^i}{2}\right)\,.\]
\begin{figure}[!ht]
\centering
\includegraphics[scale = 0.60]{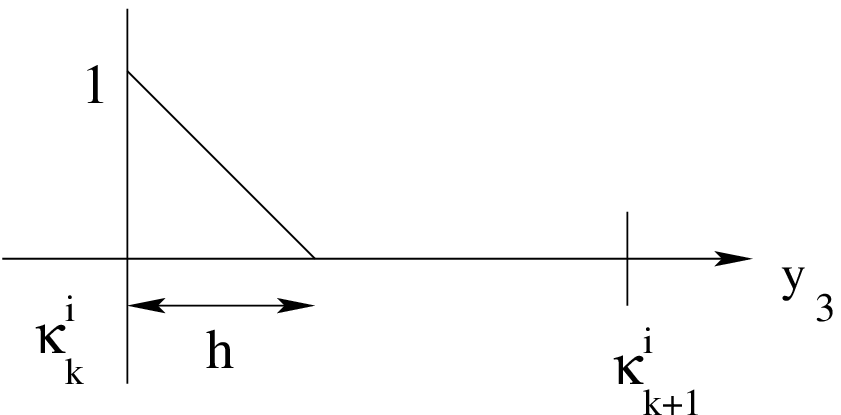}
\caption{The function $r(y_3)$.}
\label{figC.3}
\end{figure}
Let $r(y_3)$ be the function
\begin{align}\label{eqC.4}
r(y_3)&= \left\{\begin{array}{ccc}
\frac{(\kappa_{k}^i-y_3)}{h_k^i}+1 & {\it for} \; \kappa_{k}^i\leq y_3\leq
\kappa_{k}^i+h_k^i\\
0 & {\it for} \; \kappa_{k}^i+h_k^i\leq y_3\leq \kappa_{k+1}^i\,.
\end{array} \right.
\end{align}
Let $s(y_3)$ be the function
\begin{align}\label{eqC.5}
s(y_3)&= \left\{\begin{array}{ccc}
0  & {\it for} \; \kappa_{k}^i \leq y_3\leq \kappa_{k+1}^i-h_k^i\\
\frac{y_3-(\kappa_{k+1}^i-h_k^i)}{h_k^i} & {\it for}
\;\kappa_{k+1}^i-h_k^i\leq y_3\leq \kappa_{k+1}^i\,.
\end{array}\right.
\end{align}
\begin{figure}[!ht]
\centering
\includegraphics[scale = 0.60]{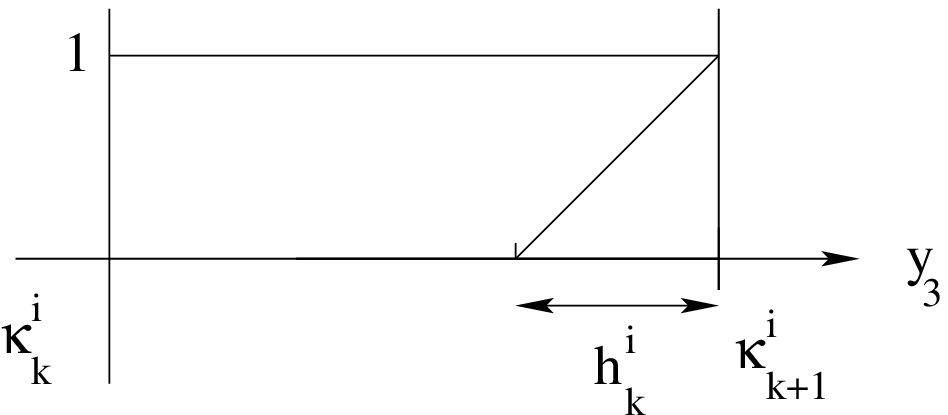}
\caption{The function $s(y_3)$.}
\label{figC.4}
\end{figure}
Then we define the correction
\[\alpha_l^{v-e}(y_1,y_2,y_3)=\left.{1/2[u(y_1,y_2)]}
\right|_{f_1} r(y_3)+1/2[u(y_1,y_2)]|_{f_2} s(y_3).\]

If $\Omega^{v-e}_l$ is a corner element, i.e. the face $f_1$ corresponds to
$x_3^{v-e}=\zeta_1^{v-e}$ then
\[\alpha_l^{v-e}(y_1,y_2,y_3)=\left.{[u(y_1,y_2)]}
\right|_{f_1} r(y_3)+1/2[u(y_1,y_2)]|_{f_2} s(y_3).\]

Next, let $f_3$ denote the side $\theta=\theta_j^{v-e}$ and $f_4$
the side $\theta=\theta_{j+1}^{v-e}$. Let $[u(y_1,y_3)]|_{f_3}$
denote the jump across the side $f_3$ and $[u(y_1,y_3)]|_{f_4}$
the jump across the side $f_4$. Define
\begin{eqnarray*}
\beta_l^{v-e}(y_1,y_2,y_3) &=& \left(\frac{y_2-\theta_{j+1}^{v-e}}
{\theta_{j}^{v-e}-\theta_{j+1}^{v-e}}\right)\left.{\frac{[u(y_1,
y_3)]}{2}}\right|_{f_3} \\
&+&\left(\frac{y_2-\theta_{j}^{v-e}}{\theta_{j+1}^{v-e}-\theta_{j}
^{v-e}}\right)\left.{\frac{[u(y_1,y_3)]}{2}}\right|_{f_4}
\end{eqnarray*}
It is assumed here that $\theta=\theta_j^{v-e}$ and $\theta=\theta
_{j+1}^{v-e}$ do not correspond to a face of the domain $\Omega$.
\newline
Finally, let $f_5$ denote the face $\psi=\psi_i^{v-e}$ and $f_6$
the face $\psi=\psi_{i+1}^{v-e}$. Define
\begin{eqnarray*}
\gamma_l^{v-e}(y_1,y_2,y_3) &=& \left(\frac{y_1-\psi_{i+1}^{v-e}}
{\psi_{i}^{v-e}-\psi_{i+1}^{v-e}}\right)\left.{\frac{[u(y_2,y_3)]}
{2}}\right|_{f_5}\\
&+&\left(\frac{y_1-\psi_{i}^{v-e}}{\psi_{i+1}^{v-e}-\psi_{i}
^{v-e}}\right)\left.{\frac{[u(y_2,y_3)]}{2}}\right|_{f_6}
\end{eqnarray*}
If $i=1$ then the first term in the right hand side is multiplied
by a factor of two.
\newline
Define the final correction as
\[\nu_l^{v-e}(y_1,y_2,y_3)=\alpha_l^{v-e}(y_1,y_2,y_3)+
\beta_l^{v-e}(y_1,y_2,y_3)+\gamma_l^{v-e}(y_1,y_2,y_3).\]
Then
\[\eta_l^{v-e}(x^{v-e})=\nu_l^{v-e}\left(x_1^{v-e},x_2^{v-e},
\frac{x_3^{v-e}}{\sin(\phi_{i+1}^{v-e})}\right)\:.\]

If $\Omega_l^{v-e}$ has a face in common with a corner element
$\Omega_m^{v-e}$ then on the common face $\Gamma_{l,i}^{v-e}$ we
define the correction $\eta_l^{v-e}\left(x^{v-e}\right)$ so that
the corrected value of the spectral element function $p_l^{v-e}=
u_l^{v-e}(x^{v-e})+\eta_l^{v-e}(x^{v-e})$ assumes the value of
$u_m^{v-e}$ on the common face $\Gamma_{l,i}^{v-e}$.
\newline
If $\Omega_m^{v-e}$ is a corner element then we define the
correction $\eta_m^{v-e}(x^{v-e})\equiv 0.$

Finally, we define the corrections in $\Omega^e$, an edge
neighbourhood where $e \in \mathcal E$, the set of edge
neighbourhoods. Let $\Omega_l^e$ be an edge element. Then in
$\Omega^e$ the edge coordinates are
\begin{align*}
x_1^e &= \tau =\ \ln r \\
x_2^e &= \theta \\
x_3^e &= x_3\:.
\end{align*}
Let $\tilde{\Omega}_l^e$ denote the image of $\Omega^e$
in $x^e$ coordinates. Then
$$\tilde{\Omega}^e_l = \left\{x^e:\;\ln(r_j^e)<x_1^e
<\ln(r_{j+1}^e),\:\theta_k^e<x_2^e<\theta_{k+1}^e,\:
Z_n^e<x_3^e<Z_{n+1}^e\right\}.$$
Once again we introduce a set of local coordinates $z$ in
$\tilde{\Omega}^e_l$ defined as
\begin{align*}
z_1 &= x_1^e \\
z_2 &= x_2^e \\
z_3 &= \frac{x_3^e}{r_{j+1}^e}\:.
\end{align*}
Then $\tilde{\Omega}_l^e$ is mapped onto the hexahedron
$\widehat{\Omega}_l^e$ such that the length of the $z_3$ side
becomes large as $\Omega_l^e$ approaches the edge of the domain.
\newline
Now
$$\int_{\Omega_u^e}|\nabla_x u|^2 dx=\int_{\tilde{\Omega}_l^e}
\left(\left(\frac{\partial u}{\partial \tau}\right)^2+
\left(\frac{\partial u}{\partial\theta}\right)^2+
e^{2\tau}\left(\frac{\partial u}{\partial
x_3}\right)^2\right)\:d\tau \:d\theta\:dx_3\:.$$
Hence
$$\int_{\Omega_u^e}|\nabla_x u|^2 dx$$
is uniformly equivalent to
$$\int_{\widehat{\Omega}_u^e}|\nabla_z u|^2 r_{j+1}^e\,dz\:.$$

Here $u$ denotes the spectral element function defined on $\Omega_u^e\,.$ Now we can
define corrections to the spectral element functions so that the corrected element
functions are conforming as we have for vertex-edge elements.

Finally, a further set of corrections can be made so that the corrected element function
$p(x) \in H_0^1(\Omega)$, i.e. $p$ vanishes on $\Gamma^{[0]}$, the Dirichlet portion of
the boundary $\partial{\Omega}$ of $\Omega$. Moreover the estimate (\ref{eqC.1}) holds.

We now briefly indicate how the corrections need to be defined in case the spectral element
functions are nonconforming.

The corrections are first described for an element $\Omega_l^r\subseteq \Omega^r$, the
regular region of $\Omega$. Let $M_l^r\left(\lambda_1,\lambda_2,\lambda_3\right)$ denote
the map from the cube $Q=(-1,1)^3$ to $\Omega_l^r$ as shown in Figure \ref{figC.1}.

We first make a correction
$\delta_l^r(\lambda_1,\lambda_2,\lambda_3)=\sum_{i=0}^1\sum_{j=0}^1\sum_{k=0}^1
a_{i,j,k}\lambda_1^i\lambda_2^j \lambda_3^k$ such that $(u_l^r+\delta_l^r)(n)=\bar{u}(n)$,
where $n$ denotes a node of $Q=(M_l^r)^{-1}(\Omega_l^r)$ and $\bar{u}(n)$ the average value
of $u$ at the node $n$. Hence the corrected spectral element function is conforming at the
nodes of $\Omega_l^r$. Here we use the estimate
\[\left\|w\right\|^2_{L^{\infty}(R)}\leq C\left\|w\right\|^2_{H^{3/2}(R)}\]
where $R$ is a rectangle in the plane, each of whose sides is of length $=\Theta(1).$

Next, we define a correction $\epsilon_l^r(\lambda_1,\lambda_2,\lambda_3)$ such that if
$t$ is a point on a side $S$ of $Q=\left(M_l^r\right)^{-1}(\Omega_l^r)$ then
$(u_l^r+\delta_l^r+\epsilon_l^r)(t)=\overline{u+\delta}(t)$ where $\overline{u+\delta}(t)$
denotes the average value of $u+\delta$ at $t$. It should be noted that $\epsilon_l^r(t)=0$
if $t$ is a node of $Q$. Consider for example the side corresponding to $\lambda_1=-1,\:
\lambda_2=-1,\:-1<\lambda_3<1$ and let the correction corresponding to the side be
$g(\lambda_3)$. Then the contribution of the correction for this side to $\epsilon_l^r$ would
be $\frac{(1-\lambda_1)}{2}\frac{(1-\lambda_2)}{2} g(\lambda_3)$. The contributions of the
corrections for other sides would be similarly defined and $\epsilon_l^r$ would be the sum of
all these contributions. Here we use the estimate
$$\|w\|^2_{H^1(S)}\leq C \ln W \|w\|^2_{H^{3/2}(R)}$$
where $R$ is a rectangle in the plane, each of whose sides is of length $\Theta(1),$ and $S$
is a side of the rectangle $R$. Moreover $w$ is a polynomial of degree $W$ in each of its two
arguments.

Now $u_l^r+\delta_l^r+\epsilon_l^r$, would be conforming on the portion of the wirebasket
contained in $\Omega^r$, the regular region of $\Omega$. We shall next make corrections in
$\Omega^v$ for $v\in \mathcal V,\Omega^{v-e}$ for $v-e \in \mathcal {V-E}$ and $\Omega^e$
for $e\in \mathcal E$, so that the corrected spectral element functions are conforming on
the wire basket of the elements contained in $\Omega$. Once this has been done a further set
of corrections can be made so that the corrected spectral element functions would be
conforming on the faces as well as the wire basket and vanish on the Dirichlet portion of
the boundary $\Gamma^{[0]}$ of $\partial{\Omega}$ as has already been described.
\begin{figure}[!ht]
\centering
\includegraphics[scale = 0.60]{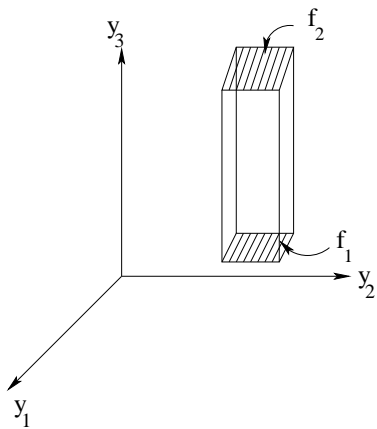}
\caption{The element $\hat{\Omega}_l^{v-e}$.}
\label{figC.5}
\end{figure}
The corrections $\delta_l^v+\epsilon_l^v$ that need to be made in the vertex neighbourhood
$\Omega^v$ of $\Omega$ so that $u_l^v+\delta_l^v+\epsilon_l^v$ would be conforming on the portion
of the wirebasket contained in $\Omega^v$ are very similar to those described for the regular
region $\Omega^r$ of $\Omega$ and hence are not discussed any further. We now describe the
corrections $\delta_l^{v-e}+\epsilon_l^{v-e}$ that need to be made in the vertex-edge
neighbourhood $\Omega^{v-e}$ of $\Omega$ so that $u_l^{v-e}+\delta_l^{v-e}+\epsilon_l^{v-e}$
would be conforming on the portion of the wirebasket contained in $\Omega^{v-e}$.

Let $\Omega_l^{v-e}$ be an element in the vertex-edge neighbourhood $\Omega^{v-e}$, and let
$\widehat{\Omega}_l^{v-e}$ be its image in $y$ coordinates where
\begin{align*}
y_1 &= \psi =\: \ln(\tan\phi) \\
y_2 &= \theta \\
y_3 &= \frac{\zeta}{\sin\left(\phi_{i+1}^{v-e}\right)} =\:
\frac{\ln(x_3)}{\sin\left(\phi_{i+1}^{v-e}\right)}\:.
\end{align*}
Then the length of the $y_3$ side becomes large as the elements approach the edge of the domain
$\Omega$. We define
$$\sigma_1(y_1,y_2)=\sum_{i,j=0,1}h_{i,j}^{(1)} y_1^i y_2^j,$$
a bilinear function of $y_1$ and $y_2$ so that
$$\left.{u_l^{v-e}(y_1,y_2,y_3)+\sigma_1(y_1,y_2)}\right|_{f_1}=w_1(y_1,y_2)$$
is conforming at the nodes of the face $f_1$. Here $w_1(y_1,y_2)$ assumes the average value of
the spectral element functions at the nodes of $f_1$.

In the same way we define
$$\sigma_2(y_1,y_2)=\sum_{i,j=0,1}h_{i,j}^{(2)}y_1^iy_2^j,$$
a bilinear function of $y_1$ and $y_2$, so that
$$u_l^{v-e}(y_1,y_2,y_3)+\sigma_2(y_1,y_2)|_{f_2}=w_2(y_1,y_2)$$
is conforming at the nodes of the face $f_2$.
\begin{figure}[!ht]
\centering
\includegraphics[scale = 0.60]{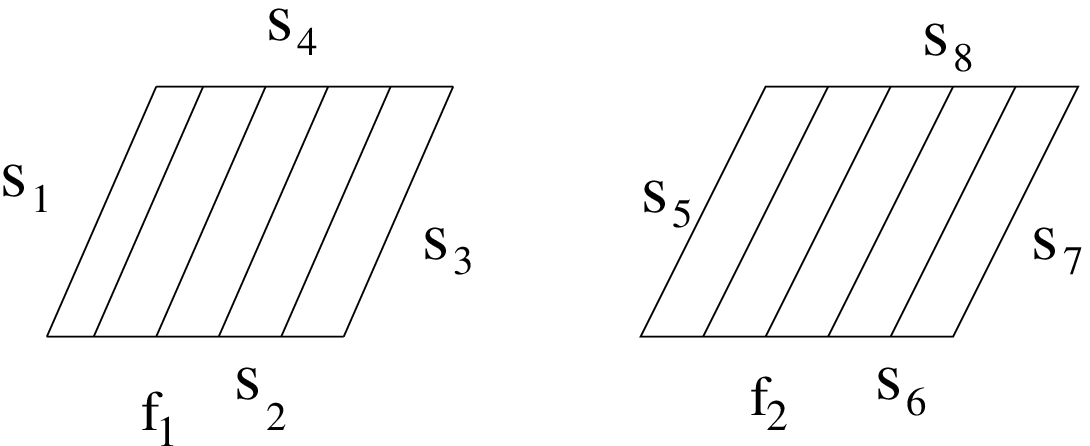}
\caption{Faces $f_1$ and $f_2$.}
\label{figC.6}
\end{figure}
The correction $\delta_l^{v-e}(y_1,y_2,y_3)$ is then given by
\[\delta_l^{v-e}(y_1,y_2,y_3)=\sigma_1(y_1,y_2)r(y_3)+\sigma_2(y_1,y_2)s(y_3)\:.\]
Here $r(y_3)$ and $s(y_3)$ are defined in (\ref{eqC.4}) and (\ref{eqC.5}).

Let
\[\nu_1(y_1,y_2)=\sum_{i=0}^W \sum_{j=0}^W o_{i,j}^{(1)}y_1^i y_2^j\]
be a polynomial of degree $N$ in $y_1$ and $y_2$ separately such that
\[{u_l^{v-e}(y_1,y_2,y_3)+\delta_l^{v-e}(y_1,y_2,y_3)}+\nu_1(y_1,y_2)\]
is conforming at the sides of the face $f_1$.

In the same way we define
\[\nu_2(y_1,y_2)=\sum_{i=0}^W \sum_{j=0}^W o_{i,j}^{(2)}y_1^i y_2^j\]
such that
\[{u_l^{v-e}(y_1,y_2,y_3)+\delta_l^{v-e}(y_1,y_2,y_3)}+\nu_2(y_1,y_2)\]
is conforming at the sides of the face $f_2$.

Define
\[\omega_1(y_1,y_2,y_3)=\nu_1(y_1,y_2)r(y_3)+\nu_2(y_1,y_2)s(y_3).\]
Here $r(y_3)$ and $s(y_3)$ are defined in (\ref{eqC.4}) and (\ref{eqC.5}).

Now $u_l^{v-e}+\delta_l^{v-e}+\omega_1$ is conforming on the face $f_1$ and $f_2$.
We can define corrections $\sigma_9(y_3),\ \sigma_{10}(y_3),\ \sigma_{11}(y_3)$
and $\sigma_{12}(y_3)$ such that
\[\left(u_l^{v-e}+\delta_l^{v-e}+\omega_1\right)(y_1,y_2,y_3)+\sigma_i{(y_3)}\]
is conforming on the sides $s_i$ as shown in Figure \ref{figC.7}. Note that $\sigma_i(y_3)$
is zero at the end points of the side $s_i$.
\begin{figure}[!ht]
\centering
\includegraphics[scale = 0.68]{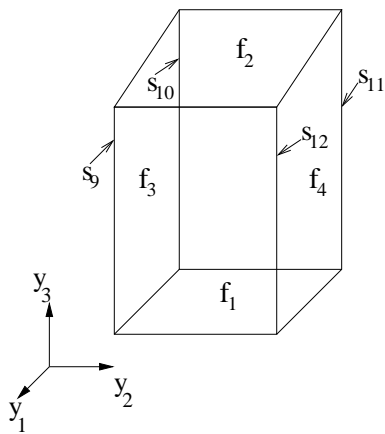}
\caption{Sides $s_i,i=1,\cdots,12$ and faces $f_i,i=1,\cdots,6$ of
a typical element in vertex-edge neighbourhood.}
\label{figC.7}
\end{figure}
On the face $f_3$ define $l_3(y_1,y_3)$ to be a
linear function of $y_1$ which assumes the value $\sigma_i(y_3)$ on the side $s_i$ for $i=9,10$.
Similarly on the face $f_4$ define $l_4(y_1,y_3)$ to be a linear function of $y_1$ which assumes
the value $\sigma_i(y_3)$ on the side $s_i$ for $i=11,12$. We now define $\omega_2(y_1,y_2,y_3)$
to be a linear function of $y_2$ such that $\omega_2$ assumes the value $l_3(y_1,y_3)$ on the
face $f_3$ and the value $l_4(y_1,y_3)$ on the face $f_4$. The correction
$\beta_l^{v-e}(y_1,y_2,y_3)$ is then given by
\[\beta_l^{v-e}(y_1,y_2,y_3)=\delta_l^{v-e}(y_1,y_2,y_3)+\omega_1(y_1,y_2,y_3)+\omega_2(y_1,y_2,y_3).\]
\end{proof}

\subsection*{C.2}
\textbf{Proof of Theorem \ref{thm3.3.1}}
\begin{thm3.3.1}
The following estimate for the spectral element functions holds
\begin{align}\label{eqC.6}
\mathcal U^{N,W}_{(1)}(\{\mathcal F_u\}) \leq K_{N,W}
\mathcal V^{N,W}(\{\mathcal F_u\})\:.
\end{align}
Here $K_{N,W}=CN^4$, when the boundary conditions are mixed and
$K_{N,W}=C(\ln W)^2$ when the boundary conditions are Dirichlet.
\newline
If the spectral element functions vanish on the wirebasket $W\!B$ of the elements then
$K_{N,W}=C(\ln W)^2$, where $C$ is a constant.
\end{thm3.3.1}
\begin{proof}
Let $p$ denote the corrected spectral element function defined in Lemma \ref{lem3.3.1}
and
\begin{align}\label{eqC.7}
B(p,\lambda)=\int_{\Omega}\left(\sum_{i,j=1}^3 a_{i,j}p_{x_i}\lambda_{x_j}
+\sum_{i=1}^3 b_i p_{x_i}\lambda+c p\lambda\right) dx
\end{align}
be a bilinear form for $p,\lambda \in H_0^1(\Omega)$. Here $H_0^1(\Omega)$ denotes the
subspace of functions $\subseteq$ $H^1(\Omega)$ which vanish on $\Gamma^{[0]}$. Moreover
there is a constant $K$ such that
\begin{equation}\label{eqC.8}
K\|q\|_{H^1(\Omega)}^2 \leq B(q,q)
\end{equation}
for all $q \in H_0^1(\Omega)$.
\newline
Now
\begin{equation}\label{eqC.9}
B(p,\lambda)=B_{regular}(p,\lambda)+B_{vertices}(p,\lambda)+
B_{vertex-edge}(p,\lambda)+B_{edges}(p,\lambda)\,.
\end{equation}
Here
\[B_{regular}(p,\lambda)=\sum_{l=1}^{N_r}B(p,\lambda)_{{\Omega}_l^r}\]
and
\[B(p,\lambda)_{{\Omega}_l^r}=\int_{{\Omega}_l^r}\sum_{i,j=1}
^3\left(a_{i,j} p_{x_i}\lambda_{x_j}+\sum_{i=1}^3b_ip_{x_i}
\lambda+cp\lambda\right) d x\:.\]
The other terms are similarly defined.
\newline
Now
$$p_l^r=u_l^r+\eta_l^r.$$
Hence
$$B(p,\lambda)_{{\Omega}_l^r}=B(u_l^r,\lambda)_{{\Omega}_l^r}
+B(\eta_l^r,\lambda)_{{\Omega}_l^r}.$$
Integrating by parts we obtain,
\[B(u_l^r,\lambda)_{{\Omega}_l^r}=\int_{{\Omega}_l^r} Lu_l^r
\:\lambda\:d x + \int_{\partial{{\Omega}_l^r}}\left(\frac
{\partial u_l^r}{\partial\nuw}\right)_A\lambda\:d\sigma\:.\]
Hence
\begin{align}\label{eqC.10}
B_{regular}(p,p)=\sum_{l=1}^{N_r}\int_{{\Omega}_l^r} Lu_l^r \:p\:d
x + \sum_{\Gamma_{l,i}^r\subseteq\Omega^r}\int_{\Gamma_{l,i}^r}
\left(\frac{\partial u}{\partial\nuw}\right)_A\:p\: d\sigma \notag \\
+ \sum_{\Gamma_{l,i}^r\subseteq\partial{\Omega}^r}\int_{\Gamma_{l,i}^r}
\left(\frac{\partial u}{\partial\nuw}\right)_A\:p\:d\sigma
+\sum_{l=1}^{N_r}B(\eta_l^r,p)_{{\Omega}_l^r}\:.
\end{align}
Now
$$B_{vertices}(p,\lambda)=\sum_{v \in \mathcal V}
\sum_{l=1}^{N_v}B(p,\lambda)_{\Omega_l^v}\:.$$
Clearly
$$B(p,\lambda)_{{\Omega}_l^v}=B(u_l^v,\lambda)_{{\Omega}_l^v}
+B(\eta_l^v,\lambda)_{{\Omega}_l^v}\:.$$
Once more using integration by parts
\begin{align*}
B(u_l^v,\lambda)_{{\Omega}_l^v}=\int_{\tilde{\Omega}_l^v} L^v
u(x^v)\:\lambda\:e^{\chi/2}{\sqrt{\sin\phi}}\:d x^v+
\int_{\partial{\tilde{\Omega}_l^v}}\left(\frac{\partial
u_l^v}{\partial\nuw^v}\right)_{A^v}\:\lambda\:e^{\chi}\sin\phi
\:d\sigma^v\:.
\end{align*}
Hence
\begin{align}\label{eqC.11}
B_{vertices}(p,p)&=\sum_{v \in \mathcal V}\left(\sum_{l=1}^{N_v}
\int_{\tilde{\Omega}_l^v}L^v u(x^v)\:\lambda\:e^{\chi/2}{\sqrt
{\sin\phi}}\:d x^v\right. \notag\\
&+\left.\sum_{\Gamma_{l,i}^v\subseteq\Omega^v}\int_{\tilde
{\Gamma}_{l,i}^v}\left[\left(\frac{\partial u}{\partial\nuw^v}
\right)_{A^v}\right] p\,e^\chi\sin\phi\:d\sigma^v\right.\notag \\
&+\left.\sum_{\Gamma_{l,i}^v\subseteq\partial{\Omega}^v}\int_
{\tilde{\Gamma}_{l,i}^v}\left(\frac{\partial u}{\partial\nuw^v}
\right)_{A^v} p\,e^\chi\sin\phi\: d \sigma^v +\sum_{l=1}^{N_v}
B(\eta_l^v,p)_{\Omega_l^v}\right).
\end{align}
Now if ${\Gamma}_{l,i}^v$ is a face of ${\Omega}_l^v$ and ${\Gamma}_{j,k}^r$ is a
face of ${\Omega}_j^r$ such that ${\Gamma}_{l,i}^v={\Gamma}_{j,k}^r$ then
\begin{equation}\label{eqC.12}
\int_{\Gamma_{j,k}^r}\left(\frac{\partial u_l^r}{\partial\nuw}
\right)_{A^v}\,p\:d\sigma=-\int_{\Gamma_{l,i}^v}\left(\frac
{\partial u_l^v}{\partial \nuw^v}\right)_{A^v}p e^\chi\sin\phi
\:d\sigma^v\,.
\end{equation}
Next
$$B_{vertex-edge}(p,\lambda)=\sum_{{v-e}\in\mathcal {V-E}}\sum
_{l=1}^{N_{v-e}}B(p,\lambda)_{{\Omega}_l^{v-e}}.$$
And
$$B(p,\lambda)_{{\Omega}_l^{v-e}}=B(u_l^{v-e},\lambda)_{{\Omega}
_l^{v-e}}+B(\eta_l^{v-e},\lambda)_{{\Omega}_l^{v-e}}.$$
Integration by parts gives
\begin{align*}
B(u_l^{v-e},\lambda)_{{\Omega}_l^{v-e}}&=\int_{\tilde{\Omega}_l^
{v-e}}L^{v-e} u(x^{v-e})\lambda(x^{v-e}) e^{\zeta/2} \:dx^{v-e}\\
&+\int_{\partial\tilde{\Omega}_l^{v-e}}\left(\frac{\partial u}
{\partial \nuw^{v-e}}\right)_{A^{v-e}}\lambda e^{\zeta}\:d\sigma^{v-e}\,.
\end{align*}
Hence
\begin{align}\label{eqC.13}
B_{vertex-edge}(p,p)&=\sum_{{v-e}\in\mathcal {V-E}}\left(
\sum_{l=1}^{N_{v-e}}\int_{\tilde{\Omega}_l^{v-e}}L^{v-e}
u(x^{v-e})\,p(x^{v-e})\,e^{\zeta/2}\:d x^{v-e}\right. \notag \\
&+\left. \sum_{{\Gamma}_{l,i}^{v-e}\subseteq\Omega^{v-e}}
\int_{\tilde{\Gamma}_{l,i}^{v-e}}\left[\left(\frac{\partial u}
{\partial\nuw^{v-e}}\right)_{A^{v-e}}\right]p\,e^\chi
\:d\sigma^{v-e}\right. \notag \\
&+ \left.\sum_{{\Gamma}_{l,i}^{v-e}\subseteq\partial
{\Omega}^{v-e}}\int_{\tilde{\Gamma}_{l,i}^{v-e}}\left(\frac{\partial u}
{\partial\nuw^{v-e}}\right)_{A^{v-e}}p\,e^\chi\:d\sigma^{v-e}\right. \notag \\
&+\left.\sum_{l=1}^{N_{v-e}}B(\eta_l^{v-e},p)_{\Omega_l^{v-e}}\right)\:.
\end{align}
Now if $\Gamma_{m,j}^v=\Gamma_{l,i}^{v-e}$ then
\begin{equation}\label{eqC.14}
\int_{\tilde{\Gamma}_{m,j}^v}\left(\frac{\partial u}{\partial\nuw^v}
\right)_{A^v}\:p\,e^\chi\sin\phi\: d\sigma^v=-\int_{\tilde
{\Gamma}_{l,i}^{v-e}}\left(\frac{\partial u}{\partial\nuw^{v-e}}
\right)_{A^{v-e}}\:p e^\zeta\:d\sigma^{v-e}.
\end{equation}
Finally
$$B_{edges}(p,\lambda)=\sum_{e\in\mathcal E}\sum_{l=1}^{N_e}
B(p,\lambda)_{\Omega_l^e}\:.$$
Here
$$B(p,\lambda)_{\Omega_l^e}=B(u_l^e,\lambda)_{\Omega_l^e}+
B(\eta_l^e,\lambda)_{\Omega_l^e}.$$
Integrating by parts we obtain
$$B(u_l^e,\lambda)_{\Omega_l^e}=\int_{\tilde{\Omega}_l^e}L^eu(x^e)
\lambda\:d x^e+\int_{\partial\tilde{\Omega}_l^e}\left(\frac
{\partial u}{\partial \nuw^e}\right)_{A^e}\lambda\:d \sigma^e.$$
Hence
\begin{align}\label{eqC.15}
B_{edges}(p,p)&=\sum_{e\in\mathcal E}\left(\sum_{l=1}^{N^e}
\int_{\tilde{\Omega}_l^e}L^eu(x^e)p\: d x^e\right. \notag \\
&+\sum_{{\Gamma}_{l,i}^{e}\subseteq\Omega^{e}}
\int_{\tilde{\Gamma}_{l,i}^{e}}\left[\left(\frac{\partial u}
{\partial\nuw^e}\right)_{A^e}\right]p\:d\sigma^e
+\sum_{{\Gamma}_{l,i}^e\subseteq\partial{\Omega}^e}\int_{\tilde
{\Gamma}_{l,i}^e}\left(\frac{\partial u}{\partial\nuw^e}\right)
_{A^e}p \:d\sigma^e \notag \\
&+ \left.\sum_{l=1}^{N_e}B(\eta_l^e,p)_{\Omega_l^e}\right)\:.
\end{align}
Let $\Gamma_{l,i}^{v-e}=\Gamma_{m,j}^e$. Then it can be shown that
\begin{equation}\label{eqC.16}
\int_{\tilde{\Gamma}_{l,i}^{v-e}}\left(\frac{\partial
u}{\partial \nuw^{v-e}}\right)_{A^{v-e}}p e^\zeta\:d
\sigma^{v-e}=-\int_{\tilde{\Gamma}_{m,j}^{e}}\left(\frac{\partial
u}{\partial \nuw^e} \right)_{A^{e}}p \:d \sigma^e
\end{equation}
Also if $\Gamma_{m,j}^{e}=\Gamma_{n,k}^{r}$ then
\begin{equation}\label{eqC.17}
\int_{\tilde{\Gamma}_{m,j}^{e}}\left(\frac{\partial u}{\partial \nuw^{e}}
\right)_{A^{e}}p \:d\sigma^{e}=-\int_{{\Gamma}_{n,k}^{r}}\left(\frac{\partial
u}{\partial \nuw}\right)_{A}p \:d \sigma
\end{equation}

Now
\begin{align}\label{eqC.18}
B(p,p)_{\Omega}=B_{regular}(p,p)+B_{vertices}(p,p)+B_{vertex-edges}(p,p)\notag \\
+B_{edges}(p,p)\:.
\end{align}
Substituting (\ref{eqC.10})$-$(\ref{eqC.17}) into the above we obtain
\begin{equation}\label{eqC.19}
B(p,p)_{\Omega}\leq cd\:\mathcal V^{N,W}\left({\mathcal \{F_u\}}\right)
+1/d\:{\mathcal G}(p)\:.
\end{equation}
In the above
\begin{align}\label{eqC.20}
{\mathcal G}(p)&=\|p(x)\|^2_{1,\Omega^r}+\sum_{v\in\mathcal V}
\|p(x^v)\:e^{x_3^{v-e}/2}\|^2_{1,\tilde{\Omega}^v}
+\sum_{{v-e}\in\mathcal {V-E}}\int_{\tilde{\Omega}^{v-e}}
\left(\sum_{i=1}^2\left(\frac{\partial p(x^{v-e})}{\partial x_i^
{v-e}}\right)^2\right. \notag\\
&\quad+\left.\sin^2\phi\left(\frac{\partial p(x^{v-e})} {\partial
x_3^{v-e}}\right)^2+\big(p\,(x^{v-e})\big)^2\right) e^{x_3^{v-e}}
w^{v-e}(x_1^{v-e})\: d x^{v-e} \notag \\
&\quad+\sum_{e\in \mathcal E}\int_{\tilde{\Omega}^e} \left
(\sum_{i=1}^2\left(\frac{\partial p(x^e)} {\partial
x_i^e}\right)^2 +e^{2\tau} \left(\frac{\partial p(x^e)} {\partial
x_3^e}\right)^2+\big(p\,(x^e)\big)^2\right)\:w^e(x_1^e)dx^e.
\end{align}
Now
$$K\|p\|^2_{H^1(\Omega)}\leq B(p,p)\,.$$
Moreover if the spectral element functions are not conforming on the wirebasket $WB$ then
\begin{equation}\label{eqC.21}
{\mathcal G}(p)\leq C_{N,W}\|p\|^2_{H^1(\Omega)}\,. 
\end{equation}
Here $C_{N,W}=EN^2$ if the boundary conditions are mixed and $C_{N,W}=E$ if the boundary
conditions are Dirichlet. Choosing $d=\frac{2E{N}^2}{K}$ in (\ref{eqC.19}) gives
$$B(p,p)_{\Omega}\leq \frac{c}{K}C_{N,W}\mathcal V^{N,W}(\{\mathcal F_u\})\,.$$
Hence combining the above and (\ref{eqC.20}) gives
\begin{equation}\label{eqC.22}
{\mathcal G}(p)\leq \frac{2c}{K^2}(C_{N,W})^{2}N^4\:{\mathcal V}^{N,W}
\left(\{{\mathcal F}_u\}\right).
\end{equation}
Now (\ref{eqC.1}) and (\ref{eqC.22}) yields (\ref{eqC.6}).

Finally, we consider the case when the spectral element functions are conforming on the
wirebasket $W\!B$ and vanish on it. Then $p$ vanishes on the wirebasket. Consider $p(x^e)$
on $\tilde{\Omega}^e_l$. Here
$$\tilde{\Omega}^e_l=\left\{x^e:\: \ln(r_i^e)<x_1^e<\ln(r_{i+1}^e),
\:\theta_j<x_2^e<\theta_{j+1},\:Z_k^e<x_3^e<Z_{k+1}^e\right\}\:.$$
Now if $s(y_1,y_2)$ is a polynomial of degree $W$ in each of its arguments on the unit
square $S$ which vanishes at one of the vertices of $S$ then using a \textit{scaling}
argument as in $\S$ 3.4 of~\cite{TW} we obtain
\[\int_{S}\left(s(y)\right)^2\:d y\leq F(\ln W)\: \int_{S}\left(\left(\frac{\partial s}
{\partial y_1}\right)^2+\left(\frac{\partial s}{\partial y_2}\right)^2\right)\:d y\:.\]
Hence, since $p(x_1^e,x_2^e,.)$ is a polynomial of degree $W$ in $x_1^e$ and $x_2^e$
$$\int_{\tilde{\Omega}_l^e}(p(x^e))^2\:d x^e\leq F(\ln W)
\int_{\tilde{\Omega}_l^e}\left(\sum_{i=1}^2\left(\frac{\partial
p(x^e)}{\partial x_i^e}\right)^2+e^{2\tau}\left(\frac{\partial
p(x^e)}{\partial x_3^e} \right)^2\right)\: d x^e\:.$$
And so it can be shown that
\begin{equation}\label{eqC.23}
{\mathcal G}(p)\leq F\ln W\|p\|^2_{H^1(\Omega)}\:. 
\end{equation}
Choosing $d=\frac{1}{\frac{K}{2F\ln W}}$ in (\ref{eqC.19}) gives
\begin{equation}\label{eqC.24}
{\mathcal G}(p)\leq \frac{2cF^2}{K^2}\left(\ln W\right)^2
{\mathcal V}^{N,W}\left(\{{\mathcal F}_u\}\right)\:. 
\end{equation}
Now combining (\ref{eqC.1}) and (\ref{eqC.24}) yields (\ref{eqC.6}).
\end{proof}
\end{appendix}
\thispagestyle{empty}
\chapcleardoublepage

\renewcommand{\theequation}{D.\arabic{equation}}
\renewcommand{\thefigure}{D.\arabic{figure}}
\setcounter{equation}{0}
\setcounter{figure}{0}
\addcontentsline{toc}{chapter}{Appendix D}
\markboth{\small Appendix D}{\small Appendix D}
\begin{appendix}
\section*{Appendix D}
\subsection*{D.1}
\textbf{Proof of Lemma \ref{lem3.4.1}}
\begin{lem3.4.1}
Let $\Omega_m^r$ and $\Omega_p^r$ be elements in the regular region $\Omega^r$ of $\Omega$
and $\Gamma_{m,i}^r$ be a face of $\Omega_m^r$ and $\Gamma_{p,j}^r$ be a face of $\Omega_p^r$
such that $\Gamma_{m,i}^r=\Gamma_{p,j}^r$. Then for any $\epsilon>0$ there exists a constant
$C_{\epsilon}$ such that for $W$ large enough
\begin{align}\label{eqD.1}
&\left|\int_{\partial{\Gamma}_{m,i}^r}\left(\left(\frac{\partial
u_m^r}{\partial \nuw}\right)_A\left(\frac{\partial u_m^r}
{\partial\nb}\right)_A-\left(\frac{\partial u_p^r}{\partial\nuw}
\right)_A\left(\frac{\partial u_p^r}{\partial\nb}\right)_A\right)
\:ds\right| \notag\\
&\leq C_{\epsilon}(\ln W)^2\sum_{k=1}^3\left\|[u_{x_k}]
\right\|^2_{1/2,{\Gamma_{m,i}^r}}+\epsilon\sum_{1\leq |\alpha|\leq 2}
\left(\|D_x^{\alpha}u_m^r\|^2_{0,\Omega_m^r}+\|D_x^{\alpha}u_p^r\|^2_{0,\Omega_p^r}\right)\:.
\end{align}
\end{lem3.4.1}
\begin{proof}
Let $\Gamma_{m,i}^r$ be the image of the mapping $M_m^r$ from $Q$ to $\Omega_m^r$ corresponding
to $\lambda_3=-1$ and $\Gamma_{p,j}^r$, the image of the mapping $M_m^r$ from $Q$ to $\Omega_p^r$
corresponding to $\lambda_3=1$. Then $\Gamma_{m,i}^r$ is the image of the square $S=(-1,1)^2$
under these mappings. However the proof remains valid if it is the image of $T$, the master
triangle, too.
\newline
Now
\begin{equation}\label{eqD.2}
\int_{\partial{\Gamma}_{m,i}^r}\left(\frac{\partial u}
{\partial \nuw}\right)_A\left(\frac{\partial u}{\partial \nb}
\right)_A\:d s=\int_{\partial S}\sum_{i,j=1}^3\alpha_{i,j}
\frac{\partial u}{\partial x_i}\frac{\partial u}{\partial x_j}\:ds\:.
\end{equation}
Here the element of arc length $d s$ in the right hand side denotes either $d\lambda_1$ or
$d\lambda_2$. Moreover $\alpha_{i,j}$ is an analytic function of its arguments. Now
\[\frac{\partial u_m^r}{\partial x_i}=\sum_{j=1}^3\beta_{i,j}\frac
{\partial u_m^r}{\partial \lambda_j}\]
where $\beta_{i,j}$ is an analytic function of $\lambda=(\lambda_1,\lambda_2,\lambda_3)$.

Let $\hat{\beta}_{i,j}$ be the projection of $\beta_{i,j}$ into the space of polynomials of
degree $W$ in $H^3(Q)$. Then for any $m>0$
\begin{equation}\label{eqD.3}
\|\beta_{i,j}-\hat\beta_{i,j}\|_{1,\infty,Q}\leq \frac{C_m}{W^m}
\end{equation}
as $W\rightarrow\infty$.
\newline
Define $\left(\frac{\partial u_m^r}{\partial x_i}\right)^a$, an approximate representation
for $\frac{\partial u_m^r}{\partial x_i}$, by
$$\left(\frac{\partial u_m^r}{\partial x_i}\right)^a=\sum_{j=1}^3
\hat\beta_{i,j}\frac{\partial u_m^r}{\partial \lambda_j}\:.$$
Moreover $\left(\frac{\partial u_m^r}{\partial x_i}\right)^a$ is a polynomial of degree $2W$
in each of its arguments $\lambda_1,\lambda_2$ and $\lambda_3$. Now
$$\left\|\frac{\partial u_m^r}{\partial x_i}-\left(\frac{\partial
u_m^r}{\partial x_i}\right)^a\right\|_{0,\partial S}\leq \frac{C}
{W^4}\sum_{j=1}^3\left\|\frac{\partial u_m^r}{\partial \lambda_j}
\right\|_{0,\partial S}\leq \frac{C}{W^5}\sum_{j=1}^3\left\|\frac
{\partial u_m^r}{\partial \lambda_j}(\lambda)\right\|_{5/4,Q}$$
by the trace theorem for Sobolev spaces.
\newline
Hence
\begin{equation}\label{eqD.4}
\left\|\frac{\partial u_m^r}{\partial x_i}-\left(\frac{\partial u_m^r}{\partial x_i}
\right)^a\right\|_{0,\partial S}\leq\frac{C}{W^4}\sum_{1\leq|\alpha|\leq2}
\left(\|D_{\lambda}^{\alpha}u^r_m\|^2_{0,Q}\right)^{1/2}\,.
\end{equation}
Here we have used the inverse inequality for differentiation
$$\left\|\frac{\partial u_m^r}{\partial \lambda_i}(\lambda)\right\|_{5/4,Q}\leq KW
\: \left\|\frac{\partial u_m^r}{\partial\lambda_i }(\lambda)\right\|_{1,Q}\:.$$
Now
$$\left\|\frac{\partial u_m^r}{\partial x_i}\right\|^2_{0,\partial S}\leq C\:
\sum_{i=1}^3\left\|\frac{\partial u_m^r}{\partial\lambda_i}\right\|^2_{0,\partial S}\:.$$
Moreover (see~\cite{TW}),
\begin{subequations}\label{eqD.5}
\begin{equation}\label{eqD.5a}
\left\|\frac{\partial u_m^r}{\partial x_i}\right\|^2_{0,\partial S}
\leq C(\ln W)\sum_{1\leq|\alpha|\leq2}\|D_{\lambda}^{\alpha}u^r_m\|^2_{0,Q}\:.
\end{equation}
In the same way it can be shown that
\begin{equation}\label{eqD.5b}
\left\|\left(\frac{\partial u_m^r}{\partial x_i}\right)^a\right\|^2_{0,\partial S}
\leq C(\ln W)\sum_{1\leq|\alpha|\leq2}\|D_{\lambda}^ {\alpha}u^r_m\|^2_{0,Q}\:.
\end{equation}
\end{subequations}
Now
$$\int_{\partial{\Gamma}_{m,i}^r}\left(\frac{\partial u_m^r}
{\partial \nuw}\right)_A\left(\frac{\partial u_m^r}{\partial \nb}
\right)_A \:d s=\int_{\partial S}\sum_{i,j=1}^3\alpha_{i,j}\left
(\frac{\partial u_m^r}{\partial x_i}\right)\left(\frac{\partial
u_m^r} {\partial x_j}\right)\, ds\,.$$
Using (\ref{eqD.4}) and (\ref{eqD.5}) gives
\begin{align}\label{eqD.6}
&\left|\int_{\partial S}\sum_{i,j=1}^3\alpha_{i,j}\left(\left
(\frac{\partial u_m^r}{\partial x_i}\right)^a\left(\frac{\partial
u_m^r} {\partial x_j}\right)^a - \frac{\partial u_m^r}{\partial
x_i}\frac{\partial u_m^r}{\partial x_j}\right)\: d s\right| \notag \\
&\quad \leq C \left(\sum_{i,j=1}^3\left\|\left(\frac{\partial
u_m^r}{\partial x_i}\right)^a\right\|_{0,\partial
S}\left\|\left(\frac{\partial u_m^r} {\partial x_j}\right)^a -
\left(\frac{\partial u_m^r}{\partial x_j}\right)\right\|_{0,\partial S}
\right. \notag \\
&\quad +\left.\sum_{i,j=1}^3\left\|\left(\frac{\partial
u_m^r}{\partial x_j}\right)\right\|_{0,\partial
S}\left\|\left(\frac{\partial u_m^r} {\partial x_i}\right)^a -
\left(\frac{\partial u_m^r}{\partial x_i}
\right)\right\|_{0,\partial S}\right) \notag \\
&\quad \leq C\frac{\sqrt{\ln W}}{W^4}\sum_{1\leq|\alpha|\leq2}
\|D_\lambda^ \alpha u^r_m\|^2_{0,Q}\:.
\end{align}
In the same way it can be shown that
\begin{align}\label{eqD.7}
&\left|\int_{\partial S}\sum_{i,j=1}^3\alpha_{i,j}\left(\left(\frac{\partial u_p^r}
{\partial x_i}\right)^a\left(\frac{\partial u_p^r}{\partial x_j}\right)^a
-\left(\frac{\partial u_p^r}{\partial x_i} \right)\left(\frac{\partial u_p^r}
{\partial x_j}\right)\right)\: d s\right| \notag \\
&\quad \leq C\frac{\sqrt{\ln W}} {W^4} \sum_{1\leq|\alpha|\leq2}\|D_\lambda^\alpha
u_p^r\|^2_{0,Q}\,.
\end{align}
Hence
\begin{align}\label{eqD.8}
&\left|\int_{\partial S}\sum_{i,j=1}^3\alpha_{i,j}\left(\left
(\frac{\partial u_m^r}{\partial x_i}\right)\left(\frac{\partial
u_m^r} {\partial x_j}\right) - \left(\frac{\partial
u_p^r}{\partial x_i} \right)\left(\frac{\partial u_p^r}{\partial
x_j}\right)\right)\,d s\right| \notag \\
&\quad \leq \left|\int_{\partial S} \sum_{i,j=1}^3
\alpha_{i,j}\left (\left(\frac{\partial u_m^r}{\partial
x_i}\right)^a\left(\frac {\partial u_m^r}{\partial x_j}\right)^a -
\left(\frac{\partial u_p^r} {\partial
x_i}\right)^a\left(\frac{\partial u_p^r}{\partial x_j}
\right)^a\right)\,d s\right| \notag \\
&\quad +\frac{C\sqrt{\ln W}}{W^4}\left(\sum_{1\leq|\alpha|\leq
2}\left( \|D_\lambda^\alpha u_m^r\|^2_{0,Q} + \|D_\lambda^\alpha
u_p^r\|^2_{0,Q} \right)\right)\,.
\end{align}
Now
\begin{align*}
&\left|\int_{\partial S}\sum_{i,j=1}^3\alpha_{i,j}
\left(\left(\frac{\partial u_m^r}{\partial x_i}
\right)^a\left(\frac{\partial u_m^r}{\partial x_j} \right)^a -
\left(\frac{\partial u_p^r}{\partial x_i}\right)^a\left(\frac
{\partial u_p^r}{\partial x_j}\right)^a\right)\, d s\right|\\
&\quad \leq C\,\left[\sum_{i,j=1}^3\left(\left\|\left(\frac
{\partial u_m^r}{\partial x_i}\right)^a\right\|_{0,
\partial S}\left\|\left(\frac{\partial u_m^r}{\partial
x_j}\right)^a - \left(\frac{\partial u_p^r}{\partial x_j}
\right)^a\right\|_{0,\partial S}\right.\right.\\
&\quad +\left.\left.\left\|\left(\frac{\partial u_p^r}{\partial
x_j}\right)^a\right\|_{0,\partial S}\left\|\left(\frac{\partial
u_m^r}{\partial x_i}\right)^a - \left(\frac{\partial
u_p^r}{\partial x_i}\right)^a \right\|_{0,\partial
S}\right)\right]
\end{align*}
Hence for any $\epsilon>0$ there is a constant $C_{\epsilon}$ such that
\begin{align}\label{eqD.9}
&\left|\int_{\partial S}\sum_{i,j=1}^3\alpha_{i,j}
\left(\left(\frac{\partial u_m^r}{\partial x_i}\right)^a
\left(\frac{\partial u_m^r}{\partial x_j}\right)^a -
\left(\frac{\partial u_p^r}{\partial x_i}\right)^a
\left(\frac{\partial u_p^r}{\partial x_j}\right)^a\right)
\,d s\right| \notag \\
&\quad \leq C_{\epsilon}(\ln W)\left(\sum_{i,j=1}^3
\left\|\left(\frac{\partial u_m^r}{\partial x_i}\right)^a -
\left(\frac{\partial u_p^r}{\partial x_i}\right)^a\right
\|^2_{0,\partial S}\right) \notag \\
&\quad +\epsilon/3\left(\sum_{1\leq |\alpha|\leq 2}\left(
\|D_\lambda ^\alpha u_m^r\|^2_{0,Q} + \|D_\lambda^\alpha
u_p^r\|^2_{0,Q} \right)\right)\,.
\end{align}
Here we have used the estimate (\ref{eqD.5})
$$\left\|\left(\frac{\partial u_m^r}{\partial x_i}\right)^a
\right\|^2_{0,\partial S}\leq C(\ln W)\left(\sum_{1\leq
|\alpha|\leq2}\|D_\lambda^\alpha u_m^r\|^2_{0,Q}\right)\,.$$
and
$$\left\|\left(\frac{\partial u_p^r}{\partial x_i}\right)^a
\right\|^2_{0,\partial S}\leq C(\ln W)\left(\sum_{1\leq
|\alpha|\leq2}\|D_\lambda^\alpha u_p^r\|^2_{0,Q}\right)\,.$$
Now as in~\cite{TW}
\begin{equation}\label{eqD.10}
\left\|\left(\frac{\partial u_m^r}{\partial x_i}
\right)^a - \left(\frac{\partial u_p^r}{\partial x_i}\right)
^a\right\|^2_{0,\partial S}\leq C(\ln W)\: \left(\left\|\left
(\frac{\partial u_m^r}{\partial x_i}\right)^a - \left(\frac
{\partial u_p^r}{\partial x_i}\right)^a\right\|^2_{1/2,S}\right)
\end{equation}
since $\left(\frac{\partial u_m^r}{\partial x_i}\right)$ and
$\left(\frac{\partial u_p^r}{\partial x_i}\right)$ are polynomials of degree $2W$ in
each of their arguments.
\newline
Now combining (\ref{eqD.9}) and (\ref{eqD.10}) gives
\begin{align}\label{eqD.11}
&\left|\int_{\partial S}\sum_{i,j=1}^3\alpha_{i,j}\left(\left
(\frac{\partial u_m^r}{\partial x_i}\right)^a\left(\frac{\partial
u_m^r} {\partial x_j}\right)^a - \left(\frac{\partial
u_p^r}{\partial x_i} \right)^a\left(\frac{\partial u_p^r}{\partial
x_j}\right)^a\right)\: d s\right| \notag \\
&\quad \leq C_{\epsilon}(\ln W)^2\: \left(\sum_{i,j=1} ^3\left\|
\left(\frac{\partial u_m^r}{\partial x_i}\right)^a -
\left(\frac{\partial u_p^r}{\partial x_i}\right)^a\right\|^2_{1/2,S}\right) \notag \\
&\quad +\epsilon/3 \left(\sum_{1\leq|\alpha|\leq2}\left(\|D_\lambda^\alpha u_m^r\|^2_{0,Q}
+\|D_\lambda^\alpha u_p^r\|^2_{0,Q}\right)\right).
\end{align}
Next
\begin{align*}
\left\|\left(\frac{\partial u_m^r}{\partial x_i}\right)^a
-\left(\frac{\partial u_m^r}{\partial x_i}\right)\right\|_{1/2,S}
&\leq\sum_{j=1}^3\left\|(\beta_{i,j}-\hat{\beta}_{i,j})\frac
{\partial u_m^r}{\partial\lambda_j}\right\|_{1/2,S}\\
&\leq \sum_{j=1}^3\left\|(\beta_{i,j} -
\hat{\beta}_{i,j})\right\|_{1,\infty,S}\left\| \frac{\partial
u_m^r}{\partial\lambda_j}\right\|_{1/2,S}\:.
\end{align*}
Hence
\begin{subequations}\label{eqD.12}
\begin{equation}\label{eqD.12a}
\left\|\left(\frac{\partial u_m^r}{\partial x_i}\right)^a-\left
(\frac{\partial u_m^r}{\partial x_i}\right)\right\|^2_{1/2,S} \leq
\frac{C}{W^8}\:\left(\sum_{1\leq|\alpha|\leq2}\|D_\lambda^\alpha
u_m^r\|^2_{0,Q}\right).
\end{equation}
In the same way it can be shown that
\begin{equation}\label{eqD.12b}
\left\|\left(\frac{\partial u_p^r}{\partial x_i}\right)^a-\left
(\frac{\partial u_p^r}{\partial x_i}\right)\right\|^2_{1/2,S} \leq
\frac{C}{W^8}\:\left(\sum_{1\leq|\alpha|\leq2}\|D_\lambda^\alpha
u_p^r\|^2_{0,Q}\right).
\end{equation}
\end{subequations}
Combining (\ref{eqD.11}) and (\ref{eqD.12}) and choosing $W$ large enough gives
\begin{align}\label{eqD.13}
&\left|\int_{\partial S}\sum_{i,j=1}^3\alpha_{i,j}\left(\left
(\frac{\partial u_m^r}{\partial x_i}\right)^a\left(\frac{\partial
u_m^r} {\partial x_j}\right)^a - \left(\frac{\partial
u_p^r}{\partial x_i} \right)^a\left(\frac{\partial u_p^r}{\partial
x_j}\right)^a\right)\: d s\right| \notag \\
&\quad \leq C_{\epsilon}(\ln W)^2\:\left(\sum_{i=1}^3\left\|\frac
{\partial u_m^r}{\partial x_i} - \frac{\partial u_p^r}{\partial x_i}
\right\|^2_{1/2,S}\right)+2\epsilon/3\left(\sum_{1\leq|\alpha|\leq2}
\|D_\lambda^\alpha u_p^r\|^2_{0,Q}\right).
\end{align}
Now substituting (\ref{eqD.13}) into (\ref{eqD.8}) and choosing $W$ large enough yields
\begin{align}\label{eqD.14}
&\left|\int_{\partial S}\sum_{i,j=1}^3\alpha_{i,j}\left(\left(\frac
{\partial u_m^r}{\partial x_i}\right)\left(\frac{\partial u_m^r}
{\partial x_j}\right) - \left(\frac{\partial u_p^r}{\partial x_i}
\right)\left(\frac{\partial u_p^r}{\partial x_j}\right)\right)
\: d s\right| \notag \\
&\quad \leq C_{\epsilon}(\ln W)^2\:\left(\sum_{i=1}^3\left\|\frac
{\partial u_m^r}{\partial x_i} - \frac{\partial u_p^r}{\partial x_i}
\right\|^2_{1/2,S}\right)+\epsilon\left(\sum_{1\leq|\alpha|\leq2}
\|D_\lambda^\alpha u_p^r\|^2_{0,Q}\right)\:.
\end{align}
And so the required result (\ref{eqD.1}) follows.
\end{proof}

\subsection*{D.2}
\textbf{Proof of Lemma \ref{lem3.4.2}}
\begin{lem3.4.2}
Let $\Omega_m^r$ and $\Omega_p^r$ be elements in the regular
region $\Omega^r$ of $\Omega$ and $\Gamma_{m,i}^r$ be a face of
$\Omega_m^r$ and $\Gamma_{p,j}^r$ be a face of $\Omega_p^r$ such
that $\Gamma_{m,i}^r=\Gamma_{p,j}^r$. Then for any $\epsilon>0$
there exists a constant $C_{\epsilon}$ such that for $W$ large
enough
\begin{align}\label{eqD.15}
&\left|\sum_{j=1}^2\left(\int_{{\Gamma}_{m,i}^r}\left(\frac
{\partial u_m^r}{\partial \taux_j}\right)_A\frac{\partial}
{\partial s_j}\left(\frac{\partial u_m^r}{\partial\nuw}\right)
_A-\int_{{\Gamma}_{p,j}^r}\left(\frac{\partial u_p^r}
{\partial\taux_j}\right)_A\frac{\partial}{\partial s_j}
\left(\frac{\partial u_p^r}{\partial\nuw}\right)_A\right)
d\sigma\right| \notag \\
&\hspace{1.0cm}\leq C_{\epsilon}(\ln W)^2\sum_{k=1}^3
\left\|[u_{x_k}]\right\|^2_{1/2,{{\Gamma}_{p,j}^r}} \notag \\
&\hspace{1.0cm}+\epsilon\sum_{1\leq|\alpha| \leq 2}
\left(\|D_x^{\alpha}u_m^r\|^2_{0,\Omega_m^r}+\|D_x^{\alpha}
u_p^r\|^2_{0,\Omega_p^r}\right)\:.
\end{align}
\end{lem3.4.2}
\begin{proof}
It is enough to show that
\begin{align}\label{eqD.16}
&\left|\int_{{\Gamma}_{m,i}^r}\left(\left(\frac{\partial u_m^r}
{\partial \taux_j}\right)_A\frac{\partial}{\partial s_j}\left
(\frac{\partial u_m^r}{\partial\nuw}\right)_A-\left(\frac{\partial
u_p^r} {\partial\taux_j}\right)_A\frac{\partial}{\partial s_j}
\left(\frac{\partial u_p^r}{\partial\nuw}\right)_A\right)
\: d \sigma\right| \notag \\
&\quad\quad\leq C_{\epsilon}(\ln W)^2\sum_{k=1}^3
\left\|[u_{x_k}]\right\|^2_{1/2,{{\Gamma}_{p,j}^r}} \notag \\
&\quad\quad+\epsilon\sum_{1\leq|\alpha|\leq 2}
\left(\|D_x^{\alpha}u_m^r\|^2_{0,\Omega_m^r}+\|D_x^{\alpha}
u_p^r\|^2_{0,\Omega_p^r}\right)
\end{align}
for $j=1$.

Let $\Gamma_{m,i}^r$ be the image of the mapping $M_m^r$ from $Q$
to $\Omega_m^r$ corresponding to $\lambda_3=-1$ and $\Gamma_{p,j}^r$,
the image of the mapping $M_m^r$ from $Q$ to $\Omega_p^r$
corresponding to $\lambda_3=1$. Then $\Gamma_{m,i}^r$ is the image
of the square $S=(-1,1)^2$ under these mappings. However the proof
remains valid if it is the image of $T$, the master triangle, too.
\newline
Now
\begin{align}\label{eqD.17}
&\int_{{\Gamma}_{m,i}^r}\left\{\left(\frac{\partial u}{\partial
\taux_1}\right)_A\frac{\partial}{\partial s_1}\left(\frac
{\partial u}{\partial\nuw}\right)_A\right\}\:d\sigma \notag \\
&\quad=\sum_{j=1}^2\sum_{i,k=1}^3\int_S\left\{\left(a_{i,j}\frac
{\partial u}{\partial x_i}\right ) \frac{\partial}
{\partial\lambda_j}\left(b_k\frac{\partial u}{\partial
x_k}\right)\right\} \: d\lambda_1 d\lambda_2
\end{align}
where $a_{i,j}(\lambda_1,\lambda_2)$ and $b_k(\lambda_1,\lambda_2)$
are analytic functions of $\lambda_1$ and $\lambda_2$.

Now as in Lemma \ref{lem3.4.1}, let $\hat{a}_{i,j}(\lambda_1,\lambda_2)$
and $\hat{b}_k(\lambda_1,\lambda_2)$ be the projections of
$a_{i,j} (\lambda_1,\lambda_2)$ and $b_k(\lambda_1,\lambda_2)$
into the space of polynomials of degree $W$ in $H^3(S)$. Moreover,
let $\left(\frac {\partial u_m^r}{\partial x_i}\right)^a$,
$\left(\frac{\partial u_p^r} {\partial x_i}\right)^a$ be the
approximations to $\frac{\partial u_m^r} {\partial x_i}$ and
$\frac{ \partial u_p^r}{\partial x_i}$ defined in Lemma \ref{lem3.4.1}.
Then $\left(\frac {\partial u_m^r}{\partial x_i}\right) ^a$ and
$\left(\frac{\partial u_p^r} {\partial x_i}\right)^a$ are
polynomials of degree $2W$ in each of the variables
$\lambda_1,\lambda_2$ and $\lambda_3$.
\newline
Now
\begin{align}\label{eqD.18}
&\left|\int_S\left(a_{i,j}\left(\frac{\partial u_m^r} {\partial
x_i}\right)\frac{\partial}{\partial\lambda_j}\left(b_k\frac
{\partial u_m^r} {\partial x_k}\right)-\hat{a}_{i,j}\left(\frac
{\partial u_m^r} {\partial x_i}\right)\frac{\partial}{\partial
\lambda_j}\left(\hat{b}_k\frac{\partial u_m^r}{\partial x_k}
\right)\right)\: d \lambda_1 d\lambda_2\right| \notag \\
&\qquad\leq \left|\int_S (a_{i,j}-\hat{a}_{i,j})\frac{\partial
u_m^r} {\partial x_i}\frac{\partial}{\partial\lambda_j}\left(b_k
\frac{\partial u_m^r}{\partial x_k}\right)\:d \lambda_1
d \lambda_2\right| \notag \\
&\qquad+ \left|\int_S \hat{a}_{i,j}\frac{\partial u_m^r}{\partial
x_i}\frac {\partial}{\partial\lambda_j}\left((b_k-\hat{b}_k)\frac
{\partial u_m^r} {\partial x_k}\right)\:d \lambda_1
d \lambda_2\right| \notag \\
&\qquad\leq \frac{C}{W^4}\sum_{l,m=1}^3\left\|\frac{\partial
u_m^r}{\partial\lambda_l}\right\|_{0,S}\left(\left\|
\frac{\partial u_m^r}{\partial \lambda_m}\right\|_{0,S}
+\left\|\frac{\partial^2 u_m^r}{\partial \lambda
_m\partial \lambda_j}\right\|_{0,S}\right) \notag \\
&\qquad\leq \frac{C}{W^4}\sum_{l=1}^3 \left\|\frac{\partial
u_m^r}{\partial \lambda_l}\right\|^2_{7/4,Q}\:.
\end{align}
To obtain the above inequality we have used the trace theorem for
Sobolev spaces. Hence by the inverse inequality for differentiation
\begin{align}\label{eqD.19}
&\left|\int_S\left(a_{i,j}\left(\frac{\partial u_m^r} {\partial
x_i}\right)\frac{\partial}{\partial\lambda_j}\left(b_k\frac
{\partial u_m^r} {\partial
x_k}\right)-\hat{a}_{i,j}\left(\frac{\partial u_m^r} {\partial
x_i}\right)\frac{\partial}{\partial \lambda_j}\left(\hat{b}_k
\frac{\partial u_m^r}{\partial x_k}\right)\right)\: d \lambda_1
d\lambda_2\right| \notag \\
&\qquad\leq\frac{C}{W}\sum_{1\leq|\alpha|\leq2}\left\|D
_{\lambda}^\alpha u_m^r\right\|^2_{2,Q}\:.
\end{align}
Combining (\ref{eqD.17}) and (\ref{eqD.19}) gives
\begin{align}\label{eqD.20}
&\left|\int_S\left(a_{i,j}\frac{\partial u_m^r}{\partial x_i}
\frac{\partial}{\partial \lambda_j}\left(b_k\frac{\partial
u_m^r}{\partial x_k}\right) - a_{i,j}\frac{\partial
u_p^r}{\partial x_i} \frac{\partial}{\partial
\lambda_j}\left(b_k\frac{\partial u_p^r} {\partial
x_k}\right)\right)\: d \lambda_1 d \lambda_2\right| \notag \\
&\quad\leq \left|\int_S\left(\hat{a}_{i,j}\frac{\partial u_m^r}
{\partial x_i} \frac{\partial}{\partial \lambda_j}\left
(\hat{b}_k\frac{\partial u_m^r} {\partial x_k}\right) -
\hat{a}_{i,j} \left(\frac{\partial u_p^r}{\partial
x_i}\right)\frac{\partial}{\partial \lambda_j}\left
(\hat{b}_k\frac{\partial u_p^r}{\partial x_k}\right)\right)\:
d\lambda_1 d \lambda_2\right| \notag \\
&\quad+\frac{C}{W}\left(\sum_{1\leq|\alpha|\leq2}\left(\left\|
D_{\lambda}^\alpha u_m^r\right\|^2_{2,Q}+\left\|D_{\lambda}^
\alpha u_p^r\right\|^2_ {2,Q}\right)\right).
\end{align}
Next
\begin{align}\label{eqD.21}
&\left|\int_S\left(\hat{a}_{i,j}\frac{\partial u_m^r}{\partial
x_i}\frac{\partial}{\partial\lambda_j}\left(\hat{b}_k\frac
{\partial u_m^r} {\partial x_k}\right)
-\hat{a}_{i,j}\left(\frac{\partial u_m^r}{\partial
x_i}\right)^a\frac{\partial}{\partial\lambda_j}\left(\hat{b}_k
\left(\frac{\partial u_m^r}{\partial x_k}\right)^a\right)\right)
\:d \lambda_1d\lambda_2\right| \notag \\
&\quad\quad\quad\leq \left|\int_S \hat{a}_{i,j}\frac{\partial
u_m^r}{\partial x_i}\frac{\partial}
{\partial\lambda_j}\left(\hat{b}_k \left(\frac{\partial
u_m^r}{\partial x_k}-\left(\frac{\partial u_m^r}{\partial
x_k}\right)^a\right)\right)\,d \lambda_1 d
\lambda_2\right| \notag \\
&\quad\quad\quad+ \left|\int_S\hat{a}_{i,j}
\left(\left(\frac{\partial u_m^r}{\partial
x_i}\right)^a-\frac{\partial u_m^r}{\partial x_i}
\right)\frac{\partial}{\partial \lambda_j}
\left(\left(\frac{\partial u_m^r}{\partial
x_k}\right)^a\right)\,d \lambda_1 d \lambda_2\right| \notag \\
&\quad\quad\quad\leq C\left(\left\|\frac{\partial u_m^r}{\partial
x_i} \right\|_{0,S} \left (\left\| \frac{\partial}
{\partial\lambda_j}\left (\frac{\partial u_m^r}{\partial
x_k}-\left(\frac{\partial u_m^r}
{\partial x_k}\right)^a\right)\right\|_{0,S}\right)\right. \notag \\
&\quad\quad\quad\left.+\left\|\left(\frac{\partial u_m^r}{\partial
x_i} \right)^a-\frac{\partial u_m^r}{\partial x_i} \right\|_{0,S}
\left\|\frac{\partial}{\partial\lambda_j}\left(\left(\frac
{\partial u_m^r}{\partial x_k} \right)^a
\right)\right\|_{0,S}\right).
\end{align}
Now
\begin{align}\label{eqD.22}
\left\|\frac{\partial u_m^r}{\partial x_i}\right\|_{0,S}
\leq C \sum_{|\alpha|\leq1}\|D_{\lambda}^{\alpha}
u_m^r\|_{1,Q}\:.
\end{align}
and
\begin{align}\label{eqD.23}
\left\|\frac{\partial}{\partial \lambda_j}\left(\frac
{\partial u_m^r}{\partial x_k}-\left(\frac{\partial u_m^r}
{\partial x_k}\right)^a\right)\right\|_{0,S}\leq\frac{C}{W^4}
\sum_{1\leq|\alpha|\leq2}\|D_{\lambda}^{\alpha}u_m^r\|_{0,S}\:.
\end{align}
Hence
\begin{align}\label{eqD.24}
\left\|\frac{\partial}{\partial \lambda_j}\left(\frac
{\partial u_m^r}{\partial x_k}-\left(\frac{\partial
u_m^r}{\partial x_k}\right)^a\right)\right\|_{0,S}\leq
\frac{C}{W^4}\sum_{|\alpha|=1}\|D_{\lambda}^{\alpha}
u_m^r\|_{2,Q}\:.
\end{align}
Here we have used the trace theorem for Sobolev spaces.
\newline
Now using the inverse inequality for differentiation in
(\ref{eqD.24}) gives
\begin{align}\label{eqD.25}
\left\|\frac{\partial}{\partial \lambda_j}\left
(\frac{\partial u_m^r}{\partial x_k}-\left(\frac{\partial
u_m^r}{\partial x_k}\right)^a\right)\right\|_{0,S}\leq
\frac{C}{W^2}\sum_{1\leq|
\alpha|\leq2}\|D_{\lambda}^{\alpha}u_m^r\|_{1,Q}\:.
\end{align}
Combining (\ref{eqD.22}) and (\ref{eqD.25}) gives
\begin{align}\label{eqD.26}
\left\|\frac{\partial u_m^r}{\partial x_i}\right
\|_{0,S}\left\|\frac{\partial}{\partial
\lambda_j}\left(\frac{\partial u_m^r} {\partial
x_k}-\left(\frac{\partial u_m^r}{\partial x_k}\right)^a
\right)\right\|_{0,S}\leq\frac{C}{W^2}
\sum_{1\leq|\alpha|\leq2}\|D_{\lambda}^{\alpha}u_m^r\|_{0,Q}\:.
\end{align}
Next
\[\left\|\left(\frac{\partial u_m^r}{\partial x_i}\right)^a
-\frac{\partial u_m^r}{\partial x_i}\right\|_{0,S}\leq \frac{C}
{W^4}\sum_{|\alpha|=1}\|D_{\lambda}^{\alpha}u_m^r\|_{0,S}\:.\]
Hence by the trace theorem for Sobolev spaces
\begin{align}\label{eqD.27}
\left\|\left(\frac{\partial u_m^r}{\partial x_i}\right)^a
-\frac{\partial u_m^r}{\partial x_i}\right\|_{0,S}\leq\frac{C}
{W^4}\sum_{1\leq|\alpha|\leq2}\|D_{\lambda}^{\alpha}u_m^r\|_{0,Q}\:.
\end{align}
And
$$\left\|\frac{\partial}{\partial\lambda_j}\left(\left(\frac
{\partial u_m^r}{\partial x_k}\right)^a\right)\right\|_{0,S}\leq C
\sum_{1\leq |\alpha|\leq
2}\|D_{\lambda}^{\alpha}u_m^r\|_{0,S}\,.$$
Using the trace theorem and inverse inequality for differentiation gives
\begin{align}\label{eqD.28}
\left\|\frac{\partial}{\partial\lambda_j}\left(\left(\frac
{\partial u_m^r}{\partial x_k}\right)^a\right)\right\|_{0,S}\leq
CW^3 \sum_{1\leq |\alpha|\leq 2}\|D_{\lambda}^{\alpha}u_m^r\|_{0,Q}\:.
\end{align}
Combining (\ref{eqD.27}) and (\ref{eqD.28}) we obtain
\begin{align}\label{eqD.29}
&\left\|\left(\frac{\partial u_m^r}{\partial x_i}\right)^a
-\frac{\partial u_m^r}{\partial x_i}\right\|_{0,S}\left\|\frac
{\partial} {\partial\lambda_j}\left(\left(\frac{\partial
u_m^r}{\partial x_k}\right)^a\right)\right\|_{0,S} \notag \\
&\quad\quad\leq\frac{C}{W}\left(\sum_{1\leq|\alpha|\leq2}
\|D_{\lambda}^{\alpha}u_m^r\|^2_{0,Q}\right).
\end{align}
Substituting (\ref{eqD.26}) and (\ref{eqD.29}) into (\ref{eqD.21})
yields
\begin{subequations}\label{eqD.30}
\begin{align}\label{eqD.30a}
&\left|\int_S\left(\hat{a}_{i,j}\frac{\partial u_m^r}{\partial
x_i} \frac{\partial}{\partial
\lambda_j}\left(\hat{b}_k\frac{\partial u_m^r} {\partial
x_k}\right) - \hat{a}_{i,j}\left(\frac{\partial u_m^r}{\partial
x_i}\right)^a\frac{\partial}{\partial
\lambda_j}\left(\hat{b}_k\left (\frac{\partial u_m^r}{\partial
x_k}\right)^a\right)\right)\:d\lambda_1d\lambda_2\right| \notag\\
&\hspace{3.0cm}\leq \frac{C}{W}\left(\sum_{1 \leq|\alpha|\leq 2}
\|D_{\lambda}^{\alpha}u_m^r\|^2_{0,Q}\right).
\end{align}
In the same way it can be shown that
\begin{align}\label{eqD.30b}
&\left|\int_S\left(\hat{a}_{i,j}\frac{\partial u_p^r}{\partial
x_i} \frac{\partial}{\partial
\lambda_j}\left(\hat{b}_k\frac{\partial u_p^r}{\partial
x_k}\right) - \hat{a}_{i,j}\left(\frac{\partial u_p^r}{\partial
x_i}\right)^a\frac{\partial}{\partial\lambda_j}\left(\hat{b}_k\left
(\frac{\partial u_p^r}{\partial x_k}\right)^a\right)\right)\:d
\lambda_1 d\lambda_2\right| \notag \\
&\hspace{3.0cm}\leq\frac{C}{W}\left(\sum_{1\leq|\alpha|\leq2}
\|D_{\lambda}^{\alpha}u_p^r\|^2_{0,Q}\right).
\end{align}
\end{subequations}
Combining (\ref{eqD.20}) and (\ref{eqD.30}) gives
\begin{align}\label{eqD.31}
&\left|\int_S\left(a_{i,j}\frac{\partial u_m^r}{\partial x_i}
\frac{\partial}{\partial \lambda_j}\left(b_k\frac{\partial u_m^r}
{\partial x_k}\right) - a_{i,j}\frac{\partial u_p^r}{\partial
x_i}\frac{\partial}{\partial \lambda_j}\left(b_k\left(\frac
{\partial u_p^r}{\partial x_k}\right)\right)\right)\: d
\lambda_1 d\lambda_2\right| \notag \\
&\quad\leq\left|\int_S\left(\hat{a}_{i,j}\left(\frac{\partial u_m^r}
{\partial x_i}\right)\frac{\partial}{\partial \lambda_j}\left(\hat{b}_k
\left(\frac{\partial u_m^r}{\partial x_k}\right)^a\right)\right.
\right. \notag \\
&\quad-\left.\left.\hat{a}_{i,j}\left(\frac{\partial u_p^r}
{\partial x_i}\right)^a\frac{\partial}{\partial \lambda_j}
\left(\hat{b}_k\left(\frac{\partial u_p^r}{\partial x_k}\right)^a
\right)\right)\: d \lambda_1 d\lambda_2\right| \notag \\
&\quad+\frac{C}{W}\left(\sum_{1\leq|\alpha|\leq2}\left(\|D_x^
{\alpha}u_m^r\|^2_{0,Q}+\|D_x^{\alpha}u_m^r\|^2_{0,Q}\right)\right)\:.
\end{align}
Clearly
\begin{align}\label{eqD.32}
&\left|\int_S\left(\hat{a}_{i,j}\left(\frac{\partial u_m^r}
{\partial x_i}\right)^a\frac{\partial}{\partial \lambda_j}
\left(\hat{b}_k\left(\frac{\partial u_m^r}{\partial x_k}\right)^a
\right)\right.\right. \notag \\
&\quad-\left.\left.\hat{a}_{i,j}\left(\frac{\partial u_p^r}
{\partial x_i}\right)^a\frac{\partial}{\partial \lambda_j}
\left(\hat{b}_k\left(\frac{\partial u_p^r}{\partial x_k}\right)^a
\right)\right)\: d \lambda_1 d\lambda_2\right| \notag \\
&\quad\leq\left|\int_S\hat{a}_{i,j}\left(\frac{\partial
u_m^r}{\partial x_i}\right)^a\frac{\partial}{\partial
\lambda_j}\left(\hat{b}_k\left(\left(\frac{\partial
u_m^r}{\partial x_k}\right)^a - \left(\frac{\partial u_p^r}
{\partial x_k}\right)^a\right)\right)\:d\lambda_1 d\lambda_2
\right| \notag \\
&\quad+\left|\int_S\left(\hat{a}_{i,j}\left(\left(\frac{\partial
u_m^r}{\partial x_i}\right)^a- \left(\frac{\partial u_p^r}
{\partial x_i}\right)^a\right)\frac{\partial}{\partial \lambda_j}
\left(\hat{b}_k\left(\frac{\partial u_p^r}{\partial x_k}\right)^a
\right)\right)\: d \lambda_1 d\lambda_2\right|\:.
\end{align}
Now if $e(\lambda_1,\lambda_2)$ and $f(\lambda_1,\lambda_2)$ are
polynomials of degree $W$ in $\lambda_1$ and $\lambda_2$ then
using Theorem $4.1$ of~\cite{DTK1}
\begin{equation}\label{eqD.33}
\left|\int_S e(\lambda_1,\lambda_2)\frac{\partial}
{\partial\lambda_j}\left(f(\lambda_1,\lambda_2)\right) \: d
\lambda_1 d\lambda_2\right| \leq C(\ln W)
\|e\|_{1/2,S}\|f\|_{1/2,S}\:.
\end{equation}
The estimate (\ref{eqD.32}) remains valid if we replace the square
$S$ by the master triangle $T$.
\newline
Hence for any $\eta>0$ there is a constant $C$ such that
\begin{align}\label{eqD.34}
&\left|\int_S\hat{a}_{i,j}\left(\frac{\partial u_m^r} {\partial
x_i}\right)^a\frac{\partial}{\partial \lambda_j}
\left(\hat{b}_k\left(\left(\frac{\partial u_m^r}{\partial x_k}
\right)^a - \left(\frac{\partial u_p^r}{\partial x_k}
\right)^a\right)\right)\:d\lambda_1 d\lambda_2\right| \notag \\
&\leq\frac{C}{\eta}(\ln W)^2\left\|\hat{b}_k
\left(\left(\frac{\partial u_m^r}{\partial x_k}\right)^a -
\left(\frac{\partial u_p^r}{\partial x_k}\right)^a\right)
\right\|^2_{1/2,S}+\eta\left\|\hat{a}_{i,j}\left(\frac
{\partial u_m^r}{\partial x_i}\right)^a\right\|^2_{1/2,S} \notag \\
&\leq K\left(\frac{1}{\eta}(\ln W)^2\left\|\left(\left(\frac
{\partial u_m^r}{\partial x_k}\right)^a - \left(\frac {\partial
u_p^r}{\partial x_k}\right)^a\right)\right\|^2
_{1/2,S}+\eta\left\|\left(\frac{\partial u_m^r}{\partial
x_i}\right)^a\right\|^2_{1/2,S}\right)\:.
\end{align}
Here we have used the estimate
\begin{equation}\label{eqD.35}
\|gh\|_{1/2,S}\leq C\|g\|_{1,\infty,S}\,\|h\|_{1/2,S}\:.
\end{equation}
Now
\begin{align}\label{eqD.36}
\left\|\left(\frac{\partial u_m^r}{\partial x_k} \right)^a -
\left(\frac{\partial u_p^r}{\partial x_k}\right)^a\right
\|^2_{1/2,S}&\leq\left(\left\|\frac{\partial
u_m^r}{\partial x_k} - \frac{\partial u_p^r} {\partial
x_k}\right\|^2_{1/2,S}+\left\|\frac{\partial u_p^r}{\partial
x_k}-\left(\frac{\partial u_p^r}
{\partial x_k}\right)^a\right\|^2_{1/2,S}\right. \notag \\
&\left.+\left\|\frac{\partial u_m^r} {\partial x_k}
-\left(\frac{\partial u_m^r} {\partial
x_k}\right)^a\right\|^2_{1/2,S}\right)\:.
\end{align}
By the trace theorem for Sobolev spaces
\begin{equation}\label{eqD.37}
\left\|\frac{\partial u_p^r}{\partial x_k}
- \left(\frac{\partial u_p^r}{\partial x_k}\right)^a
\right\|_{1/2,S}\leq C\left\|\frac{\partial u_p^r} {\partial x_k}
- \left(\frac{\partial u_p^r}{\partial x_k}
\right)^a\right\|_{1,Q}
\end{equation}
and
\begin{equation}\label{eqD.38}
\left\|\frac{\partial u_p^r}{\partial x_k}
-\left(\frac{\partial u_p^r} {\partial x_k}\right)^a
\right\|_{1,Q}\leq \frac{C}{W^4}\left(\sum_{1\leq|\alpha|\leq
2}\|D_{\lambda}^{\alpha}u_p^r\|_{1,Q}\right).
\end{equation}
Substituting (\ref{eqD.37}) and (\ref{eqD.38}) into (\ref{eqD.36}) gives
\begin{align}\label{eqD.39}
\left\|\left(\frac{\partial u_m^r}{\partial x_k}\right)^a -
\left(\frac{\partial u_p^r}{\partial x_k}\right)^a\right
\|^2_{1/2,S} &\leq C\left\|\frac{\partial u_m^r}{\partial x_k} -
\frac{\partial u_p^r}{\partial x_k}\right\|^2_{1/2,S} \notag\\
&+\frac{K}{W^8}\left(\sum_{1\leq|\alpha|\leq 2}\|D_x^{\alpha}
u_p^r\|^2_{0,\Omega_p^r}+\|D_x^{\alpha}u_m^r\|^2
_{0,\Omega_m^r}\right).
\end{align}
Next by the trace theorem for Sobolev spaces
$$\left\|\left(\frac{\partial u_m^r}{\partial x_i}\right)
^a\right\|^2_{1/2,S}\leq C \left\|\left(\frac{\partial u_m^r}
{\partial x_i}\right)^a\right\|_{1,Q}$$
and
$$\left\|\left(\frac{\partial u_m^r}{\partial x_i}\right)^a
\right\|_{1,Q}\leq C \left\|\frac{\partial u_m^r}{\partial
x_i}\right\|_{1,Q}+\frac{C}{W^4}\left(\sum_{1\leq\|\alpha\|\leq2}
\left\|D_x^\alpha u_m^r\right\|_{1,Q}\right).$$
Hence
\begin{equation}\label{eqD.40}
\left\|\left(\frac{\partial u_m^r}{\partial x_i}
\right)^a\right\|_{1/2,S}\leq C\left(\sum_{1\leq|\alpha|\leq 2}
\|D_x^{\alpha}u_m^r\|_{\Omega_m^r}\right).
\end{equation}
Substituting (\ref{eqD.39}) and (\ref{eqD.40}) into (\ref{eqD.34})
for any $\eta>0$
\begin{align}\label{eqD.41}
&\left|\int_S\hat{a}_{i,j}\left(\frac{\partial u_m^r}{\partial
x_i}\right)^a\frac{\partial}{\partial\lambda_j}\left(\hat{b}_k
\left (\left(\frac{\partial u_m^r}{\partial x_k}\right)^a
-\left(\frac {\partial u_p^r}{\partial x_k}\right)^a\right)\right)
\:d\lambda_1 d\lambda_2\right| \notag \\
&\quad\leq K\left(\frac{(\ln W)^2}{\eta}\left\|\frac {\partial
u_m^r}{\partial x_k} - \frac{\partial u_p^r}{\partial
x_k}\right\|^2_{1/2,S}+\eta\left(\sum_{1\leq|\alpha|\leq 2}
\|D_x^{\alpha}u_m^r\|^2_{0,\Omega_m^r}\right)\right)
\end{align}
by choosing $W$ large enough.
\newline
In the same way it can be shown that
\begin{align}\label{eqD.42}
&\left|\int_S\hat{a}_{i,j}\left(\left(\frac{\partial u_m^r}
{\partial x_i}\right)^a-\left(\frac{\partial u_p^r}{\partial
x_i}\right)^a \right)\frac{\partial}{\partial
\lambda_j}\left(\hat{b}_k\left(\frac {\partial u_p^r}{\partial
x_k}\right)^a\right)\:d\lambda_1d\lambda_2\right| \notag \\
&\quad\leq K\left(\frac{(\ln W)^2}{\eta}\left\|\frac{\partial
u_m^r}{\partial x_k} - \frac{\partial u_p^r}{\partial x_k}\right\|
^2_{1/2,S}+\eta\left(\sum_{1\leq|\alpha|\leq 2}\|D_x^{\alpha}u_p^r
\|^2_{0,\Omega_m^r}\right)\right)\:.
\end{align}
Substituting (\ref{eqD.41}) and (\ref{eqD.42}) into (\ref{eqD.31})
and combining it with (\ref{eqD.32}) gives
\begin{align}\label{eqD.43}
&\left|\int_S\left(a_{i,j}\frac{\partial u_m^r}{\partial x_i}
\frac{\partial}{\partial \lambda_j}\left(b_k\frac{\partial u_m^r}
{\partial x_k}\right) - a_{i,j}\frac{\partial u_p^r}{\partial x_i}
\frac{\partial}{\partial \lambda_j}\left(b_k\frac{\partial
u_p^r}{\partial x_k}\right)\right)\: d \lambda_1 d\lambda_2
\right| \notag\\
&\quad\leq K\left(\frac{(\ln W)^2}{\eta}\left(\sum_{k=1}^3
\left\|[u_{x_k}]\right\|^2_{1/2,\Gamma_{p,j}^r}\right)\right.
\notag \\
&\quad+\left.\eta\left(\sum_{1\leq|\alpha|\leq
2}\left(\|D_x^{\alpha}u_m^r\|^2_{0,Q}+\|D_x^{\alpha}u_p^r
\|^2_{0,Q}\right)\right)\right)\:.
\end{align}
Now choosing $\epsilon=K\eta$, (\ref{eqD.15}) follows.
\end{proof}

\subsection*{D.3}
\textbf{Proof of Lemma 3.4.3}
\begin{lem3.4.3}
Let $\Omega_m^e$ and $\Omega_p^e$ be elements in the edge
neighbourhood $\Omega^e$ of $\Omega$ and $\Gamma_{m,i}^e$
be a face of $\Omega_m^e$ and $\Gamma_{p,j}^e$ be a face
of $\Omega_p^e$ such that $\Gamma_{m,i}^e=\Gamma_{p,j}^e$
and $\mu(\tilde{\Gamma}_{m,i}^e)<\infty$.
Then for any $\epsilon>0$ there exists a constant
$C_{\epsilon}$ such that for $W$ large enough
\begin{subequations}\label{eqD.44}
\begin{align}
&\left|\oint_{\partial\tilde{\Gamma}_{m,i}^e}\left(\left
(\frac{\partial u_m^e}{\partial \nb^e}\right)_{A^e}\left(
\frac{\partial u_m^e}{\partial\nuw^e}\right)_{A^e}-\left(
\frac{\partial u_p^e}{\partial\nb^e}\right)_{A^e}\left(
\frac{\partial u_p^e}{\partial\nuw^e}\right)_{A^e}\right)
\:d s^e\right| \notag \\
&\quad\quad\quad\leq C_{\epsilon}(\ln
W)^2\left(\big|\big|\big|\:[u_{x_1^e}]
\:\big|\big|\big|^2_{\tilde{\Gamma}_{m,i}^e}+\big|\big|\big|
\:[u_{x_2^e}]\:\big|\big|\big|^2_{\tilde
{\Gamma}_{m,i}^e}+\big|\big|\big|\:G_{m,i}^e[u_{x_3^e}]
\:\big|\big|\big|^2_{\tilde{\Gamma}_{m,i}^e}\right)\\
&\quad\quad\quad+\epsilon\sum_{k=m,p}\left(\int_{\tilde
{\Omega}_k^e}\left(\sum_{i,j=1,2}\left(\frac{\partial^2 u_k^e}
{\partial x_i^e\partial x_j^e} \right)^2
+e^{2\tau}\sum_{i=1}^2\left(\frac{\partial^2 u_k^e}{\partial
x_k^e\partial x_3^e}\right)^2+e^{4\tau}\left(\frac{\partial^2
u_k^e}{\left(\partial x_3^e\right)^2}\right)^2\right.\right.
\notag \\
&\quad\quad\quad\left.\left.+\sum_{i=1}^2\left(\frac {\partial
u_k^e}{\partial x_i^e}\right)^2+e^{2\tau}\left(\frac {\partial
u_k^e}{\partial x_3^e}\right)^2\right)\:w^e(x_1^e)\;dx^e\right)\:.
\end{align}
Here $C_\epsilon$ is a constant which depend on $\epsilon$ but is uniform for all
$\tilde{\Gamma}_{m,i}^e\subseteq\tilde{\Omega}^e$, and $G_{m,i}^e=\underset{x^e\in
{\tilde{\Gamma}_{m,i}^e}}{sup}(e^\tau)$\,.
\newline
If $\mu(\tilde{\Gamma}_{m,i}^e)=\infty$ then for any $\epsilon>0$ for $W,N$ large
enough
\begin{align}
&\left|\oint_{\partial\tilde{\Gamma}_{m,i}^e}\left(\left(\frac{\partial u_m^e}
{\partial \nb^e}\right)_{A^e}\left(\frac{\partial u_m^e}{\partial\nuw^e}
\right)_{A^e}-\left(\frac{\partial u_p^e}{\partial\nb^e}\right)_{A^e}\left(
\frac{\partial u_p^e}{\partial\nuw^e}\right)_{A^e}\right)\:d s^e\right|\notag\\
&\quad\leq\epsilon\left(\int_{\tilde{\Omega}_m^e}(u_m^e)^2w^e(x_1^e)\:dx^e
+\int_{\tilde{\Omega}_m^e}(u_m^e)^2w^e(x_1^e)\:dx^e\right)
\end{align}
\end{subequations}
provided $W=O(e^{{N}^\alpha})$ with $\alpha<1/2$.
\end{lem3.4.3}
\begin{proof}
In the edge neighbourhood $\Omega^e$ the system of coordinates
used are
\begin{subequations}\label{eqD.45}
\begin{align}\label{eqD.45a}
x_1^e&=\tau \notag \\
x_2^e&=\theta \notag \\
x_3^e&=x_3\,.
\end{align}
There are two cases to be considered. the first case is when
$$\tilde{\Gamma}_{m,i}^e=\left\{x^e:\alpha_0<x_1^e<\alpha_1\:,
\beta_0<x_2^e<\beta_1\:,x_3^e=\gamma_0\right\}\:.$$
\begin{figure}[!ht]
\centering
\includegraphics[scale = 0.60]{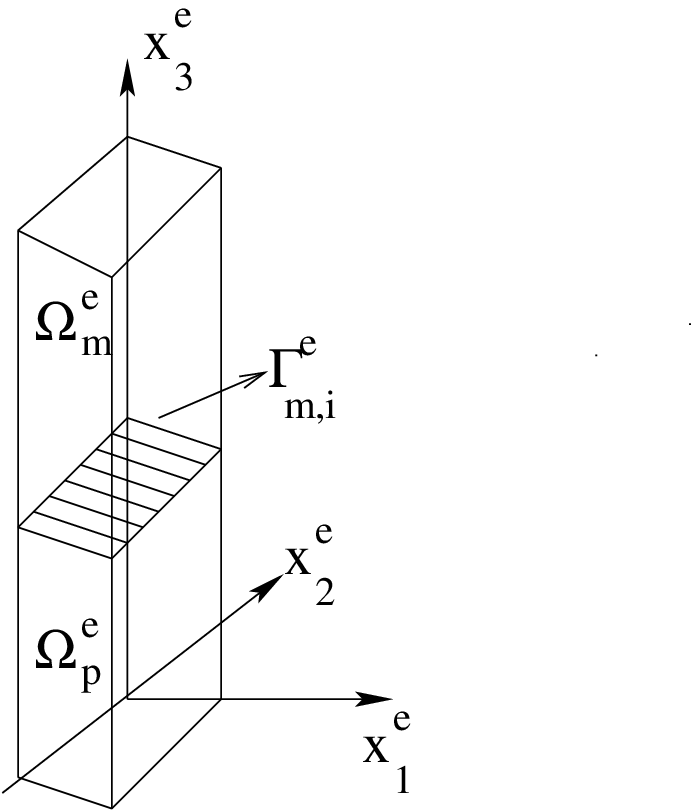}
\caption{A common face in the interior of the edge neighbourhood
in $x_1^e-x_2^e$ plane.}
\label{figD.1}
\end{figure}
We introduce local variables
\begin{align}\label{eqD.45b}
z_1&=x_1^e \notag\\
z_2&=x_2^e \notag\\
z_3&=\frac{x_3^e}{G_{m,i}^e}\,.
\end{align}
\end{subequations}
Now
\begin{align*}
\left(\frac{\partial u}{\partial x_1^e}\right)_{A^e} &= \left[
\begin{array}{ccc} 1 & 0 & 0 \end{array} \right]\:A^e \left[
\begin{array}{ccc}
\frac{\partial u}{\partial x_1^e}\\
\frac{\partial u}{\partial x_2^e}\\
\frac{\partial u}{\partial x_3^e}
\end{array} \right]\\
&=\left[ \begin{array}{ccc} 1 & 0 & 0 \end{array} \right] \left[
\begin{array}{ccc}
1 & 0 & 0\\
0 & 1 & 0\\
0 & 0 & \frac{1}{G_{m,i}^e}\end{array} \right] A^e\left[
\begin{array}{ccc}
1 & 0 & 0\\
0 & 1 & 0\\
0 & 0 & \frac{1}{G_{m,i}^e}\end{array} \right] \left[
\begin{array}{ccc}
\frac{\partial u}{\partial z_1}\\
\frac{\partial u}{\partial z_2}\\
\frac{\partial u}{\partial z_3}
\end{array} \right]
\end{align*}
Hence we obtain
\begin{subequations}\label{eqD.46}
\begin{align}
\left(\frac{\partial u}{\partial x_1^e}\right)_{A^e}&=
\left(\frac{\partial u}{\partial z_1}\right)_{\widehat{A}^e} \\
\left(\frac{\partial u}{\partial x_2^e}\right)_{A^e}&=
\left(\frac{\partial u}{\partial z_2}\right)_{\widehat{A}^e} \\
\left(\frac{\partial u}{\partial x_3^e}\right)_{A^e}&=
G_{m,i}^e\left(\frac{\partial u}{\partial z_3}\right)_{\widehat{A}^e}.
\end{align}
\end{subequations}
Here using (\ref{eq3.76})
\begin{subequations}\label{eqD.47}
\begin{equation}\label{eqD.47a}
\widehat{A}^e=(\widehat{S}^e)^TA\widehat{S}^e
\end{equation}
where
\begin{align}\label{eqD.47b}
\widehat{S}^e &=\left[ \begin{array}{ccc}
\cos\theta  & -\sin\theta & 0 \\
\sin\theta  & \cos\theta  & 0 \\
0           & 0           & \frac{e^\tau}{G_{m,i}^e}
\end{array} \right]\:.
\end{align}
\end{subequations}
Hence $\widehat{A}^e$ and its derivatives with respect to $z$ are uniformly bounded in
$\widehat{\Omega}_m^e$ and $\widehat{\Omega}_p^e$. Here $\widehat{\Omega}_m^e$ and
$\widehat{\Omega}_p^e$ are the images of ${\Omega}_m^e$ and ${\Omega}_p^e$ in $z$
variables respectively.
\begin{figure}[!ht]
\centering
\includegraphics[scale = 0.60]{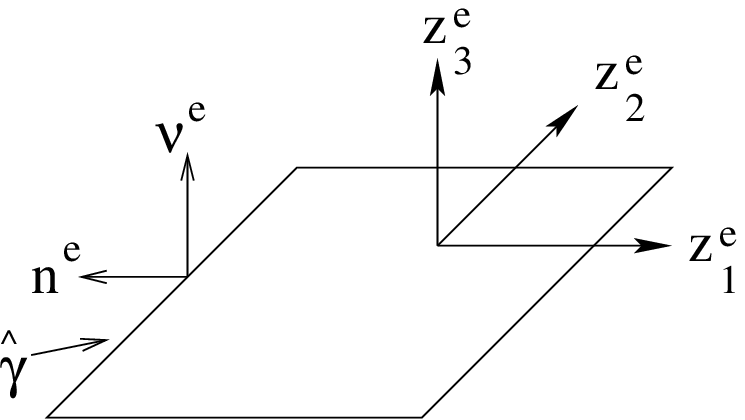}
\caption{$\widehat{\gamma}$, image of $\tilde{\gamma}$, in $z-$coordinates.}
\label{figD.2}
\end{figure}
Now
\begin{align}\label{eqD.48}
&\int_{\tilde\gamma}\left(\left(\frac{\partial
u_m^e}{\partial\nb^e} \right)_{A^e}\left(\frac{\partial
u_m^e}{\partial\nuw^e}\right) _{A^e}-\left(\frac{\partial
u_p^e}{\partial\nb^e}\right)_{A^e}\left(\frac{\partial u_p^e}
{\partial\nuw^e}\right)_{A^e}\right)\:d s^e \notag \\
&=\int_{\tilde\gamma}\left(\left(\frac{\partial u_m^e} {\partial
x_1^e}\right)_{A^e}\left(\frac{\partial u_m^e}{\partial
x_3^e}\right)_{A^e}-\left(\frac{\partial u_p^e}{\partial x_1^e}
\right)_{A^e}\left(\frac{\partial u_p^e}{\partial
x_3^e}\right)_{A^e}\right)\:d x_2^e \notag \\
&=G_{m,i}^e\int_{\hat\gamma}\left(\left(\frac{\partial u_m^e}
{\partial z_1}\right)_{\widehat{A}^e}\left(\frac{\partial
u_m^e}{\partial z_3}\right)_{\widehat{A}^e}-\left(\frac{\partial
u_p^e}{\partial z_1} \right)_{\widehat{A}^e}\left(\frac{\partial
u_p^e}{\partial z_3}\right) _{\widehat{A}^e}\right)\:d z_2\:.
\end{align}
Here $\widehat\gamma$ is the image of $\tilde\gamma$ in $z$ coordinates as shown in Figure
\ref{figD.2}.

Proceeding as in Lemma \ref{lem3.4.1} we can prove using the representation (\ref{eqD.48})
that for any $\eta>0$ there exists a constant $C_{\eta}$, which depends on $\eta$, but is
uniform for all $\tilde{\Gamma}_{m,i}^e \subseteq\tilde{\Omega}^e$ such that
\begin{align}\label{eqD.49}
&\left|\int_{\partial\tilde{\Gamma}_{m,i}^e}\left(\left(\frac
{\partial u_m^e}{\partial\nb^e}\right)_{A^e}\left(\frac{\partial
u_m^e}{\partial \nuw^e}\right)_{A^e}-\left(\frac{\partial
u_p^e}{\partial\nb^e}\right) _{A^e}\left(\frac{\partial
u_p^e}{\partial\nuw^e}\right)_{A^e}\right)\:d s^e\right| \notag\\
&\leq G_{m,i}^e\left(C_{\eta}(\ln W)^2\sum_{k=1}^3
\left\|[u_{z_k}]\right\|^2_{1/2,\hat{\Gamma}_{m,i}^e}\right.\notag \\
&+\left.\eta\left(\sum_{1\leq|\alpha|\leq 2}\left(\|D_z^{\alpha}
u_m^e\|^2_{0,\hat{\Omega}_m^e}+\|D_z^{\alpha}u_p^e\|^2_{0,\hat{\Omega}_p^e}
\right)\right)\right).
\end{align}
Here $\widehat{\Gamma}_{m,i}^e$ is the image of $\Gamma_{m,i}^e$ in $z$ coordinates and
$\widehat{\Omega}_m^e$ and $\widehat{\Omega}_p^e$ are the images of $\Omega_m^e$ and
$\Omega_p^e$ in $z$ coordinates. And by a proper choice of $\eta$ in (\ref{eqD.49}) we
obtain (\ref{eqD.44}).

The other case to be considered is when the boundary $\tilde{\Gamma} _{m,i}^e$ corresponds to
\[\tilde{\Gamma}_{m,i}^e=\left\{x^e:\alpha_0<x_1^e<\alpha_1,
\:x_2^e=\beta_0,\:\gamma_0<x_3^e<\gamma_1\right\}\:.\]
\begin{figure}[!ht]
\centering
\includegraphics[scale = 0.60]{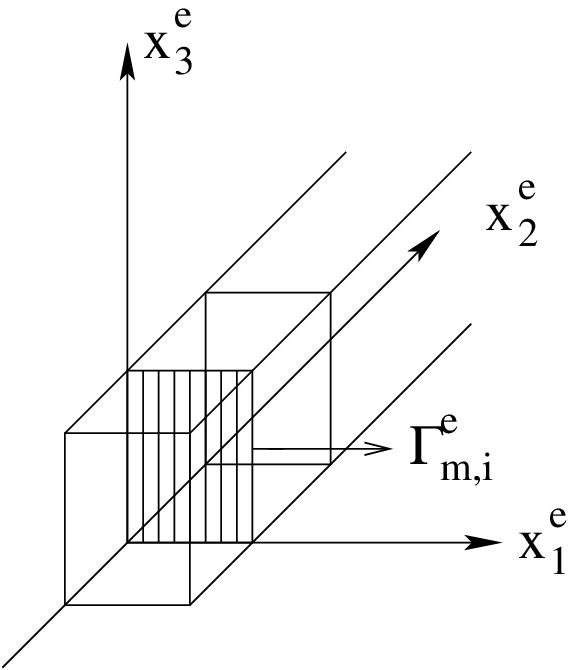}
\caption{A common face in the interior of the edge neighbourhood in $x_1^e-x_3^e$ plane.}
\label{figD.3}
\end{figure}
The case when
\[\tilde{\Gamma}_{m,i}^e=\left\{x^e:x_1^e=\alpha_0,\:\beta_0<x_2^e<
\beta_1,\:\gamma_0<x_3^e<\gamma_1\right\}\]
is essentially the same and hence is omitted.
\newline
Now
\begin{align}\label{eqD.50}
&\int_{\tilde\gamma}\left(\left(\frac{\partial
u_m^e}{\partial\nb^e} \right)_{A^e}\left(\frac{\partial
u_m^e}{\partial\nuw^e}\right) _{A^e}-\left(\frac{\partial
u_p^e}{\partial\nb^e}\right)_{A^e}\left(\frac{\partial u_p^e}
{\partial\nuw^e}\right)_{A^e}\right)\:d s^e \notag\\
&=\int_{\tilde\gamma}\left(\left(\frac{\partial u_m^e} {\partial
x_1^e}\right)_{A^e}\left(\frac{\partial u_m^e}{\partial
x_2^e}\right)_{A^e}-\left(\frac{\partial u_p^e}{\partial x_1^e}
\right)_{A^e}\left(\frac{\partial u_p^e}{\partial
x_2^e}\right)_{A^e}\right)\:d x_3^e \notag\\
&=G_{m,i}^e\int_{\hat\gamma}\left(\left(\frac{\partial u_m^e}
{\partial z_1}\right)_{\widehat{A}^e}\left(\frac{\partial
u_m^e}{\partial z_2}\right)_{\widehat{A}^e}-\left(\frac{\partial
u_p^e}{\partial z_1} \right)_{\widehat{A}^e}\left(\frac{\partial
u_p^e}{\partial z_2}\right) _{\widehat{A}^e}\right)\:dz_3\:.
\end{align}
\begin{figure}[!ht]
\centering
\includegraphics[scale = 0.60]{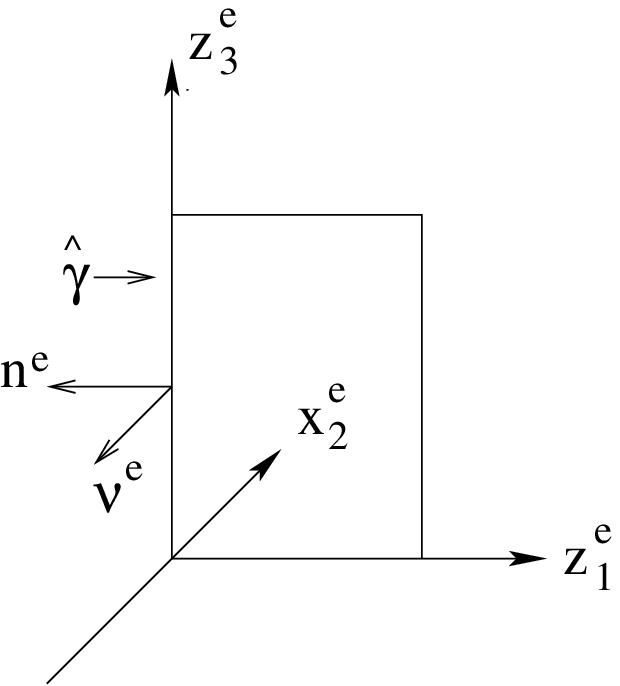}
\caption{$\widehat{\gamma}$, image of $\tilde{\gamma}$, in $z-$coordinates.}
\label{figD.4}
\end{figure}
Here $\widehat\gamma$ is the image of $\tilde\gamma$ in $z$ coordinates as shown in Figure \ref{figD.4}.
\newline
\begin{figure}[!ht]
\centering
\includegraphics[scale = 0.60]{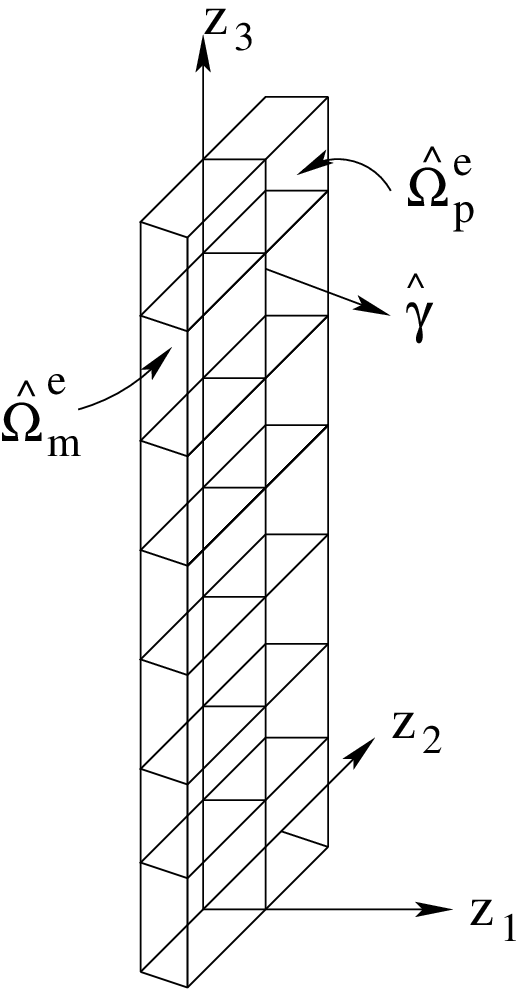}
\caption{Division of $\widehat{\Omega}_m^e$ into smaller rectangles.}
\label{figD.5}
\end{figure}
Let $\widehat{\Omega}_m^e$ and $\widehat{\Omega}_p^e$ be the images of $\Omega_m^e$ and $\Omega_p^e$
in $z$ coordinates. Then $\widehat{\Omega}_m^e$ and $\widehat{\Omega}_p^e$ are long thin rectangles
in $z$ coordinates with the length in the $z_3$ direction$=\Theta\left(\frac{1}{G_{m,i}^e}\right)$.
We divide these rectangles into $\Theta\left(\frac{1}{G_{m,i}^e}\right)$ rectangles as shown in the
Figure \ref{figD.5}, where each rectangle has length in the $z_3$ direction $=\Theta(1)$.

Proceeding as in Lemma \ref{lem3.4.1} we can prove using the representation (\ref{eqD.50}) that
\begin{align}\label{eqD.51}
&\left|\int_{\partial\tilde{\Gamma}_{m,i}^e}\left(\left(\frac
{\partial u_m^e}{\partial\nb^e}\right)_{A^e}\left(\frac{\partial
u_m^e}{\partial \nuw^e}\right)_{A^e}-\left(\frac{\partial u_p^e}
{\partial\nb^e}\right) _{A^e}\left(\frac{\partial u_p^e}
{\partial\nuw^e}\right)_{A^e}\right)\:d s^e\right| \notag \\
&\leq G_{m,i}^e\left(C_{\eta}(\ln W)^2\sum_{k=1}^3
\left\|[u_{z_k}]\right\|^2_{1/2,\hat{\Gamma}_{m,i}^e}\right. \notag \\
&+\left.\eta\left(\sum_{1\leq|\alpha|\leq 2}\left(\|D_z^{\alpha}
u_m^e\|^2_{0,\hat{\Omega}_m^e}+\|D_z^{\alpha}u_p^e\|^2_{0,\hat
{\Omega}_p^e}\right)\right)\right). 
\end{align}
Here $\widehat{\Gamma}_{m,i}^e$ is the image of $\Gamma_{m,i}^e$ in $z$ coordinates and
$\widehat{\Omega}_m^e$ and $\widehat{\Omega}_p^e$ are the images of $\Omega_m^e$ and $\Omega_p^e$
in $z$ coordinates. Now by a proper choice of $\eta$ in (\ref{eqD.51}) we obtain
(\ref{eqD.44}).
\end{proof}

\subsection*{D.4}
\textbf{Proof of Lemma \ref{lem3.4.4}}
\begin{lem3.4.4}
Let $\Omega_m^e$ and $\Omega_p^e$ be elements in the edge neighbourhood $\Omega^e$ of $\Omega$ and
$\Gamma_{m,i}^e$ be a face of $\Omega_m^e$ and $\Gamma_{p,j}^e$ be a face of $\Omega_p^e$ such that
$\Gamma_{m,i}^e=\Gamma_{p,j}^e$ and $\mu(\tilde{\Gamma}_{m,i}^e)<\infty$. Then for any $\epsilon>0$
there exists a constant $C_{\epsilon}$ such that for $W$ large enough
\begin{subequations}\label{eqD.52}
\begin{align}
&\left|\int_{\tilde{\Gamma}_{m,i}^e}\sum_{l=1}^2\left(\left(
\frac{\partial u_m^e}{\partial \taux_l^e}\right)_{A^e}\frac
{\partial}{\partial s_l^e}\left(\left(\frac{\partial u_m^e}
{\partial\nuw^e}\right)_{A^e}\right)-\left(\frac{\partial u_p^e}
{\partial\taux_l^e}\right)_{A^e}\frac{\partial}{\partial s_l^e}
\left(\left(\frac{\partial u_p^e}{\partial\nuw^e}\right)_{A^e}
\right)\right)\: d \sigma^e\right| \notag \\
&\quad\quad\leq C_{\epsilon}(\ln W)^2\left(\big|\big|\big|
\:[u_{x_1^e}]\:\big|\big|\big|^2_{\tilde{\Gamma}_{m,i}^e}
+\big|\big|\big|\:[u_{x_2^e}]\:\big|\big|\big|^2_{\tilde
{\Gamma}_{m,i}^e}+\big|\big|\big|\:G_{m,i}^e[u_{x_3^e}]
\:\big|\big|\big|^2_{\tilde{\Gamma}_{m,i}^e}\right) \notag\\
&\quad\quad+\epsilon\sum_{k=m,p}\left(\int_{\tilde
{\Omega}_k^e}\left(\sum_{i,j=1,2}\left(\frac{\partial^2 u_k^e}
{\partial x_i^e\partial x_j^e}\right)^2
+e^{2\tau}\sum_{i=1}^2\left(\frac{\partial^2 u_k^e}{\partial x_i^e
\partial x_3^e}\right)^2+e^{4\tau}\left(\frac {\partial^2 u_k^e}
{\left(\partial x_3^e\right)^2}\right)^2\right.\right. \notag\\
&\quad\quad\left.\left.+\sum_{i=1}^2\left(\frac{\partial u_k^e}
{\partial x_i^e}\right)^2+ e^{2\tau}\left(\frac{\partial
u_k^e}{\partial x_3^e}\right)^2\right)\:w^e(x_1^e)\;dx^e\right)\:.
\end{align}
If $\mu(\tilde{\Gamma}_{m,i}^e)=\infty$ then for any
$\epsilon>0$ for $W,N$ large enough
\begin{align}
&\left|\int_{\tilde{\Gamma}_{m,i}^e}\sum_{l=1}^2\left(\left(
\frac{\partial u_m^e}{\partial \taux_l^e}\right)_{A^e}\frac
{\partial}{\partial s_l^e}\left(\left(\frac{\partial u_m^e}
{\partial\nuw^e}\right)_{A^e}\right)-\left(\frac{\partial u_p^e}
{\partial\taux_l^e}\right)_{A^e}\frac{\partial}{\partial s_l^e}
\left(\left(\frac{\partial u_p^e}{\partial\nuw^e}\right)_{A^e}
\right)\right)\: d \sigma^e\right| \notag \\
&\quad\leq\epsilon\left(\int_{\tilde{\Omega}_m^e}(u_m^e)^2
w^e(x_1^e)\:dx^e+\int_{\tilde{\Omega}_m^e}(u_m^e)^2
w^e(x_1^e)\:dx^e\right)
\end{align}
\end{subequations}
provided $W=O(e^{{N}^\alpha})$ with $\alpha<1/2$.
\end{lem3.4.4}
\begin{proof}
Let us first consider the case when
\[\tilde{\Gamma}_{m,i}^e=\left\{x^e:\alpha_0<x_1^e<\alpha_1\:
\beta_0<x_2^e<\beta_1\:x_3^e=\gamma_0\right\}\]
which corresponds to Figure \ref{figD.1}.
\newline
Now using (\ref{eqD.47})
\begin{align}
&\int_{\tilde{\Gamma}_{m,i}^e}\sum_{l=1}^2\left(\left(
\frac{\partial u_m^e}{\partial \taux_l^e}\right)_{A^e}\frac
{\partial}{\partial s_l^e}\left(\left(\frac{\partial u_m^e}
{\partial\nuw^e}\right)_{A^e}\right)\right. \notag \\
&\quad\quad\quad\quad\quad\left. -\left(\frac{\partial u_p^e}
{\partial\taux_l^e}\right)_{A^e}\frac{\partial}{\partial s_l^e}
\left(\left(\frac{\partial u_p^e}{\partial\nuw^e}\right)_{A^e}
\right)\right)\: d \sigma^e \notag 
\end{align}
\begin{align}\label{eqD.53}
&=\int_{\tilde{\Gamma}_{m,i}^e}\sum_{l=1}^2\left(\left(
\frac{\partial u_m^e}{\partial x_l^e}\right)_{A^e}\frac
{\partial}{\partial x_l^e}\left(\left(\frac{\partial u_m^e}
{\partial x_3^e}\right)_{A^e}\right)\right. \notag \\
&\quad\quad\quad\quad\quad\left. -\left(\frac{\partial u_p^e}
{\partial x_l^e}\right)_{A^e}\frac{\partial}{\partial x_l^e}
\left(\left(\frac{\partial u_p^e}{\partial x_3^e}\right)_{A^e}
\right)\right)\: d x_1^e d x_2^e \notag \\
&=G_{m,i}^e\int_{\widehat{\Gamma}_{m,i}^e}\sum_{l=1}^2\left
(\left(\frac{\partial u_m^e}{\partial z_l}\right)_{\widehat{A}^e}
\frac{\partial}{\partial z_l}\left(\left(\frac{\partial u_m^e}
{\partial z_3}\right)_{\widehat{A}^e}\right)\right. \notag \\
&\quad\quad\quad\quad\quad\left. -\left(\frac{\partial u_p^e}
{\partial z_l}\right)_{\widehat{A}^e}\frac{\partial}{\partial
z_l}\left(\left(\frac{\partial u_p^e}{\partial z_3}\right)
_{\widehat{A}^e}\right)\right)\: d z_1 d z_2 \:.
\end{align}

Proceeding as in Lemma \ref{lem3.4.2} we can prove using the representation (\ref{eqD.53})
that for any $\eta>0$ there exists a constant $C_{\eta}$, which depends on $\eta$, but is
uniform for all $\tilde{\Gamma}_{m,i}^e\subseteq \tilde{\Omega}^e$ such that
\begin{align}\label{eqD.54}
&\int_{\tilde{\Gamma}_{m,i}^e}\sum_{l=1}^2\left(\left(
\frac{\partial u_m^e}{\partial \taux_l^e}\right)_{A^e}\frac
{\partial}{\partial s_l^e}\left(\left(\frac{\partial u_m^e}
{\partial\nuw^e}\right)_{A^e}\right)-\left(\frac{\partial u_p^e}
{\partial\taux_l^e}\right)_{A^e}\frac{\partial}{\partial s_l^e}
\left(\left(\frac{\partial u_p^e}{\partial\nuw^e}\right)_{A^e}
\right)\right)\: d \sigma^e \notag \\
&\quad\leq G_{m,i}^e\left(C_{\eta}(\ln W)^2\sum_{k=1}^3
\left\|[u_{z_k}]\right\|^2_{1/2,\hat{\Gamma}_{m,i}^e}\right.
\notag\\
&\hspace{1.8cm}+\left.\eta\left(\sum_{1\leq|\alpha|\leq
2}\left(\|D_z^{\alpha}u_m^e\|^2_{0,\hat{\Omega}_m^e}+
\|D_z^{\alpha}u_p^e\|^2_{0,\hat{\Omega}_p^e}\right)\right)\right).
\end{align}
And by a proper choice of $\eta$ in (\ref{eqD.54}) we obtain (\ref{eqD.52}).
\newline
The next case we consider is when the boundary $\tilde{\Gamma}_{m,i}^e$ corresponds to
\[\tilde{\Gamma}_{m,i}^e=\left\{x^e:\alpha_0<x_1^e<\alpha_1,\:
x_2^e=\beta_0,\:\gamma_0<x_3^e<\gamma_1\right\}\:.\]
The boundary $\tilde{\Gamma}_{m,i}^e$ is shown in Figure \ref{figD.3}. Now
\begin{align*}
&\int_{\tilde{\Gamma}_{m,i}^e}\sum_{l=1}^2\left(\left(
\frac{\partial u_m^e}{\partial \taux_l^e}\right)_{A^e}\frac
{\partial}{\partial s_l^e}\left(\left(\frac{\partial u_m^e}
{\partial\nuw^e}\right)_{A^e}\right)-\left(\frac{\partial u_p^e}
{\partial\taux_l^e}\right)_{A^e}\frac{\partial}{\partial s_l^e}
\left(\left(\frac{\partial u_p^e}{\partial\nuw^e}\right)_{A^e}
\right)\right)\: d \sigma^e\\
&=\int_{\tilde{\Gamma}_{m,i}^e}\left(\left(\frac{\partial
u_m^e}{\partial x_1^e}\right)_{A^e}\frac{\partial}{\partial
x_1^e}\left(\left(\frac{\partial u_m^e}{\partial x_2^e}\right)_
{A^e}\right)-\left(\frac{\partial u_p^e}{\partial x_1^e}\right)
_{A^e}\frac{\partial}{\partial x_1^e}\left(\left(\frac{\partial
u_p^e}{\partial x_2^e}\right)_{A^e}\right)\right)\:dx_1^e dx_3^e\\
&+\int_{\hat{\Gamma}_{m,i}^e}\left(\left(\frac{\partial u_m^e}
{\partial x_3^e}\right)_{A^e}\frac{\partial}{\partial x_3^e}\left
(\left(\frac{\partial u_m^e}{\partial x_2^e}\right)_{A^e}\right)
-\left(\frac{\partial u_p^e}{\partial x_3^e}\right)_{A^e}\frac
{\partial}{\partial x_3^e}\left(\left(\frac{\partial
u_p^e}{\partial x_2^e}\right)_{A^e}\right)\right)\:d x_1^e d x_3^e\:.
\end{align*}
Hence
\begin{align}\label{eqD.55}
&\int_{\tilde{\Gamma}_{m,i}^e}\sum_{l=1}^2\left(\left(
\frac{\partial u_m^e}{\partial \taux_l^e}\right)_{A^e}\frac
{\partial}{\partial s_l^e}\left(\left(\frac{\partial u_m^e}
{\partial\nuw^e}\right)_{A^e}\right)\right. \notag \\
&\quad\quad\quad\quad\quad-\left.\left(\frac{\partial u_p^e}
{\partial\taux_l^e}\right)_{A^e}\frac{\partial}{\partial s_l^e}
\left(\left(\frac{\partial u_p^e}{\partial\nuw^e}\right)_{A^e}
\right)\right)\: d \sigma^e \notag \\
&\quad=G_{m,i}^e\left(\int_{\widehat{\Gamma}_{m,i}^e}\left(\left
(\frac{\partial u_m^e}{\partial z_1}\right)_{\widehat{A}^e}
\frac{\partial}{\partial z_1}\left(\left(\frac{\partial u_m^e}
{\partial z_2}\right)_{\widehat{A}^e}\right)\right.\right. \notag \\
&\quad\quad\quad\quad\quad-\left.\left.\left(\frac{\partial
u_p^e}{\partial z_1}\right)_{\widehat{A}^e}\frac{\partial}{\partial
z_1}\left(\left(\frac{\partial u_p^e}{\partial
z_2}\right)_{\widehat{A}^e}\right)\right)\:d z_1 d z_3\right. \notag \\
&\quad+\left.\int_{\widehat{\Gamma}_{m,i}^e}\left(\left(\frac
{\partial u_m^e}{\partial z_3}\right)_{\widehat{A}^e}\frac
{\partial}{\partial z_3}\left(\left(\frac{\partial u_m^e}
{\partial z_2}\right)_{\widehat{A}^e}\right)\right.\right. \notag \\
&\quad\quad\quad\quad\quad-\left.\left.\left(\frac{\partial
u_p^e}{\partial z_3}\right)_{\widehat{A}^e}\frac{\partial}{\partial z_3}
\left(\left(\frac{\partial u_p^e}{\partial z_2}\right)_{\widehat{A}^e}\right)
\right)\:d z_1 d z_3\right).
\end{align}

Proceeding as in Lemma \ref{lem3.4.2} we can prove using the
representation (\ref{eqD.55}) that for any $\eta>0$ there exists
a constant $C_\eta$, which depends on $\eta$, but is uniform
for all $\tilde{\Gamma}_{m,i}^e \subseteq\tilde{\Omega}^e$ such that
\begin{align}\label{eqD.56}
&\left|\int_{\tilde{\Gamma}_{m,i}^e}\sum_{l=1}^2\left(\left(
\frac{\partial u_m^e}{\partial \taux_l^e}\right)_{A^e}\frac
{\partial}{\partial s_l^e}\left(\left(\frac{\partial u_m^e}
{\partial\nuw^e}\right)_{A^e}\right)-\left(\frac{\partial u_p^e}
{\partial\taux_l^e}\right)_{A^e}\frac{\partial}{\partial s_l^e}
\left(\left(\frac{\partial u_p^e}{\partial\nuw^e}
\right)_{A^e}\right)\right)\: d \sigma^e\right| \notag \\
&\quad\quad\quad\leq G_{m,i}^e\left(C_{\eta}(\ln W)^2\sum_{k=1}^3
\left\|[u_{z_k}]\right\|^2_{1/2,\hat{\Gamma}_{m,i}^e}\right.
\notag \\
&\hspace{2.4cm}+\left.\eta\left(\sum_{1\leq|\alpha|\leq
2}\left(\|D_z^{\alpha}u_m^e\|^2_{0,\hat{\Omega}_m^e}
+\|D_z^{\alpha}u_p^e\|^2_{0,\hat{\Omega}_p^e}\right)
\right)\right)\:.
\end{align}
Now by a proper choice of $\eta$, (\ref{eqD.52}) follows.
\end{proof}

\subsection*{D.5}
\textbf{Proof of Lemma \ref{lem3.5.1}}
\begin{lem3.5.1}
Let $\Gamma_{m,j}^r$ be part of the boundary of the element
$\Omega_m^r$ which lies on the $x_2-x_3$ axis. Define the
contributions from $\Gamma_{m,j}^r$ by
\begin{align}\label{eqD.57}
(BT)_{m,j}^r&=\rho_v^2\sin^2(\phi_v)\left(-\oint_{\partial
\Gamma_{m,j}^r}\left(\frac{\partial u}{\partial \nb} \right)_A
\left(\frac{\partial u}{\partial \nuw}\right)_A\:ds\right. \notag\\
&+\left. 2\int_{\Gamma_{m,j}^r}\sum_{j=1}^2\left(\frac{\partial
u}{\partial \taux_j}\right)_A\frac{\partial}{\partial s_j}\left(
\left(\frac{\partial u}{\partial \nuw}\right)_A\right)\:d\sigma\right)\:.
\end{align}
If Dirichlet boundary conditions are imposed on $\Gamma_{m,j}^r$
then
\begin{align}\label{eqD.58}
|(BT)_{m,j}^r|&\leq C_{\epsilon}(\ln W)^2\|u_m^r\|^2_{3/2,
\Gamma_{m,j}^r}+K_{\epsilon}\sum_{|\alpha|=1}
\|D_x^\alpha u_m^r\|^2_{0,\Omega_m^r} \notag\\
&+\epsilon\sum_{|\alpha|=2} \|D_x^\alpha u_m^r\|^2_{0,\Omega_m^r}\:.
\end{align}
If Neumann boundary conditions are imposed on $\Gamma_{m,j}^r$
then
\begin{align}\label{eqD.59}
\left|(BT)_{m,j}^r\right|\leq C_{\epsilon}(\ln W)^2\left\|\left(
\frac{\partial u _m^r}{\partial \nuw}\right)_A\right\|^2_{1/2,
\Gamma_{m,j}^r}+\epsilon\sum_{1\leq|\alpha|\leq 2}\|D_x^r u_m^r\|
^2_{2,\Omega_m^r}\:.
\end{align}
\end{lem3.5.1}
\begin{proof}
In case the boundary conditions on the face corresponding to the
$x_2-x_3$ axis are Neumann we can show that for any $\epsilon>0$
\begin{align}\label{eqD.60}
\left|(BT)_{m,j}^r\right|\leq C_{\epsilon}(\ln W)^2\left\|\left(
\frac{\partial u _m^r}{\partial \nuw}\right)_A\right\|^2_{1/2,
\Gamma_{m,j}^r}+\epsilon\sum_{1\leq|\alpha|\leq 2}\|D_x^r u_m^r\|
^2_{2,\Omega_m^r}\:.
\end{align}
\begin{figure}[!ht]
\centering
\includegraphics[scale = 0.60]{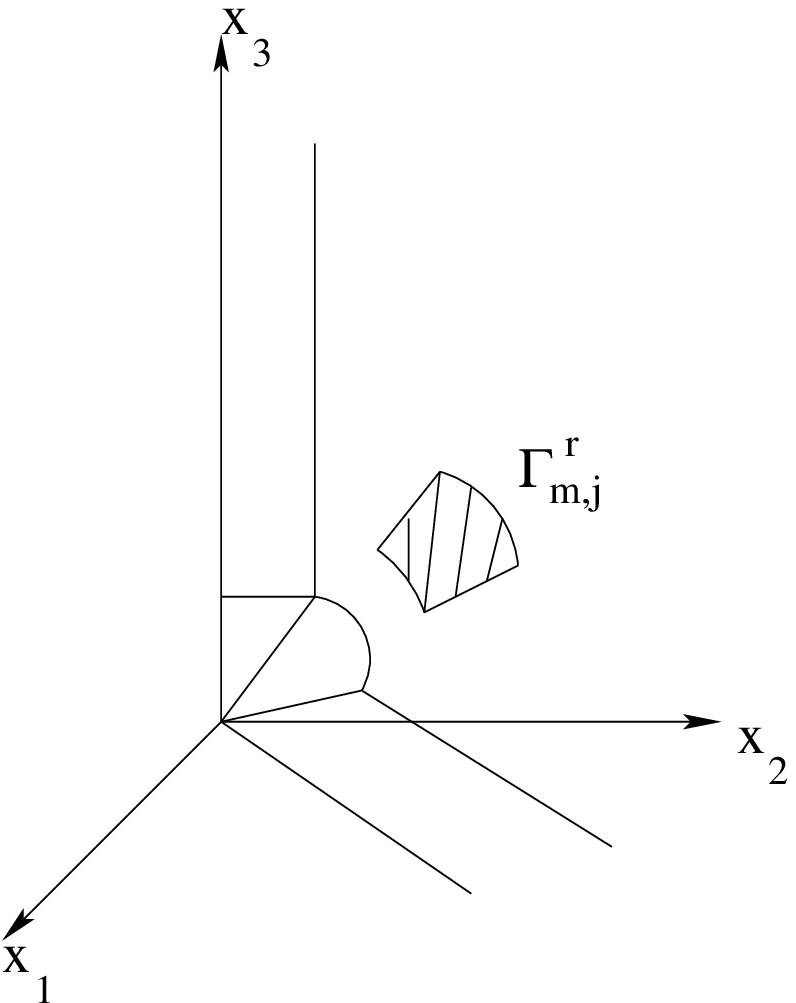}
\caption{The face $\Gamma_{m,j}^r$ of the boundary element $\Omega_m^r$.}
\label{figD.6}
\end{figure}
We now need to examine the case when Dirichlet boundary conditions are imposed
on the face corresponding to the $x_2-x_3$ plane.
\newline
We may choose
\begin{align}\label{eqD.61}
\taux_1 &=(0,0,1)\:, \notag \\
\taux_2 &=(0,1,0)\:, \notag \\
\nuw &=(-1,0,0)\:.
\end{align}
Now
\begin{align}\label{eqD.62}
A\taux_1=\alpha_{11}\taux_1+\alpha_{12}\taux_2+\alpha_{13}A\nuw \notag \\
A\taux_2=\alpha_{21}\taux_1+\alpha_{22}\taux_2+\alpha_{23}A\nuw
\end{align}
Here
\begin{align}\label{eqD.63}
\alpha_{13}=\frac{\nuw^tA\taux_1}{\nuw^tA\nuw},\:\quad
\alpha_{23}=\frac{\nuw^tA\taux_2}{\nuw^tA\nuw}\:,
\end{align}
and
\begin{align}
\alpha_{11}&=\taux_1^tA\taux_1-\frac{(\nuw^tA\taux_1)^2}
{\nuw^tA\nuw},\hspace{1.6cm}\alpha_{12}=\taux_2^tA\taux_1
-\frac{(\nuw^tA\taux_1)(\nuw^tA\taux_2)}{\nuw^tA\nuw}\:,\notag \\
\alpha_{21}&=\taux_1^tA\taux_2-\frac{(\nuw^tA\taux_2)
(\nuw^tA\taux_1)}{\nuw^tA\nuw},\hspace{0.5cm}\alpha_{22}
=\taux_2^tA\taux_2-\frac{(\nuw^tA\taux_2)^2}{\nuw^tA\nuw}\:.\notag
\end{align}
Hence we conclude that
\begin{align}\label{eqD.64}
\left(\frac{\partial u}{\partial\taux_1}\right)_A
=\alpha_{11}\frac{\partial u}{\partial\taux_1}
+\alpha_{12}\frac{\partial u}{\partial\taux_2}
+\alpha_{13}\left(\frac{\partial u}{\partial\nuw}\right)_A \notag\\
\left(\frac{\partial u}{\partial\taux_2}\right)_A
=\alpha_{21}\frac{\partial u}{\partial\taux_1}
+\alpha_{22}\frac{\partial u}{\partial\taux_2}
+\alpha_{23}\left(\frac{\partial u}{\partial\nuw}\right)_A\:.
\end{align}
Thus
\begin{align}\label{eqD.65}
&2\int_{\Gamma_{m,j}^r}\sum_{k=1}^2\left(\frac{\partial u}
{\partial\taux_k}\right)_A\frac{\partial}{\partial s_k}
\left(\frac{\partial u}{\partial \nuw}\right)_A\:d\sigma \notag\\
&=2\int_{\Gamma_{m,j}^r}\left(\sum_{k,l=1}^2
\alpha_{k,l}\left(\frac{\partial u}{\partial \taux_l}
\right)\frac{\partial}{\partial s_k}\left(\frac{\partial u}
{\partial\nuw}\right)_A\right)\:d\sigma \notag\\
&+2\int_{\Gamma_{m,j}^r}\left(\alpha_{13}\left(\frac {\partial
u}{\partial \nuw}\right)_A\frac{\partial} {\partial
s_1}\left(\frac{\partial u}{\partial \nuw}\right)_A
+\alpha_{23}\left(\frac{\partial u}{\partial \nuw}\right)_A
\frac{\partial}{\partial s_2}\left(\frac{\partial u}{\partial
\nuw}\right)_A\right)\:d\sigma\,.
\end{align}
Now
\begin{align}\label{eqD.66}
&2\int_{\Gamma_{m,j}^r}\left(\alpha_{13}\left(\frac{\partial u}
{\partial \nuw}\right)_A\frac{\partial}{\partial s_1}\left(\frac
{\partial u}{\partial \nuw}\right)_A+\alpha_{23}\left(\frac
{\partial u}{\partial \nuw}\right)_{A}\frac{\partial}{\partial s_2}
\left(\frac{\partial u}{\partial \nuw}\right)_A\right)\:d\sigma\notag \\
&=\int_{\Gamma_{m,j}^r}\left(\frac{\partial}{\partial s_1}\left
(\alpha_{13}\left(\frac{\partial u}{\partial \nuw}\right)_A^2
\right)+\frac{\partial}{\partial s_2}\left(\alpha_{23}\left(\frac
{\partial u}{\partial \nuw}\right)_A^2\right)\right)\:d \sigma\notag \\
&-\int_{\Gamma_{m,j}^r}\left(\frac{\partial}{\partial s_1}
(\alpha_{13})+\frac{\partial}{\partial s_2}(\alpha_{23})
\right)\left(\frac{\partial u}{\partial \nuw} \right)_A^2\:d\sigma\,.
\end{align}
Using the divergence theorem we conclude that
\begin{align}\label{eqD.67}
&2\int_{\Gamma_{m,j}^r}\left(\alpha_{13}\left(\frac{\partial
u}{\partial \nuw}\right)_A\frac{\partial}{\partial s_1}
\left(\frac{\partial u}{\partial \nuw}\right)_A+\alpha_{23}
\left(\frac{\partial u}{\partial\nuw}\right)_{A}\frac{\partial}
{\partial s_2}\left(\frac{\partial u}{\partial \nuw}\right)_A
\right)\:d\sigma \notag \\
&=\int_{\partial\Gamma_{m,j}^r}\left\{(\nb\cdot\taux_1)
\alpha_{13}+(\nb\cdot\taux_2)\alpha_{23}\right\}\left
(\frac{\partial u}{\partial\nuw}\right)_A^2\:d s \notag \\
&-\int_{\Gamma_{m,j}^r}\left(\frac{\partial}{\partial s_1}
(\alpha_{13})+\frac{\partial}{\partial s_2}(\alpha_{23})
\right)\left(\frac{\partial u}{\partial \nuw}\right)_A^2 \:d\sigma\,.
\end{align}
Here $\nb$ denotes the unit outward normal to $\partial\Gamma_{m,j}^r$.
\newline
Now
\begin{align*}
\{(\nb\cdot\taux_1)\alpha_{13}+(\nb\cdot\taux_2)\alpha_{23}\}
&=\frac{(\nb\cdot\taux_1)\nuw^tA\taux_1+(\nb\cdot\taux_2)
\nuw^tA\taux_2}{\nuw^tA\nuw}\\
&=\frac{\nuw^tA\{(\nb\cdot\taux_1)\taux_1+(\nb\cdot\taux_2)
\taux_2\}}{\nuw^tA\nuw}=\frac{\nuw^tA\nb}{\nuw^tA\nuw}\:.
\end{align*}
Hence
\begin{align}\label{eqD.68}
\int_{\partial\Gamma_{m,j}^r}\left\{(\nb\cdot\taux_1)\alpha_{13}
+(\nb\cdot\taux_2)\alpha_{23}\right\}\left(\frac{\partial u}
{\partial\nuw}\right)_A^2\:d s=\int_{\partial\Gamma_{m,j}^r}
\frac{\nuw^tA\nb}{\nuw^t A\nuw}\left(\frac{\partial u}{\partial
\nuw}\right)_A^2\:d s\,.
\end{align}
Thus
\begin{align}\label{eqD.69}
&2\rho_v^2\sin^2(\phi_v)\int_{\Gamma_{m,j}^r}\sum_{k=1}^2\left
(\frac{\partial u}{\partial\taux_k}\right)_A\frac{\partial}
{\partial s_k}\left(\frac{\partial u}{\partial \nuw}\right)
_A\:d\sigma \notag\\
&=\rho_v^2\sin^2(\phi_v)\left(\int_{\Gamma_{m,j}^r}\left(
\sum_{k,l=1}^22\alpha_{k,l}\left(\frac{\partial u}{\partial
\taux_l}\right)\frac{\partial}{\partial s_k}\left(\frac {\partial
u}{\partial \nuw}\right)_A\right)\:d\sigma\right. \notag\\
&-\left.\int_{\Gamma_{m,j}^r}\left(\sum_{k=1}^2\frac{\partial}
{\partial s_k}(\alpha_{k3})\right)\left(\frac{\partial u}
{\partial \nuw}\right)_A^2\:d\sigma
+\int_{\partial\Gamma_{m,j}^r}\frac{\nuw^tA\nb}{\nuw^tA\nuw}
\left(\frac{\partial u}{\partial\nuw}\right)_A^2\:d s\right).
\end{align}
We now examine the term
\begin{align}\label{eqD.70}
\rho_v^2\sin^2(\phi_v)\left(-\oint_{\partial\Gamma_{m,j}^r}
\left(\frac{\partial u}{\partial \nb}\right)_A\left(\frac
{\partial u}{\partial \nuw}\right)_A\:d s\right).
\end{align}
Now
\[\left(\frac{\partial u}{\partial \nb}\right)_A=\gamma_{11}
\frac{\partial u}{\partial \taux_1}+\gamma_{12}\frac{\partial
u}{\partial\taux_2}+\gamma_{13}\left(\frac{\partial u}{\partial\nuw}
\right)_A.\]
Here
\begin{align}\label{eqD.71}
\gamma_{13}&=\frac{\nuw^tA\nb}{\nuw^tA\nuw}\:, \notag \\
\gamma_{11}&=\taux_1^tA\nb-\frac{(\nuw^tA\nb)(\nuw^tA\taux_1)}
{\nuw^tA\nuw}\:, \notag \\
\gamma_{12}&=\taux_2^tA\nb-\frac{(\nuw^tA\nb)(\nuw^tA\taux_2)}
{\nuw^tA\nuw}\,.
\end{align}
Hence
\begin{align}\label{eqD.72}
&\rho_v^2\sin^2(\phi_v)\left(-\oint_{\partial\Gamma_{m,j}^r}\left
(\frac{\partial u}{\partial \nb}\right)_A\left(\frac{\partial u}
{\partial \nuw}\right)_A\:d s\right) \notag \\
&=\rho_v^2\sin^2(\phi_v)\left(-\oint_{\partial\Gamma_{m,j}^r}\sum_
{k=1}^2\gamma_{1,k}\frac{\partial u}{\partial \taux_k}\left(
\frac{\partial u}{\partial \nuw}\right)_A\:d s\right. \notag\\
&\hspace{2.2cm}-\left.\oint_{\partial\Gamma_{m,j}^r}
\left(\frac{\nuw^tA\nb} {\nuw^tA\nuw}\right)\left(\frac{\partial
u}{\partial \nuw}\right) _A^2\:d s\right)\,.
\end{align}
Combining (\ref{eqD.69}) and (\ref{eqD.72}) we obtain
\begin{align}\label{eqD.73}
(BT)_{m,j}^r&=\rho_v^2\sin^2(\phi_v)\left(\int_{\Gamma_{m,j}^r}
\left(\sum_{k,l=1}^2 2\alpha_{k,l}\frac{\partial
u}{\partial\taux_l} \frac{\partial}{\partial
s_k}\left(\frac{\partial u}{\partial
\nuw}\right)_A\right)\:d\sigma\right.\\
&-\left.\int_{\Gamma_{m,j}^r}\left(\sum_{k=1}^2\frac{\partial}
{\partial s_k}(\alpha_{k3})\right)\left(\frac{\partial u}
{\partial \nuw}\right)_A^2\:d\sigma-\oint_{\partial\Gamma_{m,j}^r}
\sum_{k=1}^2\gamma_{1,k}\frac{\partial u}{\partial \taux_k}\left
(\frac{\partial u}{\partial \nb}\right)_A\:d s\right).\tag{D.74}
\end{align}
Using (\ref{eqD.73}) we can show that
\begin{align}\label{eqD.74}
\left\|(BT)_{m,j}^r\right\|&\leq C_{\epsilon}(\ln W)^2
\|u_m^r\|^2_{3/2,\Gamma_{m,j}^r}+\epsilon/2\sum_{1\leq|\alpha|
\leq2}\|D_x^\alpha u_m^r\|^2_{0,\Omega_m^r} \notag \\
&+K|u_m^r|^2_{1,\Gamma_{m,j}^r}\:.
\end{align}
Now
\begin{align}\label{eqD.75}
|u_m^r|^2_{1,\Gamma_{m,j}^r}\leq K_{\epsilon}|u_m^r|^2_{1,\Omega_m^r}
+\epsilon/2|u_m^r|^2_{2,\Omega_m^r}.
\end{align}
Hence
\begin{align}\label{eqD.76}
\left\|(BT)_{m,j}^r\right\|&\leq C_{\epsilon}(\ln W)
^2\|u_m^r\|^2_{3/2,\Gamma_{m,j}^r}+K_{\epsilon}\sum_{|\alpha|=1}
\|D_x^\alpha u_m^r\|^2_{0,\Omega_m^r} \notag\\
&+\epsilon\sum_{|\alpha|=2} \|D_x^\alpha u_m^r\|^2_{0,\Omega_m^r}\:.
\end{align}
\end{proof}

\subsection*{D.6}
\textbf{Proof of Lemma \ref{lem3.5.2}}
\begin{lem3.5.2}
Let $\Gamma_{m,j}^e$ be part of the boundary of the element
$\Omega_m^e$ which lies on the $x_2-x_3$ axis. Define the
contributions from $\Gamma_{m,j}^e$ by
\begin{align}\label{eqD.77}
(BT)_{m,j}^e&=-\oint_{\partial\tilde{\Gamma}_{m,j}^e}
\left(\frac{\partial u}{\partial\nb^e}\right)_{A^e}
\left(\frac{\partial u}{\partial \nuw^e}\right)_{A^e}\:d s^e\notag \\
&-2\int_{\tilde{\Gamma}_{m,j}^e}\sum_{l=1}^2\left(\frac{\partial
u}{\partial \taux_l^e}\right)_{A^e}\frac{\partial}{\partial
s_l^e}\left(\left(\frac{\partial u}{\partial
\nuw^e}\right)_{A^e}\right)\:d\sigma^e\:.
\end{align}
If Dirichlet boundary conditions are imposed on
$\Gamma_{m,j}^e$ and $\mu{(\tilde{\Gamma}_{m,j}^e)}<\infty$
then for any $\epsilon>0$ there exists constants\,
$C_\epsilon, K_\epsilon$ such that for $W$ large enough
\begin{align}\label{eqD.78}
|(BT)_{m,j}^e|&\leq C_{\epsilon}(\ln W)^2
\left(\|u_m^e\|_{0,\tilde{\Gamma}_{m,i}^e}^2+\Big|\Big|\Big|
\left(\frac{\partial u_m^e}{\partial x_1^e}\right)
\Big|\Big|\Big|^2_{\tilde{\Gamma}_{m,j}^e}+
\Big|\Big|\Big|G_{m,j}^e\left( \frac{\partial u_m^e}{\partial
x_3^e}\right)\Big|\Big|\Big|^2_{\tilde{\Gamma}_{m,j}^e}\right)\notag\\
&+K_\epsilon\int_{\tilde{\Omega}_m^e}\left(\sum_{i=1}^2
\left(\frac{\partial u_m^e}{\partial x_i^e}\right)^2
+e^{2\tau}\left(\frac{\partial u_m^e}{\partial x_3^e}
\right)^2\right)\:dx^e +\epsilon\int_{\tilde{\Omega}_m^e}\left(\sum_{i,j=1,2}
\left(\frac{\partial^2 u_m^e}{\partial x_i^e\partial x_j^e}
\right)^2 \right.\notag \\
&\left.+e^{2\tau}\sum_{i=1}^2\left(\frac{\partial^2 u_m^e}
{\partial x_i^e\partial x_3^e}\right)^2
+e^{4\tau}\left(\frac{\partial^2 u_m^e}{\left(\partial x_3^e
\right)^2}\right)^2\right)dx^e\:.
\end{align}
If Neumann boundary conditions are imposed on $\Gamma_{m,j}^e$ and
$\mu{(\tilde{\Gamma}_{m,j}^e)}<\infty$ then for any $\epsilon>0$ there exists a
constant $C_\epsilon$ such that
\begin{align}\label{eqD.79}
\left|(BT)_{m,j}^e\right|&\leq C_{\epsilon}(\ln
W)^2\Big|\Big|\Big|\left( \frac{\partial u}{\partial
\nuw^e}\right)_{A^e}\Big|\Big|\Big|^2_{\tilde{
\Gamma}_{m,j}^e}+\epsilon\left(\int_{\tilde{\Omega}_m^e}
\left(\sum_{i,j=1}^2 \left(\frac{\partial^2 u_m^e}{\partial
x_i^e\partial x_j^e}\right)^2\right.\right. \notag \\
&\left.\left.+ e^{2\tau}\sum_{i=1}^2\left(\frac {\partial^2
u_m^e}{\partial x_i^e \partial x_3^e}\right)^2
+e^{4\tau}\left(\frac{\partial^2 u_m^e}{(\partial
x_3^e)^2}\right)^2 + \sum_{i=1}^2\left(\frac{\partial u_m^e}
{\partial x_i^e}\right)^2\right.\right. \notag \\
&\left.\left.+ e^{2\tau}\left(\frac{\partial u_m^e} {\partial
x_3^e}\right)^2\right)\:dx^e\right)\:.
\end{align}
If $\mu{(\tilde{\Gamma}_{m,j}^e)}=\infty$ then for any $\epsilon>0$ for $N,W$
large enough
\begin{align}
\left|(BT)_{m,j}^e\right|&\leq \epsilon \int_{\tilde{\Omega}_m^e}
(u_m^e)^2w^e(x_1^e)\:dx^e \notag
\end{align}
provided $W=O(e^{N^\alpha})$ for $\alpha<1/2$.
\end{lem3.5.2}
\begin{proof}
In the edge neighbourhood the coordinates used are
\begin{align}\label{eqD.80}
x_1^e&=\tau \notag \\
x_2^e&=\theta \notag \\
x_3^e&=x_3\:.
\end{align}
We introduce the variables as in (\ref{eqD.45})
\begin{align}\label{eqD.81}
z_1&=x_1^e \notag \\
z_2&=x_2^e \notag \\
z_3&=\frac{x_3^e}{G_{m,i}^e}\:.
\end{align}
Let us examine the term
\begin{align}\label{eqD.82}
(BT)_{m,j}^e&=-2\int_{\tilde{\Gamma}_{m,j}^e}\sum_{l=1}^2\left(
\frac{\partial u}{\partial \taux_l^e}\right)_{A^e}\frac{\partial}
{\partial s_l^e}\left(\frac{\partial u}{\partial \nuw^e}\right)
_{A^e}\: d\sigma^e \notag \\
&+\oint_{\partial\tilde{\Gamma}_{m,j}^e}\left(\frac{\partial u}
{\partial\nb^e}\right)_{A^e}\left(\frac{\partial u}
{\partial\nuw^e}\right)_{A^e}\: ds^e\,.
\end{align}
If the boundary conditions are Neumann we can prove as in Lemma \ref{lem3.4.4}
that
\begin{align}\label{eqD.83}
|(BT)_{m,j}^e|&\leq C_{\epsilon}(\ln W)^2|||\left(\frac {\partial
u_m^e}{\partial \nuw^e}\right)_{\tilde{A}^e}|||^2
_{\tilde{\Gamma}_{m,j}^e} \notag \\
&+\epsilon\left(\int_{\tilde{\Omega}_m^e}
\sum_{i,j=1,2}\left(\frac{\partial^2 u_m^e}{\partial x_i^e
\partial x_j^e} \right)^2+e^{2\tau}\sum_{i=1}^2\left(\frac
{\partial^2 u_m^e}{\partial x_i^e \partial x_3^e}\right)^2
+e^{4\tau}\left(\frac{\partial^2 u_m^e}{\left(\partial
x_3^e\right)^2}\right)^2\right. \notag \\
&+\left.\sum_{i=1}^2\left(\frac{\partial u_m^e} {\partial
x_i^e}\right)^2+e^{2\tau}\left(\frac{\partial u_m^e} {\partial
x_3^e}\right)^2\right)w^e(x_1^e)\: dx^e\,.
\end{align}
\begin{figure}[!ht]
\centering
\includegraphics[scale = 0.60]{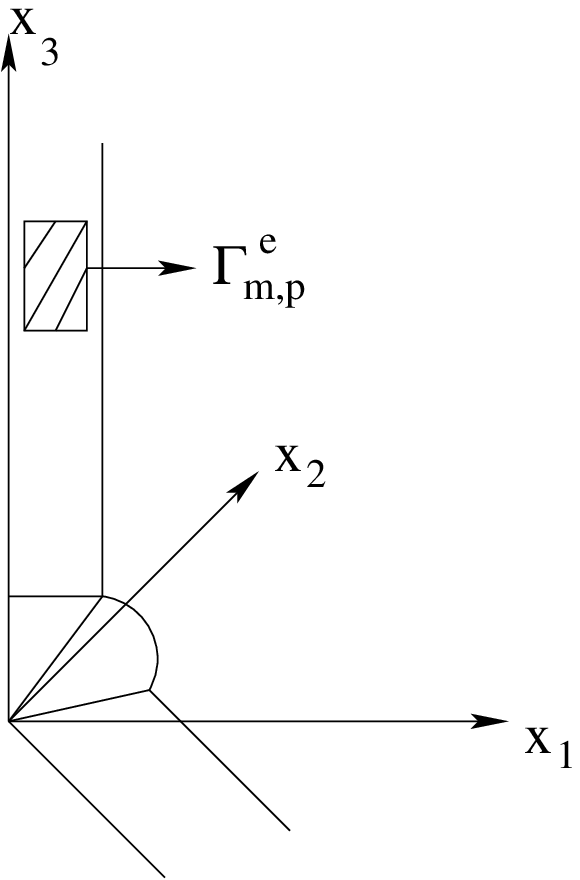}
\caption{The face $\Gamma_{m,p}^e$ of the boundary element $\Omega_m^e$.}
\label{figD.7}
\end{figure}
Let us now examine the case when the boundary conditions are Dirichlet.
\newline
Now choose
\begin{align}\label{eqD.84}
\taux_1^e&=(1,0,0) \notag \\
\taux_2^e&=(0,0,1) \notag \\
\nuw^e   &=(0,-1,0)\,.
\end{align}
Moreover
\begin{subequations}\label{eqD.85}
\begin{align}\label{eqD.85a}
&A^e\taux_1^e=\alpha_{11}^e\taux_1^e+\alpha_{12}^e\taux_2^e+
\alpha_{13}^eA^e\nuw^e\:, \notag\\
&A^e\taux_2^e=\alpha_{21}^e\taux_1^e+\alpha_{22}^e\taux_2^e+
\alpha_{23}^eA^e\nuw^e\:.
\end{align}
Here
\begin{align}\label{eqD.85b}
&\alpha_{13}^e=\frac{(\nuw^e)^tA^e\taux_1^e}{(\nuw^e)^tA^e
\nuw^e},\:\quad
\alpha_{23}^e=\frac{(\nuw^e)^tA^e\taux_2^e}{(\nuw^e)^tA^e
\nuw^e}\:, \notag \\
&\alpha_{11}^e=(\taux_1^e)^tA^e\taux_1^e-\frac{((\nuw^e)^t
A^e\taux_1^e)^2}{(\nuw^e)^tA^e\nuw^e}, \notag\\
&\alpha_{12}^e=(\taux_2^e)^tA^e
\taux_1^e-\frac{((\nuw^e)^tA^e\taux_1^e)((\nuw^e)^tA^e
\taux_2^e)}{(\nuw^e)^tA^e\nuw^e}\:, \notag \\
&\alpha_{21}^e=(\taux_1^e)^tA^e\taux_2^e-\frac{((\nuw^e)^t
A^e\taux_2^e)((\nuw^e)^tA^e\taux_1^e)}{(\nuw^e)^tA^e\nuw^e},
\notag \\
&\alpha_{22}^e=(\taux_2^e)^tA^e\taux_2^e-\frac{((\nuw^e)^tA^e
\taux_2^e)^2}{(\nuw^e)^tA^e\nuw^e}\:.
\end{align}
\end{subequations}
Now
\begin{subequations}\label{eqD.86}
\begin{align}\label{eqD.86a}
\left(\frac{\partial u}{\partial \nb^e}\right)_{A^e}
=\gamma_{11}^e\left(\frac{\partial u}{\partial
\taux_1^e}\right)+\gamma_{12}^e\left( \frac{\partial u}
{\partial\taux_2^e}\right)+\gamma_{13}^e\left(\frac{\partial
u}{\partial \nuw^e}\right)_{A^e}\,.
\end{align}
Here
\begin{align}\label{eqD.86b}
\gamma_{13}^e&=\frac{(\nuw^e)^tA^e\nb^e}{(\nuw^e)^tA^e\nuw^e}\ ,
\notag\\
\gamma_{11}^e&=(\taux_1^e)^tA^e\nb^e-\frac{((\nuw^e)^tA^e\nb^e)
((\nuw^e)^tA^e\taux_1^e)}{(\nuw^e)^tA^e\nuw^e}\ , \notag \\
\gamma_{12}^e&=(\taux_2^e)^tA^e\nb^e-\frac{((\nuw^e)^tA^e\nb^e)
((\nuw^e)^tA^e\taux_2^e)}{(\nuw^e)^tA^e\nuw^e}\,.
\end{align}
\end{subequations}
Then it can be shown that
\begin{align}\label{eqD.87}
(BT)_{m,j}^e&=\left(\int_{\tilde\Gamma_{m,j}^e}
\left(\sum_{k,l=1}^22\alpha_{k,l}^e\frac{\partial
u}{\partial\taux_l^e}\frac{\partial} {\partial
s_k^e}\left(\frac{\partial u}{\partial \nuw^e}\right)_{A^e}
\right)\:d\sigma^e\right. \notag \\
&\qquad-\left.\int_{\tilde\Gamma_{m,j}^e}\left(\sum_{k=1}^2\frac
{\partial}{\partial
s_k^e}(\alpha_{k3}^e)\right)\left(\frac{\partial u}{\partial
\nuw^e}\right)_{A^e}^2\:d\sigma^e \right. \notag \\
&\qquad\left. -\oint_{\partial\tilde\Gamma_{m,j}^e}
\sum_{k=1}^2\gamma_{1,k}^e\frac{\partial u}{\partial
\taux_k^e}\left( \frac{\partial u}{\partial \nb^e}
\right)_{A^e}\:ds^e\right)\,.
\end{align}
Now
\begin{align}\label{eqD.88}
&\alpha_{13}^e=\frac{(\nuw^e)^tA^e\taux_1^e}{(\nuw^e)^t
A^e\nuw^e},\:\quad \alpha_{23}^e=\frac{(\nuw^e)^tA^e\taux_2^e}
{(\nuw^e)^tA^e\nuw^e}\:, \notag \\
&\alpha_{11}^e=(\taux_1^e)^tA^e\taux_1^e-\frac{((\nuw^e)
^tA^e\taux_1^e)^2}{(\nuw^e)^tA^e\nuw^e}, \notag \\
&\alpha_{12}^e=(\taux_2^e)^tA^e\taux_1^e-\frac{((\nuw^e)
^tA^e\taux_1^e)((\nuw^e)^tA^e\taux_2^e)}{(\nuw^e)
^tA^e\nuw^e}\:, \notag \\
&\alpha_{21}^e=\alpha_{12}^e,\quad
\alpha_{22}^e=(\taux_2^e)^tA^e\taux_2^e-\frac{((\nuw^e)^t
A^e\taux_2^e)^2}{(\nuw^e)^tA^e\nuw^e}\:.
\end{align}
Hence
\begin{align}\label{eqD.89}
(\mbox{ I })&=\int_{\tilde\Gamma_{m,j}^e}\left(\sum_{k,l=1}^2
2\alpha_{k,l}^e\frac{\partial u}{\partial\taux_l^e}\frac
{\partial}{\partial s_k^e}\left(\left(\frac{\partial u}
{\partial\nuw^e}\right)_{A^e}\right)\right)\:d\sigma^e \notag\\
&=-\int_{\tilde\Gamma_{m,j}^e}2\left(\alpha_{1,1}^e\frac{\partial
u}{\partial x_1^e}\frac{\partial}{\partial x_1^e}\left(\left
(\frac{\partial u}{\partial x_2^e}\right)_{A^e}\right)
+\alpha_{1,2}^e\frac {\partial u}{\partial x_3^e}
\frac{\partial}{\partial x_1^e}\left(\left(\frac{\partial u}
{\partial x_2^e}\right)_{A^e}\right)\right. \notag \\
&+\left.\alpha_{2,1}^e\frac{\partial u}{\partial x_1^e}\frac
{\partial}{\partial x_3^e}\left(\left(\frac{\partial u}{\partial
x_2^e}\right)_ {A^e}\right)+\alpha_{2,2}^e\frac{\partial
u}{\partial x_3^e}\frac {\partial}{\partial
x_3^e}\left(\left(\frac{\partial u}{\partial x_2^e}
\right)_{A^e}\right)\right)\:dx_1^e\: dx_3^e \notag \\
&=-2G_{m,j}^e\int_{\widehat\Gamma_{m,j}^e} \left(\alpha_{1,1}^e\frac
{\partial u}{\partial z_1}\frac{\partial}{\partial z_1}\left(
\left(\frac{\partial u}{\partial z_2}\right)_{\widehat{A}^e}\right)
+\frac{\alpha_{1,2}^e}{G_{m,j}^e}\frac{\partial u}{\partial
z_3}\frac{\partial} {\partial z_1}\left(\left(\frac{\partial
u}{\partial z_2}\right)_{\widehat{A}^e}\right)\right. \notag \\
&+\left.\frac{\alpha_{2,1}^e}{G_{m,j}^e}\frac{\partial u}{\partial
z_1}\frac{\partial}{\partial z_3}\left(\left(\frac{\partial u}
{\partial z_2}\right)_{\widehat{A}^e} \right)
+\frac{\alpha_{2,2}^e}{(G_{m,j}^e)^2}\frac{\partial u}{\partial
z_3}\frac{\partial}{\partial z_3}\left(\left( \frac{\partial
u}{\partial z_2}\right)_{\widehat{A}^e}\right)\right)\:dz_1\:dz_3\,.
\end{align}
Here $\widehat{\Gamma}_{m,j}^e$ denotes the image of $\Gamma_{m,j}^e$
in $z$ coordinates.
\newline
Now from (\ref{eq3.79}), (\ref{eq3.80}) and (\ref{eqD.88})
$\alpha_{1,1}^e,\:\frac{\alpha_{1,2}^e}{G_{m,j}^e},\:\frac{\alpha_{2,1}^e}{G_{m,j}^e}
\mbox{ and }\frac{\alpha_{2,2}^e}{(G_{m,j}^e)^2}$ are bounded and have bounded
derivatives in $z$ coordinates.
\newline
Therefore
\begin{align}\label{eqD.90}
|(\mbox{ I })|&\leq\left(C_{\epsilon}(\ln W)^2\left(\left\|\frac
{\partial u_m^e}{\partial z_1}\right\|^2_{1/2,\widehat{\Gamma}
_{m,j}^e}+\left\|\frac{\partial u_m^e}{\partial z_3}\right\|^2_
{1/2,\widehat{\Gamma}_{m,j}^e}\right)\right. \notag \\
&+\left.\epsilon\left(\sum_{1\leq|\alpha|\leq2}\|D_z^\alpha u_m^e
\|^2_{0,\widehat{\Omega}_m^e}\right)\right)G_{m,j}^e\,.
\end{align}
Next, we examine
\begin{align}\label{eqD.91}
(\mbox{ II })&=-\int_{\tilde\Gamma_{m,j}^e}\left(\sum_{k=1}^2
\frac{\partial}{\partial s_k^e}(\alpha_{k3}^e)\right)\left(\frac
{\partial u}{\partial\nuw^e}\right)_{A^e}^2\:d\sigma^e \notag \\
&=-G_{m,j}^e\int_{\widehat{\Gamma}_{m,j}^e}\left(\left(\frac
{\partial}{\partial z_1}\alpha_{1,3}^e\right)\left(\frac {\partial
u}{\partial z_2}\right)_{\widehat{A}^e}^2\right. \notag \\
&\hspace{1.5cm}\left.+\frac{1}{G_{m,j}^e}\left(\frac{\partial}
{\partial z_3}\alpha_{2,3}^e\right)\left(\frac{\partial u}
{\partial z_2}\right)_{\widehat{A}^e}^2\right)\: dz_1 dz_3\,.
\end{align}
Now $\alpha_{1,3}^e$ and $\frac{\alpha_{2,3}^e}{G_{m,j}^e}$ are bounded
and have uniformly bounded derivatives in $z$ coordinates.
\newline
Hence
\begin{align}\label{eqD.92}
|(\mbox{ II })|\leq KG_{m,j}^e|u_m^e|^2_{1,\widehat{\Gamma}_m^e}
\leq G_{m,j}^e\left(K_\epsilon|u_m^e|^2_{1,\widehat{\Omega}_m^e}
+\epsilon|u_m^e|^2_{2,\widehat{\Omega}_m^e}\right).
\end{align}
Finally, we examine
\begin{align}\label{eqD.93}
(\mbox{ III })=\oint_{\partial\tilde\Gamma_{m,j}^e}
\sum_{k=1}^2\gamma_{1,k}^e\frac{\partial u}{\partial \taux_k^e}
\left(\frac{\partial u}{\partial \nb^e}\right)_{A^e}\:d s^e\:.
\end{align}
Consider
\[(\mbox{ IIIa })=\int_{\tilde\gamma}\sum_{k=1}^2\gamma_{1,k}^e
\frac{\partial u}{\partial \taux_k^e}\left(\frac{\partial u}
{\partial \nb^e}\right)_{A^e}\:d s^e\:.\]
Now $\nb^e=\taux_2^e$ and so from (\ref{eqD.86b})
\begin{figure}[!ht]
\centering
\includegraphics[scale = 0.60 ]{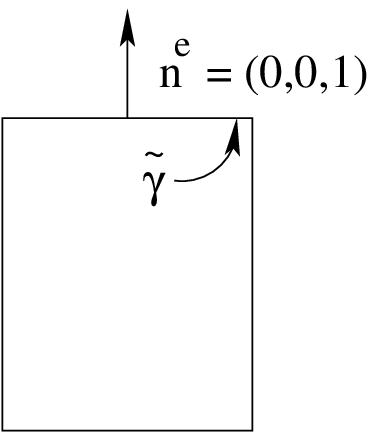}
\caption{The edge $\tilde\gamma$.}
\label{figD.8}
\end{figure}
\begin{align}\label{eqD.94}
\gamma_{11}^e&=\left((\taux_1^e)^t{A}^e\taux_2^e
-\frac{((\nuw^e)^t{A}^e\taux_2^e)((\nuw^e)^t{A}^e
\taux_1^e)}{(\nuw^e)^t{A}^e\nuw^e}\right), \notag\\
\gamma_{12}^e&=\left((\taux_2^e)^t{A}^e\taux_2
^e-\frac{((\nuw^e)^t{A}^e\taux_2^e)((\nuw^e)^tA^e\taux_2
^e)}{(\nuw^e)^t{A}^e\nuw^e}\right).
\end{align}
Hence
\begin{align*}
(\mbox{ IIIa }) &= \int_{\widehat\gamma}\left(\gamma_{11}^e
\left(\frac{\partial u}{\partial z_1}\right)\left(\frac{\partial
u}{\partial z_2}\right)_{\widehat{A}^e} +
\frac{\gamma_{12}^e}{G_{m,j}^e}\left(\frac{\partial u}{\partial
z_3}\right)\left(\frac{\partial
u}{\partial z_2}\right)_{\widehat{A}^e}\right)\: dz_1\\
&= G_{m,j}^e\int_{\widehat\gamma}\left(\frac{\gamma_{11}^e}
{G_{m,j}^e}\left(\frac{\partial u}{\partial
z_1}\right)\left(\frac{\partial u} {\partial
z_2}\right)_{\widehat{A}^e} + \frac{\gamma_{12}^e}{(G_{m,j}
^e)^2}\left(\frac{\partial u}{\partial
z_3}\right)\left(\frac{\partial u}{\partial
z_2}\right)_{\widehat{A}^e}\right)\: dz_1\:.
\end{align*}
Now from (\ref{eq3.79}), (\ref{eq3.80}) and (\ref{eqD.94}),
$\frac{\gamma_{11}^e}{G_{m,j}^e}$ and $\frac{\gamma_{12}^e}{(G_{m,j}^e)^2}$
are bounded and have uniformly bounded derivatives in $z$ coordinates.
\newline
And so we can show that
\begin{align}\label{eqD.95}
|(\mbox{ IIIa })|&\leq G_{m,j}^e\left(C_{\epsilon}(\ln W)^2
\left(\left\|\frac {\partial u_m^e}{\partial z_1}\right\|^2
_{1/2,\widehat{\Gamma}_{m,j}^e}+\left\|\frac{\partial u_m^e}
{\partial z_3}\right\|^2_{1/2,\widehat{\Gamma}_{m,j}^e}
\right)\right. \notag \\
&+\left.\epsilon\sum_{1\leq|\alpha|\leq2}\|D_z^\alpha
u_m^e\|^2_{0,\widehat{\Omega}_m^e}\right).
\end{align}
Next, consider
\[(\mbox{ IIIb })=\int_{\widehat\gamma}\sum_{k=1}^2\gamma_{1,k}^e
\frac{\partial u}{\partial \taux_k^e}\left(\frac{\partial u}
{\partial \nb^e}\right)_{A^e}\:d s^e\:.\]
Now $\nb^e=\taux_1^e$ and so from (\ref{eqD.86b})
\begin{figure}[!ht]
\centering
\includegraphics[ scale = 0.60 ]{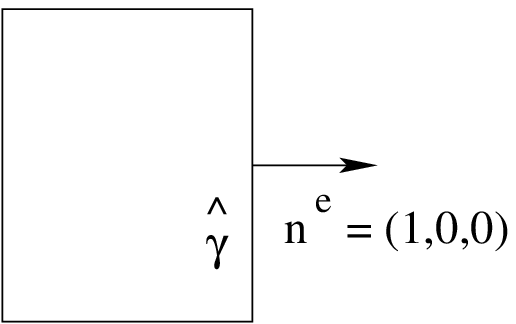}
\caption{The edge $\hat{\gamma}$.}
\label{figD.9}
\end{figure}
\begin{align}\label{eqD.96}
\gamma_{11}^e&=\left((\taux_1^e)^t{A}^e\taux_1^e
-\frac{((\nuw^e)^t{A}^e\taux_1^e)((\nuw^e)^t{A}^e
\taux_1^e)}{(\nuw^e)^t{A}^e\nuw^e}\right), \notag \\
\gamma_{12}^e&=\left((\taux_2^e)^t{A}^e\taux_1^e
-\frac{((\nuw^e)^t{A}^e\taux_1^e)((\nuw^e)^tA^e\taux_2
^e)}{(\nuw^e)^t{A}^e\nuw^e}\right)\,.
\end{align}
Hence
\[(\mbox{ IIIb }) = G_{m,j}^e\int_{\widehat\gamma}
\left(\gamma_{11}^e\left(\frac{\partial u}{\partial
z_1}\right)\left(\frac{\partial u} {\partial
z_1}\right)_{\widehat{A}^e} + \frac{\gamma_{12}^e}{G_{m,j}^e}
\left(\frac{\partial u}{\partial z_3}\right)\left(\frac{\partial
u}{\partial z_1}\right)_{\widehat{A}^e}\right)\: dz_3\,.\]
Now from (\ref{eq3.79}), (\ref{eq3.80}) and (\ref{eqD.96}), $\gamma_{11}^e$ and
$\frac{\gamma_{12}^e}{G_{m,j}^e}$ are bounded and have uniformly bounded derivatives
in $z$ coordinates.
\newline
So we can show that
\begin{align}\label{eqD.97}
|(\mbox{ IIIb })|&\leq G_{m,j}^e\left(C_{\epsilon}(\ln W)^2
\left(\left\|\frac{\partial u_m^e}{\partial z_1}\right\|^2
_{1/2,\widehat{\Gamma}_{m,j}^e}+\left\|\frac{\partial u_m^e}{\partial
z_3}\right\|^2_{1/2,\widehat{\Gamma}_{m,j}^e}\right)\right. \notag \\
&+\left.\epsilon\sum_{1\leq|\alpha|\leq2}\|D_z^\alpha
u_m^e\|^2_{0,\widehat{\Omega}_m^e}\right).
\end{align}
Combining (\ref{eqD.90}), (\ref{eqD.92}), (\ref{eqD.95}) and (\ref{eqD.97}), we
obtain
\begin{align}
|(BT)_{m,j}^e|&\leq G_{m,j}^e\left(C_{\epsilon}(\ln W)^2\left(\left\|\frac
{\partial u_m^e}{\partial z_1}\right\|^2_{1/2,\widehat{\Gamma}_{m,j}^e}
+\left\|\frac{\partial u_m^e}{\partial z_3}\right\|^2_{1/2,\widehat{\Gamma}_{m,j}^e}
\right)\right. \notag\\
&+\left.K_\epsilon\sum_{|\alpha|=1}\|D_z^\alpha u_m^e\|^2_{0,\widehat{\Omega}_m^e}
+\epsilon\sum_{|\alpha|=2}\|D_z^\alpha u_m^e\|^2_{0,\widehat{\Omega}_m^e}\right).
\end{align}
From this the result follows.
\end{proof}
\end{appendix}
\thispagestyle{empty}
\chapcleardoublepage

\addcontentsline{toc}{chapter}{Bibliography}
\renewcommand{\bibname}{Bibliography}
\bibliographystyle{mythesis}
\bibliography{bubble.bib}

\begin{thebibliography}{99}
\bibitem {ADAM} R.A. Adams (1975): \textit{Sobolev Spaces}, {Academic Press, New York}.
\bibitem {ANAG} G. Anagnostou (1991): \textit{Non-conforming sliding spectral element method for the
         unsteady incompressible Navier-Stokes equations}, Ph.D. thesis, Massachusetts Institute of Technology.
\bibitem {ANDE} B. Andersson, U. Falk, I. Babu\v{s}ka and T. Von-Petersdorff (1995): Reliable stress and
          fracture mechanics analysis of complex components using a $h-p$ version of FEM, \textit{Int.
          J. Numer. Meth. Eng.}, 38, 2135.
\bibitem {ARNO1} D.N. Arnold, F. Brezzi, B. Cockburn and D. Marini (2000): The development of discontinuous
         Galerkin methods. In \textit{Discontinuous Galerkin methods: Theory, computation and applications}
         (ed. B. Cockburn, G.E. Karniadakis and C.W. Shu), p. 89. Springer, Germany.
\bibitem {ARNO2} D.N. Arnold, F. Brezzi, B. Cockburn and D. Marini (2002): Unified analysis of discontinuous
         Galerkin methods for elliptic problems, \textit{SIAM J. Numer. Anal.}, 39, 1794.
\bibitem {AZIZ} A.K. Aziz and I. Babu\v{s}ka (1972): {The Mathematical Foundations of the Finite
         Element Method with Applications to Partial Differential Equations, Academic Press, New York.}
\bibitem {B1} I. Babu\v{s}ka (1994): On the $h,p$ and $h-p$ version of the finite element method, Tatra
         Mountains Math. Publ., 4, 5-18.
\bibitem {BG6} I. Babu\v{s}ka and B. Guo (1986): The $h-p$ Version of the finite element method, Part I: The
         basic approximation results, Comp. Mech., 1, 21-41.
\bibitem {BG7} I. Babu\v{s}ka and B. Guo (1986): The $h-p$ Version of the finite element method, Part II:
         General results and Applications, Comp. Mech., 1, 203-220.
\bibitem {BG8} I. Babu\v{s}ka and B. Guo (1988): Regularity of the solutions of elliptic problems with
         piecewise analytic data, Part I: Boundary value problems for linear elliptic equation of second
         order, SIAM J. Math. Anal. Vol. 19, No. 1, 172.
\bibitem {BG9} I. Babu\v{s}ka and B. Guo (1988): The $h-p$ version of the finite element method on domains
         with curved boundaries, SIAM J. Num. Anal., Vol 25, 837.
\bibitem {BG4} I. Babu\v{s}ka and B. Guo (1995): The $h-p$ version of the finite element method in $R^3$.
\bibitem {BG5} I. Babu\v{s}ka and B. Guo (1996): Approximation properties of the $h-p$ version of the finite
         element method, Comp. Methods Appl. Mech. Engrg, 133, 319-346.
\bibitem {BG1} I. Babu\v{s}ka and B. Guo (1997): Regularity of the solutions for elliptic problems on
         non-smooth domains in $R^3$, Part I: Countably Normed spaces on polyhedral domains; Proc. of the
         Royal Society of Edinburgh, 127A, 77-126.
\bibitem {BG2} I. Babu\v{s}ka and B. Guo (1997): Regularity of the solutions for elliptic problems on
         non-smooth domains in $R^3$, Part II: Regularity in neighbourhoods of edges; Tech. Note BN-1182,
         IPST, University of Maryland, College Park, Proc. of the Royal Society of Edinburgh, 127A, 517-545.
\bibitem {BG3} I. Babu\v{s}ka and B. Guo: Regularity of the solutions for elliptic problems on
         non-smooth domains in $R^3$, Part III: Regularity in neighbourhoods of vertices. To appear in Proc.
         of the Royal Society of Edinburgh.
\bibitem {BABU} I. Babu\v{s}ka and S.H. Oh (1990): The $p$-version of the Finite Element Method for
         Domains with Corners and for Infinite Domains, Numer. Meth. PDEs., Vol. 6, pp 371-392.
\bibitem {BGOH} I. Babu\v{s}ka, B. Guo, B. Andersson, H.S. Oh and J.M. Melenk (1996): Finite element
         methods for solving problems with singular solutions, J. Comp. Appl. Math., Vol. 74, 51-70.
\bibitem {BS} I. Babu\v{s}ka and M. Suri (1994): The $p$ and $h-p$ version of the finite element
         method, basic principles and properties, SIAM review, Vol. 36, No. 4, 578-632.
\bibitem {BABU1} I. Babu\v{s}ka, T. Von-Petersdorff and B. Andersson (1994): Numerical treatment of
         vertex singularities and intensity factors for mixed boundary value problems for the Laplace
         equation in $\mathbb{R}^{3}$, \textit{SIAM J. Numer. Anal.}, 31, 1265.
\bibitem {BEAW} A.E. Beagles and J.R. Whiteman (1985): Treatment of a reentrant vertex in three dimensional
         Poisson problem, Singularities and constructive methods for their threatment, edited by Grisvard et
         al, Lecture Note No. 1121, Springer-Verlag.
\bibitem {BERN1} C. Bernardi, Y. Maday, C. Mavriplis and A.T. Patera (1989): The mortar element method
         applied to spectral discretizations, In \textit{Finite element analysis in fluids, seventh
         international conference on finite element methods in flow problems} (ed. T. J. Chung and G.
         R. Karr). UAH Press, Huntsville, TX.
\bibitem{BER} C. Bernardi and Y. Maday (1999): {Spectral methods. In \textit{Handbook of Numerical
         Analysis V}, ed. P.G. Ciarlet and J.L. Lions. Amsterdam: Elsevier Sciences.}
\bibitem{BOY} J.P. Boyd (2001): \textit{Chebyshev and Fourier Spectral Methods}, 2nd edn. (Dover, New York).
\bibitem {BL} E.N. Blinova (1944): Hydrodynamic theory of pressure and temperature waves and center of
         action of the atmosphere, Trans. No. 113 (Regional Control Office, Second Weather Region,
         Patterson Field, OH).
\bibitem {BOGU} P.B. Bochev, M.D. Gunzburger (2009): Least-Squares Finite Element Methods, Springer.
\bibitem {CHQZ1} C. Canuto, M.Y. Hussaini, A. Quarteroni, T.A. Zang (1987): Spectral methods in fluid
         dynamics, Springer, New York.
\bibitem {CHQZ2} C. Canuto, M.Y. Hussaini, A. Quarteroni, T.A. Zang (2006): Spectral methods, Fundamentals
         in single domains; Scientific computaion, Springer.
\bibitem {CHQZ3} C. Canuto, M.Y. Hussaini, A. Quarteroni, T.A. Zang (2007): Spectral methods, Evolution to
         Complex geometries and applications to fluid dynamics, Scientific computaion, Springer.
\bibitem {CLOU} R.W. Clough (1960): “The Finite Element Method in Plane Stress Analysis”, Proceedings
         of 2nd ASCE Conference on Electronic Computation, Pittsburgh, PA, September 8–9.
\bibitem {COCK} B. Cockburn, G.E. Karniadakis and C.W. Shu (2000): The development of discontinuous
         Galerkin methods, In \textit{Discontinuous Galerkin methods: theory, computation and applications}
         (ed. B. Cockburn, G.E. Karniadakis and C.W. Shu), p. 3, Springer, Germany.
\bibitem {COST} M. Costabel and M. Dauge (1993): General asymptotics of solutions of second-order
         elliptic boundary value problems I and II, \textit{Proc. R. Soc. Edin.}, 123A, 109.
\bibitem {DAUG} M. Dauge (1988): Elliptic boundary value problems in corner domains-Smoothness and
         asymptotics of solutions, Lecture Notes in Mathematics 1341. Springer-Verlag, Heidelberg.
\bibitem {ODEN1} L. Demkowicz, J.T. Oden, W. Rachowicz and O. Hardy (1989): Toward a universal $h-p$
         adaptive finite element strategy, Part I: Constrained approximations and data structure,
         \textit{Comp. Meth. Appl. Mech. Eng.}, 77, 79.
\bibitem {DS} P. Dutt and S. Bedekar (2001): Spectral Methods for Hyperbolic Initial Boundary Value
         Problems on Parallel Computers, Jour. Comp. Appl. Math., 134, pp 165-190.
\bibitem {DBR} P. Dutt, P. Biswas and G. N. Raju (2008): Preconditioners for spectral element methods
         for elliptic and parabolic problems, Journal of Computational and Applied Mathematics, Vol.
         215, Issue 1, 152-166.
\bibitem {DTK1} P. Dutt, S. Tomar and B.V.R. Kumar (2002): Stability estimates for $h-p$ spectral element
         methods for elliptic problems, Proc. Indian Acad. Sci. (Math Sci.) Vol. 112, No. 4, 601-639.
\bibitem {DT} P. Dutt and S. Tomar (2003): Stability estimates for $h-p$ spectral element methods for
         general elliptic problems on curvilinear domains, Proc. Indian Acad. Sci. (Math. Sci.) Vol.113,
         No.4, 395-429.
\bibitem {DKU} P.K. Dutt, N. Kishore Kumar and C.S. Upadhyay (2007): Non-conforming $h-p$ spectral
         element methods for elliptic problems, Proc. Indian Acad. Sci. (Math. Sci.) Vol. 117, No. 1, 109-145.
\bibitem {EMR} E. Eliasen, B. Machenhauer and E. Rasmussen (1970): `On a Numerical Method for Integration of
         the Hydrodynamical Equations with a Spectral Representation of the Horizontal Fields', Rep. No. 2
         (Institut for Teoretisk Meteorologi, Univ. Copenhagen).
\bibitem {FIN} B.A. Finlayson and L.E. Scriven (1966): {The Method of Weighted Residuals: A Review,
         Appl. Mech. Rev., 19, pp 735-748.}
\bibitem {FRAZ} R.A. Frazer, W.P. Jones and S.W. Skan (1937): \textit{Approximation to Functions and to
         the Solution of Differential Equations}, Rep. and Mem. 1799 (Aeronautical Research Council, London).
\bibitem{GALER} B. Galerkin (1915): {Rods and plates: Series occurring in various questions concerning
         the elastic equilibrium of rods and plates, Vestn. Inzhen., 19, pp 897-908}.
\bibitem {GO} D. Gottlieb and S.A. Orszag (1986): Numerical analysis of spectral methods, Theory and
         applications, SIAM.
\bibitem {GRIS1} P. Grisvard : Singularities des probl\'{e}mes aux limites dans polyh\^{e}dres, Expos\'{e}
         no. VIII, Ecole Plytechnique Centre de Mathematique, France.
\bibitem {G} P. Grisvard (1985): Elliptic problems in nonsmooth domains, Monographs and Studies in
         Mathematics 24, Pitman Advanced Publishing Program, Boston, MA.
\bibitem {GRIS2} P. Grisvard (1976): Behavior of the solutions of an elliptic boundary value problem in a
         polyhedral domain, Numerical solution of PDEs, edited by B. Hubbard, pp 207-274.
\bibitem {GB} W. Gui and I. Babuska (1986): The $h,p$ and $h-p$ versions of the finite element method
         for one dimensional problem, Part I: The error analysis of the $p$ version, Part II: The
         error analysis of the $h$ and $h-p$ versions, Part III: The adaptive $h-p$ version;
         Numer. Math., 49, 577-683.
\bibitem {G2} B. Guo (1994): The $h-p$ Version of the finite element method for solving boundary value
         problems in polyhedral domains, In \textit{Boundary value problems and integral equations in non
         smooth domains}, volume 167 of \textit{Lecture Notes in Pure and Appl. Math.}, pages 101-120.
         eds. M. Costabel, M. Dauge, C. Nicaise and Marcel Dekker Inc.
\bibitem {GOH2} B. Guo (1995): Optimal finite element approaches for elasticity problems on nonsmooth
         domains in $R^3$, Computaional Mechanics's 95, Vol. 1, 427-432, eds. S.N. Atluri, G. Yawaga and
         T.A. Cruse, Springer.
\bibitem {G1} B. Guo (1996): $h-p$ version of the finite element method in $R^3$: Theory and algorithm,
         In \textit{ICOSAHOM 95: Proceedings of the third international conference on spectral
         and higher order methods} (ed. A.V. Ilin and L.R. Scott), Houston, TX, June 5, 1995,
         p. 487-500, Houston Journal of Mathematics, 1996.
\bibitem {GOH1} B. Guo and S.H. Oh (1995): The method of auxiliary mapping for the finite element solutions
         of elliptic partial differential equations on non-smooth domains in $R^3$. Available at
         \textit{http://home.cc.umanitoba.ca/~guo/publication.htm}.
\bibitem {JIAN} Bo-nan Jiang (1998): \textit{Least-Squares Finite Element Method: Theory and Applications
         in Computational Fluid Dynamics and Electromagnetics}, Springer-Verlag, New York.
\bibitem {KANT} L.V. Kantorovic (1934): {On a new method of approximate solution of partial differential
         equations, Dolk. Akad. Nauk SSSR 4, pp 532-536 (in Russian).}
\bibitem {KS} G. Karniadakis and J. Sherwin (1999): Spectral/\textit{h-p} Element Methods for CFD,
         Oxford University Press.
\bibitem {KESA} S. Kesavan (1998): \textit{Topics in Functional Analysis and Applications}, {Wiley Eastern}.
\bibitem {KOND} V.A. Kondratiev (1967): Boundary value problems for elliptic elliptic equations in domains
         with conical or angular points, \textit{Trans. Moscow Math. Soc.}, 16, 227.
\bibitem {KD} V.A. Kondratiev (1970): The smoothness of a solution of Dirichlet's problem for second order
         elliptic equations in a region with a piecewise smooth boundary, Differential'nye Uraneniya, 6(10),
         1831-1843. Differential Equations, 6, 1392-1401.
\bibitem {KRIE} H.O. Kreiss and J. Oliger (1972): Comparison of accurate methods for the integration
         of hyperbolic equations, Tellus, 24, pp 199-215.
\bibitem {KDU} N.K. Kumar, P.K. Dutt and C.S. Upadhyay (2009): Nonconforming spectral/$h-p$ element
         methods for elliptic systems, Journal of Numerical Mathematics, Vol. 17, Issue 2, pp 119-142.
\bibitem {KR} N. Kishore Kumar, G. Naga Raju (2010): Least-squares $h-p$/spectral element method for
         elliptic problems, Applied Numerical Mathematics, Vol. 60, 38-54.
\bibitem {LANC} C. Lanczos (1938): {Trigonometric interpolation of empirical and analytical functions,
         J. Math. Phys., 67, pp 123-199.}
\bibitem {LEE} S.J. Lee, H.S. Oh, J.H. Yun (2001): Extension of the method of auxiliary mapping for
         three-dimensional elliptic boundary valuse problems, Int. J. Numer. Math. Engg., 50, pp 1103-1129.
\bibitem {LM} J.L. Lions and E. Magenes (1972): Non-homogeneous boundary value problems and applications,
         Vol II; Springer Verlag, Berlin, Heidelberg, New York.
\bibitem {LUCA} T.R. Lucas and S.H. Oh (1993): The method of auxiliary mappings in the finite element
         solutions of elliptic boundary value problems containing boundary or corner singularities,
         Journal of Computational Physics, 108, pp 327-342.
\bibitem {MAVR} C. Mavriplis (1989): \textit{Non-conforming discretizations and a posteriori error estimators
         for adaptive spectral element techniques}, Ph.D. thesis, Massachusetts Institute of Technology.
\bibitem {ODEN2} J.T. Oden, L. Demkowicz, W. Rachowicz and T. Westermann (1989): Toward a universal $h-p$
         adaptive finite element strategy, Part II: A posteriori error estimation. \textit{Comp. Meth. Appl.
         Mech. Eng.}, 77, 113.
\bibitem {OH1} S.H. Oh and I. Babu\v{s}ka (1992): The $p$-version of the finite element method for the elliptic
         problems with interfaces, Computer Methods in Applied Mechanics and Engineering, 97, pp 211-232.
\bibitem {OH2} S.H. Oh and I. Babu\v{s}ka (1995): The method of auxiliary mapping for the finite element
         solutions of elasticity problems containing singularities, Journal of Computational Physics, 121,
         pp 193-212.
\bibitem {ORZ} S.A. Orszag (1969): Numerical methods for the simulation of turbulence, Phys. Fluids Suppl.
         II. 12, pp 250-257.
\bibitem {ORSZ2} S.A. Orszag (1972): {Comparison of pseudospectral and spectral approximation, Stud. Appl.
         Math., 51, pp 253-259.}
\bibitem {ORSZ3} S.A. Orszag (1980): {Spectral methods for problems in complex geometries, \textit{J.
         Comput. Phys.}, 37, pp 70-92.}
\bibitem {PATH} D. Pathria and G.E. Karniadakis (1995): Spectral element methods for ellptic problems
         in non-smooth domains, \textit{Journal of Computational Physics}, 122, 83-95.
\bibitem {PROO1} M. Proot (2003): \textit{The least-squares spectral element method}, Ph.D. thesis, Delft
         University of Technology, The Netherlands.
\bibitem {PROO2} M. Proot and M.I. Gerritsma (2002): Least-squares spectral elements applied to the
         Stokes problem, J. Comput. Phys., 181, pp 454-477.
\bibitem {ODEN3} W. Rachowicz, J.T. Oden and L. Demkowicz (1989): Toward a universal $h-p$ adaptive finite
         element strategy, Part III: Design of $h-p$ meshes. \textit{Comp. Meth. Appl. Mech. Eng.}, 77, 181.
\bibitem {SSW1} D. Sch$\ddot{o}$tzau, C. Schwab and T.P. Wihler (2009): $hp-$DGFEM for second order elliptic
         problmes in polyhedra, I: Stability and Quasioptimality on Geometric Meshes, Research Report No.
         2009-28, ETH, Z$\ddot{u}$rich.
\bibitem {SSW2} D. Sch$\ddot{o}$tzau, C. Schwab and T.P. Wihler (2009): $hp$-DGFEM for second order elliptic
         problmes in polyhedra, II: Exponential Convergence, Research Report No. 2009-29, ETH, Z$\ddot{u}$rich.
\bibitem {CH} Ch. Schwab (1998): $p$ and $h-p$ Finite element methods, Clarendon Press, Oxford.
\bibitem {SL} I. Silberman (1954): Planetary waves in the atmosphere, J. Meteorol, 11, pp 27-34.
\bibitem {SZAB} B. Szab\'{o} and I. Babuska (1991): \textit{Finite element analysis}, Wiley, New york.
\bibitem {SZAB2} B. Szab\'{o} and Z. Yosibash (1996): Superconvergent extraction of flux intensity
         factors and first derivatives from finite element solutions, \textit{Comp. Meth. Appl. Mech. Eng.},
         129, 349.
\bibitem {SLAT} J.C. Slater (1934): {Electronic energy bands in metal. Phys. Rev., 45, pp 794-801.}
\bibitem {DTK2} S.K. Tomar, P.Dutt and B.V.Ratish Kumar (2002): An Efficient and Exponentially Accurate
         Parallel $h-p$ Spectral Element Method for Elliptic Problems on Polygonal Domains - The Dirichilet
         Case, Lecture Notes in Computer Science 2552, High Performance Computing, Springer Verlag.
\bibitem {SKT1} S.K. Tomar (2002): $h-p$ spectral element methods for elliptic problems on nonsmooth
         domains using parallel computers, Ph.D. Thesis, IIT Kanpur, India, 2001. Reprint available as Tec.
         Rep. no. 1631, Faculty of Mathematical Sciences, University of Twente, The Netherlands.
         \textit{http://www.math.utwente.nl/publications}.
\bibitem {SKT2} S.K. Tomar (2006): $h-p$ Spectral element methods for elliptic problems over non-smooth
         domains using parallel computers, Computing 78, 117-143.
\bibitem {TW} Toselli, Widlund (2005): Domain decomposition methods-Algorithms and theory, Springer
         series in computational mathematics.
\bibitem {WALK} H. Walden and R.B. Kellogg (1977): Numerical determination of the fundamental eigenvalue for
          the Laplace operator in a spherical domain, J. Eng. Math., 11, pp 299-318.
\bibitem {WK} Wolfgang Kuhnel (2005): Differential Geometry, Curves-Surfaces-Manifolds, Student mathematical
         library, vol. 96, AMS.
\bibitem {YOSI1} Z. Yosibash (1997): Numerical analysis of edge singularities in three-dimensional
         elasticity, \textit{Int. J. Numer. Meth. Eng.}, 40, 4611.
\bibitem {YOSI2} Z. Yosibash (2000): Computing singular solutions of elliptic boundary value problems in
         polyhedral domains using $p$-FEM, \textit{Appl. Numer. Math.}, 33, 71.
\bibitem {YOSI3} Z. Yosibash and B. Szab\'{o} (1995): Numerical analysis of singularities in two dimensions,
         Part 1: Computation of eigenpairs, \textit{Int. J. Numer. Meth. Eng.}, 38, 2055.
\bibitem {ZIE} O.C. Zienkiewicz, Y.K. Cheung (1967): \textit{The Finite Element Method in Structural
         and Continuum Mechanics}, McGraw-Hill, London.
\end{thebibliography}
\bibliographystyle{plain}

\chapcleardoublepage

\end{document}